\title[]{Hypergraph \texorpdfstring{$F$}{F}-designs for arbitrary $F$}
\date{\today}
\author[S.~Glock, D.~K\"uhn, A.~Lo and D.~Osthus]{Stefan Glock, Daniela K\"uhn, Allan Lo and Deryk Osthus}
\thanks{This preprint has been merged with `The existence of designs via iterative absorption' (arXiv:1611.06827v1) into a single paper `The existence of designs via iterative absorption: hypergraph $F$-designs for arbitrary $F$' (arXiv:1611.06827v3).
\\The research leading to these results was partially supported by the EPSRC, grant nos. EP/N019504/1 (D.~K\"uhn) and EP/P002420/1 (A.~Lo),
by the Royal Society and the Wolfson Foundation (D.~K\"uhn) as well as by the European Research Council
under the European Union's Seventh Framework Programme (FP/2007--2013) / ERC Grant
Agreement no. 306349 (S.~Glock and D.~Osthus).}
\newtheorem{theorem}[algorithm]{Theorem}
\newtheorem{prop}[algorithm]{Proposition}
\newtheorem{lemma}[algorithm]{Lemma}
\newtheorem{cor}[algorithm]{Corollary}
\newtheorem{fact}[algorithm]{Fact}
\theoremstyle{definition}
\newtheorem{defin}[algorithm]{Definition}
\newtheorem{example}[algorithm]{Example}
\newtheoremstyle{claimstyle}{5pt}{5pt}{\em}{5pt}{\em}{:}{5pt}{}
\theoremstyle{claimstyle}
\newtheorem{claim}{Claim}
\numberwithin{equation}{section}
\definecolor{darkblue}{rgb}{0,0,0.5}
\def\noproof{{\unskip\nobreak\hfill\penalty50\hskip2em\hbox{}\nobreak\hfill%
       $\square$\parfillskip=0pt\finalhyphendemerits=0\par}\goodbreak}
\def\endproof{\noproof\bigskip}
\def\noclaimproof{{\unskip\nobreak\hfill\penalty50\hskip2em\hbox{}\nobreak\hfill%
       $-$\parfillskip=0pt\finalhyphendemerits=0\par}\goodbreak}
\def\endclaimproof{\noclaimproof\medskip}
\newdimen\margin
\def\textno#1&#2\par{
   \margin=\hsize
   \advance\margin by -4\parindent
          \setbox1=\hbox{\sl#1}
   \ifdim\wd1 < \margin
      $$\box1\eqno#2$$
   \else
      \bigbreak
      \hbox to \hsize{\indent$\vcenter{\advance\hsize by -3\parindent
      \it\noindent#1}\hfil#2$}
      \bigbreak
   \fi}
\def\proof{\removelastskip\penalty55\medskip\noindent\setcounter{claim}{0}{\bf Proof. }} 
\def\lateproof#1{\removelastskip\penalty55\medskip\noindent\setcounter{claim}{0}{\bf Proof of #1. }} 
\DeclareMathOperator{\Ima}{Im}
\def\claimproof{\removelastskip\penalty55\medskip\noindent{\em Proof of claim: }}
\begin{document}

\def\COMMENT#1{}
\def\TASK#1{}

\def\eps{{\varepsilon}}
\newcommand{\ex}{\mathbb{E}}
\newcommand{\pr}{\mathbb{P}}
\newcommand{\cB}{\mathcal{B}}
\newcommand{\cE}{\mathcal{E}}
\newcommand{\cS}{\mathcal{S}}
\newcommand{\cF}{\mathcal{F}}
\newcommand{\bF}{\mathbb{F}}
\newcommand{\bZ}{\mathbb{Z}}
\newcommand{\cH}{\mathcal{H}}
\newcommand{\cC}{\mathcal{C}}
\newcommand{\cM}{\mathcal{M}}
\newcommand{\bN}{\mathbb{N}}
\newcommand{\bR}{\mathbb{R}}
\def\O{\mathcal{O}}
\newcommand{\cP}{\mathcal{P}}
\newcommand{\cQ}{\mathcal{Q}}
\newcommand{\cR}{\mathcal{R}}
\newcommand{\cJ}{\mathcal{J}}
\newcommand{\cL}{\mathcal{L}}
\newcommand{\cK}{\mathcal{K}}
\newcommand{\cD}{\mathcal{D}}
\newcommand{\cI}{\mathcal{I}}
\newcommand{\cV}{\mathcal{V}}
\newcommand{\cT}{\mathcal{T}}
\newcommand{\cU}{\mathcal{U}}
\newcommand{\cZ}{\mathcal{Z}}
\newcommand{\1}{{\bf 1}_{n\not\equiv \delta}}
\newcommand{\eul}{{\rm e}}
\newcommand{\Erd}{Erd\H{o}s}
\newcommand{\cupdot}{\mathbin{\mathaccent\cdot\cup}}
\newcommand{\whp}{whp }

\newcommand{\doublesquig}{%
  \mathrel{%
    \vcenter{\offinterlineskip
      \ialign{##\cr$\rightsquigarrow$\cr\noalign{\kern-1.5pt}$\rightsquigarrow$\cr}%
    }%
  }%
}

\newcommand{\defn}{\emph}

\newcommand\restrict[1]{\raisebox{-.5ex}{$|$}_{#1}}

\newcommand{\prob}[1]{\mathrm{\mathbb{P}}(#1)}
\newcommand{\expn}[1]{\mathrm{\mathbb{E}}#1}
\def\gnp{G_{n,p}}
\def\G{\mathcal{G}}
\def\lflr{\left\lfloor}
\def\rflr{\right\rfloor}
\def\lcl{\left\lceil}
\def\rcl{\right\rceil}

\newcommand{\brackets}[1]{\left(#1\right)}
\def\sm{\setminus}
\newcommand{\Set}[1]{\{#1\}}
\newcommand{\set}[2]{\{#1\,:\;#2\}}
\newcommand{\krq}[2]{K^{(#1)}_{#2}}
\newcommand{\ind}[1]{$(\ast)_{#1}$}
\newcommand{\indcov}[1]{$(\#)_{#1}$}
\def\In{\subseteq}

\begin{abstract}  \noindent
We solve the existence problem for $F$-designs for arbitrary $r$-uniform
hypergraphs~$F$. In particular, this shows that, given any $r$-uniform hypergraph~$F$, the trivially necessary divisibility
conditions are sufficient to guarantee a decomposition of any sufficiently
large complete $r$-uniform hypergraph $G=K_n^{(r)}$ into edge-disjoint copies
of~$F$, which answers a question asked e.g.~by Keevash. The graph case $r=2$ forms one of the cornerstones of design theory and was proved by Wilson in 1975.
The case when~$F$ is complete corresponds to the existence of
block designs, a problem going back to the 19th century, which was first
settled by Keevash.

More generally, our results extend to~$F$-designs of quasi-random
hypergraphs~$G$ and of~hypergraphs $G$ of suitably large
minimum degree. Our approach builds on results and methods we recently introduced
in our new proof of the existence conjecture for block designs.
\end{abstract}

\maketitle

\section{Introduction}\label{sec:intro}

\subsection{Background}
A \defn{hypergraph} $G$ is a pair $(V,E)$, where $V=V(G)$ is the vertex set of $G$ and the edge set $E$ is a set of subsets of $V$. We often identify $G$ with $E$, in particular, we let $|G|:=|E|$. We say that $G$ is an \defn{$r$-graph} if every edge has size $r$. We let $\krq{r}{n}$ denote the complete $r$-graph on $n$ vertices.

Let $G$ and $F$ be $r$-graphs.  An \defn{$F$-decomposition of $G$} is a collection $\cF$ of copies of $F$ in $G$ such that every edge of $G$ is contained in exactly one of these copies. (Throughout the paper, we always assume that $F$ is non-empty
without mentioning this explicitly.)
More generally, an \defn{$(F,\lambda)$-design of $G$} is a collection $\cF$ of distinct copies of $F$ in $G$ such that every edge of $G$ is contained in exactly $\lambda$ of these copies.
Such a design can only exist if $G$ satisfies certain divisibility conditions
(e.g.~if $F$ is a graph triangle and $\lambda=1$, then $G$ must have even vertex degrees and the
number of edges must be a multiple of three). If $F$ is complete, such designs are also referred to as block designs.

The question of the existence of such designs goes back to the 19th century.
The first general result was due to Kirkman~\cite{Ki}, who proved the existence of
Steiner triple systems (i.e. triangle decompositions of complete graphs)
under the appropriate divisibility conditions.
In a ground-breaking series of papers which transformed the area,
Wilson~\cite{W1,W2,W3,WilsonBCC} solved the existence problem in the graph setting (i.e.~when $r=2$) by showing that
the trivially necessary divisibility conditions imply the existence of $(F,\lambda)$-designs in $\krq{2}{n}$ for sufficiently large $n$.
More generally, the existence conjecture postulated that the necessary divisibility conditions
are also sufficient to ensure the existence of block designs with given parameters in $\krq{r}{n}$.

Answering a question of Erd\H{o}s and Hanani~\cite{EH}, R\"odl~\cite{Ro} was able to give an approximate solution to the existence conjecture by constructing near optimal packings of
edge-disjoint copies of $\krq{r}{f}$ in $\krq{r}{n}$, i.e.~packings which cover almost all the edges of~$\krq{r}{n}$.
(For this, he introduced his now famous R\"odl nibble method, which has since
had a major impact in many areas.)
More recently, Kuperberg, Lovett and Peled~\cite{KLP} were able to prove probabilistically
 the existence of non-trivial designs for a large range of parameters
 (but their result requires that $\lambda$ is comparatively large).
 Apart from this, progress for $r \ge 3$ was mainly limited to explicit constructions for
 rather restrictive parameters (see e.g.~\cite{CD,T}).

In a recent breakthrough, Keevash~\cite{Ke} proved the existence of
$(\krq{r}{f},\lambda)$-designs in $\krq{r}{n}$ for arbitrary (but fixed)
$r,f$ and $\lambda$, provided $n$ is sufficiently large.
In particular, his result implies the existence of Steiner systems
for any admissible range of parameters as long as
$n$ is sufficiently large compared to $f$ (Steiner systems are block designs with $\lambda=1$).
The approach in~\cite{Ke} involved randomised algebraic constructions
and yielded a far-reaching generalisation to block designs in quasirandom $r$-graphs.
This in turn was extended in~\cite{GKLO}, where we developed a non-algebraic approach
based on iterative absorption, which additionally yielded resilience versions and the existence of block designs in
hypergraphs of large minimum degree.
This naturally raises the question of whether $F$-designs also exist for arbitrary $r$-graphs~$F$.
Here, we answer this affirmatively, by building on methods and results from~\cite{GKLO}.

\subsection{\texorpdfstring{$F$}{F}-designs in quasirandom hypergraphs}

We now describe the degree conditions which are trivially necessary for the
existence of an $F$-design in an $r$-graph $G$.
For a set $S\In V(G)$ with $0\le |S|\le r$, the $(r-|S|)$-graph $G(S)$ has vertex set $V(G)\sm S$ and contains all $(r-|S|)$-subsets of $V(G)\sm S$ that together with $S$ form an edge in $G$. ($G(S)$ is often called the \defn{link graph of $S$}.) Let $\delta(G)$ and $\Delta(G)$ denote the minimum and maximum $(r-1)$-degree of an $r$-graph $G$, respectively, that is, the minimum/maximum value of $|G(S)|$ over all $S\In V(G)$ of size $r-1$.
For a (non-empty) $r$-graph $F$, we define the \defn{divisibility vector of $F$} as $Deg(F):=(d_0,\dots,d_{r-1})\in \bN^r$, where $d_{i}:=\gcd\set{|F(S)|}{S\in \binom{V(F)}{i}}$, and we set $Deg(F)_i:=d_i$ for $0\le i\le r-1$.\COMMENT{As long as $F$ is not edgeless, this is well-defined.} Note that $d_0=|F|$.
So if $F$ is the Fano plane, we have $Deg(F)=(7,3,1)$.

Given $r$-graphs $F$ and $G$, $G$ is called \defn{$(F,\lambda)$-divisible} if $Deg(F)_i\mid \lambda |G(S)|$ for all $0\le i\le r-1$ and all $S\in \binom{V(G)}{i}$.
Note that $G$ must be $(F,\lambda)$-divisible in order to admit an $(F,\lambda)$-design.
For simplicity, we say that $G$ is \defn{$F$-divisible} if $G$ is $(F,1)$-divisible.
Thus $F$-divisibility of $G$ is necessary for the existence of an $F$-decomposition
of $G$.

As a special case, the following result implies that $(F,\lambda)$-divisibility is
sufficient to guarantee the existence of an $(F,\lambda)$-design
when $G$ is complete and $\lambda$ is not too large.
This answers a question asked e.g.~by Keevash~\cite{Ke}.

In fact, rather than requiring $G$ to be complete, it suffices that $G$
is quasirandom in the following sense.
An $r$-graph $G$ on $n$ vertices is called \defn{$(c,h,p)$-typical} if for any set $A$ of $(r-1)$-subsets of $V(G)$ with $|A|\le h$ we have $|\bigcap_{S\in A}G(S)|=(1\pm c)p^{|A|}n$.
Note that this is what one would expect in a random $r$-graph
with edge probability~$p$.

\begin{theorem}[$F$-designs in typical hypergraphs]\label{thm:design}
For all $f,r\in \bN$ with $f>r$ and all $c,p\in (0,1]$ with
\begin{align*}
c \le 0.9(p/2)^{h}/(q^r4^q), \mbox{ where }q:=2f\cdot f! \mbox{ and } h:=2^r\binom{q+r}{r},
\end{align*}
there exist $n_0\in \bN$ and $\gamma>0$ such that the following holds for all $n\ge n_0$.
Let $F$ be any $r$-graph on $f$ vertices and let $\lambda\in \bN$ with $\lambda\le \gamma n$. Suppose that $G$ is a $(c,h,p)$-typical $r$-graph on $n$ vertices. Then $G$ has an $(F,\lambda)$-design if it is $(F,\lambda)$-divisible.
\end{theorem}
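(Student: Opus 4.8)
The plan is to reduce the $F$-design problem to the known results on $\krq{r}{f}$-designs (i.e.\ block designs) in typical hypergraphs from \cite{GKLO}. The key observation is that if we could find an $F$-decomposition of $\krq{r}{f}$ itself, then any $\krq{r}{f}$-decomposition of $G$ would immediately yield an $F$-decomposition of $G$ by replacing each clique with copies of $F$. Of course $\krq{r}{f}$ need not be $F$-decomposable; but $\krq{r}{f}$ \emph{is} $F$-divisible after taking a suitable multiplicity. Concretely, I would first establish a ``local'' decomposition lemma: there is a constant $m=m(F)$ (a multiple of $f$, something like $q=2f\cdot f!$ as in the statement) such that the complete $r$-multigraph on $f$ vertices with every edge taken with multiplicity $Deg(F)_r:=|F|$—or more robustly, $\krq{r}{q}$ with an appropriate multiplicity—has an $F$-decomposition. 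This should follow from a short direct/algebraic argument (permuting a fixed copy of $F$ over all orderings of the vertex set covers each edge a number of times depending only on its ``type'', and one then corrects using the gcd structure encoded in $Deg(F)$), and explains the appearance of $q=2f\cdot f!$ in the hypotheses of Theorem~\ref{thm:design}.

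Granting such a gadget, the main step is to pass from $G$ to an auxiliary $r$-graph (or bounded-multiplicity multigraph) $G'$ on the same vertex set which is $\krq{r}{q}$-divisible and whose edge multiset is obtained from $\lambda$ copies of $G$ by adding a small, carefully chosen ``correction'' set of edges so that divisibility for cliques holds; here $(F,\lambda)$-divisibility of $G$ is exactly what guarantees that the required correction is negligible and can be absorbed. Then I would invoke the block-design result of \cite{GKLO} in the $(c,h,p)$-typical setting to obtain a $\krq{r}{q}$-decomposition of $G'$, expand each clique via the gadget into copies of $F$, and finally argue that the copies of $F$ so obtained can be taken to be \emph{distinct}: since $\lambda\le \gamma n$ is small relative to $n$ and each edge lies in many potential copies of $F$, a greedy or probabilistic swapping argument makes all $\lambda|G|/|F|$ copies pairwise distinct, yielding a genuine $(F,\lambda)$-design rather than a multiset.

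I would organise the write-up as: (i) the gadget lemma for $\krq{r}{q}$; (ii) the divisibility transfer, showing $(F,\lambda)$-divisibility of $G$ implies the auxiliary graph can be made $\krq{r}{q}$-divisible with only $o(n^{r-1})$ changes per link; (iii) quoting the block-design theorem of \cite{GKLO} for typical $G$ (checking that $(c,h,p)$-typicality with the stated bound on $c$, and $h=2^r\binom{q+r}{r}$, is exactly what that theorem needs for clique size $q$); (iv) substitution and the distinctness argument. The constants $q=2f\cdot f!$ and $h=2^r\binom{q+r}{r}$ are forced by steps (i) and (iii) respectively.

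The main obstacle I expect is step (ii): ensuring that the correction needed to make the auxiliary hypergraph $\krq{r}{q}$-divisible is both \emph{small} (so that typicality of $G$ is not destroyed and the block-design machinery still applies) and \emph{realisable} inside $G$ (one must actually find the correcting cliques/edges as subgraphs of $G$, using typicality). This is where $(F,\lambda)$-divisibility must be used in full strength, and where the interplay between the divisibility vector $Deg(F)$ and the clique-divisibility conditions is genuinely delicate. A secondary but more routine difficulty is the distinctness argument in step (iv), which needs the slack $\lambda\le\gamma n$; and one must be careful that the gadget in (i) uses only vertices available within each clique and does not itself introduce repeated copies of $F$ in an uncontrollable way.
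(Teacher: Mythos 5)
There is a genuine gap at the heart of your reduction: the substitution step (iv) cannot work with a multigraph gadget. Your gadget lemma (correctly) only yields an $F$-decomposition of a \emph{multigraph} such as $m\cdot \krq{r}{q}$ (e.g.\ taking all $f!$ permuted copies of $F$ on a fixed vertex set decomposes $|F|\,r!\,(f-r)!\cdot\krq{r}{f}$), because a single simple $\krq{r}{q}$ is in general not $F$-decomposable -- indeed, asserting that some simple $\krq{r}{q}$ is $F$-decomposable is essentially the theorem being proved. But a $\krq{r}{q}$-decomposition (or a $(\krq{r}{q},\mu)$-design) of your auxiliary object $G'$ produces cliques on \emph{pairwise distinct} vertex sets: the $\mu$ cliques covering a given edge do not share a common $q$-set, so you can never group them into copies of $m\cdot\krq{r}{q}$ on which the gadget could act. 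Relatedly, the block-design results of \cite{GKLO} apply to typical simple $r$-graphs (or to the complete multigraph with uniform bounded multiplicity minus a bounded piece), not to a multigraph obtained from $\lambda\le\gamma n$ copies of $G$, so step (iii) has nothing to quote. This is precisely why the paper does \emph{not} reduce to the clique case: instead it constructs a \emph{simple}, weakly regular (but non-complete) $r$-graph $F^\ast$ on at most $q=2f\cdot f!$ vertices admitting a $1$-well separated $F$-decomposition (Lemma~\ref{lem:regularisation}, via resolvable decompositions of complete partite $r$-graphs built from Cauchy matrices), and then proves a new decomposition theorem (Theorem~\ref{thm:main complex}) for supercomplexes with respect to arbitrary weakly regular $F^\ast$ by iterative absorption. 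That theorem is the main content of the paper and cannot be replaced by the clique results of \cite{GKLO} or \cite{Ke}.

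Your steps (ii) and (iv) are otherwise in the right spirit: the divisibility transfer is the paper's Lemma~\ref{lem:make divisible} (proved via ``degree shifters'', removing a sparse $F$-decomposable subgraph so that the leftover becomes $F^\ast$-divisible -- note the target is $F^\ast$-divisibility, not clique-divisibility), and the passage from $\lambda=1$ to general $\lambda\le\gamma n$ with distinct copies is handled not by random perturbation but by splitting $G$ and producing $\lambda$ pairwise $(r+1)$-disjoint well separated decompositions, which forces distinctness structurally. Your identification of where $q$ and $h$ come from is correct. But without a decomposition theorem for a non-complete $F^\ast$, the argument does not close.
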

The main result in~\cite{Ke} is also stated in the setting of typical $r$-graphs,
but additionally requires that $c \ll 1/h \ll p,1/f$ and that $\lambda=\O(1)$ and $F$ is complete.
The case when $F$ is complete and $\lambda$ is bounded
is also a special case of our recent result on designs in supercomplexes
(see Theorem~1.4 in~\cite{GKLO}).
Previous results in the case when $r \ge 3$ and $F$ is not complete are very sporadic
-- for instance Hanani~\cite{hanani} settled the problem if $F$ is an octahedron
(viewed as a $3$-uniform hypergraph)
and $G$ is complete.

As a very special case, Theorem~\ref{thm:design} resolves a conjecture of
Archdeacon on self-dual embeddings of random graphs in orientable surfaces:
as proved in~\cite{Archdeacon}, a graph has such an embedding if it has a decomposition into $K:=\krq{2}{4}$ and $K':=\krq{2}{5}$.
Suppose $G$ is a $(c,h,p)$-typical $2$-graph on $n$ vertices with an even number of edges
and $1/n \ll c \ll 1/h\ll p$ (which almost surely holds for the binomial random graph
$G_{n,p}$ if we remove at most one edge).
Now remove a suitable number of copies of
$K$ from $G$ to ensure that the leftover $G'$ satisfies $16 \mid |G'|$. Let $F$ be the vertex-disjoint union of $K$ and $K'$. Since $Deg(F)_1=1$, $G'$ is $F$-divisible. Thus we can apply Theorem~\ref{thm:design} to obtain an $F$-decomposition of $G'$.\COMMENT{http://www.cems.uvm.edu/TopologicalGraphTheoryProblems/partcomp.htm
\newline
It doesn't seem to easy to do this using the clique paper results?} If the number of edges is odd, a similar argument yields self-dual embeddings in non-orientable surfaces.\COMMENT{Take out one copy of $K_6$}

In Section~\ref{sec:main proofs}, we will deduce Theorem~\ref{thm:design} from a more general result on
$F$-decompositions in supercomplexes $G$ (Theorem~\ref{thm:main complex}).
(The condition of $G$ being a supercomplex is considerably less restrictive than typicality.)
Moreover, the $F$-designs we obtain will have the additional property that $|V(F')\cap V(F'')|\le r$ for all distinct $F',F''$ which are included in the design.
It is easy to see that with this additional property the bound on $\lambda$
in Theorem~\ref{thm:design} is best possible up to the value of~$\gamma$.\COMMENT{the number of $F$-copies is
$O(\lambda \binom{n}{r}) =O( \binom{n}{r+1})$}

We can also deduce the following result which yields `near-optimal' $F$-packings in typical $r$-graphs which are not divisible.\COMMENT{To get the better bound here, don't use Theorem~\ref{thm:design} but Theorem~\ref{thm:typical separated dec}} (An $F$-packing in $G$ is a collection of edge-disjoint copies of $F$ in~$G$.)

\begin{theorem}\label{thm:near optimal}
For all $f,r\in \bN$ with $f>r$ and all $c,p\in (0,1]$ with
\begin{align*}
c \le 0.9p^{h}/(q^r4^q), \mbox{ where }q:=2f\cdot f! \mbox{ and } h:=2^r\binom{q+r}{r}, 
\end{align*}
there exist $n_0,C\in \bN$ such that the following holds for all $n\ge n_0$.
Let $F$ be any $r$-graph on $f$ vertices. Suppose that $G$ is a $(c,h,p)$-typical $r$-graph on $n$ vertices. Then $G$ has an $F$-packing $\cF$ such that the leftover $L$ consisting of all uncovered edges satisfies $\Delta(L)\le C$.\COMMENT{Could also write $n_0$ instead of $C$, but seems a bit odd to use $n_0$ as bound for maxdeg}
\end{theorem}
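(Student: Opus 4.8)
\textbf{The plan} is to deduce Theorem~\ref{thm:near optimal} from Theorem~\ref{thm:design}. The obstruction is that a $(c,h,p)$-typical $r$-graph $G$ need not be $F$-divisible, so Theorem~\ref{thm:design} does not apply to $G$ directly. I would remove this obstruction by first deleting a sparse subgraph: find $H\In G$ with $\Delta(H)\le C$ for a suitable constant $C=C(F,p)$ such that $G':=G\sm H$ is $F$-divisible. Since $G'$ is still typical (with a marginally larger constant, see below), Theorem~\ref{thm:design} applied with $\lambda=1$ (note $1\le\gamma n$ for large $n$) yields an $F$-decomposition $\cF$ of $G'$; then $\cF$ is an $F$-packing of $G$ whose leftover is exactly $L=H$, so $\Delta(L)\le C$ as required.

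\textbf{The main step} is the existence of such an $H$ (``divisibility fixing''). I would build $H=H_{r-1}\cup\dots\cup H_0$ by correcting the divisibility conditions one level at a time, from $i=r-1$ down to $i=0$, maintaining the invariant that after stage~$i$ we have $Deg(F)_j\mid |(G\sm(H_{r-1}\cup\dots\cup H_i))(S)|$ for all $j>i$ and all $j$-sets~$S$. To correct level~$i$ one adds to $H$ a union of bounded-degree ``gadgets'' (small $r$-graphs) whose degree contribution at each level $j>i$ is divisible by $Deg(F)_j$, so that the already-corrected levels are left intact, while at level~$i$ the gadgets realise, at each $i$-set $S$, the residue $|G(S)|\bmod Deg(F)_i$ that must be deleted. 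Suitable gadgets exist by a standard argument exploiting that $Deg(F)_j$ is by definition the greatest common divisor of the numbers $|F(S)|$ over $j$-sets~$S$, so that integer combinations of such quantities (realised by disjoint copies of $F$ or of sub-$r$-graphs of~$F$) hit every residue; and there is enough room to pack all the required edge-disjoint gadgets into $G$ without overloading any $(r-1)$-set, since by typicality every link $G(S)$ has size $(1\pm c)pn$. Each of the $r$ levels contributes $O_F(1)$ to $\Delta(H)$, giving $\Delta(H)\le C$. Finally, $G'$ is still typical: for any family $A$ of at most $h$ many $(r-1)$-sets, $\bigl|\bigcap_{S\in A}G'(S)\bigr|$ differs from $\bigl|\bigcap_{S\in A}G(S)\bigr|=(1\pm c)p^{|A|}n$ by at most $\sum_{S\in A}|H(S)|\le hC$, so $G'$ is $(c+o(1),h,p)$-typical as $n\to\infty$, which lets us apply Theorem~\ref{thm:design} once $n$ is large.

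\textbf{The main obstacle} is precisely the divisibility-fixing step: organising the gadgets so that correcting one level does not disturb the higher levels while keeping $\Delta(H)$ bounded, and verifying that there is room to place them in a merely typical (rather than complete) host. There is also a caveat on the constant: run as above, the argument needs the typicality hypothesis of Theorem~\ref{thm:design} (weakened only by the harmless $o(1)$ term), which is a factor $2^h$ stronger than the hypothesis claimed in Theorem~\ref{thm:near optimal}. To obtain the stated constant one should instead bypass divisibility altogether and argue directly that iterative absorption produces a ``separated'' near-$F$-decomposition of $G$ -- one in which any two copies of $F$ meet in at most $r$ vertices -- whose set of uncovered edges already forms a bounded-degree $r$-graph; this is the route indicated by the footnote (cf.\ Theorem~\ref{thm:typical separated dec}), and I would follow it for the final proof, reusing the supercomplex machinery behind Theorem~\ref{thm:main complex}.
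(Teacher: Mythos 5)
Your overall strategy---delete a bounded-degree subgraph to restore $F$-divisibility, then decompose the typical remainder---is exactly the paper's proof. The paper applies Corollary~\ref{cor:make divisible typical} to find $L\In G$ with $\Delta(L)\le C$ such that $G-L$ is $F$-divisible, observes that $G-L$ is still $(1.1c,h,p)$-typical, and then applies Theorem~\ref{thm:typical separated dec} (note $1.1\cdot 0.9\le 1$, so the hypothesis $c\le p^h/(q^r4^q)$ of that theorem is met). You also correctly diagnose why routing through Theorem~\ref{thm:design} loses a factor $2^h$ in the hypothesis on $c$: its proof splits $G$ into two $(c+\gamma,h,p/2)$-typical halves in order to handle general $\lambda$, which is unnecessary here.

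The one genuine mis-step is in your final paragraph: you cannot ``bypass divisibility altogether''. Theorem~\ref{thm:typical separated dec} is stated only for $F$-divisible typical $r$-graphs, and the iterative-absorption machinery behind it genuinely needs divisibility at every stage (the Cover down lemma requires $H^\ast\cup L$ to be $F$-divisible, and the absorbers are built only for $F$-divisible leftovers), so running it on a non-divisible $G$ does not produce a bounded-degree leftover. The correct assembly is the hybrid of your two paragraphs: keep the divisibility-fixing step from your ``main step'' and feed the divisible remainder into Theorem~\ref{thm:typical separated dec} rather than Theorem~\ref{thm:design}. Be aware also that the divisibility-fixing step is not quite the ``standard argument'' you sketch: for $r\ge 3$ one cannot shift the residue at one $i$-set without disturbing other $i$-sets, and the paper devotes Section~\ref{sec:make divisible} (degree shifters, balancers, and the rooted-embedding lemma) to organising these corrections; the statement you need is packaged as Corollary~\ref{cor:make divisible typical}, obtained from Lemma~\ref{lem:make divisible} with $F$ and $\krq{r}{r}$ playing the roles of $F^\ast$ and $F$.
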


\subsection{\texorpdfstring{$F$}{F}-designs in hypergraphs of large minimum degree}
Once the existence question is settled, a next natural step is to seek~$F$-designs
and $F$-decompositions in $r$-graphs of large minimum degree.
Our next result gives a bound on the minimum degree which ensures
an $F$-decomposition  for `weakly regular' $r$-graphs~$F$. These are defined as follows.
\begin{defin}[weakly regular] \label{def:weakly regular}
Let $F$ be an $r$-graph. We say that $F$ is \defn{weakly $(s_0,\dots,s_{r-1})$-regular} if for all $0\le i\le r-1$ and all $S\in\binom{V(F)}{i}$, we have $|F(S)|\in \Set{0,s_i}$.\COMMENT{Of course $s_0$ is a bit redundant because there is only one $S$ of size $0$. But as $r$ could be one it is nicer to have $0$th entry.} We simply say that $F$ is \defn{weakly regular} if it is weakly $(s_0,\dots,s_{r-1})$-regular for suitable $s_i$'s.
\end{defin}
So for example, cliques, the Fano plane and the octahedron are all weakly regular
but a $3$-uniform tight or loose cycle is not.
\begin{theorem}[$F$-decompositions in hypergraphs of large minimum degree]\label{thm:min deg}
Let $F$ be a weakly regular $r$-graph on $f$ vertices. Let $$c^\diamond_{F}:=\frac{r!}{3\cdot 14^r f^{2r}}.$$ There exists an $n_0\in\mathbb{N}$ such that the following holds for all $n \ge n_0$.
Suppose that $G$ is an $r$-graph on $n$ vertices with $\delta(G)\ge (1-c^\diamond_{F})n$.
Then $G$ has an $F$-decomposition if it is $F$-divisible.
\end{theorem}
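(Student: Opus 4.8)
The plan is to deduce Theorem~\ref{thm:min deg} from the supercomplex result Theorem~\ref{thm:main complex}. Given an $F$-divisible $r$-graph $G$ on $n$ vertices with $\delta(G)\ge(1-c^\diamond_F)n$, I would pass to the complex $\cG$ whose faces are all $S\In V(G)$ with $|S|\le r$ all of whose $r$-subsets are edges of $G$. A set of size $r$ is a face of $\cG$ exactly when it is an edge of $G$, so the top level of $\cG$ is $E(G)$ and an $F$-decomposition of $\cG$ is literally an $F$-decomposition of $G$. It therefore suffices to show that $\cG$ is an $F$-divisible supercomplex whose parameters are good enough to feed into Theorem~\ref{thm:main complex}.

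Verifying the supercomplex conditions is the bulk of the work. Since $\delta(G)\ge(1-c^\diamond_F)n$, every $(r-1)$-subset of $V(G)$ has at most $c^\diamond_F n$ non-neighbours, so $\cG$ is extremely close to the complete complex; its density and regularity are then immediate. The real content is \emph{extendability}: for each rooted $r$-graph $H$ that features in the proof of Theorem~\ref{thm:main complex} --- and the absorbers and transformers used there have vertex sets of size polynomial in $f$ --- one must count, for a fixed embedding of the root, the extensions to a copy of $H$ in $\cG$. In the complete complex this is of order $n^{v}$, with $v$ the number of non-root vertices, and deleting the missing edges of $G$ perturbs it by at most $e(H)\,c^\diamond_F\,n^{v}$ by a union bound over the at most $\binom{|V(H)|}{r}$ potential edges of $H$; this stays within the tolerance in the supercomplex definition as long as $c^\diamond_F$ is small relative to these polynomial-in-$f$ quantities, and chasing the constants (the binomial factor, together with further book-keeping losses of size $\O(14^r)$ coming from the extendability hierarchy and the bounded-length vortex in Theorem~\ref{thm:main complex}) produces an admissible value of the shape $c^\diamond_F=r!/(3\cdot 14^r f^{2r})$. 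Finally, $F$-divisibility passes over for free, since for $|S|\le r-1$ the top-level link of $S$ in $\cG$ is exactly $G(S)$, so $\cG$ is $F$-divisible because $G$ is.

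The hypothesis that $F$ is weakly regular enters inside the proof of Theorem~\ref{thm:main complex} at the one point where the host is only assumed to have large minimum degree rather than being typical. In the iterative absorption one repeatedly restricts to subsets of the vertex set and must keep the successive leftovers $F$-divisible, correcting divisibility by deleting copies of $F$; deleting a copy of $F$ lowers a link size $|L(S)|$ by the link size of the corresponding set of $F$, which for a weakly $(s_0,\dots,s_{r-1})$-regular $F$ is always $0$ or $s_i=Deg(F)_i$, so the correction can be carried out within the limited room a minimum-degree host provides. For general $F$ the analogous balancing needs the slack supplied by typicality, which is exactly why Theorem~\ref{thm:design} puts no structural restriction on $F$ while Theorem~\ref{thm:min deg} does. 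Accordingly, the main obstacle is not the (essentially routine) quasirandomness check for $\cG$, but making the absorbing and divisibility-correction apparatus of Theorem~\ref{thm:main complex} run under the bare minimum-degree assumption while keeping every gadget polynomial in $f$, so that the modest constant $c^\diamond_F$ above actually suffices.
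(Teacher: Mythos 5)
Your overall strategy---write $G=K_n^{(r)}-L$ with $\Delta(L)\le c^\diamond_F n$, check that the induced complex is an $F$-divisible supercomplex, and invoke Theorem~\ref{thm:main complex}---is exactly the paper's (the paper proves the stronger resilience version, Theorem~\ref{thm:resilience}, and takes $p=1$). But there is a genuine gap in how you obtain the \emph{explicit} constant. Theorem~\ref{thm:main complex} carries the hierarchy $\eps\ll\xi,1/f$, whose threshold is never made explicit, so one cannot ``chase the constants'' through its proof. After deleting $L$, the complex $K_n-L$ is (by Corollary~\ref{cor:random supercomplex} with $p=1$, or Proposition~\ref{prop:noise}) an $(\eps',\xi-\eps',f,r)$-supercomplex with $\xi\approx 0.99/f!$ and $\eps'\approx 2^r\binom{f+r}{r}c^\diamond_F/(f-r)!$, which is \emph{comparable} to $\xi$ (smaller only by a factor of order $(2\sqrt{\eul})^{-r}$), not negligible against it. The missing ingredient is the Boost lemma (Lemma~\ref{lem:boost complex}): under the explicit, checkable condition $2(2\sqrt{\eul})^r\eps\le\xi$, the complex is automatically a $(2n^{-1/3},\xi',f,r)$-supercomplex, and $2n^{-1/3}$ \emph{does} satisfy the hierarchy for large $n$. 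The value $c^\diamond_F=r!/(3\cdot 14^rf^{2r})$ is reverse-engineered from exactly this threshold (note $14>8\sqrt{\eul}$) combined with the factor $2^r\binom{f+r}{r}/(f-r)!$ converting $\Delta(L)$ into a perturbation of $f$-clique counts; it does not come from the vortex length or from gadget embeddings, which are all absorbed into the inexplicit $n_0$.

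Two smaller points. The complex fed into Theorem~\ref{thm:main complex} must be the full induced complex $G^{\leftrightarrow}$, containing every $S$ with $\binom{S}{r}\In G$ regardless of size: your truncation at $|S|\le r$ has no $f$-sets, so regularity, density, extendability and Definition~\ref{def:complex dec} all become vacuous or false. Also, extendability in Definition~\ref{def:complex} is a statement about extending a single $r$-set to an $f$-clique minus that edge, not about embedding the absorber gadgets; the latter is handled inside the proof of Theorem~\ref{thm:main complex}. Finally, weak regularity of $F$ is not a patch for the minimum-degree setting: it is a hypothesis of Theorem~\ref{thm:main complex} itself, used in the absorber construction (e.g.\ Lemma~\ref{lem:colouring}) and in the Cover down lemma via Proposition~\ref{prop:link divisibility}. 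General $F$ would also go through here via the $F^\ast$-reduction of Lemmas~\ref{lem:regularisation} and~\ref{lem:make divisible}, just with a significantly worse constant, as the paper notes after the statement of the theorem.
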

Note that Theorem~\ref{thm:min deg} implies that every packing
of edge-disjoint copies of $F$ into $\krq{r}{n}$ with overall maximum degree at most
$c^\diamond_{F}n$ can be extended into an $F$-decomposition of $\krq{r}{n}$
(provided $\krq{r}{n}$ is $F$-divisible).

An analogous (but significantly worse) constant $c^\diamond_{F}$ for $r$-graphs $F$ which are not weakly regular immediately follows from the case $p=1$ of
Theorem~\ref{thm:design}.
These results lead to the concept of the `decomposition threshold' $\delta_F$ of a given $r$-graph $F$.

\begin{defin}[Decomposition threshold] \label{decthreshold}
\emph{Given an $r$-graph $F$, let $\delta_F$ be the infimum of all $\delta\in[0,1]$ with the following property: There exists $n_0\in \bN$ such that for all $n\ge n_0$,
every $F$-divisible $r$-graph $G$ on $n$ vertices with $\delta(G)\ge \delta n$ has an $F$-decomposition.}
\end{defin}
By Theorem~\ref{thm:min deg}, we have $\delta_F \le 1-c_F^\diamond$ whenever $F$ is weakly regular.
As noted in~\cite{GKLO}, for all $r,f,n_0\in \bN$, there exists an $r$-graph $G_n$ on $n\ge n_0$ vertices with $\delta(G_n)\ge (1-b_r \frac{\log{f}}{f^{r-1}})n$ such that $G_n$ does not contain a single copy of $\krq{r}{f}$, where $b_r>0$ only depends on $r$. (This can be seen by adapting a construction from~\cite{KMV} which is based on a result from~\cite{RS}.)

Previously, the only positive result for the hypergraph case $r \ge 3$
was due to Yuster~\cite{Y2}, who showed that
if $T$ is a linear $r$-uniform hypertree, then every $T$-divisible $r$-graph $G$ on $n$ vertices with minimum vertex degree at least
$(\frac{1}{2^{r-1}}+o(1))\binom{n}{r-1}$ has a $T$-decomposition.
This is asymptotically best possible for nontrivial $T$. Moreover, the result implies that $\delta_T\le 1/2^{r-1}$.

For the graph case $r=2$, much more is known about the decomposition threshold:
the results in~\cite{BKLO,GKLMO} establish a close connection between $\delta_F$
and the fractional decomposition
threshold $\delta_F^\ast$  (which is defined as in Definition~\ref{decthreshold},
but with an
$F$-decomposition replaced by a fractional $F$-decomposition).
In particular, the results in~\cite{BKLO,GKLMO}
imply that  $\delta_F\le \max\Set{\delta_F^\ast,1-1/(\chi(F)+1)}$ and that
$\delta_F=\delta_F^\ast$ if $F$ is a complete graph.

Together with recent results on the fractional decomposition threshold for cliques in~\cite{BKLMO,D},
this gives the best current bounds on $\delta_F$ for general~$F$.
It would be very interesting to establish a similar connection in the hypergraph case.

Also, for bipartite graphs the decomposition threshold was completely determined
in~\cite{GKLMO}. It would be interesting to see if this can be generalised
to $r$-partite $r$-graphs.
On the other hand, even the decomposition threshold of a graph triangle is still unknown
(a beautiful conjecture of Nash-Williams~\cite{NW} would imply
that the value is $3/4$).

\subsection{Counting}
An approximate $F$-decomposition of $\krq{r}{n}$ is a set of edge-disjoint copies
of $F$ in $\krq{r}{n}$ which together cover almost all edges of $\krq{r}{n}$.
Given good bounds on the number of approximate $F$-decompositions of
$\krq{r}{n}$ whose set of leftover edges forms a typical $r$-graph, one can apply Theorem~\ref{thm:design} to obtain
corresponding bounds on the number of $F$-decompositions in $\krq{r}{n}$ (see~\cite{Ke,Ke2} for the clique case).
Such bounds on the number of approximate $F$-decompositions can be achieved by considering either a random greedy $F$-removal
process or an associated $F$-nibble removal process.

\subsection{Outline of the paper}\label{subsec:outline}
As mentioned earlier, our main result (Theorem~\ref{thm:main complex})
actually concerns $F$-decompositions in so-called supercomplexes. We will define supercomplexes
in Section~\ref{sec:super and main} and derive Theorems~\ref{thm:design},~\ref{thm:near optimal}
and~\ref{thm:min deg} from Theorem~\ref{thm:main complex} in
Section~\ref{sec:main proofs}.
The definition of a supercomplex $G$ involves mainly the distribution of cliques
of size $f$ in $G$ (where $f=|V(F)|$).
The notion is weaker than usual notions of quasirandomness.
This has two main advantages:
firstly, our proof is by induction on $r$, and working with this weaker notion is essential to make the induction proof work.
Secondly, this allows us to deduce Theorems~\ref{thm:design},~\ref{thm:near optimal}
and~\ref{thm:min deg} from a single statement.

However, Theorem~\ref{thm:main complex} applies only to $F$-decompositions of a supercomplex $G$ for
weakly regular $r$-graphs $F$ (which allows us to deduce Theorem~\ref{thm:min deg} but not Theorem~\ref{thm:design}).
To deal with this, in Section~\ref{sec:main proofs} we first provide an explicit construction which shows that every $r$-graph $F$ can be `perfectly' packed into
a suitable weakly regular $r$-graph $F^*$.
In particular, $F^*$ has an $F$-decomposition.
The idea is then to apply Theorem~\ref{thm:main complex} to find an $F^*$-decomposition in $G$. Unfortunately, $G$ may not be $F^*$-divisible.
To overcome this, in Section~\ref{sec:make divisible} we show that we can remove a small set of
copies of $F$ from $G$ to achieve that the leftover $G'$ of $G$ is now
$F^*$-divisible (see Lemma~\ref{lem:make divisible} for the statement).
This now implies Theorem~\ref{thm:design} for $F$-decompositions,
i.e.~for $\lambda=1$.
However, by repeatedly applying Theorem~\ref{thm:main complex} in a suitable way, we can actually allow $\lambda$ to be as large as required in Theorem~\ref{thm:design}.

It thus remains to prove Theorem~\ref{thm:main complex} itself.
We achieve this via the iterative absorption method.
The idea is to iteratively extend a packing of edge-disjoint copies of $F$
until the set $H$ of uncovered edges is very small.
This final set can then be `absorbed' into an $r$-graph $A$ we set aside at the beginning of the proof
(in the sense that $A \cup H$ has an $F$-decomposition).
This iterative approach to decompositions was first introduced in~\cite{KKO,KO} in the context of
Hamilton decompositions of graphs.
(Absorption itself was pioneered earlier for spanning structures e.g.~in~\cite{Kriv,RRS},
but as remarked e.g.~in~\cite{Ke}, such direct approaches are not feasible in the
decomposition setting.)

This approach relies on being able to find a suitable approximate $F$-decomposition in each iteration, whose existence we derive in Section~\ref{sec:bounded}.
The iteration process is underpinned by a so-called `vortex', which consists of
an appropriate nested sequence of vertex subsets of $G$
(after each iteration, the current set of uncovered edges is constrained to the
next vertex subset in the sequence). These vortices are discussed in Section~\ref{sec:vortices}.
The final absorption step  is described in Section~\ref{sec:absorbers}.

As mentioned earlier, the current proof builds on the framework introduced
in~\cite{GKLO}.
In fact, several parts of the argument in~\cite{GKLO} can either be used directly
or can be straightforwardly
adapted to the current setting.\COMMENT{in journal version: ...so we do not repeat them here.}
In particular, this applies to the Cover down lemma (Lemma~\ref{lem:cover down}),
which is the key result that allows the iteration to work.
Thus in the current paper we concentrate on the parts which involve significant new ideas (e.g.~the absorption process).
For details of the parts which can be straightforwardly adapted, we refer
to the appendix.\COMMENT{in journal version: ...of the arxiv version of the current paper.}
Altogether, this illustrates the versatility of our framework and we thus believe that
it can be developed in further settings.

As a byproduct of the construction of the weakly regular $r$-graph $F^*$ outlined above,
we prove the existence of resolvable clique decompositions in complete partite $r$-graphs $G$ (see Theorem~\ref{thm:partite designs}). The construction is explicit and exploits the property that all square submatrices of so-called Cauchy matrices over finite fields are invertible.
We believe this construction to be of independent interest.
A natural question leading on from the current work would be to obtain such resolvable decompositions
also in the general (non-partite) case. For decompositions of $\krq{2}{n}$ into $\krq{2}{f}$,
this is due to Ray-Chaudhuri and Wilson~\cite{RCW}. For recent progress see~\cite{DukesLing,LRV}.

\section{Notation}\label{sec:notation}

\subsection{Basic terminology}

We let $[n]$ denote the set $\Set{1,\dots,n}$, where $[0]:=\emptyset$. Moreover, let $[n]_0:=[n]\cup\Set{0}$ and $\bN_0:=\bN\cup \Set{0}$. As usual, $\binom{n}{i}$ denotes the binomial coefficient, where we set $\binom{n}{i}:=0$ if $i>n$ or $i<0$. Moreover, given a set $X$ and $i\in\bN_0$, we write $\binom{X}{i}$ for the collection of all $i$-subsets of $X$. Hence, $\binom{X}{i}=\emptyset$ if $i>|X|$. If $F$ is a collection of sets, we define $\bigcup F:=\bigcup_{f\in F}f$. We write $A \cupdot B$ for the union of $A$ and $B$ if we want to emphasise that $A$ and $B$ are disjoint.

We write $X\sim B(n,p)$ if $X$ has binomial distribution with parameters $n,p$, and we write $bin(n,p,i):=\binom{n}{i}p^i(1-p)^{n-i}$. So by the above convention, $bin(n,p,i)=0$ if $i>n$ or $i<0$.

We say that an event holds \defn{with high probability (whp)} if the probability that it holds tends to $1$ as $n\to\infty$ (where $n$ usually denotes the number of vertices).

We write $x\ll y$ to mean that for any $y\in (0,1]$ there exists an $x_0\in (0,1)$ such that for all $x\le x_0$ the subsequent statement holds. Hierarchies with more constants are defined in a similar way and are to be read from the right to the left. We will always assume that the constants in our hierarchies are reals in $(0,1]$. Moreover, if $1/x$ appears in a hierarchy, this implicitly means that $x$ is a natural number. More precisely, $1/x\ll y$ means that for any $y\in (0,1]$ there exists an $x_0\in \bN$ such that for all $x\in \bN$ with $x\ge x_0$ the subsequent statement holds.

We write $a=b\pm c$ if $b-c\le a\le b+c$. Equations containing $\pm$ are always to be interpreted from left to right, e.g. $b_1\pm c_1=b_2\pm c_2$ means that $b_1-c_1\ge b_2-c_2$ and $b_1+c_1\le b_2+c_2$.

When dealing with multisets, we treat multiple appearances of the same element as distinct elements. In particular, two subsets $A,B$ of a multiset can be disjoint even if they both contain a copy of the same element, and if $A$ and $B$ are disjoint, then the multiplicity of an element in the union $A\cup B$ is obtained by adding the multiplicities of this element in $A$ and $B$ (rather than just taking the maximum).

\subsection{Hypergraphs and complexes}
Let $G$ be an $r$-graph. Note that $G(\emptyset)=G$. For a set $S\In V(G)$ with $|S|\le r$ and $L\In G(S)$, let $S\uplus L:=\set{S\cup e}{e\in L}$. Clearly, there is a natural bijection between $L$ and $S\uplus L$.

For $i\in[r-1]_0$, we define $\delta_{i}(G)$ and $\Delta_i(G)$ as the minimum and maximum value of $|G(S)|$ over all $i$-subsets $S$ of $V(G)$, respectively. As before, we let $\delta(G):=\delta_{r-1}(G)$ and $\Delta(G):=\Delta_{r-1}(G)$. Note that $\delta_0(G)=\Delta_0(G)=|G(\emptyset)|=|G|$.\COMMENT{For a $0$-graph, $\Delta$ is not defined. So should be careful not to use it in this case.}

For two $r$-graphs $G$ and $G'$, we let $G-G'$ denote the $r$-graph obtained from $G$ by deleting all edges of $G'$.
We write $G_1+G_2$ to mean the vertex-disjoint union of $G_1$ and $G_2$, and $t\cdot G$ to mean the vertex-disjoint union of $t$ copies of $G$.

Let $F$ and $G$ be $r$-graphs. An \defn{$F$-packing in $G$} is a set $\cF$ of edge-disjoint copies of $F$ in $G$. We let $\cF^{(r)}$ denote the $r$-graph consisting of all covered edges of $G$, i.e.~$\cF^{(r)}=\bigcup_{F'\in \cF}F'$.

A \defn{multi-$r$-graph} $G$ consists of a set of vertices $V(G)$ and a multiset of edges $E(G)$, where each $e\in E(G)$ is a subset of $V(G)$ of size $r$. We will often identify a multi-$r$-graph with its edge set.
For $S\In V(G)$, let $|G(S)|$ denote the number of edges of $G$ that contain $S$ (counted with multiplicities). If $|S|=r$, then $|G(S)|$ is called the \defn{multiplicity of $S$ in $G$}. We say that $G$ is \defn{$F$-divisible} if $Deg(F)_{|S|}$ divides $|G(S)|$ for all $S\In V(G)$ with $|S|\le r-1$. An $F$-decomposition of $G$ is a collection $\cF$ of copies of $F$ in $G$ such that every edge $e\in G$ is covered precisely once. (Thus if $S\In V(G)$ has size $r$, then there are precisely $|G(S)|$ copies of $F$ in $\cF$ in which $S$ forms an edge.)

\begin{defin}
A \defn{complex} $G$ is a hypergraph which is closed under inclusion, that is, whenever $e' \In e \in G$ we have $e' \in G$.
If $G$ is a complex and $i\in\bN_0$, we write $G^{(i)}$ for the $i$-graph on $V(G)$ consisting of all $e \in G$ with $|e|=i$. We say that a complex is empty if $\emptyset\notin G^{(0)}$, that is, if $G$ does not contain any edges.
\end{defin}

Suppose $G$ is a complex and $e \In V(G)$. Define $G(e)$ as the complex on vertex set $V(G)\sm e$ containing all sets $e'\In V(G)\sm e$ such that $e \cup e' \in G$. Clearly, if $e\notin G$, then $G(e)$ is empty. Observe that if $|e|=i$ and $r\ge i$, then $G^{(r)}(e)=G(e)^{(r-i)}$. We say that $G'$ is a \defn{subcomplex} of $G$ if $G'$ is a complex and a subhypergraph of $G$.

For a set $U$, define $G[U]$ as the complex on $U\cap V(G)$ containing all $e\in G$ with $e\In U$. Moreover, for an $r$-graph $H$, let $G[H]$ be the complex on $V(G)$ with edge set $$G[H]:=\set{e\in G}{\binom{e}{r}\In H},$$ and define $G-H:=G[G^{(r)}-H]$. So for $i\in[r-1]$, $G[H]^{(i)}=G^{(i)}$. For $i>r$, we might have $G[H]^{(i)} \subsetneqq G^{(i)}$. Moreover, if $H\In G^{(r)}$, then $G[H]^{(r)}=H$. Note that for an $r_1$-graph $H_1$ and an $r_2$-graph $H_2$, we have $(G[H_1])[H_2]=(G[H_2])[H_1]$.
Also, $(G-H_1)-H_2=(G-H_2)-H_1$, so we may write this as $G-H_1-H_2$.

If $G_1$ and $G_2$ are complexes, we define $G_1\cap G_2$ as the complex on vertex set $V(G_1)\cap V(G_2)$ containing all sets $e$ with $e\in G_1$ and $e\in G_2$. We say that $G_1$ and $G_2$ are \defn{$i$-disjoint} if $G_1^{(i)}\cap G_2^{(i)}$ is empty.

For any hypergraph $H$, let $H^{\le}$ be the complex on $V(H)$ \defn{generated by $H$}, that is, $$H^{\le}:=\set{e\In V(H)}{\exists e'\in H\mbox{ such that } e\In e'}.$$

For an $r$-graph $H$, we let $H^{\leftrightarrow}$ denote the complex on $V(H)$ that is \defn{induced by $H$}, that is, $$H^{\leftrightarrow}:=\set{e\In V(H)}{\binom{e}{r}\In H}.$$ Note that $H^{\leftrightarrow(r)}=H$ and for each $i\in[r-1]_0$, $H^{\leftrightarrow(i)}$ is the complete $i$-graph on $V(H)$. We let $K_n$ denote the the complete complex on $n$ vertices.

\section{Decompositions of supercomplexes} \label{sec:super and main}

\subsection{Supercomplexes} \label{subsec:super}

We prove our main decomposition theorem for so-called `supercomplexes', which were introduced in \cite{GKLO}. The crucial property appearing in the definition is
that of `regularity', which means that every $r$-set of a given complex $G$ is contained in roughly the same number of $f$-sets (where  $f=|V(F)|$). 
If we view $G$ as a complex which is induced by some $r$-graph, this means that every edge lies in roughly the same number of cliques of size $f$. It turns out that this set of conditions is
appropriate  even when $F$ is not a clique. 

A key advantage of the notion of a supercomplex is that the conditions are 
very flexible, which will enable us to `boost' their parameters (see 
Lemma~\ref{lem:boost complex}
below).
The following definitions are the same as in \cite{GKLO}.

\begin{defin}\label{def:complex}
Let $G$ be a complex on $n$ vertices, $f\in \bN$ and $r\in[f-1]_0$, $0\le \eps,d,\xi\le 1$. We say that $G$ is
\begin{enumerate}[label={\rm(\roman*)}]
\item \defn{$(\eps,d,f,r)$-regular}, if for all $e\in G^{(r)}$ we have $$|G^{(f)}(e)|=(d\pm \eps)n^{f-r};$$\label{def:complex:regular}
\item \defn{$(\xi,f,r)$-dense}, if for all $e\in G^{(r)}$, we have $$|G^{(f)}(e)|\ge \xi n^{f-r};$$
\item \defn{$(\xi,f,r)$-extendable}, if $G^{(r)}$ is empty or there exists a subset $X\In V(G)$ with $|X|\ge \xi n$ such that for all $e\in \binom{X}{r}$, there are at least $\xi n^{f-r}$ $(f-r)$-sets $Q\In V(G)\sm e$ such that $\binom{Q\cup e}{r}\sm\Set{e}\In G^{(r)}$.
\end{enumerate}
We say that $G$ is a \defn{full $(\eps,\xi,f,r)$-complex} if $G$ is
\begin{itemize}
\item $(\eps,d,f,r)$-regular for some $d\ge \xi$,
\item $(\xi,f+r,r)$-dense,
\item $(\xi,f,r)$-extendable.
\end{itemize}
We say that $G$ is an \defn{$(\eps,\xi,f,r)$-complex} if there exists an $f$-graph $Y$ on $V(G)$ such that $G[Y]$ is a full $(\eps,\xi,f,r)$-complex. Note that $G[Y]^{(r)}=G^{(r)}$ (recall that $r<f$).
\end{defin}

\begin{defin}{(supercomplex)}\label{def:supercomplex}
Let $G$ be a complex. We say that $G$ is an \defn{$(\eps,\xi,f,r)$-supercomplex} if for every $i\in[r]_0$ and every set $B\In G^{(i)}$ with $1\le|B|\le 2^i$, we have that $\bigcap_{b\in B}G(b)$ is an $(\eps,\xi,f-i,r-i)$-complex.
\end{defin}

In particular, taking $i=0$ and $B=\Set{\emptyset}$ implies that every $(\eps,\xi,f,r)$-supercomplex is also an $(\eps,\xi,f,r)$-complex.
Moreover, the above definition ensures that if $G$ is a supercomplex and $b,b'\in G^{(i)}$, then $G(b)\cap G(b')$ is also a supercomplex (cf.~Proposition~\ref{prop:hereditary}).

The following examples from \cite{GKLO} demonstrate that the definition of supercomplexes generalises the notion of typicality.

\begin{example}\label{ex:complete}
Let $1/n\ll 1/f$ and $r\in [f-1]$. Then the complete complex $K_n$ is a $(0,0.99/f!,f,r)$-supercomplex.
\end{example}

\begin{example}\label{ex:typical super}
Suppose that $1/n\ll c,p,1/f$, that $r\in [f-1]$ and that $G$ is a $(c,2^r\binom{f+r}{r},p)$-typical $r$-graph on $n$ vertices. Then $G^{\leftrightarrow}$ is an $(\eps,\xi,f,r)$-supercomplex, where $$\eps:=2^{f-r+1}c/(f-r)!\quad\mbox{and}\quad \xi:=(1-2^{f+1}c)p^{2^r\binom{f+r}{r}}/{f!}.$$
\end{example}

As mentioned above, the following lemma allows us to `boost' the regularity parameters (and thus deduce results with `effective' bounds). 
It is an easy consequence of our Boost lemma (Lemma~\ref{lem:boost}). The key to the proof is that we can (probabilistically) choose some $Y\In G^{(f)}$ so that the parameters of $G[Y]$ in Definition~\ref{def:complex}\ref{def:complex:regular} are better than those of $G$, i.e.~the resulting distribution of $f$-sets is more uniform.

\begin{lemma}[\cite{GKLO}]\label{lem:boost complex}
Let $1/n\ll \eps,\xi,1/f$ and $r\in[f-1]$ with $2(2\sqrt{\eul})^r \eps \le \xi$. Let $\xi':=0.9(1/4)^{\binom{f+r}{f}}\xi$. If $G$ is an $(\eps,\xi,f,r)$-complex on $n$ vertices, then $G$ is an $(n^{-1/3},\xi',f,r)$-complex. In particular, if $G$ is an $(\eps,\xi,f,r)$-supercomplex, then it is a $(2n^{-1/3},\xi',f,r)$-supercomplex.\COMMENT{$2$ accounts for $v(G(b))<v(G)$}
\end{lemma}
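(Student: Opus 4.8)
\emph{Plan of proof.} The lemma is essentially a bookkeeping consequence of the Boost lemma (Lemma~\ref{lem:boost}). First I would unravel the definition of an $(\eps,\xi,f,r)$-complex: fix an $f$-graph $Y$ on $V(G)$ for which $G[Y]$ is a full $(\eps,\xi,f,r)$-complex. Replacing $Y$ by $Y\cap G^{(f)}$ does not change $G[Y]$ (for every $e\in G$ we have $\binom{e}{f}\In G^{(f)}$, as $G$ is a complex), so we may assume $Y\In G^{(f)}$, and then $G[Y]^{(f)}=Y$. Thus $G[Y]$ is $(\eps,d,f,r)$-regular for some $d\ge\xi$, is $(\xi,f+r,r)$-dense, and is $(\xi,f,r)$-extendable. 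Note also that for any $Y'\In Y$ one has $G[Y']=(G[Y])[Y']$, so it suffices to find $Y'\In Y$ for which $(G[Y])[Y']$ is a full $(n^{-1/3},\xi',f,r)$-complex.

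Next I would apply the Boost lemma to the regular complex $G[Y]$. This yields — probabilistically, by retaining each $f$-edge of $Y$ independently with a suitable constant probability $p$ (the value $p=1/4$ being the relevant one here) — a sub-$f$-graph $Y'\In Y$ such that, with high probability, $G[Y']$ is $(n^{-1/3},d',f,r)$-regular for some $d'\ge 0.9(1/4)^{\binom{f+r}{f}}\xi=\xi'$. The substantive content sits entirely in Lemma~\ref{lem:boost}: passing to an appropriately chosen sub-$f$-graph makes the distribution of $f$-edges over the $r$-edges far more uniform, so the constant error $\eps$ gets replaced by $n^{-1/3}$ while the density drops only by a constant factor; the hypotheses $1/n\ll\eps,\xi,1/f$, $r\in[f-1]$ and $2(2\sqrt\eul)^r\eps\le\xi$ are exactly what is needed to invoke it.

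It then remains to check that the other two conditions of a full complex survive the passage to $Y'$. Extendability is immediate: by Definition~\ref{def:complex}\ref{def:complex:regular} and the third bullet of Definition~\ref{def:complex}, being $(\xi,f,r)$-extendable is a statement about the $r$-graph $G^{(r)}$ only, and $G[Y']^{(r)}=G^{(r)}=G[Y]^{(r)}$ since $r<f$; hence $G[Y']$ is $(\xi,f,r)$-extendable, so $(\xi',f,r)$-extendable as $\xi'\le\xi$. For denseness, observe that an $(f+r)$-edge $T$ of $G[Y]$ lies in $G[Y']$ if and only if all $\binom{f+r}{f}$ of its $f$-subsets are retained in $Y'$; since the retentions are independent, the number of surviving such $T$ through a fixed $r$-edge is a bounded-degree polynomial in the retention indicators and therefore concentrates. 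A union bound over the at most $n^r$ $r$-edges shows that with high probability $G[Y']$ is $(0.9\,p^{\binom{f+r}{f}}\xi,f+r,r)$-dense, which for $p=1/4$ gives $(\xi',f+r,r)$-denseness. Fixing an outcome of $Y'$ for which the Boost lemma conclusion and this estimate both hold, $G[Y']$ is a full $(n^{-1/3},\xi',f,r)$-complex, so $G$ is an $(n^{-1/3},\xi',f,r)$-complex.

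For the supercomplex statement I would apply the complex version to the links. Fix $i\in[r]_0$ and $B\In G^{(i)}$ with $1\le|B|\le 2^i$, and set $H:=\bigcap_{b\in B}G(b)$; by Definition~\ref{def:supercomplex}, $H$ is an $(\eps,\xi,f-i,r-i)$-complex on $n':=|V(G)\sm\bigcup B|\ge n-i2^i$ vertices, and $2(2\sqrt\eul)^{r-i}\eps\le 2(2\sqrt\eul)^r\eps\le\xi$, so the complex version applies and shows $H$ is an $\big((n')^{-1/3},\,0.9(1/4)^{\binom{(f-i)+(r-i)}{f-i}}\xi,\,f-i,r-i\big)$-complex. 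Since $\binom{a+b}{a}\le\binom{a+b+2}{a+1}$ for all $a,b\ge 0$, iterating gives $\binom{(f-i)+(r-i)}{f-i}\le\binom{f+r}{f}$, so this density is at least $\xi'$; and $(n')^{-1/3}\le 2n^{-1/3}$ for large $n$. Hence every such $H$ is a $(2n^{-1/3},\xi',f-i,r-i)$-complex, i.e.\ $G$ is a $(2n^{-1/3},\xi',f,r)$-supercomplex.

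The main obstacle is Lemma~\ref{lem:boost} itself, which is used as a black box; granting it, the only points needing genuine care are the verification that the probabilistic choice of $Y'$ preserves $(f+r)$-denseness up to a constant factor (extendability being free, since the $r$-level is untouched) and the elementary monotonicity of binomial coefficients that lets the links go through with the same constant $\xi'$.
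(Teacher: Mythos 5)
Your proposal is correct and follows exactly the route intended in the paper: Lemma~\ref{lem:boost complex} is quoted from~\cite{GKLO}, where it is deduced, as you do, by applying the Boost lemma (Lemma~\ref{lem:boost}) to the full complex $G[Y]$ witnessing the hypothesis and observing that extendability depends only on $G^{(r)}$, which is unchanged when passing to $G[Y^\ast]$. Two small points. First, your probabilistic re-derivation of $(f+r,r)$-denseness is both unnecessary and, as written, not legitimate: Lemma~\ref{lem:boost} is a black box whose stated conclusion already includes that $G[Y^\ast]$ is $(\xi',f+r,r)$-dense, so you should simply read this off rather than assume that the $Y^\ast$ it produces comes from independent retention of $f$-edges with probability $1/4$ --- the actual construction is a non-uniform random choice designed to equalise the quantities $|G^{(f)}(e)|$, which is how the regularity error improves from $\eps$ to $n^{-(f-r)/2.01}$; independent constant-probability retention would not achieve this. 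Second, in the supercomplex step the case $i=r$ produces links that are $(\eps,\xi,f-r,0)$-complexes, and the complex version you invoke is stated only for uniformities $r-i\in[f-i-1]$, i.e.\ $r-i\ge 1$; this case needs a separate (trivial) sentence, e.g.\ for a $0$-complex the regularity condition is a single equation, so one may take $d'':=|H[Y]^{(f-r)}|/(n')^{f-r}\ge \xi-\eps\ge \xi'$ and conclude that $H[Y]$ is already $(0,d'',f-r,0)$-regular, while denseness and extendability are immediate. With these two points tidied up the argument is complete.
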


\subsection{The main complex decomposition theorem}\label{subsec:main thm}

The statement of our main complex decomposition theorem involves the concept of `well separated' decompositions.
This did not appear in \cite{GKLO}, but is crucial for our inductive proof to work in the context of $F$-decompositions.

\begin{defin}[well separated]\label{def:well separated}
Let $F$ be an $r$-graph and let $\cF$ be an $F$-packing (in some $r$-graph $G$). We say that $\cF$ is \defn{$\kappa$-well separated} if the following hold:
\begin{enumerate}[label={\rm (WS\arabic*)}]
\item for all distinct $F',F''\in \cF$, we have $|V(F')\cap V(F'')|\le r$.\label{separatedness:1}
\item for every $r$-set $e$, the number of $F'\in \cF$ with $e\In V(F')$ is at most $\kappa$.\label{separatedness:2}
\end{enumerate}
We simply say that $\cF$ is \defn{well separated} if \ref{separatedness:1} holds.
\end{defin}

For instance, any $\krq{r}{f}$-packing is automatically $1$-well separated. Moreover, if an $F$-packing $\cF$ is $1$-well separated, then for all distinct $F',F''\in \cF$, we have $|V(F')\cap V(F'')|< r$. On the other hand, if $F$ is not complete, we cannot require $|V(F')\cap V(F'')|< r$ in \ref{separatedness:1}: this would make it impossible to find an
$F$-decomposition of $\krq{r}{n}$.\COMMENT{Take any copy $F'$ of $F$. Then there is some $r$-set inside $V(F')$ which is not covered by $F'$, and no other copy is allowed to cover it either.}
The notion of being well-separated is a natural relaxation of this requirement,
we discuss this in more detail after stating Theorem~\ref{thm:main complex}.

We now define $F$-divisibility and $F$-decompositions for complexes~$G$
(rather than $r$-graphs~$G$).

\begin{defin}\label{def:complex dec}
Let $F$ be an $r$-graph and $f:=|V(F)|$. A complex $G$ is \defn{$F$-divisible} if $G^{(r)}$ is $F$-divisible. An \defn{$F$-packing in $G$} is an $F$-packing $\cF$ in $G^{(r)}$ such that $V(F')\in G^{(f)}$ for all $F'\in \cF$. Similarly, we say that $\cF$ is an \defn{$F$-decomposition of $G$} if $\cF$ is an $F$-packing in $G$ and $\cF^{(r)}=G^{(r)}$.
\end{defin}
Note that this implies that every copy $F'$ of $F$ used in an $F$-packing in $G$ is `supported' by a clique, i.e.~$G^{(r)}[V(F')]\cong \krq{r}{f}$.

We can now state our main complex decomposition theorem.
\begin{theorem}[Main complex decomposition theorem]\label{thm:main complex}
For all $r\in \bN$, the following is true.
\begin{itemize}
\item[\ind{r}] Let $1/n\ll 1/\kappa,\eps \ll \xi,1/f$ and $f>r$. Let $F$ be a weakly regular $r$-graph on $f$ vertices and
let $G$ be an $F$-divisible $(\eps,\xi,f,r)$-supercomplex on $n$ vertices. Then $G$ has a $\kappa$-well separated $F$-decomposition.
\end{itemize}
\end{theorem}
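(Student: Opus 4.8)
The plan is to prove \ind{r} by induction on $r$. The base case $r=1$ is essentially trivial: a weakly regular $1$-graph $F$ on $f$ vertices is just a set of $s_0=|F|$ singletons among $f$ vertices, $G^{(1)}$ is a set of at least $\xi n$ vertices with $s_0 \mid |G^{(1)}|$, and a $\kappa$-well separated $F$-decomposition is obtained by greedily partitioning $G^{(1)}$ into groups of $s_0$ vertices, each sitting inside an edge of $G^{(f)}$ (which exists by extendability/density). So assume $r\ge 2$ and that \ind{r'} holds for all $r'<r$. The overall architecture follows the iterative absorption framework of \cite{GKLO}: set aside an \emph{absorber} $r$-graph $A$ at the start so that $A$ together with any sufficiently small, suitably structured leftover has an $F$-decomposition; build a \emph{vortex}, i.e.\ a nested sequence $V(G)=U_0\supseteq U_1\supseteq\dots\supseteq U_\ell$ of vertex subsets shrinking geometrically; then iterate, using in each step an approximate $F$-decomposition (Section~\ref{sec:bounded}) together with the Cover down lemma (Lemma~\ref{lem:cover down}) to push all uncovered edges down into $G[U_{i+1}]$ while maintaining $F$-divisibility and the supercomplex property (invoking Lemma~\ref{lem:boost complex} to keep the regularity parameters under control after restriction). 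After the last round the leftover lives in $G[U_\ell]$ and is tiny; absorb it with $A$.

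\textbf{Where the induction on $r$ enters.} The new ingredient compared with \cite{GKLO} — and the reason we work with \emph{well separated} decompositions and with the weak notion of a supercomplex — is that we must produce the absorber $A$ and handle the various `local' decomposition tasks for an arbitrary weakly regular $F$, not just a clique. Concretely, when we need to decompose some small auxiliary $r$-graph into copies of $F$ (for the absorbing gadgets), we will instead decompose its link structure. The key point: if $F$ is weakly $(s_0,\dots,s_{r-1})$-regular, then for a vertex $v\in V(F)$ the link $F(v)$ is a weakly $(s_1,\dots,s_{r-1})$-regular $(r-1)$-graph, and divisibility of $G^{(r)}$ translates to divisibility of the relevant link graphs. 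This lets us apply \ind{r-1} inside links to build the constituent pieces, and it is exactly the mechanism that forces the inductive hypothesis to be stated for supercomplexes (which restrict well to links: Definition~\ref{def:supercomplex} is tailored so that $\bigcap_{b\in B}G(b)$ remains a supercomplex of one lower uniformity) rather than for, say, typical $r$-graphs. The $\kappa$-well separatedness is maintained throughout because each round contributes only boundedly many copies through any fixed $r$-set, and we take $\kappa$ large enough to absorb the constant number of rounds plus the contribution of $A$.

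\textbf{The main obstacle.} I expect the crux to be the construction and analysis of the absorber $A$ for general weakly regular $F$. For cliques one has clean `transformer' gadgets; for arbitrary $F$ one must design an $r$-graph $A$ (of bounded maximum degree, living on a reserved vertex set) such that for every admissible small leftover $H$ supported inside $U_\ell$, the union $A\cup H$ has a ($\kappa$-well separated) $F$-decomposition. The plan is to build $A$ as a bounded union of elementary `absorbing' $r$-graphs, each responsible for a single potential leftover edge $e$, which can be toggled between including $e$ and not including $e$; these elementary absorbers are themselves assembled from copies of $F$ whose existence inside $G$ we guarantee via the extendability and density conditions of the supercomplex, and whose decompositions we obtain from \ind{r-1} applied in links. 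Verifying that these gadgets can be found edge-disjointly inside $G$ (a random greedy / nibble-type argument in Section~\ref{sec:absorbers}), that their overall maximum degree stays bounded, and that the resulting packing together with everything else remains $\kappa$-well separated, will be the technically heaviest part. A secondary but nontrivial point is checking that every intermediate complex obtained by deleting the copies chosen so far and restricting to $U_{i+1}$ is still an $(\eps',\xi',f,r)$-supercomplex with acceptable parameters; this is where the flexibility of the supercomplex definition and the Boost lemma (Lemma~\ref{lem:boost complex}) do the work, but the bookkeeping of the hierarchy $1/n\ll 1/\kappa,\eps\ll\xi,1/f$ across all rounds needs care.
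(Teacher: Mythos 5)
Your overall architecture (induction on $r$, vortex, approximate decomposition plus Cover down in each round, absorption of the final leftover, with \ind{r-1} applied to links to handle the general weakly regular $F$) matches the paper's proof. However, the absorber design you sketch contains a genuine conceptual error. You propose building $A$ as a union of elementary gadgets, ``each responsible for a single potential leftover edge $e$, which can be toggled between including $e$ and not including $e$.'' Such a gadget cannot exist for decompositions: an $F$-absorber $A_e$ for the single-edge graph $\{e\}$ would require both $A_e$ and $A_e\cup\{e\}$ to be $F$-decomposable, hence both $F$-divisible, which forces $\{e\}$ itself to be $F$-divisible --- false already for $F$ a triangle. This is precisely the ``direct approach'' that the paper explicitly notes is not feasible in the decomposition setting (as opposed to the spanning-structure setting where single-element toggling does work).

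What the paper does instead is to enumerate \emph{all} spanning $F$-divisible subgraphs $H_1,\dots,H_s$ of $G[U_\ell]^{(r)}$ --- boundedly many, since $|U_\ell|=m$ is bounded --- and to construct, via Lemma~\ref{lem:absorbing lemma}, one exclusive absorber $A_i$ per such $H_i$, together with well separated packings $\cF_{i,\circ},\cF_{i,\bullet}$ decomposing $A_i$ and $A_i\cup H_i$ respectively; the final leftover equals some $H_t$ and one uses $\cF_{t,\bullet}$ for that index and $\cF_{i,\circ}$ for the rest. Each individual absorber is in turn not an edge-toggle gadget but a concatenation of transformers (Lemma~\ref{lem:transformer}): one shows $\nabla(\nabla(H+t\cdot F)+s\cdot F)\rightsquigarrow\nabla M_h$ for a canonical multigraph $M_h$ independent of $H$, using strong regular colourings, a symmetric $r$-extender obtained from the Cover down lemma, and the resolvable partite designs of Theorem~\ref{thm:partite designs}; transitivity then transforms $H$ into the empty graph. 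None of this machinery is visible in your sketch, and without replacing the per-edge toggling idea by an argument at the level of whole divisible leftover graphs, the absorption step cannot be completed.
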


We will prove \ind{r} by induction on $r$ in Section~\ref{sec:main proofs}. We do not make any attempt to optimise the values that we obtain for $\kappa$.

We now motivate Definitions~\ref{def:well separated} and~\ref{def:complex dec}. 
This involves the following additional concepts, which are also convenient later.
\begin{defin}
Let $f:=|V(F)|$ and suppose that  $\cF$ is a well separated $F$-packing. 
We let $\cF^{\le}$ denote the complex generated by the $f$-graph $\set{V(F')}{F'\in \cF}$.
We say that well separated $F$-packings $\cF_1,\cF_2$ are \defn{$i$-disjoint} if $\cF_1^\le,\cF_2^\le$ are $i$-disjoint (or equivalently, if $|V(F')\cap V(F'')|< i$ for all $F'\in \cF_1$ and $F''\in \cF_2$).
\end{defin}
Note that if $F$ is a well-separated $F$-packing, then  the $f$-graph $\set{V(F')}{F'\in \cF}$ is simple.\COMMENT{If $\cF$ is well separated, this in particular implies that $V(F')\neq V(F'')$ for all distinct $F',F''\in \cF$. if $f>r$, this is clear. If $f=r$ and thus $F=\krq{r}{r}$, two copies of $F$ cannot have the same vertex set as they would not be edge-disjoint.} 
Moreover, observe that \ref{separatedness:2} is equivalent to the condition 
$\Delta_r(\cF^{\le(f)})\le \kappa$. 
Furthermore, if $\cF$ is a well separated $F$-packing in a complex $G$, then $\cF^{\le}$ is a subcomplex of $G$ by Definition~\ref{def:complex dec}. Clearly, we have $\cF^{(r)}\In \cF^{\le(r)}$, but in general equality does not hold. 
On the other hand, if $\cF$ is an $F$-decomposition of $G$, then $\cF^{(r)}=G^{(r)}$ which implies $\cF^{(r)}=\cF^{\le(r)}$.

We now discuss \ref{separatedness:1}.
During our proof, we will need to find an $F$-packing which covers a given set of edges. This gives rise to the following task of `covering down locally'.

\emph{$(\star)$ Given a set $S\In V(G)$ of size $1\le i\le r-1$, find an $F$-packing $\cF$ which covers all edges of $G$ that contain $S$.} 

(This is crucial in the proof of Lemma~\ref{lem:cover down}. Moreover, a two-sided version of this involving sets $S$, $S'$ is needed to construct parts of our absorbers, see Section~\ref{subsec:transformers}.) 

A natural approach to achieve $(\star)$ is as follows: Let $T\in \binom{V(F)}{i}$. Suppose that by using the main theorem inductively, we can find an $F(T)$-decomposition $\cF'$ of $G(S)$. We now wish to obtain $\cF$ by `extending' $\cF'$ as follows: For each copy $F'$ of $F(T)$ in $\cF'$, we define a copy $F'_{\triangleleft}$ of $F$ by `adding $S$ back', that is, $F'_{\triangleleft}$ has vertex set $V(F')\cup S$ and $S$ plays the role of $T$ in $F'_{\triangleleft}$. Then $F'_{\triangleleft}$ covers all edges $e$ with $S\In e$ and $e\sm S\in F'$. Since $\cF'$ is an $F(T)$-decomposition of $G(S)$, the union of all $F'_{\triangleleft}$ would indeed cover all edges of $G$ that contain $S$, as desired. There are two issues with this `extension' though. Firstly, it is not clear that $F'_{\triangleleft}$ is a subgraph of $G$. Secondly, for distinct $F',F''\in \cF'$, it is not clear that $F'_{\triangleleft}$ and $F''_{\triangleleft}$ are edge-disjoint. Definition~\ref{def:complex dec} (and the succeeding remark) allows us to resolve the first issue. Indeed, if $\cF'$ is an $F(T)$-decomposition of 
the complex $G(S)$, then from $V(F')\in G(S)^{(f-i)}$, we can deduce $V(F'_{\triangleleft})\in G^{(f)}$ and thus that $F'_{\triangleleft}$ is a subgraph of $G^{(r)}$.

We now consider the second issue.
This does not arise if $F$ is a clique. Indeed, in that case $F(T)$ is a copy of $\krq{r-i}{f-i}$, and thus for distinct $F',F''\in \cF'$ we have $|V(F')\cap V(F'')|<r-i$. Hence $|V(F'_{\triangleleft})\cap V(F''_{\triangleleft})|<r-i+|S|=r$, i.e.~$F'_{\triangleleft}$ and $F''_{\triangleleft}$ are edge-disjoint. If however $F$ is not a clique%
\COMMENT{
(more precisely, if there is no $T$ such that $F(T)$ is a clique)}, 
then $F',F''\in \cF'$ can overlap in $r-i$ or more vertices (they could in fact have the same vertex set), and the above argument does not work.
We will show that under the assumption that $\cF'$ is well separated, we can overcome this issue and still carry out the above `extension'.
(Moreover,  the resulting $F$-packing $\cF$ will in fact be well separated itself, see Definition~\ref{def:link extension} and Proposition~\ref{prop:S cover}).
For this it is useful to note that $F(T)$ is an $(r-i)$-graph, and thus we already have $|V(F')\cap V(F'')|\le r-i$ if $\cF'$ is well separated.

The reason why we also include \ref{separatedness:2} in Definition~\ref{def:well separated} is as follows. Suppose we have already found a well separated $F$-packing $\cF_1$ in $G$ and now want to find another well separated $F$-packing $\cF_2$ such that we can combine $\cF_1$ and $\cF_2$. If we find $\cF_2$ in $G-\cF_1^{(r)}$, then $\cF_1^{(r)}$ and $\cF_2^{(r)}$ are edge-disjoint and thus $\cF_1\cup \cF_2$ will be an $F$-packing in $G$, but it is not necessarily well separated. We therefore find $\cF_2$ in $G-\cF_1^{(r)}-\cF_1^{\le(r+1)}$. This ensures that $\cF_1$ and $\cF_2$ are $(r+1)$-disjoint, which in turn implies that $\cF_1\cup \cF_2$ is indeed well separated, as required.
But in order to be able to construct $\cF_2$, we need to ensure that $G-\cF_1^{(r)}-\cF_1^{\le(r+1)}$ is still a supercomplex, which is true if $\Delta(\cF_1^{(r)})$ and $\Delta(\cF_1^{\le(r+1)})$ are small (cf.~Proposition~\ref{prop:noise}). The latter in turn is ensured by \ref{separatedness:2}
 via Fact~\ref{fact:ws}.

Finally, we discuss why we prove Theorem~\ref{thm:main complex} for weakly regular $r$-graphs $F$. Most importantly, the `regularity' of the degrees will be crucial for the construction of our absorbers (most notably in Lemma~\ref{lem:colouring}). Beyond that, weakly regular graphs also have useful closure properties (cf.~Proposition~\ref{prop:link divisibility}): they are closed under taking link graphs and divisibility is inherited by link graphs in a natural way.

We prove Theorem~\ref{thm:main complex} in Sections~\ref{sec:bounded}--\ref{sec:absorbers} and~\ref{subsec:main complex thm}.
As described in Section~\ref{subsec:outline}, we generalise this to arbitrary $F$ via Lemma~\ref{lem:regularisation} (proved in Section~\ref{subsec:resolvable}) and Lemma~\ref{lem:make divisible} (proved in Section~\ref{sec:make divisible}):
Lemma~\ref{lem:regularisation} shows that for every given $r$-graph $F$, there is a weakly regular $r$-graph $F^\ast$ which has an $F$-decomposition. 
Lemma~\ref{lem:make divisible} then complements this by showing that every $F$-divisible $r$-graph $G$ can be transformed into an $F^\ast$-divisible $r$-graph $G'$ by removing a sparse $F$-decomposable subgraph of $G$.

\section{Tools}\label{sec:tools}

\subsection{Basic tools}

We will often use the following `handshaking lemma' for $r$-graphs: Let $G$ be an $r$-graph and $0\le i\le k\le r-1$. Then for every $S\in\binom{V(G)}{i}$ we have
\begin{align}
|G(S)|=\binom{r-i}{r-k}^{-1}\sum_{T\in \binom{V(G)}{k}\colon S\In T}|G(T)|.\label{handshaking}
\end{align}

\begin{prop}\label{prop:divisible existence}
Let $F$ be an $r$-graph. Then there exist infinitely many $n\in\bN$ such that $\krq{r}{n}$ is $F$-divisible.
\end{prop}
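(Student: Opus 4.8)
The plan is to reduce $F$-divisibility of $\krq{r}{n}$ to a single explicit congruence on $n$ and then exhibit an arithmetic progression of good values of $n$.

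First I would record what the condition means here. For $S\in\binom{V(\krq{r}{n})}{i}$ with $i\le r-1$, completeness gives $|\krq{r}{n}(S)|=\binom{n-i}{r-i}$, independent of the choice of $S$; hence $\krq{r}{n}$ is $F$-divisible precisely when $Deg(F)_i\mid\binom{n-i}{r-i}$ for every $0\le i\le r-1$. Note also that each $Deg(F)_i$ is a positive integer: since $F$ is non-empty, fixing an edge $e_0\in F$ and any $S\in\binom{e_0}{i}$ gives $|F(S)|\ge 1$, so the defining $\gcd$ is a positive integer. Thus $D:=\operatorname{lcm}(Deg(F)_0,\dots,Deg(F)_{r-1})\in\bN$ is well defined, and it suffices to find infinitely many $n$ with $D\mid\binom{n-i}{r-i}$ for all $0\le i\le r-1$.

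The key elementary observation is that this holds whenever $D\cdot r!\mid n-r+1$. Indeed, fix $i$ with $0\le i\le r-1$ and write $j:=r-i\in[r]$, so that
\[
\binom{n-i}{r-i}=\binom{n-i}{j}=\frac{(n-i)(n-i-1)\cdots(n-i-j+1)}{j!}.
\]
The final factor in the numerator is $n-i-j+1=n-r+1$, which by assumption equals $D\cdot r!\cdot k$ for some $k\in\bN$. Since $j\le r$ we have $j!\mid r!$, so dividing the numerator by $j!$ leaves $\binom{n-i}{j}=D\cdot(r!/j!)\cdot k\cdot(\text{product of the remaining }j-1\text{ integer factors})$, which is an integer multiple of $D$. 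Hence $Deg(F)_i\mid D\mid\binom{n-i}{r-i}$, as required.

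Finally I would conclude: the integers $n=D\cdot r!\cdot t+r-1$ with $t\in\bN$ are infinitely many, each satisfies $n\ge r$ and $D\cdot r!\mid n-r+1$, and therefore $\krq{r}{n}$ is $F$-divisible for every such $n$. There is essentially no obstacle; the only point needing a little care is the bookkeeping that the single numerator factor $n-r+1$ already absorbs $j!$ for all $j\le r$ simultaneously — this is precisely why the modulus is built from $r!$ rather than from the individual $j!$ — together with the trivial check that $Deg(F)_i\ge 1$ so that $D$ makes sense.
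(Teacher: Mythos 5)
Your proof is correct and follows essentially the same route as the paper: the paper sets $p:=\prod_{i=0}^{r-1}Deg(F)_i$ and takes $n=r!ap+r-1$, then observes exactly as you do that the factor $n-r+1$ in the numerator of $\binom{n-i}{r-i}$ absorbs $(r-i)!$ and leaves a multiple of $p$. Using the lcm in place of the product is an inessential variation.
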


\proof
Let $p:=\prod_{i=0}^{r-1}Deg(F)_i$. We will show that for every $a\in \bN$, if we let $n=r!ap+r-1$ then $\krq{r}{n}$ is $F$-divisible. Clearly, this implies the claim. In order to see that $\krq{r}{n}$ is $F$-divisible, it is sufficient to show that $p\mid \binom{n-i}{r-i}$ for all $i\in [r-1]_0$. It is easy to see that this holds for the above choice of $n$.\COMMENT{$\binom{n-i}{r-i}=\frac{(n-i)\cdots(n-r+1)}{(r-i)!}=\frac{(n-i)\cdots(n-r+2)r!ap }{(r-i)!}$}
\endproof

The following proposition shows that the class of weakly regular uniform hypergraphs is closed under taking link graphs.

\begin{prop}\label{prop:link divisibility}
Let $F$ be a weakly regular $r$-graph and let $i\in[r-1]$. Suppose that $S\in\binom{V(F)}{i}$ and that $F(S)$ is non-empty. Then $F(S)$ is a weakly regular $(r-i)$-graph and $Deg(F(S))_j=Deg(F)_{i+j}$ for all $j\in[r-i-1]_0$.
\end{prop}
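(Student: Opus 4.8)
The plan is to unwind the definitions of weak regularity and of the divisibility vector, using the iterated-link identity $F(S)(T) = F(S\cup T)$ for disjoint $S,T$. First I would record this identity: if $S\in\binom{V(F)}{i}$ and $T\In V(F)\sm S$ with $|T|=j$, then a set $e\In V(F)\sm(S\cup T)$ lies in $F(S)(T)$ iff $e\cup T\in F(S)$ iff $e\cup T\cup S\in F$, i.e.\ iff $e\in F(S\cup T)$; hence $F(S)(T)=F(S\cup T)$ and in particular $|F(S)(T)| = |F(S\cup T)|$.

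Next, for weak regularity of $F(S)$: let $F$ be weakly $(s_0,\dots,s_{r-1})$-regular and fix $S\in\binom{V(F)}{i}$ with $F(S)$ non-empty. For any $j\in[r-i-1]_0$ and any $T\in\binom{V(F(S))}{j}=\binom{V(F)\sm S}{j}$ we have $|F(S)(T)| = |F(S\cup T)| \in \Set{0,s_{i+j}}$ since $S\cup T\in\binom{V(F)}{i+j}$ and $F$ is weakly regular. Thus $F(S)$ is weakly $(s_i,\dots,s_{r-1})$-regular, so in particular weakly regular as an $(r-i)$-graph. (Here $F(S)$ non-empty guarantees $s_i\neq 0$, so this is a genuine weak-regularity datum, matching the convention after Definition~\ref{def:weakly regular}.)

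For the divisibility vectors: by definition $Deg(F(S))_j = \gcd\set{|F(S)(T)|}{T\in\binom{V(F(S))}{j}}$ and $Deg(F)_{i+j}=\gcd\set{|F(U)|}{U\in\binom{V(F)}{i+j}}$. Every $T\in\binom{V(F)\sm S}{j}$ gives $U:=S\cup T\in\binom{V(F)}{i+j}$ with $|F(S)(T)|=|F(U)|$, so $Deg(F)_{i+j}\mid Deg(F(S))_j$; it remains to show the reverse divisibility, i.e.\ that $Deg(F(S))_j$ divides $|F(U)|$ for \emph{every} $U\in\binom{V(F)}{i+j}$, not just those containing $S$. This is where weak regularity does the real work: for the finitely many values $s_0,\dots,s_{r-1}$, weak $(s_0,\dots,s_{r-1})$-regularity forces $|F(U)|\in\Set{0,s_{i+j}}$ for all such $U$, and likewise $|F(S)(T)|\in\Set{0,s_{i+j}}$. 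Since $F(S)$ is non-empty, at least one $T$ realises the nonzero value $s_{i+j}$ (by the handshaking identity \eqref{handshaking} applied inside $F(S)$, some $j$-set has nonzero link, as $F(S)$ has an edge), hence $Deg(F(S))_j$ divides $s_{i+j}$ and therefore divides $|F(U)|\in\Set{0,s_{i+j}}$ for every $U\in\binom{V(F)}{i+j}$. Combining the two divisibilities gives $Deg(F(S))_j=Deg(F)_{i+j}$.

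The only mild subtlety — and the step I would be most careful about — is the edge case $j=0$: then $F(S)(\emptyset)=F(S)$, $Deg(F(S))_0 = |F(S)| \in\Set{0,s_i}$, and since $F(S)$ is assumed non-empty this equals $s_i = Deg(F)_i$, consistent with the formula; one should also note $s_i\neq 0$ here to stay within the stated convention. Everything else is a direct substitution, so no genuine obstacle arises; the proof is essentially just the link identity $F(S)(T)=F(S\cup T)$ plus the fact that weak regularity pins down $|F(U)|$ to a two-element set depending only on $|U|$.
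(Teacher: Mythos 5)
Your proof is correct and follows essentially the same route as the paper: the link identity $F(S)(T)=F(S\cup T)$ shows $F(S)$ is weakly $(s_i,\dots,s_{r-1})$-regular, and non-emptiness forces each relevant gcd to equal the corresponding $s_{i+j}$. The paper simply computes $Deg(F)=(s_0,\dots,s_{r-1})$ and $Deg(F(S))=(s_i,\dots,s_{r-1})$ directly rather than via your two mutual-divisibility steps, but the content is identical.
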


\proof
Let $s_0,\dots,s_{r-1}$ be such that $F$ is weakly $(s_0,\dots,s_{r-1})$-regular. Note that since $F$ is non-empty, we have $s_j>0$ for all $j\in[r-1]_0$ (and the $s_i$'s are unique).
Consider $j\in[r-i-1]_0$. For all $T\in \binom{V(F(S))}{j}$, we have $|F(S)(T)|=|F(S\cup T)|\in\Set{0,s_{i+j}}$. Hence, $F(S)$ is weakly $(s_i,\dots,s_{r-1})$-regular. Since $F$ is non-empty, we have $Deg(F)=(s_0,\dots,s_{r-1})$, and since $F(S)$ is non-empty too by assumption, we have $Deg(F(S))=(s_i,\dots,s_{r-1})$. Therefore, $Deg(F(S))_j=Deg(F)_{i+j}$ for all $j\in[r-i-1]_0$.
\endproof

We now list some useful properties of well separated $F$-packings.

\begin{fact}\label{fact:ws}
Let $G$ be a complex and $F$ an $r$-graph on $f>r$ vertices. Suppose that $\cF$ is a $\kappa$-well separated $F$-packing (in $G$) and $\cF'$ is a $\kappa'$-well separated $F$-packing (in $G$). Then the following hold.
\begin{enumerate}[label={\rm (\roman*)}]
\item $\Delta(\cF^{\le(r+1)})\le \kappa(f-r)$.\COMMENT{Every $r$-set $e$ is contained in at most $\kappa$ $f$-sets in $\cF^{\le(f)}$ by \ref{separatedness:2}. For each of these, we get $f-r$ $(r+1)$-sets in $\cF^{\le(r+1)}$ which contain $e$}\label{fact:ws:maxdeg}
\item If $\cF^{(r)}$ and $\cF'^{(r)}$ are edge-disjoint and $\cF$ and $\cF'$ are $(r+1)$-disjoint, then $\cF\cup \cF'$ is a $(\kappa+\kappa')$-well separated $F$-packing (in $G$). \label{fact:ws:1}
\item If $\cF$ and $\cF'$ are $r$-disjoint, then $\cF\cup \cF'$ is a $\max\Set{\kappa,\kappa'}$-well separated $F$-packing (in $G$).\label{fact:ws:2}
\end{enumerate}
\end{fact}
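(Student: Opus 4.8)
The statement is Fact~\ref{fact:ws}, which collects three elementary bookkeeping properties of well separated $F$-packings. I will prove each part by unwinding the definitions of $\cF^{\le}$, $(r+1)$-disjointness and $r$-disjointness, and the conditions~\ref{separatedness:1} and~\ref{separatedness:2}. None of the parts require any probabilistic or extremal input; the whole point is careful counting, so the main (and only real) obstacle is keeping the roles of $r$-sets, $(r+1)$-sets and $f$-sets straight, and tracking which copies of $F$ can share how many vertices.

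For part~\ref{fact:ws:maxdeg}: fix an $r$-set $e\in\binom{V(G)}{r}$. An $(r+1)$-set $e'\in\cF^{\le(r+1)}$ with $e\In e'$ must be contained in some $V(F')\in\cF^{\le(f)}$ with $F'\in\cF$, and since $e\In e'\In V(F')$ we in particular have $e\In V(F')$. By~\ref{separatedness:2} there are at most $\kappa$ such $F'$; for each of them, the number of $(r+1)$-sets $e'$ with $e\In e'\In V(F')$ equals $|V(F')\sm e|=f-r$. Hence $|\cF^{\le(r+1)}(e)|\le \kappa(f-r)$, giving $\Delta(\cF^{\le(r+1)})\le\kappa(f-r)$.

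For part~\ref{fact:ws:1}: since $\cF^{(r)}$ and $\cF'^{(r)}$ are edge-disjoint, $\cF\cup\cF'$ is an $F$-packing in $G^{(r)}$; since all members of $\cF$ and of $\cF'$ have vertex sets in $G^{(f)}$, it is an $F$-packing in the complex $G$. It remains to verify~\ref{separatedness:1} and~\ref{separatedness:2} with parameter $\kappa+\kappa'$. For~\ref{separatedness:1}, take distinct $F^{1},F^{2}\in\cF\cup\cF'$. If both lie in $\cF$ or both in $\cF'$, then $|V(F^{1})\cap V(F^{2})|\le r$ by~\ref{separatedness:1} for that packing. If $F^{1}\in\cF$ and $F^{2}\in\cF'$, then $(r+1)$-disjointness of $\cF$ and $\cF'$ gives $|V(F^{1})\cap V(F^{2})|<r+1$, i.e.~$\le r$. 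For~\ref{separatedness:2}, fix an $r$-set $e$: the number of $F'\in\cF\cup\cF'$ with $e\In V(F')$ is at most (number from $\cF$)~$+$~(number from $\cF'$)~$\le\kappa+\kappa'$. Thus $\cF\cup\cF'$ is $(\kappa+\kappa')$-well separated.

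For part~\ref{fact:ws:2}: again $\cF\cup\cF'$ is an $F$-packing in $G$ — note that $r$-disjointness of $\cF$ and $\cF'$ means $|V(F^{1})\cap V(F^{2})|<r$ for all $F^{1}\in\cF$, $F^{2}\in\cF'$, so in particular $F^{1}$ and $F^{2}$ share fewer than $r$ vertices and hence no edge, which together with edge-disjointness within each of $\cF,\cF'$ shows $\cF\cup\cF'$ is an $F$-packing. (If $f=r$, note $r$-disjointness forces $V(F^1)\ne V(F^2)$, and two copies of $\krq{r}{r}$ with distinct vertex sets are automatically edge-disjoint.) Condition~\ref{separatedness:1} holds exactly as in part~\ref{fact:ws:1}, using that $<r\le r$. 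For~\ref{separatedness:2}, fix an $r$-set $e$. An $F'\in\cF$ with $e\In V(F')$ and an $F''\in\cF'$ with $e\In V(F'')$ would have $|V(F')\cap V(F'')|\ge|e|=r$, contradicting $r$-disjointness. Hence all $F'\in\cF\cup\cF'$ with $e\In V(F')$ lie entirely in $\cF$ or entirely in $\cF'$, so there are at most $\max\Set{\kappa,\kappa'}$ of them. Thus $\cF\cup\cF'$ is $\max\Set{\kappa,\kappa'}$-well separated, as claimed.
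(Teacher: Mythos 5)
Your proof is correct and matches the paper's (implicit) argument: the paper states this as a Fact without proof, giving only a comment for part~\ref{fact:ws:maxdeg} that is exactly your counting of at most $\kappa$ relevant $f$-sets times $f-r$ choices of extra vertex, and parts~\ref{fact:ws:1} and~\ref{fact:ws:2} follow by the same routine case analysis on whether two copies lie in the same packing. (Your parenthetical about $f=r$ in part~\ref{fact:ws:2} is vacuous here since the hypothesis assumes $f>r$, but it does no harm.)
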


\subsection{Some properties of supercomplexes}

The following properties of supercomplexes were proved in~\cite{GKLO}.

\begin{prop}[\cite{GKLO}]\label{prop:hereditary}
Let $G$ be an $(\eps,\xi,f,r)$-supercomplex and let $B\In G^{(i)}$ with $1\le |B|\le 2^i$ for some $i\in[r]_0$. Then $\bigcap_{b\in B}G(b)$ is an $(\eps,\xi,f-i,r-i)$-supercomplex.\COMMENT{Only needed for transformers, could still leave it here because it is quite an important fact?}
\end{prop}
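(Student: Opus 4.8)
\textbf{Proof proposal for Proposition~\ref{prop:hereditary}.}

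The plan is to show that each of the three defining properties of a supercomplex -- and hence the definition of an $(\eps,\xi,f-i,r-i)$-supercomplex applied to $\bigcap_{b\in B}G(b)$ -- can be verified directly from the corresponding property of $G$, using the key observation that taking links commutes with intersections in a controlled way. Concretely, set $G':=\bigcap_{b\in B}G(b)$, which is a complex on $V(G)\setminus \bigcup B$ with $|V(G')|=n-O(1)=(1\pm o(1))n$ vertices. For a set $e\in G'^{(r-i)}$ and a further collection $B'\In G'^{(j)}$, one checks that $\bigcap_{b'\in B'}G'(b')=\bigcap_{c\in B\cup (B\uplus B')\cup B'}G(c)$ up to relabelling, where the point is that a link of a link $G(b)(b')$ equals $G(b\cup b')$; so iterated links of $G'$ are again iterated links of $G$, with the number of sets involved multiplying by at most a bounded factor.

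The main step is to track how the index $i$ propagates through Definition~\ref{def:supercomplex}: to show $G'$ is an $(\eps,\xi,f-i,r-i)$-supercomplex we must show that for every $j\in[r-i]_0$ and every $B'\In G'^{(j)}$ with $1\le |B'|\le 2^j$, the complex $\bigcap_{b'\in B'}G'(b')$ is an $(\eps,\xi,f-i-j,r-i-j)$-complex. Now $\bigcap_{b'\in B'}G'(b') = \bigcap_{b\in B, b'\in B'} G(b\cup b')$, which is an intersection of links of $G$ indexed over sets of size $i+j$ (after noting the sets $b\cup b'$ lie in $G^{(i+j)}$ since $G$ is a complex and $b\cup b'\in G'(b')(\emptyset)\cup\cdots$). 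Crucially, the number of such sets is at most $|B|\cdot|B'|\le 2^i\cdot 2^j=2^{i+j}$, which is exactly the bound permitted in the definition of $(\eps,\xi,f,r)$-supercomplex for $G$ at level $i+j$. Hence Definition~\ref{def:supercomplex} applied to $G$ (with the index there equal to $i+j\le r$) gives that this intersection is an $(\eps,\xi,f-(i+j),r-(i+j))$-complex $=(\eps,\xi,(f-i)-j,(r-i)-j)$-complex, which is precisely what is required. One also needs the base case $j=0$, $B'=\{\emptyset\}$: then $\bigcap_{b'\in B'}G'(b')=G'=\bigcap_{b\in B}G(b)$, and this is an $(\eps,\xi,f-i,r-i)$-complex by Definition~\ref{def:supercomplex} applied to $G$ at index $i$ -- so $G'$ is at least a complex of the right type to start with.

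I expect the main obstacle to be purely bookkeeping rather than conceptual: one must be careful that the sets $b\cup b'$ are genuinely distinct elements of $G^{(i+j)}$ (or handle multiplicities/coincidences, which only decreases the count and is harmless), that $b\cup b'\in G^{(i+j)}$ indeed holds (this follows since $b'\in G'(b')$-type membership forces $b\cup b'\in G$, using that $G$ is closed under inclusion), and that the parameters $\eps,\xi$ are genuinely unchanged -- which they are, since we only ever invoke the supercomplex property of $G$, never re-derive regularity constants. Since $f-i>r-i$ (as $f>r$) and $r-i\ge 0$, all the relevant complexes are of admissible type, so no degenerate cases arise beyond $r-i=0$, where the statement is vacuous or trivial. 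This mirrors the proof of the analogous statement in~\cite{GKLO}, and I would simply cite or reproduce that short argument.
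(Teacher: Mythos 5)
Your proposal is correct, and it is essentially the intended argument (the paper only cites this from~\cite{GKLO}, where the proof is exactly this): the identity $\bigcap_{b'\in B'}G'(b')=\bigcap_{b\in B,\,b'\in B'}G(b\cup b')$ together with the multiplicative bound $|B|\cdot|B'|\le 2^i\cdot 2^j=2^{i+j}$ reduces the claim directly to Definition~\ref{def:supercomplex} for $G$ at level $i+j\le r$. The only blemish is your first, redundant formula $\bigcap_{c\in B\cup(B\uplus B')\cup B'}G(c)$, which is superfluous (though harmless, since $G(b\cup b')\In G(b)$); the cleaner identity you use in the main step is the right one.
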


\begin{prop}[\cite{GKLO}]\label{prop:noise}
Let $f,r'\in \bN$ and $r\in \bN_0$ with $f>r$ and $r'\ge r$. Let $G$ be a complex on $n\ge r2^{r+1}$ vertices and let $H$ be an $r'$-graph on $V(G)$ with $\Delta(H)\le \gamma n$. Then the following hold:
\begin{enumerate}[label={\rm (\roman*)}]
\item If $G$ is $(\eps,d,f,r)$-regular, then $G-H$ is $(\eps+2^r\gamma,d,f,r)$-regular.\label{noise:regular}
\item If $G$ is $(\xi,f,r)$-dense, then $G-H$ is $(\xi-2^r\gamma,f,r)$-dense.\label{noise:dense}
\item If $G$ is $(\xi,f,r)$-extendable, then $G-H$ is $(\xi-2^r\gamma,f,r)$-extendable.\label{noise:extendable}
\item If $G$ is an $(\eps,\xi,f,r)$-complex, then $G-H$ is an $(\eps+2^r\gamma,\xi-2^{r}\gamma,f,r)$-complex.\label{noise:complex}
\item If $G$ is an $(\eps,\xi,f,r)$-supercomplex, then $G-H$ is an $(\eps+2^{2r+1}\gamma,\xi-2^{2r+1}\gamma,f,r)$-supercomplex.\label{noise:supercomplex}
\end{enumerate}\COMMENT{Only \ref{noise:supercomplex} used in main paper}
\end{prop}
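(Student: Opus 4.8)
The final statement to prove is Proposition~\ref{prop:noise}: removing a sparse $r'$-graph $H$ (with $r' \ge r$) from a complex $G$ preserves, with slightly degraded parameters, the properties of being regular, dense, extendable, a complex, and a supercomplex.

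\textbf{Plan.} I would prove the five parts in the order they are stated, since each later part builds on the earlier ones. The key observation throughout is that deleting $H$ from $G$ only removes $f$-sets (or $(f-r)$-sets in link graphs) that contain some edge of $H$, and since $\Delta(H) \le \gamma n$, not too many such $f$-sets are killed. Concretely, for a fixed $e \in G^{(r)}$, an $f$-set $Q \in G^{(f)}(e)$ is destroyed in $G - H$ only if some $r'$-subset of $Q \cup e$ lies in $H$; I would bound the number of such $Q$ crudely. Recall $G - H = G[G^{(r)} - H]$, so for $i \le r-1$ the $i$-graph is unchanged, for $i = r$ exactly the edges of $H$ are removed, and for $i = f$ an $f$-set survives iff all its $r$-subsets avoid $H$.

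\textbf{Parts (i)--(iii).} For \ref{noise:regular}: fix $e \in (G-H)^{(r)} \subseteq G^{(r)}$. We have $|(G-H)^{(f)}(e)| = |G^{(f)}(e)| - (\text{number of }Q\in G^{(f)}(e)\text{ with }\binom{Q\cup e}{r}\not\subseteq G^{(r)}-H)$. Each such lost $Q$ contains, together with $e$, an edge of $H$; choosing one vertex of that edge outside $e$ (at most $r$ choices once we know which vertex of $Q$, but more simply: fix $v \in Q$ lying in an $H$-edge through part of $Q\cup e$) and counting, the number of lost $Q$ is at most $2^r \Delta(H) n^{f-r-1} \le 2^r \gamma n^{f-r}$ (the factor $2^r$ accounting for which $r$-subset / which vertex plays which role, crudely). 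Hence $|(G-H)^{(f)}(e)| = (d \pm (\eps + 2^r\gamma))n^{f-r}$. Part \ref{noise:dense} is the same one-sided estimate, and \ref{noise:extendable} is similar: restrict the extendability witness set $X$ to vertices not incident to too many $H$-edges (or simply keep $X$ and note that for each $e \in \binom{X}{r}$ the number of good $(f-r)$-sets $Q$ drops by at most $2^r\gamma n^{f-r}$). In each case I would just need to check the constant $2^r$ absorbs all the routine over-counting.

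\textbf{Parts (iv)--(v).} Part \ref{noise:complex}: if $G$ is an $(\eps,\xi,f,r)$-complex, there is $Y$ with $G[Y]$ a full $(\eps,\xi,f,r)$-complex; take the same $Y$ for $G-H$, note $(G-H)[Y] = (G[Y]) - H$ (as $r$-graphs they agree, being $G^{(r)} \cap Y$-restricted then minus $H$), and apply \ref{noise:regular}--\ref{noise:extendable} to $G[Y]$, using that $d \ge \xi$ still holds (the regularity density $d$ is unchanged, only the error grows). This gives the claimed $(\eps + 2^r\gamma, \xi - 2^r\gamma, f, r)$-complex. Part \ref{noise:supercomplex}: unwind the definition of supercomplex. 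For $i \in [r]_0$ and $B \subseteq (G-H)^{(i)}$ with $1 \le |B| \le 2^i$, we must show $\bigcap_{b \in B}(G-H)(b)$ is an $(\eps + 2^{2r+1}\gamma, \xi - 2^{2r+1}\gamma, f-i, r-i)$-complex. Since $G$ is a supercomplex, $\bigcap_{b\in B}G(b)$ is an $(\eps,\xi,f-i,r-i)$-complex on $n - i$ vertices. The point is that $\bigcap_{b\in B}(G-H)(b) = \left(\bigcap_{b\in B}G(b)\right) - H'$ where $H'$ is the $(r'-i)$-graph (or rather the appropriate uniformity; one must be slightly careful, taking $H' := \bigcup_{b\in B} \set{e \sm b}{e \in H, b \subseteq e}$, which has maximum degree at most $|B|\Delta(H) \le 2^i \gamma n \le 2^r\gamma n$, and ranges over uniformities $\ge r - i$). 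Applying part \ref{noise:complex} to $\bigcap_{b\in B}G(b)$ with this $H'$ and with $\gamma' := 2^r\gamma$ yields an $(\eps + 2^{r-i}2^r\gamma, \xi - 2^{r-i}2^r\gamma, f-i, r-i)$-complex; since $2^{r-i}2^r \le 2^{2r} \le 2^{2r+1}$, this suffices.

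\textbf{Main obstacle.} The only genuinely delicate point is the bookkeeping in part \ref{noise:supercomplex}, where the $r'$-graph $H$ induces, after taking links of a set $B$ of small $i$-edges, a union of hypergraphs of possibly different uniformities — but all of uniformity at least $r-i$, which is exactly the hypothesis $r' \ge r$ rephrased for the link. One must verify that each of \ref{noise:regular}--\ref{noise:complex} applies uniformly regardless of the exact value of $r' - i$ (they do, since those parts already allow $r' \ge r$), and that the maximum-degree bound on the induced link $r'$-graph is at most $2^i\gamma(n-i) \le 2^r\gamma n$. Everything else is a routine crude count where the powers of $2$ are deliberately generous. I do not expect to need anything beyond Definition~\ref{def:complex} and the handshaking-type counting already in the paper.
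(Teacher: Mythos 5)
The paper does not actually prove this proposition here (it is quoted from \cite{GKLO}), so I can only assess your argument on its own terms. Parts \ref{noise:regular}--\ref{noise:complex} are fine: the count of $f$-sets $Q\in G^{(f)}(e)$ for which $Q\cup e$ contains an edge of $H$ is exactly the bound $2^r\gamma n^{f-r}$ recorded as Proposition~\ref{prop:sparse noise containment} (sum over the $\le 2^r$ possible traces $S=h\cap e$; for each, at most $\Delta(H)n^{r'-1-|S|}$ choices of $h$ and $n^{f-r-|h\sm e|}$ completions of $Q$), and your deduction of \ref{noise:complex} via $(G-H)[Y]=G[Y]-H$ and the same $d\ge\xi$ is correct. (Minor point: your parenthetical ``for $i=r$ exactly the edges of $H$ are removed'' is only accurate when $r'=r$; for $r'>r$ the $r$-graph is untouched, which only makes things easier.)

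There is, however, a genuine gap in your proof of \ref{noise:supercomplex}. The identity $\bigcap_{b\in B}(G-H)(b)=\bigl(\bigcap_{b\in B}G(b)\bigr)-H'$ with $H':=\bigcup_{b\in B}\set{e\sm b}{e\in H,\ b\subseteq e}$ is false: a set $e'$ survives in $(G-H)(b)$ only if $e'\cup b$ contains \emph{no} edge of $H$, and such an edge $h$ may meet $b$ in any subset $S\subseteq b$, including $S=\emptyset$. For instance with $r=r'=2$, $i=1$, $B=\{b\}$, a pair $e'=\{x,y\}$ is destroyed in $(G-H)(b)$ whenever $\{x,y\}\in H$, yet $e'$ contains no edge of your $H'$ (which only records edges of $H$ through $b$). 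The correct reduction deletes, for each $b\in B$ and each $S\subseteq b$, the $(r'-|S|)$-graph $\set{h\sm S}{h\in H,\ h\cap b=S}$ (each of maximum degree at most $\Delta(H)\le\gamma n$, by the same $(r'-1)$-set count), and then applies \ref{noise:complex} at most $|B|\cdot 2^i\le 2^{2i}\le 2^{2r}$ times to the $(f-i,r-i)$-complex $\bigcap_{b\in B}G(b)$; together with the factor $n/(n-i)\le 2$ coming from the smaller vertex set (this is where $n\ge r2^{r+1}$ is used), this is precisely what the constant $2^{2r+1}$ is designed to absorb. As written, your argument misses all the edges of $H$ that do not contain $b$, so \ref{noise:supercomplex} is not established.
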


\begin{cor}[\cite{GKLO}]\label{cor:random supercomplex}
Let $1/n\ll \eps,\gamma,\xi,p,1/f$ and $r\in[f-1]$. Let $$\xi':=0.95\xi p^{2^r\binom{f+r}{r}} \ge 0.95\xi p^{(8^f)}\mbox{ and } \gamma':=1.1\cdot 2^{r}\frac{\binom{f+r}{r}}{(f-r)!}\gamma.$$ Suppose that $G$ is an $(\eps,\xi,f,r)$-supercomplex on $n$ vertices and that $H\In G^{(r)}$ is a random subgraph obtained by including every edge of $G^{(r)}$ independently with probability $p$. Then \whp the following holds: for all $L\In G^{(r)}$ with $\Delta(L)\le \gamma n$, $G[H\bigtriangleup L]$ is a $(3\eps+\gamma',\xi'-\gamma',f,r)$-supercomplex.
\end{cor}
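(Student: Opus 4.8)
\textbf{Plan for the proof of Corollary~\ref{cor:random supercomplex}.}

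The plan is to reduce the statement to the Boost/regularity behaviour of supercomplexes under random sparsification, which is precisely what Proposition~\ref{prop:noise} (and presumably an accompanying lemma in~\cite{GKLO} about the behaviour of the \emph{regular}, \emph{dense} and \emph{extendable} properties under random selection) already encapsulates. First I would fix $i\in[r]_0$ and a set $B\In G^{(i)}$ with $1\le|B|\le 2^i$, and set $G_B:=\bigcap_{b\in B}G(b)$, which is an $(\eps,\xi,f-i,r-i)$-complex by the definition of supercomplex (Definition~\ref{def:supercomplex}). The key observation is that $H':=H\cap G_B^{(r)}$ is itself a random subgraph of $G_B^{(r)}$ obtained by retaining each edge independently with probability $p$, and that $(G[H\bigtriangleup L])(B) = G_B[H'\bigtriangleup L']$ where $L':=L\cap G_B^{(r)}$ satisfies $\Delta(L')\le\Delta(L)\le\gamma n$. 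So it suffices to show that, \whp simultaneously over all such $B$, the complex $G_B[H'\bigtriangleup L']$ is a $(3\eps+\gamma',\xi'-\gamma',f-i,r-i)$-complex for \emph{every} admissible $L$; a union bound over the (polynomially many in $n$, for fixed $f,r$) choices of $i$ and $B$ then gives the conclusion.

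Next I would unpack what it means for $G_B[H'\bigtriangleup L']$ to be an $(\eps',\xi',f-i,r-i)$-complex: there must be an $(f-i)$-graph $Y_B$ with $G_B[Y_B]$ a full complex, and then one works with $(G_B[Y_B])[H'\bigtriangleup L']$, noting that the $r$-graph $H'\bigtriangleup L'$ only deletes or adds edges and that additions can be absorbed into a suitable redefinition (edges of $L'$ added back lie below the relevant cliques as long as the density/extendability margins can absorb a $\gamma n$-bounded perturbation). Concretely, I would treat $H'\bigtriangleup L'$ as $(H'\sm L')\cupdot(L'\sm H')$, handle the deletion $H'\sm L'$ via the random-sparsification estimates (this is what produces the factor $p^{2^r\binom{f+r}{r}}$ in $\xi'$ and the $3\eps$ — the $3$ being slack for the concentration argument, exactly as in the analogous lemma of~\cite{GKLO}), and handle the at most $\gamma n$-bounded addition $L'\sm H'$ and the at most $\gamma n$-bounded extra deletion (the part of $L'$ inside $H'$) via Proposition~\ref{prop:noise}, which is why the final parameters lose an extra additive $\gamma'$ and why $\gamma'$ carries the combinatorial factor $2^r\binom{f+r}{r}/(f-r)!$. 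The point is that $\Delta$ of the symmetric difference relative to $H'$ is still $O(\gamma n)$, so Proposition~\ref{prop:noise}\ref{noise:complex} applies to pass from the freshly sparsified complex to the perturbed one.

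The main obstacle — and the only genuinely nonroutine point — is making the whp statement \emph{uniform over all $L$ with $\Delta(L)\le\gamma n$} simultaneously, since there are exponentially many such $L$, so one cannot naively union-bound over them after revealing $H$. The standard resolution (and the one I would follow, as in~\cite{GKLO}) is to prove the concentration for $G_B[H']$ alone — i.e.\ for $L=\emptyset$ — but with room to spare (this is where the generous $3\eps$ rather than, say, $\eps+o(1)$ comes from, and where one might even prove a slightly stronger statement, e.g.\ that the relevant counts $|G_B[H']^{(f-i)}(e)|$ are simultaneously close to their expectations for \emph{all} $e$ and all small intersections of links, using Chernoff/Azuma bounds combined with a union bound over the polynomially many $e$), and then observe that adding or deleting a $\gamma n$-bounded $r$-graph $L'$ changes every relevant clique-count by at most $O(\gamma n^{f-r-1}\cdot n)=O(\gamma n^{f-r})$ via the deterministic Proposition~\ref{prop:noise}. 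Thus the random part needs no knowledge of $L$ at all, and the dependence on $L$ is absorbed deterministically into the $\gamma'$ slack. Finally I would assemble the pieces: apply the above to each $G_B$, take the union bound over $i\in[r]_0$ and $B$, and conclude that \whp $G[H\bigtriangleup L]$ is a $(3\eps+\gamma',\xi'-\gamma',f,r)$-supercomplex for every admissible $L$, with the stated $\xi'$ and $\gamma'$ (the inequality $\xi'\ge 0.95\xi p^{(8^f)}$ following from $2^r\binom{f+r}{r}\le 8^f$, a trivial estimate I would just cite in passing).
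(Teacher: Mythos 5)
The paper itself does not prove this corollary --- it is quoted directly from~\cite{GKLO} --- so your plan can only be judged on its own terms. Its overall architecture is the right one: fix $i$ and $B$, analyse the $p$-random sparsification for $L=\emptyset$ with room to spare (Chernoff plus a union bound over the polynomially many relevant $r$-sets $e$ and sets $B$), and then absorb an arbitrary $L$ with $\Delta(L)\le\gamma n$ \emph{deterministically}, since an $f$- or $(f+r)$-set containing $e$ can only be affected by $L$ if it contains an edge of $L$, and there are at most $O(\gamma n^{f-r})$ such sets. In particular, your resolution of the ``uniformity over exponentially many $L$'' issue is exactly the correct one, and this is where the additive $\gamma'$ slack comes from.

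There is, however, one concrete error in the first step of your reduction: the identity $(G[H\bigtriangleup L])(B)=G_B[H'\bigtriangleup L']$ with $H'=H\cap G_B^{(r)}$ is false. By the definition of $G[\cdot]$, a set $e'$ lies in $\bigcap_{b\in B}(G[H\bigtriangleup L])(b)$ if and only if \emph{every} $r$-subset of $b\cup e'$ lies in $H\bigtriangleup L$ for every $b\in B$ --- including the $r$-sets that meet $b$ --- whereas $G_B[H'\bigtriangleup L']$ constrains only the $r$-subsets of $e'$ itself (and since the interesting sets $e'$ here have size $r-i<r$ at the bottom level, that constraint is vacuous there). This is not cosmetic: the survival probability of a candidate $(f-i)$- or $(f+r-i)$-set $Q$ in the link of $B$ is $p$ raised to the number of $r$-subsets of $b\cup e'\cup Q$ summed over $b\in B$, and it is precisely this count, bounded by $2^r\binom{f+r}{r}$, that produces the exponent in $\xi'$; under your identity the exponent would come out strictly smaller. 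The repair is routine --- track the full family of $r$-sets whose survival is required, observe that they are still independent Bernoulli events so the same concentration argument applies, and keep the corrected count --- but as written the reduction does not compute the right object.
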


\COMMENT{Only needed for min degree version.}

\subsection{Rooted Embeddings}
We now prove a result (Lemma~\ref{lem:rooted embedding}) which allows us to find edge-disjoint embeddings of graphs with a prescribed `root embedding'. Let $T$ be an $r$-graph and suppose that $X\In V(T)$ is such that $T[X]$ is empty. A \defn{root of $(T,X)$} is a set $S\In X$ with $|S|\in [r-1]$ and $|T(S)|>0$.

For an $r$-graph $G$, we say that $\Lambda\colon X\to V(G)$ is a \defn{$G$-labelling of $(T,X)$} if $\Lambda$ is injective. Our aim is to embed $T$ into $G$ such that the roots of $(T,X)$ are embedded at their assigned position.
 More precisely, given a $G$-labelling $\Lambda$ of $(T,X)$, we say that $\phi$ is a \defn{$\Lambda$-faithful embedding of $(T,X)$ into $G$} if $\phi$ is an injective homomorphism from $T$ to $G$ with $\phi{\restriction_{X}}=\Lambda$. Moreover, for a set $S\In V(G)$ with $|S|\in [r-1]$, we say that $\Lambda$ \defn{roots $S$} if $S\In \Ima(\Lambda)$ and $|T(\Lambda^{-1}(S))|>0$, i.e.~if $\Lambda^{-1}(S)$ is a root of $(T,X)$.

The \defn{degeneracy of $T$ rooted at $X$} is the smallest $D$ such that there exists an ordering $v_1,\dots,v_{k}$ of the vertices of $V(T)\sm X$ such that for every $\ell\in[k]$, we have $$|T[X\cup \Set{v_1,\dots,v_\ell}](v_\ell)|\le D,$$ i.e.~every vertex is contained in at most $D$ edges which lie to the left of that vertex in the ordering.

We need to be able to embed many copies of $(T,X)$ simultaneously (with different labellings) into a given host graph $G$ such that the different embeddings are edge-disjoint. In fact, we need a slightly stronger disjointness criterion. Ideally, we would like to have that two distinct embeddings intersect in less than $r$ vertices. However, this is in general not possible because of the desired rooting. We therefore introduce the following concept of a \defn{hull}. We will ensure that the hulls are edge-disjoint, which will be sufficient for our purposes.
Given $(T,X)$ as above, the \defn{hull of $(T,X)$} is the $r$-graph $T'$ on $V(T)$ with $e\in T'$ if and only if $e\cap X=\emptyset$ or $e\cap X$ is a root of $(T,X)$. Note that $T\In T'\In K_{V(T)}^{(r)}-K_X^{(r)}$, where $K_Z^{(r)}$ denotes the complete $r$-graph with vertex set $Z$.\COMMENT{Let $e\in T$. If $e\cap X=\emptyset$, then $e\in T'$. If $e\cap X\neq\emptyset$, then $|e\cap X|\in [r-1]$ since $T[X]$ is empty. Moreover, $|T(e\cap X)|>0$ since $e\cap X\In e$. Thus, $e\cap X$ is a root and hence $e\in T'$.} Moreover, the roots of $(T',X)$ are precisely the roots of $(T,X)$.\COMMENT{Every root of $T$ is also a root of $T'$ as $T\In T'$. Suppose now that $S$ is a root of $(T',X)$. Hence, there is $e\in T'$ with $S\In e$. In particular, $e\cap X\neq \emptyset$. Thus, $e\cap X$ is a root of $(T,X)$, i.e.~$|T(e\cap X)|>0$, since $S\In e\cap X$, this implies $|T(S)|>0$, i.e.~$S$ is a root of $(T,X)$.}

\begin{lemma}\label{lem:rooted embedding}
Let $1/n\ll \gamma \ll \xi,1/t,1/D$ and $r\in [t]$. Suppose that $\alpha \in (0,1]$ is an arbitrary scalar (which might depend on $n$) and let $m\le \alpha\gamma n^r$ be an integer. For every $j\in[m]$, let $T_j$ be an $r$-graph on at most $t$ vertices and $X_j\In V(T_j)$ such that $T_j[X_j]$ is empty and $T_j$ has degeneracy at most $D$ rooted at $X_j$. Let $G$ be an $r$-graph on $n$ vertices such that for all $A\In\binom{V(G)}{r-1}$ with $|A|\le D$, we have $|\bigcap_{S\in A}G(S)|\ge \xi n$.
Let $O$ be an $(r+1)$-graph on $V(G)$ with $\Delta(O)\le \gamma n$.
For every $j\in[m]$, let $\Lambda_j$ be a $G$-labelling of $(T_j,X_j)$. Suppose that for all $S\In V(G)$ with $|S|\in[r-1]$, we have that
\begin{align}
|\set{j\in[m]}{\Lambda_j \mbox{ roots }S}|\le \alpha \gamma n^{r-|S|}-1.\label{rooting nice}
\end{align}
Then for every $j\in [m]$, there exists a $\Lambda_j$-faithful embedding $\phi_j$ of $(T_j,X_j)$ into $G$ such that the following hold:
\begin{enumerate}[label=\rm{(\roman*)}]
\item for all distinct $j,j'\in [m]$, the hulls of $(\phi_j(T_j),\Ima(\Lambda_j))$ and $(\phi_{j'}(T_{j'}),\Ima(\Lambda_{j'}))$ are edge-disjoint;\label{rooted embedding:disjoint}
\item for all $j\in[m]$ and $e\in O$ with $e\In \Ima(\phi_j)$, we have $e\In \Ima(\Lambda_j)$;\label{rooted embedding:f-disjoint}
\item $\Delta(\bigcup_{j\in[m]}\phi_j(T_j))\le \alpha\gamma^{(2^{-r})} n$.\label{rooted embedding:maxdeg}
\end{enumerate}
\end{lemma}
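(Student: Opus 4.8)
The plan is to embed the copies one at a time, greedily, in a random order, and to control the accumulated "damage" by a union bound. First I would fix, for each $j\in[m]$, an ordering $v^j_1,\dots,v^j_{k_j}$ of $V(T_j)\sm X_j$ witnessing that $T_j$ has degeneracy at most $D$ rooted at $X_j$. Processing the indices $j=1,2,\dots,m$ in turn, I would build $\phi_j$ vertex by vertex along this ordering: having already placed the roots via $\Lambda_j$ and the first $\ell-1$ free vertices, I want to choose $\phi_j(v^j_\ell)$ so that (a) it extends to a homomorphism, i.e.\ it lies in $\bigcap_{S}G(S)$ where $S$ ranges over the (at most $D$) already-embedded $(r-1)$-subsets that together with $v^j_\ell$ form an edge of $T_j[X_j\cup\{v^j_1,\dots,v^j_\ell\}]$; (b) it avoids creating an edge that was already used by the hull of a previous embedding $\phi_{j'}$, $j'<j$, or the hull of the current partial $\phi_j$; (c) it avoids the finitely many vertices that would complete an edge of $O$ inside $\Ima(\phi_j)\setminus\Ima(\Lambda_j)$; and (d) it avoids overloading any $(r-1)$-set's degree in $\bigcup_{j'\le j}\phi_{j'}(T_{j'})$. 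The hypothesis $|\bigcap_{S\in A}G(S)|\ge \xi n$ for $|A|\le D$ gives $\xi n$ candidates for (a); I would show (b), (c), (d) each forbid only $o(n)$ of them.

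\textbf{The counting for the forbidden sets.} For (b): an edge $e$ containing $\phi_j(v^j_\ell)$ that clashes with some earlier hull must have its other $r-1$ vertices among the already-placed vertices. There are at most $\binom{t}{r-1}$ choices of which earlier-placed vertices of the current or a previous embedding form that $(r-1)$-set $S$; for a fixed such $S$, the number of earlier embeddings whose hull uses an edge through $S$ is at most the current degree of $S$ in $\bigcup_{j'<j}(\text{hull of }\phi_{j'})$. One maintains the invariant that this hull-degree (and hence the number of vertices forbidden by (b) at any step) is at most, say, $\alpha\gamma^{(2^{-r})}n/2$; this is where a careful inductive bookkeeping on $\Delta$ of the union of hulls is needed, and it is driven by the fact that each embedding contributes at most $t^r$ edges and $m\le\alpha\gamma n^r$, so the average degree is $\le\alpha\gamma t^r n^{r-1}\cdot r/n^{r-1}$-ish — far below $\xi n$ — while the maximum stays controlled because a new hull-edge through a given $(r-1)$-set $S$ requires $\Lambda_j$ or $\phi_j$ to have placed a vertex into $S$, and the rooting condition \eqref{rooting nice} together with the degeneracy ordering limits how often that happens. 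For (c): $\Delta(O)\le\gamma n$, so a fixed already-placed $(r-1)$-subset of $\Ima(\phi_j)\setminus\Ima(\Lambda_j)$ forbids at most $\gamma n$ vertices, and there are at most $\binom{t}{r-1}$ such subsets, forbidding $\le\binom{t}{r-1}\gamma n$ vertices total. For (d): we simply decree that no $(r-1)$-set is allowed to exceed degree $\alpha\gamma^{(2^{-r})}n$ in the growing union; a vertex $x$ is forbidden by (d) only if $x$ together with some already-placed $(r-1)$-set that is already "full" would form an edge, but "full" sets are few since the total number of edges placed so far is $\le t^r m\le t^r\alpha\gamma n^r$, so the number of full $(r-1)$-sets is $\le t^r\alpha\gamma n^r/(\alpha\gamma^{(2^{-r})}n)=t^r\gamma^{(1-2^{-r})}n^{r-1}$, and only those through one of the $\le\binom{t}{r-1}$ relevant positions matter. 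Summing, the forbidden vertices number at most $\big(\tfrac12+o(1)\big)\alpha\gamma^{(2^{-r})}n+O(\gamma n)\ll\xi n$, so a valid choice of $\phi_j(v^j_\ell)$ exists at every step, completing the greedy construction. Conclusions \ref{rooted embedding:disjoint} and \ref{rooted embedding:f-disjoint} are immediate from conditions (b) and (c), and \ref{rooted embedding:maxdeg} from (d) (after absorbing the roots, whose degrees one checks separately are also controlled via \eqref{rooting nice}).

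\textbf{Main obstacle.} The genuinely delicate part is the degree bound \ref{rooted embedding:maxdeg} and, intertwined with it, the maximum-degree control of the union of hulls used in (b): one needs a single self-consistent invariant — roughly, "after embedding $\phi_1,\dots,\phi_{j}$, every $(r-1)$-set has degree $\le\alpha\gamma^{(2^{-r})}n/2$ in $\bigcup(\text{hulls})$" — that is strong enough to make the greedy step go through yet weak enough to be preserved. Preservation is where the hypothesis \eqref{rooting nice} is essential: a new hull-edge through a fixed $(r-1)$-set $S$ is only created when the current embedding places a vertex into $S$, and along the degeneracy ordering each free vertex, once placed, can only be the "last" vertex of edges to its left; combined with \eqref{rooting nice} bounding how many $\Lambda_j$ root a given small set, this caps the number of embeddings that can touch $S$ by roughly $\alpha\gamma n^{r-|S|}$ for $(r-1)$-sets (i.e.\ $\lesssim\alpha\gamma n$), which is exactly what keeps the invariant below the threshold with room to spare. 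Everything else is a routine union-bound-free greedy argument once this invariant is set up correctly; I would present it as a single induction on $j$ with an inner induction on $\ell$.
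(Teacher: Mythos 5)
Your overall strategy coincides with the paper's: embed the $(T_j,X_j)$ one at a time along the degeneracy ordering, forbid at each greedy step the vertices that would clash with an earlier hull, complete an edge of $O$ outside the roots, or overload a degree, and use \eqref{rooting nice} to argue that the relevant degrees stay controlled. Conditions (a)--(c) and their counts are essentially correct. The gap is in (d) and in the preservation of your invariant. You only track the degrees of $(r-1)$-sets in the union of the placed graphs/hulls, and you bound the number of ``full'' $(r-1)$-sets by dividing the total number of placed edges by the threshold. That global count does not prevent the full $(r-1)$-sets from concentrating on a single $(r-2)$-set: for $r\ge 3$ your bound allows up to about $t^r\gamma^{(1-2^{-r})}n^{r-1}$ full $(r-1)$-sets, and if all of them contain a fixed $(r-2)$-subset $S''$ of the already-placed vertices, then every candidate $x$ completes a full set $S''\cup\{x\}$ and is forbidden, so the greedy step dies. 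Equivalently, your invariant cannot be preserved as stated: a newly placed vertex $x$ raises the degree of sets of the form $(S'\setminus\{u\})\cup\{x\}$, and to avoid pushing one of these over the threshold you must avoid every $x$ for which such a set is already nearly full --- which requires a \emph{maximum-degree} bound (over $(r-2)$-sets) on the hypergraph of nearly-full $(r-1)$-sets, not merely a bound on its total size.

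The paper closes this by maintaining the invariant simultaneously for sets of every size $i\in[r-1]$, with level-dependent threshold roughly $\alpha\gamma^{(2^{-i})}n^{r-i}+(root(S,j)+1)2^t$, and then proving $\Delta(BAD_i)\le\gamma^{(2^{-r})}n$ by a handshake argument that descends one level: if too many vertices $v$ made $S\cup\{v\}$ bad for some $(i-1)$-set $S$, then $S$ itself would violate the level-$(i-1)$ bound. The exponents are chosen precisely so that $2^{-r}+2^{-i}<2^{-(i-1)}$ makes this contradiction work, with the base case $i=1$ handled by the global edge count $m\binom{t}{r}\le\binom{t}{r}\alpha\gamma n^r$ --- which is the only level your argument actually covers. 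The case analysis you sketch for preservation (either $S\subseteq\Ima(\Lambda_j)$ and is rooted, so \eqref{rooting nice} pays for the increment, or some vertex of $S$ was placed freely, in which case the last such vertex would have been steered away had $S$ already been bad) is the right one, but it must be run at every level $i$, not only $i=r-1$. For $r=2$ your single-level version happens to suffice; for $r\ge3$ it does not, and this multi-level bookkeeping is the substance of the lemma rather than a routine detail.
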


Note that \ref{rooted embedding:disjoint} implies that $\phi_1(T_1),\dots,\phi_m(T_m)$ are edge-disjoint. We also remark that the $T_j$ do not have to be distinct; in fact, they could all be copies of a single $r$-graph $T$.

\proof
For $j\in[m]$ and a set $S\In V(G)$ with $|S|\in[r-1]$, let $$root(S,j):=|\set{j'\in[j]}{\Lambda_{j'} \mbox{ roots }S}|.$$ We will define $\phi_1,\dots,\phi_m$ successively. Once $\phi_j$ is defined, we let $K_j$ denote the hull of $(\phi_j(T_j),\Ima(\Lambda_j))$. Note that $\phi_j(T_j)\In K_j$ and that $K_j$ is not necessarily a subgraph of $G$.

Suppose that for some $j\in[m]$, we have already defined $\phi_1,\dots,\phi_{j-1}$ such that $K_1,\dots,K_{j-1}$ are edge-disjoint, \ref{rooted embedding:f-disjoint} holds for all $j'\in[j-1]$, and the following holds for $G_{j}:=\bigcup_{j'\in[j-1]}K_{j'}$, all $i\in[r-1]$ and all $S\in \binom{V(G)}{i}$:
\begin{align}
|G_{j}(S)|\le \alpha\gamma^{(2^{-i})}n^{r-i}+ (root(S,j-1)+1)2^t.\label{embedding degree bound}
\end{align}
Note that \eqref{embedding degree bound} together with \eqref{rooting nice} implies that for all $i\in[r-1]$ and all $S\in \binom{V(G)}{i}$, we have
\begin{align}
|G_{j}(S)|\le 2\alpha\gamma^{(2^{-i})}n^{r-i}.\label{embedding degree bound new}
\end{align}\COMMENT{$(root(S,j-1)+1)2^t\le \alpha \gamma n^{r-i}2^t  \le  \alpha\gamma^{(2^{-i})}n^{r-i} $ as $\gamma 2^t\le \gamma^{(2^{-i})}$}

We will now define a $\Lambda_j$-faithful embedding $\phi_j$ of $(T_j,X_j)$ into $G$ such that $K_j$ is edge-disjoint from $G_j$, \ref{rooted embedding:f-disjoint} holds for $j$, and \eqref{embedding degree bound} holds with $j$ replaced by $j+1$. For $i\in[r-1]$, define $BAD_i:=\set{S\in\binom{V(G)}{i}}{|G_j(S)|\ge \alpha\gamma^{(2^{-i})}n^{r-i}}$.
We view $BAD_i$ as an $i$-graph. We claim that for all $i\in[r-1]$,
\begin{align}
\Delta(BAD_i)\le \gamma^{(2^{-r})}n.\label{degeneracy embedding max degree}
\end{align}\COMMENT{Deliberately no $\alpha$ here}
Consider $i\in[r-1]$ and suppose that there exists some $S\in\binom{V(G)}{i-1}$ such that $|BAD_i(S)|>\gamma^{(2^{-r})}n$. We then have that
\begin{align*}
|G_j(S)|&=\frac{1}{r-i+1}\sum_{v\in V(G)\sm S}|G_j(S\cup \Set{v})|\ge r^{-1} \sum_{v\in BAD_i(S)}|G_j(S\cup \Set{v})|\\
          &\ge r^{-1}|BAD_i(S)|\alpha \gamma^{(2^{-i})}n^{r-i}  \ge r^{-1}\gamma^{(2^{-r})}n \alpha \gamma^{(2^{-i})}n^{r-i}=r^{-1}\alpha \gamma^{(2^{-r}+2^{-i})}n^{r-(i-1)}.
\end{align*}
This contradicts \eqref{embedding degree bound new} if $i-1>0$ since $2^{-r}+2^{-i}<2^{-(i-1)}$. If $i=1$, then $S=\emptyset$ and we have $|G_j|\ge r^{-1}\alpha\gamma^{(2^{-r}+2^{-1})}n^{r}$, which is also a contradiction since $|G_{j}|\le m\binom{t}{r}\le \binom{t}{r} \alpha\gamma n^r$ and $2^{-r}+2^{-1}<1$ (as $r\ge 2$ if $i\in [r-1]$). This proves~\eqref{degeneracy embedding max degree}.

We now embed the vertices of $T_j$ such that the obtained embedding $\phi_j$ is $\Lambda_j$-faithful. First, embed every vertex from $X_j$ at its assigned position. Since $T_j$ has degeneracy at most $D$ rooted at $X_j$, there exists an ordering $v_1,\dots,v_{k}$ of the vertices of $V(T_j)\sm X_j$ such that for every $\ell\in[k]$, we have
\begin{align}
|T_j[X_j\cup \Set{v_1,\dots,v_\ell}](v_\ell)|\le D.\label{degeneracy ordering}
\end{align}
Suppose that for some $\ell\in [k]$, we have already embedded $v_1,\dots,v_{\ell-1}$. We now want to define $\phi_j(v_\ell)$.
Let $U:=\set{\phi_j(v)}{v\in X_j\cup \Set{v_1,\dots,v_{\ell-1}}}$ be the set of vertices which have already been used as images for $\phi_j$.
Let $A$ contain all $(r-1)$-subsets $S$ of $U$ such that $\phi_j^{-1}(S)\cup \Set{v_\ell}\in T_j$. We need to choose $\phi_j(v_\ell)$ from the set $(\bigcap_{S\in A}G(S))\sm U$ in order to complete $\phi_j$ to an injective homomorphism from $T_j$ to $G$. By~\eqref{degeneracy ordering}, we have $|A|\le D$. Thus, by assumption, $|\bigcap_{S\in A}G(S)|\ge \xi n$.

For $i\in[r-1]$, let $O_i$ consist of all vertices $x\in V(G)$ such that there exists some $S\in \binom{U}{i-1}$ such that $S\cup \Set{x}\in BAD_i$ (so $BAD_1= \binom{O_1}{1}$). We have $$|O_i|\le \binom{|U|}{i-1}\Delta(BAD_i)\overset{\eqref{degeneracy embedding max degree}}{\le} \binom{t}{i-1}\gamma^{(2^{-r})}n.$$
Let $O_r$ consist of all vertices $x\in V(G)$ such that $S\cup \Set{x}\in G_j$ for some $S\in \binom{U}{r-1}$. By~\eqref{embedding degree bound new}, we have that $|O_r|\le \binom{|U|}{r-1}\Delta(G_j)\le  \binom{t}{r-1}2\alpha \gamma^{(2^{-(r-1)})}n \le \binom{t}{r-1} \gamma^{(2^{-r})}n$.\COMMENT{Here need $\alpha\le 1$}
Finally, let $O_{r+1}$ be the set of all vertices $x\in V(G)$ such that there exists some $S\in \binom{U}{r}$ such that $S\cup \Set{x}\in O$. By assumption, we have $|O_{r+1}|\le \binom{|U|}{r}\Delta(O) \le \binom{t}{r}\gamma n$.

Crucially, we have $$|\bigcap_{S\in A}G(S)|-|U|-\sum_{i=1}^{r+1}|O_i| \ge \xi n-t-2^t\gamma^{(2^{-r})}n >0.$$ Thus, there exists a vertex $x\in V(G)$ such that $x\notin U\cup O_1\cup \dots \cup O_{r+1}$ and $S\cup \Set{x}\in G$ for all $S\in A$. Define $\phi_j(v_\ell):=x$.

Continuing in this way until $\phi_j$ is defined for every $v\in V(T_j)$ yields an injective homomorphism from $T_j$ to $G$. By definition of $O_{r+1}$, \ref{rooted embedding:f-disjoint} holds for $j$.  Moreover, by definition of $O_r$, $K_j$ is edge-disjoint from $G_j$. It remains to show that \eqref{embedding degree bound} holds with $j$ replaced by $j+1$. Let $i\in[r-1]$ and $S\in \binom{V(G)}{i}$. If $S\notin BAD_i$, then we have $|G_{j+1}(S)|\le |G_{j}(S)|+\binom{t-i}{r-i}\le \alpha\gamma^{(2^{-i})}n^{r-i}+2^t$, so \eqref{embedding degree bound} holds. Now, assume that $S\in BAD_i$. If $S\In \Ima(\Lambda_{j})$ and $|T_j(\Lambda_{j}^{-1}(S))|>0$, then $root(S,j)=root(S,j-1)+1$ and thus $|G_{j+1}(S)|\le |G_{j}(S)|+\binom{t-i}{r-i}\le \alpha \gamma^{(2^{-i})}n^{r-i}+ (root(S,j-1)+1)2^t+\binom{t-i}{r-i} \le \alpha\gamma^{(2^{-i})}n^{r-i}+ (root(S,j)+1)2^t$ and \eqref{embedding degree bound} holds. Suppose next that $S\not\In \Ima(\Lambda_{j})$. We claim that $S\not\In V(\phi_j(T_j))$. Suppose, for a contradiction, that $S\In V(\phi_j(T_j))$. Let $\ell:=\max\set{\ell'\in[k]}{\phi_j(v_{\ell'})\in S}$. (Note that the maximum exists since $(S\cap V(\phi_j(T_j)))\sm \Ima(\Lambda_{j})$ is not empty.) Hence, $x:=\phi_j(v_\ell)\in S$. Recall that when we defined $\phi_j(v_\ell)$, $\phi_j(v)$ had already been defined for all $v\in X_j\cup \Set{v_1,\dots,v_{\ell-1}}$ and hence $S\sm\Set{x}\In U$. But since $S\in BAD_i$, we have $x\in O_i$, in contradiction to $x=\phi_j(v_\ell)$. Thus, $S\not\In V(\phi_j(T_j))=V(K_j)$, which clearly implies that $|G_{j+1}(S)|=|G_j(S)|$ and \eqref{embedding degree bound} holds. The last remaining case is if $S\In \Ima(\Lambda_{j})$ but $|T_j(\Lambda_{j}^{-1}(S))|=0$. But then $S$ is not a root of $(\phi_j(T_j),\Ima(\Lambda_j))$ and thus not a root of $(K_j,\Ima(\Lambda_j))$. Hence $|K_j(S)|=0$ and therefore $|G_{j+1}(S)|=|G_j(S)|$ as well.

Finally, if $j=m$, then the fact that \eqref{embedding degree bound} holds with $j$ replaced by $j+1$ together with \eqref{rooting nice} implies that $\Delta(\bigcup_{j\in[m]}\phi_j(T_j))\le 2\alpha \gamma^{(2^{-(r-1)})}n \le \alpha \gamma^{(2^{-r})} n$.
\endproof

\section{Approximate \texorpdfstring{$F$}{F}-decompositions}\label{sec:bounded}

The majority of the edges which are covered during our iterative absorption procedure are covered by `approximate' $F$-decompositions of certain parts of $G$, i.e.~$F$-packings which cover almost all the edges in these parts. For cliques, the existence of such packings was first proved by R\"odl~\cite{Ro}, introducing what is now called the `nibble' technique. 
Here, we derive a result on approximate $F$-decompositions which is suitable for our needs (Lemma~\ref{lem:F nibble}).

We will derive the $F$-nibble lemma (Lemma~\ref{lem:F nibble}) from the special case when $F$ is a clique. This in turn was derived in~\cite{GKLO} from a result in~\cite{AY} which allows us to assume that the leftover of an approximate clique decomposition has appropriately bounded maximum degree.

\begin{lemma}[Boosted nibble lemma, \cite{GKLO}]\label{lem:boosted nibble}
Let $1/n\ll \gamma,\eps \ll \xi,1/f$ and $r\in[f-1]$. Let $G$ be a complex on $n$ vertices such that $G$ is $(\eps,d,f,r)$-regular and $(\xi,f+r,r)$-dense for some $d\ge \xi$. Then $G$ contains a $\krq{r}{f}$-packing $\cK$ such that $\Delta(G^{(r)}-\cK^{(r)})\le \gamma n$.
\end{lemma}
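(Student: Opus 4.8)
The statement to prove is the Boosted nibble lemma (Lemma~\ref{lem:boosted nibble}), which upgrades a classical R\"odl-nibble-type approximate decomposition into one whose leftover has \emph{linearly} bounded maximum degree. The natural strategy is a two-step argument: first invoke the known nibble-type result (as noted, from~\cite{AY}) to obtain a $\krq{r}{f}$-packing whose leftover has \emph{small} maximum degree, say $\Delta(G^{(r)}-\cK_0^{(r)})\le \eps' n$ for a parameter $\eps'$ we may take as small as we like; then observe that such a leftover itself inherits enough structure to repeat, or rather, to directly meet the target $\gamma n$. Concretely, I would fix a hierarchy $1/n\ll \gamma \ll \eps' \ll \eps,\xi,1/f$ and first show that the hypotheses $(\eps,d,f,r)$-regularity and $(\xi,f+r,r)$-density of $G$ pass to the (induced complex on the) $r$-graph $G^{(r)}$ in the form required by the external nibble result. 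Since regularity says every $e\in G^{(r)}$ lies in $(d\pm\eps)n^{f-r}$ copies of $\krq{f}{}$ (in the complex sense, i.e.\ supported $f$-cliques), and density controls $(f+r)$-sets, these are exactly the codegree-type conditions under which an approximate clique decomposition with $o(n)$-degree leftover exists.

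\textbf{Key steps.} (1) Translate the complex conditions into the hypergraph codegree conditions of the cited approximate-decomposition theorem from~\cite{AY} (or its packaging in~\cite{GKLO}): $(\eps,d,f,r)$-regularity gives that every $r$-edge is in $\Theta(n^{f-r})$ copies of $\krq{r}{f}$, and one also needs an upper bound on the codegree of pairs of $r$-sets in the ``clique hypergraph'', which follows from $(\xi,f+r,r)$-density together with a crude count (any two $r$-sets lie together in $O(n^{f-r-1})$ or $O(n^{2f-2r-\text{something}})$ common cliques; the point is it is lower-order relative to the individual degree). (2) Apply that theorem to get $\cK$ with $\Delta(G^{(r)}-\cK^{(r)})\le \eps' n$, where $\eps'$ is whatever the external bound delivers once $n$ is large; by choosing $n$ large and tracking constants we can make $\eps'$ as small as we wish, in particular $\eps'\le\gamma$. (3) Conclude. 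If the cited result only gives leftover degree $o(n)$ without a fully explicit rate, one phrases the whole lemma with the $1/n\ll\gamma$ quantifier so that largeness of $n$ alone suffices; this is precisely why the statement is formulated with the hierarchy rather than an explicit function of $\gamma$.

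\textbf{Main obstacle.} The genuinely delicate point is verifying that the \emph{complex}-theoretic regularity/density hypotheses are strong enough to feed the \emph{hypergraph} nibble machinery of~\cite{AY}, which is typically stated for (near-)regular hypergraphs with small pair-degrees. The regularity condition handles the ``every edge in roughly equally many cliques'' requirement directly, but one must check the small-codegree condition: for two distinct $r$-sets $e,e'$, the number of $f$-cliques of $G$ containing both must be $o(n^{f-r})$. When $|e\cup e'|\ge r+1$ this is immediate since such a clique is determined by $f-|e\cup e'|\le f-r-1$ further vertices; when $e,e'$ share all but one vertex one still gets at most $n^{f-r-1}$ by a direct count. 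So the codegree condition is automatic and does not even need density — density is needed instead to guarantee the packing is large enough / that the process does not stall, i.e.\ it is used to control the evolving leftover, not the initial codegrees. I would therefore spend the bulk of the write-up carefully citing the exact form of the result from~\cite{AY,GKLO} being used and matching parameters, and keep the codegree verification to a one-line counting remark. Everything after the black-box application is bookkeeping with the $\ll$-hierarchy.

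\endproof
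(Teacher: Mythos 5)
Your proposal has a genuine gap: it misses the ``boost'' that gives the lemma its name and its point. The hierarchy in the statement is $1/n\ll\gamma,\eps\ll\xi,1/f$, so $\gamma$ and $\eps$ are \emph{unrelated}; in particular $\gamma$ may be far smaller than $\eps$. A direct application of the nibble machinery of~\cite{AY} to $G$ itself, as in your step (2), produces a leftover whose maximum degree is controlled by the regularity defect $\eps$ of the input (the fraction left uncovered by Pippenger--Spencer-type arguments cannot be made smaller than a function of the relative irregularity $\eps/d$, no matter how large $n$ is). So your claim that ``by choosing $n$ large and tracking constants we can make $\eps'$ as small as we wish, in particular $\eps'\le\gamma$'' is false under the stated hypotheses, and indeed your own hierarchy $1/n\ll\gamma\ll\eps'\ll\eps$ contradicts the conclusion $\eps'\le\gamma$ you then draw. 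Your argument would prove the lemma only under the stronger assumption $\eps\ll\gamma$, which is exactly what the paper emphasises is \emph{not} assumed: ``Crucially, we do not need to assume that $\eps\ll\gamma$ in Lemma~\ref{lem:boosted nibble}.''

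The missing step is the Boost lemma (Lemma~\ref{lem:boost}): using the $(\xi,f+r,r)$-density of $G$ (this is where density enters, not merely to keep the process from stalling), one first selects $Y\In G^{(f)}$ so that $G[Y]$ is $(n^{-(f-r)/2.01},d/2,f,r)$-regular, i.e.\ the regularity parameter is improved from the fixed constant $\eps$ to a quantity polynomially small in $n$. Since $G[Y]^{(r)}=G^{(r)}$, any $\krq{r}{f}$-packing of $G[Y]$ is one of $G$ with the same leftover, and applying the result of~\cite{AY} to $G[Y]$ now yields leftover maximum degree at most $\gamma n$ because $1/n\ll\gamma$. Your codegree verification and the translation of the complex-theoretic hypotheses into hypergraph language are fine as far as they go, but without the boosting step the proof does not establish the stated lemma.
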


Crucially, we do not need to assume that $\eps\ll \gamma$ in Lemma~\ref{lem:boosted nibble}. The reason for this is the so-called Boost lemma from \cite{GKLO}, which allows us to `boost' the regularity parameters of a suitable complex and which is an important ingredient in the proof of both Lemma~\ref{lem:boosted nibble} and Lemma~\ref{lem:F nibble}.

\begin{lemma}[Boost lemma, \cite{GKLO}]\label{lem:boost}
Let $1/n\ll \eps,\xi,1/f$ and $r\in[f-1]$ such that $2(2\sqrt{\eul})^r \eps \le \xi$. Let $\xi':=0.9(1/4)^{\binom{f+r}{f}}\xi$. Suppose that $G$ is a complex on $n$ vertices and that $G$ is $(\eps,d,f,r)$-regular and $(\xi,f+r,r)$-dense for some $d\ge \xi$. Then there exists $Y\In G^{(f)}$ such that $G[Y]$ is $(n^{-(f-r)/2.01},d/2,f,r)$-regular and $(\xi',f+r,r)$-dense.
\end{lemma}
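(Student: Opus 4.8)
The plan is to obtain $Y$ as a random sub-$f$-graph of $G^{(f)}$: we include each $f$-set $S\in G^{(f)}$ in $Y$ independently with a carefully tuned probability $p_S\in[1/4,1]$. For $e\in G^{(r)}$ write $d_e:=|G^{(f)}(e)|n^{-(f-r)}=d\pm\eps$; note that the hypothesis $2(2\sqrt{\eul})^r\eps\le\xi\le d$ forces $\eps\le d/2$, so $d_e\in[d/2,3d/2]$ for every $e$. The key is to choose the weights $p_S$ so that for every $e\in G^{(r)}$ the \emph{expected} $f$-degree of $e$ in $G[Y]$ equals exactly the target value, i.e.
\begin{align*}
\sum_{S\in G^{(f)}\colon e\In S}p_S=\frac d2\,n^{f-r}.
\end{align*}
Once this is arranged, $|G[Y]^{(f)}(e)|$ is a sum of $|G^{(f)}(e)|=\Theta(n^{f-r})$ independent $0/1$ variables with mean exactly $\tfrac d2 n^{f-r}$, and a Chernoff bound gives $|G[Y]^{(f)}(e)|=\tfrac d2 n^{f-r}\pm O(\sqrt{n^{f-r}\log n})$ with probability $1-n^{-\omega(1)}$; a union bound over the at most $n^r$ sets $e$, using that $\sqrt{n^{f-r}\log n}\ll n^{-(f-r)/2.01}n^{f-r}$ for large $n$ (the exponent $2.01$ rather than $2$ providing exactly the slack needed), yields the $(n^{-(f-r)/2.01},d/2,f,r)$-regularity.

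To construct the weights I would start from the uniform choice $p_S\equiv 1/2$. Then the expected $f$-degree of $e$ equals $\tfrac12 d_e n^{f-r}$, which differs from the target $\tfrac d2 n^{f-r}$ by $\tfrac12(d-d_e)n^{f-r}$, of absolute value at most $\tfrac\eps2 n^{f-r}$. It thus suffices to add a correction $a\colon G^{(f)}\to[-1/4,1/4]$ with $\sum_{S\ni e}a_S=\tfrac12(d-d_e)n^{f-r}$ for all $e\in G^{(r)}$, and then set $p_S:=\tfrac12+a_S\in[1/4,3/4]$. Existence of such a correction inside the box is where the hypotheses of the lemma are used: $(\eps,d,f,r)$-regularity controls the `diagonal' (each $e$ lies in $\Theta(n^{f-r})$ sets of $G^{(f)}$, vastly more than the $\le\tfrac\eps2 n^{f-r}$ of mass that must be redistributed at $e$), while $(\xi,f+r,r)$-density, via the handshaking identity~\eqref{handshaking}, bounds the codegrees $|G^{(f)}(e\cup e')|=O(n^{f-r-1})$, so that correcting the discrepancy at one $r$-set perturbs the degree of every other $r$-set only by a lower-order amount. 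One can then either route the required mass greedily along $f$-sets, or phrase the defining equations for $a$ (equivalently, for $p_S$ written multiplicatively as $\prod_{e\in\binom{S}{r}}z_e$) as a fixed-point equation and solve it by a contraction argument in an appropriately weighted norm, exploiting that $\eps$ is tiny compared with $\xi\le d$.

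Finally, for the density of $G[Y]$: since $G$ is a complex, every $(f+r)$-set $W\in G^{(f+r)}$ has all of its $\binom{f+r}{f}$ $f$-subsets in $G^{(f)}$, and $W\in G[Y]^{(f+r)}$ precisely when all these $f$-subsets land in $Y$, which happens with probability $\prod_{S\in\binom{W}{f}}p_S\ge (1/4)^{\binom{f+r}{f}}$. Hence $\ex\,|G[Y]^{(f+r)}(e)|\ge (1/4)^{\binom{f+r}{f}}|G^{(f+r)}(e)|\ge (1/4)^{\binom{f+r}{f}}\xi n^f$ for each $e\in G^{(r)}$. As $|G[Y]^{(f+r)}(e)|$ is a polynomial of bounded degree $\binom{f+r}{f}$ in the independent indicators $(\mathbf 1[S\in Y])_{S\in G^{(f)}}$ with bounded partial derivatives, a polynomial (Kim--Vu type) concentration inequality followed by a union bound over $e$ gives $|G[Y]^{(f+r)}(e)|\ge 0.9\,(1/4)^{\binom{f+r}{f}}\xi n^f=\xi' n^f$ for all $e\in G^{(r)}$ whp, completing the proof. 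I expect the construction of the weights $p_S$ in the second paragraph to be the main obstacle — keeping all corrections inside $[-1/4,1/4]$ while achieving the exact prescribed row sums — whereas the two concentration steps are routine.
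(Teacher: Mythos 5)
The paper itself gives no proof of this lemma (it is imported from \cite{GKLO}), so I am judging your argument on its own terms. Your overall architecture is surely the intended one: the constants in the statement essentially force it — the factor $(1/4)^{\binom{f+r}{f}}$ in $\xi'$ signals that every $Q\in G^{(f)}$ is retained with probability at least $1/4$, and the exponent $2.01$ leaves exactly the slack above the Chernoff window $\sqrt{n^{f-r}\log n}$ that your union bound needs. The two concentration steps (Chernoff for the $f$-degrees, a polynomial concentration inequality for the $(f+r)$-degrees) are indeed routine once the retention probabilities $p_S\in[1/4,1]$ with row sums $\sum_{S\ni e}p_S=\tfrac d2 n^{f-r}$ (up to $o(n^{(f-r)(1-1/2.01)})$) are in hand.

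The gap is exactly where you suspect it, and it is a genuine one: you have not constructed those probabilities, and neither of your two suggestions works as described. The flaw in both is quantitative. Although each individual codegree $|G^{(f)}(e\cup e')|$ is $O(n^{f-r-1})$ for $e'\neq e$, the \emph{total} interaction of a fixed $e$ with all other $r$-sets is $\sum_{e'\neq e}|G^{(f)}(e)\cap G^{(f)}(e')|=\bigl(\binom fr-1\bigr)|G^{(f)}(e)|=\Theta(n^{f-r})$. Hence if you correct the discrepancy $r_{e'}=O(\eps n^{f-r})$ at each $e'$ by spreading mass $r_{e'}/|G^{(f)}(e')|=O(\eps/\xi)$ over the $f$-sets containing $e'$, the perturbation induced back at $e$ is $\Theta(n^{f-r})\cdot O(\eps/\xi)$ — the same order as (in fact up to $1/\xi$ larger than) the discrepancy you set out to remove. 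So the off-diagonal part of the relevant operator has $\ell^\infty$-norm $\Theta(1)$ rather than $o(1)$: the natural Jacobi/Sinkhorn-type iteration is not a contraction, and one round of greedy routing does not decrease the defect. There is a further point your justification does not touch: for the equations $\sum_{S\ni e}a_S=\tfrac12(d-d_e)n^{f-r}$ to admit \emph{any} solution to the required accuracy $n^{(f-r)/2+o(1)}$ (far below the $\eps n^{f-r}$ scale that regularity and codegree bounds control), the defect vector must lie essentially in the column space of the $G^{(r)}\times G^{(f)}$ incidence matrix, i.e.\ the constant function on $G^{(r)}$ must admit an approximate real-weighted $\krq{r}{f}$-decomposition supported on $G^{(f)}$; this is precisely the nontrivial content a correct construction of the $p_S$ has to deliver, and it does not follow from the diagonal/codegree estimates you invoke. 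Until the weights are actually produced, the proof is incomplete at its central step.
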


We now prove an $F$-nibble lemma which allows us to find $\kappa$-well separated approximate $F$-decompositions in supercomplexes.

\begin{lemma}[$F$-nibble lemma]\label{lem:F nibble}
Let $1/n\ll 1/\kappa \ll \gamma,\eps \ll \xi,1/f$ and $r\in[f-1]$.\COMMENT{Don't need $1/\kappa\ll \eps$, but $1/\kappa\ll \gamma$ is essential} Let $F$ be an $r$-graph on $f$ vertices.\COMMENT{Can be any $r$-graph as we do not use induction} Let $G$ be a complex on $n$ vertices such that $G$ is $(\eps,d,f,r)$-regular and $(\xi,f+r,r)$-dense for some $d\ge \xi$. Then $G$ contains a $\kappa$-well separated $F$-packing $\cF$ such that $\Delta(G^{(r)}-\cF^{(r)})\le \gamma n$.
\end{lemma}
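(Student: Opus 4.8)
The plan is to derive Lemma~\ref{lem:F nibble} from the clique case (Lemma~\ref{lem:boosted nibble}) by decomposing a near-optimal $\krq{r}{f}$-packing of $G$ into copies of $F$. First I would apply Lemma~\ref{lem:boosted nibble} to $G$ with a sufficiently small auxiliary parameter $\gamma'$ (with $1/n \ll \gamma' \ll \gamma$) to obtain a $\krq{r}{f}$-packing $\cK$ with $\Delta(G^{(r)} - \cK^{(r)}) \le \gamma' n$. Each $K' \in \cK$ is a clique on $f$ vertices; fix an arbitrary labelling of $V(F)$ by $[f]$ and, for each $K'$, choose a bijection from $[f]$ to $V(K')$ to obtain a copy $F'$ of $F$ inside $K'$. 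Let $\cF$ be the resulting family. Then $\cF$ is an $F$-packing in $G$ (the copies are edge-disjoint since the $K'$ are), and $V(F') \in G^{(f)}$ since $G$ is a complex and $V(K')$ spans a clique, so $\cF$ is an $F$-packing in $G$ in the sense of Definition~\ref{def:complex dec}.

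For the leftover bound, the uncovered edges are $G^{(r)} - \cF^{(r)} = (G^{(r)} - \cK^{(r)}) \cup (\cK^{(r)} - \cF^{(r)})$. The first part has maximum degree at most $\gamma' n$. For the second part, within each clique $K'$ we leave uncovered exactly the $r$-sets of $V(K')$ that are not edges of the embedded copy $F'$; these number at most $\binom{f}{r}$ per clique. Since $\cK$ is a packing, any fixed $(r-1)$-set $S$ lies in at most $n$ cliques of $\cK$, but more usefully: each vertex (or $(r-1)$-set) of $G$ lies in boundedly many edges per clique and the cliques containing a fixed $(r-1)$-set are edge-disjoint on that link, so $\Delta(\cK^{(r)} - \cF^{(r)}) \le \Delta(\cK^{(r)})$, and since $\cK^{(r)} \subseteq G^{(r)}$ is a union of edge-disjoint cliques, a fixed $(r-1)$-set $S$ extends to at most $\Delta(G) \le n$ vertices — this crude bound is too weak, so instead I would argue directly: the leftover-within-cliques $r$-graph has the property that any $(r-1)$-set $S$ is contained in at most (number of cliques of $\cK$ through $S$) $\cdot (f - r + 1)$ of its edges, and the cliques of $\cK$ through $S$ are edge-disjoint in the link $G(S)$, each using up $\binom{f-r+1}{1}$ or more neighbours, hence there are at most $n/(f-r+1) \cdot (f-r+1) = n$ — again too weak. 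The clean fix: simply absorb $\cK^{(r)} - \cF^{(r)}$ into the leftover and note $\Delta(\cK^{(r)} - \cF^{(r)}) \le \Delta(\cK^{(r)})$; but $\cK$ being an approximate decomposition means $\cK^{(r)}$ has near-full degree. So this naive approach does \emph{not} control the leftover degree, and that is the main obstacle.

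To overcome this, the right approach is to iterate or to apply the nibble at the level of $F$ directly, mimicking the proof of Lemma~\ref{lem:boosted nibble}: one reduces to the clique case but only uses it to fill a \emph{bounded-degree} target. Concretely, I would instead set up an auxiliary hypergraph whose vertices are the edges of $G^{(r)}$ and whose hyperedges are the edge-sets of copies of $F$ supported on cliques of $G$, verify the codegree and regularity conditions needed for the Pippenger–Spencer / R\"odl nibble theorem (the density of $f$-cliques through each $r$-edge is $(d \pm \eps)n^{f-r}$ by regularity, and the relevant codegrees are $o$ of the degree by the density hypothesis), and thereby obtain directly an $F$-packing covering all but a $\gamma n$-bounded-degree leftover. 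The $\kappa$-well-separatedness, i.e. \ref{separatedness:1} and \ref{separatedness:2}, is then handled by a greedy/probabilistic deletion step: since the chosen copies are already pairwise edge-disjoint, two copies $F', F''$ with $|V(F') \cap V(F'')| = r$ would force a shared non-edge $r$-set, and the number of such coincidences, as well as the number of copies through any fixed $r$-set, can be shown to be bounded (in expectation, by $O(1)$, using that the nibble produces a quasirandom packing) — so deleting one copy from each violating pair and from each overloaded $r$-set removes only a $o(n)$-degree sub-$r$-graph, which we dump into the leftover. The final and key point to verify carefully is that the "Boost lemma" (Lemma~\ref{lem:boost}) lets us avoid assuming $\eps \ll \gamma$: we first boost $G$ to $G[Y]$ which is $(n^{-(f-r)/2.01}, d/2, f, r)$-regular and still $(\xi', f+r, r)$-dense, apply the nibble to $G[Y]$ where now the regularity parameter is genuinely negligible, and observe that the resulting $F$-packing in $G[Y]$ is also one in $G$ with the required leftover bound. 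I expect the bookkeeping around well-separatedness and the interplay with the boost step to be the most delicate part; the existence of the approximate packing itself follows from standard nibble technology once the auxiliary hypergraph's parameters are checked.
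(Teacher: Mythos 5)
You correctly diagnose the obstacle with the naive route (one clique nibble followed by embedding one copy of $F$ per clique leaves a constant proportion of each clique uncovered, so the leftover has degree $\Theta(n)$), but the fix you then propose diverges from the paper's and leaves two genuine gaps. The paper's key idea, which you miss, is Lemma~\ref{lem:random packing}: if $\cK$ is a $\krq{r}{f}$-decomposition and $\cF_1$ is the $\cK$-random $F$-packing, then the leftover $G-\cK^{\le(r+1)}-\cF_1^{(r)}$ is again $(2\eps,p^{\binom{f}{r}-1}d,f,r)$-regular and suitably dense, where $p=1-|F|/\binom{f}{r}$. One therefore simply \emph{iterates} the pair (boosted clique nibble, random embedding): after $j$ rounds the uncovered part has maximum degree at most $(1.1p)^{j}n$ plus an accumulated error of $j\eps' n$ from the clique-nibble leftovers, so $\kappa\approx\log_{1.1p}\gamma$ rounds suffice --- this is exactly why the hierarchy demands $1/\kappa\ll\gamma$. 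Well-separatedness is then free: within one round, distinct cliques of a $\krq{r}{f}$-packing share at most $r-1$ vertices, so each round is $1$-well separated, and removing $\cF_j'^{\le(r+1)}$ from the complex before the next round makes consecutive rounds $(r+1)$-disjoint, whence the union is $\kappa$-well separated by Fact~\ref{fact:ws}.

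Your alternative --- a direct nibble on the auxiliary hypergraph of $F$-copies followed by a deletion step --- has two concrete problems. First, Pippenger--Spencer only bounds the \emph{size} of the leftover, whereas the lemma requires $\Delta(G^{(r)}-\cF^{(r)})\le\gamma n$; the paper only has the max-degree-controlled nibble (via~\cite{AY}) in the clique case, namely Lemma~\ref{lem:boosted nibble}, and you would have to reprove that strengthening for the $F$-copy hypergraph rather than cite ``standard nibble technology''. Second, the deletion argument for \ref{separatedness:1} and \ref{separatedness:2} needs much more than the expected number of copies through a fixed $r$-set being $O(1)$: a fixed $(r-1)$-set $T$ lies in $\Theta(n)$ copies of the packing, and to keep the dumped edges below degree $\gamma n$ at $T$ you must show that only an $o(1)$-fraction of those copies get deleted, i.e.~you need concentration of the maximum $r$-set multiplicity and of the number of vertex-overlapping pairs through $T$. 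Nibble theorems as stated give no such control, so this step does not go through without a substantial additional analysis of the packing process. Both gaps are avoided by the iteration scheme above.
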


Let $F$ be an $r$-graph on $f$ vertices. Given a collection $\cK$ of edge-disjoint copies of $\krq{r}{f}$, we define the \defn{$\cK$-random $F$-packing $\cF$} as follows: For every $K\in \cK$, choose a random bijection from $V(F)$ to $V(K)$ and let $F_K$ be a copy of $F$ on $V(K)$ embedded by this bijection. Let $\cF:=\set{F_K}{K\in \cK}$.

Clearly, if $\cK$ is a $\krq{r}{f}$-decomposition of a complex $G$, then the $\cK$-random $F$-packing $\cF$ is a $1$-well separated $F$-packing in $G$. Moreover, writing $p:=1-|F|/\binom{f}{r}$, we have $|\cF^{(r)}|=|F||\cK|=|F||G^{(r)}|/\binom{f}{r}=(1-p)|G^{(r)}|$, and for every $e\in G^{(r)}$, we have $\prob{e\in G^{(r)}-\cF^{(r)}}=p$. As turns out, the leftover $G^{(r)}-\cF^{(r)}$ behaves essentially like a $p$-random subgraph of $G^{(r)}$ (cf.~Lemma~\ref{lem:random packing}). Our strategy to prove Lemma~\ref{lem:F nibble} is thus as follows: We apply Lemma~\ref{lem:boosted nibble} to $G$ to obtain a $\krq{r}{f}$-packing $\cK_1$ such that $\Delta(G^{(r)}-\cK_1^{(r)})\le \gamma n$. The leftover here is negligible, so assume for the moment that $\cK_1$ is a $\krq{r}{f}$-decomposition. We then choose a $\cK_1$-random $F$-packing $\cF_1$ in $G$ and continue the process with $G-\cF_1^{(r)}$. In each step, the leftover decreases by a factor of $p$. Thus after $\log_{p}\gamma$ steps, the leftover will have maximum degree at most $\gamma n$.

\begin{lemma}\label{lem:random packing}
Let $1/n\ll \eps \ll \xi,1/f$ and $r\in[f-1]$. Let $F$ be an $r$-graph on $f$-vertices with $p:=1-|F|/\binom{f}{r}\in (0,1)$. Let $G$ be an $(\eps,d,f,r)$-regular and $(\xi,f+r,r)$-dense complex on $n$ vertices for some $d\ge \xi$. Suppose that $\cK$ is a $\krq{r}{f}$-decomposition of $G$. Let $\cF$ be the $\cK$-random $F$-packing in $G$. Then whp the following hold for $G':=G-\cK^{\le(r+1)}-\cF^{(r)}$.
\begin{enumerate}[label=\rm{(\roman*)}]
\item $G'$ is $(2\eps,p^{\binom{f}{r}-1}d,f,r)$-regular;\label{f nibble trick regular}
\item $G'$ is $(0.9p^{\binom{f+r}{r}-1}\xi,f+r,r)$-dense;\label{f nibble trick dense}
\item $\Delta(G'^{(r)})\le 1.1p\Delta(G^{(r)})$.\label{f nibble trick maxdeg}
\end{enumerate}
\end{lemma}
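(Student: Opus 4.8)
\textbf{Proof strategy for Lemma~\ref{lem:random packing}.}
The plan is to show that each of the three conclusions is a concentration statement about a sum of (nearly) independent indicator random variables, one family for each clique $K\in\cK$, and then apply a Chernoff-type bound together with a union bound over the relevant $r$-sets, $(f+r)$-sets, etc. The key observation is that the random choices defining $\cF$ (one uniformly random bijection $V(F)\to V(K)$ per clique $K$) are mutually independent over the cliques $K\in\cK$, and that deleting $\cK^{\le(r+1)}$ only removes a negligible (bounded-degree) set of edges, so it contributes at most a small additive error to each degree and link-degree count. Since $\cK$ is a $\krq{r}{f}$-decomposition of $G$, each $e\in G^{(r)}$ lies in exactly one clique $K=K(e)$, and $e$ survives into $\cF^{(r)}$ (i.e.\ is \emph{not} covered by the embedded copy $F_K$) precisely when the random bijection places $e$ on one of the $\binom{f}{r}-|F|$ non-edges of $F$; this event has probability exactly $p$ and depends only on the single bijection for $K(e)$.

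For \ref{f nibble trick maxdeg}, fix an $(r-1)$-set $S$. Then $|G'^{(r)}(S)|\le |\{e\in G^{(r)}(S): e\cup S\notin \cF^{(r)}\text{-covered}\}|$ is a sum of independent $0/1$ variables (indexed by the distinct cliques meeting $S$; edges in a common clique are positively correlated but one can split by clique and bound crudely, or simply note distinct edges $e\supseteq S$ lie in distinct cliques since two $r$-sets sharing $r-1$ vertices cannot both lie in one $\krq{r}{f}$ unless... they can, so one groups the at most $\binom{f-r+1}{1}$ edges through $S$ inside a fixed clique and treats each clique's contribution as a bounded independent variable). Its expectation is $p|G^{(r)}(S)|\le p\Delta(G^{(r)})$, and Chernoff gives $|G'^{(r)}(S)|\le 1.1\,p\,\Delta(G^{(r)})$ with probability $1-e^{-\Omega(n)}$ (using $\Delta(G^{(r)})=\Theta(n)$, which follows from $(\eps,d,f,r)$-regularity and handshaking, and from $G$ being dense); a union bound over the $\binom{n}{r-1}$ choices of $S$ finishes this part.

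For \ref{f nibble trick regular} and \ref{f nibble trick dense}, the quantity to control is $|G'^{(f)}(e)|$ for $e\in G'^{(r)}$, respectively $|G'^{(f+r)}(e)|$ for $e\in G'^{(r)}$. Here $G'^{(f)}(e)$ consists of those $f$-sets $Q\supseteq e$ in $G^{(f)}$ all of whose $r$-subsets survive into $G'^{(r)}$ (and $Q$ is not destroyed by the $\cK^{\le(r+1)}$-deletion, a lower-order effect). For a fixed $Q\in G^{(f)}(e)$, whether $Q\in G'^{(f)}(e)$ depends on the bijections of the cliques covering the $\binom{f}{r}-1$ relevant $r$-subsets of $Q\cup e$ other than $e$ itself; generically these lie in $\binom{f}{r}-1$ distinct cliques, and each must avoid "killing" its $r$-set, an event of probability exactly $p$, giving $\pr[Q\in G'^{(f)}(e)]=(1\pm o(1))p^{\binom{f}{r}-1}$. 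Summing over the $(d\pm\eps)n^{f-r}$ choices of $Q$ and applying a concentration inequality for sums of variables with limited dependence (e.g.\ a bounded-degree dependency graph, since each bijection is shared by only $O(1)$ of the $f$-sets through $e$), we get $|G'^{(f)}(e)|=(p^{\binom{f}{r}-1}d\pm 2\eps)n^{f-r}$ whp, and similarly $|G'^{(f+r)}(e)|\ge 0.9\,p^{\binom{f+r}{r}-1}\xi n^{f}$; a union bound over $e$ and over $G^{(r)}\setminus G'^{(r)}$ versus $G'^{(r)}$ (the latter is a subset of the former, so it suffices to prove the bound for all $e\in G^{(r)}$) completes the argument.

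\textbf{Main obstacle.}
The delicate point is the dependency structure: the events "$r$-set $e'$ survives" for the various $r$-subsets $e'$ of a fixed $(f$ or $f+r)$-set are \emph{not} independent when two such $e'$ happen to lie in the same clique of $\cK$ (which can occur, since an $f$-clique contains $\binom{f}{r}$ many $r$-sets). One must therefore argue that for the $f$-sets $Q$ relevant to regularity, the $r$-subsets $e'\subseteq Q\cup e$, $e'\neq e$, do typically spread across many distinct cliques --- or, more robustly, simply control the probability $\pr[Q\in G'^{(f)}(e)]$ directly as a product over clique-classes (each class contributing a factor between $p^{\binom{f}{r}-1}$ and $1$, so the product is $(1\pm o(1))p^{\binom{f}{r}-1}$ once one checks that the number of "bad" $Q$ whose $r$-subsets cluster into few cliques is $o(n^{f-r})$, which holds because $\cK$-cliques are $(r+1)$-almost-disjoint after removing $\cK^{\le(r+1)}$). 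Combined with a standard bounded-dependence concentration inequality (McDiarmid or a Chernoff bound on a partition of the index set into independent blocks), this yields all three statements; the rest is routine but somewhat tedious bookkeeping of the error terms.
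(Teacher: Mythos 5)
Your proposal is correct and follows essentially the same route as the paper's proof: the paper likewise exploits that the survival events are independent across cliques of $\cK$, that after deleting $\cK^{\le(r+1)}$ every remaining $f$-set (resp.\ $(f+r)$-set) through $e$ meets each clique of $\cK$ in at most $r$ vertices so that $\pr(Q\in G'^{(f)}(e))$ equals $p^{\binom{f}{r}-1}$ exactly, and then applies a Chernoff-type bound for index sets partitioned into independent blocks (via an auxiliary bounded-degree dependency graph on the $Q$'s, and a grouping of the edges through $S$ by clique for part (iii)), followed by a union bound. The only cosmetic difference is that you treat the clustering of $r$-subsets of $Q\cup e$ into common cliques as a genericity/counting issue to be bounded, whereas the paper notes that such ``bad'' $Q$ are removed deterministically by the $\cK^{\le(r+1)}$-deletion, so the survival probability is exact for all surviving $Q$.
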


Since the assertions follow easily from the definitions, we omit the proof here. We refer to Appendix~\ref{app:tools} for the details.

\lateproof{Lemma~\ref{lem:F nibble}}
Let $p:=1-|F|/\binom{f}{r}$. If $F=\krq{r}{f}$, then we are done by Lemma~\ref{lem:boosted nibble}. We may thus assume that $p\in (0,1)$. Choose $\eps'>0$ such that $1/n\ll \eps' \ll 1/\kappa \ll \gamma, \eps  \ll p,1-p, \xi,1/f$. We will now repeatedly apply Lemma~\ref{lem:boosted nibble}. More precisely, let $\xi_0:=0.9(1/4)^{\binom{f+r}{f}}\xi$ and define $\xi_{j}:=(0.5p)^{j\binom{f+r}{r}}\xi_0$ for $j\ge 1$.
For every $j\in[\kappa]_0$, we will find $\cF_j$ and $G_j$ such that the following hold:
\begin{enumerate}[label=\rm{(\alph*)}]
\item$\hspace{-5pt}_{j}$ $\cF_j$ is a $j$-well separated $F$-packing in $G$ and $G_j\In G-\cF_j^{(r)}$;\label{iterative nibble 1}
\item$\hspace{-5pt}_{j}$ $\Delta(L_j)\le j\eps' n$, where $L_j:=G^{(r)}-\cF_j^{(r)}-G_j^{(r)}$;\label{iterative nibble 5}
\item$\hspace{-5pt}_{j}$ $G_j$ is $(2^{(r+1)j}\eps',d_j,f,r)$-regular and $(\xi_j,f+r,r)$-dense for some $d_j\ge \xi_j$;\label{iterative nibble 2}
\item$\hspace{-5pt}_{j}$ $\cF_{j}^\le$ and $G_j$ are $(r+1)$-disjoint;\label{iterative nibble 3}
\item$\hspace{-5pt}_{j}$ $\Delta(G_j^{(r)})\le (1.1p)^{j}n$.\label{iterative nibble 4}
\end{enumerate}
First, apply Lemma~\ref{lem:boost} to $G$ in order to find $Y\In G^{(f)}$ such that $G_0:=G[Y]$ is $(\eps',d/2,f,r)$-regular and $(\xi_0,f+r,r)$-dense. Hence, \ref{iterative nibble 1}$_{0}$--\ref{iterative nibble 4}$_{0}$ hold with $\cF_0:=\emptyset$. Also note that $\cF_\kappa$ will be a $\kappa$-well separated $F$-packing in $G$ and $\Delta(G^{(r)}-\cF_\kappa^{(r)})\le \Delta(L_\kappa)+\Delta(G_\kappa^{(r)})\le  \kappa\eps' n+(1.1p)^{\kappa}n\le \gamma n$, so we can take $\cF:=\cF_\kappa$.

Now, assume that for some $j\in[\kappa]$, we have found $\cF_{j-1}$ and $G_{j-1}$ and now need to find $\cF_{j}$ and $G_{j}$.
By \ref{iterative nibble 2}$_{j-1}$, $G_{j-1}$ is $(\sqrt{\eps'},d_{j-1},f,r)$-regular and $(\xi_{j-1},f+r,r)$-dense for some $d_{j-1}\ge \xi_{j-1}$.
Thus, we can apply Lemma~\ref{lem:boosted nibble}\COMMENT{$\sqrt{\eps'}\ll \xi_{j-1}$} to obtain a $\krq{r}{f}$-packing $\cK_j$ in $G_{j-1}$ such that $\Delta(L_j')\le \eps' n$, where $L_j':=G_{j-1}^{(r)}-\cK_{j}^{(r)}$.
Let $G_j':=G_{j-1}-L_j'$. Clearly, $\cK_j$ is a $\krq{r}{f}$-decomposition of $G_j'$. Moreover, by \ref{iterative nibble 2}$_{j-1}$ and Proposition~\ref{prop:noise} we have that $G_j'$ is $(2^{(r+1)(j-1)+r}\eps',d_{j-1},f,r)$-regular and $(0.9\xi_{j-1},f+r,r)$-dense.
By Lemma~\ref{lem:random packing}, there exists a $1$-well separated $F$-packing $\cF_j'$ in $G_j'$ such that the following hold for $G_j:=G_j'-\cF_j'^{(r)}-\cK_j^{\le(r+1)}=G_j'-\cF_j'^{(r)}-\cF_j'^{\le(r+1)}$:
\begin{enumerate}[label=\rm{(\roman*)}]
\item $G_j$ is $(2^{(r+1)(j-1)+r+1}\eps',p^{\binom{f}{r}-1}d_{j-1},f,r)$-regular;\label{almost random leftover 1}
\item $G_j$ is $(0.81 p^{\binom{f+r}{r}-1}\xi_{j-1},f+r,r)$-dense;\label{almost random leftover 2}
\item $\Delta(G_j^{(r)})\le 1.1p\Delta(G_j'^{(r)})$.\label{almost random leftover 3}
\end{enumerate}
Let $\cF_j:=\cF_{j-1}\cup \cF_j'$ and $L_j:=G^{(r)}-\cF_j^{(r)}-G_j^{(r)}$. Note that $\cF_{j-1}^{(r)}\cap \cF_j'^{(r)}=\emptyset$ by \ref{iterative nibble 1}$_{j-1}$. Moreover, $\cF_{j-1}$ and $\cF_j'$ are $(r+1)$-disjoint by \ref{iterative nibble 3}$_{j-1}$. Thus, $\cF_j$ is $(j-1+1)$-well separated by Fact~\ref{fact:ws}\ref{fact:ws:1}. Moreover, using \ref{iterative nibble 1}$_{j-1}$, we have $$G_j\In G_{j-1}-\cF_j'^{(r)}\In G-\cF_{j-1}^{(r)}-\cF_j'^{(r)},$$ thus \ref{iterative nibble 1}$_{j}$ holds.
Observe that $L_{j}\sm L_{j-1}\In L_j'$.\COMMENT{$L_{j}\sm L_{j-1}\In (\cF_{j-1}^{(r)}\cup G_{j-1}^{(r)})-(\cF_j^{(r)}\cup G_j^{(r)}) \In G_{j-1}^{(r)}-(\cF_j'^{(r)}\cup G_j^{(r)})=G_{j-1}^{(r)}-G_{j}'^{(r)}$} Thus, we clearly have $\Delta(L_j)\le \Delta(L_{j-1})+ \Delta(L_j')\le j\eps' n$, so \ref{iterative nibble 5}$_{j}$ holds.
Moreover, \ref{iterative nibble 2}$_{j}$ follows directly from \ref{almost random leftover 1} and \ref{almost random leftover 2},\COMMENT{need that $0.81p^{\binom{f+r}{r}-1}\xi_{j-1}\ge \xi_j$, iff $0.81p^{\binom{f+r}{r}-1} \ge (0.5p)^{\binom{f+r}{r}}$} and \ref{iterative nibble 4}$_{j}$ follows from \ref{iterative nibble 4}$_{j-1}$ and \ref{almost random leftover 3}.\COMMENT{$\Delta(G_j^{(r)})\le 1.1p\Delta(G_j'^{(r)})\le 1.1p \Delta(G_{j-1}^{(r)})\le 1.1p (1.1p)^{j-1}n\le (1.1p)^j n$} To see \ref{iterative nibble 3}$_{j}$, observe that $\cF_{j-1}^\le$ and $G_j$ are $(r+1)$-disjoint by \ref{iterative nibble 3}$_{j-1}$ and since $G_j\In G_{j-1}$, and $\cF_{j}'^\le$ and $G_j$ are $(r+1)$-disjoint by definition of $G_j$.
Thus, \ref{iterative nibble 1}$_{j}$--\ref{iterative nibble 4}$_{j}$ hold and the proof is completed.
\endproof

\section{Vortices}\label{sec:vortices}

A vortex is best thought of as a sequence of nested `random-like' subsets of the vertex set of a supercomplex $G$. In our approach, the final set of the vortex has bounded size.

The main results of this section are Lemmas~\ref{lem:get vortex}~and~\ref{lem:almost dec}, where the first one shows that vortices exist, and the latter one shows that given a vortex, we can find an $F$-packing covering all edges which do not lie inside the final vortex set.
We now give the formal definition of what it means to be a `random-like' subset. 

\begin{defin}\label{def:regular subset}
Let $G$ be a complex on $n$ vertices. We say that $U$ is \defn{$(\eps,\mu,\xi,f,r)$-random in $G$} if there exists an $f$-graph $Y$ on $V(G)$ such that the following hold:
\begin{enumerate}[label={\rm(R\arabic*)}]
\item $U\In V(G)$ with $|U|=\mu n\pm n^{2/3}$;\label{random:objects}
\item there exists $d\ge \xi$ such that for all $x\in[f-r]_0$ and all $e\in G^{(r)}$, we have that $$|\set{Q\in G[Y]^{(f)}(e)}{|Q\cap U|=x}|=(1\pm \eps)bin(f-r,\mu,x)dn^{f-r};$$\label{random:binomial}
\item for all $e\in G^{(r)}$ we have $|G[Y]^{(f+r)}(e)[U]|\ge \xi (\mu n)^{f}$;\label{random:dense}
\item for all $h\in[r]_0$ and all $B\In G^{(h)}$ with $1\le |B|\le 2^{h}$ we have that $\bigcap_{b\in B}G(b)[U]$ is an $(\eps,\xi,f-h,r-h)$-complex.\label{random:intersections}
\end{enumerate}
\end{defin}

Having defined what it means to be a `random-like' subset, we can now define what a vortex is.

\begin{defin}[Vortex]
Let $G$ be a complex. An \defn{$(\eps,\mu,\xi,f,r,m)$-vortex in $G$} is a sequence $U_0\supseteq U_1 \supseteq \dots \supseteq U_\ell$ such that
\begin{enumerate}[label={\rm(V\arabic*)}]
\item $U_0=V(G)$;
\item $|U_i|=\lfloor \mu |U_{i-1}|\rfloor$ for all $i\in[\ell]$;\label{vortex:size}
\item $|U_\ell|=m$;
\item for all $i\in[\ell]$, $U_i$ is $(\eps,\mu,\xi,f,r)$-random in $G[U_{i-1}]$;\label{vortex:untwisted}
\item for all $i\in[\ell-1]$, $U_i\sm U_{i+1}$ is $(\eps,\mu(1-\mu),\xi,f,r)$-random in $G[U_{i-1}]$.\label{vortex:twisted}
\end{enumerate}
\end{defin}

As shown in \cite{GKLO}, a vortex can be found in a supercomplex by repeatedly taking random subsets.

\begin{lemma}[\cite{GKLO}]\label{lem:get vortex}
Let $1/m'\ll \eps\ll \mu,\xi,1/f$ such that $\mu\le 1/2$ and $r\in[f-1]$. Let $G$ be an $(\eps,\xi,f,r)$-supercomplex on $n\ge m'$ vertices. Then there exists a $(2\sqrt{\eps},\mu,\xi-\eps,f,r,m)$-vortex in $G$ for some $\mu m' \le m \le m'$.
\end{lemma}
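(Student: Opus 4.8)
The plan is to prove Lemma~\ref{lem:get vortex} by iteratively selecting random subsets, at each step invoking a single-step lemma that guarantees a random subset of the appropriate size is random-like in the current restricted supercomplex, and that the resulting restriction is again a supercomplex. Concretely, I would first isolate the following one-step statement (which is the real content and is essentially already available from \cite{GKLO}): if $G$ is an $(\eps,\xi,f,r)$-supercomplex on $N$ vertices with $1/N \ll \eps \ll \mu,\xi,1/f$ and $\mu \le 1/2$, and $U\subseteq V(G)$ is obtained by including each vertex independently with probability $\mu$ (or, equivalently, a uniformly random subset of size $\lfloor \mu N\rfloor$), then \whp $U$ is $(2\sqrt{\eps},\mu,\xi-\eps,f,r)$-random in $G$, the complement-type set is $(2\sqrt\eps,\mu(1-\mu),\xi-\eps,f,r)$-random in $G$, and $G[U]$ is again an $(\eps',\xi',f,r)$-supercomplex for suitable $\eps' = \eps'(\eps)$ and $\xi' = \xi'(\xi)$ with only a mild loss. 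The four conditions \ref{random:objects}--\ref{random:intersections} are all of the form ``a certain count concentrates around its expectation'', so each follows from a Chernoff/Azuma bound once one checks that the relevant expectations behave as claimed; the expectation computations use exactly the regularity, density and extendability clauses of the supercomplex definition, together with Definition~\ref{def:supercomplex} applied to the intersections $\bigcap_{b\in B}G(b)$ for the link-based condition \ref{random:intersections}.

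Granting the one-step lemma, the proof of Lemma~\ref{lem:get vortex} is a bookkeeping induction. Set $U_0 := V(G)$ and $n_0 := n$. Having constructed $U_0 \supseteq \dots \supseteq U_{i}$ with $U_i$ a random-like subset and $G[U_i]$ still a supercomplex on $n_i := |U_i| = \lfloor \mu n_{i-1}\rfloor$ vertices, apply the one-step lemma inside $G[U_i]$ to obtain $U_{i+1}\subseteq U_i$ with $|U_{i+1}| = \lfloor \mu n_i \rfloor$ satisfying \ref{vortex:untwisted} and, simultaneously, with $U_i \setminus U_{i+1}$ satisfying the complement version needed for \ref{vortex:twisted}; here I would run the random choice once and read off both properties from the same sample (this is why \ref{vortex:untwisted} and \ref{vortex:twisted} are stated relative to $G[U_{i-1}]$ — they are two faces of the same random partition of $U_{i-1}$). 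Stop at the first index $\ell$ with $n_\ell \le m'$; since each step multiplies the size by $\mu\pm o(1)$ and $\mu \le 1/2$, we get $\mu m' \le n_\ell =: m \le m'$, giving (V3) and the size control. One must track the slow drift of the parameters $\eps$ and $\xi$: each application of the one-step lemma replaces $\eps$ by something like $2\sqrt\eps$ and $\xi$ by $\xi$ minus a small amount, but since the number of iterations $\ell$ is bounded (roughly $\log_{1/\mu}(n/m')$, which is absorbed because $1/m' \ll \eps$ allows us to take $\eps$ as small as needed relative to $\ell$), the total loss is at most $\eps$ in the density parameter and the regularity parameter stays below $2\sqrt\eps$ throughout. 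A union bound over the (at most $\ell$, hence constantly many) exceptional events shows the whole construction succeeds \whp, so in particular a valid vortex exists.

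The main obstacle is the one-step lemma, specifically verifying condition \ref{random:intersections}: one needs that for \emph{every} $h\in[r]_0$ and \emph{every} $B\subseteq G^{(h)}$ with $1\le|B|\le 2^h$, the complex $\bigcap_{b\in B}G(b)[U]$ is an $(\eps,\xi,f-h,r-h)$-complex, i.e.\ one must recover regularity, $(f+r)$-density and extendability \emph{after} restricting to the random set $U$. Regularity and density restrict well because they are averaged count conditions to which Chernoff applies directly (using that $\bigcap_{b\in B}G(b)$ is already an $(\eps,\xi,f-h,r-h)$-supercomplex by Proposition~\ref{prop:hereditary}, so we have a clean starting point and the relevant $f$-set counts are linear in the appropriate power of $N$). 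Extendability is the delicate one: we need a large $X' \subseteq X\cap U$ witnessing extendability in the restriction, and one has to argue that a typical vertex of the original witness set $X$ lands in $U$ and still has $\ge \xi' (\mu N)^{f-r}$ suitable extension $(f-r)$-sets lying inside $U$ — again a second-moment or Chernoff argument on the number of good extensions surviving the restriction, made uniform over the bounded family of choices of $B$. Since all of this is carried out in \cite{GKLO} (the lemma is quoted there), in the present paper I would simply cite it; the only thing to double-check is that the quantitative output matches the stated $(2\sqrt\eps,\mu,\xi-\eps,f,r,m)$-vortex, which it does after the parameter bookkeeping above.
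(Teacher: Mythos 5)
Your overall strategy is exactly the one the paper (via \cite{GKLO}) uses: start from the trivial observation that $V(G)$ is random in $G$ (Fact~\ref{fact:random trivial}\ref{fact:random trivial:itself}), then repeatedly apply a one-step statement — Proposition~\ref{prop:find subset} — which from one random sample produces both $U_{i+1}$ random in $G[U_i]$ (for \ref{vortex:untwisted}) and $U_i\sm U_{i+1}$ random in $G[U_{i-1}]$ with parameter $\mu(1-\mu)$ (for \ref{vortex:twisted}), and stop once the set size drops to about $m'$. Your identification of \ref{random:intersections} (in particular extendability after restriction) as the delicate point of the one-step lemma is also accurate.

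However, your parameter bookkeeping contains a genuine error. The number of iterations $\ell\approx\log_{1/\mu}(n/m')$ is \emph{not} bounded: in the hierarchy $1/m'\ll\eps\ll\mu,\xi,1/f$ the constants $\eps$ and $m'$ are fixed before $n$, and $n$ can be arbitrarily large, so $\ell\to\infty$ with $n$ and you cannot ``take $\eps$ small relative to $\ell$''. Consequently, a one-step lemma whose output regularity parameter is a fixed function of the input such as $\eps\mapsto 2\sqrt{\eps}$ cannot be iterated: after $\ell$ steps you would get $\eps^{2^{-\ell}}\to 1$. What makes the induction work in Proposition~\ref{prop:find subset} is that the per-step loss is \emph{additive and decays with the current set size}: the parameters degrade by $|U_i|^{-1/6}$ at step $i$. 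Since $|U_i|\approx\mu^i n$, the total loss $\sum_{i}|U_i|^{-1/6}$ is a geometric sum dominated by its last term, which is $O((\mu m')^{-1/6})\le\eps$ because $1/m'\ll\eps$; starting from the $(\eps/\xi,1,\xi,f,r)$-randomness of $V(G)$ this yields the stated $2\sqrt{\eps}$ and $\xi-\eps$. Relatedly, no union bound over $\ell$ events is needed (nor would ``constantly many'' be correct): the sets are chosen one at a time, and at each step any outcome of positive probability suffices, so existence follows step by step.
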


The following is the main lemma of this section. Given a vortex in a supercomplex $G$, it allows us to cover all edges of $G^{(r)}$ except possibly some from inside the final vortex set (see Lemma~7.13 in~\cite{GKLO} for the corresponding result in the case when $F$ is a clique).

\begin{lemma}\label{lem:almost dec}
Let $1/m\ll 1/\kappa \ll \eps \ll \mu \ll \xi,1/f$ and $r\in[f-1]$. Assume that \ind{k} is true for all $k\in[r-1]$. Let $F$ be a weakly regular $r$-graph on $f$ vertices. Let $G$ be an $F$-divisible $(\eps,\xi,f,r)$-supercomplex and $U_0 \supseteq U_1 \supseteq \dots \supseteq U_\ell$ an $(\eps,\mu,\xi,f,r,m)$-vortex in $G$. Then there exists a $4\kappa$-well separated $F$-packing $\cF$ in $G$ which covers all edges of $G^{(r)}$ except possibly some inside~$U_\ell$.
\end{lemma}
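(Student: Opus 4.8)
The plan is to prove Lemma~\ref{lem:almost dec} by iterating along the vortex, at each step covering all edges that touch the `annulus' $U_{i-1}\setminus U_i$ while keeping the remaining uncovered edges confined to $U_i$. Concretely, I would process the vortex from $i=1$ up to $i=\ell$. Suppose inductively that after step $i-1$ we have a well separated $F$-packing $\cF_{i-1}$ in $G$ whose uncovered edges all lie inside $U_{i-1}$, and that the leftover complex $G_{i-1}:=G-\cF_{i-1}^{(r)}-\cF_{i-1}^{\le(r+1)}$, restricted to $U_{i-1}$, is still an $F$-divisible supercomplex (with slightly worse parameters). The goal at step $i$ is to find a well separated $F$-packing $\cF_i'$ inside $G_{i-1}[U_{i-1}]$ which covers every edge meeting $U_{i-1}\setminus U_i$, is $(r+1)$-disjoint from $\cF_{i-1}^{\le}$, and leaves behind a supercomplex on $U_i$ that is still $F$-divisible; then set $\cF_i:=\cF_{i-1}\cup\cF_i'$ and apply Fact~\ref{fact:ws}\ref{fact:ws:1} to conclude that $\cF_i$ is well separated, with the $\kappa$-parameters adding up over the $\ell$ steps. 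Here $\kappa$ is chosen so large that the total stays below $4\kappa$ — a constant number of contributions of size $O(\kappa)$ per step, but crucially the number of steps $\ell$ is not bounded; this is why one must be careful, and I expect the right bookkeeping is that each step only adds a bounded contribution to \ref{separatedness:2} when restricted to $r$-sets touching the annulus, so that overall $\Delta_r(\cF^{\le(f)})$ stays $O(\kappa)$ because a given $r$-set lies in at most two consecutive annuli (this is the role of \ref{vortex:twisted} giving the randomness of $U_i\setminus U_{i+1}$).

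For the single step — covering all edges meeting the annulus $A:=U_{i-1}\setminus U_i$ — I would argue roughly as in the clique case (Lemma~7.13 of \cite{GKLO}) but now invoking the inductive hypothesis \ind{k} for $k<r$ together with the well separated `local cover' machinery sketched after Theorem~\ref{thm:main complex}. The idea is: for each $r$-set $e$ meeting $A$, group by $|e\cap A|$; for sets $e$ with $e\subseteq A$ one wants an $F$-decomposition of (a suitable subcomplex of) $G_{i-1}[A]$, which one gets because $G_{i-1}[A]$ is again an $F$-divisible supercomplex of bounded-from-below size and one can apply \ind{r} — but wait, that is circular since we are in the middle of proving \ind{r}; instead $A=U_{i-1}\setminus U_i$ has size $\Theta(\mu|U_{i-1}|)$ which is still large, so one should rather use the Cover down lemma (Lemma~\ref{lem:cover down}, referenced as adapted from \cite{GKLO}) which is precisely designed to cover edges incident to a random-like subset using only approximate decompositions plus link-graph arguments in dimension $<r$. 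So the honest structure is: use Lemma~\ref{lem:F nibble} to approximately decompose the bulk, and use the Cover down lemma (fed with well separated $F(T)$-decompositions of link complexes $G(S)$ obtained via \ind{r-|S|}, extended back to $F$-copies by the link-extension construction of Definition~\ref{def:link extension}/Proposition~\ref{prop:S cover}) to clean up the edges incident to $U_i$. Each such use produces an $O(\kappa)$-well separated packing, and the leftover after the step is confined to $U_i$ and is a supercomplex there by Proposition~\ref{prop:noise} (since the packings built have small maximum degree, controlled via \ref{separatedness:2} and Fact~\ref{fact:ws}\ref{fact:ws:maxdeg}).

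I would then assemble the pieces: after step $\ell$, all uncovered edges lie inside $U_\ell$, $\cF:=\cF_\ell$ is the desired packing, and the well separatedness constant is bounded by $4\kappa$ provided one tracks that per step the increment to the $r$-degree of $\cF^{\le(f)}$ is bounded by a fixed multiple of $\kappa$ \emph{and} that each $r$-set is `active' in only a bounded number of steps. Maintaining $F$-divisibility of the leftover on each $U_i$ is automatic in the clique setting but here needs Proposition~\ref{prop:link divisibility} (weakly regular $F$ has divisibility inherited by links) plus the fact that removing an $F$-decomposable subgraph preserves $(F,1)$-divisibility — this is where weak regularity of $F$ is genuinely used.

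The main obstacle I anticipate is the \emph{uniform} control of the well separatedness parameter across an unbounded number $\ell$ of iterations: naively each step could add $\kappa$ to \ref{separatedness:2} and after $\ell$ steps one would only get $\ell\kappa$, not $4\kappa$. The resolution must be that the packing $\cF_i'$ produced at step $i$ only puts copies of $F$ on $f$-sets meeting the annulus $U_{i-1}\setminus U_i$ (together possibly with part of $U_i$ for the cover-down copies), so any fixed $r$-set $e$ is covered by copies from at most two consecutive steps — the step when $e$ first leaves the shrinking set and possibly the previous one — hence the contributions to the number of $F'\in\cF$ with $e\subseteq V(F')$ come from $O(1)$ steps, each contributing $O(\kappa)$. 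Making this precise (and checking the cover-down copies near $U_i$ don't accumulate either) is the delicate part of the argument, and mirrors the corresponding analysis in \cite{GKLO} for cliques, which here must additionally be made compatible with conditions \ref{separatedness:1}--\ref{separatedness:2} of well separatedness.
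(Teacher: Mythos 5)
Your overall strategy is the one the paper uses: iterate down the vortex, at each level combining the $F$-nibble lemma (Lemma~\ref{lem:F nibble}) with the Cover down lemma (Lemma~\ref{lem:cover down}) so that the leftover is pushed into the next vortex set, and control the well separatedness constant by the observation that any fixed $r$-set only receives copies of $F$ from a bounded number of levels. Your resolution of the bookkeeping matches the invariant the paper maintains: the packing built through step $i$ is a packing in the complex $G-H_{i+1}-G^{(r)}[U_{i+1}]$, so no $f$-set it uses has an $r$-subset inside $U_{i+1}$; hence edges inside the current vortex set have so far received coverage at most $2\kappa$, and each step adds at most another $2\kappa$-well separated packing, giving $4\kappa$ in total.

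Two points in your sketch need repair. First, you propose to invoke the Cover down lemma at step $i$ inside the current leftover complex. Its hypotheses ($U_i$ being $(\eps,\mu,\xi,f,r)$-random in the host, and the host being suitably dense with respect to the edges meeting the annulus) are guaranteed by \ref{vortex:untwisted} and \ref{vortex:twisted} for $G$ itself, not for the leftover after $i-1$ rounds of removal; re-verifying them on the fly is exactly what the paper avoids by reserving all cleaning graphs $H_1,\dots,H_\ell$ \emph{before} any nibbling (Lemma~\ref{lem:cleaner}), using a random split of $G^{(r)}$ to make $H_i$ and $H_{i+1}$ edge-disjoint. Second, and more substantively, nothing in your step $i$ prevents the cover-down packing from consuming edges of $H_{i+1}$ or of $G^{(r)}[U_{i+1}]$: the Cover down lemma only promises that the uncovered edges lie inside $U_i$, so it may well cover edges inside $U_{i+1}$, which would destroy the structure (and the divisibility and randomness properties) needed at the next level. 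The paper handles this by first reserving an $F$-divisible subgraph $R_i$ of $G^{(r)}[U_i]-H_i$ containing $H_{i+1}\cup G^{(r)}[U_{i+1}]$ whose complement in $G^{(r)}[U_i]-H_i$ is sparse (Corollary~\ref{cor:make divisible}), and only feeding that sparse complement, together with the nibble leftover and $H_i$, into the cover-down step. Without some such protection mechanism the induction does not close.
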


The proof of Lemma~\ref{lem:almost dec} consists of an `iterative absorption' procedure, where the key ingredient is the Cover down lemma (Lemma~\ref{lem:cover down}). Roughly speaking, given a supercomplex $G$ and a `random-like' subset $U\In V(G)$, the Cover down lemma allows us to find a `partial absorber' $H\In G^{(r)}$ such that for any sparse $L\In G^{(r)}$, $H\cup L$ has an $F$-packing which covers all edges of $H\cup L$ except possibly some inside $U$. Together with the $F$-nibble lemma (Lemma~\ref{lem:F nibble}), this allows us to cover all edges of $G$ except possibly some inside $U$ whilst using only few edges inside $U$. Indeed, set aside $H$ as above, which is reasonably sparse. Then apply the Lemma~\ref{lem:F nibble} to $G-G^{(r)}[U]-H$ to obtain an $F$-packing $\cF_{nibble}$ with a very sparse leftover $L$. Combine $H$ and $L$ to find an $F$-packing $\cF_{clean}$ whose leftover lies inside $U$.

Now, if $U_0 \supseteq U_1 \supseteq \dots \supseteq U_\ell$ is a vortex, then $U_1$ is `random-like' in $G$ and thus we can cover all edges which are not inside $U_1$ by using only few edges inside $U_1$ (and in this step we forbid edges inside $U_2$ from being used.) Then $U_2$ is still `random-like' in the remainder of $G[U_1]$, and hence we can iterate until we have covered all edges of $G$ except possibly some inside $U_\ell$. The proof of Lemma~\ref{lem:almost dec} is very similar to that of Lemma~7.13 in \cite{GKLO}, thus we omit it here. The details can be found in Appendix~\ref{app:vortices}.

We record the following easy tools from~\cite{GKLO} for later use.

\begin{fact}[\cite{GKLO}]\label{fact:random trivial}
The following hold.
\begin{enumerate}[label={\rm(\roman*)}]
\item If $G$ is an $(\eps,\xi,f,r)$-supercomplex, then $V(G)$ is $(\eps/\xi,1,\xi,f,r)$-random in $G$.\label{fact:random trivial:itself}
\item If $U$ is $(\eps,\mu,\xi,f,r)$-random in $G$, then $G[U]$ is an $(\eps,\xi,f,r)$-supercomplex.\label{fact:random trivial:subcomplex}
\end{enumerate}
\end{fact}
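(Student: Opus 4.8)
The plan is to verify both statements directly against Definitions~\ref{def:regular subset} and~\ref{def:supercomplex}; neither requires a new idea, only careful unwinding of the definitions, so I expect no genuine obstacle. The one mildly non-obvious point is that taking links commutes with restriction to a vertex subset, which I isolate below.

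For~(i), put $\mu:=1$ and $\eps':=\eps/\xi$, and let $U:=V(G)$. Since $G$ is an $(\eps,\xi,f,r)$-supercomplex, taking $i=0$ (with $B=\Set{\emptyset}$) in Definition~\ref{def:supercomplex} shows $G$ is an $(\eps,\xi,f,r)$-complex, so there is an $f$-graph $Y$ on $V(G)$ with $G[Y]$ a full $(\eps,\xi,f,r)$-complex; in particular $G[Y]$ is $(\eps,d,f,r)$-regular for some $d\ge\xi$, is $(\xi,f+r,r)$-dense, and $G[Y]^{(r)}=G^{(r)}$. I use this $Y$ as the witness $f$-graph in Definition~\ref{def:regular subset}. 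Condition~\ref{random:objects} is immediate as $|U|=n=\mu n$. For~\ref{random:binomial}, note $bin(f-r,1,x)$ equals $1$ if $x=f-r$ (using $0^0=1$) and $0$ otherwise; since every $Q\in G[Y]^{(f)}(e)$ has $|Q|=f-r$ and $Q\In V(G)\sm e\In U$, the relevant count equals $|G[Y]^{(f)}(e)|=(d\pm\eps)n^{f-r}=(1\pm\eps/d)dn^{f-r}=(1\pm\eps')dn^{f-r}$ when $x=f-r$, and is $0$ (matching the right-hand side) for $x<f-r$. Condition~\ref{random:dense} follows from $(\xi,f+r,r)$-density of $G[Y]$ together with the fact that $G[Y]^{(f+r)}(e)[U]=G[Y]^{(f+r)}(e)$ when $U=V(G)$, and $\xi(\mu n)^f=\xi n^f$. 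Finally, for~\ref{random:intersections} (with $h\in[r]_0$ and $B\In G^{(h)}$, $1\le|B|\le 2^h$) we have $G(b)[U]=G(b)$, so $\bigcap_{b\in B}G(b)[U]=\bigcap_{b\in B}G(b)$, which by the supercomplex hypothesis is an $(\eps,\xi,f-h,r-h)$-complex; since raising the regularity parameter only weakens the complex conditions, this is a fortiori an $(\eps',\xi,f-h,r-h)$-complex. Hence $V(G)$ is $(\eps',1,\xi,f,r)$-random in $G$.

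For~(ii), the key observation is that links commute with restriction to $U$: for $b\in(G[U])^{(i)}$, i.e.\ $b\in G^{(i)}$ with $b\In U$, we have $(G[U])(b)=G(b)[U]$, because $e'\In U\sm b$ and $b\cup e'\in G$ already force $b\cup e'\In U$, so the two complexes have the same vertex set and edge set. Consequently, for any $B\In(G[U])^{(i)}$ we get $\bigcap_{b\in B}(G[U])(b)=\bigcap_{b\in B}G(b)[U]$. Now fix $i\in[r]_0$ and $B\In(G[U])^{(i)}$ with $1\le|B|\le 2^i$. Then also $B\In G^{(i)}$, so applying~\ref{random:intersections} to the random set $U$ (with $h:=i$) shows $\bigcap_{b\in B}G(b)[U]$, and hence $\bigcap_{b\in B}(G[U])(b)$, is an $(\eps,\xi,f-i,r-i)$-complex. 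This is exactly the requirement in Definition~\ref{def:supercomplex} for $G[U]$, so $G[U]$ is an $(\eps,\xi,f,r)$-supercomplex.
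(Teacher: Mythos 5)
Your proof is correct: both parts follow by directly unwinding Definitions~\ref{def:regular subset} and~\ref{def:supercomplex}, with the only non-trivial observations being the $(d\pm\eps)=(1\pm\eps/d)d$ rescaling (which is where the $\eps/\xi$ comes from, via $d\ge\xi$) and the identity $(G[U])(b)=G(b)[U]$ — both of which you handle correctly. The paper imports this statement from \cite{GKLO} without proof precisely because it is immediate from the definitions, and your argument is the intended one.
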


\begin{prop}[\cite{GKLO}]\label{prop:find subset}
Let $1/n\ll \eps \ll \mu_1,\mu_2,1-\mu_2,\xi,1/f$ and $r\in[f-1]$. Let $G$ be a complex on $n$ vertices and let $U\In V(G)$ be of size $\lfloor\mu_1 n \rfloor$ and $(\eps,\mu_1,\xi,f,r)$-random in $G$. Then there exists $\tilde{U}\In U$ of size $\lfloor\mu_2 |U|\rfloor$ such that
\begin{enumerate}[label={\rm(\roman*)}]
\item $\tilde{U}$ is $(\eps+|U|^{-1/6},\mu_2,\xi-|U|^{1/6},f,r)$-random in $G[U]$ and\label{random subset:untwisted}
\item $U\sm \tilde{U}$ is $(\eps+|U|^{-1/6},\mu_1(1-\mu_2),\xi-|U|^{1/6},f,r)$-random in $G$.\label{random subset:twisted}
\end{enumerate}
\end{prop}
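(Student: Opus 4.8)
The plan is to choose $\tilde U$ as a random subset of $U$ of size exactly $\lfloor \mu_2 |U|\rfloor$, selected uniformly among all such subsets (equivalently, order the vertices of $U$ randomly and take the first $\lfloor\mu_2|U|\rfloor$ of them), and then verify that both conclusions hold \whp, which suffices since a suitable $\tilde U$ then exists. The two statements to be checked are that $\tilde U$ is $(\eps+|U|^{-1/6},\mu_2,\xi-|U|^{1/6},f,r)$-random in $G[U]$ and that $U\sm\tilde U$ is $(\eps+|U|^{-1/6},\mu_1(1-\mu_2),\xi-|U|^{1/6},f,r)$-random in $G$. For the first, note that $G[U]$ is already an $(\eps,\xi,f,r)$-supercomplex by Fact~\ref{fact:random trivial}\ref{fact:random trivial:subcomplex} (applied with the hypothesis that $U$ is $(\eps,\mu_1,\xi,f,r)$-random in $G$), and $\tilde U$ is just a random subset of its vertex set of the appropriate density; so this is essentially the statement that a random subset of a supercomplex is random in it, which is exactly the type of computation underlying Lemma~\ref{lem:get vortex}. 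For the second, we work with the $f$-graph $Y$ on $V(G)$ witnessing that $U$ is random in $G$ (Definition~\ref{def:regular subset}), and reuse it (or its restriction) as the witness for $U\sm\tilde U$.

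The key steps, in order, are as follows. First, fix the witnessing $f$-graph $Y$ for $U$ being $(\eps,\mu_1,\xi,f,r)$-random in $G$, so that \ref{random:objects}--\ref{random:intersections} hold for $U$ in $G$ with respect to $G[Y]$. Second, take $\tilde U\In U$ uniformly at random of size $\lfloor\mu_2|U|\rfloor$. Third, verify \ref{random:objects} for both $\tilde U$ (in $G[U]$) and $U\sm\tilde U$ (in $G$): the sizes are deterministic up to rounding, $|\tilde U| = \mu_2|U| \pm 1 = \mu_2\mu_1 n \pm O(n^{2/3})$ and similarly $|U\sm\tilde U| = \mu_1(1-\mu_2)n \pm O(n^{2/3})$, well within the allowed error $n^{2/3}$ after adjusting constants (using $|U|=\lfloor\mu_1 n\rfloor$ and $1/n\ll\eps$). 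Fourth — the main computation — verify \ref{random:binomial}: for each $e\in G^{(r)}$ and each relevant $x$, condition on a fixed $Q\in G[Y]^{(f)}(e)$ (or, for the $U\sm\tilde U$ statement, on the sub-collection of such $Q$ with $|Q\cap U|$ equal to a fixed value $y$, whose count is controlled by \ref{random:binomial} for $U$) and observe that $|Q\cap\tilde U|$ is hypergeometrically distributed; the expected number of $Q$ with the required intersection pattern is a product of the known count from \ref{random:binomial} for $U$ with a binomial/hypergeometric factor close to $bin(\cdot,\mu_2,\cdot)$, and concentration follows from a bounded-differences / Azuma–Hoeffding inequality applied to the random ordering, giving an additive error of order $|U|^{-1/6}$ times the main term \whp. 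Fifth, verify \ref{random:dense} similarly — here one only needs a lower bound, so a one-sided concentration bound on $|G[Y]^{(f+r)}(e)[U\cap\tilde U]|$ (resp. with $U\sm\tilde U$) suffices, losing at most $|U|^{1/6}$ in the $\xi$-parameter. Sixth, verify \ref{random:intersections}: for each $h$ and each admissible $B$, the complex $\bigcap_{b\in B}G(b)[U]$ is already an $(\eps,\xi,f-h,r-h)$-complex (since $U$ is random in $G$), and restricting further to $\tilde U$ (resp. intersecting $\bigcap_{b\in B}G(b)$ with $U\sm\tilde U$) is again a random-subset operation on a complex, whose effect on the regularity/density/extendability parameters is controlled by the same concentration estimates as in Lemma~\ref{lem:boost complex}-type arguments; this yields the claimed $\eps+|U|^{-1/6}$ and $\xi-|U|^{1/6}$ parameters. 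Finally, take a union bound over the $O(n^{O_{f,r}(1)})$ many choices of $e$, $B$, $x$, $h$, which is absorbed by the exponentially small failure probabilities, to conclude that both conclusions hold simultaneously \whp, and hence some valid $\tilde U$ exists.

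The main obstacle is Step four: getting the binomial count \ref{random:binomial} for $U\sm\tilde U$ (and, inside $G[U]$, for $\tilde U$) with the right main term $bin(f-r,\mu_1(1-\mu_2),x)dn^{f-r}$ rather than merely something close. The cleanest route is to split according to $|Q\cap U|$: write $|Q\cap(U\sm\tilde U)| = |Q\cap U| - |Q\cap\tilde U|$, use \ref{random:binomial} for $U$ to count how many $Q\in G[Y]^{(f)}(e)$ have $|Q\cap U|=y$ for each $y$, and then for each such class apply the hypergeometric law for $|Q\cap\tilde U|$; summing the convolution of $bin(f-r,\mu_1,y)$ with the hypergeometric factor reproduces $bin(f-r,\mu_1(1-\mu_2),x)$ up to the accumulated $(1\pm\eps)$ and $(1\pm|U|^{-1/6})$ errors. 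The concentration step needs care because the quantities being estimated are sums over up to $\Theta(n^{f-r})$ many $Q$, each a bounded-Lipschitz function of the random ordering of $U$; a standard martingale inequality (each vertex swap changes each count by $O(n^{f-r-1})$, and there are $|U|$ steps) gives failure probability at most $\exp(-\Omega(|U|^{1/3}))$ for an additive error $|U|^{-1/6}$ times the (polynomial-in-$n$) main term, which comfortably survives the union bound. Since this is precisely the kind of estimate already carried out in~\cite{GKLO} for the vortex construction, the details can be adapted directly; we therefore only indicate the argument here and refer to~\cite{GKLO} (or the appendix) for the full computation.
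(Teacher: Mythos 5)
Your proposal is correct and follows essentially the same route as the proof in~\cite{GKLO} that the paper relies on for this cited proposition: choose $\tilde U$ randomly, verify \ref{random:objects}--\ref{random:intersections} by Chernoff/Azuma-type concentration (the dependencies among the $Q$'s are handled exactly as in Lemma~\ref{lem:separable chernoff}), and recover the binomial parameters $\mu_1\mu_2$ and $\mu_1(1-\mu_2)$ via the convolution identity after splitting according to $|Q\cap U|$. The only cosmetic differences are that you sample a uniform subset of exact size rather than an independent binomial subset, and that the natural reference for preserving \ref{random:intersections} under a random vertex subset is Corollary~\ref{cor:random induced subcomplex} rather than Lemma~\ref{lem:boost complex}.
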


\begin{prop}[\cite{GKLO}]\label{prop:random noise}
Let $1/n\ll \eps \ll \mu,\xi,1/f$ such that $\mu\le 1/2$ and $r\in[f-1]$. Suppose that $G$ is a complex on $n$ vertices and $U$ is $(\eps,\mu,\xi,f,r)$-random in $G$. Suppose that $L\In G^{(r)}$ and $O\In G^{(r+1)}$ satisfy $\Delta(L)\le \eps n$ and $\Delta(O)\le \eps n$. Then $U$ is still $(\sqrt{\eps},\mu,\xi-\sqrt{\eps},f,r)$-random in $G-L-O$.
\end{prop}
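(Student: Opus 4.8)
The plan is to re-use the same witness throughout. Let $Y$ be an $f$-graph on $V(G)$ witnessing that $U$ is $(\eps,\mu,\xi,f,r)$-random in $G$, with associated density $d\ge\xi$ from~\ref{random:binomial}, and set $G':=G-L-O$. Since $V(G')=V(G)$, condition~\ref{random:objects} is immediate for $G'$, so the work is to verify~\ref{random:binomial},~\ref{random:dense} and~\ref{random:intersections} with parameters $(\sqrt\eps,\xi-\sqrt\eps)$ using $Y$ again. Two preliminary observations drive everything: first, by the handshaking identity~\eqref{handshaking}, $\Delta(L)\le\eps n$ implies $|L(W)|\le\eps n^{r-|W|}$ for every $W$ with $|W|\le r-1$ and $|L|\le\eps n^r$, and likewise $|O(W)|\le\eps n^{r+1-|W|}$ for $|W|\le r$ and $|O|\le\eps n^{r+1}$; second, $G'^{(i)}=G^{(i)}$ for $i\le r-1$ and $G'^{(r)}=G^{(r)}-L$, while a $j$-set $e$ of $G$ lies in $G'$ if and only if no $r$-subset of $e$ lies in $L$ and no $(r+1)$-subset of $e$ lies in $O$.

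For~\ref{random:binomial} and~\ref{random:dense} the point is monotonicity: as $G'\In G$, for every $e\in G'^{(r)}\In G^{(r)}$ we have $G'[Y]^{(f)}(e)\In G[Y]^{(f)}(e)$ and $G'[Y]^{(f+r)}(e)[U]\In G[Y]^{(f+r)}(e)[U]$, so it suffices to bound the number of \emph{lost} sets $Q$. A set $Q$ is lost only if $e\cup Q$ contains an $r$-edge of $L$ or an $(r+1)$-edge of $O$; since $e\notin L$, any such edge meets $Q$, and a routine count — splitting according to the size $a$ of its intersection with $e$, choosing the intersection in at most $\binom ra$ ways, the offending edge in at most $\eps n^{r-a}$ (resp.\ $\eps n^{r+1-a}$) ways by the handshaking bounds, and the remaining vertices of $Q$ in the obvious number of ways, the exponents conspiring so that each summand is of order $\eps n^{f-r}$ (resp.\ $\eps n^f$) — shows the loss is at most $C_r\eps n^{f-r}$ for~\ref{random:binomial} and at most $C_r\eps n^f$ for~\ref{random:dense}, where $C_r$ depends only on $r$. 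Since $\mu\le 1/2$ gives $bin(f-r,\mu,x)\ge\mu^{f-r}$, and $d\ge\xi$, and $\eps\ll\mu,\xi,1/f$, these losses are swallowed by the gap between the $\eps$- and $\sqrt\eps$-error, so~\ref{random:binomial} holds for $G'$ with the same $d\ge\xi-\sqrt\eps$, and~\ref{random:dense} holds with $\xi-\sqrt\eps$.

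For~\ref{random:intersections}, fix $h\in[r]_0$ and $B\In G'^{(h)}$ with $1\le|B|\le 2^h$; note $B\In G^{(h)}$. The plan is to realise $\bigcap_{b\in B}G'(b)[U]$ as $\bigcap_{b\in B}G(b)[U]$ with a bounded number of sparse uniform hypergraphs deleted, and then quote Proposition~\ref{prop:noise}\ref{noise:complex}. The first step is the identity $(G-L)(b)=G(b)-\bigcup_{W\In b}L(W)$ — an edge of $G(b)$ survives the deletion of $L$ precisely when it contains no link $L(W)$ of $L$ rooted at a subset $W$ of $b$ — together with its analogue for $O$, giving $G'(b)=G(b)-\bigcup_{W\In b}L(W)-\bigcup_{W\In b}O(W)$. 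Intersecting over $b\in B$, restricting to $U$, and grouping the deleted edges by uniformity then exhibits
\[\bigcap_{b\in B}G'(b)[U]=\Big(\bigcap_{b\in B}G(b)[U]\Big)-H_{r-h}-\dots-H_r-H'_{r-h+1}-\dots-H'_{r+1},\]
where each $H_\ell$ and each $H'_\ell$ is $\ell$-uniform with maximum degree at most $4^r\eps n$. By~\ref{random:intersections} for $G$, $\bigcap_{b\in B}G(b)[U]$ is an $(\eps,\xi,f-h,r-h)$-complex; since all the deleted graphs have uniformity at least $r-h$, there are at most $2(r+1)$ of them, and $\eps\ll\mu,\xi,1/f$, applying Proposition~\ref{prop:noise}\ref{noise:complex} once for each deletion shows $\bigcap_{b\in B}G'(b)[U]$ is a $(\sqrt\eps,\xi-\sqrt\eps,f-h,r-h)$-complex, which is exactly~\ref{random:intersections} for $G'$.

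I expect the bookkeeping in~\ref{random:intersections} to be the main obstacle: one must verify the identities for $(G-L)(b)$ and $(G-O)(b)$ carefully (in particular that taking the intersection over $b\in B$ and restricting to $U$ commute with these deletions), keep track of the uniformities and degrees of the hypergraphs $H_\ell,H'_\ell$, and check that the iterated application of Proposition~\ref{prop:noise}\ref{noise:complex} stays within the allowed parameter window. The counts in~\ref{random:binomial} and~\ref{random:dense} are routine once one organises them by the overlap of the offending edge with $e$.
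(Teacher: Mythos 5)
Your proposal is correct and follows essentially the same route as the original proof of this proposition in \cite{GKLO}: keep the witness $Y$, bound the lost $f$-sets in \ref{random:binomial} and \ref{random:dense} by the containment count for sparse $r'$-graphs (Proposition~\ref{prop:sparse noise containment}), and handle \ref{random:intersections} by writing $\bigcap_{b\in B}G'(b)[U]$ as $\bigcap_{b\in B}G(b)[U]$ minus boundedly many sparse link graphs of uniformities between $r-h$ and $r+1$ and invoking Proposition~\ref{prop:noise}\ref{noise:complex}. The points you flag as needing verification (the link identities, the degree bounds relative to $|U|$ rather than $n$, and the use of $\mu\le 1/2$ to lower-bound $bin(f-r,\mu,x)$) are exactly the routine checks required, and all go through under the hierarchy $\eps\ll\mu,\xi,1/f$.
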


\subsection{The Cover down lemma}\label{subsec:cover down}

Recall that the Cover down lemma allows us to replace a given leftover $L$ with a new leftover which is restricted to some small set of vertices $U$. We now provide the formal statement.  

\begin{defin}\label{def:dense wrt}
Let $G$ be a complex on $n$ vertices and $H\In G^{(r)}$. We say that $G$ is \defn{$(\xi,f,r)$-dense with respect to $H$} if for all $e\in G^{(r)}$, we have $|G[H\cup \Set{e}]^{(f)}(e)|\ge \xi n^{f-r}$.\COMMENT{For $H=G^{(r)}$, this is the normal dense.}
\end{defin}

\begin{lemma}[Cover down lemma]\label{lem:cover down}
Let $1/n\ll 1/\kappa \ll \gamma \ll \eps \ll \nu\ll \mu,\xi,1/f$ and $r\in[f-1]$ with $\mu\le 1/2$. Assume that \ind{i} is true for all $i\in[r-1]$ and that $F$ is a weakly regular $r$-graph on $f$ vertices. Let $G$ be a complex on $n$ vertices and suppose that $U$ is $(\eps,\mu,\xi,f,r)$-random in $G$.
Let $\tilde{G}$ be a complex on $V(G)$ with $G\In \tilde{G}$ such that $\tilde{G}$ is $(\eps,f,r)$-dense with respect to $G^{(r)}- G^{(r)}[\bar{U}]$, where $\bar{U}:=V(G)\sm U$.

Then there exists a subgraph $H^{\ast}\In G^{(r)}-G^{(r)}[\bar{U}]$ with $\Delta(H^\ast)\le \nu n$ such that for any $L \In \tilde{G}^{(r)}$ with $\Delta(L)\le \gamma n$ and $H^\ast \cup L$ being $F$-divisible and any $(r+1)$-graph $O$ on $V(G)$ with $\Delta(O)\le \gamma n$, there exists a $\kappa$-well separated $F$-packing in $\tilde{G}[H^\ast \cup L]-O$ which covers all edges of $H^\ast\cup L$ except possibly some inside $U$.
\end{lemma}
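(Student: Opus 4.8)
The overall strategy is to build $H^\ast$ from two ingredients: a "partition part" which guarantees that the final leftover lands inside $U$, and a "local absorption part" which allows us to clean up edges that cannot be routed into $U$ directly. First I would apply a random slicing argument inside $G^{(r)}[U, \bar U]$ (edges with at least one vertex in $\bar U$ and at least one in $U$) to obtain a sparse subgraph $H^\ast_0$ whose link structure is still "random-like": more precisely, using the randomness of $U$ together with Proposition~\ref{prop:random noise} and Corollary~\ref{cor:random supercomplex}, a $p$-random sparsification will have the property that for every $S\subseteq \bar U$ of size $i\in[r-1]$ with $S$ contained in some edge of $\tilde G$, the "local link" $(\tilde G[H^\ast_0\cup L])(S)$ restricted to $U$ is still an $(\eps',\xi',f-i,r-i)$-supercomplex after removing the sparse $L$ and $O$. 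This is the point at which the hypothesis that $\tilde G$ is $(\eps,f,r)$-dense with respect to $G^{(r)}-G^{(r)}[\bar U]$ is used: it ensures that edges touching $\bar U$ have enough $f$-cliques through them that also reach into $U$, so these local links are genuinely dense.

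Next comes the core step: "covering down locally". We process the sets $S\subseteq \bar U$ in increasing order of $|S|$, from $|S|=r-1$ down to... actually from $|S|=1$ up to $|S|=r-1$ — whichever ordering makes the induction bookkeeping clean; I would cover edges meeting $\bar U$ in exactly $r-i$ vertices in decreasing order of $r-i$. For a fixed $S$ with $|S|=i$, we want an $F$-packing covering all remaining edges $e$ with $e\cap\bar U \supseteq S$ appropriately; the tool is the "extension" mechanism discussed after Theorem~\ref{thm:main complex}: pick $T\in\binom{V(F)}{i}$ with $F(T)$ nonempty (here weak regularity of $F$ and Proposition~\ref{prop:link divisibility} are used to guarantee $F(T)$ is weakly regular with the right divisibility vector), apply \ind{r-i} to the supercomplex $(\tilde G[H^\ast_0\cup L]-O)(S)$ restricted so that all new vertices lie in $U$ — which is a supercomplex by Proposition~\ref{prop:hereditary} and the random-like properties established above — to get a $\kappa'$-well separated $F(T)$-decomposition $\cF'$, and then "add $S$ back" to each copy. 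Well-separatedness of $\cF'$ is exactly what makes the lifted copies edge-disjoint (since $F(T)$ is an $(r-i)$-graph, two copies share fewer than $r-i+|S|=r$ vertices after re-attaching $S$, provided the shared part of $V(F')$ lies in $U$; the part in $\bar U$ is exactly $S$, handled by working one $S$ at a time), and Fact~\ref{fact:ws} lets us combine across different $S$'s while keeping the accumulated packing $\kappa$-well separated by choosing $\kappa$ large relative to the $\kappa'$ coming from the inductive applications. For the divisibility hypotheses required to apply \ind{r-i}, one needs $(\tilde G[H^\ast_0\cup L])(S)$ to be $F(T)$-divisible; this is where one must be careful to reserve, within $H^\ast$, enough "flexibility" (e.g. via a small auxiliary random reservoir or by an averaging/handshaking argument using \eqref{handshaking}) to adjust the local leftover to be divisible — this adjustment is itself done by the same extension trick at one level down, or absorbed into the $F$-divisibility of $H^\ast\cup L$ that is assumed globally.

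After all sets $S\subseteq\bar U$ are processed, every remaining uncovered edge lies entirely inside $U$, which is the desired conclusion. The degree bound $\Delta(H^\ast)\le \nu n$ follows by tracking the sparsity: $H^\ast$ consists of $H^\ast_0$ (sparse by the initial $p$-random choice with $p$ small) together with the edges used by the cleaning packings, each iteration contributing a bounded-degree increment by the $\kappa'$-well-separatedness bounds of Fact~\ref{fact:ws}\ref{fact:ws:maxdeg}; choosing the hierarchy $\gamma\ll\eps\ll\nu$ and $1/\kappa\ll\gamma$ appropriately makes all these increments sum to at most $\nu n$. The main obstacle, I expect, is the joint bookkeeping of three things simultaneously across the $r-1$ rounds of the local covering: (a) maintaining that every relevant local link stays a supercomplex with parameters that do not degrade below the threshold needed to invoke \ind{r-i} (this needs the boosting lemmas, Lemma~\ref{lem:boost complex}, and the noise-stability Propositions~\ref{prop:noise} and~\ref{prop:random noise}); (b) maintaining $F(T)$-divisibility of each local link before invoking the inductive decomposition, which is delicate because removing edges in one round changes the links relevant to later rounds; and (c) ensuring the lifted copies across all rounds assemble into a single $\kappa$-well separated packing, i.e. controlling $\Delta(\cF^{\le(r+1)})$ via (WS2), which feeds back into keeping the host a supercomplex for the next round via Proposition~\ref{prop:noise}\ref{noise:supercomplex}. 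Getting the order of operations right so that these three invariants are simultaneously preserved is the heart of the argument; the structure here closely parallels Lemma~6.3 (the Cover down lemma) of~\cite{GKLO}, with the new feature being the systematic use of well-separatedness to make the non-clique extension step work.
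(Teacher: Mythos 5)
Your high-level architecture does match the paper's: cover the edges of $H^\ast\cup L$ by type, and for each $S\In\bar U$ apply \ind{r-|S|} to a link complex and lift the resulting well separated $F(T)$-decomposition back via the extension mechanism of Proposition~\ref{prop:S cover}. However, there is a genuine gap at the heart of the argument, namely the interference between the local coverings for \emph{different} sets $S$ of the same size. You write that Fact~\ref{fact:ws} ``lets us combine across different $S$'s'', but Fact~\ref{fact:ws}\ref{fact:ws:1}--\ref{fact:ws:2} only applies once the packings for distinct $S,S'$ are already known to be $r$- or $(r+1)$-disjoint, and that is exactly what has to be proved: if the link decompositions for $S$ and $S'$ both live in the same set $U$, their $f$-sets $S\cup Q$ and $S'\cup Q'$ (with $Q,Q'\In U$) can overlap in $r$ or more vertices and can reuse the same edges inside $U$, and there are up to $n^{|S|}$ sets $S$ competing for the same territory. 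The paper's device is to assign each $S$ its own sparse random territory $U_S\In U$ (a focus satisfying \ref{def:focus:size}--\ref{def:focus:Js}, so that in particular $|U_S\cap U_{S'}|$ is small), to confine the decomposition of the link of $S$ to $U_S$, and to choose the decompositions sequentially from among many pairwise $f$-disjoint candidates (Corollary~\ref{cor:many decs new}) by a random greedy algorithm whose success is established by a Jain-type concentration argument (Lemma~\ref{lem:dense jain}). Nothing in your proposal plays this role, and without some such device the step ``process each $S$ and combine via Fact~\ref{fact:ws}'' fails.

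A second, related omission: once the link decompositions are confined to sparse territories $U_S$, they can only cover the edges $e\supseteq S$ with $e\sm S\In U_S$, which is a small fraction of the edges of the given type containing $S$. The paper therefore proves a stronger statement, the Cover down lemma for setups (Lemma~\ref{lem:horrible}) for $i$-systems, by induction on $r-i$: in each round the bulk of the current type class is covered by combining the nibble with the inductive hypothesis applied to the $i'$-extension of the system (using the partition-pair machinery to keep the relevant densities regular), and only a reserved sparse remainder $J_\ell$ is covered by the localised mechanism above. This two-tier structure is also what makes the divisibility of the local links come out correctly: it is inherited from the global $F$-divisibility of $H^\ast\cup L$ precisely because all edges of strictly smaller type have already been covered, which your ``auxiliary reservoir / handshaking'' sketch does not by itself deliver. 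Your outline would need to be restructured along these lines to close.
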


Roughly speaking, the proof of the Cover down lemma proceeds as follows. 
Suppose that we have already chosen $H^\ast$  and that $L$ is any sparse (leftover) $r$-graph. For an edge $e\in H^\ast \cup L$, we refer to $|e\cap U|$ as its type. Since $L$ is very sparse, we can greedily cover all edges of $L$ in a first step. 
In particular, this covers all type-$0$-edges. We will now continue and cover all type-$1$-edges. Note that every type-$1$-edge contains a unique $S\in\binom{V(G)\sm U}{r-1}$. For a given set $S\in\binom{V(G)\sm U}{r-1}$, we would like to cover all remaining edges of $H^*$ that contain $S$ simultaneously. Assuming a suitable choice of $H^\ast$, this can be
achieved as follows. Let $L_S$ be the link graph of $S$ after the first step. Let $T\in \binom{V(F)}{r-1}$ be such that $F(T)$ is non-empty. By Proposition~\ref{prop:link divisibility}, $L_S$ will be $F(T)$-divisible. Thus, by \ind{1}, $L_S$ has a $\kappa$-well separated $F(T)$-decomposition $\cF_S'$. 
Proposition~\ref{prop:S cover} below implies that we can extend $\cF_S'$ to a $\kappa$-well separated $F$-packing $\cF_S$ which covers all edges that contain $S$. 

However, in order to cover all type-$1$-edges, we need to obtain such a packing $\cF_S$ for every $S\in\binom{V(G)\sm U}{r-1}$, and these packings are to be $r$-disjoint for their union to be a $\kappa$-well separated $F$-packing again. The real
difficulty thus lies in choosing $H^\ast$ in such a way that the link graphs $L_S$ do not interfere too much with each other, and then to choose the decompositions $\cF_S'$ sequentially.
We would then continue to cover all type-$2$-edges using \ind{2}, etc., until we finally cover all type-$(r-1)$-edges using \ind{r-1}. The only remaining edges are then type-$r$-edges, which are contained in $U$, as desired. 

Proving the Cover down lemma for cliques presented one of the main challenges
in~\cite{GKLO}. However, with Proposition~\ref{prop:S cover} in hand, the proof
carries over to general (weakly regular)\COMMENT{That's important for divisibility issues.} $F$ without significant modifications, and is thus omitted here. The full proof of the Cover down lemma (Lemma~\ref{lem:cover down}) can be found in Appendix~\ref{app:covering down}.

We now show how the notion of well separated $F$-packings allows us to `extend' a decomposition of a link complex to a packing which covers all edges that contain a given set $S$ (cf. the discussion in Section~\ref{subsec:main thm}).

\begin{defin}\label{def:link extension}
Let $F$ be an $r$-graph, $i\in[r-1]$ and assume that $T\in \binom{V(F)}{i}$ is such that $F(T)$ is non-empty. Let $G$ be a complex and $S\in \binom{V(G)}{i}$. Suppose that $\cF'$ is a well separated $F(T)$-packing in $G(S)$. We then define $S \triangleleft \cF'$ as follows: For each $F'\in \cF'$, let $F'_{\triangleleft}$ be an (arbitrary) copy of $F$ on vertex set $S\cup V(F')$ such that $F'_{\triangleleft}(S)=F'$. Let $$S \triangleleft \cF':=\set{F'_{\triangleleft}}{F'\in \cF'}.$$
\end{defin}

The following proposition is crucial and guarantees that the above extension yields a packing which covers the desired set of edges. It replaces Fact 10.1\COMMENT{check pointer once accepted} of \cite{GKLO} in the proof of the Cover down lemma. It is also used in the construction of so-called `transformers' (see Section~\ref{subsec:transformers}). 

\begin{prop}\label{prop:S cover}
Let $F$, $r$, $i$, $T$, $G$, $S$ be as in Definition~\ref{def:link extension}. Let $L\In G(S)^{(r-i)}$. Suppose that $\cF'$ is a $\kappa$-well separated $F(T)$-decomposition of $G(S)[L]$. Then $\cF:=S \triangleleft \cF'$ is a $\kappa$-well separated $F$-packing in $G$ and $\set{e\in \cF^{(r)}}{S\In e}=S\uplus L$.
\end{prop}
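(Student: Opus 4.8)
The plan is to verify the two claimed properties of $\cF = S \triangleleft \cF'$ directly from the definitions, using that $\cF'$ is a $\kappa$-well separated $F(T)$-decomposition of $G(S)[L]$.

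\textbf{Step 1: $\cF$ is an $F$-packing in $G$.} First I would check that each $F'_{\triangleleft}$ is a copy of $F$ in $G^{(r)}$. By Definition~\ref{def:complex dec}, since $\cF'$ is an $F(T)$-packing in the complex $G(S)$, we have $V(F')\in G(S)^{(f-i)}$ for each $F'\in \cF'$, and hence $S\cup V(F')\in G^{(f)}$, so $G^{(r)}[S\cup V(F')]\cong \krq{r}{f}$; in particular $F'_{\triangleleft}$, being a copy of $F$ on this vertex set, is a subgraph of $G^{(r)}$ and moreover satisfies $V(F'_{\triangleleft})=S\cup V(F')\in G^{(f)}$. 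Next I would show the copies $F'_{\triangleleft}$ are edge-disjoint. Consider an edge $e$ of $F'_{\triangleleft}$. Either $S\In e$, in which case $e\sm S\in F'_{\triangleleft}(S)=F'$, so $e$ is determined by an edge of $F'$; or $S\not\In e$. I would argue that an edge $e$ with $S\not\In e$ lies in at most one $F'_{\triangleleft}$: write $e' := e\sm S$. If $|e\cap S| = i$ then $S\In e$, contradiction, so $|e\cap S|\le i-1$, meaning $|e'| \ge r-i+1 > r-i$; but $V(F'_{\triangleleft})\cap V(F''_{\triangleleft}) = S\cup(V(F')\cap V(F''))$ and, since $F(T)$ is an $(r-i)$-graph and $\cF'$ is well separated, $|V(F')\cap V(F'')|\le r-i$ for distinct $F',F''\in\cF'$, so $|V(F'_{\triangleleft})\cap V(F''_{\triangleleft})|\le i + (r-i) = r$ with equality only if $|V(F')\cap V(F'')|=r-i$. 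An edge $e$ with $|e\cap S|\le i-1$ has $|e\sm S|\ge r-i+1$, so $e$ cannot be contained in $V(F'_{\triangleleft})\cap V(F''_{\triangleleft})$ (which meets $V(F')\cup V(F'')$ in at most $r-i$ vertices outside $S$), hence $e$ lies in at most one copy. For edges $e$ with $S\In e$, edge-disjointness follows because $\cF'$ is an $F(T)$-packing (so the $F'$ are edge-disjoint) — if $e\sm S\in F'\cap F''$ then $F'=F''$. This establishes that $\cF$ is an $F$-packing in $G$ (and an $F$-packing in the complex $G$ by the vertex-set condition above).

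\textbf{Step 2: the covered edges through $S$ are exactly $S\uplus L$.} An edge $e\in\cF^{(r)}$ with $S\In e$ is of the form $S\cup e'$ with $e'\in F'_{\triangleleft}(S) = F'$ for some $F'\in\cF'$, so $e'\in \cF'^{(r)}$. Conversely every $e'\in\cF'^{(r)}$ arises this way. Since $\cF'$ is an $F(T)$-\emph{decomposition} of $G(S)[L]$, we have $\cF'^{(r)} = (G(S)[L])^{(r-i)} = L$. Hence $\set{e\in\cF^{(r)}}{S\In e} = S\uplus L$.

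\textbf{Step 3: $\kappa$-well separatedness.} I would verify \ref{separatedness:1} and \ref{separatedness:2} for $\cF$. For \ref{separatedness:1}: by the computation in Step 1, $|V(F'_{\triangleleft})\cap V(F''_{\triangleleft})| = |S| + |V(F')\cap V(F'')| \le i + (r-i) = r$ for distinct $F',F''\in\cF'$, using that $\cF'$ is well separated and $F(T)$ is an $(r-i)$-graph. For \ref{separatedness:2}: fix an $r$-set $g$ and count $F'_{\triangleleft}\in\cF$ with $g\In V(F'_{\triangleleft}) = S\cup V(F')$. If $S\In g$, then $g\sm S$ is an $(r-i)$-set contained in $V(F')$, and by \ref{separatedness:2} for $\cF'$ (with the $(r-i)$-set $g\sm S$, noting $\dim$ matches since $F(T)$ is $(r-i)$-uniform) at most $\kappa$ copies $F'$ qualify. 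If $S\not\In g$, then $g\In S\cup V(F')$ forces $g\sm S$, a set of size $\ge r-i+1$, to be contained in $V(F')$; but I would need a count here — actually \ref{separatedness:2} for $\cF'$ only controls $(r-i)$-sets. Here the key observation is that for $g$ with $g\not\supseteq S$, the set $g\cap (V(F')\sm S)$ has size $> r-i$, and by \ref{separatedness:1} two distinct such $F'$ would have $|V(F')\cap V(F'')| > r-i$, contradicting well-separatedness of $\cF'$; so at most one $F'$ qualifies, which is $\le\kappa$.

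\textbf{Main obstacle.} I expect the delicate point to be handling edges and $r$-sets $g$ that are \emph{not} supersets of $S$: the hypothesis on $\cF'$ only directly controls $(r-i)$-dimensional objects in $G(S)$, so one must carefully use well-separatedness of $\cF'$ (which bounds $|V(F')\cap V(F'')|$ by $r-i$) together with the fact that adding back the $i$ vertices of $S$ inflates intersection sizes by exactly $i$. The bookkeeping of which type of edge is covered by how many copies — especially ruling out that a non-$S$-containing edge is covered twice — is the crux; everything else is essentially unwinding Definitions~\ref{def:well separated}, \ref{def:complex dec} and \ref{def:link extension}.
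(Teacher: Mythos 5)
Your proof is correct and follows essentially the same approach as the paper: you verify that each $F'_{\triangleleft}$ is supported by an $f$-set of $G$, use \ref{separatedness:1} for $\cF'$ (together with $F(T)$ being $(r-i)$-uniform) to bound $|V(F'_{\triangleleft})\cap V(F''_{\triangleleft})|$ by $r$, reduce edge-disjointness and \ref{separatedness:2} to the corresponding properties of $\cF'$ via the correspondence $e\mapsto e\sm S$, and read off the covered edges through $S$ from $\cF'^{(r-i)}=L$. The only difference is organisational (you split into the cases $S\In e$ and $S\not\In e$, whereas the paper shows any doubly covered edge must contain $S$ and any $r$-set has an $(r-i)$-subset avoiding $S$), which changes nothing of substance.
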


In particular, if $L=G(S)^{(r-i)}$, i.e.~if $\cF'$ is a $\kappa$-well separated $F(T)$-decomposition of $G(S)$, then $\cF$ is a $\kappa$-well separated $F$-packing in $G$ which covers all $r$-edges of $G$ that contain~$S$.

\proof
We first check that $\cF$ is an $F$-packing in $G$. Let $f:=|V(F)|$. For each $F'\in \cF'$, we have $V(F')\in G(S)[L]^{(f-i)}\In G(S)^{(f-i)}$. Hence, $V(F'_{\triangleleft})\in G^{(f)}$. In particular, $G^{(r)}[V(F'_{\triangleleft})]$ is a clique and thus $F'_{\triangleleft}$ is a subgraph of $G^{(r)}$. Suppose, for a contradiction, that for distinct $F',F''\in \cF'$, $F'_{\triangleleft}$ and $F''_{\triangleleft}$ both contain $e\in G^{(r)}$. By \ref{separatedness:1} we have that $|V(F')\cap V(F'')|\le r-i$, and thus we must have $e=S\cup (V(F')\cap V(F''))$. Since $V(F')\cap V(F'')\in G(S)[L]$, we have $e\sm S\in G(S)[L]^{(r-i)}$, and thus $e\sm S$ belongs to at most one of $F'$ and $F''$. Without loss of generality, assume that $e\sm S\notin F'$. Then we have $e\sm S\notin F'_{\triangleleft}(S)$ and thus $e\notin F'_{\triangleleft}$, a contradiction. Thus, $\cF$ is an $F$-packing in $G$.

We next show that $\cF$ is $\kappa$-well separated. Clearly, for distinct $F',F''\in \cF'$, we have $|V(F'_{\triangleleft})\cap V(F''_{\triangleleft})|\le r-i+|S|=r$, so \ref{separatedness:1} holds. To check \ref{separatedness:2}, consider $e\in \binom{V(G)}{r}$. Let $e'$ be an $(r-i)$-subset of $e\sm S$. By definition of $\cF$, we have that the number of $F'_{\triangleleft}\in \cF$ with $e\In V(F'_{\triangleleft})$ is at most the number of $F'\in \cF'$ with $e'\In V(F')$, where the latter is at most $\kappa$ since $\cF'$ is $\kappa$-well separated.

Finally, we check that $\set{e\in \cF^{(r)}}{S\In e}=S\uplus L$. Let $e$ be any $r$-set with $S\In e$. By Definition~\ref{def:link extension}, we have $e\in \cF^{(r)}$ if and only if $e\sm S\in \cF'^{(r-i)}$. Since $\cF'$ is an $F(T)$-decomposition of $G(S)[L]^{(r-i)}=L$, we have $e\sm S\in \cF'^{(r-i)}$ if and only if $e\sm S\in L$. Thus, $e\in \cF^{(r)}$ if and only if $e\in S\uplus L$.
\endproof

\section{Absorbers}\label{sec:absorbers}

In this section we show that for any (divisible) $r$-graph $H$ in a supercomplex $G$, we can find an `exclusive' absorber $r$-graph $A$ (as discussed in Section~\ref{subsec:outline}, one may think of $H$ as a potential leftover from an approximate $F$-decomposition and $A$ will be set aside earlier to absorb $H$ into an $F$-decomposition). The following definition makes this precise. The main result of this section is Lemma~\ref{lem:absorbing lemma}, which constructs an absorber provided that $F$ is weakly regular.

\begin{defin}[Absorber]
Let $F$, $H$ and $A$ be $r$-graphs. We say that $A$ is an \defn{$F$-absorber for $H$} if $A$ and $H$ are edge-disjoint and both $A$ and $A\cup H$ have an $F$-decomposition. More generally, if $G$ is a complex and $H\In G^{(r)}$, then $A\In G^{(r)}$ is a \defn{$\kappa$-well separated $F$-absorber for $H$ in $G$} if $A$ and $H$ are edge-disjoint and there exist $\kappa$-well separated $F$-packings $\cF_{\circ}$ and $\cF_{\bullet}$ in $G$ such that $\cF_{\circ}^{(r)}=A$ and $\cF_{\bullet}^{(r)}=A\cup H$.
\end{defin}

\begin{lemma}[Absorbing lemma]\label{lem:absorbing lemma}
Let $1/n\ll 1/\kappa \ll \gamma, 1/h , \eps\ll \xi,1/f$ and $r\in[f-1]$. Assume that \ind{i} is true for all $i\in[r-1]$. Let $F$ be a weakly regular $r$-graph on $f$ vertices, let $G$ be an $(\eps,\xi,f,r)$-supercomplex on $n$ vertices and let $H$ be an $F$-divisible subgraph of $G^{(r)}$ with $|H|\le h$. Then there exists a $\kappa$-well separated $F$-absorber $A$ for $H$ in $G$ with $\Delta(A)\le \gamma n$.
\end{lemma}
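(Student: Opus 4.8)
The plan is to build the absorber $A$ for $H$ edge by edge, or rather copy by copy, using the fact that $H$ is $F$-divisible to reduce to handling a single edge at a time, and then to glue the resulting gadgets together via ``transformers''.

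First I would reduce to the case $|H|=1$, i.e. $H$ consists of a single edge $e$. This is not literally possible (a single edge is never $F$-divisible unless $F$ is a single edge), but the right reduction is the following: since $H$ is $F$-divisible, one can write (the edge set of) $H$ as a ``signed'' combination that telescopes. More precisely, the standard trick is to note that it suffices to build, for each edge $e$ of $H$, a gadget that allows one to ``move'' $e$ around, and then to exploit divisibility to cancel. Concretely, I would first prove an auxiliary statement: for any two $F$-divisible $r$-graphs $H_1, H_2$ on at most $h$ vertices each which are ``$F$-equivalent'' in a suitable sense (they have the same divisibility data, or one is obtained from the other by a local move), there is a small \emph{transformer} $T \subseteq G^{(r)}$, edge-disjoint from $H_1 \cup H_2$, with $\Delta(T)$ small, such that both $T \cup H_1$ and $T \cup H_2$ have $\kappa$-well separated $F$-decompositions (in $G$). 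Transformers are built in Section~\ref{subsec:transformers}; the key tool there is the two-sided version of ``covering down locally'' via Proposition~\ref{prop:S cover} together with the rooted embedding lemma (Lemma~\ref{lem:rooted embedding}), and the fact that $F$ is weakly regular is used crucially to control the divisibility of link graphs (Proposition~\ref{prop:link divisibility}) and in the colouring argument (Lemma~\ref{lem:colouring}) that produces a suitable auxiliary structure.

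The main construction then runs as follows. Using Proposition~\ref{prop:divisible existence}, pick a reasonably small $f'$ such that $\krq{r}{f'}$ is $F$-divisible, and find inside $G$ (using that $G$ is a supercomplex, via the extendability and density conditions, plus Lemma~\ref{lem:rooted embedding}) an ``exclusive clique-like'' structure: an edge-disjoint family of copies of $\krq{r}{f'}$, with small maximum degree, and disjoint (in an edge sense) from $H$. Then I would chain transformers: start from $H$, transform it into a disjoint union of complete $r$-graphs $\krq{r}{f'}$ (using that $H$ is $F$-divisible so that the ``difference'' between $H$ and such a union is absorbable by a transformer), then transform that union of cliques into a fresh copy of the same union of cliques disjoint from everything used so far, and finally transform \emph{that} back — the composition of all these transformers, together with the intermediate cliques (which are themselves $F$-decomposable since $\krq{r}{f'}$ is $F$-divisible and $G$ restricted to such a clique is a supercomplex, so \ind{r}, or rather the clique case already in \cite{GKLO}, applies), forms the absorber $A$. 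The point is that $A$ has an $F$-decomposition on its own (traverse the chain one way, leaving $H$ out and using the cliques) and $A \cup H$ has one too (traverse the chain the other way). Keeping $\Delta(A) \le \gamma n$ is a matter of taking the vertex sets of all the gadgets essentially disjoint, which is possible because there are only boundedly many of them and $G$ is large; keeping everything $\kappa$-well separated follows from Fact~\ref{fact:ws}, since we combine boundedly many well separated packings that are pairwise $(r+1)$-disjoint by construction.

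The main obstacle I expect is the construction of transformers, and in particular verifying that a transformer exists between $H$ and the target union of cliques using only that $H$ is $F$-divisible (rather than, say, $\krq{r}{f}$-divisible). This is where one genuinely needs $F$ to be weakly regular: the transformer is assembled by repeatedly applying the ``extend a link decomposition'' operation of Proposition~\ref{prop:S cover}, and at each level $i \in [r-1]$ one invokes \ind{i} to decompose a link graph, which requires that link graph to be $F(T)$-divisible for an appropriate $T$ with $F(T)$ non-empty — this divisibility of the link is exactly what Proposition~\ref{prop:link divisibility} delivers for weakly regular $F$, and it would fail in general. A second, more technical obstacle is the bookkeeping to ensure all the packings produced along the way are simultaneously well separated and have small maximum degree; this is handled by using Lemma~\ref{lem:rooted embedding} with a small parameter $\gamma$ and appealing to Fact~\ref{fact:ws}\ref{fact:ws:1} and~\ref{fact:ws:2} repeatedly, but it requires care to set up the hierarchy of constants so that $\kappa$ absorbs all the accumulated constants from the boundedly many steps.
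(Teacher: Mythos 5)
Your overall architecture (chain transformers between $H$ and intermediate objects, then read the chain in two directions to obtain $F$-decompositions of $A$ and of $A\cup H$) matches the paper's, and your account of why weak regularity is needed inside the Transforming lemma is accurate. But there is a genuine gap at the heart of the construction: the step where you ``transform $H$ into a disjoint union of complete $r$-graphs $\krq{r}{f'}$''. The Transforming lemma (Lemma~\ref{lem:transformer}) only produces a transformer between $H$ and $H'$ when $H\rightsquigarrow H'$, i.e.\ when there is an edge-bijective homomorphism from $H$ to $H'$ (equivalently, $H'$ is obtained from $H$ by identifying vertices). An arbitrary $F$-divisible $H$ admits no such homomorphism onto a disjoint union of cliques in general, and $F$-divisibility alone buys you nothing here. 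This is exactly the obstacle that Section~\ref{subsec:canonical} is designed to overcome: one cannot canonicalise $H$ directly, but one can canonicalise a suitable \emph{extension} of it. The paper proves (Lemma~\ref{lem:identifaction}) that $\nabla(\nabla(H+t\cdot F)+s\cdot F)\rightsquigarrow\nabla M_h$ for a canonical multi-$r$-graph $M_h$ depending only on $h$, and this rests on two nontrivial ingredients your proposal does not supply: the strong $m$-regular colouring of $H+t\cdot F$ (Lemma~\ref{lem:colouring}, which in turn needs Lemma~\ref{lem:multi dec} applied to the shadow $F^{sh}$), and the $F$-decomposable symmetric $r$-extender $(F^\ast,e^\ast)$ (Lemma~\ref{lem:extenders}, whose proof already invokes the Cover down lemma). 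You mention Lemma~\ref{lem:colouring}, but you place it inside the construction of transformers, where it plays no role; its actual job is to produce the identification onto the canonical target.

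A second problem is your justification that the intermediate cliques are $F$-decomposable ``since $\krq{r}{f'}$ is $F$-divisible and \ind{r} applies''. You may not invoke \ind{r}: the Absorbing lemma is a step in the proof of \ind{r}, and only \ind{i} for $i\in[r-1]$ is available. Moreover, a single copy of $\krq{r}{f'}$ with $f'$ bounded is not a supercomplex on $n$ vertices, and $F$-divisibility of a bounded complete $r$-graph does not imply its $F$-decomposability. In the paper the intermediate objects come with $F$-decompositions for free by construction: $\tilde{\nabla}H=H\cup\nabla H$ is $F$-decomposable because each edge of $H$ is extended into a copy of $F$ (Fact~\ref{fact:extensions}), and this is what makes the graphs $T_1,T_2$ in the proof transformers without any appeal to a decomposition theorem. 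Without replacing your clique intermediaries by the $\nabla$-extensions and the canonical $\nabla M_h$, the chain does not close.
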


We now briefly discuss the case $r=1$. For the case $F=\krq{1}{f}$, a construction of an $F$-absorber for any $F$-divisible $r$-graph $H$ in a supercomplex $G$ is given in \cite{GKLO}. It is easy to see that this absorber is $1$-well separated. Essentially the same construction also works if $F$ contains some isolated vertices.
Thus, for the remainder of this section, we will assume that $r\ge 2$.

The building blocks of our absorbers will be so-called `transformers', first introduced in~\cite{BKLO}.
Roughly speaking, a transformer $T$ can be viewed as transforming a given leftover graph $H$ into a new leftover $H'$ (where we set aside $T$ and $H'$ earlier).

\begin{defin}[Transformer]
Let $F$ be an $r$-graph, $G$ a complex and assume that $H,H'\In G^{(r)}$. A subgraph $T\In G^{(r)}$ is a \defn{$\kappa$-well separated $(H,H';F)$-transformer in $G$} if $T$ is edge-disjoint from both $H$ and $H'$ and there exist $\kappa$-well separated $F$-packings $\cF$ and $\cF'$ in $G$ such that $\cF^{(r)}=T\cup H$ and $\cF'^{(r)}=T\cup H'$.
\end{defin}

Our `Transforming lemma' (Lemma~\ref{lem:transformer}) guarantees the existence of a transformer for $H$ and $H'$ if $H'$ is \emph{obtained from $H$ by identifying vertices} (modulo deleting some isolated vertices from $H'$).
To make this more precise, given a multi-$r$-graph $H$ and $x,x'\in V(H)$, we say that $x$ and $x'$ are \defn{identifiable} if $|H(\Set{x,x'})|=0$, that is, if identifying $x$ and $x'$ does not create an edge of size less than $r$.
For multi-$r$-graphs $H$ and $H'$, we write $H\doublesquig H'$ if there is a sequence $H_0,\dots,H_t$ of multi-$r$-graphs such that $H_0\cong H$, $H_t$ is obtained from $H'$ by deleting isolated vertices, and for every $i\in[t]$, there are two identifiable vertices $x,x'\in V(H_{i-1})$ such that $H_i$ is obtained from $H_{i-1}$ by identifying $x$ and $x'$.

If $H$ and $H'$ are (simple) $r$-graphs and $H\doublesquig H'$, we just write $H\rightsquigarrow H'$ to indicate the fact that during the identification steps, only vertices $x,x'\in V(H_{i-1})$ with $H_{i-1}(\Set{x})\cap H_{i-1}(\Set{x'})=\emptyset$ were identified (i.e.~if we did not create multiple edges).

Clearly, $\doublesquig$ is a reflexive and transitive relation on the class of multi-$r$-graphs, and $\rightsquigarrow$ is a reflexive and transitive relation on the class of $r$-graphs.

It is easy to see that $H\rightsquigarrow H'$ if and only if there is an \emph{edge-bijective homomorphism} from $H$ to $H'$ (see Proposition~\ref{prop:simple identification facts}\ref{fact:identification equivalence simple}). Given $r$-graphs $H,H'$, a \defn{homomorphism from $H$ to $H'$} is a map $\phi\colon V(H)\to V(H')$ such that $\phi(e)\in H'$ for all $e\in H$. Note that this implies that $\phi{\restriction_{e}}$ is injective for all $e\in H$.
We let $\phi(H)$ denote the subgraph of $H'$ with vertex set $\phi(V(H))$ and edge set $\set{\phi(e)}{e\in H}$. We say that $\phi$ is \defn{edge-bijective} if $|H|=|\phi(H)|=|H'|$. For two $r$-graphs $H$ and $H'$, we write $H\overset{\phi}{\rightsquigarrow} H'$ if $\phi$ is an edge-bijective homomorphism from $H$ to $H'$.

We now record a few simple observations about the relation $\rightsquigarrow$ for future reference.

\begin{prop}\label{prop:simple identification facts}
The following hold.
\begin{enumerate}[label=\rm{(\roman*)}]
\item $H\rightsquigarrow H'$ if and only if there exists $\phi$ such that $H\overset{\phi}{\rightsquigarrow} H'$.\label{fact:identification equivalence simple}
\item Let $H_1,H_1',\dots,H_t,H_t'$ be $r$-graphs such that $H_1,\dots,H_t$ are vertex-disjoint and $H_1',\dots,H_t'$ are edge-disjoint and $H_i\cong H_i'$ for all $i\in[t]$. Then $$H_1+\dots+H_t \rightsquigarrow H_1'\cupdot \cdots \cupdot H_t'.$$\label{fact:disjoint identification}
\item If $H\rightsquigarrow H'$ and $H$ is $F$-divisible, then $H'$ is $F$-divisible.\label{fact:identification divisible}\COMMENT{\proof Let $H\overset{\phi}{\rightsquigarrow} H'$. Let $i\in[r-1]_0$ and $S'\in \binom{V(H')}{i}$. Let $\cS$ consist of all $S\in \binom{V(H)}{i}$ such that $\phi(S)=S'$. Then $|H'(S')|=\sum_{S\in \cS}|H(S)|\equiv 0\mod{Deg(F)_i}$.
\endproof}
\end{enumerate}
\end{prop}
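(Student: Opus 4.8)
The plan is to prove (i) directly and then to read off (ii) and (iii) as short consequences. For the forward direction of (i), suppose $H\rightsquigarrow H'$ and take a witnessing sequence $H_0,\dots,H_t$ as in the definition: $H_0\cong H$, $H_t$ is $H'$ with its isolated vertices deleted, and $H_i$ arises from $H_{i-1}$ by identifying two vertices so that no multiple edge is created. Each such identification step is an edge-bijective homomorphism $H_{i-1}\to H_i$: it is a homomorphism by construction, it is surjective on edges since every edge of $H_i$ is the image of an edge of $H_{i-1}$, and it is injective on edges precisely because no multiple edges arise. The isomorphism $H\to H_0$ and the inclusion $H_t\hookrightarrow H'$ are edge-bijective too, and a composition of edge-bijective homomorphisms is edge-bijective (the edge counts agree and the image is all of the target). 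Composing everything yields $\phi$ with $H\overset{\phi}{\rightsquigarrow}H'$.

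For the converse of (i), given $H\overset{\phi}{\rightsquigarrow}H'$ (we may assume $H$ has no isolated vertices, first identifying any isolated vertex of $H$ with an arbitrary other vertex, which creates no edge), I would realise $\phi$ as a sequence of single-vertex identifications. Keep track of a pair $(H_i,\phi_i)$ where $\phi_i\colon H_i\to H'$ is an edge-bijective homomorphism, starting from $(H,\phi)$. As long as $\phi_i$ is not injective on vertices, choose $x\ne x'$ with $\phi_i(x)=\phi_i(x')$; then $x,x'$ are identifiable and identifying them creates no multiple edge, since otherwise $\Set{x}\cup g$ and $\Set{x'}\cup g$ would be two distinct edges of $H_i$ with the same $\phi_i$-image, contradicting edge-bijectivity. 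The induced map $\phi_{i+1}$ on the quotient $H_{i+1}$ remains edge-bijective because the quotient map is. The process terminates at some $H_t$ on which the map to $H'$ is a vertex-injective edge-bijective homomorphism, i.e.\ an isomorphism onto the induced subgraph of $H'$ on its non-isolated vertices, which is exactly $H'$ with its isolated vertices deleted. Hence $H\rightsquigarrow H'$. The identifiability check in this direction is the one point that needs genuine care; the rest is bookkeeping.

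For (ii), I would exhibit an explicit edge-bijective homomorphism and quote (i). On the part $V(H_j)$ of $V(H_1+\dots+H_t)$, let $\phi$ be a fixed isomorphism $H_j\to H_j'$ followed by the inclusion into $H_1'\cupdot\dots\cupdot H_t'$. Since every edge of $H_1+\dots+H_t$ lies in a single $H_j$, this $\phi$ is a homomorphism; its image has edge set $\bigcup_j E(H_j')$, which is the whole target; and $|H_1+\dots+H_t|=\sum_j|H_j|=\sum_j|H_j'|=|H_1'\cupdot\dots\cupdot H_t'|$ because the $H_j'$ are edge-disjoint. So $\phi$ is edge-bijective, and (i) gives $H_1+\dots+H_t\rightsquigarrow H_1'\cupdot\dots\cupdot H_t'$.

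For (iii), pick $\phi$ with $H\overset{\phi}{\rightsquigarrow}H'$. Fix $i\in[r-1]_0$ and $S'\in\binom{V(H')}{i}$, and set $\cS:=\set{S\in\binom{V(H)}{i}}{\phi(S)=S'}$. Using that $\phi$ is edge-bijective and that the restriction of $\phi$ to any edge is a bijection onto its image, the edges $e'\in H'$ with $S'\In e'$ are in bijection with the pairs $(e,S)$ where $e\in H$, $S\in\cS$ and $S\In e$, and for fixed $e$ at most one such $S$ occurs; hence $|H'(S')|=\sum_{S\in\cS}|H(S)|$. Since $H$ is $F$-divisible, $Deg(F)_i$ divides each term on the right, so it divides $|H'(S')|$; as $i$ and $S'$ were arbitrary, $H'$ is $F$-divisible.
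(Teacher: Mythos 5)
Your proof is correct. The paper records these as simple observations without a written proof (only part (iii) carries a one-line justification in a source comment, which is identical to your counting argument $|H'(S')|=\sum_{S\in\cS}|H(S)|$), and your arguments for (i) and (ii) --- composing the identification maps in the forward direction, and conversely realising an edge-bijective homomorphism as a sequence of identifications of vertices with equal images, using edge-bijectivity to rule out merged edges --- supply exactly the intended details.
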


\subsection{Transformers}\label{subsec:transformers}

The following lemma guarantees the existence of a transformer from $H$ to $H'$ if $F$ is weakly regular and $H \rightsquigarrow H'$. The proof relies inductively on the assertion of the main complex decomposition theorem (Theorem~\ref{thm:main complex}).

\begin{lemma}[Transforming lemma]\label{lem:transformer}
Let $1/n\ll 1/\kappa \ll \gamma , 1/h, \eps \ll \xi,1/f$ and $2\le r<f$. Assume that \ind{i} is true for all $i\in[r-1]$. Let $F$ be a weakly regular $r$-graph on $f$ vertices, let $G$ be an $(\eps,\xi,f,r)$-supercomplex on $n$ vertices and let $H,H'$ be vertex-disjoint $F$-divisible subgraphs of $G^{(r)}$ of order at most $h$ and such that $H\rightsquigarrow H'$. Then there exists a $\kappa$-well separated $(H,H';F)$-transformer $T$ in $G$ with $\Delta(T)\le \gamma n$.
\end{lemma}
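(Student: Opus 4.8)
\textbf{Proof plan for the Transforming lemma (Lemma~\ref{lem:transformer}).}

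The plan is to build the transformer $T$ from a collection of `atomic' gadgets, one for each identification step in the sequence $H=H_0 \rightsquigarrow H_1 \rightsquigarrow \dots \rightsquigarrow H_t=H'$ witnessing $H\rightsquigarrow H'$. Since $\rightsquigarrow$ is transitive and we are allowed $\Delta(T)\le \gamma n$, it suffices to handle a single identification step (at the cost of replacing $\gamma$ by $\gamma/t$ and $\kappa$ by $\kappa/t$, and inserting vertex-disjoint intermediate copies $H_1,\dots,H_{t-1}$ of the intermediate graphs as subgraphs of $G^{(r)}$, which is possible since $G$ is a supercomplex and hence highly extendable). So I reduce to the case where $H'$ is obtained from $H$ by identifying a single pair of identifiable vertices $x,x'$ (with $H(\{x\})\cap H(\{x'\})=\emptyset$), possibly followed by deleting isolated vertices.

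For this atomic case, the idea is the one from~\cite{BKLO}: take two vertex-disjoint `copies' of the configuration and glue them along an auxiliary structure so that the resulting $r$-graph can be decomposed into copies of $F$ in two different ways, one way leaving $H$ uncovered and the other leaving $H'$ uncovered. Concretely, fix a weakly regular $F$ on $f$ vertices. Using that $F$ is weakly regular, one sets aside a collection of vertex-disjoint copies of $F$ inside $G$ (supported on cliques, which exist in abundance by extendability and density), and one designates a small number of `link slots' where the two decompositions differ. The key point is that because $F$ is weakly regular, the link graphs $F(S)$ that appear are themselves weakly regular (Proposition~\ref{prop:link divisibility}), and divisibility is inherited by these links; this lets us invoke \ind{i} for $i\in[r-1]$ to decompose the relevant link complexes, and then lift these decompositions back via Proposition~\ref{prop:S cover} to $\kappa$-well separated $F$-packings covering all edges through a given root. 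The two different `gluings' at the identified vertices $x,x'$ are what produce the two packings $\cF$ with $\cF^{(r)}=T\cup H$ and $\cF'$ with $\cF'^{(r)}=T\cup H'$.

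To assemble everything I would: (1) embed, vertex-disjointly from $H\cup H'$ and from each other, the $O(h)$ auxiliary clique gadgets needed, using Lemma~\ref{lem:rooted embedding} with the roots prescribed on $V(H)\cup V(H')$ — here the degeneracy/rooting hypotheses are easily met since the gadgets have bounded size and we only root $r$-sets, and the maximum-degree and rooting-count conditions \eqref{rooting nice} hold because $h$ is bounded while $n$ is huge; (2) within each gadget, use the supercomplex property (Proposition~\ref{prop:hereditary}) to pass to link complexes $\bigcap_{b\in B}G(b)$, which are $(\eps,\xi,f-i,r-i)$-supercomplexes, and apply \ind{i} to obtain $\kappa$-well separated $F(T)$-decompositions of the appropriate $F(T)$-divisible link graphs; (3) extend these via Proposition~\ref{prop:S cover} and combine them using Fact~\ref{fact:ws}, taking packings in successively reduced complexes $G - (\text{used edges}) - (\text{used }(r+1)\text{-shadow})$ so that the unions stay well separated (Proposition~\ref{prop:noise} guarantees we remain inside a supercomplex throughout since all the graphs removed have maximum degree $o(n)$); (4) finally set $T$ to be the union of all gadget edges minus $H$ and minus $H'$, and check $\Delta(T)\le\gamma n$ from the maximum-degree bound in Lemma~\ref{lem:rooted embedding}\ref{rooted embedding:maxdeg} together with the bounded number of gadgets.

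\textbf{Main obstacle.} The delicate part is the exact combinatorial design of the atomic gadget: specifying a bounded $r$-graph together with two $F$-decompositions that agree except on a controlled set of edges, so that one leftover is (a copy of) $H$ and the other is (a copy of) $H'$, and simultaneously ensuring that all the link graphs arising are $F(T)$-divisible so that \ind{i} applies. This is where weak regularity of $F$ is genuinely used — both for the divisibility bookkeeping and for the colouring/matching argument (cf.~Lemma~\ref{lem:colouring}) that produces the two decompositions — and where the bulk of the technical work lies. I would expect to modularise this: first prove a `basic transformer' for the simplest nontrivial identification (merging two vertices in an edge-disjoint pair of $F$-copies), analogous to~\cite{BKLO}, and then bootstrap to a single identification step in a general $H$, and finally chain these along the $\doublesquig$-sequence.
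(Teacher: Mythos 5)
Your reduction to a single identification step is legitimate (transformers concatenate, the intermediate graphs $H_1,\dots,H_{t-1}$ are $F$-divisible by Proposition~\ref{prop:simple identification facts}\ref{fact:identification divisible} and can be embedded vertex-disjointly since $t\le h$ is bounded), and you have correctly identified the supporting toolkit: link decompositions via \ind{i}, the extension of Proposition~\ref{prop:S cover}, rooted embeddings, and the $(r+1)$-shadow bookkeeping for well-separatedness. But the proposal stops exactly where the content of the lemma begins. The ``atomic gadget'' --- a bounded structure admitting two $F$-packings whose leftovers are $H$ and $H'$ respectively --- \emph{is} the transformer, and you explicitly defer its construction as ``the main obstacle'' without supplying it. Nothing in the outline explains how the identification of $x$ and $x'$ is actually realised by two different $F$-packings, and the appeal to~\cite{BKLO} does not fill this in: the graph case there does not have to contend with leftover edges meeting $V(H)$ in every intersection size between $1$ and $r-1$.

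Note also that the reduction to one identification step buys essentially nothing for this core difficulty. Even for a single identification, after the natural first move (extend each pair $e,\phi(e)$ into mirrored copies $\tilde F_{e,1},\tilde F_{e,2}$ of $F$ sharing a common $(f-r)$-set $Q_e$, which transforms $H$ into $H'$ modulo a remainder), the uncancelled edges still meet $V(H)$ in anywhere from $1$ to $r-1$ vertices, and one must eliminate them by a descent on $|e\cap V(H)|$. Each descent step needs a \emph{two-sided} version of Proposition~\ref{prop:S cover}: for an $i$-set $S\In V(H)$ and its image $\phi(S)$, a single $\kappa$-well separated $F(S^\ast)$-decomposition of the leftover link $L_S\In G(S)^{(r-i)}\cap G(\phi(S))^{(r-i)}$ must be extended simultaneously over $S$ and over $\phi(S)$ (Definition~\ref{def:link extension two sided}, Proposition~\ref{prop:S cover two sided}). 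Moreover, since $L_S$ is merely a sparse leftover and does not span a supercomplex, \ind{r-i} cannot be applied to it directly; writing $G'':=G(S)\cap G(\phi(S))$, one must first choose an $F(S^\ast)$-divisible $A\In G''^{(r-i)}$ so that both $G''[A]$ and $G''[A\cup L_S]$ are decomposable supercomplexes, and take the difference of the two resulting localised transformers (this is Lemma~\ref{lem:iterative transforming local}). The descent, the two-sided extension, and the $A$ versus $A\cup L_S$ device constitute the actual proof, and none of them appear in your outline, so the argument as proposed has a genuine gap.
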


A key operation in the proof of Lemma~\ref{lem:transformer} is the ability to find `localised transformers'. Let $i\in[r-1]$ and let $S\In V(H)$, $S' \In V(H')$ and $S^\ast\In V(F)$ be sets of size $i$. For an $(r-i)$-graph $L$ in the link graph of both $S$ and $S'$, we can view an $F(S^\ast)$-decomposition $\cF_L$ of $L$ (which exists by \ind{r-i}\COMMENT{if $L$ is $F(S^\ast)$-divisible and a supercomplex}) as a localised transformer between $S\uplus L$ and $S'\uplus L$. Indeed, similarly to the situation described in Sections~\ref{subsec:main thm} and~\ref{subsec:cover down}, we can extend $\cF_L$ `by adding $S$ back' to obtain an $F$-packing $\cF$ which covers all edges of $S\uplus L$. By `mirroring' this extension, we can also obtain an $F$-packing $\cF'$ which covers all edges of $S'\uplus L$ (see Definition~\ref{def:link extension two sided} and Proposition~\ref{prop:S cover two sided}). 
To make this more precise, we introduce the following notation.

\begin{defin}
Let $V$ be a set and let $V_1,V_2$ be disjoint subsets of $V$ having equal size. Let $\phi\colon V_1\to V_2$ be a bijection. For a set $S\In V\sm V_2$, define $\phi(S):=(S\sm V_1)\cup \phi(S\cap V_1)$. Moreover, for an $r$-graph $R$ with $V(R)\In V\sm V_2$, we let $\phi(R)$ be the $r$-graph on $\phi(V(R))$ with edge set $\set{\phi(e)}{e\in R}$.
\end{defin}

The following facts are easy to see.

\begin{fact}\label{fact:simple proj}
Suppose that $V$, $V_1$, $V_2$ and $\phi$ are as above. Then the following hold for every $r$-graph $R$ with $V(R)\In V\sm V_2$:
\begin{enumerate}[label=\rm{(\roman*)}]
\item $\phi(R)\cong R$;
\item if $R=R_1\cupdot \dots \cupdot R_k$, then $\phi(R)=\phi(R_1)\cupdot \dots \cupdot \phi(R_k)$ and thus $\phi(R_1)=\phi(R)-\phi(R_2)-\dots-\phi(R_k)$. \label{simple proj:union}
\end{enumerate}
\end{fact}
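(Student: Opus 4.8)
\textbf{Proof plan for Fact~\ref{fact:simple proj}.}

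Both assertions are immediate consequences of the definition of $\phi(R)$, so the "proof" here is really just an unwinding of definitions; I would keep it to a few lines. For part~(i), note that $\phi$ restricted to $V_1$ is a bijection onto $V_2$, and that on $V\sm(V_1\cup V_2)$ the map $S\mapsto\phi(S)$ acts as the identity. Hence the map $v\mapsto v$ for $v\in V(R)\sm V_1$ and $v\mapsto\phi(v)$ for $v\in V(R)\cap V_1$ is a bijection from $V(R)$ to $\phi(V(R))$. Since for every edge $e\in R$ we have $\phi(e)=(e\sm V_1)\cup\phi(e\cap V_1)$, this vertex bijection carries $e$ to $\phi(e)$, and it carries distinct edges to distinct edges because it is injective on $V(R)$. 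Thus it is an isomorphism from $R$ to $\phi(R)$, giving $\phi(R)\cong R$. In particular $|\phi(R)|=|R|$, a fact I will reuse below.

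For part~(ii), suppose $R=R_1\cupdot\dots\cupdot R_k$, i.e.\ the $R_j$ are edge-disjoint $r$-graphs with $\bigcup_{j}R_j=R$ (as edge sets). Directly from the definition, $\phi(R)$ has edge set $\set{\phi(e)}{e\in R}=\bigcup_{j=1}^{k}\set{\phi(e)}{e\in R_j}=\bigcup_{j=1}^{k}E(\phi(R_j))$, so $\phi(R)=\phi(R_1)\cup\dots\cup\phi(R_k)$ as edge sets. It remains to check that this union is disjoint, i.e.\ that $\phi$ does not merge two edges coming from different $R_j$. This follows from injectivity: if $e\in R_j$, $e'\in R_{j'}$ and $\phi(e)=\phi(e')$, then applying the inverse of the vertex bijection from part~(i) (which is well defined on $\phi(V(R))$) gives $e=e'$, whence $j=j'$ since the $R_j$ are edge-disjoint. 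Therefore $\phi(R)=\phi(R_1)\cupdot\dots\cupdot\phi(R_k)$, and rearranging yields $\phi(R_1)=\phi(R)-\phi(R_2)-\dots-\phi(R_k)$, as claimed.

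There is no real obstacle here; the only point requiring a word of care is that the notation $\phi(S):=(S\sm V_1)\cup\phi(S\cap V_1)$ extends $\phi$ from $V_1$ to all of $V\sm V_2$, and one should observe that this extension is injective on $V\sm V_2$ (the two "pieces" $S\sm V_1$ and $\phi(S\cap V_1)$ land in $V\sm(V_1\cup V_2)$ and $V_2$ respectively, which are disjoint, and $\phi$ is injective on $V_1$). Everything else is bookkeeping.

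\endproof
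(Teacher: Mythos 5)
Your proof is correct. The paper states this fact without proof ("The following facts are easy to see"), and your argument is exactly the intended definitional unwinding: the extended map $v\mapsto v$ on $V\sm(V_1\cup V_2)$ and $v\mapsto\phi(v)$ on $V_1$ is injective on $V\sm V_2$ because its two ranges $V\sm(V_1\cup V_2)$ and $V_2$ are disjoint, which gives both the isomorphism in (i) and the preservation of edge-disjointness in (ii).
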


The following definition is a two-sided version of Definition~\ref{def:link extension}.

\begin{defin}\label{def:link extension two sided}
Let $F$ be an $r$-graph, $i\in[r-1]$ and assume that $S^\ast\in \binom{V(F)}{i}$ is such that $F(S^\ast)$ is non-empty. Let $G$ be a complex and assume that $S_1,S_2\in \binom{V(G)}{i}$ are disjoint and that a bijection $\phi\colon S_1\to S_2$ is given. Suppose that $\cF'$ is a well separated $F(S^\ast)$-packing in $G(S_1)\cap G(S_2)$. We then define $S_1 \triangleleft \cF' \triangleright S_2$ as follows: For each $F'\in \cF'$ and $j\in\Set{1,2}$, let $F'_{j}$ be a copy of $F$ on vertex set $S_j\cup V(F')$ such that $F'_{j}(S_j)=F'$ and such that $\phi(F_1')=F_2'$. Let
\begin{align*}
\cF_1:=\set{F_1'}{F'\in \cF'};\\
\cF_2:=\set{F_2'}{F'\in \cF'};\\
S_1 \triangleleft \cF' \triangleright S_2:=(\cF_1,\cF_2).
\end{align*}
\end{defin}

The next proposition is proved using its one-sided counterpart, Proposition~\ref{prop:S cover}. As in Proposition~\ref{prop:S cover}, the notion of well separatedness (Definition~\ref{def:well separated}) is crucial here.

\begin{prop}\label{prop:S cover two sided}
Let $F$, $r$, $i$, $S^\ast$, $G$, $S_1$, $S_2$ and $\phi$ be as in Definition~\ref{def:link extension two sided}. Suppose that $L\In G(S_1)^{(r-i)}\cap G(S_2)^{(r-i)}$ and that $\cF'$ is a $\kappa$-well separated $F(S^\ast)$-decomposition of $(G(S_1)\cap G(S_2))[L]$. Then the following holds for $(\cF_1,\cF_2)=S_1 \triangleleft \cF' \triangleright S_2$:
\begin{enumerate}[label=\rm{(\roman*)}]
\item for $j\in[2]$, $\cF_j$ is a $\kappa$-well separated $F$-packing in $G$ with $\set{e\in \cF_j^{(r)}}{S_j\In e}=S_j\uplus L$;\label{prop:S cover two sided:packing}
\item $V(\cF_1^{(r)})\In V(G)\sm S_2$ and $\phi(\cF_1^{(r)})=\cF_2^{(r)}$.\label{prop:S cover two sided:mirror}
\end{enumerate}
\end{prop}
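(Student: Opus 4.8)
The plan is to reduce Proposition~\ref{prop:S cover two sided} to its one-sided counterpart, Proposition~\ref{prop:S cover}, applied separately to each of the two index sets $S_1$ and $S_2$, and then to use the bijection $\phi$ to tie the two resulting packings together. First I would observe that $\cF'$ is in particular a $\kappa$-well separated $F(S^\ast)$-decomposition of $G(S_j)[L]$ for each $j\in[2]$, since $(G(S_1)\cap G(S_2))[L]$ has $r$-graph $L$, and $L\In G(S_j)^{(r-i)}$ by hypothesis, so $G(S_j)[L]^{(r-i)}=L$ as well (and the higher uniformity levels of $G(S_1)\cap G(S_2)$ are subcomplexes of those of $G(S_j)$). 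Moreover, for each $F'\in\cF'$ we have $V(F')\in (G(S_1)\cap G(S_2))[L]^{(f-i)}$, so $V(F')\cup S_j\in G^{(f)}$; together with $V(F')\cap S_j=\emptyset$ (as $V(F')\In V(G)\sm(S_1\cup S_2)$, which follows from $\cF'$ being a packing in $G(S_1)\cap G(S_2)$) this shows the copies $F'_j$ are well defined and are subgraphs of $G^{(r)}$. Hence $\cF_j=S_j\triangleleft\cF'$ in the sense of Definition~\ref{def:link extension}, and Proposition~\ref{prop:S cover} immediately gives part~(i): each $\cF_j$ is a $\kappa$-well separated $F$-packing in $G$ with $\set{e\in\cF_j^{(r)}}{S_j\In e}=S_j\uplus L$.

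For part~(ii), the key point is that in Definition~\ref{def:link extension two sided} the copies $F'_1$ and $F'_2$ are chosen so that $\phi(F'_1)=F'_2$; I need to verify this is consistent and then propagate it. Since $V(F'_1)=S_1\cup V(F')$ with $V(F')\In V(G)\sm(S_1\cup S_2)$, we have $V(F'_1)\In V(G)\sm S_2$, and applying $\phi$ (extended to sets as in the Definition preceding Fact~\ref{fact:simple proj}) sends $S_1$ to $S_2$ and fixes $V(F')$, so $\phi(V(F'_1))=S_2\cup V(F')=V(F'_2)$; the edge sets match by the defining requirement $\phi(F'_1)=F'_2$, which is realisable precisely because $F'_1(S_1)=F'=F'_2(S_2)$ forces the two copies of $F$ to agree off $S_1\cup S_2$ up to the relabelling $\phi$. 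Taking the union over $F'\in\cF'$ and using Fact~\ref{fact:simple proj}\ref{simple proj:union} gives $\phi(\cF_1^{(r)})=\bigcup_{F'\in\cF'}\phi(F'_1)=\bigcup_{F'\in\cF'}F'_2=\cF_2^{(r)}$, and $V(\cF_1^{(r)})\In\bigcup_{F'\in\cF'}V(F'_1)\In V(G)\sm S_2$.

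I do not expect any serious obstacle here — this proposition is essentially a bookkeeping exercise layered on top of Proposition~\ref{prop:S cover}. The one place that needs slight care is checking that $\cF'$, being a decomposition of the intersection link complex $(G(S_1)\cap G(S_2))[L]$, genuinely qualifies as a decomposition of each single-index link complex $G(S_j)[L]$ in the precise sense Definition~\ref{def:complex dec} demands (namely that every $V(F')$ lies in $G(S_j)^{(f-i)}$); this follows because $(G(S_1)\cap G(S_2))^{(f-i)}\In G(S_j)^{(f-i)}$ and $L$ is common to both link graphs, so the restriction is harmless. The only genuinely new ingredient relative to the one-sided case is verifying the mirror identity $\phi(\cF_1^{(r)})=\cF_2^{(r)}$, and this is immediate from the symmetric construction in Definition~\ref{def:link extension two sided} combined with Fact~\ref{fact:simple proj}.
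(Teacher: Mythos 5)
Your proposal is correct and follows essentially the same route as the paper: reduce to the one-sided Proposition~\ref{prop:S cover} by noting that $\cF'$ is also a $\kappa$-well separated $F(S^\ast)$-decomposition of each $G(S_j)[L]$, then obtain the mirror identity from $\phi(F_1')=F_2'$ together with Fact~\ref{fact:simple proj}\ref{simple proj:union}. The extra bookkeeping you include (well-definedness of the copies $F_j'$, disjointness of $V(F')$ from $S_1\cup S_2$) is consistent with the paper's argument, which leaves these details implicit.
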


\proof
Let $j\in[2]$. Since $(G(S_1)\cap G(S_2))[L]\In G(S_j)$, we can view $\cF_j$ as $S_j\triangleleft \cF'$ (cf.~Definition~\ref{def:link extension}). Moreover, since $(G(S_1)\cap G(S_2))[L]^{(r-i)}=L=G(S_j)[L]^{(r-i)}$, we can conclude that $\cF'$ is a $\kappa$-well separated $F(S^\ast)$-decomposition of $G(S_j)[L]$. Thus, by Proposition~\ref{prop:S cover}, $\cF_j$ is a $\kappa$-well separated $F$-packing in $G$ with $\set{e\in \cF_j^{(r)}}{S_j\In e}=S_j\uplus L$.

Moreover, we have $V(\cF_1^{(r)})\In \bigcup_{F'\in \cF' }V(F'_1)\In V(G)\sm S_2$ and by Fact~\ref{fact:simple proj}\ref{simple proj:union}$$\phi(\cF_1^{(r)})=\phi(\mathop{\dot{\bigcup}}_{F'\in \cF' }F'_1)=\mathop{\dot{\bigcup}}_{F'\in \cF' }\phi(F'_1)=\mathop{\dot{\bigcup}}_{F'\in \cF' }F'_2=\cF_2^{(r)}.$$
\endproof

We now sketch the proof of Lemma~\ref{lem:transformer}. Given Proposition~\ref{prop:S cover two sided}, the details are very similar to the proof of Lemma~8.5 in~\cite{GKLO} and thus omitted here. The full proof of Lemma~\ref{lem:transformer} can be found in Appendix~\ref{app:transformers}.

Suppose for simplicity that $H'$ is simply a copy of $H$, i.e.~$H'=\phi(H)$ where $\phi$ is an isomorphism from $H$ to $H'$. We aim to construct an $(H,H';F)$-transformer. In a first step, for every edge $e\in H$, we introduce a set $X_e$ of $|V(F)|-r$ new vertices and let $F_e$ be a copy of $F$ such that $V(F_e)=e\cup X_e$ and $e\in F_e$. Let $T_1:=\bigcup_{e\in H}F_e[X_e]$ and $R_1:=\bigcup_{e\in H}F_e-T_1-H$. Clearly, $\set{F_e}{e\in H}$ is an $F$-decomposition of $H\cup R_1 \cup T_1$. By Fact~\ref{fact:simple proj}\ref{simple proj:union}, we also have that $\set{\phi(F_e)}{e\in H}$ is an $F$-decomposition of $H'\cup \phi(R_1) \cup T_1$. Hence, $T_1$ is an $(H\cup R_1,H'\cup \phi(R_1);F)$-transformer.
Note that at this stage, it would suffice to find an $(R_1,\phi(R_1);F)$-transformer $T_1'$, as then $T_1\cup T_1'\cup R_1 \cup \phi(R_1)$ would be an $(H,H';F)$-transformer. The crucial difference now to the original problem is that every edge of $R_1$ contains at most $r-1$ vertices from $V(H)$. On the other hand, every edge in $R_1$ contains at least one vertex in $V(H)$ as otherwise it would belong to $T_1$.
We view this as Step~$1$ and will now proceed inductively. After Step~$i$, we will have an $r$-graph $R_i$ and an $(H\cup R_i,H'\cup \phi(R_i);F)$-transformer $T_i$ such that every edge $e\in R_i$ satisfies $1\le |e\cap V(H)|\le r-i$. Thus, after Step~$r$ we can terminate the process as $R_r$ must be empty and thus $T_r$ is an $(H,H';F)$-transformer.

In Step~$i+1$, where $i\in [r-1]$, we use \ind{i} inductively as follows. Let $R_i'$ consist of all edges of $R_i$ which intersect $V(H)$ in $r-i$ vertices.
We decompose $R_i'$ into `local' parts.
For every edge $e\in R_i'$, there exists a unique set $S\in \binom{V(H)}{r-i}$ such that $S\In e$. For each $S\in \binom{V(H)}{r-i}$, let $L_S:=R_i'(S)$. Note that the `local' parts $S\uplus L_S$ form a decomposition of $R_i'$. The problem of finding $R_{i+1}$ and $T_{i+1}$ can be reduced to finding a `localised transformer' between $S\uplus L_S$ and $\phi(S)\uplus L_S$ for every $S$, as described above.
At this stage, by Proposition~\ref{prop:link divisibility}, $L_S$ will automatically be $F(S^\ast)$-divisible, where $S^\ast\in \binom{V(F)}{r-i}$ is such that $F(S^\ast)$ is non-empty. If we were given an $F(S^\ast)$-decomposition $\cF_S'$ of $L_S$, we could use Proposition~\ref{prop:S cover two sided} to extend $\cF_S'$ to an $F$-packing $\cF_S$ which covers all edges of $S\uplus L_S$, and all new edges created by this extension intersect $S$ (and $V(H)$) in at most $r-i-1$ vertices, as desired. It is possible to combine these localised transformers with $T_i$ and $R_i$ in such a way that we obtain $T_{i+1}$ and $R_{i+1}$.

Unfortunately, $(G(S)\cap G(\phi(S)))[L_S]$ might not be a supercomplex (one can think of $L_S$ as some leftover from previous steps) and so $\cF_S'$ may not exist. However, by Proposition~\ref{prop:hereditary}, we have that $G(S)\cap G(\phi(S))$ is a supercomplex.\COMMENT{(This step is the reason why we use supercomplexes instead of just $()$-complexes)} Thus we can (randomly) choose a suitable $i$-subgraph $A_S$ of $(G(S)\cap G(\phi(S)))^{(i)}$ such that $A_S$ is $F(S^\ast)$-divisible and edge-disjoint from $L_S$. Instead of building a localised transformer for $L_S$ directly, we will now build one for $A_S$ and one for $A_S\cup L_S$, using \ind{i} both times to find the desired $F(S^\ast)$-decomposition. These can then be combined into a localised transformer for~$L_S$.

\subsection{Canonical multi-\texorpdfstring{$r$}{r}-graphs}\label{subsec:canonical}

Roughly speaking, the aim of this section is to show that any $F$-divisible $r$-graph $H$ can be transformed into a canonical multigraph $M_h$ which does not depend on the structure of $H$. However, it turns out that for this we need to move to a `dual' setting, where we consider $\nabla H$ which is obtained from $H$ by applying an $F$-extension operator $\nabla$. This operator allows us to switch between multi-$r$-graphs (which arise naturally in the construction but are not present in the complex $G$ we are decomposing) and (simple) $r$-graphs (see e.g.~Fact~\ref{fact:id after ext}).

Given a multi-$r$-graph $H$ and a set $X$ of size $r$, we say that $\psi$ is an \defn{$X$-orientation of $H$} if $\psi$ is a collection of bijective maps $\psi_e\colon X\to e$, one for each $e\in H$. (For $r=2$ and $X=\Set{1,2}$, say, this coincides with the notion of an oriented multigraph, e.g.~by viewing $\psi_e(1)$ as the tail and $\psi_e(2)$ as the head of~$e$, where parallel edges can be oriented in opposite directions.)

Given an $r$-graph $F$ and a distinguished edge $e_0\in F$, we introduce the following `extension' operators $\tilde{\nabla}_{(F,e_0)}$ and $\nabla_{(F,e_0)}$.

\begin{defin}[Extension operators $\tilde{\nabla}$ and $\nabla$]\label{def:extensions}
 Given a (multi-)$r$-graph $H$ with an $e_0$-orientation $\psi$, let $\tilde{\nabla}_{(F,e_0)}(H,\psi)$ be obtained from $H$ by extending every edge of $H$ into a copy of $F$, with $e_0$ being the rooted edge. More precisely, let $Z_e$ be vertex sets of size $|V(F)\sm e_0|$ such that $Z_e\cap Z_{e'}=\emptyset$ for all distinct (but possibly parallel) $e,e'\in H$ and $V(H)\cap Z_e=\emptyset$ for all $e\in H$. For each $e\in H$, let $F_e$ be a copy of $F$ on vertex set $e\cup Z_e$ such that $\psi_e(v)$ plays the role of $v$ for all $v\in e_0$ and $Z_{e}$ plays the role of $V(F)\sm e_0$. Then $\tilde{\nabla}_{(F,e_0)}(H,\psi):=\bigcup_{e\in H}F_e$.
Let $\nabla_{(F,e_0)}(H,\psi):=\tilde{\nabla}_{(F,e_0)}(H,\psi)-H$. 
\end{defin}

Note that $\nabla_{(F,e_0)}(H,\psi)$ is a (simple) $r$-graph even if $H$ is a multi-$r$-graph.
If $F$, $e_0$ and $\psi$ are clear from the context, or if we only want to motivate an argument before giving the formal proof, we just write $\tilde{\nabla}H$ and $\nabla H$.

\begin{fact}\label{fact:extensions}
Let $F$ be an $r$-graph and $e_0\in F$. Let $H$ be a multi-$r$-graph and let $\psi$ be any $e_0$-orientation of $H$. Then the following hold:
\begin{enumerate}[label=\rm{(\roman*)}]
\item $\tilde{\nabla}_{(F,e_0)}(H,\psi)$ is $F$-decomposable;\label{fact:extensions:decomposable}
\item $\nabla_{(F,e_0)}(H,\psi)$ is $F$-divisible if and only if $H$ is $F$-divisible.\label{fact:extensions:divisible}\COMMENT{Since $H\cup \nabla_{(F,e_0)}(H,\psi)=\tilde{\nabla}_{(F,e_0)}(H,\psi)$ is $F$-divisible}
\end{enumerate}
\end{fact}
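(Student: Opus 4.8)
The plan is to establish \ref{fact:extensions:decomposable} directly from Definition~\ref{def:extensions} and then read off \ref{fact:extensions:divisible} from it, using the decomposition $\tilde\nabla_{(F,e_0)}(H,\psi)=H\cupdot\nabla_{(F,e_0)}(H,\psi)$ (as hinted by the comment attached to the statement).

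For \ref{fact:extensions:decomposable} I would take the obvious candidate: the family $\set{F_e}{e\in H}$ of copies of $F$ produced in Definition~\ref{def:extensions}. Each $F_e$ is by construction a sub-(multi)graph of $\tilde\nabla_{(F,e_0)}(H,\psi)=\bigcup_{e\in H}F_e$, so all that is needed is to check that the edge sets $E(F_e)$ partition the edge multiset of $\tilde\nabla_{(F,e_0)}(H,\psi)$. The key observation is that the copy $F_e$ has exactly one edge lying inside $V(H)$, namely the image of the rooted edge $e_0$, which is $e$ itself: any other edge of $F_e$ corresponds to an edge of $F$ different from $e_0$, and hence meets the private set $Z_e$, since the only $r$-subset of $e_0$ is $e_0$. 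Thus for distinct $e,e'\in H$ (including parallel edges) $F_e$ and $F_{e'}$ can only share an edge inside $V(H)$; but such an edge would be the image of $e_0$ in both, and since $Z_e\cap Z_{e'}=\emptyset$ these are distinct elements of the edge multiset of $\tilde\nabla_{(F,e_0)}(H,\psi)$. So $\set{F_e}{e\in H}$ covers every edge exactly once, i.e.\ it is an $F$-decomposition.

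For \ref{fact:extensions:divisible} I would first note that any $F$-decomposable multi-$r$-graph is automatically $F$-divisible: for an $F$-decomposition $\cF$ of a multi-$r$-graph $G$ and a set $S$ with $|S|\le r-1$ one has $|G(S)|=\sum_{F'\in\cF}|F'(S)|$, where each summand is $0$ unless $S\In V(F')$, in which case it equals $|F(S^\ast)|$ for some $S^\ast\in\binom{V(F)}{|S|}$ and is therefore divisible by $Deg(F)_{|S|}$. By \ref{fact:extensions:decomposable} this makes $\tilde\nabla_{(F,e_0)}(H,\psi)$ $F$-divisible, for every $H$ and $\psi$. Next, the structural description from the previous paragraph shows that the edges of $\tilde\nabla_{(F,e_0)}(H,\psi)$ inside $V(H)$ are exactly the edges of $H$, each with its $H$-multiplicity, so $\tilde\nabla_{(F,e_0)}(H,\psi)=H\cupdot\nabla_{(F,e_0)}(H,\psi)$ and hence $|\tilde\nabla_{(F,e_0)}(H,\psi)(S)|=|H(S)|+|\nabla_{(F,e_0)}(H,\psi)(S)|$ for every set $S$ of size at most $r-1$ (reading $|G(S)|:=0$ when $S\not\In V(G)$). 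Since the left-hand side is always divisible by $Deg(F)_{|S|}$, we get $Deg(F)_{|S|}\mid|H(S)|$ if and only if $Deg(F)_{|S|}\mid|\nabla_{(F,e_0)}(H,\psi)(S)|$; ranging over all such $S$ yields the stated equivalence.

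I do not expect any serious obstacle: the fact is essentially immediate from the definitions. The only points requiring a little care are the multiset bookkeeping for parallel edges of $H$ in \ref{fact:extensions:decomposable} — which is precisely why Definition~\ref{def:extensions} insists on pairwise disjoint private vertex sets $Z_e$ even for parallel edges, so that the images of $e_0$ in the various $F_e$ are genuinely distinct edges of $\tilde\nabla_{(F,e_0)}(H,\psi)$ — and, in \ref{fact:extensions:divisible}, the harmless discrepancy between the vertex sets of $H$, $\nabla_{(F,e_0)}(H,\psi)$ and $\tilde\nabla_{(F,e_0)}(H,\psi)$, which disappears once $F$-divisibility is read as a condition on $|G(S)|$ for all sets $S$ of size $\le r-1$ with the convention $|G(S)|=0$ when $S\not\In V(G)$.
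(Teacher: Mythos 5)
Your proof is correct and follows exactly the route the paper intends: the paper states this as a Fact without proof, its only justification being the remark that $H\cupdot\nabla_{(F,e_0)}(H,\psi)=\tilde{\nabla}_{(F,e_0)}(H,\psi)$ is $F$-divisible, and your argument is precisely the natural fleshing-out of that remark (the $F_e$ give the decomposition in \ref{fact:extensions:decomposable}, decomposability implies divisibility, and degrees add across the disjoint union to give \ref{fact:extensions:divisible}).
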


The goal of this subsection is to show that for every $h\in \bN$, there is a multi-$r$-graph $M_h$ such that for any $F$-divisible $r$-graph $H$ on at most $h$ vertices, we have
\begin{align}
\nabla(\nabla(H+t\cdot F)+s\cdot F)  &\rightsquigarrow \nabla M_h \label{identification formula}
\end{align}
for suitable $s,t\in \bN$. The multigraph $M_h$ is \defn{canonical} in the sense that it does not depend on $H$, but only on $h$. The benefit is, very roughly speaking, that it allows us to transform any given leftover $r$-graph $H$ into the empty $r$-graph, which is trivially decomposable, and this will enable us to construct an absorber for $H$. Indeed, to see that \eqref{identification formula} allows us to transform $H$ into the empty $r$-graph, let $$H':=\nabla(\nabla(H+t\cdot F)+s\cdot F)=\nabla\nabla H + t\cdot \nabla\nabla F + s\cdot \nabla F$$ and observe that the $r$-graph $T:=\nabla H + t\cdot \tilde{\nabla} F + s\cdot F$ `between' $H$ and $H'$ can be chosen\COMMENT{subject to orientations and everything} in such a way that
\begin{align*}
T\cup H &=\tilde{\nabla} H + t\cdot \tilde{\nabla} F +s\cdot F,\\
T\cup H' &=\tilde{\nabla} (\nabla H) + t\cdot (\tilde{\nabla} (\nabla F) \cupdot  F) + s\cdot\tilde{\nabla} F,
\end{align*}
i.e.~$T$ is an $(H,H';F)$-transformer (cf.~Fact~\ref{fact:extensions}\ref{fact:extensions:decomposable}).
Hence, together with \eqref{identification formula} and Lemma~\ref{lem:transformer}, this means that we can transform $H$ into $\nabla M_h$. Since $M_h$ does not depend on $H$, we can also transform the empty $r$-graph into $\nabla M_h$, and by transitivity we can transform $H$ into the empty graph, which amounts to an absorber for $H$ (the detailed proof of this can be found in Section~\ref{subsec:prove absorbing lemma}).

We now give the rigorous statement of~\eqref{identification formula}, which is the main lemma of this subsection.

\begin{lemma}\label{lem:identifaction}
Let $r\ge 2$ and assume that \ind{i} is true for all $i\in[r-1]$. Let $F$ be a weakly regular $r$-graph and $e_0\in F$. Then for all $h\in \bN$, there exists a multi-$r$-graph $M_h$ such that for any $F$-divisible $r$-graph $H$ on at most $h$ vertices, we have
\begin{align*}
\nabla_{(F,e_0)}(\nabla_{(F,e_0)}(H+t\cdot F,\psi_1)+s\cdot F,\psi_3)  &\rightsquigarrow \nabla_{(F,e_0)} (M_h,\psi_2)
\end{align*}
for suitable $s,t\in \bN$, where $\psi_1$ and $\psi_2$ can be arbitrary $e_0$-orientations of $H+t\cdot F$ and $M_h$, respectively, and $\psi_3$ is an $e_0$-orientation depending on these.
\end{lemma}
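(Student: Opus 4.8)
The goal is to produce a single multi-$r$-graph $M_h$ that works simultaneously for every $F$-divisible $H$ on at most $h$ vertices. The strategy is to separate two sources of non-uniformity in $H$: its vertex structure and its edge-degree data. After two applications of $\nabla$, the vertex structure will have been "washed out" via identification moves, and only the degree data (which is governed by $Deg(F)$-divisibility) will remain, and that data can be matched to a fixed canonical target.

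\medskip
\textbf{Step 1: normalise the number of edges.} First I would observe that $\nabla_{(F,e_0)}(H,\psi)$ depends on $H$ only through its edge set (up to the choice of the disjoint vertex sets $Z_e$ and the orientation), and that $\nabla$ turns each edge of $H$ into a fixed gadget $\nabla F$ rooted on that edge, glued along $e_0$. The role of the padding $t \cdot F$ is to bring the total edge count $|H| + t|F|$ up to a fixed multiple of $|F|$, say exactly $N \cdot |F|$ for a prescribed $N = N(h)$ (any $N$ large enough that $t := (N|F| - |H|)/|F| \ge 0$ is an integer — here we use $|F| \mid |H|$, which is part of $F$-divisibility since $Deg(F)_0 = |F|$). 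Thus after this step $H + t\cdot F$ has exactly $N|F|$ edges for every admissible $H$, although its vertex set and the incidence pattern of those edges still depend on $H$.

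\medskip
\textbf{Step 2: break apart the edges via identification.} The key point is that $\nabla_{(F,e_0)}(H+t\cdot F,\psi_1)$ is a simple $r$-graph whose edges are naturally partitioned into $N|F|$ "petals", one copy of $\nabla F$ per edge of $H+t\cdot F$, sharing only vertices that lie in $e_0$-images inside $V(H)$. Since $f > r$, each petal $\nabla F = F_e - e$ has the property that its edges meeting $V(H)$ do so in at most $r$ vertices, and crucially these shared vertices can be identified apart. Concretely, I want to apply Proposition~\ref{prop:simple identification facts}\ref{fact:disjoint identification}: if $H_1,\dots,H_{N|F|}$ are vertex-disjoint copies of $\nabla F$ then $H_1 + \dots + H_{N|F|} \rightsquigarrow \nabla F \cupdot \dots \cupdot \nabla F$ (edge-disjoint union). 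Running this the other way, the already-glued graph $\nabla_{(F,e_0)}(H + t\cdot F,\psi_1)$, obtained from $N|F|$ vertex-disjoint copies of $\nabla F$ by identifying various vertices (all lying over $V(H)$, hence identifiable in the relevant sense because distinct petals never share a sub-$r$-edge there once the link-disjointness is arranged), satisfies
\[
  \nabla_{(F,e_0)}(H + t\cdot F,\psi_1) \text{ is obtained from } \underbrace{\nabla F + \dots + \nabla F}_{N|F|} \text{ by identifications,}
\]
so by transitivity of $\rightsquigarrow$ it suffices to identify the latter appropriately. But a disjoint union of $N|F|$ copies of $\nabla F$ is itself canonical — call this multi-$r$-graph $W_h$ — and it does not depend on $H$. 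The outer $\nabla$ (together with the extra padding $s\cdot F$) is what lets us realise "identify the petals back together into the fixed pattern defining $W_h$" as a chain of identification moves: applying $\nabla_{(F,e_0)}(\,\cdot\,,\psi_3)$ with an orientation $\psi_3$ read off from $\psi_1$, every edge becomes a petal again, and now both $\nabla(\nabla(H+t\cdot F)+s\cdot F)$ and $\nabla W_h$ are built from the same number of petals; the $s\cdot F$ term tunes that number, and Proposition~\ref{prop:simple identification facts}\ref{fact:disjoint identification} plus the observation that $\nabla$ commutes with taking disjoint unions of gadgets gives the required $\rightsquigarrow$. Set $M_h := W_h$.

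\medskip
\textbf{Step 3: check divisibility is preserved throughout.} Since $\rightsquigarrow$ only preserves $F$-divisibility in the forward direction (Proposition~\ref{prop:simple identification facts}\ref{fact:identification divisible}), I need $H + t\cdot F$ to be $F$-divisible (immediate: both $H$ and $F$ are), hence $\nabla(H+t\cdot F)$ is $F$-divisible by Fact~\ref{fact:extensions}\ref{fact:extensions:divisible}, hence $\nabla(H+t\cdot F)+s\cdot F$ is $F$-divisible, hence its $\nabla$ is too; and $M_h$ being reached from $W_h$ (a disjoint union of copies of the $F$-divisible graph $\nabla F \cup F = \tilde\nabla F$, minus... ) — more carefully, one verifies directly that $W_h$ is $F$-divisible and hence so is $\nabla_{(F,e_0)}(M_h,\psi_2)$, consistently with the fact that it is the forward image under $\rightsquigarrow$.

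\medskip
\textbf{Main obstacle.} The delicate part is Step~2: making precise that the gluings turning $N|F|$ disjoint petals into $\nabla_{(F,e_0)}(H + t\cdot F,\psi_1)$ are legal $\rightsquigarrow$-moves (i.e.\ no two vertices being identified have overlapping link graphs, so no multi-edge is ever created), and conversely that one can re-identify the outer layer's petals into the \emph{fixed} pattern of $M_h$ regardless of how $H$'s edges originally overlapped — the outer $\nabla$ is essential precisely because it re-separates everything into petals, decoupling the final identification pattern from $H$'s geometry. Controlling the role of the orientations ($\psi_3$ must be chosen compatibly with $\psi_1$ so that the petals of the two layers align, while $\psi_2$ can be arbitrary because $M_h$ is a disjoint union and orientation choices on disjoint gadgets are interchangeable up to isomorphism) is the other bookkeeping nuisance. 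I expect the proof to mirror closely the corresponding canonical-graph construction in \cite{GKLO}, with the genuinely new input being only Proposition~\ref{prop:S cover} / well-separatedness elsewhere in the absorber machinery, not here.
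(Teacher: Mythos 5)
There is a genuine gap, and it is directional. In Step~2 you correctly note that $\nabla_{(F,e_0)}(H+t\cdot F,\psi_1)$ is obtained from $N|F|$ vertex-disjoint petals by identifying vertices, i.e.\ $\underbrace{\nabla F+\dots+\nabla F}_{N|F|}\rightsquigarrow \nabla_{(F,e_0)}(H+t\cdot F,\psi_1)$. But the lemma requires the arrow to point \emph{from} the $H$-dependent graph \emph{to} the canonical one: identifications only ever merge vertices, so $M_h$ must be a common \emph{quotient} of all the graphs $\nabla(H+t\cdot F)+s\cdot F$, not their common "unglued" ancestor. Your proposed $M_h=W_h$ (a disjoint union of petals) sits on the wrong side of the relation, and no choice of $s$ or of the outer $\nabla$ repairs this: by Fact~\ref{fact:id after ext} the outer extension merely \emph{transports} an identification $\nabla(H+t\cdot F,\psi_1)+s\cdot F\doublesquig M_h$ of the inner layer to one of the outer layer; it does not "re-separate everything into petals", since the outer petals remain glued along the vertex set of the inner graph and hence still record the incidence structure of $H$.

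Consequently the proposal is missing the two ideas that make the quotient canonical. First, one needs a colouring mechanism that collapses $V(H+t\cdot F)$ onto a bounded set $[k]$ so that the resulting multiplicities are the same for every $H$: this is the strong $m$-regular $[k]$-colouring of Lemma~\ref{lem:colouring}, whose existence is exactly where the $F$-divisibility of $H$ and the induction hypothesis \ind{r-1} enter (via the decomposition of a dense multi-$(r-1)$-graph into copies of the shadow $F^{sh}$, Lemma~\ref{lem:multi dec}). Your Step~1 only normalises the edge count, which says nothing about the $(r-1)$-degrees into each colour class, and these are what determine the multiplicities of the quotient (Definition~\ref{def:loop graph}, Lemma~\ref{lem:unique multigraph}). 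Second, the quotient is taken of $\nabla_{(F^\ast,e^\ast)}$ for a \emph{symmetric} $r$-extender $F^\ast$ (Lemma~\ref{lem:extenders}), not of $\nabla_{(F,e_0)}$ itself; the symmetry condition \ref{extender3} is needed so that every "cross" edge between colour classes and petal-role classes is present, which is what makes the collapsed multigraph well defined independently of $F$'s internal structure. The $s\cdot F$ padding then arises because $\nabla_{(F^\ast,e^\ast)}(H+t\cdot F)$ decomposes into a copy of $\nabla_{(F,e_0)}(H+t\cdot F,\psi_1)$ plus $s$ copies of $F$ — not as a device for tuning the number of petals.
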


The above graphs $\nabla(\nabla(H+t\cdot F)+s\cdot F)$ and $\nabla M_h$ will be part of our $F$-absorber for~$H$. We therefore need to make sure that we can actually find them in a supercomplex~$G$. This requirement is formalised by the following definition.

\begin{defin}\label{def:ext in complex}
Let $G$ be a complex, $X\In V(G)$, $F$ an $r$-graph with $f:=|V(F)|$ and $e_0\in F$. Suppose that $H\In G^{(r)}$ and that $\psi$ is an $e_0$-orientation of $H$. By \defn{extending $H$ with a copy of $\nabla_{(F,e_0)}(H,\psi)$ in $G$ (whilst avoiding $X$)} we mean the following: for each $e\in H$, let $Z_e\in G^{(f)}(e)$ be such that $Z_e\cap (V(H)\cup X)=\emptyset$ for every $e\in H$ and $Z_e\cap Z_{e'}=\emptyset$ for all distinct $e,e'\in H$. For each $e\in H$, let $F_e$ be a copy of $F$ on vertex set $e\cup Z_e$ (so $F_e\In G^{(r)}$) such that $\psi_e(v)$ plays the role of $v$ for all $v\in e_0$ and $Z_{e}$ plays the role of $V(F)\sm e_0$. Let $H^\nabla:=\bigcup_{e\in H}F_e-H$ and $\cF:=\set{F_e}{e\in H}$ be the output of this.
\end{defin}

For our purposes, the set $|V(H)\cup X|$ will have a small bounded size compared to $|V(G)|$. Thus, if the $G^{(f)}(e)$ are large enough (which is the case e.g.~in an $(\eps,\xi,f,r)$-supercomplex), then the above extension can be carried out simply by picking the sets $Z_e$ one by one.

\begin{fact}\label{fact:ext in complex}
Let $(H^{\nabla},\cF)$ be obtained by extending $H\In G^{(r)}$ with a copy of $\nabla_{(F,e_0)}(H,\psi)$ in $G$. Then $H^\nabla\In G^{(r)}$ is a copy of $\nabla_{(F,e_0)}(H,\psi)$ and $\cF$ is a $1$-well separated $F$-packing in $G$ with $\cF^{(r)}=H\cup H^{\nabla}$ such that for all $F'\in \cF$, $|V(F')\cap V(H)|\le r$.
\end{fact}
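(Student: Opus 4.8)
The plan is to verify Fact~\ref{fact:ext in complex} directly from Definition~\ref{def:ext in complex}, essentially by unravelling the definitions and checking that the object produced by the extension operation satisfies the properties of a $1$-well separated $F$-packing. There is no deep content here; the work is purely bookkeeping, and it parallels the abstract extension $\nabla_{(F,e_0)}$ of Definition~\ref{def:extensions}.

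First I would observe that, by construction, for each $e\in H$ the set $F_e$ is a copy of $F$ on vertex set $e\cup Z_e$ with $Z_e\in G^{(f)}(e)$, which means $e\cup Z_e\in G^{(f)}$; since $G$ is a complex, $\binom{e\cup Z_e}{r}\In G^{(r)}$, so $G^{(r)}[V(F_e)]$ is a clique and hence $F_e\In G^{(r)}$. Therefore $\cF=\set{F_e}{e\in H}$ is a collection of copies of $F$ in $G^{(r)}$ with $V(F_e)\in G^{(f)}$, i.e.\ an $F$-packing in $G$ in the sense of Definition~\ref{def:complex dec}, provided the $F_e$ are pairwise edge-disjoint. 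Edge-disjointness and the well-separatedness bounds follow from the disjointness conditions imposed on the $Z_e$: for distinct $e,e'\in H$ we have $V(F_e)\cap V(F_{e'}) = (e\cup Z_e)\cap(e'\cup Z_{e'})$, and since $Z_e\cap Z_{e'}=\emptyset$, $Z_e\cap(V(H)\cup X)=\emptyset$ and $Z_{e'}\cap(V(H)\cup X)=\emptyset$, this intersection equals $e\cap e'\In V(H)$, which has size at most $r-1 < r$ (as $e\neq e'$ are distinct $r$-sets). In particular any edge of $F_e$ other than $e$ itself uses at least one vertex of $Z_e$ and so lies in no other $F_{e'}$; and the only possible common edge would have to be contained in $e\cap e'$, which is too small to be an $r$-set. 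Hence the $F_e$ are edge-disjoint, and moreover $|V(F_e)\cap V(F_{e'})|\le r$, giving \ref{separatedness:1}. For \ref{separatedness:2} with $\kappa=1$: given an $r$-set $g$ with $g\In V(F_e)$ for some $e$, either $g\In V(H)$ in which case no such $F_e$ can contain it unless $g=e$ (since $g\cap Z_e=\emptyset$ forces $g\In e$, hence $g=e$, and $e$ is the unique such edge in $H$), or $g$ meets some $Z_e$ in which case $e$ is uniquely determined by $g$ since the $Z_e$ are pairwise disjoint and disjoint from $V(H)$. Either way at most one $F'\in\cF$ has $g\In V(F')$, so $\cF$ is $1$-well separated.

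Next I would identify $\cF^{(r)}$ and $H^\nabla$. By definition $\cF^{(r)}=\bigcup_{e\in H}F_e$ and $H^\nabla=\bigcup_{e\in H}F_e - H=\cF^{(r)}-H$; since each $e\in H$ lies in $F_e\In\cF^{(r)}$ we have $H\In\cF^{(r)}$, so $\cF^{(r)}=H\cup H^\nabla$ as claimed. That $H^\nabla$ is a copy of $\nabla_{(F,e_0)}(H,\psi)$ is immediate from comparing Definition~\ref{def:ext in complex} with Definition~\ref{def:extensions}: both take the same edge-extension data $F_e$ (with $\psi_e$ prescribing how $e_0$ sits inside $e$ and $Z_e$ playing the role of $V(F)\sm e_0$) and remove the edges of $H$; the abstract construction uses fresh vertex sets $Z_e$ while the in-complex version uses sets $Z_e\In V(G)$, but the isomorphism type is the same because the $Z_e$ satisfy exactly the same disjointness conditions ($Z_e\cap Z_{e'}=\emptyset$, $Z_e\cap V(H)=\emptyset$) in both. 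Finally, for each $F'=F_e\in\cF$ we have $V(F_e)\cap V(H)=(e\cup Z_e)\cap V(H)=e$ (again using $Z_e\cap V(H)=\emptyset$), which has size $r\le r$, giving the last assertion.

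There is essentially no obstacle here: the statement is a routine consistency check between two formalisations of the same extension operation, and the only thing one has to be slightly careful about is the disjointness of the $Z_e$ from $V(H)\cup X$ and from each other, which is precisely what makes the in-complex construction behave like the abstract one. If anything merits a sentence of care, it is confirming that the role-assignment in $F_e$ (the vertices of $e_0$ mapped via $\psi_e$ into $e$) forces $e\in F_e$, so that $H\In\cF^{(r)}$ and the subtraction $\cF^{(r)}-H$ genuinely yields a copy of $\nabla_{(F,e_0)}(H,\psi)$; this is built into Definition~\ref{def:extensions} and carries over verbatim.
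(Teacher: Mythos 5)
Your proof is correct and is exactly the routine definition-unravelling that the paper has in mind: it states this as a \emph{Fact} with no proof precisely because everything follows from the disjointness conditions imposed on the sets $Z_e$ in Definition~\ref{def:ext in complex}, which is what you verify. All the key points (that $V(F_e)\cap V(F_{e'})=e\cap e'$ has size at most $r-1$, that $e\in F_e$ so $H\In\cF^{(r)}$, and that $V(F_e)\cap V(H)=e$) are checked correctly.
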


For a partition $\cP=\Set{V_{x}}_{x\in X}$ whose classes are indexed by a set $X$, we define $V_{Y}:=\bigcup_{x\in Y}V_x$ for every subset $Y\In X$. Recall that for a multi-$r$-graph $H$ and $e\in \binom{V(H)}{r}$, $|H(e)|$ denotes the multiplicity of $e$ in $H$.
For multi-$r$-graphs $H,H'$, we write $H\overset{\cP}{\doublesquig} H'$ if $\cP=\Set{V_{x'}}_{x'\in V(H')}$ is a partition of $V(H)$ such that
\begin{enumerate}[label=\rm{(I\arabic*)}]
\item for all $x'\in V(H')$ and $e\in H$, $|V_{x'}\cap e|\le 1$;\label{identification:independent}\COMMENT{i.e.~each $V_{x'}$ is a strongly independent set in $H$}
\item for all $e'\in\binom{V(H')}{r}$, $\sum_{e\in \binom{V_{e'}}{r}}|H(e)|=|H'(e')|$.\label{identification:bijective}
\end{enumerate}
Given $\cP$, define $\phi_{\cP}\colon V(H)\to V(H')$ as $\phi_{\cP}(x):=x'$ where $x'$ is the unique $x'\in V(H')$ such that $x\in V_{x'}$. Note that by \ref{identification:independent}, we have $|\set{\phi_{\cP}(x)}{x\in e}|=r$ for all $e\in H$. Further, by \ref{identification:bijective}, there exists a bijection $\Phi_{\cP}\colon H\to H'$ between the multi-edge-sets of $H$ and $H'$ such that for every edge $e\in H$, the image $\Phi_{\cP}(e)$ is an edge consisting of the vertices $\phi_{\cP}(x)$ for all $x\in e$.
It is easy to see that $H\doublesquig H'$ if and only if there is some $\cP$ such that $H\overset{\cP}{\doublesquig} H'$.

The extension operator $\nabla$ is well behaved with respect to the identification relation $\doublesquig$ in the following sense: if $H\doublesquig H'$, then $\nabla H\rightsquigarrow \nabla H'$. More precisely, let $H$ and $H'$ be multi-$r$-graphs and suppose that $H\overset{\cP}{\doublesquig} H'$. Let $\phi_{\cP}$ and $\Phi_{\cP}$ be defined as above. Let $F$ be an $r$-graph and $e_0 \in F$. For any $e_0$-orientation $\psi'$ of $H'$, we define an $e_0$-orientation $\psi$ of $H$ \defn{induced by $\psi'$} as follows:
for every $e\in H$, let $e':=\Phi_{\cP}(e)$ be the image of $e$ with respect to $\overset{\cP}{\doublesquig}$. We have that $\phi_{\cP}{\restriction_{e}}\colon e\to e'$ is a bijection. We now define the bijection $\psi_{e}\colon e_0\to e$ as $\psi_e:=\phi_{\cP}{\restriction_{e}}^{-1}\circ \psi'_{e'}$, where $\psi'_{e'}\colon e_0\to e'$. Thus, the collection $\psi$ of all $\psi_e$, $e\in H$, is an $e_0$-orientation of $H$. It is easy to see that $\psi$ satisfies the following.

\begin{fact}\label{fact:id after ext}
Let $F$ be an $r$-graph and $e_0\in F$. Let $H,H'$ be multi-$r$-graphs and suppose that $H\doublesquig H'$. Then for any $e_0$-orientation $\psi'$ of $H'$, we have $\nabla_{(F,e_0)}(H,\psi) \rightsquigarrow \nabla_{(F,e_0)}(H',\psi')$, where $\psi$ is induced by $\psi'$.
\end{fact}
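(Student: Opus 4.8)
The plan is to exhibit an \emph{edge-bijective homomorphism} from $\nabla_{(F,e_0)}(H,\psi)$ to $\nabla_{(F,e_0)}(H',\psi')$; since both of these are simple $r$-graphs (the remark after Definition~\ref{def:extensions}), Proposition~\ref{prop:simple identification facts}\ref{fact:identification equivalence simple} then yields $\nabla_{(F,e_0)}(H,\psi)\rightsquigarrow\nabla_{(F,e_0)}(H',\psi')$. So first I would fix a partition $\cP=\Set{V_{x'}}_{x'\in V(H')}$ witnessing $H\overset{\cP}{\doublesquig}H'$, with associated vertex map $\phi_{\cP}\colon V(H)\to V(H')$ and edge-bijection $\Phi_{\cP}\colon H\to H'$ (so $\phi_{\cP}(e)=\Phi_{\cP}(e)$ for $e\in H$), and recall that $\psi$ is the orientation of $H$ induced by $\psi'$ via $\cP$, i.e.\ $\psi_e=\phi_{\cP}{\restriction_{e}}^{-1}\circ\psi'_{\Phi_{\cP}(e)}$ for every $e\in H$.

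Next I would unpack the two extensions. Write $\tilde{\nabla}_{(F,e_0)}(H,\psi)=\bigcup_{e\in H}F_e$, where $F_e$ is realised by an isomorphism $\rho_e\colon V(F)\to e\cup Z_e$ with $\rho_e{\restriction_{e_0}}=\psi_e$ and $\rho_e(V(F)\sm e_0)=Z_e$, the $Z_e$ being pairwise disjoint and disjoint from $V(H)$; similarly $\tilde{\nabla}_{(F,e_0)}(H',\psi')=\bigcup_{e'\in H'}F'_{e'}$ with isomorphisms $\rho'_{e'}$ satisfying $\rho'_{e'}{\restriction_{e_0}}=\psi'_{e'}$. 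I then define $\Psi$ on $V(\tilde{\nabla}_{(F,e_0)}(H,\psi))$ by $\Psi{\restriction_{V(H)}}:=\phi_{\cP}$ and $\Psi{\restriction_{Z_e}}:=\rho'_{\Phi_{\cP}(e)}\circ\rho_e^{-1}$ for each $e\in H$; this is well defined as the $Z_e$ partition $V(\tilde{\nabla}_{(F,e_0)}(H,\psi))\sm V(H)$. The key point is that on $e$ itself both recipes agree with $\rho'_{\Phi_{\cP}(e)}\circ\rho_e^{-1}$: for $v\in e$ one has $\rho_e^{-1}(v)=\psi_e^{-1}(v)=\psi'^{-1}_{\Phi_{\cP}(e)}(\phi_{\cP}(v))$, whence $\rho'_{\Phi_{\cP}(e)}(\rho_e^{-1}(v))=\psi'_{\Phi_{\cP}(e)}(\psi'^{-1}_{\Phi_{\cP}(e)}(\phi_{\cP}(v)))=\phi_{\cP}(v)$. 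Consequently $\Psi{\restriction_{V(F_e)}}=\rho'_{\Phi_{\cP}(e)}\circ\rho_e^{-1}$ is an isomorphism of $F_e$ onto $F'_{\Phi_{\cP}(e)}$, and in particular $\Psi$ sends the edge $\rho_e(g_0)$ to $\rho'_{\Phi_{\cP}(e)}(g_0)$ for every $g_0\in F$.

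Finally I would pass to $\nabla_{(F,e_0)}(H,\psi)=\tilde{\nabla}_{(F,e_0)}(H,\psi)-H$. Since $g_0\In e_0$ together with $|g_0|=r$ forces $g_0=e_0$, the edges of $\nabla_{(F,e_0)}(H,\psi)$ are precisely the $r$-sets $\rho_e(g_0)$ with $e\in H$ (counted with multiplicity) and $g_0\in F\sm\Set{e_0}$, all distinct because $\rho_e(g_0)$ meets $Z_e$; the same description holds for $\nabla_{(F,e_0)}(H',\psi')$. As $g_0\neq e_0$ forces $\rho'_{\Phi_{\cP}(e)}(g_0)$ to meet $Z_{\Phi_{\cP}(e)}$ and hence lie outside $H'$, the map $\Psi$ sends every edge of $\nabla_{(F,e_0)}(H,\psi)$ into $\nabla_{(F,e_0)}(H',\psi')$, acting on edge sets by $(e,g_0)\mapsto(\Phi_{\cP}(e),g_0)$. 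Since $\Phi_{\cP}$ is a bijection $H\to H'$, this is a bijection between the edge sets, so $|\nabla_{(F,e_0)}(H,\psi)|=|\nabla_{(F,e_0)}(H',\psi')|=|\Psi(\nabla_{(F,e_0)}(H,\psi))|$; thus $\Psi{\restriction_{\nabla_{(F,e_0)}(H,\psi)}}$ is an edge-bijective homomorphism and Proposition~\ref{prop:simple identification facts}\ref{fact:identification equivalence simple} finishes the proof. I expect the only real obstacle to be notational bookkeeping: keeping the two layers of ``role vertices'' straight (the copy of $e_0$ in $V(F)$ versus the edge $e$, and likewise for $H'$) and checking the compatibility identity $\psi'_{\Phi_{\cP}(e)}\circ\psi_e^{-1}=\phi_{\cP}{\restriction_{e}}$, which is exactly what the definition of the induced orientation is built to provide; the presence of isolated vertices in $\nabla_{(F,e_0)}(H,\psi)$ or $\nabla_{(F,e_0)}(H',\psi')$ is harmless since $\rightsquigarrow$ is insensitive to them.
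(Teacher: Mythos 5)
Your proof is correct, and it is precisely the verification the paper leaves implicit: Fact~\ref{fact:id after ext} is stated without proof (``it is easy to see''), the induced orientation being defined exactly so that the vertex identification $\phi_{\cP}$ extends over each copy $F_e$ via $\rho'_{\Phi_{\cP}(e)}\circ\rho_e^{-1}$. Your compatibility check $\psi'_{\Phi_{\cP}(e)}\circ\psi_e^{-1}=\phi_{\cP}{\restriction_e}$ and the observation that every edge of $\nabla_{(F,e_0)}(H,\psi)$ meets some $Z_e$ (so that deleting $H$ removes exactly the edges $\rho_e(e_0)$ and the resulting edge map $(e,g_0)\mapsto(\Phi_{\cP}(e),g_0)$ is a bijection) are exactly the right points to make.
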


We now define the multi-$r$-graphs which will serve as the canonical multi-$r$-graphs $M_h$ in \eqref{identification formula}. For $r\in \bN$, let $\cM_r$ contain all pairs $(k,m)\in \bN_0^2$ such that $\frac{m}{r-i}\binom{k-i}{r-1-i}$ is an integer for all $i\in[r-1]_0$.

\begin{defin}[Canonical multi-$r$-graph]\label{def:loop graph}
Let $F^\ast$ be an $r$-graph and $e^\ast\in F^\ast$. Let $V':=V(F^\ast)\sm e^\ast$. If $(k,m)\in \cM_r$, define the multi-$r$-graph $M^{(F^\ast,e^\ast)}_{k,m}$ on vertex set $[k]\cupdot V'$ such that for every $e\in\binom{[k]\cup V'}{r}$, the multiplicity of $e$ is
$$|M^{(F^\ast,e^\ast)}_{k,m}(e)|=\begin{cases}
0 &\mbox{if } e\In [k]; \\
\frac{m}{r-|e\cap [k]|}\binom{k-|e\cap [k]|}{r-1-|e\cap [k]|} &\mbox{if }|e\cap [k]|>0,|e\cap V'|>0; \\
0 &\mbox{if }e\In V',e\notin F^\ast;\\
\frac{m}{r}\binom{k}{r-1} &\mbox{if }e\In V',e\in F^\ast.
\end{cases}$$
\end{defin}

We will require the graph $F^\ast$ in Definition~\ref{def:loop graph} to have a certain symmetry property with respect to $e^\ast$, which we now define. We will prove the existence of a suitable ($F$-decomposable) symmetric $r$-extender in Lemma~\ref{lem:extenders}.

\begin{defin}[symmetric $r$-extender]\label{def:extenders}
We say that $(F^\ast,e^\ast)$ is a \defn{symmetric $r$-extender} if $F^\ast$ is an $r$-graph, $e^\ast\in F^\ast$ and the following holds:
\begin{enumerate}[label=(SE)]
\item for all $e'\in \binom{V(F^\ast)}{r}$ with $e'\cap e^\ast\neq\emptyset$, we have $e'\in F^\ast$.\label{extender3}
\end{enumerate}
\end{defin}

Note that if $(F^\ast,e^\ast)$ is a symmetric $r$-extender, then the operators $\tilde{\nabla}_{(F^\ast,e^\ast)},\nabla_{(F^\ast,e^\ast)}$ are labelling-invariant, i.e.~$\tilde{\nabla}_{(F^\ast,e^\ast)}(H,\psi_1)\cong \tilde{\nabla}_{(F^\ast,e^\ast)}(H,\psi_2)$ and $\nabla_{(F^\ast,e^\ast)}(H,\psi_1)\cong \nabla_{(F^\ast,e^\ast)}(H,\psi_2)$ for all $e^\ast$-orientations $\psi_1,\psi_2$ of a multi-$r$-graph $H$. We therefore simply write $\tilde{\nabla}_{(F^\ast,e^\ast)}H$ and $\nabla_{(F^\ast,e^\ast)}H$ in this case.

To prove Lemma~\ref{lem:identifaction} we introduce so called strong colourings.
Let $H$ be an $r$-graph and $C$ a set. A map $c\colon V(H)\to C$ is a \defn{strong $C$-colouring of $H$} if for all distinct $x,y\in V(H)$ with $|H(\Set{x,y})|>0$, we have $c(x)\neq c(y)$, that is, no colour appears twice in one edge. For $\alpha\in C$, we let $c^{-1}(\alpha)$ denote the set of all vertices coloured $\alpha$. For a set $C'\In C$, we let $c^{\In}(C'):=\set{e\in H}{C'\In c(e)}$. We say that $c$ is \defn{$m$-regular} if $|c^{\In}(C')|=m$ for all $C'\in \binom{C}{r-1}$. For example, an $r$-partite $r$-graph $H$ trivially has a strong $|H|$-regular $[r]$-colouring.

\begin{fact}\label{fact:double count}
Let $H$ be an $r$-graph and let $c$ be a strong $m$-regular $[k]$-colouring of $H$. Then $|c^{\In}(C')|=\frac{m}{r-i}\binom{k-i}{r-1-i}$ for all $i\in[r-1]_0$ and all $C'\in \binom{[k]}{i}$.
\end{fact}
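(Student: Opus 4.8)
The plan is to prove Fact~\ref{fact:double count} by a short double-counting argument, counting incidences between edges of $H$ and those $(r-1)$-subsets of $[k]$ which contain the prescribed colour set $C'$.

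First I would record the one structural observation that makes the count clean: since $c$ is a strong $[k]$-colouring and any two distinct vertices $x,y$ lying in a common edge $e\in H$ satisfy $|H(\Set{x,y})|>0$ (as $e\sm\Set{x,y}\in H(\Set{x,y})$), the map $c$ is injective on every edge of $H$. Hence $|c(e)|=r$ for every $e\in H$.

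Now fix $i\in[r-1]_0$ and $C'\in\binom{[k]}{i}$, and consider the set $P$ of pairs $(e,D)$ with $e\in H$, $D\in\binom{[k]}{r-1}$ and $C'\In D\In c(e)$. Counting $P$ by first choosing $e$: such an $e$ must satisfy $C'\In c(e)$, i.e.~$e\in c^{\In}(C')$, and then $D$ is $C'$ together with $r-1-i$ of the $r-i$ colours of $c(e)\sm C'$ (here I use that $|c(e)|=r$); so each such $e$ contributes $\binom{r-i}{r-1-i}=r-i$ pairs, giving $|P|=(r-i)\,|c^{\In}(C')|$. Counting $P$ by first choosing $D$: one needs $C'\In D$, so $D$ is $C'$ together with $r-1-i$ colours of $[k]\sm C'$, giving $\binom{k-i}{r-1-i}$ choices for $D$, and for each such $D$ the number of edges $e$ with $D\In c(e)$ equals $|c^{\In}(D)|=m$ by $m$-regularity; hence $|P|=m\binom{k-i}{r-1-i}$. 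Equating the two expressions for $|P|$ yields $|c^{\In}(C')|=\frac{m}{r-i}\binom{k-i}{r-1-i}$.

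There is no real obstacle here; the only point requiring any care is the observation that $|c(e)|=r$ for every edge $e$, which is exactly where the strong-colouring hypothesis enters (and which also makes $c^{\In}(C')$ empty unless $|C'|\le r$, consistent with the restriction $i\le r-1$). As a sanity check, the formula reduces to the hypothesis $|c^{\In}(C')|=m$ when $i=r-1$, and to $|H|=\frac{m}{r}\binom{k}{r-1}$ when $i=0$.
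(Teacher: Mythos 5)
Your proof is correct: the observation that a strong colouring is injective on each edge (so $|c(e)|=r$), followed by double counting the pairs $(e,D)$ with $C'\In D\In c(e)$ and $|D|=r-1$, is exactly the routine argument the paper has in mind (it states Fact~\ref{fact:double count} without proof, the label ``double count'' signalling precisely this computation). Both sides of your count, $(r-i)|c^{\In}(C')|$ and $m\binom{k-i}{r-1-i}$, check out, as do your sanity checks at $i=r-1$ and $i=0$.
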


\begin{lemma}\label{lem:unique multigraph}
Let $(F^\ast,e^\ast)$ be a symmetric $r$-extender. Suppose that $H$ is an $r$-graph and suppose that $c$ is a strong $m$-regular $[k]$-colouring of $H$. Then $(k,m)\in \cM_r$ and $$\nabla_{(F^\ast,e^\ast)}H\doublesquig M^{(F^\ast,e^\ast)}_{k,m}.$$
\end{lemma}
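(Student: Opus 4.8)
The plan is to produce an explicit partition $\cP$ of $V(\nabla_{(F^\ast,e^\ast)}H)$ witnessing $\nabla_{(F^\ast,e^\ast)}H\overset{\cP}{\doublesquig} M^{(F^\ast,e^\ast)}_{k,m}$, using the colouring $c$ to label the ``core'' vertices and the symmetry of $(F^\ast,e^\ast)$ to label the ``extension'' vertices canonically. First I would check $(k,m)\in\cM_r$: by Fact~\ref{fact:double count} applied with $i\in[r-1]_0$, $|c^{\In}(C')|=\frac{m}{r-i}\binom{k-i}{r-1-i}$ for all $C'\in\binom{[k]}{i}$, and this is the number of edges of $H$ whose colour set contains $C'$, hence an integer; since this holds for every $i\in[r-1]_0$, the defining condition of $\cM_r$ is satisfied, so $M^{(F^\ast,e^\ast)}_{k,m}$ is well defined.

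Next I would set up the identification map. Recall $\nabla_{(F^\ast,e^\ast)}H=\tilde\nabla_{(F^\ast,e^\ast)}H-H=\bigcup_{e\in H}F^\ast_e-H$, where $F^\ast_e$ is a copy of $F^\ast$ on $e\cupdot Z_e$ with $e$ playing the role of $e^\ast$ and $Z_e$ the role of $V':=V(F^\ast)\sm e^\ast$; by \ref{extender3} we may suppress the orientation. The target $M^{(F^\ast,e^\ast)}_{k,m}$ lives on $[k]\cupdot V'$. Define $\phi\colon V(\nabla_{(F^\ast,e^\ast)}H)\to[k]\cupdot V'$ by: for a core vertex $v\in V(H)$, set $\phi(v):=c(v)\in[k]$; for an extension vertex $z\in Z_e$, set $\phi(z):=w\in V'$ where $w$ is the role $z$ plays in the fixed isomorphism $F^\ast_e\cong F^\ast$. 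Let $\cP=\{V_{x}\}_{x\in[k]\cupdot V'}$ be the partition into fibres of $\phi$; so $V_\alpha=c^{-1}(\alpha)$ for $\alpha\in[k]$ and $V_w=\{z_e(w):e\in H\}$ for $w\in V'$, where $z_e(w)\in Z_e$ is the vertex playing the role of $w$ in $F^\ast_e$. I would then verify \ref{identification:independent}: an edge $g$ of $\nabla_{(F^\ast,e^\ast)}H$ lies in some $F^\ast_e$ and meets a fibre in at most one vertex because (a) within $e$, $c$ is injective as $c$ is a strong colouring and $e\in H$ forces pairwise co-edge vertices; (b) within $Z_e$, the role map is injective; (c) different $Z_e, Z_{e'}$ are disjoint and so are the $V_w$ across a single edge since one edge lies in a single $F^\ast_e$. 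A small point: I should note that $g$ lies in a \emph{unique} $F^\ast_e$ — this holds because $g\notin H$ (we removed $H$), so $g$ meets $Z_e$, and the $Z_e$ are disjoint.

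The main work is checking \ref{identification:bijective}, i.e. that for every $e'\in\binom{[k]\cup V'}{r}$ the number of edges $g$ of $\nabla_{(F^\ast,e^\ast)}H$ with $\phi(g)=e'$ equals $|M^{(F^\ast,e^\ast)}_{k,m}(e')|$. Here I would split into the four cases of Definition~\ref{def:loop graph}. If $e'\In[k]$: such an $e'$ would have to be $\phi(g)$ for $g$ entirely inside some $e\in H\subseteq V(H)$, but $g\notin H$, so there are none — matching multiplicity $0$. If $e'\In V'$ with $e'\notin F^\ast$: any preimage $g$ would lie in $Z_e$ for a single $e$ and its role-set would be $e'\notin F^\ast$, impossible — multiplicity $0$. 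If $e'\In V'$ with $e'\in F^\ast$ (note $e'\neq e^\ast$ since $e^\ast$ corresponds to the core): for each $e\in H$ there is exactly one such $g\subseteq Z_e$ (the copy of $e'$ inside $F^\ast_e$), and this $g$ is an edge of $F^\ast_e$ not in $H$, hence in $\nabla_{(F^\ast,e^\ast)}H$; so the count is $|H|$. On the other hand $|H|=\frac{m}{r}\binom{k}{r-1}$ by Fact~\ref{fact:double count} with $i=0$, matching the multiplicity. Finally if $|e'\cap[k]|=i$ with $0<i<r$ and $|e'\cap V'|=r-i>0$: write $C':=e'\cap[k]$ and $w':=e'\cap V'$. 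A preimage $g$ must lie in a single $F^\ast_e$, meet the core in exactly $i$ vertices forming a set with colour-set $C'$, and these $i$ core vertices must be a subset of $e$; by \ref{extender3} (symmetric $r$-extender) \emph{every} $r$-subset of $e\cupdot Z_e$ meeting $e$ is an edge of $F^\ast_e$, so for each edge $e\in H$ with $C'\In c(e)$ and each way to pick the $i$-subset of $e$ coloured by $C'$ (there is exactly one such $i$-subset since $c\restriction_e$ is injective) and the specified $(r-i)$-subset $w'$ of $Z_e$, we get exactly one edge $g$, and it is not in $H$. Hence the count is $|c^{\In}(C')|=\frac{m}{r-i}\binom{k-i}{r-1-i}$ by Fact~\ref{fact:double count}, which is exactly the multiplicity in Definition~\ref{def:loop graph}. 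This completes \ref{identification:bijective}, so $\nabla_{(F^\ast,e^\ast)}H\overset{\cP}{\doublesquig}M^{(F^\ast,e^\ast)}_{k,m}$, and therefore $\nabla_{(F^\ast,e^\ast)}H\doublesquig M^{(F^\ast,e^\ast)}_{k,m}$ as claimed.

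I expect the main obstacle to be bookkeeping rather than a conceptual difficulty: making sure, in the mixed case $0<|e'\cap[k]|<r$, that the use of the symmetric-extender property \ref{extender3} correctly accounts for \emph{all} preimages (in particular that no edge of $F^\ast_e$ disjoint from $e$ contributes, and that the removal of the core copy of $H$ does not remove any of the edges we are counting — which is automatic since all counted edges meet $Z_e$). A secondary point worth stating carefully is labelling-invariance: because $(F^\ast,e^\ast)$ is a symmetric $r$-extender, $\nabla_{(F^\ast,e^\ast)}H$ is well defined up to isomorphism independent of the $e^\ast$-orientation, so the construction of $\phi$ above does not depend on a choice of orientation, and $\doublesquig$ (being closed under isomorphism of the source) is unaffected.
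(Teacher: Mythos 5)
Your proposal is correct and follows essentially the same route as the paper: the same fibre partition (colour classes for core vertices, role classes for extension vertices), the same verification of \ref{identification:independent}, and the same case analysis for \ref{identification:bijective}, with the mixed case handled by the same bijection between $c^{\In}(e'\cap[k])$ and the preimage edges using \ref{extender3} and Fact~\ref{fact:double count}.
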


\proof
By Fact~\ref{fact:double count}, $(k,m)\in \cM_r$, thus $M^{(F^\ast,e^\ast)}_{k,m}$ is defined. Recall that $M^{(F^\ast,e^\ast)}_{k,m}$ has vertex set $[k]\cup V'$, where $V':=V(F^\ast)\sm e^\ast$.
Let $V(H)\cup \bigcup_{e\in H} Z_{e}$ be the vertex set of $\nabla_{(F^\ast,e^\ast)}H$ as in Definition~\ref{def:extensions}, with $Z_{e}=\set{z_{e,v}}{v\in V'}$. We define a partition $\cP$ of $V(H)\cup \bigcup_{e\in H} Z_{e}$ as follows: for all $i\in[k]$, let $V_i:=c^{-1}(i)$. For all $v\in V'$, let $V_{v}:=\set{z_{e,v}}{e\in H}$. We now claim that $\nabla_{(F^\ast,e^\ast)}H\overset{\cP}{\doublesquig} M^{(F^\ast,e^\ast)}_{k,m}.$

Clearly, $\cP$ satisfies \ref{identification:independent} because $c$ is a strong colouring of $H$.
For a set $e'\in \binom{[k]\cup V'}{r}$, define $$S_{e'}:=\set{e''\in \nabla_{(F^\ast,e^\ast)}H}{e''\In V_{e'}}.$$
Since $\nabla_{(F^\ast,e^\ast)}H$ is simple, in order to check \ref{identification:bijective}, it is enough to show that for all $e'\in \binom{[k]\cup V'}{r}$, we have $|S_{e'}|=|M^{(F^\ast,e^\ast)}_{k,m}(e')|$. We distinguish three cases.

\medskip
\noindent{\emph{Case 1:} $e'\In [k]$}
\medskip

In this case, $|M^{(F^\ast,e^\ast)}_{k,m}(e')|=0$. Since $V_{e'}\In V(H)$ and $(\nabla_{(F^\ast,e^\ast)}H)[V(H)]$ is empty, we have $S_{e'}=\emptyset$, as desired.

\medskip
\noindent{\emph{Case 2:} $e'\In V'$}
\medskip

In this case, $S_{e'}$ consists of all edges of $\nabla_{(F^\ast,e^\ast)}H$ which play the role of $e'$ in $F^\ast_{e}$ for some $e\in H$. Hence, if $e'\notin F^\ast$, then $|S_{e'}|=0$, and if $e'\in F^\ast$, then $|S_{e'}|=|H|$. Fact~\ref{fact:double count} applied with $i=0$ yields $|H|=\frac{m}{r}\binom{k}{r-1}$, as desired.

\medskip
\noindent{\emph{Case 3:} $|e'\cap [k]|>0$ and $|e'\cap V'|>0$}
\medskip

We claim that $|S_{e'}|=|c^{\In}(e'\cap [k])|$.
In order to see this, we define a bijection $\pi\colon c^{\In}(e'\cap [k]) \to S_{e'}$ as follows: for every $e\in H$ with $e'\cap [k]\In c(e)$, define $$\pi(e):=(e\cap c^{-1}(e'\cap [k]))\cup \set{z_{e,v}}{v\in e'\cap V'}.$$ We first show that $\pi(e)\in S_{e'}$. Note that $e\cap {c^{-1}(e'\cap [k])}$ is a subset of $e$ of size $|e'\cap [k]|$ and $\set{z_{e,v}}{v\in e'\cap V'}$ is a subset of $Z_{e}$ of size $|e'\cap V'|$. Hence, $\pi(e)\in \binom{V(F^\ast_{e})}{r}$ and $|\pi(e)\cap e|=|e'\cap [k]|>0$. Thus, by \ref{extender3}, we have $\pi(e)\in F^\ast_{e}\In \nabla_{(F^\ast,e^\ast)}H$. (This is in fact the crucial point where we need \ref{extender3}.)
Moreover, $$\pi(e)\In c^{-1}(e'\cap [k])\cup \set{z_{e,v}}{v\in e'\cap V'}\In V_{e'\cap [k]}\cup V_{e'\cap V'}=V_{e'}.$$ Therefore, $\pi(e)\in S_{e'}$. It is straightforward to see that $\pi$ is injective.\COMMENT{Suppose $\pi(e_1)=\pi(e_2)$. This implies $\set{z_{e_1,v}}{v\in e'\cap V'}=\set{z_{e_2,v}}{v\in e'\cap V'}\neq \emptyset$, which is only possible if $e_1=e_2$.} Finally, for every $e''\in S_{e'}$, we have $e''=\pi(e)$, where $e\in H$ is the unique edge of $H$ with $e''\in F^{\ast}_e$.\COMMENT{$\pi(e)=(e\cap c^{-1}(e'\cap [k]))\cup \set{z_{e,v}}{v\in e'\cap V'}$. Since $e''\In V_{e'}$, we have $e\cap e''\In V_{e'\cap [k]}=c^{-1}(e'\cap [k])$. Moreover, $e''\cap Z_e\In \set{z_{e,v}}{v\in e'\cap V'}$. Thus, $e''=(e''\cap e)\cup (e''\cap Z_e)\In \pi(e)$. Since $|\pi(e)|=r$, the claim follows.} This establishes our claim that $\pi$ is bijective and hence $|S_{e'}|=|c^{\In}(e'\cap [k])|$.
Since $1\le |e'\cap [k]|\le r-1$, Fact~\ref{fact:double count} implies that
$$|S_{e'}|=|c^{\In}(e'\cap [k])|=\frac{m}{r-|e'\cap [k]|}\binom{k-|e'\cap [k]|}{r-1-|e'\cap [k]|}=|M^{(F^\ast,e^\ast)}_{k,m}(e')|,$$ as required.
\endproof

Next, we establish the existence of suitable strong regular colourings.
As a tool we need the following result about decompositions of very dense multi-$r$-graphs (which we will apply with $r-1$ playing the role of $r$). We omit the proof as it is essentially the same as that of Corollary~8.16\COMMENT{Check crossref once accepted} in \cite{GKLO}.

\begin{lemma}\label{lem:multi dec}
Let $r\in \bN$ and assume that \ind{r} is true. Let $1/n\ll 1/h,1/f$ with $f>r$, let $F$ be a weakly regular\COMMENT{makes it nicer to refer to \ind{r}} $r$-graph on $f$ vertices and assume that $\krq{r}{n}$ is $F$-divisible. Let $m\in \bN$. Suppose that $H$ is an $F$-divisible multi-$r$-graph on~$[h]$ with multiplicity at most~$m-1$ and let $K$ be the complete multi-$r$-graph on $[n]$ with multiplicity~$m$. Then $K-H$ has an $F$-decomposition.
\end{lemma}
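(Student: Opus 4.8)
The plan is to deduce Lemma~\ref{lem:multi dec} from the clique case (i.e.~the case $F = \krq{r}{f}$), exactly as Corollary~8.16 in \cite{GKLO} follows from Keevash's clique-decomposition results, but now invoking \ind{r} instead. First I would reduce to cliques: since $F$ is weakly regular on $f$ vertices, $\krq{r}{f}$-divisibility implies $F$-divisibility (because $Deg(F)_i \mid \binom{f-i}{r-i} = |\krq{r}{f}(S)|$ for $S$ of size $i$, as $|\krq{r}{f}|$ is a multiple of... more directly, any $\krq{r}{f}$-decomposable multi-$r$-graph is $F$-decomposable by replacing each clique with a copy of its $F$-decomposition, which exists since $\krq{r}{f}$ itself is weakly regular and $F$-divisible, hence $\krq{r}{f}$ has an $F$-decomposition whenever $f$ is chosen appropriately — but in fact we should just work with the weakly regular $F$ directly via \ind{r}). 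So it suffices to produce an $F$-decomposition of $K - H$ where $K$ is the complete multi-$r$-graph on $[n]$ with multiplicity $m$ and $H$ is an $F$-divisible multi-$r$-graph on $[h]$ with multiplicity at most $m-1$.

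The key step is a standard ``blow-up and split'' argument. The idea is to view $K - H$ as a subgraph of a single large \emph{simple} complete $r$-graph after subdividing vertices. Concretely, since each vertex of $[n]$ carries multiplicity $m$ in $K$ (and at most $m-1$ in $H$), I would replace each vertex $v \in [n]$ by a set $W_v$ of $N$ new vertices, where $N$ is a large integer with $\krq{r}{Nn}$ being $F$-divisible (such $N$ exists by Proposition~\ref{prop:divisible existence}, applied along an arithmetic progression so that $Nn$ also lies in the admissible set — one picks $n_0$ so that this can be arranged). Then one distributes the $\le m$ parallel copies of each $r$-set among the $W_v$'s so that the resulting simple $r$-graph $G^\star$ on $\bigcup_v W_v$ is (i) contained in $\krq{r}{Nn}$, (ii) $F$-divisible, and (iii) such that an $F$-decomposition of $G^\star$ projects back to an $F$-decomposition of $K - H$ (each copy of $F$ in $G^\star$ uses $r$-sets that hit $r$ distinct blocks $W_v$, hence projects to a genuine copy of $F$ on $[n]$, and edge-disjointness is preserved since parallel copies went to different sub-vertices). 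The multigraph $H$ causes the only mild complication: we must delete from the blow-up precisely the edges corresponding to $H$, but since $H$ is bounded (on $[h]$ with multiplicity $< m$) and $N$ is huge, the leftover $\krq{r}{Nn}$ minus a bounded-degree graph is still a supercomplex (Example~\ref{ex:complete} together with Proposition~\ref{prop:noise}\ref{noise:supercomplex}), so \ind{r} applies.

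More carefully, I would complete $K - H$ to a full complete multi-$r$-graph of uniform multiplicity $m$ by observing that the ``missing'' part $H$ is $F$-divisible and bounded; the blow-up $G^\star$ can then be taken to be $\krq{r}{Nn}$ minus a bounded-degree $F$-divisible $r$-graph $H^\star$ (the image of $H$ under a suitable injection spreading $H$'s parallel edges across the blocks). By Example~\ref{ex:complete}, $\krq{r}{Nn}^{\leftrightarrow}$ is a $(0, 0.99/f!, f, r)$-supercomplex; by Proposition~\ref{prop:noise}\ref{noise:supercomplex}, so is $\krq{r}{Nn}^{\leftrightarrow} - H^\star$ after adjusting the parameters by $O(\Delta(H^\star)/(Nn))$, which is negligible. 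Since $H^\star$ is $F$-divisible and $\krq{r}{Nn}$ is $F$-divisible, the leftover $\krq{r}{Nn} - H^\star$ is $F$-divisible. Applying \ind{r} (which holds by hypothesis), $\krq{r}{Nn}^{\leftrightarrow} - H^\star$ has an ($F$-)decomposition $\cF^\star$. Projecting $\cF^\star$ back along the map $W_v \mapsto v$ yields the desired $F$-decomposition of $K - H$: each $F' \in \cF^\star$ lives on $r$-sets that are transversal to the blocks (so its projection is again a copy of $F$), distinct parallel copies of an $r$-set of $[n]$ arose from $r$-sets of $\krq{r}{Nn}$ using distinct sub-vertices and hence different edges of $F'$, guaranteeing edge-disjointness downstairs.

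The main obstacle — and the reason this is stated as a separate lemma rather than an immediate corollary — is arranging the blow-up so that \emph{all three} of divisibility, the supercomplex property, and the transversality of the decomposition hold simultaneously, while also ensuring $Nn$ lands in the admissible set for $\krq{r}{Nn}$ to be $F$-divisible. The divisibility bookkeeping when passing from the multigraph $H$ (with its multiplicities) to its simple blow-up image $H^\star$ needs care: one must check $Deg(F)_i \mid |H^\star(S)|$ for all $i$-sets $S$ in the blow-up, which follows from $F$-divisibility of $H$ together with the identity relating $|H^\star(S)|$ to a sum of multiplicities $|H(e)|$ over $e$ in the ``shadow'' of $S$ — this is where choosing the distribution of parallel edges across blocks carefully (respecting the colouring/regularity structure) matters. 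Since this is routine and identical in spirit to the clique case worked out in \cite{GKLO}, I would simply cite that argument, noting only the one change: \ind{r} replaces the clique-decomposition input. Hence the proof is omitted, as stated in the excerpt.
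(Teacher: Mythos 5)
Your blow-up strategy does not work, for two independent reasons. First, the object you apply \ind{r} to is not a supercomplex. If $G^\star$ consists of exactly $(K-H)(e)\le m$ transversal lifts of each $r$-set $e$ of $[n]$, then $|G^\star|\le m\binom{n}{r}$ and every $(r-1)$-set of $\bigcup_v W_v$ lies in at most $mn$ edges of $G^\star$; but every edge of a non-empty $(\eps,\xi,f,r)$-supercomplex on $Nn$ vertices lies in $\Omega((Nn)^{f-r})$ $f$-cliques, which forces some $(r-1)$-set to have degree $\Omega(Nn)$. So for $N$ large (as you need it to be) \ind{r} simply does not apply to $G^\star$. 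Your alternative claim that ``$G^\star$ can be taken to be $\krq{r}{Nn}$ minus a bounded-degree $F$-divisible $H^\star$'' is false: such a graph has $(1-o(1))\binom{Nn}{r}$ edges, contains all $\Theta(N(Nn)^{r-1})$ non-transversal $r$-sets (which project to nothing in $[n]$), and its transversal part projects to multiplicity $N^r$, not $m$; there is no choice of $N$ reconciling this with $K-H$ for general $m$. Second, and independently, even an $F$-decomposition of $G^\star$ need not project to one of $K-H$: a copy of $F$ in $G^\star$ has all its \emph{edges} transversal, but two of its $f$ \emph{vertices} may share a block $W_v$ whenever they lie in no common edge of $F$ (which happens for every non-complete $F$), and such a copy projects to a degenerate homomorphic image of $F$ on fewer than $f$ vertices. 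Ruling this out would require a transversal version of \ind{r}, which is not available (Theorem~\ref{thm:partite designs} treats only cliques in complete partite complexes). Your opening reduction to $F=\krq{r}{f}$ is also unsound, since $\krq{r}{f}$ need not even be $F$-divisible for weakly regular $F$ (for the octahedron, $8\nmid 20=|\krq{3}{6}|$), though you abandon that step.

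The intended argument stays on the vertex set $[n]$: one writes $K-H$ as a sum of $m$ \emph{simple} layers of the form $\krq{r}{n}-D_i$ with each $D_i$ of bounded degree, plus a bounded, explicitly $F$-decomposable remainder; each layer is then an $F$-divisible supercomplex by Example~\ref{ex:complete} and Proposition~\ref{prop:noise}\ref{noise:supercomplex} and is decomposed via \ind{r}, and one inducts on $m$. The entire content of the lemma, which your proposal never engages with, is that the layers cannot simply be taken to be $\krq{r}{n}-H_j$ with $H_j:=\set{e}{H(e)\ge j}$, because the $H_j$ need not be $F$-divisible, and in general \emph{no} splitting of $H$ into simple $F$-divisible layers exists: already for $r=1$, $F$ consisting of two singleton edges, $m=3$ and $H=2\cdot\Set{1}$, any decomposition of $H$ into simple graphs uses two copies of the odd graph $\Set{\Set{1}}$. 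One therefore has to repair the divisibility of the layers by extending edges of $H$ into full copies of $F$ rooted in $[n]\sm[h]$ (in the spirit of the operator $\tilde{\nabla}$ of Section~\ref{subsec:canonical}) and redistributing these copies across layers; that bookkeeping is the substance of the omitted proof, and it is genuinely combinatorial rather than ``routine''.
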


The next lemma guarantees the existence of a suitable strong regular colouring. For this, we apply Lemma~\ref{lem:multi dec} to the shadow of $F$. For an $r$-graph $F$, define the \defn{shadow $F^{sh}$ of $F$} to be the $(r-1)$-graph on $V(F)$ where an $(r-1)$-set $S$ is an edge if and only if $|F(S)|>0$. We need the following fact.

\begin{fact}\label{fact:shadow}
If $F$ is a weakly $(s_0,\dots,s_{r-1})$-regular $r$-graph, then $F^{sh}$ is a weakly $(s_0',\dots,s_{r-2}')$-regular $(r-1)$-graph, where $s_i':=\frac{r-i}{s_{r-1}}s_i$ for all $i\in[r-2]_0$.
\end{fact}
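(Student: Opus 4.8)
The plan is to compute $|F^{sh}(S')|$ for an arbitrary set $S'\in\binom{V(F)}{i}$ directly, by a short double-counting argument based on the handshaking lemma~\eqref{handshaking}.

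First I would fix $i\in[r-2]_0$ and $S'\in\binom{V(F)}{i}$ and unwind the definition of the shadow: $F^{sh}(S')$ consists of exactly those $(r-1-i)$-subsets $T$ of $V(F)\sm S'$ for which $S'\cup T\in F^{sh}$, i.e.\ for which $|F(S'\cup T)|>0$. Since $F$ is weakly $(s_0,\dots,s_{r-1})$-regular we have $|F(R)|\in\Set{0,s_{r-1}}$ for every $(r-1)$-set $R$, and $s_{r-1}>0$ as $F$ is non-empty; hence $|F(S'\cup T)|>0$ is equivalent to $|F(S'\cup T)|=s_{r-1}$. Using the bijection $T\mapsto S'\cup T$ between the $(r-1-i)$-subsets of $V(F)\sm S'$ and the $(r-1)$-subsets of $V(F)$ containing $S'$, this gives
\begin{align*}
|F^{sh}(S')|=\frac{1}{s_{r-1}}\sum_{T\in\binom{V(F)\sm S'}{r-1-i}}|F(S'\cup T)|=\frac{1}{s_{r-1}}\sum_{R\in\binom{V(F)}{r-1},\,S'\In R}|F(R)|.
\end{align*}

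Next I would apply~\eqref{handshaking} with $G:=F$, $k:=r-1$ and $S:=S'$ (valid since $0\le i\le k=r-1$), which reads $|F(S')|=\binom{r-i}{1}^{-1}\sum_{R\in\binom{V(F)}{r-1},\,S'\In R}|F(R)|$, i.e.\ $\sum_{R\in\binom{V(F)}{r-1},\,S'\In R}|F(R)|=(r-i)|F(S')|$. Combining this with the displayed identity yields $|F^{sh}(S')|=\frac{r-i}{s_{r-1}}|F(S')|$. Since $F$ is weakly regular, $|F(S')|\in\Set{0,s_i}$, so $|F^{sh}(S')|\in\Set{0,\frac{r-i}{s_{r-1}}s_i}=\Set{0,s_i'}$. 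As this holds for every $i\in[r-2]_0$ and every $S'\in\binom{V(F)}{i}$, and $F^{sh}$ is by construction an $(r-1)$-graph, $F^{sh}$ is weakly $(s_0',\dots,s_{r-2}')$-regular, as claimed.

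There is no genuine obstacle here; the only points worth checking are that $s_{r-1}>0$ so the divisions make sense (this is exactly the non-emptiness observation already used in the proof of Proposition~\ref{prop:link divisibility}), and that the numbers $s_i'=\frac{r-i}{s_{r-1}}s_i$ are honest nonnegative integers, which is automatic since each $s_i'$ is realised as $|F^{sh}(S')|$ for any $S'$ with $|F(S')|=s_i$ (such an $S'$ exists because $F$ is non-empty).
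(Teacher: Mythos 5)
Your proof is correct and is essentially the same double-counting argument as the paper's: the paper phrases it as "every edge of $F$ containing $T$ contains $r-i$ edges of $F^{sh}$ containing $T$, each of which lies in $s_{r-1}$ such edges of $F$", which is exactly your combination of the identity $|F^{sh}(S')|=\frac{1}{s_{r-1}}\sum_{R\supseteq S'}|F(R)|$ with the handshaking lemma~\eqref{handshaking}. Both yield the key relation $|F^{sh}(S')|=\frac{r-i}{s_{r-1}}|F(S')|$, from which the claim follows.
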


\proof
Let $i\in[r-2]_0$. For every $T\in \binom{V(F)}{i}$, we have $|F^{sh}(T)|=\frac{r-i}{s_{r-1}}|F(T)|$ since every edge of $F$ which contains $T$ contains $r-i$ edges of $F^{sh}$ which contain $T$, but each such edge of $F^{sh}$ is contained in $s_{r-1}$ such edges of $F$. This implies the claim.
\endproof

\begin{lemma}\label{lem:colouring}
Let $r\ge 2$ and assume that \ind{r-1} holds. Let $F$ be a weakly regular $r$-graph. Then for all $h\in \bN$, there exist $k,m\in \bN$ such that for any $F$-divisible $r$-graph $H$ on at most $h$ vertices, there exists $t\in \bN$ such that $H+t\cdot F$ has a strong $m$-regular $[k]$-colouring.
\end{lemma}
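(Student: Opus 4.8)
The plan is to translate the existence of a strong $m$-regular $[k]$-colouring of $H+t\cdot F$ into an $F^{sh}$-decomposition of a very dense multi-$(r-1)$-graph, and then to invoke Lemma~\ref{lem:multi dec} with $r-1$ in place of $r$; this is the only place where the hypothesis that \ind{r-1} holds is used. The bridge is an elementary shadow identity. Write $F$ as weakly $(s_0,\dots,s_{r-1})$-regular, so $Deg(F)=(s_0,\dots,s_{r-1})$ and each $s_i>0$. If $c$ is a strong $[k]$-colouring of an $r$-graph $G$ which is injective on $V(G)$, then for every $C'\in\binom{[k]}{r-1}$ one has $|c^{\In}(C')|=|G(S)|$, where $S$ is the preimage of $C'$ (and the count is $0$ if $C'\not\In c(V(G))$). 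Applied to a copy of $F$ spread over $f$ distinct colours, weak regularity makes this count equal to $s_{r-1}$ if $C'$ is an edge of the corresponding pushed-forward copy of the shadow $F^{sh}$ and $0$ otherwise; applied to $H$, coloured injectively (using $k\ge h$), it equals $|H(C')|$. Since $H$ is $F$-divisible, $s_{r-1}=Deg(F)_{r-1}$ divides every $|H(S)|$, so $|H(C')|=s_{r-1}|\hat H(C')|$ where $\hat H$ is the multi-$(r-1)$-graph on $V(H)$ with $|\hat H(S)|:=|H(S)|/s_{r-1}$. Hence, if all $t$ copies of $F$ are coloured so that their pushed-forward shadows form an $F^{sh}$-decomposition of $K-\hat H$ (with $K$ the complete multi-$(r-1)$-graph on $[k]$ of multiplicity $m'$), then for every $C'$ we get $|c^{\In}(C')|=s_{r-1}|\hat H(C')|+s_{r-1}(m'-|\hat H(C')|)=s_{r-1}m'$, i.e.\ $c$ is $m$-regular with $m:=s_{r-1}m'$.

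Carrying this out, I would fix $m':=\binom{h}{r}+1$ and $m:=s_{r-1}m'$. By Fact~\ref{fact:shadow}, $F^{sh}$ is a non-empty weakly regular $(r-1)$-graph on $f$ vertices with $Deg(F^{sh})_j=\frac{r-j}{s_{r-1}}s_j$, and by Proposition~\ref{prop:divisible existence} applied to $F^{sh}$ I would choose $k$ with $1/k\ll 1/h,1/f$ such that $\krq{r-1}{k}$ is $F^{sh}$-divisible. Given an $F$-divisible $H$ on at most $h$ vertices, identify $V(H)$ with a subset of $[k]$ and form $\hat H$. Two things need checking. First, $|\hat H(S)|\le|H|/s_{r-1}\le\binom{h}{r}=m'-1$, so $\hat H$ has multiplicity at most $m'-1$. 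Second, $\hat H$ is $F^{sh}$-divisible: for $T\in\binom{V(H)}{j}$ the handshaking identity~\eqref{handshaking} gives $\sum_{S}|H(S)|=(r-j)|H(T)|$ (sum over $(r-1)$-subsets $S$ of $V(H)$ containing $T$), whence $|\hat H(T)|=\frac{r-j}{s_{r-1}}|H(T)|$, and $Deg(F^{sh})_j=\frac{r-j}{s_{r-1}}s_j$ divides this precisely because $s_j=Deg(F)_j$ divides $|H(T)|$ by $F$-divisibility of $H$. Now Lemma~\ref{lem:multi dec} (applied with parameters $r-1$, $F^{sh}$, $m'$, and host $[k]$, using \ind{r-1}) yields an $F^{sh}$-decomposition $\mathcal D$ of $K-\hat H$; since $k$ is large, $t:=|\mathcal D|\ge 1$. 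Finally, colour $V(H)$ by the inclusion into $[k]$, and for each $D\in\mathcal D$ colour one of the $t$ copies of $F$ by an isomorphism onto the $f$-subset of $[k]$ carrying $D$. This $c$ is strong, since every edge of $H+t\cdot F$ lies inside $H$ or inside a single copy of $F$, where $c$ is injective; and the shadow identity above, together with the fact that $\mathcal D$ decomposes $K-\hat H$ so that $|\{D\in\mathcal D:C'\in D\}|=m'-|\hat H(C')|$, gives $|c^{\In}(C')|=|H(C')|+s_{r-1}(m'-|\hat H(C')|)=s_{r-1}m'=m$ for every $C'\in\binom{[k]}{r-1}$.

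The step I expect to be the main obstacle is the verification that $\hat H$ is $F^{sh}$-divisible — and, more broadly, keeping the three uniformities straight ($r$-graphs with multiplicities, their pushed-forward shadows as multi-$(r-1)$-graphs, and the colour set $[k]$). The computation $|\hat H(T)|=\frac{r-j}{s_{r-1}}|H(T)|$ combined with $Deg(F^{sh})_j=\frac{r-j}{s_{r-1}}s_j$ is the crux: it is exactly what converts $F$-divisibility of $H$ at level $j$ into $F^{sh}$-divisibility of $\hat H$ at level $j$, and it relies on weak regularity of $F$ to pin down $Deg(F)=(s_0,\dots,s_{r-1})$. Everything else — the shadow identity, checking that $F^{sh}$ meets the hypotheses of Lemma~\ref{lem:multi dec}, and the final degree count — is routine.
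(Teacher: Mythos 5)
Your proposal is correct and follows essentially the same route as the paper: pass to the shadow $F^{sh}$, encode the degrees of $H$ as a multi-$(r-1)$-graph with multiplicities divided by $s_{r-1}$, verify its $F^{sh}$-divisibility via the handshaking identity and Fact~\ref{fact:shadow}, apply Lemma~\ref{lem:multi dec} (this is where \ind{r-1} enters), and read off the colouring from the resulting $F^{sh}$-decomposition. The only cosmetic differences are your choice $m'=\binom{h}{r}+1$ in place of the paper's $m'=h+1$ and defining $\hat H$ in one step rather than via an intermediate multigraph.
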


\proof
Let $f:=|V(F)|$ and suppose that $F$ is weakly $(s_0,\dots,s_{r-1})$-regular. Thus, for every $S\in \binom{V(F)}{r-1}$, we have
\begin{align}
|F(S)|=\begin{cases}
s_{r-1} &\mbox{if } S\in F^{sh}; \\
0 &\mbox{otherwise}.
\end{cases}\label{shadow reduction}
\end{align}

By Proposition~\ref{prop:divisible existence}, we can choose $k\in \bN$ such that $1/k\ll 1/h,1/f$ and such that $\krq{r-1}{k}$ is $F^{sh}$-divisible.
Let $G$ be the complete multi-$(r-1)$-graph on $[k]$ with multiplicity $m':=h+1$ and let $m:=s_{r-1} m'$.

Let $H$ be any $F$-divisible $r$-graph on at most $h$ vertices. By adding isolated vertices to $H$ if necessary, we may assume that $V(H)=[h]$. We first define a multi-$(r-1)$-graph $H'$ on $[h]$ as follows: For each $S\in \binom{[h]}{r-1}$, let the multiplicity of $S$ in $H'$ be $|H'(S)|:=|H(S)|$. Clearly, $H'$ has multiplicity at most~$h$. Observe that for each $S\In [h]$ with $|S|\le r-1$, we have
\begin{align}
|H'(S)|=(r-|S|)|H(S)|.\label{multiplicity}
\end{align}
Note that since $H$ is $F$-divisible, we have that $s_{r-1}\mid |H(S)|$ for all $S\in \binom{[h]}{r-1}$. Thus, the multiplicity of each $S\in \binom{[h]}{r-1}$ in $H'$ is divisible by $s_{r-1}$. Let $H''$ be the multi-$(r-1)$-graph on $[h]$ obtained from $H'$ by dividing the multiplicity of each $S\in \binom{[h]}{r-1}$ by $s_{r-1}$. Hence, by~\eqref{multiplicity}, for all $S\In [h]$ with $|S|\le r-1$, we have
\begin{align}
|H''(S)|=\frac{|H'(S)|}{s_{r-1}}=\frac{r-|S|}{s_{r-1}}|H(S)|.\label{multiplicity division}
\end{align}
For each $S\in \binom{[k]}{r-1}$ with $S\not\In [h]$, we set $|H''(S)|:=|H(S)|:=0$. Then \eqref{multiplicity division} still holds.

We claim that $H''$ is $F^{sh}$-divisible. Recall that by Fact~\ref{fact:shadow},
\begin{align*}
F^{sh}\mbox{ is weakly }(\frac{r}{s_{r-1}}s_0,\dots,\frac{r-i}{s_{r-1}}s_i,\dots,\frac{2}{s_{r-1}}s_{r-2})\mbox{-regular.}
\end{align*}
Let $i\in[r-2]_0$ and let $S\in \binom{[h]}{i}$. We need to show that $|H''(S)|\equiv 0 \mod{Deg(F^{sh})_i}$, where $Deg(F^{sh})_i=\frac{r-i}{s_{r-1}}s_i$.
Since $H$ is $F$-divisible, we have $|H(S)|\equiv 0 \mod{s_i}$. Together with \eqref{multiplicity division}, we deduce that $|H''(S)|\equiv 0 \mod{\frac{r-i}{s_{r-1}}s_i}$.
Hence, $H''$ is $F^{sh}$-divisible.
Therefore, by Lemma~\ref{lem:multi dec} (with $k,m',r-1,F^{sh}$ playing the roles of $n,m,r,F$) and our choice of~$k$, $G-H''$ has an $F^{sh}$-decomposition $\cF$ into $t$ edge-disjoint copies $F_1',\dots,F_t'$ of~$F^{sh}$.

We will show that $t$ is as required in Lemma~\ref{lem:colouring}. To do this, let $F_1,\dots,F_t$ be vertex-disjoint copies of $F$ which are also vertex-disjoint from $H$.
We will now define a strong $m$-regular $[k]$-colouring $c$ of $$H^+:=H\cup \bigcup_{j\in[t]}F_j.$$

Let $c_0$ be the identity map on $V(H)=[h]$, and for each $j\in [t]$, let
\begin{align}
c_j\colon V(F_j)\to V(F_j')\mbox{ be an isomorphism from }F_j^{sh}\mbox{ to }F_j'\label{colouring of Fjs}
\end{align}
(recall that $V(F_j^{sh})=V(F_j)$).
Since $H,F_1,\dots,F_t$ are vertex-disjoint and $V(H)\cup \bigcup_{j\in[t]}V(F_j')\In[k]$, we can combine $c_0,c_1,\dots,c_t$ to a map $$c\colon V(H^+)\to [k],$$ i.e.~for $x\in V(H^+)$, we let $c(x):=c_j(x)$, where either $j$ is the unique index for which $x\in V(F_j)$ or $j=0$ if $x\in V(H)$.
For every edge $e\in H^+$, we have $e\In V(H)$ or $e\In V(F_j)$ for some $j\in[t]$, thus $c{\restriction_e}$ is injective. Therefore, $c$ is a strong $[k]$-colouring of~$H^+$.

It remains to check that $c$ is $m$-regular. Let $C\in \binom{[k]}{r-1}$. Clearly, $|c^{\In}(C)|=\sum_{j=0}^t|c_j^{\In}(C)|$. Since every $c_j$ is a bijection, we have
\begin{align*}
|c_0^{\In}(C)|&=|\set{e\in H}{c_0^{-1}(C)\In e}|=|H(c_0^{-1}(C))|=|H(C)|\quad\mbox{and}\\
|c_j^{\In}(C)|
&=|F_j(c_j^{-1}(C))|\overset{\eqref{shadow reduction}}{=}\begin{cases}
s_{r-1} &\mbox{if } c_j^{-1}(C)\in F_j^{sh}\overset{\eqref{colouring of Fjs}}{\Leftrightarrow} C\in F_j'; \\
0 &\mbox{otherwise}.
\end{cases}
\end{align*}
Thus, we have $|c^{\In}(C)|=|H(C)|+s_{r-1}|J(C)|$, where $$J(C):=\set{j\in[t]}{C\in F_j'}.$$
Now crucially, since $\cF$ is an $F^{sh}$-decomposition of $G-H''$, we have that $|J(C)|$ is equal to the multiplicity of $C$ in $G-H''$, i.e. $|J(C)|=m'-|H''(C)|$.
Thus, $$|c^{\In}(C)|=|H(C)|+s_{r-1}|J(C)|\overset{\eqref{multiplicity division}}{=} s_{r-1}(|H''(C)|+|J(C)|)= s_{r-1}m'=m,$$ completing the proof.
\endproof

Before we can prove Lemma~\ref{lem:identifaction}, we need to show the existence of a symmetric $r$-extender $F^\ast$ which is $F$-decomposable. For some $F$ we could actually take $F^\ast=F$ (e.g.~if $F$ is a clique). For general (weakly regular) $r$-graphs $F$, we will use the Cover down lemma (Lemma~\ref{lem:cover down}) to find $F^\ast$. At first sight, appealing to the Cover down lemma may seem rather heavy handed, but a direct construction seems to be quite difficult.

\begin{lemma}\label{lem:extenders}
Let $F$ be a weakly regular $r$-graph, $e_0\in F$ and assume that \ind{i} is true for all $i\in[r-1]$. There exists a symmetric $r$-extender $(F^\ast,e^\ast)$ such that $F^\ast$ has an $F$-decomposition $\cF$ with $e^\ast\in F'\in \cF$ and $e^\ast$ plays the role of $e_0$ in $F'$.
\end{lemma}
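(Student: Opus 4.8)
The goal is to find a symmetric $r$-extender $(F^\ast, e^\ast)$ that has an $F$-decomposition in which $e^\ast$ plays the role of $e_0$ in one of the copies. The natural strategy is to take $F^\ast$ to be essentially a large complete $r$-graph on some vertex set $W$, minus a few edges so as to become $F$-divisible while retaining a copy of $F$ with $e_0$ sitting on a prescribed edge $e^\ast$, and then to decompose it using the Cover down lemma. I would proceed as follows.

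First I would fix a vertex set $W$ with $|W| = w$, where $w$ is chosen large (with $1/w \ll 1/f$) and such that $\krq{r}{w}$ is $F$-divisible (possible by Proposition~\ref{prop:divisible existence}). Embed a copy $F_0$ of $F$ in $\krq{r}{w}$, and let $e^\ast \in F_0$ be the edge playing the role of $e_0$. Set $F^\ast_{\text{pre}} := \krq{r}{w}$; clearly $(F^\ast_{\text{pre}}, e^\ast)$ satisfies the symmetric $r$-extender condition \ref{extender3} trivially (it's complete), and it is $F$-divisible. Now I want to decompose $F^\ast_{\text{pre}}$ into copies of $F$ so that $F_0 \in \cF$. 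The idea is to set aside $F_0$, remove its edges, and decompose the remainder $\krq{r}{w} - F_0$; but this leftover need not be $F$-divisible, so instead I would apply an absorbing-type argument: use Lemma~\ref{lem:absorbing lemma} (which is available since we assume \ind{i} for all $i \in [r-1]$) to find inside a suitable supercomplex on $W$ an $F$-absorber $A$ for $H := F_0$ viewed as a tiny $F$-divisible $r$-graph — wait, $F_0$ is $F$-divisible only if $F$ itself is, which it is. Then $A$ and $A \cup F_0$ both have $F$-decompositions, $\cF_\circ$ and $\cF_\bullet$. Take $F^\ast := A \cup F_0$: it has the $F$-decomposition $\cF_\bullet$, one member of which contains $e^\ast$ as the role of $e_0$ — but only if we can arrange that $F_0$ itself is a member of $\cF_\bullet$.

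The cleaner route, and the one I would actually write, is: take $F^\ast := F^\ast_0 \cup A$ where $F^\ast_0$ is a copy of $F$ with distinguished edge $e^\ast$ playing the role of $e_0$, on a vertex set $W_0$ with $|W_0| = f$; enlarge $W_0$ to $W$ with $|W|$ large enough that $\krq{r}{|W|}$ is $F$-divisible and so that the complete complex $K_W$ is an $(\eps, \xi, f, r)$-supercomplex (Example~\ref{ex:complete}); then apply Lemma~\ref{lem:absorbing lemma} inside the supercomplex $K_W - F^\ast_0$ (which is still a supercomplex by Proposition~\ref{prop:noise}\ref{noise:supercomplex} since $\Delta(F^\ast_0)$ is bounded) to the $F$-divisible graph $H := F^\ast_0$, obtaining an $F$-absorber $A$ edge-disjoint from $F^\ast_0$ with $F$-decompositions $\cF_\circ$ of $A$ and $\cF_\bullet$ of $A \cup F^\ast_0$. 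Set $F^\ast := A \cup F^\ast_0$ and $\cF := \cF_\bullet$. Then $\cF$ is an $F$-decomposition of $F^\ast$; I still need a member of $\cF$ to be $F^\ast_0$ so that $e^\ast$ plays the role of $e_0$. This is not automatic, so instead I would take $F^\ast := A' \cup A \cup F^\ast_0$ where $A'$ is a second absorber, or — simplest — define $\cF := \cF_\circ \cup \{F^\ast_0\}$ and note that $\cF_\circ^{(r)} \cup F^\ast_0 = A \cup F^\ast_0 = F^\ast$ is partitioned by $\cF_\circ \cup \{F^\ast_0\}$ into copies of $F$ (edge-disjointness because $A$ and $F^\ast_0$ are edge-disjoint). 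This is exactly an $F$-decomposition of $F^\ast$ containing $F^\ast_0$, so $e^\ast \in F^\ast_0 \in \cF$ and $e^\ast$ plays the role of $e_0$ by construction. Finally I must verify \ref{extender3}: since $F^\ast \supseteq \krq{r}{W_0}$? No — $A$ need not make $F^\ast$ complete on $W_0$. So I would instead take $W_0 := e^\ast$, i.e.\ $|W_0| = r$, making the symmetric-extender condition vacuous-ish: \ref{extender3} requires every $r$-set $e'$ with $e' \cap e^\ast \neq \emptyset$ to lie in $F^\ast$; to guarantee this I enlarge the construction so that $F^\ast$ contains all such $e'$ — concretely, first add to $F^\ast_0$ all $r$-sets meeting $e^\ast$ (call this graph $E^\ast$, of bounded size, still $F$-divisible after adjoining a bounded $F$-decomposable completion), then run the absorbing lemma on $E^\ast$ instead of $F^\ast_0$.

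\emph{Details and the main obstacle.} So the final construction is: let $W \supseteq e^\ast$ with $|W|$ large, $\krq{r}{|W|}$ $F$-divisible, $K_W$ a supercomplex; let $E^\ast$ be the $r$-graph on $W$ consisting of all $r$-subsets of $W$ meeting $e^\ast$ (this satisfies \ref{extender3} automatically and contains $e^\ast$); adjoin a bounded $F$-decomposable graph if needed to make $E^\ast$ $F$-divisible while keeping \ref{extender3} and keeping a copy of $F$ with $e^\ast$ in the role of $e_0$ (embed such a copy $F^\ast_0$ with $e^\ast \in F^\ast_0$ inside $\krq{r}{W}$ and throw its edges into $E^\ast$; $F$-divisibility of the union can be arranged since $|W|$ is flexible, or handled via Lemma~\ref{lem:make divisible}-style adjustments, but here a direct count suffices because we only need divisibility of a bounded graph which we can pad with copies of $F$). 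Apply Lemma~\ref{lem:absorbing lemma} in $K_W - E^\ast$ (a supercomplex by Proposition~\ref{prop:noise}) to $H := E^\ast$ to get an absorber $A$ with $\cF_\circ$ decomposing $A$; set $F^\ast := A \cup E^\ast$ and $\cF := \cF_\circ \cup \{$ the copies of $F$ decomposing $E^\ast\}$. Since $E^\ast$ is $F$-decomposable and one of its copies is $F^\ast_0$ with $e^\ast$ in the role of $e_0$, we are done; \ref{extender3} holds because $E^\ast \subseteq F^\ast$ already contains all $r$-sets meeting $e^\ast$. The main obstacle is bookkeeping around divisibility: ensuring that the bounded graph $E^\ast$ (with its prescribed copy of $F$ sitting on $e^\ast$) can be made $F$-divisible while preserving condition \ref{extender3} and the prescribed-role property — this requires choosing $|W|$ appropriately and possibly adjoining vertex-disjoint copies of $F$, and then checking that the resulting $H$ fed to Lemma~\ref{lem:absorbing lemma} still has bounded order so that the absorbing lemma applies with the chosen $\xi, f, r$ inside the complete supercomplex $K_W$.
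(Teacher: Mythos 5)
There is a genuine gap, and it is structural: your argument invokes the Absorbing lemma (Lemma~\ref{lem:absorbing lemma}), but in this paper the Absorbing lemma is proved \emph{via} Lemma~\ref{lem:identifaction}, whose proof begins by applying Lemma~\ref{lem:extenders}. So using an absorber to prove Lemma~\ref{lem:extenders} is circular. Note that the hypotheses here only give you \ind{i} for $i\in[r-1]$, not \ind{r}; any route that produces a genuine $F$-decomposition of a dense $r$-graph on the full uniformity $r$ (which is what an absorber for a nontrivial $H$ amounts to) is essentially as hard as the statement being built towards. The tools that \emph{are} legitimately available at this point are the $F$-nibble lemma (Lemma~\ref{lem:F nibble}) and the Cover down lemma (Lemma~\ref{lem:cover down}), both of which only need \ind{i} for $i<r$. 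The paper's proof uses exactly these: it fixes a random-like set $U$, places a copy $F'$ of $F$ with $V(F')\In \bar{U}$, runs the nibble on $K_n-H^\ast-F'$, and uses the Cover down graph $H^\ast$ to absorb the sparse leftover into $U$. The resulting $F^\ast=\cF^{(r)}$ is \emph{not} complete, but every $r$-set meeting $e^\ast\In\bar U$ lies outside $U$ and is therefore covered, which is all \ref{extender3} asks for. This "confine the leftover away from $e^\ast$" idea is the missing ingredient in your approach.

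Even granting the Absorbing lemma, your construction does not simultaneously deliver the three required properties. Your trick $\cF:=\cF_\circ\cup\{F_0^\ast\}$ works only when the graph $H$ fed to the absorber is itself $F$-decomposable with the prescribed copy — but then the absorber is superfluous and \ref{extender3} fails, since $A\cup F_0^\ast$ need not contain all $r$-sets meeting $e^\ast$. When you instead take $H=E^\ast$ (all $r$-sets meeting $e^\ast$), condition \ref{extender3} holds but you must use $\cF_\bullet$, which gives no control over whether any of its members is a copy of $F$ sitting on $e^\ast$ with $e^\ast$ in the role of $e_0$; and $E^\ast$ has no reason to be $F$-decomposable, so the hybrid $\cF_\circ\cup\{\mbox{decomposition of }E^\ast\}$ is unavailable. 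Finally, the divisibility patch is backwards: adjoining an $F$-decomposable (hence $F$-divisible) graph to $E^\ast$ preserves any divisibility defect rather than repairing it, so "padding with copies of $F$" cannot make a non-divisible $E^\ast$ divisible.
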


\proof
Let $f:=|V(F)|$. By Proposition~\ref{prop:divisible existence}, we can choose $n\in \bN$ and $\gamma,\eps,\nu,\mu>0$ such that $1/n\ll \gamma \ll \eps \ll \nu \ll \mu \ll 1/f$ and such that $\krq{r}{n}$ is $F$-divisible.
By Example~\ref{ex:complete}, $K_n$ is a $(0,0.99/f!,f,r)$-supercomplex. By Fact~\ref{fact:random trivial}\ref{fact:random trivial:itself} and Proposition~\ref{prop:find subset}, there exists $U\In V(K_n)$ of size $\lfloor \mu n\rfloor$ which is $(\eps,\mu,0.9/f!,f,r)$-random in $K_n$.
Let $\bar{U}:=V(K_n)\sm U$. Using \ref{random:binomial} of Definition~\ref{def:regular subset}, it is easy to see that $K_n$ is $(\eps,f,r)$-dense with respect to $K_n^{(r)}-K_n^{(r)}[\bar{U}]$ (see~Definition~\ref{def:dense wrt}). Thus, by the Cover down lemma (Lemma~\ref{lem:cover down}), there exists a subgraph $H^\ast$ of $K_n^{(r)}-K_n^{(r)}[\bar{U}]$ with $\Delta(H^\ast)\le \nu n$ and the following property: for all $L\In K_n^{(r)}$ such that $\Delta(L)\le \gamma n$ and $H^\ast \cup L$ is $F$-divisible, $H^\ast \cup L$ has an $F$-packing which covers all edges except possibly some inside $U$.

Let $F'$ be a copy of $F$ with $V(F')\In \bar{U}$.
Let $G_{nibble}:=K_n-H^\ast-F'$. By Proposition~\ref{prop:noise}\ref{noise:supercomplex}, $G_{nibble}$ is a $(2^{2r+2}\nu,0.8/f!,f,r)$-supercomplex. Thus, by Lemma~\ref{lem:F nibble}, there exists an $F$-packing $\cF_{nibble}$ in $G_{nibble}^{(r)}$ such that $\Delta(L)\le \gamma n$, where $L:=G_{nibble}^{(r)}-\cF_{nibble}^{(r)}$. Clearly, $H^\ast \cup L=\krq{r}{n}-\cF_{nibble}^{(r)}-F'$ is $F$-divisible. Thus, there exists an $F$-packing $\cF^\ast$ in $H^\ast \cup L$ which covers all edges of $H^\ast \cup L$ except possibly some inside $U$. Let $\cF:=\Set{F'}\cup \cF_{nibble}\cup \cF^\ast$. Let $F^\ast:=\cF^{(r)}$ and let $e^\ast$ be the edge in $F'$ which plays the role of $e_0$.

Clearly, $\cF$ is an $F$-decomposition of $F^\ast$ with $e^\ast\in F'\in \cF$ and $e^\ast$ plays the role of $e_0$ in $F'$. It remains to check \ref{extender3}. Let $e'\in \binom{V(\krq{r}{n})}{r}$ with $e'\cap e^\ast\neq \emptyset$. Since $e^\ast \In \bar{U}$, $e'$ cannot be inside $U$. Thus, $e'$ is covered by $\cF$ and we have $e'\in F^\ast$.
\endproof

Note that $|V(F^\ast)|$ is quite large here, in particular $1/|V(F^\ast)|\ll 1/f$ for $f=|V(F)|$. This means that $G$ being an $(\eps,\xi,f,r)$-supercomplex does not necessarily allow us to extend a given subgraph $H$ of $G^{(r)}$ to a copy of $\nabla_{(F^\ast,e^\ast)}H$ as described in Definition~\ref{def:ext in complex}. Fortunately, this will in fact not be necessary, as $F^\ast$ will only serve as an abstract auxiliary graph and will not appear as a subgraph of the absorber. (This is crucial since otherwise we would not be able to prove our main theorems with explicit bounds, let alone the bounds given in~Theorem~\ref{thm:min deg}.)

We are now ready to prove Lemma~\ref{lem:identifaction}.

\lateproof{Lemma~\ref{lem:identifaction}}
Given $F$ and $e_0$, we first apply Lemma~\ref{lem:extenders} to obtain a symmetric $r$-extender $(F^\ast,e^\ast)$ such that $F^\ast$ has an $F$-decomposition $\cF$ with $e^\ast\in F'\in \cF$ and $e^\ast$ plays the role of $e_0$ in $F'$. For given $h\in \bN$, let $k,m\in \bN$ be as in Lemma~\ref{lem:colouring}. Clearly, we may assume that there exists an $F$-divisible $r$-graph on at most $h$ vertices.\COMMENT{e.g. the empty one} Together with Lemma~\ref{lem:unique multigraph}, this implies that $(k,m)\in \cM_r$. Define $$M_h:=M^{(F^\ast,e^\ast)}_{k,m}.$$

Now, let $H$ be any $F$-divisible $r$-graph on at most $h$ vertices. By Lemma~\ref{lem:colouring}, there exists $t\in \bN$ such that $H+t\cdot F$ has a strong $m$-regular $[k]$-colouring. By Lemma~\ref{lem:unique multigraph}, we have $$\nabla_{(F^\ast,e^\ast)}(H+t\cdot F)\doublesquig M_h.$$
Let $\psi_1$ be any $e_0$-orientation of $H+t\cdot F$. Observe that since $e^\ast$ plays the role of $e_0$ in $F'$, $\nabla_{(F^\ast,e^\ast)}(H+t\cdot F)$ can be decomposed into a copy of $\nabla_{(F,e_0)}(H+t\cdot F,\psi_1)$ and $s$ copies of $F$ (where $s=|H+t\cdot F|\cdot|\cF\sm\Set{F'}|$). Hence, we have $$\nabla_{(F,e_0)}(H+t\cdot F,\psi_1)+s\cdot F\rightsquigarrow \nabla_{(F^\ast,e^\ast)}(H+t\cdot F)$$ by Proposition~\ref{prop:simple identification facts}\ref{fact:disjoint identification}. Thus, $\nabla_{(F,e_0)}(H+t\cdot F,\psi_1)+s\cdot F\doublesquig M_h$ by transitivity of $\doublesquig$.
Finally, let $\psi_2$ be any $e_0$-orientation of $M_h$. By Fact~\ref{fact:id after ext}, there exists an $e_0$-orientation $\psi_3$ of $\nabla_{(F,e_0)}(H+t\cdot F,\psi_1)+s\cdot F$ such that $$\nabla_{(F,e_0)}(\nabla_{(F,e_0)}(H+t\cdot F,\psi_1)+s\cdot F,\psi_3)  \rightsquigarrow \nabla_{(F,e_0)} (M_h,\psi_2).$$
\endproof

\subsection{Proof of the Absorbing lemma}\label{subsec:prove absorbing lemma}

As discussed at the beginning of Section~\ref{subsec:canonical}, we can now combine Lemma~\ref{lem:transformer} and Lemma~\ref{lem:identifaction} to construct the desired absorber by concatenating transformers between certain auxiliary $r$-graphs, in particular the extension $\nabla M_h$ of the canonical multi-$r$-graph $M_h$. It is relatively straightforward to find these auxiliary $r$-graphs within a given supercomplex $G$. The step when we need to find $\nabla M_h$ is the reason why the definition of a supercomplex includes the notion of extendability.

\lateproof{Lemma~\ref{lem:absorbing lemma}}
If $H$ is empty, then we can take $A$ to be empty, so let us assume that $H$ is not empty. In particular, $G^{(r)}$ is not empty. Recall also that we assume $r\ge 2$. Let $e_0\in F$ and let $M_h$ be as in Lemma~\ref{lem:identifaction}. Fix any $e_0$-orientation $\psi$ of $M_h$. By Lemma~\ref{lem:identifaction}, there exist $t_1,t_2,s_1,s_2,\psi_1,\psi_2,\psi_1',\psi_2'$ such that
\begin{align}
\nabla_{(F,e_0)}(\nabla_{(F,e_0)}(H+t_1\cdot F,\psi_1)+s_1\cdot F,\psi_1')  &\rightsquigarrow \nabla_{(F,e_0)} (M_h,\psi); \label{identifications1}\\
\nabla_{(F,e_0)}(\nabla_{(F,e_0)}(t_2\cdot F,\psi_2)+s_2\cdot F,\psi_2')  &\rightsquigarrow \nabla_{(F,e_0)} (M_h,\psi). \label{identifications2}
\end{align}
We can assume that $1/n\ll 1/\ell$ where $\ell:=\max\Set{|V(M_h)|,t_1,t_2,s_1,s_2}$.

Since $G$ is $(\xi,f+r,r)$-dense, there exist disjoint $Q_{1,1},\dots,Q_{1,t_1},Q_{2,1},\dots,Q_{2,t_2}\in G^{(f)}$ which are also disjoint from $V(H)$. For $i\in[2]$ and $j\in[t_i]$, let $F_{i,j}$ be a copy of $F$ with $V(F_{i,j})=Q_{i,j}$. Let $H_1:=H\cup \bigcup_{j\in[t_1]} F_{1,j}$ and $H_2:=\bigcup_{j\in[t_2]} F_{2,j}$ and for $i\in[2]$, define $$\cF_i:=\set{F_{i,j}}{j\in[t_i]}.$$ So $H_1$ is a copy of $H+t_1\cdot F$ and $H_2$ is a copy of $t_2\cdot F$. In fact, we will from now on assume (by redefining $\psi_i$ and $\psi_i'$) that for $i\in[2]$, we have
\begin{align}
\nabla_{(F,e_0)}(\nabla_{(F,e_0)}(H_i,\psi_i)+s_i\cdot F,\psi_i')  &\rightsquigarrow \nabla_{(F,e_0)} (M_h,\psi).
\end{align}
For $i\in[2]$, let $(H_i',\cF_i')$ be obtained by extending $H_i$ with a copy of $\nabla_{(F,e_0)}(H_i,\psi_i)$ in $G$ (cf.~Definition~\ref{def:ext in complex}). We can assume that $H_1'$ and $H_2'$ are vertex-disjoint by first choosing $H_1'$ whilst avoiding $V(H_2)$ and subsequently choosing $H_2'$ whilst avoiding $V(H_1')$. (To see that this is possible we can e.g.~use the fact that $G$ is $(\eps,d,f,r)$-regular for some $d\ge \xi$.)

There exist disjoint $Q_{1,1}',\dots,Q_{1,s_1}',Q_{2,1}',\dots,Q_{2,s_2}'\in G^{(f)}$ which are also disjoint from $V(H_1')\cup V(H_2')$. For $i\in[2]$ and $j\in[s_i]$, let $F_{i,j}'$ be a copy of $F$ with $V(F_{i,j}')=Q_{i,j}'$.
For $i\in[2]$, let
\begin{align*}
H_i''&:=H_i'\cup \bigcup_{j\in[s_i]}F_{i,j}';\\
\cF_i''&:=\set{F'_{i,j}}{j\in[s_i]}.
\end{align*}
 Since $H_i''$ is a copy of $\nabla_{(F,e_0)}(H_i,\psi_i)+s_i\cdot F$, we can assume (by redefining $\psi_i'$) that
\begin{align}
\nabla_{(F,e_0)}(H_i'',\psi_i')  &\rightsquigarrow \nabla_{(F,e_0)} (M_h,\psi).\label{identifications}
\end{align}
For $i\in[2]$, let $(H_i''',\cF_i''')$ be obtained by extending $H_i''$ with a copy of $\nabla_{(F,e_0)}(H_i'',\psi_i')$ in $G$ (cf.~Definition~\ref{def:ext in complex}). We can assume that $H_1'''$ and $H_2'''$ are vertex-disjoint.

Since $G$ is $(\xi,f,r)$-extendable, it is straightforward to find a copy $M'$ of $\nabla_{(F,e_0)} (M_h,\psi)$ in $G^{(r)}$ which is vertex-disjoint from $H_1'''$ and $H_2'''$.

Since $H_i'''$ is a copy of $\nabla_{(F,e_0)}(H_i'',\psi_i')$, by~\eqref{identifications} we have $H_i'''\rightsquigarrow M'$ for $i\in[2]$. Using Fact~\ref{fact:extensions}\ref{fact:extensions:divisible} repeatedly, we can see that both $H_1'''$ and $H_2'''$ are $F$-divisible. Together with Proposition~\ref{prop:simple identification facts}\ref{fact:identification divisible}, this implies that $M'$ is $F$-divisible as well.

Let $T_1:=(H_1-H)\cup H_1''$ and $T_2:=H_2\cup H_2''$. For $i\in[2]$, let $$\cF_{i,1}:=\cF_i'\cup \cF_i''\mbox{ and }\cF_{i,2}:=\cF_i\cup \cF_i'''.$$
We claim that $\cF_{1,1},\cF_{1,2},\cF_{2,1},\cF_{2,2}$ are $2$-well separated $F$-packings in $G$ such that
\begin{align}
\cF_{1,1}^{(r)}=T_1\cup H,\quad
\cF_{1,2}^{(r)}=T_1\cup H_1''',\quad
\cF_{2,2}^{(r)}=T_2\cup H_2'''\quad\mbox{and}\quad
\cF_{2,1}^{(r)}=T_2.\label{silly transformer}
\end{align}
(In particular, $T_1$ is a $2$-well separated $(H,H_1''';F)$-transformer in $G$ and $T_2$ is a $2$-well separated $(H_2''',\emptyset;F)$-transformer in $G$.)
Indeed, we clearly have that $\cF_1,\cF_2,\cF_1'',\cF_2''$ are $1$-well separated $F$-packings in $G$, where $\cF_1^{(r)}=H_1-H$, $\cF_2^{(r)}=H_2$, and for $i\in[2]$, $\cF_i''^{(r)}=H_i''-H_i'$. Moreover, by Fact~\ref{fact:ext in complex}, for $i\in[2]$, $\cF_i'$ and $\cF_i'''$ are $1$-well separated $F$-packings in $G$ with $\cF_i'^{(r)}=H_i\cup H_i'$ and $\cF_i'''^{(r)}=H_i''\cup H_i'''$.
Note that
\begin{align*}
T_1\cup H &= H_1 \cup H_1''=(H_1\cup H_1') \cupdot (H_1''-H_1')=\cF_1'^{(r)}\cupdot \cF_1''^{(r)} = \cF_{1,1}^{(r)};\\
T_1\cup H_1''' &=(H_1-H)\cupdot (H_1''\cup H_1''')=\cF_1^{(r)}\cupdot \cF_1'''^{(r)} = \cF_{1,2}^{(r)};\\
T_2\cup H_2''' &=H_2 \cupdot (H_2''\cup H_2''')=\cF_2^{(r)}\cupdot \cF_2'''^{(r)} = \cF_{2,2}^{(r)};\\
T_2  &= H_2\cup H_2''=(H_2\cup H_2') \cupdot (H_2''-H_2')=\cF_2'^{(r)}\cupdot \cF_2''^{(r)} = \cF_{2,1}^{(r)}.
\end{align*}
To check that $\cF_{1,1}$, $\cF_{1,2}$, $\cF_{2,1}$ and $\cF_{2,2}$ are $2$-well separated $F$-packings, by Fact~\ref{fact:ws}\ref{fact:ws:1} it is now enough to show for $i\in[2]$ that $\cF_i'$ and $\cF_i''$ are $(r+1)$-disjoint and that $\cF_i$ and $\cF_i'''$ are $(r+1)$-disjoint.
Note that for all $F'\in \cF_i'$ and $F''\in \cF_i''$, we have $V(F')\In V(H_i')$ and $V(F'')\cap V(H_i')=\emptyset$, thus $V(F')\cap V(F'')=\emptyset$. For all $F'\in \cF_i$ and $F''\in \cF_i'''$, we have $V(F')\In V(H_i)$ and $|V(F'')\cap V(H_i)|\le |V(F'')\cap V(H_i'')|\le r$ by Fact~\ref{fact:ext in complex}, thus $|V(F')\cap V(F'')|\le r$.
This completes the proof of \eqref{silly transformer}.

Let
\begin{align*}
O_{r}&:=H_1\cup H_1''\cup H_2 \cup H_2'';\\
O_{r+1,3}&:=\cF_{1,1}^{\le(r+1)}\cup \cF_{1,2}^{\le(r+1)} \cup \cF_{2,1}^{\le(r+1)} \cup \cF_{2,2}^{\le(r+1)}.
\end{align*}
By Fact~\ref{fact:ws}\ref{fact:ws:maxdeg}, $\Delta(O_{r+1,3})\le 8(f-r)$. Note that $H_1''',M'\In G^{(r)}-(O_{r}\cup H_2''')$. Thus, by Proposition~\ref{prop:noise}\ref{noise:supercomplex} and Lemma~\ref{lem:transformer}, there exists a $(\kappa/3)$-well separated $(H_1''',M';F)$-transformer $T_3$ in $G-(O_{r}\cup H_2''')-O_{r+1,3}$ with $\Delta(T_3)\le \gamma n/3$. Let $\cF_{3,1}$ and $\cF_{3,2}$ be $(\kappa/3)$-well separated $F$-packings in $G-(O_{r}\cup H_2''')-O_{r+1,3}$ such that $\cF_{3,1}^{(r)}=T_3\cup H_1'''$ and $\cF_{3,2}^{(r)}=T_3\cup M'$.

Similarly, let $O_{r+1,4}:=O_{r+1,3}\cup \cF_{3,1}^{\le(r+1)}\cup \cF_{3,2}^{\le(r+1)}$. By Fact~\ref{fact:ws}\ref{fact:ws:maxdeg}, $\Delta(O_{r+1,4})\le (8+2\kappa/3)(f-r)$. Note that $H_2''',M'\In G^{(r)}-(O_r\cup H_1'''\cup T_3)$. Using Proposition~\ref{prop:noise}\ref{noise:supercomplex} and Lemma~\ref{lem:transformer} again, we can find a $(\kappa/3)$-well separated $(H_2''',M';F)$-transformer $T_4$ in $G-(O_r\cup H_1'''\cup T_3)-O_{r+1,4}$ with $\Delta(T_4)\le \gamma n/3$. Let $\cF_{4,1}$ and $\cF_{4,2}$ be $(\kappa/3)$-well separated $F$-packings in $G-(O_r\cup H_1'''\cup T_3)-O_{r+1,4}$  such that of $\cF_{4,1}^{(r)}=T_4\cup H_2'''$ and $\cF_{4,2}^{(r)}=T_4\cup M'$.

Let
\begin{align*}
A&:=T_1\cupdot H_1''' \cupdot T_3 \cupdot M' \cupdot T_4 \cupdot H_2''' \cupdot T_2;\\
\cF_{\circ}&:=\cF_{1,2}\cup\cF_{3,2}\cup\cF_{4,1}\cup\cF_{2,1};\\
\cF_{\bullet}&:=\cF_{1,1}\cup\cF_{3,1}\cup\cF_{4,2}\cup\cF_{2,2}.
\end{align*}
Clearly, $A\In G^{(r)}$, and $\Delta(A)\le \gamma n$. Moreover, $A$ and $H$ are edge-disjoint. Using \eqref{silly transformer}, we can check that
\begin{align*}
\cF_{\circ}^{(r)} &= \cF_{1,2}^{(r)}\cupdot \cF_{3,2}^{(r)}\cupdot \cF_{4,1}^{(r)}\cupdot \cF_{2,1}^{(r)} = (T_1\cup H_1''') \cupdot (T_3 \cup M') \cupdot (T_4 \cup H_2''') \cupdot T_2= A;\\
\cF_{\bullet}^{(r)} &= \cF_{1,1}^{(r)}\cupdot\cF_{3,1}^{(r)}\cupdot\cF_{4,2}^{(r)}\cupdot\cF_{2,2}^{(r)} =(H\cup T_1) \cupdot (H_1''' \cup T_3) \cupdot (M'\cup T_4) \cupdot (H_2'''\cup T_2)= A\cup H.
\end{align*}
By definition of $O_{r+1,3}$ and $O_{r+1,4}$, we have that $\cF_{1,2},\cF_{3,2},\cF_{4,1},\cF_{2,1}$ are $(r+1)$-disjoint. Thus, $\cF_{\circ}$ is a $(2\cdot \kappa/3+4)$-well separated $F$-packing in $G$ by Fact~\ref{fact:ws}\ref{fact:ws:1}. Similarly, $\cF_{\bullet}$ is a $(2\cdot \kappa/3+4)$-well separated $F$-packing in $G$.
So~$A$ is indeed a $\kappa$-well separated $F$-absorber for $H$ in~$G$.
\endproof

\section{Proof of the main theorems}\label{sec:main proofs}

\subsection{Main complex decomposition theorem}\label{subsec:main complex thm}

We can now deduce our main decomposition result for supercomplexes. The main ingredients for the proof of Theorem~\ref{thm:main complex} are Lemma~\ref{lem:get vortex} (to find a vortex), Lemma~\ref{lem:absorbing lemma} (to find absorbers for the possible leftovers in the final vortex set), and Lemma~\ref{lem:almost dec} (to cover all edges outside the final vortex set).

\lateproof{Theorem~\ref{thm:main complex}}
We proceed by induction on $r$. The case $r=1$ forms the base case of the induction and in this case we do not rely on any inductive assumption. Suppose that $r\in \bN$ and that \ind{i} is true for all $i\in[r-1]$.

We may assume that $1/n\ll 1/\kappa\ll \eps$. Choose new constants $\kappa',m'\in \bN$ and $\gamma,\mu>0$ such that $$1/n\ll 1/\kappa \ll \gamma \ll 1/m' \ll 1/\kappa'\ll \eps \ll \mu \ll \xi,1/f$$ and suppose that $F$ is a weakly regular $r$-graph on $f>r$ vertices.

Let $G$ be an $F$-divisible $(\eps,\xi,f,r)$-supercomplex on $n$ vertices. We are to show the existence of a $\kappa$-well separated $F$-decomposition of $G$. By Lemma~\ref{lem:get vortex}, there exists a $(2\sqrt{\eps},\mu,\xi-\eps,f,r,m)$-vortex $U_0,U_1,\dots,U_\ell$ in $G$ for some $\mu m' \le m \le m'$. Let $H_1,\dots,H_s$ be an enumeration of all spanning $F$-divisible subgraphs of $G[U_\ell]^{(r)}$. Clearly, $s\le 2^{\binom{m}{r}}$. We will now find edge-disjoint subgraphs $A_1,\dots,A_s$ of $G^{(r)}$ and $\sqrt{\kappa}$-well separated $F$-packings $\cF_{1,\circ},\cF_{1,\bullet},\dots,\cF_{s,\circ},\cF_{s,\bullet}$ in $G$ such that for all $i\in[s]$ we have that
\begin{enumerate}[label={\rm(A\arabic*)}]
\item $\cF_{i,\circ}^{(r)}=A_i$ and $\cF_{i,\bullet}^{(r)}=A_i\cup H_i$;\label{absorber:property without}
\item $\Delta(A_i)\le \gamma n$;\label{absorber:maxdeg}
\item $A_i[U_1]$ is empty;\label{absorber:outside}
\item $\cF_{i,\bullet}^{\le},G[U_1],\cF_{1,\circ}^{\le},\dots,\cF_{i-1,\circ}^{\le},\cF_{i+1,\circ}^{\le},\dots,\cF_{s,\circ}^{\le}$ are $(r+1)$-disjoint.\label{absorber:disjointness}
\end{enumerate}
Suppose that for some $t\in[s]$, we have already found edge-disjoint $A_1,\dots,A_{t-1}$ together with $\cF_{1,\circ},\cF_{1,\bullet},\dots,\cF_{t-1,\circ},\cF_{t-1,\bullet}$ that satisfy \ref{absorber:property without}--\ref{absorber:disjointness} (with $t-1$ playing the role of $s$).
Let
\begin{align*}
T_t&:=(G^{(r)}[U_1]- H_t) \cup \bigcup_{i\in[t-1]}A_i;\\
T_t'&:=G^{(r+1)}[U_1] \cup \bigcup_{i\in[t-1]}(\cF_{i,\circ}^{\le(r+1)}\cup \cF_{i,\bullet}^{\le(r+1)}).
\end{align*}
Clearly, $\Delta(T_t)\le \mu n + s\gamma n \le 2\mu n$ by \ref{vortex:size} and \ref{absorber:maxdeg}. Also, $\Delta(T_t')\le \mu n+ 2s\sqrt{\kappa} (f-r)\le 2\mu n$ by \ref{vortex:size} and Fact~\ref{fact:ws}\ref{fact:ws:maxdeg}. Thus, applying Proposition~\ref{prop:noise}\ref{noise:supercomplex} twice we see that $G_{abs,t}:=G-T_t-T_t'$ is still a $(\sqrt{\mu},\xi/2,f,r)$-supercomplex. Moreover, $H_t\In G_{abs,t}^{(r)}$ by~\ref{absorber:outside}.
Hence, by Lemma~\ref{lem:absorbing lemma},\COMMENT{with $\sqrt{\mu}$ playing the role of $\eps$} there exists a $\sqrt{\kappa}$-well separated $F$-absorber $A_t$ for $H_t$ in $G_{abs,t}$ with $\Delta(A_t)\le \gamma n$. Let $\cF_{t,\circ}$ and $\cF_{t,\bullet}$ be $\sqrt{\kappa}$-well separated $F$-packings in $G_{abs,t} \In G$ such that $\cF_{t,\circ}^{(r)}=A_t$ and $\cF_{t,\bullet}^{(r)}=A_t\cup H_t$. Clearly, $A_t$ is edge-disjoint from $A_1,\dots,A_{t-1}$. Moreover, \ref{absorber:outside} holds since $G_{abs,t}^{(r)}[U_1]=H_t$ and $A_t$ is edge-disjoint from $H_t$, and \ref{absorber:disjointness} holds with $t$ playing the role of $s$ due to the definition of $T_t'$.

Let $A^\ast:=A_1\cup \dots\cup A_s$ and $T^\ast:=\bigcup_{i\in[s]}(\cF_{i,\circ}^{\le(r+1)}\cup \cF_{i,\bullet}^{\le(r+1)})$. We claim that the following hold:
\begin{enumerate}[label={\rm(A\arabic*$'$)}]
\item for every $F$-divisible subgraph $H^\ast$ of $G[U_\ell]^{(r)}$, $A^\ast\cup H^\ast$ has an $s\sqrt{\kappa}$-well separated $F$-decomposition $\cF^\ast$ with $\cF^{\ast\le}\In G[T^\ast]$;\COMMENT{The last condition is equivalent to $\cF^{\ast\le}\In G$ and $\cF^{\ast\le(r+1)}\In T^\ast$}\label{absorber all:property}
\item $\Delta(A^\ast)\le \eps n$ and $\Delta(T^\ast)\le 2s\sqrt{\kappa}(f-r)\le \eps n$;\label{absorber all:maxdeg}
\item $A^\ast[U_1]$ and $T^\ast[U_1]$ are empty.\label{absorber all:outside}
\end{enumerate}

For \ref{absorber all:property}, we have that $H^\ast=H_t$ for some $t\in[s]$. Then $\cF^\ast:=\cF_{t,\bullet}\cup \bigcup_{i\in[s]\sm\Set{t}}\cF_{i,\circ}$ is an $F$-decomposition of $A^\ast \cup H^\ast=(A_t\cup H_t) \cup \bigcup_{i\in[s]\sm\Set{t}}A_i$ by \ref{absorber:property without} and since $H_t,A_1,\dots,A_s$ are pairwise edge-disjoint. By \ref{absorber:disjointness} and Fact~\ref{fact:ws}\ref{fact:ws:1}, $\cF^\ast$ is $s\sqrt{\kappa}$-well separated. We clearly have $\cF^{\ast\le} \In G$ and $\cF^{\ast\le(r+1)}\In T^\ast$. Thus $\cF^{\ast\le}\In G[T^\ast]$ and so \ref{absorber all:property} holds. It is straightforward to check that \ref{absorber all:maxdeg} follows from \ref{absorber:maxdeg} and Fact~\ref{fact:ws}\ref{fact:ws:maxdeg}, and that \ref{absorber all:outside} follows from \ref{absorber:outside} and \ref{absorber:disjointness}.

Let $G_{almost}:=G-A^\ast-T^\ast$. By~\ref{absorber all:maxdeg} and Proposition~\ref{prop:noise}\ref{noise:supercomplex}, $G_{almost}$ is an $(\sqrt{\eps},\xi/2,f,r)$-supercomplex. Moreover, since $A^\ast$ must be $F$-divisible, we have that $G_{almost}$ is $F$-divisible. By~\ref{absorber all:outside}, $U_1,\dots,U_\ell$ is a $(2\sqrt{\eps},\mu,\xi-\eps,f,r,m)$-vortex in $G_{almost}[U_1]$. Moreover, \ref{absorber all:maxdeg} and Proposition~\ref{prop:random noise} imply that $U_1$ is $(\eps^{1/5},\mu,\xi/2,f,r)$-random in $G_{almost}$ and $U_1\sm U_2$ is $(\eps^{1/5},\mu(1-\mu),\xi/2,f,r)$-random in $G_{almost}$. Hence, $U_0,U_1,\dots,U_\ell$ is still an $(\eps^{1/5},\mu,\xi/2,f,r,m)$-vortex in $G_{almost}$. Thus, by Lemma~\ref{lem:almost dec}, there exists a $4\kappa'$-well separated $F$-packing $\cF_{almost}$ in $G_{almost}$ which covers all edges of $G_{almost}^{(r)}$ except possibly some inside $U_\ell$. Let $H^\ast:=(G_{almost}^{(r)}-\cF_{almost}^{(r)})[U_\ell]$. Since $H^\ast$ is $F$-divisible, $A^\ast\cup H^\ast$ has an $s\sqrt{\kappa}$-well separated $F$-decomposition $\cF^{\ast}$ with $\cF^{\ast\le}\In G[T^\ast]$ by \ref{absorber all:property}. Clearly, $$G^{(r)}=G_{almost}^{(r)} \cupdot A^\ast =\cF_{almost}^{(r)}\cupdot H^\ast \cupdot A^\ast= \cF_{almost}^{(r)} \cupdot \cF^{\ast(r)},$$ and $\cF_{almost}$\COMMENT{`$\In G-T^\ast$'} and $\cF^\ast$ are $(r+1)$-disjoint. Thus, by Fact~\ref{fact:ws}\ref{fact:ws:1}, $\cF_{almost}\cup \cF^\ast$ is a $(4\kappa'+s\sqrt{\kappa})$-well separated $F$-decomposition of $G$, completing the proof.
\endproof

\subsection{Resolvable partite designs}\label{subsec:resolvable}

Perhaps surprisingly, it is much easier to obtain decompositions of complete partite $r$-graphs than of complete (non-partite) $r$-graphs. In fact, we can obtain (explicit)~resolvable decompositions (sometimes referred to as \defn{Kirkman systems} or \emph{large sets of designs}) in the partite setting using basic linear algebra. We believe that this result and the corresponding construction are of independent interest. Here, we will use this result to show that for every $r$-graph $F$, there is a weakly regular $r$-graph $F^\ast$ which is $F$-decomposable (see Lemma~\ref{lem:regularisation}).

Let $G$ be a complex. We say that a $\krq{r}{f}$-decomposition $\cK$ of $G$ is \defn{resolvable} if $\cK$ can be partitioned into $\krq{r-1}{f}$-decompositions of $G$, that is, $\cK^{\le(f)}$ can be partitioned into sets $Y_1,\dots,Y_t$ such that for each $i\in[t]$, $\cK_i:=\set{G^{(r-1)}[Q]}{Q\in Y_i}$ is a $\krq{r-1}{f}$-decomposition of $G$. Clearly, $\cK_1,\dots,\cK_t$ are $r$-disjoint.

Let $K_{n\times k}$ be the complete $k$-partite complex with each vertex class having size $n$. More precisely, $K_{n\times k}$ has vertex set $V_1\cupdot \dots \cupdot V_k$ such that $|V_i|=n$ for all $i\in[k]$ and $e\in K_{n\times k}$ if and only if $e$ is \defn{crossing}, that is, intersects with each $V_i$ in at most one vertex. Since every subset of a crossing set is crossing, this defines a complex.

\begin{theorem}\label{thm:partite designs}
Let $q$ be a prime power and $2f\le q$. Then for every $r\in[f-1]$, $K_{q\times f}$ has a resolvable $\krq{r}{f}$-decomposition.
\end{theorem}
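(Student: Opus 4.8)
The idea is to use the algebraic structure of the finite field $\mathbb{F}_q$ to explicitly construct the $f$-sets forming the decomposition, and to show that the invertibility of square submatrices of Cauchy matrices guarantees that each $r$-set is covered exactly once. First I would set up coordinates: identify each of the $f$ vertex classes $V_1,\dots,V_f$ with a copy of $\mathbb{F}_q$, so a crossing $r$-set picks an $r$-element subset $I\subseteq[f]$ of classes and an element of $V_i=\mathbb{F}_q$ for each $i\in I$. The blocks of the decomposition will be indexed by affine data: to build a copy of $\krq{r}{f}$ we need one vertex from each class, i.e.\ a function $[f]\to\mathbb{F}_q$, and the plan is to take these functions to lie on suitable low-degree ``curves'' — concretely, maps of the form $i\mapsto \sum_{j} a_j/(x_i - b_j)$ or polynomial maps $i\mapsto \sum_j a_j x_i^{j}$ where $x_1,\dots,x_f\in\mathbb{F}_q$ are $f$ fixed distinct field elements (this is where $2f\le q$ is used: we need $f$ distinct ``evaluation points'' $x_i$ and enough room for the parameters $b_j$ to avoid them, so that the relevant Cauchy-type matrices have all entries defined).

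The combinatorial core is a counting/uniqueness argument. Given an $r$-subset $I=\{i_1,\dots,i_r\}\subseteq[f]$ of classes and a target value $v_{i}\in\mathbb{F}_q$ for each $i\in I$, I want to count the parameter vectors $(a_1,\dots,a_r)$ (an $r$-dimensional $\mathbb{F}_q$-space, once we fix which ``shape'' of curve and which auxiliary points $b_j$ we use) such that the associated curve passes through all $r$ prescribed points. This is a linear system $M a = v$ where $M$ is the $r\times r$ matrix with rows indexed by $I$ and entries $1/(x_{i}-b_j)$ (a Cauchy matrix) — or a Vandermonde matrix in the polynomial version. Since all square submatrices of a Cauchy matrix over a field are invertible (the classical Cauchy determinant formula, which is nonzero precisely when the $x_i$ are distinct and the $b_j$ are distinct and disjoint from the $x_i$), the system has exactly one solution. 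Hence every crossing $r$-set lies in exactly one block, so the family of blocks is a $\krq{r}{f}$-decomposition of $K_{q\times f}$. Counting: there are $q^r$ choices of $(a_1,\dots,a_r)$ and this matches $\binom{f}{r}q^r / \binom{f}{r} = q^r$ blocks, consistent with each block covering $\binom{f}{r}$ edges and $K_{q\times f}$ having $\binom{f}{r}q^r$ edges.

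For \emph{resolvability} I would exploit the extra $\mathbb{F}_q$-symmetry: the blocks naturally come in parallel classes indexed by a distinguished parameter. Concretely, one of the parameters — say the ``constant term'' $a_r$, i.e.\ a uniform additive shift of the whole curve — can be separated out; fixing the remaining parameters $(a_1,\dots,a_{r-1})$ and letting only $a_r$ range over $\mathbb{F}_q$ gives a family of $q$ pairwise vertex-disjoint copies of $\krq{r}{f}$ (disjoint because two curves with the same $a_1,\dots,a_{r-1}$ but different shift never agree in any coordinate), and one checks that each such family is a $\krq{r-1}{f}$-decomposition of $K_{q\times f}$: given an $(r-1)$-subset $J\subseteq[f]$ of classes and a target value for each, the same Cauchy/Vandermonde invertibility (now for an $(r-1)\times(r-1)$ subsystem in the variables $a_1,\dots,a_{r-1}$, the shift being irrelevant to which curves pass through $r-1$ points up to translation — more carefully, one solves the $(r-1)$-point interpolation and reads off $a_r$) shows each crossing $(r-1)$-set is covered exactly once within the family. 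Partitioning the full block set by the value of $(a_1,\dots,a_{r-1})$ — equivalently, by which parallel class a block lies in — then exhibits the $\krq{r}{f}$-decomposition as a disjoint union of $\krq{r-1}{f}$-decompositions, which is exactly resolvability.

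\textbf{Main obstacle.} The delicate point is choosing the right family of ``curves'' so that \emph{simultaneously} (i) the interpolation matrices at every $r$-subset of classes are genuinely square Cauchy (or Vandermonde) matrices with all entries defined and nonzero determinant — this is where one must verify $2f\le q$ gives enough distinct field elements to serve both as the $f$ evaluation points $x_i$ and as the auxiliary poles $b_j$, all distinct — and (ii) the resolvability splitting works, i.e.\ there is a one-dimensional sub-family of ``shifts'' that acts freely enough to produce genuinely vertex-disjoint parallel classes while the quotient still interpolates $(r-1)$-point data bijectively. Getting a single explicit parametrisation that makes both the $r$-point count and the $(r-1)$-point count come out to exactly one (rather than, say, $q$ or $0$) is the crux; everything else is the Cauchy determinant formula plus bookkeeping.
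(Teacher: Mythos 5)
Your construction of the $\krq{r}{f}$-decomposition itself is sound and is essentially the paper's argument viewed ``dually'': the paper takes the blocks to be the kernel of an $(f-r)\times f$ Cauchy matrix $A'$ (so fixing the $r$ coordinates of an $r$-set leaves an $(f-r)\times(f-r)$ Cauchy submatrix, which gives a unique completion), whereas you parametrise the same kind of $r$-dimensional MDS code by a generator matrix and invert the $r\times r$ interpolation submatrices. Either way the Cauchy determinant does the work, and $2f\le q$ supplies enough distinct field elements.

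The resolvability splitting, however, is wrong as stated. You partition the $q^r$ blocks by the value of $(a_1,\dots,a_{r-1})$, giving $q^{r-1}$ classes of $q$ blocks each, and claim each class is a $\krq{r-1}{f}$-decomposition. But a $\krq{r-1}{f}$-decomposition of $K_{q\times f}$ must contain exactly $q^{r-1}$ blocks (there are $\binom{f}{r-1}q^{r-1}$ crossing $(r-1)$-sets and each block covers $\binom{f}{r-1}$ of them), so for $r\ge 3$ a class of only $q$ blocks covers merely a $q^{2-r}$ fraction of the $(r-1)$-sets; your check (``solve the $(r-1)$-point interpolation in $(a_1,\dots,a_{r-1})$ and read off $a_r$'') in fact locates a unique block in the class of all curves with a \emph{given constant term} $a_r$, not in the class with given $(a_1,\dots,a_{r-1})$. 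The fix is to invert your partition: the parallel classes must be the $q$ fibres of a single additional linear functional on the $r$-dimensional parameter space (e.g.\ the constant term), each of size $q^{r-1}$, and that functional must be chosen so that, for every $(r-1)$-subset of the vertex classes, the $(r-1)$ interpolation equations together with the one class-equation form an invertible $r\times r$ system. This is exactly what the paper arranges by extending $A'$ by one further Cauchy row $\mathbf{\hat{a}}$ and taking the classes to be the level sets of $\mathbf{\hat{a}}\cdot\mathbf{x_{Q}}$, so that the relevant $(f-r+1)\times(f-r+1)$ submatrices of the extended Cauchy matrix remain invertible. (In your parametric version one must also verify that adjoining the all-ones row to the Cauchy generator preserves the MDS property, e.g.\ by counting zeros of $p(x)/\prod_j(x-b_j)$ with $\deg p\le r-1$; you correctly flag this but do not close it.) Note finally that vertex-disjointness of blocks within a class is neither needed nor available here: for $r\ge 3$ the blocks of a $\krq{r-1}{f}$-decomposition of $K_{q\times f}$ necessarily share vertices.
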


Let us first motivate the proof of Theorem~\ref{thm:partite designs}. Let $\bF$ be the finite field of order $q$. Assume that each class of $K_{q\times f}$ is a copy of $\bF$. Suppose further that we are given a matrix $A\in \bF^{(f-r)\times f}$ with the property that every $(f-r)\times (f-r)$-submatrix is invertible. Identifying $K_{q\times f}^{(f)}$ with $\bF^f$ in the obvious way, we let $\cK$ be the set of all $Q\in K_{q\times f}^{(f)}$ with $AQ=0$. Fixing the entries of $r$ coordinates of $Q$ (which can be viewed as fixing an $r$-set) transforms this into an equation $A'Q'=b'$, where $A'$ is an $(f-r)\times (f-r)$-submatrix of $A$. Thus, there exists a unique solution, which will translate into the fact that every $r$-set of $K_{q\times f}$ is contained in exactly one $f$-set of $\cK$, i.e.~we have a $\krq{r}{f}$-decomposition.

There are several known classes of matrices over finite fields which have the desired property that every square submatrix is invertible. We use so-called Cauchy matrices, introduced by Cauchy~\cite{C}, which are very convenient for our purposes. For an application of Cauchy matrices to coding theory, see e.g.~\cite{BKKKLZ}.

Let $\bF$ be a field and let $x_1,\dots,x_m,y_1,\dots,y_n$ be distinct elements of $\bF$. The \defn{Cauchy matrix generated by $(x_i)_{i\in[m]}$ and $(y_j)_{j\in[n]}$} is the $m\times n$-matrix $A\in \bF^{m\times n}$ defined by $a_{i,j}:=(x_i-y_j)^{-1}$. Obviously, every submatrix of a Cauchy matrix is itself a Cauchy matrix. For $m=n$, it is well known that the Cauchy determinant is given by the following formula (cf.~\cite{Sche}):\COMMENT{only stated for complex numbers, see wikipedia}
\begin{align*}
\det(A)=\frac{\prod_{1\le i<j\le n}(x_j-x_i)(y_i-y_j)}{\prod_{1\le i,j\le n}(x_i-y_j)}.
\end{align*}
In particular, every square Cauchy matrix is invertible.

\lateproof{Theorem~\ref{thm:partite designs}}
Let $\bF$ be the finite field of order $q$. Since $2f\le q$, there exists a Cauchy matrix $A\in \bF^{(f-r+1)\times f}$.\COMMENT{just need $2f-r+1$ distinct elements in $\bF$} Let $\mathbf{\hat{a}}$ be the final row of $A$ and let $A'\in \bF^{(f-r)\times f}$ be obtained from $A$ by deleting $\mathbf{\hat{a}}$.

We assume that the vertex set of $K_{q\times f}$ is $\bF\times [f]$. Hence, for every $e\in K_{q\times f}$, there are unique $1\le i_1< \dots <i_{|e|}\le f$ and $x_1,\dots,x_{|e|}\in \bF$ such that $e=\set{(x_j,i_j)}{j\in[|e|]}$. Let
$$
I_e:=\Set{i_1,\dots,i_{|e|}}\In [f] \quad \mbox{ and } \quad\mathbf{x_e}:=\left(
\begin{array}{c}
x_1\\
\vdots \\
x_{|e|}\\
\end{array}\right)\in \bF^{|e|}.
$$
Clearly, $Q\in K_{q\times f}^{(f)}$ is uniquely determined by $\mathbf{x_{Q}}$.

Define $Y\In K_{q\times f}^{(f)}$ as the set of all $Q\in K_{q\times f}^{(f)}$ which satisfy $A'\cdot \mathbf{x_{Q}} =\mathbf{0}$.
Moreover, for each $x^\ast\in \bF$, define $Y_{x^\ast}\In Y$ as the set of all $Q\in Y$ which satisfy $\mathbf{\hat{a}}\cdot\mathbf{x_{Q}}= x^\ast$. Clearly, $\set{Y_{x^\ast}}{x^\ast \in \bF}$ is a partition of $Y$. Let $\cK:=\set{K_{q\times f}^{(r)}[Q]}{Q\in Y}$ and $\cK_{x^\ast}:=\set{K_{q\times f}^{(r-1)}[Q]}{Q\in Y_{x^\ast}}$ for each $x^\ast\in \bF$. We claim that $\cK$ is a $\krq{r}{f}$-decomposition of $K_{q\times f}$ and that $\cK_{x^\ast}$ is a $\krq{r-1}{f}$-decomposition of $K_{q\times f}$ for each $x^\ast\in \bF$.

For $I\In [f]$, let $A_I$ be the $(f-r+1)\times |I|$-submatrix of $A$ obtained by deleting the columns which are indexed by $[f]\sm I$. Similarly, for $I\In [f]$, let $A_I'$ be the $(f-r)\times |I|$-submatrix of $A'$ obtained by deleting the columns which are indexed by $[f]\sm I$. Finally, for a vector $\mathbf{x}\in \bF^f$ and $I\In [f]$, let $\mathbf{x}_I\in \bF^{|I|}$ be the vector obtained from $\mathbf{x}$ by deleting the coordinates not in $I$.

Observe that for all $e\in K_{q\times f}$ and $Q\in K_{q\times f}^{(f)}$, we have
\begin{align}
e\In Q\mbox{ if and only if } \mathbf{x_{Q}}_{I_e}=\mathbf{x_e}.\label{vector containment}
\end{align}

Consider $e\in K_{q\times f}^{(r)}$. By~\eqref{vector containment}, the number of $Q\in Y$ containing $e$ is equal to the number of $\mathbf{x}\in \bF^{f}$ such that $A'\cdot \mathbf{x} =\mathbf{0}$ and $\mathbf{x}_{I_e}=\mathbf{x_e}$, or equivalently, the number of $\mathbf{x'}\in \bF^{f-r}$ satisfying $A'_{I_e}\cdot \mathbf{x_e}+A'_{[f]\sm I_e}\cdot \mathbf{x'} =\mathbf{0}$. Since $A'_{[f]\sm I_e}$ is an $(f-r)\times(f-r)$-Cauchy matrix, the equation $A'_{[f]\sm I_e}\cdot \mathbf{x'} =-A'_{I_e}\cdot \mathbf{x_e}$ has a unique solution $\mathbf{x'}\in \bF^{f-r}$, i.e.~there is exactly one $Q\in Y$ which contains $e$. Thus, $\cK$ is a $\krq{r}{f}$-decomposition of $K_{q\times f}$.

Now, fix $x^\ast\in \bF$ and $e\in K_{q\times f}^{(r-1)}$. By~\eqref{vector containment}, the number of $Q\in Y_{x^\ast}$ containing $e$ is equal to the number of $\mathbf{x}\in \bF^{f}$ such that $A'\cdot \mathbf{x} =\mathbf{0}$,  $\mathbf{\hat{a}}\cdot\mathbf{x}= x^\ast$ and $\mathbf{x}_{I_e}=\mathbf{x_e}$, or equivalently, the number of $\mathbf{x'}\in \bF^{f-(r-1)}$ satisfying $A_{I_e}\cdot \mathbf{x_e}+A_{[f]\sm I_e}\cdot \mathbf{x'} =\left(
\begin{array}{c}
\mathbf{0}\\
x^\ast\\
\end{array}\right)$. Since $A_{[f]\sm I_e}$ is an $(f-r+1)\times(f-r+1)$-Cauchy matrix, this equation has a unique solution $\mathbf{x'}\in \bF^{f-r+1}$, i.e.~there is exactly one $Q\in Y_{x^\ast}$ which contains $e$. Hence, $\cK_{x^\ast}$ is a $\krq{r-1}{f}$-decomposition of $K_{q\times f}$.
\endproof

Our application of Theorem~\ref{thm:partite designs} is as follows.

\begin{lemma}\label{lem:regularisation}
Let $2\le r<f$. Let $F$ be any $r$-graph on $f$ vertices. There exists a weakly regular $r$-graph~$F^\ast$ on at most $2f\cdot f!$ vertices which has a $1$-well separated $F$-decomposition.
\end{lemma}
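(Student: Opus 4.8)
The plan is to construct $F^\ast$ explicitly using the resolvable partite decomposition from Theorem~\ref{thm:partite designs}. Fix a prime power $q$ with $2f\le q\le 2f\cdot f!$ (such a $q$ exists, e.g.\ by Bertrand's postulate, and in fact we may take $q$ divisible enough to accommodate the counting below — actually the cleanest choice is to set things up so that $q$ copies of $F$ will fit, which forces $q$ to be chosen so that $f!\mid q$ after absorbing constants; we can afford $q\le 2f\cdot f!$). Consider the complete $f$-partite complex $K_{q\times f}$ with parts $V_1,\dots,V_f$. By Theorem~\ref{thm:partite designs}, $K_{q\times f}$ has a resolvable $\krq{r}{f}$-decomposition $\cK$; write $\cK = \cK_1\cupdot\dots\cupdot\cK_t$ where each $\cK_i$ is a $\krq{r-1}{f}$-decomposition of $K_{q\times f}$ (so in particular the $\cK_i$ are pairwise $r$-disjoint, hence the cliques in different $\cK_i$ overlap in fewer than $r$ vertices; cliques within one $\cK_i$ are automatically $r$-disjoint too since it is a $\krq{r-1}{f}$-decomposition). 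The number of $f$-cliques in $\cK$ is $|\cK| = q^f/$ (number of $f$-cliques covering a fixed $r$-set)${}\cdot\binom{f}{r}^{-1}$-type count; the point is that it is a multiple of $\binom{f}{r}$ times an integer, and what matters is that each $r$-set of $K_{q\times f}$ lies in exactly one clique of $\cK$.

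Now I would turn $\cK$ into an $F^\ast$. Since each clique $K^{(r)}[Q]\cong\krq{r}{f}$ and $F$ has $f$ vertices, for every $Q$ in $\cK$ choose a copy $F_Q$ of $F$ on vertex set $Q$ — but I want the resulting $r$-graph $F^\ast := \bigcup_{Q\in\cK} F_Q$ to be weakly regular and the copies to form a well-separated packing. Well-separatedness is immediate from the structure: distinct cliques $Q,Q'\in\cK$ satisfy $|Q\cap Q'|\le r$ (indeed $<r$ unless they lie in the same $\cK_i$, where still $<r$), so condition \ref{separatedness:1} holds; and since $\cK$ is a $\krq{r}{f}$-decomposition, each $r$-set lies in exactly one $Q$, so \ref{separatedness:2} holds with $\kappa=1$. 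Thus $\Set{F_Q}{Q\in\cK}$ is a $1$-well separated $F$-packing, and $F^\ast = \bigcup F_Q$ is $F$-decomposable by construction (the packing is a decomposition of $F^\ast$). It remains to arrange weak regularity of $F^\ast$, i.e.\ to choose the embeddings $F_Q\hookrightarrow Q$ so that for each $i\in[r-1]_0$ every $i$-set of $V(F^\ast)$ lies in $0$ or $s_i$ edges of $F^\ast$ for a fixed $s_i$. Here I would exploit the resolvability and partite symmetry: use the vertex classes $V_1,\dots,V_f$ to index the $f$ vertices of $F$, so that in every $F_Q$ the vertex of $Q$ in class $V_j$ plays the role of the $j$-th vertex of $F$ (under a fixed labelling $V(F)=[f]$). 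Then for an $i$-set $S\subseteq V(F^\ast)$ that is crossing, say touching classes $j_1<\dots<j_i$, the number of edges of $F^\ast$ containing $S$ equals (number of cliques $Q\in\cK$ with $S\subseteq Q$) $\times\,|F(\Set{j_1,\dots,j_i})|$; the first factor is a constant $c_i$ depending only on $i$ (by the resolvable/partite design property — every crossing $i$-set lies in the same number of $f$-cliques of $\cK$, which one checks from the Cauchy-matrix construction, where fixing $i$ coordinates leaves a uniform count), so $|F^\ast(S)| \in \Set{0, c_i |F(I)|}$ depending on whether $I\in\binom{[f]}{i}$ has $|F(I)|>0$. This is \emph{not} yet weak regularity because $|F(I)|$ varies with $I$. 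To fix this, I would instead take $F^\ast$ to be the union over \emph{all} cliques $Q$ of a copy of the complete $r$-graph restricted appropriately — no; the right move is: replace $F$ by its "completion data" by noting that we are free to choose which labelled copy of $F$ to put on each $Q$, and by averaging over the symmetric group: put on the cliques of $\cK$ copies of $F$ using \emph{all} $f!$ relabellings equally often. Concretely, take $q$ large enough (divisible by $f!$, which is why we allow up to $2f\cdot f!$) that $\cK$ can be partitioned into $f!$ parts of equal size, one for each permutation $\sigma\in S_f$, and on the cliques in the $\sigma$-part embed $F$ via $\sigma$. Then $|F^\ast(S)|$ for a crossing $i$-set $S$ touching classes $I$ becomes $\tfrac{c_i}{f!}\sum_{\sigma\in S_f}|F(\sigma(I))| = \tfrac{c_i}{f!}\cdot (i!(f-i)!)\sum_{J\in\binom{[f]}{i}}|F(J)| =: s_i$, independent of $I$ — giving weak $(s_0,\dots,s_{r-1})$-regularity. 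One must check crossing vs.\ non-crossing $i$-sets: a non-crossing $i$-set lies in no clique of $\cK$ (cliques are crossing), hence in no edge of $F^\ast$, consistent with the "$0$" case.

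The key steps, in order: (1) pick the prime power $q$ with $2f\le q$, $f!\mid q$, $q\le 2f\cdot f!$; (2) apply Theorem~\ref{thm:partite designs} to get a resolvable $\krq{r}{f}$-decomposition $\cK$ of $K_{q\times f}$, and verify from the decomposition property that every crossing $i$-set of $K_{q\times f}$ lies in a constant number $c_i$ of $f$-cliques of $\cK$ (a short counting argument, using that $\cK$ is a $\krq{r}{f}$-decomposition and the handshaking identity \eqref{handshaking}, or directly from the Cauchy-matrix solution count); (3) partition $\cK$ into $f!$ equal parts indexed by $S_f$ and embed a $\sigma$-relabelled copy of $F$ on each clique in the $\sigma$-part, defining $F^\ast$ as the union; (4) check $F^\ast$ is $F$-decomposable (immediate) and that the embedded copies form a $1$-well separated packing (from $|Q\cap Q'|\le r$ and the decomposition property); (5) compute $|F^\ast(S)|$ for every $i$-set $S$ and conclude weak $(s_0,\dots,s_{r-1})$-regularity; (6) note $|V(F^\ast)|\le fq\le 2f\cdot f!$ wait — $V(F^\ast)\subseteq V(K_{q\times f})$ has $fq\le f\cdot 2f\cdot f!$ vertices, which is too big; so instead I should pick $q$ as small as possible, $2f\le q\le 4f$ say by Bertrand, and handle divisibility by $f!$ by taking $f!$ \emph{translates} of the construction or by a mild adjustment — more carefully, one takes $\cK$ itself and partitions it into $f!$ parts only if $f!\mid|\cK|$; since $|\cK|$ depends on $q$ one can enlarge $q$ to $q'$ a prime power with $f!\mid |\cK(q')|$ and $q'\le 2f$ fails in general, so the honest bound comes from choosing $q$ to be a prime power in $[2f, 2f\cdot f!]$ with the required divisibility, whence $|V(F^\ast)| = fq \le$ — no. The cleanest fix: do not use all of $K_{q\times f}$; rather note $|\cK|$ is divisible by $\binom{f}{r}$-related quantities and argue $f!$ divides an appropriate sub-selection, or simply take $q\le 2f$ (Bertrand) and argue directly that $|F^\ast(S)|$ is constant by a symmetry of the Cauchy construction under $\mathrm{Aff}(\bF)$, avoiding the $f!$-partition entirely. \textbf{The main obstacle} is exactly this: arranging genuine weak regularity (constant $s_i$ across all $i$-sets, not just a count proportional to $|F(I)|$) while keeping $|V(F^\ast)|\le 2f\cdot f!$. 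I expect the resolution is that the Cauchy/partite decomposition is highly symmetric — its automorphism group acts transitively on crossing $i$-sets within each "class pattern", and by choosing the $f!$ labelled embeddings according to an orbit one gets the averaging for free without blowing up the vertex count beyond $fq$ with $q\le 2f\cdot f!/f = 2\cdot f!$, i.e.\ a prime power $q$ in $[2f, 2f!]$ suffices and then $fq\le 2f\cdot f!$ as claimed. Verifying that such a $q$ exists with the needed divisibility and that the averaging works cleanly is the technical heart of the argument.
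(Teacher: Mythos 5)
Your overall strategy — take a resolvable $\krq{r}{f}$-decomposition of $K_{q\times f}$ from Theorem~\ref{thm:partite designs} and place copies of $F$ on its cliques using all $f!$ labellings — is exactly the paper's construction, and your observations about well-separatedness ($r$-disjointness of the cliques plus the decomposition property) and about non-crossing sets having degree $0$ are correct. But there is a genuine gap at the one point you yourself flag as "the technical heart": how the $f!$ permutations are distributed over the cliques. Your proposal is to "partition $\cK$ into $f!$ equal parts indexed by $S_f$" (worrying about whether $f!\mid|\cK|$). An arbitrary equal-size partition does not work: a given crossing $(r-1)$-set $S$ lies in some number of cliques of $\cK$, and nothing forces these cliques to be spread evenly over the $f!$ parts, so $S$ need not play the role of each $(r-1)$-subset of $V(F)$ equally often, and $|F^\ast(S)|$ would depend on $S$. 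This is precisely where resolvability must be used, and your proposal never connects the resolution classes to the averaging. The fix is to take the parts to \emph{be} resolution classes: choose a prime power $q$ with $f!\le q\le 2f!$ (so $2f\le q$ and $|V(K_{q\times f})|=fq\le 2f\cdot f!$ — no divisibility condition on $|\cK|$ is needed, and the fretting about $f!\mid q$ or affine symmetries is unnecessary), write $\cK=\cK_1\cupdot\dots\cupdot\cK_q$ with each $\cK_i$ a $\krq{r-1}{f}$-decomposition, use only $\cK_1,\dots,\cK_{f!}$, and assign the permutation $\pi_i$ to all cliques of $\cK_i$. Since each crossing $(r-1)$-set $S$ lies in \emph{exactly one} clique of each $\cK_i$, as $i$ ranges over $[f!]$ the set $S$ plays the role of every $(r-1)$-subset of $V(F)$ exactly $(r-1)!(f-r+1)!$ times, giving $|F^\ast(S)|=r!(f-r+1)!\,|F|$ by handshaking; the case $i<r-1$ then follows because every crossing $i$-set lies in exactly $\binom{f-i}{r-1-i}q^{r-1-i}$ crossing $(r-1)$-sets.

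A secondary inaccuracy: your intermediate claim that for a single fixed labelling one gets $|F^\ast(S)|=c_i|F(I)|$ with "$c_i$ the number of cliques of $\cK$ containing $S$, constant by the decomposition property" is only justified for $i=r$ (where the $\krq{r}{f}$-decomposition property gives the constant $1$); for $i<r$ the constancy of the number of cliques of $\cK$ containing a crossing $i$-set again comes from resolvability (constant per class, summed over classes), not from the decomposition property alone. Once the permutations are attached to resolution classes as above, this issue disappears, since the final count is routed through the $(r-1)$-sets rather than through the cliques directly.
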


\proof
Choose a prime power $q$ with $f! \le q\le 2f!$.\COMMENT{exists e.g.~by Bertrand's postulate} Let $V(F)=\Set{v_1,\dots,v_f}$. By Theorem~\ref{thm:partite designs}, there exists a resolvable $\krq{r}{f}$-decomposition $\cK$ of $K_{q\times f}$.\COMMENT{$f!\ge 2f$.} Let the vertex classes of $K_{q\times f}$ be $V_1,\dots, V_f$. Let $\cK_1,\dots,\cK_q$ be a partition of $\cK$ into $\krq{r-1}{f}$-decompositions of $K_{q\times f}$. (We will only need $\cK_1,\dots,\cK_{f!}$.) We now construct $F^\ast$ with vertex set $V(K_{q\times f})$ as follows: Let $\pi_1,\dots,\pi_{f!}$ be an enumeration of all permutations on $[f]$. For every $i\in[f!]$ and $Q\in \cK_i^{\le(f)}$, let $F_{i,Q}$ be a copy of $F$ with $V(F)=Q$ such that for every $j\in[f]$, the unique vertex in $Q\cap V_{\pi_i(j)}$ plays the role of $v_j$.
Let
\begin{align*}
F^\ast&:=\bigcup_{i\in[f!],Q\in \cK_i^{\le(f)}}F_{i,Q};\\
\cF&:=\set{F_{i,Q}}{i\in[f!],Q\in \cK_i^{\le(f)}}.
\end{align*}
Since $\cK_1,\dots,\cK_{f!}$ are $r$-disjoint, we have $|V(F')\cap V(F'')|<r$ for all distinct $F',F''\in \cF$. Thus, $\cF$ is a $1$-well separated $F$-decomposition of $F^\ast$.

We now show that $F^\ast$ is weakly regular. Let $i\in[r-1]_0$ and $S\in \binom{V(F^\ast)}{i}$. If $S$ is not crossing, then $|F^\ast(S)|=0$, so assume that $S$ is crossing. If $i=r-1$, then $S$ plays the role of every $(r-1)$-subset of $V(F)$ exactly $k$ times, where $k$ is the number of permutations on $[f]$ that map $[r-1]$ to $[r-1]$. Hence, $$|F^\ast(S)|=|F|{r}k=|F|\cdot r!(f-r+1)!=:s_{r-1}.$$ If $i<r-1$, then $S$ is contained in exactly $c_i:=\binom{f-i}{r-1-i}q^{r-1-i}$ crossing $(r-1)$-sets. Thus, $$|F^\ast(S)|=\frac{s_{r-1}c_i}{r-i}=:s_i.$$ Therefore, $F^\ast$ is weakly $(s_0,\dots,s_{r-1})$-regular.
\endproof

\subsection{Proofs of Theorems~\ref{thm:design},~\ref{thm:near optimal} and~\ref{thm:min deg}}\label{subsec:main r graph theorems}

We now prove our main theorems which guarantee $F$-decompositions in $r$-graphs of high minimum degree (for weakly regular $r$-graphs $F$, see Theorem~\ref{thm:min deg}), and $F$-designs in typical $r$-graphs (for arbitrary $r$-graphs $F$, see Theorem~\ref{thm:design}). We will also derive Theorem~\ref{thm:near optimal}.

We first prove the minimum degree version (for weakly regular $r$-graphs $F$).
Instead of directly proving Theorem~\ref{thm:min deg} we actually prove a stronger `local resilience version'. Let $\cH_r(n,p)$ denote the random binomial $r$-graph on $[n]$ whose edges appear independently with probability~$p$.

\begin{theorem}[Resilience version]\label{thm:resilience}
Let $p\in(0,1]$ and $f,r \in \bN$ with $f>r$ and let $$c(f,r,p):=\frac{r!p^{2^r\binom{f+r}{r}}}{3 \cdot 14^r f^{2r}}.$$ Then the following holds \whp for $H\sim \cH_r(n,p)$. For every weakly regular $r$-graph $F$ on $f$ vertices and any $r$-graph $L$ on $[n]$ with $\Delta(L)\le c(f,r,p)n$, $H\bigtriangleup L$ has an $F$-decomposition whenever it is $F$-divisible.
\end{theorem}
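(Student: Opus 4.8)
The plan is to deduce Theorem~\ref{thm:resilience} from the machinery developed for supercomplexes, namely the Main complex decomposition theorem (Theorem~\ref{thm:main complex}), via the random-supercomplex stability result (Corollary~\ref{cor:random supercomplex}). The point is that a binomial random $r$-graph $\cH_r(n,p)$ induces a supercomplex \whp, and moreover this property is robust under symmetric differences with sparse $r$-graphs~$L$. Once $H\bigtriangleup L$ is known to induce a supercomplex, Theorem~\ref{thm:main complex} (applied to the complex $(H\bigtriangleup L)^{\leftrightarrow}$, whose $r$-th level is exactly $H\bigtriangleup L$) immediately yields the desired $F$-decomposition under $F$-divisibility. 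Theorem~\ref{thm:min deg} will then follow as a deterministic consequence, since any $r$-graph $G$ with $\delta(G)\ge(1-c^\diamond_F)n$ can be written (for a suitable construction) as $H\bigtriangleup L$ with $H$ being a typical/random-like $r$-graph of the right density and $L$ sparse --- but for the present statement we only need the random version.

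\textbf{Key steps.} First I would fix $f,r,p$ and choose an auxiliary hierarchy $1/n\ll 1/\kappa\ll\eps\ll\xi,1/f$ together with $\gamma$ of order roughly $c(f,r,p)$. The main work is to show that \whp, for \emph{every} $r$-graph $L$ on $[n]$ with $\Delta(L)\le c(f,r,p)n$, the complex $(H\bigtriangleup L)^{\leftrightarrow}$ is an $(\eps',\xi',f,r)$-supercomplex for appropriate $\eps',\xi'$. Note that $H\bigtriangleup L = H[\emptyset]^{(r)}\bigtriangleup L$ can be realised as $G^{\leftrightarrow}[H\bigtriangleup L]$ where $G^{\leftrightarrow}=K_n^{\leftrightarrow}$ is (by Example~\ref{ex:complete}) a $(0,0.99/f!,f,r)$-supercomplex; so by Example~\ref{ex:typical super} the relevant object is really $G[H\bigtriangleup L]$ with $G$ the complete complex and $H$ a $p$-random subgraph of $G^{(r)}$. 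Now Corollary~\ref{cor:random supercomplex} says precisely that \whp, for all $L$ with $\Delta(L)\le\gamma n$, $G[H\bigtriangleup L]$ is a $(3\eps+\gamma',\xi'-\gamma',f,r)$-supercomplex with $\xi'\ge 0.95(0.99/f!)p^{2^r\binom{f+r}{r}}$. Taking $\eps=0$ (the complete complex is $(0,d,f,r)$-regular) and $\gamma$ small enough that $\gamma'<\xi'$ and the hierarchy $1/n\ll 1/\kappa\ll \gamma'\ll \xi'-\gamma', 1/f$ is respected, we conclude that \whp $(H\bigtriangleup L)^{\leftrightarrow}$ is an $(\eps'',\xi'',f,r)$-supercomplex for all such $L$ simultaneously, with $\xi''$ bounded below by a constant of the shape $p^{2^r\binom{f+r}{r}}/f!$. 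The second step is purely bookkeeping: compare $\gamma'$, which is $1.1\cdot 2^r\binom{f+r}{r}/(f-r)!\cdot\gamma$, against the claimed threshold $c(f,r,p)=\tfrac{r!p^{2^r\binom{f+r}{r}}}{3\cdot 14^r f^{2r}}$, and check that the crude bounds $2^r\binom{f+r}{r}/(f-r)!\le 14^r f^{2r}/r!$ (say) leave enough room so that $\Delta(L)\le c(f,r,p)n$ implies $\Delta(L)\le\gamma n$ with $\gamma'$ comfortably smaller than $\xi''$. Finally, given any weakly regular $F$ on $f$ vertices and any $F$-divisible $H\bigtriangleup L$ with $\Delta(L)\le c(f,r,p)n$, apply Theorem~\ref{thm:main complex}\,(\ind{r}) to the supercomplex $(H\bigtriangleup L)^{\leftrightarrow}$ to obtain a $\kappa$-well separated $F$-decomposition; since $(H\bigtriangleup L)^{\leftrightarrow(r)}=H\bigtriangleup L$, this is an $F$-decomposition of $H\bigtriangleup L$ in the graph sense, as required.

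\textbf{Main obstacle.} The genuinely delicate point is the uniformity over all $L$: Corollary~\ref{cor:random supercomplex} already packages this ("\whp \dots for all $L\In G^{(r)}$ with $\Delta(L)\le\gamma n$"), but one must be careful that $L$ is allowed to add edges (via the symmetric difference, edges of $L$ outside $H$ are inserted) and not merely delete them, and that the constant $\gamma'$ arising there is traded correctly against the explicit threshold $c(f,r,p)$ --- this is where the factor $3\cdot 14^r f^{2r}$ comes from, and getting an honest inequality rather than a "$\ll$" requires tracking the constants in Corollary~\ref{cor:random supercomplex} and in the hierarchy of Theorem~\ref{thm:main complex}. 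A minor additional point: Theorem~\ref{thm:main complex} is stated with a $\ll$-hierarchy, so to extract an explicit $n_0$ one invokes the Boost lemma (Lemma~\ref{lem:boost complex}) to replace $\eps''$ by $n^{-1/3}$, after which the hierarchy constraint $1/n\ll1/\kappa\ll n^{-1/3}\ll \xi'',1/f$ is automatically met for $n$ large; this is routine but should be mentioned. Everything else is a direct citation of the results proved earlier in the paper.
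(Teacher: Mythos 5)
Your proposal is correct and follows essentially the same route as the paper's proof: apply Corollary~\ref{cor:random supercomplex} to the complete complex $K_n$ with $\gamma=c(f,r,p)$ to get that \whp $K_n[H\bigtriangleup L]$ is a supercomplex uniformly over all admissible $L$, verify that $\gamma'$ is dominated by $\xi'$ via the explicit constant $3\cdot 14^rf^{2r}$, boost the regularity parameter with Lemma~\ref{lem:boost complex}, and conclude with Theorem~\ref{thm:main complex}. The only cosmetic difference is that the paper keeps a small positive $\eps$ in the hierarchy rather than literally taking $\eps=0$, which is immaterial.
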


The case $p=1$ immediately implies Theorem~\ref{thm:min deg}.

\proof
Choose $n_0\in \bN$ and $\eps>0$ such that $1/n_0\ll \eps \ll p,1/f$ and let $n\ge n_0$, $$c':= \frac{1.1 \cdot 2^{r}\binom{f+r}{r}}{(f-r)!}c(f,r,p),\quad \xi:=0.99/f!, \quad \xi':=0.95\xi p^{2^r\binom{f+r}{r}}, \quad \xi'':=0.9(1/4)^{\binom{f+r}{f}}(\xi'-c').$$
Recall that the complete complex $K_n$ is an $(\eps,\xi,f,r)$-supercomplex (cf. Example~\ref{ex:complete}).\COMMENT{Actually the $\eps$ is zero, but need it for Corollary~\ref{cor:random supercomplex}}
Let $H\sim \cH_r(n,p)$. We can view $H$ as a random subgraph of $K_n^{(r)}$. By Corollary~\ref{cor:random supercomplex}, the following holds \whp for all $L\In K_n^{(r)}$ with $\Delta(L)\le c(f,r,p) n$:
$$K_n[H\bigtriangleup L]\mbox{ is a }(3\eps+c',\xi'-c',f,r)\mbox{-supercomplex.}$$
Note that $c'\le \frac{p^{2^r\binom{f+r}{r}}}{2.7(2\sqrt{e})^r f!}$.\COMMENT{$\binom{f+r}{r}\le (2f)^r/r!$. $c'\le \frac{1.1\cdot 2^{2r}f^r}{r!(f-r)!}\cdot c= \frac{1.1\cdot 2^{2r}f^r}{r!(f-r)!}\cdot \frac{r!p^{2^r\binom{f+r}{r}}}{3\cdot 14^r f^{2r}} \le \frac{p^{2^r\binom{f+r}{r}}}{2.7(2\sqrt{e})^r f!}$}
Thus, $2(2\sqrt{e})^r\cdot (3\eps+c')\le \xi'-c'$.\COMMENT{$2(2\sqrt{e})^r(3\eps+c')+c'\le 2.4(2\sqrt{e})^rc'$ and $2.4/2.7\le 0.95\cdot 0.99$} Lemma~\ref{lem:boost complex} now implies that $K_n[H\bigtriangleup L]$ is an $(\eps,\xi'',f,r)$-supercomplex. Hence, if $H\bigtriangleup L$ is $F$-divisible, it has an $F$-decomposition by Theorem~\ref{thm:main complex}.
\endproof

Next, we derive Theorem~\ref{thm:design}. As indicated previously, we cannot apply Theorem~\ref{thm:main complex} directly, but have to carry out two reductions. As shown in Lemma~\ref{lem:regularisation}, we can `perfectly' pack any given $r$-graph $F$ into a weakly regular $r$-graph $F^\ast$.
We also need the following lemma, which we will prove later in Section~\ref{sec:make divisible}. It allows us to remove a sparse $F$-decomposable subgraph $L$ from an $F$-divisible $r$-graph $G$ to achieve that $G-L$ is $F^\ast$-divisible. Note that we do not need to assume that $F^\ast$ is weakly regular.

\begin{lemma}\label{lem:make divisible}
Let $1/n\ll \gamma \ll \xi,1/f^\ast$ and $r\in[f^\ast-1]$. Let $F$ be an $r$-graph.\COMMENT{We allow $|V(F)|=r$ here} Let $F^\ast$ be an $r$-graph on $f^\ast$ vertices which has a $1$-well separated $F$-decomposition.
Let $G$ be an $r$-graph on $n$ vertices such that for all $A\In\binom{V(G)}{r-1}$ with $|A|\le \binom{f^\ast-1}{r-1}$, we have $|\bigcap_{S\in A}G(S)|\ge \xi n$. Let $O$ be an $(r+1)$-graph on $V(G)$ with $\Delta(O)\le \gamma n$. Then there exists an $F$-divisible subgraph $D\In G$ with $\Delta(D)\le \gamma^{-2}$ such that the following holds: for every $F$-divisible $r$-graph $H$ on $V(G)$ which is edge-disjoint from $D$, there exists a subgraph $D^\ast\In D$ such that $H \cup D^\ast$ is $F^\ast$-divisible and $D-D^\ast$ has a $1$-well separated $F$-decomposition $\cF$ such that $\cF^{\le(r+1)}$ and $O$ are edge-disjoint.
\end{lemma}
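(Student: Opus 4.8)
The plan is to construct $D$ by a greedy/probabilistic argument so that it contains enough ``local'' $F$-decomposable gadgets to fix the divisibility defect of any future $H$, while keeping $\Delta(D)$ bounded by $\gamma^{-2}$. The key observation is that $F^\ast$-divisibility is a congruence condition: for each $i\in[r-1]_0$ and each $i$-set $S$, we need $Deg(F^\ast)_i \mid |(H\cup D^\ast)(S)|$. Since $H$ is already $F$-divisible, $|H(S)|$ is divisible by $Deg(F)_i$, and the residue of $|H(S)|$ modulo $Deg(F^\ast)_i$ lies in a bounded set of size at most $m_i:=Deg(F^\ast)_i/\gcd(Deg(F)_i,Deg(F^\ast)_i)$ (note $Deg(F)_i \mid Deg(F^\ast)_i$ is \emph{not} automatic, but since $F^\ast$ is $F$-decomposable one checks $Deg(F)_i \mid Deg(F^\ast)_i$ via the handshaking-type identity, so the relevant residues are multiples of $Deg(F)_i$). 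So for each $S$ we must be able to add, within $D$, a number of edges through $S$ that is an appropriate residue mod $Deg(F^\ast)_i$, simultaneously for all $S$, using only $F$-copies.

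First I would set up the gadgets. For a fixed $i$ and a fixed $i$-set $S$, I want a small $F$-decomposable $r$-graph $D_S\In G$ such that removing an appropriate sub-$F$-decomposition $D_S^\ast$ changes $|D_S(S')|$ in a controlled way for all $S'$: ideally $D_S - D_S^\ast$ should be $F$-decomposable and the edges of $D_S^\ast$ through $S$ realize any prescribed residue class. The natural building block is a single copy $F'$ of $F$ placed so that $S$ plays the role of a fixed $i$-set $T^\ast\in\binom{V(F)}{i}$ with $|F(T^\ast)|$ minimal, i.e.\ $|F(T^\ast)|=Deg(F)_i$ after intersecting over a gcd-realizing family; more robustly, take $D_S$ to be a copy of $F^\ast$ itself (which is $F$-decomposable by the hypothesis) rooted at $S$, so that deleting a suitable union of the $F$-copies in its $1$-well-separated $F$-decomposition leaves an $F$-decomposable remainder and subtracts a controllable multiple of $Deg(F)_i$ from $|D_S(S)|$. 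Because $F^\ast$ is weakly regular-like only via its divisibility vector, the precise combinatorial realization needs care, but the point is that a bounded number $O_f(1)$ of such rooted $F^\ast$-copies per set $S$ suffices to realize the full residue system mod $Deg(F^\ast)_i$.

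Next I would embed all these gadgets edge-disjointly inside $G$, avoiding $O$, using Lemma~\ref{lem:rooted embedding} (with $t$ bounded by $|V(F^\ast)|$, $D$ a bounded rooted degeneracy, $r$ the uniformity, $\alpha$ chosen so $\alpha\gamma n^r$ exceeds the total number of gadgets, which is $O(n^{r-1})$ since we have one bounded gadget per $i$-set for each $i\le r-1$). The codegree hypothesis $|\bigcap_{S\in A}G(S)|\ge\xi n$ for $|A|\le\binom{f^\ast-1}{r-1}$ is exactly what Lemma~\ref{lem:rooted embedding} needs, and its conclusion \ref{rooted embedding:disjoint} gives edge-disjointness while \ref{rooted embedding:f-disjoint} (applied with the $(r+1)$-graph $O$) keeps $\cF^{\le(r+1)}$ disjoint from $O$; conclusion \ref{rooted embedding:maxdeg} gives $\Delta(D)\le \gamma^{(2^{-r})}n$, but since each edge lies in only boundedly many gadgets we in fact get $\Delta(D)=O_f(1)\le\gamma^{-2}$. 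Also we must verify that the rooting condition \eqref{rooting nice} holds, i.e.\ few gadgets root any given smaller set $S$; this follows because the gadgets are indexed by sets and only $O(n^{r-1-|S|})$ of them contain a given $|S|$-set.

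Finally, given $H$, define $D^\ast$ by going through each $i=0,1,\dots,r-1$ in increasing order and each $i$-set $S$: select from the gadget $D_S$ the sub-$F$-decomposition whose through-$S$ edge count corrects $|H(S)|$ (together with contributions already committed from gadgets rooted at subsets of $S$, which are handled by processing $i$ in increasing order and exploiting handshaking \eqref{handshaking} to propagate lower-level fixes) to be divisible by $Deg(F^\ast)_i$. Then $H\cup D^\ast$ is $F^\ast$-divisible, and $D-D^\ast$ is a disjoint union over gadgets of $F$-decomposable leftovers, hence has a $1$-well-separated $F$-decomposition $\cF$ (well-separatedness with $\kappa=1$ holds since distinct gadgets are vertex-almost-disjoint and each carries a $1$-well-separated decomposition, as in Lemma~\ref{lem:regularisation}); and $\cF^{\le(r+1)}$ avoids $O$ by \ref{rooted embedding:f-disjoint}. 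The main obstacle I anticipate is the bookkeeping in this last step: making the residue corrections at level $i$ consistent across all $i$-sets while not spoiling the corrections already made at levels $<i$, since changing edges through $S$ also changes $|D^\ast(S')|$ for $S'\subsetneq S$. This is resolved by processing levels top-down or bottom-up with the handshaking identity, but it requires that the gadget at each $S$ has enough ``freedom'' (enough distinct sub-$F$-decompositions with distinct through-$S$ counts) — which is precisely why $F^\ast$, rather than a single copy of $F$, is the right rooted gadget, and why we need $F^\ast$ to be $F$-decomposable with a rich enough internal structure.
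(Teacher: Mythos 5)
Your overall architecture (small $F$-decomposable gadgets rooted at $i$-sets, embedded via Lemma~\ref{lem:rooted embedding} using the codegree hypothesis, levels $i=0,\dots,r-1$ processed in order, level $0$ handled by spare copies of $F$) matches the paper, and your observation that $Deg(F)_i\mid Deg(F^\ast)_i$ is correct and used. However, there is a genuine gap at the heart of the argument: the gadget you describe does not exist, and the difficulty you defer to ``bookkeeping'' at the end is in fact the main content of the proof. A rooted copy of $F^\ast$ at $S$, minus a sub-$F$-decomposition $D_S^\ast$, does not leave the degrees of the \emph{other} sets it touches unchanged modulo $Deg(F^\ast)$: each removed copy of $F$ contributes links that are multiples of $Deg(F)_{|S'|}$ but not of $Deg(F^\ast)_{|S'|}$ at the many sets $S'$ inside the gadget, including other $i$-sets of the \emph{same} level $i$. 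Your closing paragraph only worries about sets $S'\subsetneq S$ (which handshaking between levels does handle), but the fatal interference is between distinct $i$-sets at the same level, and no amount of ordering of the levels fixes it. Indeed, as the paper notes, it is in general impossible to change the degree of a single $k$-set in isolation; any correction necessarily perturbs other $k$-sets.

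The paper's resolution has two components you are missing. First, the \emph{degree shifter} $T_k$ (Lemmas~\ref{lem:multishifter} and~\ref{lem:shifters exist}): an $F$-decomposable $r$-graph with $2k$ roots whose link is $\equiv 0\pmod{Deg(F^\ast)_{|S|}}$ at \emph{every} set except the $2^k$ nondegenerate $k$-subsets of the root set, where it equals $(-1)^{\sum z_i}Deg(F)_k$. Building this requires overlaying $2^{k-1}$ signed copies of an auxiliary graph derived from a \emph{symmetric $r$-extender} $F^\ast$ (itself obtained via the Cover down lemma), not just a single rooted $F^\ast$-copy. Second, the \emph{balancer} (Lemma~\ref{lem:balancer}): since a shifter moves defect between paired roots rather than killing it at one set, one must telescope the defect vertex by vertex (recursively in $k$) until it is concentrated on a final $(2k-1)$-set, where Lemma~\ref{lem:auto div} shows divisibility holds automatically because all lower levels have already been fixed. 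Without both of these ingredients your selection of $D^\ast$ cannot be made consistent across the $i$-sets of a given level.
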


In particular, we will apply this lemma when $G$ is $F$-divisible and thus $H:=G-D$ is $F$-divisible. Then $L:=D-D^\ast$ is a subgraph of $G$ with $\Delta(L)\le \gamma^{-2}$ and has a $1$-well separated $F$-decomposition $\cF$ such that $\cF^{\le(r+1)}$ and $O$ are edge-disjoint. Moreover, $G-L=H\cup D^\ast$ is $F^\ast$-divisible.

We can deduce the following corollary from the case $F=\krq{r}{r}$ of Lemma~\ref{lem:make divisible}.

\begin{cor}\label{cor:make divisible typical}
Let $1/n\ll \gamma \ll \xi,1/f$ and $r\in[f-1]$.
Let $F$ be an $r$-graph on $f$ vertices.
Let $G$ be an $r$-graph on $n$ vertices such that for all $A\In\binom{V(G)}{r-1}$ with $|A|\le \binom{f-1}{r-1}$, we have $|\bigcap_{S\in A}G(S)|\ge \xi n$. Then there exists a subgraph $D\In G$ with $\Delta(D)\le \gamma^{-2}$ such that the following holds: for any $r$-graph $H$ on $V(G)$ which is edge-disjoint from $D$, there exists a subgraph $D^\ast\In D$ such that $H\cup D^\ast$ is $F$-divisible.
\end{cor}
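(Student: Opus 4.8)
The plan is to obtain this as the special case of Lemma~\ref{lem:make divisible} in which the single-edge $r$-graph $\krq{r}{r}$ plays the role of ``$F$'' there and the corollary's $F$ plays the role of ``$F^\ast$'' there. I would take the auxiliary $(r+1)$-graph to be $O:=\emptyset$, which trivially satisfies $\Delta(O)\le\gamma n$, so that the conclusions of Lemma~\ref{lem:make divisible} involving $O$ become vacuous.

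Two small points then have to be verified before invoking the lemma. First, $\krq{r}{r}$ consists of a single edge, so $|\krq{r}{r}(S)|=1$ for every $S\In V(\krq{r}{r})$ with $|S|\le r-1$; hence $Deg(\krq{r}{r})=(1,\dots,1)$ and \emph{every} $r$-graph (in particular $D$ and every admissible $H$) is $\krq{r}{r}$-divisible. This is precisely why the corollary may dispense with the $F$-divisibility hypothesis on $H$ that appears in Lemma~\ref{lem:make divisible}. Second, $F$ admits a $1$-well separated $\krq{r}{r}$-decomposition: take $\cF_0$ to consist of one copy of $\krq{r}{r}$ for each edge of $F$, namely that edge itself. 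Distinct edges $e\ne e'$ of $F$ satisfy $|e\cap e'|<r$, so \ref{separatedness:1} holds, and each $r$-set is the vertex set of at most one member of $\cF_0$, so \ref{separatedness:2} holds with $\kappa=1$.

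Since the hierarchy $1/n\ll\gamma\ll\xi,1/f$ and the link-intersection assumption on $G$ (for all $A\In\binom{V(G)}{r-1}$ with $|A|\le\binom{f-1}{r-1}$, $|\bigcap_{S\in A}G(S)|\ge\xi n$) are exactly those of Lemma~\ref{lem:make divisible} with $f$ in the role of $f^\ast$, the lemma supplies a subgraph $D\In G$ with $\Delta(D)\le\gamma^{-2}$ such that for every $r$-graph $H$ on $V(G)$ edge-disjoint from $D$ there is $D^\ast\In D$ with $H\cup D^\ast$ being $F$-divisible; discarding the additional (unneeded) conclusion about a $\krq{r}{r}$-decomposition of $D-D^\ast$ avoiding $O$ gives exactly the statement of the corollary. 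There is no real obstacle here; the only subtlety worth flagging explicitly is the triviality of $\krq{r}{r}$-divisibility, which is what allows the passage to arbitrary (not necessarily divisible) $H$.
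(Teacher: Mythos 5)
Your proposal is correct and coincides with the paper's own (one-line) proof: Corollary~\ref{cor:make divisible typical} is obtained by applying Lemma~\ref{lem:make divisible} with $F,\krq{r}{r}$ playing the roles of $F^\ast,F$. The details you spell out — that $Deg(\krq{r}{r})=(1,\dots,1)$ makes the divisibility hypothesis on $H$ vacuous, and that $F$ has a $1$-well separated $\krq{r}{r}$-decomposition given by its own edges — are exactly the (implicit) justifications behind the paper's application of the lemma.
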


In particular, using $H:=G-D$, there exists a subgraph $L:=D-D^\ast\In G$ with $\Delta(L)\le \gamma^{-2}$ such that $G-L=H\cup D^\ast$ is $F$-divisible.

\proof
Apply Lemma~\ref{lem:make divisible} with $F,\krq{r}{r}$ playing the roles of $F^\ast,F$.
\endproof

We now prove the following theorem, which immediately implies the case $\lambda=1$ of Theorem~\ref{thm:design}.

\begin{theorem}\label{thm:typical separated dec}
Let $1/n\ll \gamma, 1/\kappa \ll c, p, 1/f$ and $r\in[f-1]$, and
\begin{align}
c \le p^{h}/(q^r4^q), \mbox{ where }h:=2^r\binom{q+r}{r}\mbox{ and } q:=2f\cdot f!.\label{typical condition}
\end{align}
Let $F$ be any $r$-graph on $f$ vertices. Suppose that $G$ is a $(c,h,p)$-typical $F$-divisible $r$-graph on $n$ vertices. Let $O$ be an $(r+1)$-graph on $V(G)$ with $\Delta(O)\le \gamma n$.
Then $G$ has a $\kappa$-well separated $F$-decomposition $\cF$ such that $\cF^{\le(r+1)}$ and $O$ are edge-disjoint.
\end{theorem}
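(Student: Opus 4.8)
The plan is to combine Lemma~\ref{lem:regularisation} (to pass from $F$ to a weakly regular $r$-graph $F^\ast$) with the main complex decomposition theorem (Theorem~\ref{thm:main complex}) and Lemma~\ref{lem:make divisible} (to fix divisibility). Concretely, first choose a prime power $q$ with $f!\le q\le 2f!$ and apply Lemma~\ref{lem:regularisation} to obtain a weakly regular $r$-graph $F^\ast$ on $f^\ast\le 2f\cdot f!=q$ vertices which has a $1$-well separated $F$-decomposition. Set $h:=2^r\binom{q+r}{r}$, so that $h$ bounds the quantity $2^r\binom{f^\ast+r}{r}$ appearing in Example~\ref{ex:typical super}; since $G$ is $(c,h,p)$-typical, the induced complex $G^{\leftrightarrow}$ is an $(\eps,\xi,f^\ast,r)$-supercomplex for $\eps:=2^{f^\ast-r+1}c/(f^\ast-r)!$ and $\xi:=(1-2^{f^\ast+1}c)p^{h}/f^\ast!$, and the hypothesis~\eqref{typical condition} on $c$ guarantees $\eps\ll\xi,1/f^\ast$ as needed. (Typicality also trivially gives the degree condition $|\bigcap_{S\in A}G(S)|\ge\xi n$ for $|A|\le\binom{f^\ast-1}{r-1}$ required by Lemma~\ref{lem:make divisible}.)

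Next I would apply Lemma~\ref{lem:make divisible} with this $F$, $F^\ast$, the $r$-graph $G$ and the given $(r+1)$-graph $O$ (slightly enlarging $\gamma$ so that $1/n\ll\gamma\ll\xi,1/f^\ast$): it produces an $F$-divisible subgraph $D\In G$ with $\Delta(D)\le\gamma^{-2}$. Taking $H:=G-D$, which is $F$-divisible since $G$ and $D$ are, the lemma yields $D^\ast\In D$ such that $L:=D-D^\ast$ has a $1$-well separated $F$-decomposition $\cF_1$ with $\cF_1^{\le(r+1)}$ edge-disjoint from $O$, and such that $G':=G-L=H\cup D^\ast$ is $F^\ast$-divisible. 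Since $\Delta(L)\le\gamma^{-2}$ is tiny, Proposition~\ref{prop:noise}\ref{noise:supercomplex} shows that $G'^{\leftrightarrow}=G^{\leftrightarrow}-L$ is still an $(\eps',\xi',f^\ast,r)$-supercomplex with $\eps'\ll\xi',1/f^\ast$. Now apply Theorem~\ref{thm:main complex}, i.e.~\ind{r}, to the weakly regular $r$-graph $F^\ast$ and the $F^\ast$-divisible supercomplex $G'^{\leftrightarrow}$: we obtain a $\kappa'$-well separated $F^\ast$-decomposition $\cF^\ast$ of $G'$, for any prescribed $\kappa'$ (choosing $1/n\ll1/\kappa'$). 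Replacing each copy of $F^\ast$ in $\cF^\ast$ by the copies of $F$ from its fixed $1$-well separated $F$-decomposition turns $\cF^\ast$ into an $F$-packing $\cF_2$ covering exactly $G'^{(r)}$; a short calculation with \ref{separatedness:1}--\ref{separatedness:2} (each copy of $F$ lies inside one $F^\ast$-copy, whose vertex set is shared by at most $\kappa'$ others, and $|V(F')\cap V(F'')|\le f^\ast$ forces $\le r$ via the $1$-well-separatedness inside each $F^\ast$) shows $\cF_2$ is $\kappa''$-well separated for a suitable $\kappa''$ depending only on $\kappa'$ and $q$.

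Finally I would combine $\cF_1$ and $\cF_2$. They are $r$-disjoint on their covered edges since $\cF_1^{(r)}=L$ and $\cF_2^{(r)}=G'^{(r)}=G^{(r)}-L$ partition $G^{(r)}$, but to get well-separatedness of the union one uses Fact~\ref{fact:ws}: running the argument so that $\cF_2$ is found in $G'^{\leftrightarrow}-\cF_1^{\le(r+1)}$ (which is still a supercomplex, as $\Delta(\cF_1^{\le(r+1)})\le\gamma^{-2}(f-r)$ is tiny by Fact~\ref{fact:ws}\ref{fact:ws:maxdeg} and Proposition~\ref{prop:noise}) makes $\cF_1$ and $\cF_2$ $(r+1)$-disjoint, so $\cF:=\cF_1\cup\cF_2$ is a $\kappa$-well separated $F$-decomposition of $G$ by Fact~\ref{fact:ws}\ref{fact:ws:1}, for $\kappa$ as in the statement. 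It remains to check that $\cF^{\le(r+1)}$ is edge-disjoint from $O$: for $\cF_1$ this is built into Lemma~\ref{lem:make divisible}, and for $\cF_2$ one additionally passes the $(r+1)$-graph $O$ (together with $\cF_1^{\le(r+1)}$) into the application of Theorem~\ref{thm:main complex} by deleting it from the host complex before decomposing — this is why the absorbing and cover-down machinery was set up to avoid a given sparse $O$. The main obstacle I anticipate is bookkeeping: tracking the hierarchy of constants through the two reductions (so that the single inequality~\eqref{typical condition} on $c$ suffices), and verifying that the $\kappa$-bounds propagate correctly when $F^\ast$-copies are expanded into $F$-copies while simultaneously keeping everything $O$-avoiding; the decomposition itself is then immediate from \ind{r}.
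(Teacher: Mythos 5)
Your overall route is the same as the paper's: regularise $F$ into a weakly regular $F^\ast$ via Lemma~\ref{lem:regularisation}, use Lemma~\ref{lem:make divisible} to peel off a sparse $O$-avoiding $F$-decomposable graph $L$ so that $G-L$ becomes $F^\ast$-divisible, apply Theorem~\ref{thm:main complex} to $G^{\leftrightarrow}-L-\cF_1^{\le(r+1)}-O$, expand the $F^\ast$-copies into $F$-copies, and glue with Fact~\ref{fact:ws}\ref{fact:ws:1}. The handling of $O$ (delete it, together with $\cF_1^{\le(r+1)}$, from the host complex before decomposing) and the well-separatedness bookkeeping when expanding $F^\ast$-copies are both as in the paper.

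There is, however, one genuine gap: your claim that the hypothesis~\eqref{typical condition} ``guarantees $\eps\ll\xi,1/f^\ast$ as needed'' for the application of Theorem~\ref{thm:main complex} is not correct as stated. From Example~\ref{ex:typical super} you only get that $G^{\leftrightarrow}$ is an $(\eps,\xi,f^\ast,r)$-supercomplex with $\eps:=2^{f^\ast-r+1}c/(f^\ast-r)!$ a \emph{fixed constant} determined by $c$; the condition~\eqref{typical condition} yields the explicit inequality $2(2\sqrt{\eul})^r\eps\le\xi$, which is much weaker than the hierarchy requirement $\eps\ll\xi,1/f^\ast$ in \ind{r} (there $\eps$ must lie below an unspecified threshold depending on $\xi$ and $f^\ast$, and a constant ratio between $\eps$ and $\xi$ does not achieve this). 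The missing step is an application of the Boost lemma (Lemma~\ref{lem:boost complex}): precisely because $2(2\sqrt{\eul})^r\eps\le\xi$, it upgrades $G^{\leftrightarrow}$ to a $(\gamma,\xi',f^\ast,r)$-supercomplex with $\xi':=0.9(1/4)^{\binom{f^\ast+r}{r}}\xi$ and regularity parameter $\gamma$ (indeed $2n^{-1/3}$), after which Proposition~\ref{prop:noise}\ref{noise:supercomplex} and Theorem~\ref{thm:main complex} apply legitimately. This boosting step is exactly what allows the single explicit inequality on $c$ to suffice, so without it the hierarchy of constants you were worried about does not close.
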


\proof
By Lemma~\ref{lem:regularisation}, there exists a weakly regular $r$-graph $F^\ast$ on $f^\ast\le q$ vertices which has a $1$-well separated $F$-decomposition.

By Lemma~\ref{lem:make divisible}\COMMENT{$\binom{f^\ast-1}{r-1}\le \binom{f^\ast}{r}\le 2^{r}\binom{2f\cdot f!}{r}\le h$} (with $0.5 p^{\binom{f^\ast-1}{r-1}}$ playing the role of $\xi$), there exists a subgraph $L\In G$ with $\Delta(L)\le \gamma^{-2}$ such that $G-L$ is $F^\ast$-divisible and $L$ has a $1$-well separated $F$-decomposition $\cF_{div}$ such that $\cF_{div}^{\le(r+1)}$ and $O$ are edge-disjoint. By Fact~\ref{fact:ws}\ref{fact:ws:maxdeg}, $\Delta(\cF_{div}^{\le(r+1)})\le f-r$.
Let $$G':=G^{\leftrightarrow}-L-\cF_{div}^{\le(r+1)}-O.$$
By Example~\ref{ex:typical super}, $G^{\leftrightarrow}$ is an $(\eps,\xi,f^\ast,r)$-supercomplex, where $\eps:=2^{f^\ast-r+1}c/(f^\ast-r)!$ and $\xi:=(1-2^{f^\ast+1}c)p^{2^r\binom{f^\ast+r}{r}}/{f^\ast!}$.
Observe that assumption \eqref{typical condition} now guarantees that $2(2\sqrt{\eul})^r \eps \le \xi$.\COMMENT{By \eqref{typical condition} we have $2^{q+r+3}c\le 4^q c\le 1$. In particular, $2^{f^\ast+1}c\le 2^{q+1}c\le 1/2$. It thus suffices to check that $2(2\sqrt{\eul})^r 2^{f^\ast-r+1}c/(f^\ast-r)!\le p^{2^r\binom{f^\ast+r}{r}}/{2f^\ast!}$ which certainly holds if $f^{\ast r} 4(2\sqrt{\eul})^r 2^{f^\ast-r+1}c\le p^{2^r\binom{f^\ast+r}{r}}$. Since $q\ge f^\ast$, this in turn certainly holds if $q^r 4(2\sqrt{\eul})^r 2^{q-r+1}c\le p^{2^r\binom{q+r}{r}}$. This holds by \eqref{typical condition} since $4(2\sqrt{\eul})^r 2^{q-r+1}\le 4^{1+r}2^{q-r+1}\le 2^{q+r+3}\le 4^q$.}
Thus, by Lemma~\ref{lem:boost complex}, $G^{\leftrightarrow}$ is a $(\gamma,\xi',f^\ast,r)$-supercomplex, where $\xi':=0.9(1/4)^{\binom{f^\ast +r}{r}}\xi$.
By Proposition~\ref{prop:noise}\ref{noise:supercomplex}, we have that $G'$ is a $(\sqrt{\gamma},\xi'/2,f^\ast,r)$-supercomplex. Moreover, $G'$ is $F^\ast$-divisible. Thus, by Theorem~\ref{thm:main complex}, $G'$ has a $(\kappa-1)$-well separated $F^\ast$-decomposition $\cF^\ast$. Since $F^\ast$ has a $1$-well separated $F$-decomposition, we can conclude that $G'$ has a $(\kappa-1)$-well separated $F$-decomposition $\cF_{complex}$. Let $\cF:=\cF_{div}\cup \cF_{complex}$. By Fact~\ref{fact:ws}\ref{fact:ws:1}, $\cF$ is a $\kappa$-well separated $F$-decomposition of $G$. Moreover, $\cF^{\le(r+1)}$ and $O$ are edge-disjoint.
\endproof

It remains to derive Theorem~\ref{thm:design} from Theorem~\ref{thm:typical separated dec} and Corollary~\ref{cor:make divisible typical}.

\lateproof{Theorem~\ref{thm:design}}
Choose a new constant $\kappa\in \bN$ such that $$1/n\ll \gamma \ll  1/\kappa \ll c, p,1/f.$$ 

Suppose that $G$ is a $(c,h,p)$-typical $(F,\lambda)$-divisible $r$-graph on $n$ vertices. Split $G$ into two subgraphs $G_1'$ and $G_2'$ which are both $(c+\gamma,h,p/2)$-typical (a standard Chernoff-type bound shows that \whp a random splitting of $G$ yields the desired property).

By Corollary~\ref{cor:make divisible typical} (applied with $G_2',0.5(p/2)^{\binom{f-1}{r-1}}$ playing the roles of $G,\xi$), there exists a subgraph $L^\ast\In G_2'$ with $\Delta(L^\ast)\le \kappa$ such that $G_2:=G_2'-L^\ast$ is $F$-divisible. Let $G_1:=G_1'\cup L^\ast =G-G_2$. Clearly, $G_1$ is still $(F,\lambda)$-divisible. By repeated applications of Corollary~\ref{cor:make divisible typical}, we can find edge-disjoint subgraphs $L_1,\dots,L_\lambda$ of $G_1$ such that $R_i:=G_1-L_i$ is $F$-divisible and $\Delta(L_i)\le \kappa $ for all $i\in[\lambda]$.
Indeed, suppose that we have already found $L_1,\dots,L_{i-1}$. Then $\Delta(L_1\cup \dots \cup L_{i-1})\le \lambda \kappa\le \gamma^{1/2} n$ (recall that $\lambda\le \gamma n$). Thus, by Corollary~\ref{cor:make divisible typical}\COMMENT{(with $G_1'-(L_1\cup \dots \cup L_{i-1})$ and $0.5(p/2)^{\binom{f-1}{r-1}} -\binom{f-1}{r-1}\gamma^{1/2}$ playing the roles of $G$ and $\xi$)}, there exists a subgraph $L_i\In G_1'-(L_1\cup \dots \cup L_{i-1})$ with $\Delta(L_i)\le \kappa $ such that $G_1-L_i$ is $F$-divisible.

Let $G_2'':=G_2 \cup L_1 \cup \dots \cup L_\lambda$. We claim that $G_2''$ is $F$-divisible. Indeed, let $S\In V(G)$ with $|S|\le r-1$. We then have that
$|G_2''(S)| 
= |G_2(S)|+\sum_{i\in[\lambda]}|(G_1-R_i)(S)|
            = |G_2(S)|+\lambda |G_1(S)| - \sum_{i\in[\lambda]}|R_i(S)| \equiv 0 \mod{Deg(F)_{|S|}}$.

Since $G_1'$ and $G_2'$ are both $(c+\gamma,h,p/2)$-typical and $\Delta(L^\ast \cup L_1 \cup \dots \cup L_\lambda)\le 2\gamma^{1/2} n $, we have that each of $G_2$, $G_2''$, $R_1,\dots,R_\lambda$ is $(c+\gamma^{1/3},h,p/2)$-typical (and they are $F$-divisible by construction).\COMMENT{If $H$ is $(c,h,p)$-typical and $\Delta(L)\le \gamma n$ with $V(L)=V(H)$, then $H\bigtriangleup L$ is $(c+hp^{-h}\gamma, h,p)$-typical.}

Using Theorem~\ref{thm:typical separated dec} repeatedly, we can thus find $\kappa$-well separated $F$-decompositions $\cF_1,\dots,\cF_{\lambda-1}$ of $G_2$, a $\kappa$-well separated $F$-decomposition $\cF^\ast$ of $G_2''$, and for each $i\in[\lambda]$, a $\kappa$-well separated $F$-decomposition $\cF_i'$ of $R_i$. Moreover, we can assume that all these decompositions are pairwise $(r+1)$-disjoint. Indeed, this can be achieved by choosing them successively: Let $O$ consist of the $(r+1)$-sets which are covered by the decompositions we have already found. Then by Fact~\ref{fact:ws}\ref{fact:ws:maxdeg} we have that $\Delta(O)\le 2\lambda \cdot \kappa(f-r) \le \gamma^{1/2} n$. Hence, using Theorem~\ref{thm:typical separated dec}, we can find the next $\kappa$-well separated $F$-decomposition which is $(r+1)$-disjoint from the previously chosen ones.

Then $\cF:=\cF^\ast\cup \bigcup_{i\in[\lambda-1]}\cF_i \cup \bigcup_{i\in[\lambda]} \cF_i'$ is the desired $(F,\lambda)$-design. Indeed, every edge of $G_1-(L_1\cup \dots \cup L_\lambda)$ is covered by each of $\cF_1',\dots,\cF_\lambda'$. For each $i\in[\lambda]$, every edge of $L_i$ is covered by $\cF^\ast$ and each of $\cF_1',\dots,\cF_{i-1}',\cF_{i+1}',\dots,\cF_\lambda'$. Finally, every edge of $G_2$ is covered by each of $\cF_1,\dots,\cF_{\lambda-1}$ and $\cF^\ast$.
\endproof

Finally, we also prove Theorem~\ref{thm:near optimal}, which is an immediate consequence of Theorem~\ref{thm:typical separated dec} and Corollary~\ref{cor:make divisible typical}.

\lateproof{Theorem~\ref{thm:near optimal}}
Apply Corollary~\ref{cor:make divisible typical} (with $G,0.5p^{\binom{f-1}{r-1}}$ playing the roles of $G,\xi$) to find a subgraph $L\In G$ with $\Delta(L)\le C$ such that $G-L$ is $F$-divisible. It is easy to see that $G-L$ is $(1.1c,h,p)$-typical. Thus, we can apply Theorem~\ref{thm:typical separated dec} to obtain an $F$-decomposition $\cF$ of $G-L$. 
\endproof

\section{Achieving divisibility}\label{sec:make divisible}

It remains to show that we can turn every $F$-divisible $r$-graph $G$ into an $F^\ast$-divisible $r$-graph $G'$ by removing a sparse $F$-decomposable subgraph of $G$, that is, to prove Lemma~\ref{lem:make divisible}. Note that in Lemma~\ref{lem:make divisible}, we do not need to assume that $F^\ast$ is weakly regular. On the other hand, our argument heavily relies on the assumption that $F^\ast$ is $F$-decomposable.\COMMENT{from an algebraic point of view, $F$-divisibility of $F^\ast$ could also be enough. I was to say that it is easy to see that $F$-divisibility of $F^\ast$ is necessary, but not sure anymore.}

We first sketch the argument. Let $F^\ast$ be $F$-decomposable, let $b_k:=Deg(F^\ast)_k$ and $h_k:=Deg(F)_k$. Clearly, we have $h_k\mid b_k$. First, consider the case $k=0$. Then $b_0=|F^\ast|$ and $h_0=|F|$. We know that $|G|$ is divisible by $h_0$. Let $0\le x<b_0$ be such that $|G|\equiv x \mod{b_0}$. Since $h_0$ divides $|G|$ and $b_0$, it follows that $x=ah_0$ for some $0\le a <b_0/h_0$. Thus, removing $a$ edge-disjoint copies of $F$ from $G$ yields an $r$-graph $G'$ such that $|G'|=|G|-ah_0\equiv 0 \mod{b_0}$, as desired. This will in fact be the first step of our argument.

We then proceed by achieving $Deg(G')_1\equiv 0\mod{b_1}$. Suppose that the vertices of $G'$ are ordered $v_1,\dots,v_n$. We will construct a \defn{degree shifter} which will fix the degree of $v_1$ by allowing the degree of $v_2$ to change, whereas all other degrees are unaffected (modulo $b_1$). Step by step, we will fix all the degrees from $v_1,\dots,v_{n-1}$. Fortunately, the degree of $v_n$ will then automatically be divisible by $b_1$. For $k>1$, we will proceed similarly, but the procedure becomes more intricate. It is in general impossible to shift degree from one $k$-set to another one without affecting the degrees of any other $k$-set. Roughly speaking, the degree shifter will contain a set of $2k$ special `root vertices', and the degrees of precisely $2^k$ $k$-subsets\COMMENT{only the non-degenerate ones} of this root set change, whereas all other $k$-degrees are unaffected (modulo $b_k$). This will allow us to fix all the degrees of $k$-sets in $G'$ except the ones inside some final $(2k-1)$-set, where we use induction on $k$ as well. Fortunately, the remaining $k$-sets will again automatically satisfy the desired divisibility condition (cf.~Lemma~\ref{lem:auto div}).

The proof of Lemma~\ref{lem:make divisible} divides into three parts. In the first subsection, we will construct the degree shifters. In the second subsection, we show on a very abstract level (without considering a particular host graph) how the shifting has to proceed in order to achieve overall divisibility.
Finally, we will prove Lemma~\ref{lem:make divisible} by embedding our constructed shifters (using Lemma~\ref{lem:rooted embedding}) according to the given shifting procedure.

\subsection{Degree shifters}

The aim of this subsection is to show the existence of certain $r$-graphs which we call degree shifters. They allow us to locally `shift' degree among the $k$-sets of some host graph $G$.

\begin{defin}[$\mathbf{x}$-shifter]
Let $1\le k<r$ and let $F,F^\ast$ be $r$-graphs.
Given an $r$-graph $T_k$ and distinct vertices $x^0_1, \dots, x^0_k  , x^1_{1},\dots, x^1_{k}$ of $T_k$, we say that $T_k$ is an \emph{$( x^0_1, \dots, x^0_k ,  x^1_{1},\dots, x^1_{k} )$-shifter with respect to $F,F^\ast$} if the following hold:
\begin{enumerate}[label=\rm{(SH\arabic*)}]
\item $T_k$ has a $1$-well separated $F$-decomposition $\cF$ such that for all $F'\in \cF$ and all $i\in[k]$, $|V(F')\cap\Set{x_i^0,x_i^1}|\le 1$;\label{shifter decomposable}
\item $|T_k(S)|\equiv 0\mod{Deg(F^\ast)_{|S|}}$ for all $S\In V(T_k)$ with $|S|<k$;\label{shifter low degrees}
\item for all $S\in \binom{V(T_k)}{k}$, \begin{align*}
	|T_k(S)| \equiv
	\begin{cases}
		(-1)^{\sum_{i \in [k]}z_i } Deg(F)_{k} \mod{ Deg(F^\ast)_{k} }& \text{if $S = \{ x_i^{z_i} : i \in [k] \}$, }\\
		0 \mod{Deg(F^\ast)_{k} }& \text{otherwise.}
	\end{cases}
\end{align*}\label{shifter degrees}
\end{enumerate}
\end{defin}

We will now show that such shifters exist. Ultimately, we seek to find them as rooted subgraphs in some host graph $G$. Therefore, we impose additional conditions which will allow us to apply Lemma~\ref{lem:rooted embedding}.

\begin{lemma}\label{lem:shifters exist}
Let $1\le k<r$, let $F,F^\ast$ be $r$-graphs and suppose that $F^\ast$ has a $1$-well separated $F$-decomposition $\cF$. Let $f^\ast:=|V(F^\ast)|$. There exists an $(  x^0_1, \dots, x^0_k  ,  x^1_{1},\dots, x^1_{k}  )$-shifter $T_k$ with respect to $F,F^\ast$ such that $T_k[X]$ is empty and $T_k$ has degeneracy at most $\binom{f^\ast-1}{r-1}$ rooted at $X$, where $X:=\Set{x^0_1, \dots, x^0_k ,x^1_{1},\dots, x^1_{k}}$.
\end{lemma}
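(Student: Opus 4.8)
\textbf{Proof proposal for Lemma~\ref{lem:shifters exist}.}

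The plan is to build $T_k$ explicitly as a union of copies of $F^\ast$ that "cancel" each other's $k$-degrees except on the $2^k$ prescribed root $k$-sets, and then verify the divisibility bookkeeping using the fact that $F^\ast$ is $F$-decomposable. First I would set up the root vertex set $X=\Set{x_1^0,\dots,x_k^0,x_1^1,\dots,x_k^1}$ and, for each function $z=(z_1,\dots,z_k)\in\Set{0,1}^k$, let $S_z:=\Set{x_i^{z_i}:i\in[k]}$ be the corresponding $k$-subset of $X$. The idea is to attach, for each $z$, a copy $F^\ast_z$ of $F^\ast$ in which $S_z$ plays the role of some fixed $k$-set $S^\ast\in\binom{V(F^\ast)}{k}$ with $|F^\ast(S^\ast)|>0$ (which exists since $F^\ast$ is non-empty), and all remaining $f^\ast-k$ vertices of $F^\ast_z$ are brand new vertices, disjoint across different $z$ and disjoint from $X$. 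Taking $T_k:=\bigcup_{z\in\Set{0,1}^k}F^\ast_z$ gives a first candidate. Each $F^\ast_z$ contributes a copy of the $F$-decomposition of $F^\ast$ (via Lemma~\ref{lem:regularisation}/the hypothesis), and since the copies are vertex-disjoint apart from $X$, and within $X$ each pair $\Set{x_i^0,x_i^1}$ lies in no edge of any $F^\ast_z$, Proposition~\ref{prop:simple identification facts}\ref{fact:disjoint identification} (or just a direct check) shows that the union of all these copies of $F$ forms a $1$-well separated $F$-decomposition $\cF$ of $T_k$ with $|V(F')\cap\Set{x_i^0,x_i^1}|\le 1$ for all $F'\in\cF$ and $i\in[k]$, giving \ref{shifter decomposable}. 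The degeneracy statement follows because $T_k[X]$ is empty by construction, and ordering the vertices of each $F^\ast_z$ outside $X$ so that each new vertex comes after the (at most) $f^\ast-1$ others in its copy, every new vertex sees at most $\binom{f^\ast-1}{r-1}$ earlier edges; the key point is that $X$ is independent, so the only edges spanning $X$-vertices and new vertices live in a single copy $F^\ast_z$.

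Next I would compute the low degrees. For $S\In V(T_k)$ with $|S|<k$: since the copies $F^\ast_z$ share only vertices of $X$ and $X$ is $T_k$-independent within any single copy in the sense that distinct copies meet only in $X$, an $|S|$-set $S$ with $|S|<k$ that is \emph{not} contained in $X$ lies in at most one copy $F^\ast_z$, hence $|T_k(S)|=|F^\ast_z(S)|\equiv 0\pmod{Deg(F^\ast)_{|S|}}$ by definition of the divisibility vector. If $S\In X$ with $|S|<k$, then $S\In S_z$ for possibly several $z$, and each such $z$ contributes $|F^\ast_z(S)|=|F^\ast(S^\ast{}')|$ where $S^\ast{}'$ is the image of $S$; each such term is divisible by $Deg(F^\ast)_{|S|}$, so the sum is too. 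Hence \ref{shifter low degrees} holds with the naive construction. The real work is \ref{shifter degrees}: for a $k$-set $S$, if $S$ is not contained in $X$ it again lies in at most one copy and $|T_k(S)|\equiv 0\pmod{Deg(F^\ast)_k}$; if $S\In X$ but $S\ne S_z$ for every $z$ (i.e.\ $S$ contains both $x_i^0$ and $x_i^1$ for some $i$), then $S$ lies in no copy and $|T_k(S)|=0$; and if $S=S_z$ for the unique $z$, then $|T_k(S)|=|F^\ast_z(S)|=|F^\ast(S^\ast)|$, which is divisible by $Deg(F^\ast)_k$ but \emph{not} equal to $Deg(F)_k$ in general. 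So the naive union overshoots: we get the wrong residue $|F^\ast(S^\ast)|$ instead of $(-1)^{\sum z_i}Deg(F)_k$ modulo $Deg(F^\ast)_k$.

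The main obstacle, then, is to correct the value of $|T_k(S_z)|$ down to exactly $(-1)^{\sum_i z_i}Deg(F)_k \bmod Deg(F^\ast)_k$ while only changing degrees of $k$-sets contained in $X$ and preserving \ref{shifter decomposable} and \ref{shifter low degrees}. The plan is to note that since $F^\ast$ has an $F$-decomposition, we have $Deg(F)_k\mid Deg(F^\ast)_k$; write $Deg(F^\ast)_k=\mu\cdot Deg(F)_k$. For each $z$ I would attach to the root set $S_z$ a suitable number $a_z\ge 0$ of additional copies of $F$ (each using $S_z$'s vertices plus fresh vertices, arranged so $|V(F')\cap\Set{x_i^0,x_i^1}|\le 1$), together with enough further copies of $F^\ast$ whose root-$k$-sets cover, with the right multiplicities, all the \emph{sub}-$k$-sets of $X$ whose degrees get perturbed, so as to restore \ref{shifter low degrees} and cancel unwanted contributions; the number $a_z$ is chosen so that $|F^\ast(S^\ast)| + a_z\,Deg(F)_k \equiv (-1)^{\sum_i z_i}Deg(F)_k \pmod{Deg(F^\ast)_k}$, which is solvable precisely because both $|F^\ast(S^\ast)|$ and the target are multiples of $Deg(F)_k$ and $Deg(F^\ast)_k=\mu\,Deg(F)_k$. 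Adding copies of $F$ does not affect $F$-decomposability and, being placed on disjoint fresh vertices, changes no $k$-degree except $|T_k(S_z)|$ and possibly degrees of \emph{proper} subsets of $S_z$ (which are multiples of $Deg(F)_j\mid Deg(F^\ast)_j$, so \ref{shifter low degrees} is maintained). A careful induction on $k$, or a careful inclusion–exclusion over the $2^k$ choices of $z$, handles the interaction of the correction terms on the various sub-$k$-sets of $X$; throughout, one keeps all added copies of $F$ and $F^\ast$ on pairwise-disjoint new vertex sets so that the union of their $F$-decompositions stays $1$-well separated and the degeneracy rooted at $X$ stays bounded by $\binom{f^\ast-1}{r-1}$ (each new vertex still seeing at most one copy's worth of earlier edges). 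Finally I would reassemble: $T_k$ is the union of all these copies of $F^\ast$ and $F$, and verifying \ref{shifter decomposable}–\ref{shifter degrees} is then the bookkeeping sketched above.
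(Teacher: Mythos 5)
Your base construction (one copy of $F^\ast$ rooted at each $S_z$ via a fixed $k$-set $S^\ast$) and your diagnosis that it leaves $|T_k(S_z)|\equiv 0$ rather than $(-1)^{\sum_i z_i}Deg(F)_k$ modulo $Deg(F^\ast)_k$ are both fine, but the correction mechanism you propose does not work, and that is where the real content of the lemma lies. A bare copy of $F$ placed on $S_z$ plus fresh vertices contributes $|F(T)|$ to the degree of \emph{every} $j$-set $S$ inside it ($j\le k$), where $T$ is the role played by $S$; for sets $S$ meeting the fresh vertices this copy is the only contributor, so $|T_k(S)|=|F(T)|$, which is divisible by $Deg(F)_j$ but in general not by $Deg(F^\ast)_j$ --- so \ref{shifter low degrees} and \ref{shifter degrees} fail at all such sets, not only at proper subsets of $S_z$. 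Your remedy for the perturbed sub-$k$-sets of $X$ --- that the perturbations are ``multiples of $Deg(F)_j\mid Deg(F^\ast)_j$, so \ref{shifter low degrees} is maintained'' --- is a non sequitur (a multiple of a divisor of $m$ need not be a multiple of $m$), and the ``further copies of $F^\ast$'' you invoke cannot repair a bad residue modulo $Deg(F^\ast)_j$, since every copy of $F^\ast$ contributes a multiple of $Deg(F^\ast)_j$ to every $j$-degree. There is also a smaller slip: $a_z$ copies of $F$ rooted at $S_z$ contribute $\sum_T a_T|F(T)|$ to $|T_k(S_z)|$, not $a_z\,Deg(F)_k$; one needs a representation $\sum_T a_T|F(T)|\equiv Deg(F)_k \pmod{Deg(F^\ast)_k}$ with nonnegative coefficients, which exists but must be invoked explicitly.

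The missing idea is to ensure that every copy of $F$ used in the construction is completed to a full copy of $F^\ast$, so that degrees at sets meeting fresh vertices are automatically correct modulo the relevant $Deg(F^\ast)_j$. The paper does this in two stages: it first builds a \emph{multi}-$r$-graph $T_k^\ast$ out of graphs of the form $(F^\ast-F')\cupdot F''$, where $F''$ is $F'$ with one root vertex replaced, so that every degree is of the form $|F^\ast(\cdot)|$, $|F^\ast(\cdot)|-|F'(\cdot)|$ or $|F'(\cdot)|$, and an inclusion--exclusion over the $2^{k-1}$ words $w\in\{0,1\}^{k-1}$ produces the correct residues at all sets simultaneously (Lemma~\ref{lem:multishifter}); it then converts $T_k^\ast$ into a simple graph by extending each copy $F_j$ in its $F$-decomposition to a copy $F_j^\ast$ of $F^\ast$ on fresh vertices and setting $T_k:=\bigcup_j(F_j^\ast-F_j)$, whence $|T_k(S)|\equiv -|T_k^\ast(S)| \pmod{Deg(F^\ast)_{|S|}}$ for every $S$ and the sign is absorbed by swapping $x_k^0$ and $x_k^1$. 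Some device of this kind is needed; as written, your construction cannot satisfy \ref{shifter low degrees} and \ref{shifter degrees} away from $X$.
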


In order to prove Lemma~\ref{lem:shifters exist}, we will first prove a multigraph version (Lemma~\ref{lem:multishifter}), which is more convenient for our construction. We will then recover the desired (simple) $r$-graph by applying an operation similar to the extension operator $\nabla_{(F,e_0)}$ defined in Section~\ref{subsec:canonical}. The difference is that instead of extending every edge to a copy of $F$, we will consider an $F$-decomposition of the multigraph shifter and then extend every copy of $F$ in this decomposition to a copy of $F^\ast$ (and then delete the original multigraph).

For a word $w=w_1\dots w_{k}\in \Set{0,1}^{k}$, let $|w|_0$ denote the number of $0$'s in $w$ and let $|w|_1$ denote the number of $1$'s in $w$. Let $W_e(k)$ be the set of words $w\in \Set{0,1}^k$ with $|w|_1$ being even, and let $W_o(k)$ be the set of words $w\in \Set{0,1}^k$ with $|w|_1$ being odd.

\begin{fact}\label{fact:words}
For every $k\ge 1$, $|W_e(k)|=|W_o(k)|=2^{k-1}$.\COMMENT{Proof by induction. We have $W_e(1)=\Set{0}$ and $W_o(1)=\Set{1}$. For $k>1$, it is easy to see that $|W_e(k)|=|W_e(k-1)|+|W_o(k-1)|$ (and same for $W_o(k)$).}
\end{fact}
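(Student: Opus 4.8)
The plan is to establish Fact~\ref{fact:words} by exhibiting an explicit parity-reversing bijection between $W_e(k)$ and $W_o(k)$. First I would record the obvious disjoint decomposition $\Set{0,1}^k = W_e(k)\cupdot W_o(k)$, which immediately gives $|W_e(k)|+|W_o(k)| = 2^k$. Hence it suffices to prove that $|W_e(k)| = |W_o(k)|$, after which both quantities equal $2^{k-1}$.

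For that equality I would consider the map $\sigma\colon \Set{0,1}^k \to \Set{0,1}^k$ flipping the last coordinate, i.e.\ $\sigma(w_1\dots w_k) := w_1\dots w_{k-1}(1-w_k)$ (such a coordinate exists since $k\ge 1$). Then $\sigma$ is an involution, hence a bijection of $\Set{0,1}^k$, and $|\sigma(w)|_1 = |w|_1 \pm 1$, so $\sigma$ reverses the parity of the number of $1$'s. Consequently $\sigma$ restricts to a bijection $W_e(k)\to W_o(k)$, yielding $|W_e(k)| = |W_o(k)|$, and combined with the count above this gives $|W_e(k)| = |W_o(k)| = 2^{k-1}$.

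Alternatively — and this is the route indicated by the comment in the source — one can argue by induction on $k$: for $k=1$ we have $W_e(1)=\Set{0}$ and $W_o(1)=\Set{1}$, so both have size $1 = 2^{0}$; for $k>1$, splitting a word according to its final letter shows $|W_e(k)| = |W_e(k-1)| + |W_o(k-1)|$ and $|W_o(k)| = |W_o(k-1)| + |W_e(k-1)|$, and the inductive hypothesis makes each of these equal to $2^{k-2}+2^{k-2}=2^{k-1}$. Either way there is no genuine obstacle here: the statement is elementary, and the only point requiring a moment's care is observing that $W_e(k)$ and $W_o(k)$ are disjoint and together exhaust $\Set{0,1}^k$.
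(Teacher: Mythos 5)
Your proof is correct. Your primary argument (the involution flipping the last coordinate, which reverses parity and hence gives $|W_e(k)|=|W_o(k)|$, combined with $|W_e(k)|+|W_o(k)|=2^k$) is a mild variant of the paper's argument, and your stated alternative is exactly the induction the paper uses; both are complete and there is nothing to add.
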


\begin{lemma}\label{lem:multishifter}
Let $1\le k<r$ and let $F,F^\ast$ be $r$-graphs such that $F^\ast$ is $F$-decomposable.
Let $x^0_1, \dots, x^0_k  , x^1_{1},\dots, x^1_{k}$ be distinct vertices. There exists a multi-$r$-graph~$T_k^\ast$ which satisfies \ref{shifter decomposable}--\ref{shifter degrees}, except that $\cF$ does not need to be $1$-well separated. 
\end{lemma}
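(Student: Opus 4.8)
Here is my plan for proving Lemma~\ref{lem:multishifter}.

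\textbf{Overview of the construction.} The plan is to build $T_k^\ast$ from copies of $F^\ast$ glued along their root vertices, indexed by words in $\Set{0,1}^k$. Since $F^\ast$ is $F$-decomposable and we only need \ref{shifter decomposable}--\ref{shifter degrees} with $\cF$ not necessarily well separated, we have a lot of freedom: we may take $\cF$ to be obtained by simply collecting the $F$-decompositions of the individual copies of $F^\ast$. The key numerical facts we will exploit are that $Deg(F^\ast)_j$ divides $|F^\ast(S)|$ for all $|S|=j<r$, that $Deg(F)_j \mid Deg(F^\ast)_j$ (since $F^\ast$ is $F$-decomposable), and Fact~\ref{fact:words}, which says the even-weight and odd-weight words of length $k$ are equinumerous.

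\textbf{The building block.} First I would fix, for each word $w=w_1\dots w_k\in\Set{0,1}^k$, a copy $F^\ast_w$ of $F^\ast$ and choose within it an $r$-subset $e_w$ of its vertex set with the property that $e_w$ contains a prescribed $k$-subset; more precisely, pick $k$ distinguished vertices $u_1^w,\dots,u_k^w$ in $V(F^\ast_w)$ that form part of an edge of $F^\ast$ (possible since $r>k$ and $F^\ast$ is non-empty), together with $r-k$ further vertices completing them to an edge. Now form $T_k^\ast$ on vertex set $\Set{x_i^0,x_i^1:i\in[k]}$ together with fresh private vertices for each $F^\ast_w$, by identifying $u_i^w$ with $x_i^{w_i}$ for every $i\in[k]$ and every $w$, and keeping all remaining vertices of the $F^\ast_w$ pairwise disjoint. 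Thus $T_k^\ast$ is a multi-$r$-graph: its edge multiset is the disjoint union of the edge sets of the $F^\ast_w$ after the identification. Because identifications only happen among the root vertices $X=\Set{x_i^0,x_i^1}$, and within a single $F^\ast_w$ at most one of $x_i^0,x_i^1$ is used (namely $x_i^{w_i}$), condition \ref{shifter decomposable} holds: take $\cF:=\bigcup_w \cF_w$ where $\cF_w$ is an $F$-decomposition of $F^\ast_w$, and note each $F'\in\cF_w$ lives inside $V(F^\ast_w)$, hence meets $\Set{x_i^0,x_i^1}$ in at most one vertex for each $i$.

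\textbf{Checking the degree conditions.} For \ref{shifter low degrees} and \ref{shifter degrees}, take $S\subseteq V(T_k^\ast)$ with $|S|=j\le k$. An edge of $T_k^\ast$ containing $S$ lies in a unique $F^\ast_w$, so $|T_k^\ast(S)|=\sum_{w}|F^\ast_w(S)|$ where the sum is over those $w$ such that $S$ embeds into $V(F^\ast_w)$ after identification. If $S$ contains a private (non-root) vertex of some $F^\ast_w$, only that one $w$ contributes and $|T_k^\ast(S)|=|F^\ast_w(S)|\equiv 0 \bmod Deg(F^\ast)_j$; likewise if $S$ contains both $x_i^0$ and $x_i^1$ for some $i$ then no edge contains $S$ (within any single $F^\ast_w$ only one of them appears, unless $j$ is small — here one checks that such $S$ with $|S|<k$ forces a contradiction, or the count is $0$). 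The interesting case is $S\subseteq X$ with $|S|=k$ and $S$ containing at most one of $x_i^0,x_i^1$ for each $i$: write $S=\Set{x_i^{z_i}:i\in[k]}$ for a word $z$. Then the $w$ that contribute are exactly those with $w_i=z_i$ for all $i$, i.e.\ $w=z$ only, giving $|T_k^\ast(S)|=|F^\ast_z(S)|=|F^\ast(\text{the prescribed root }k\text{-set})|$, which by construction equals some fixed nonzero value $s_k^\ast$ divisible by $Deg(F^\ast)_k$ — but that is $\equiv 0$, not $\pm Deg(F)_k$, so the naive single-block construction is not yet enough. This is the main obstacle, and the fix is to take, for each word $w$, not one copy of $F^\ast_w$ but a multiset of $a_w$ copies, where the nonnegative integers $a_w$ are chosen so that the total contributions cancel appropriately: we need $\sum_{w: w_i=z_i \,\forall i}a_w |F^\ast(\cdot)| \equiv (-1)^{\sum z_i}Deg(F)_k \bmod Deg(F^\ast)_k$ for each $z$, and $0$ for all $k$-sets not of this form and for all smaller sets. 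Since each singleton-fibre $k$-set $S$ of type $z$ only sees $w=z$, this decouples into $2^k$ independent congruences $a_z \cdot s \equiv (-1)^{|z|_1}Deg(F)_k \bmod Deg(F^\ast)_k$ where $s=|F^\ast(\text{root set})|$; choosing the root $k$-set so that $s$ equals $Deg(F)_k$ (or more carefully, so that $s$ is invertible modulo $Deg(F^\ast)_k/\gcd$, using $Deg(F)_k\mid s$ and $Deg(F)_k\mid Deg(F^\ast)_k$) lets us solve for $a_z\in\bN$. Here is where Fact~\ref{fact:words} enters if we instead want the \emph{alternating} structure to come for free: alternatively, pair up words $w$ and $\bar w$ (flip one coordinate) and use $F^\ast\cupdot F^\ast$-type cancellation, but I would go with the direct $a_w$-multiplicity approach as cleaner. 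Finally, for $|S|<k$ with $S\subseteq X$: such $S$ of type $z\in\Set{0,1}^j$ is contained in edges of $F^\ast_w$ for all $w$ extending $z$, and one computes $|T_k^\ast(S)| = \sum_{w\supseteq z}a_w|F^\ast(\cdot)|$; the point is that this is a sum of terms each divisible by $Deg(F^\ast)_j$ (as $|F^\ast(S')| \equiv 0 \bmod Deg(F^\ast)_{|S'|}$ for $S'$ the corresponding root set in $F^\ast$), so \ref{shifter low degrees} holds automatically regardless of the $a_w$. I would organize the write-up so that \ref{shifter low degrees} is dispatched first (it needs no cancellation), then set up the linear system for the top-degree condition and solve it using divisibility of $Deg(F)_k$ into $Deg(F^\ast)_k$. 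The degeneracy and emptiness-of-$T_k[X]$ claims of Lemma~\ref{lem:shifters exist} are then handled afterward when passing from $T_k^\ast$ to the simple $r$-graph $T_k$ via the $\nabla$-type extension, but for Lemma~\ref{lem:multishifter} itself nothing more is needed.
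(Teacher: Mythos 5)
There is a fatal obstruction to your construction. Every block $F^\ast_w$ you glue into $T_k^\ast$ is a \emph{whole} copy of $F^\ast$, so for any set $S$ the degree $|T_k^\ast(S)|$ is a sum $\sum_w a_w|F^\ast_w(S_w)|$ in which each term is the degree of a $|S|$-set inside a genuine copy of $F^\ast$, hence divisible by $Deg(F^\ast)_{|S|}$ by the very definition of the divisibility vector. Consequently every degree of your $T_k^\ast$ is $\equiv 0 \bmod{Deg(F^\ast)_{|S|}}$ --- which gives \ref{shifter low degrees} for free, as you note, but makes \ref{shifter degrees} unattainable: you need $|T_k^\ast(S)|\equiv \pm Deg(F)_k \bmod{Deg(F^\ast)_k}$ for $S=\set{x_i^{z_i}}{i\in[k]}$, and $Deg(F)_k$ is in general a \emph{proper} divisor of $Deg(F^\ast)_k$, hence not $\equiv 0$. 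Your proposed fix does not escape this: in the congruence $a_z\cdot s\equiv(-1)^{|z|_1}Deg(F)_k\bmod{Deg(F^\ast)_k}$ the quantity $s=|F^\ast(U)|$ satisfies $Deg(F^\ast)_k\mid s$ for \emph{every} $k$-set $U$, so the left-hand side vanishes modulo $Deg(F^\ast)_k$ for every choice of $a_z$ and of the root set; there is no way to ``choose the root $k$-set so that $s=Deg(F)_k$''.

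The paper escapes this by not gluing whole copies of $F^\ast$. It fixes an $F$-decomposition $\cF^\ast$ of $F^\ast$ and a distinguished copy $F'\in\cF^\ast$, and uses the gadget $T_{1,x_k}:=(F^\ast-F')\cupdot F''$, where $F''$ is $F'$ with one root vertex $x_k$ replaced by a fresh vertex $\hat{x}_k$. The degrees of this gadget at sets containing exactly one of $x_k,\hat{x}_k$ are $|F^\ast(S)|-|F'(S)|$ resp.\ $|F'(\cdot)|$, i.e.\ they involve degrees of a \emph{single copy of $F$}, which are not multiples of $Deg(F^\ast)$; this is exactly where the nonzero residue $\pm|F(S^\ast)|$ comes from. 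The word-indexed overlay together with the parity bookkeeping via Fact~\ref{fact:words} then produces the alternating signs, and finally an integer combination $\sum_{S^\ast}a_{S^\ast}|F(S^\ast)|\equiv Deg(F)_k\bmod{Deg(F^\ast)_k}$, taken over all $S^\ast\in\binom{V(F)}{k}$ to realise the gcd, converts $\pm|F(S^\ast)|$ into $\pm Deg(F)_k$ (a second ingredient your single-root-set construction also lacks). The essential missing idea is therefore the vertex-splitting of one copy of $F$ inside an $F$-decomposition of $F^\ast$; without it, no union of copies of $F^\ast$, with whatever multiplicities, can satisfy \ref{shifter degrees}.
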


\proof
Let $\cS_k:=\binom{V(F)}{k}$. For every $S^\ast\in \cS_k$, we will construct a multi-$r$-graph $T_{k,S^\ast}$ such that $x^0_1, \dots, x^0_k  , x^1_{1},\dots, x^1_{k}\in V(T_{k,S^\ast})$ and
\begin{enumerate}[label=\rm{(sh\arabic*)}]
\item $T_{k,S^\ast}$ has an $F$-decomposition $\cF$ such that for all $F'\in \cF$ and all $i\in[k]$, $|V(F')\cap\Set{x_i^0,x_i^1}|\le 1$;\label{shifter dec special}
\item $|T_{k,S^\ast}(S)|\equiv 0\mod{Deg(F^\ast)_{|S|}}$ for all $S\In V(T_{k,S^\ast})$ with $|S|<k$;\label{shifter low degrees special}
\item for all $S\in \binom{V(T_{k,S^\ast})}{k}$, \begin{align*}
	|T_{k,S^\ast}(S)| \equiv
	\begin{cases}
		(-1)^{\sum_{i \in [k]}z_i } |F(S^\ast)| \mod{ Deg(F^\ast)_{k} }& \text{if $S = \{ x_i^{z_i} : i \in [k] \}$, }\\
		0 \mod{Deg(F^\ast)_{k} }& \text{otherwise.}
	\end{cases}
\end{align*}\label{shifter degrees special}
\end{enumerate}
Following from this, it easy to construct $T_k^\ast$ by overlaying the above multi-$r$-graphs $T_{k,S^\ast}$. Indeed, there are integers $(a_{S^\ast}')_{S^\ast\in \cS_k}$ such that $\sum_{S^\ast\in \cS_k}a_{S^\ast}'|F(S^\ast)|=Deg(F)_k$. Hence, there are positive\COMMENT{just add multiple of $Deg(F^\ast)_{k}$ to each $a_{S^\ast}'$ to make it positive} integers $(a_{S^\ast})_{S^\ast\in \cS_k}$ such that
\begin{align}
\sum_{S^\ast\in \cS_k}a_{S^\ast}|F(S^\ast)|\equiv Deg(F)_k \mod{Deg(F^\ast)_{k}}.\label{gcd representation}
\end{align}
Therefore, we take $T_k^\ast$ to be the union of $a_{S^\ast}$ copies of $T_{k,S^\ast}$ for each $S^\ast\in \cS_k$.
Then $T_k^\ast$ has the desired properties.

Let $S^\ast\in \cS_k$. It remains to construct $T_{k,S^\ast}$. Let $X_0:=\Set{x_1^0,\dots,x_k^0}$ and $X_1:=\Set{x_1^1,\dots,x_k^1}$. We may assume that $V(F^\ast)\cap(X_0\cup X_1)=\emptyset$. Let $\cF^\ast$ be an $F$-decomposition of $F^\ast$ and $F'\in \cF^\ast$. Let $X=\Set{x_1,\dots,x_k}\In V(F')$ be the $k$-set which plays the role of $S^\ast$ in $F'$, in particular $|F'(X)|=|F(S^\ast)|$. We first define an auxiliary $r$-graph $T_{1,x_k}$ as follows: Let $F''$ be obtained from $F'$ by replacing $x_k$ with a new vertex $\hat{x}_k$. Then let $$T_{1,x_k}:=(F^\ast-F')\cupdot F''.$$ Clearly, $(\cF^\ast\sm\Set{F'})\cup \Set{F''}$ is an $F$-decomposition of $T_{1,x_k}$. Moreover, observe that for every set $S\In V(T_{1,x_k})$ with $|S|<r$, we have
\begin{align}
|T_{1,x_k}(S)|&=\begin{cases} 0 & \mbox{if }\Set{x_k,\hat{x}_k}\In S;\\
                           |F^\ast(S)| & \mbox{if }\Set{x_k,\hat{x}_k}\cap S=\emptyset;\\
													 |F^\ast(S)|-|F'(S)| & \mbox{if }x_k\in S,\hat{x}_k\notin S;\\
													 |F''(S)|=|F'((S\sm\Set{\hat{x}_k})\cup\Set{x_k})|  & \mbox{if }x_k\notin S,\hat{x}_k\in S.\label{aux shifter degrees}
						\end{cases}
\end{align}
We now overlay copies of $T_{1,x_k}$ in a suitable way in order to obtain the multi-$r$-graph $T_{k,S^\ast}$.
The vertex set of $T_{k,S^\ast}$ will be $$V(T_{k,S^\ast})=(V(F^\ast)\sm X) \cupdot X_0 \cupdot X_1.$$ For every word $w=w_1\dots w_{k-1}\in \Set{0,1}^{k-1}$, let $T_w$ be a copy of $T_{1,x_k}$, where 
\begin{enumerate}[label=(\alph*)]
\item for each $i\in[k-1]$, $x_i^{w_i}$ plays the role of $x_i$ (and $x_i^{1-w_i}\notin V(T_w)$);\label{role playing projected}
\item if $|w|_1$ is odd, then $x_k^0$ plays the role of $x_k$ and $x_k^1$ plays the role of $ \hat{x}_k$, whereas if $|w|_1$ is even, then $x_k^0$ plays the role of $\hat{x}_k$ and $x_k^1$ plays the role of $x_k$;\label{role playing odd even}
\item the vertices in $V(T_{1,x_k})\sm \Set{x_1,\dots,x_{k-1},x_k,\hat{x}_k}$ keep their role.\label{role playing keep isolated}
\end{enumerate}
Let $$T_{k,S^\ast}:=\bigcup_{w\in\Set{0,1}^{k-1}}T_w.$$
(Note that if $k=1$, then $T_{k,S^\ast}$ is just a copy of $T_{1,x_k}$, where $x_1^0$ plays the role of $\hat{x}_1$ and $x_1^1$ plays the role of $x_1$.)
We claim that $T_{k,S^\ast}$ satisfies \ref{shifter dec special}--\ref{shifter degrees special}. Clearly, \ref{shifter dec special} is satisfied because each $T_w$ is a copy of $T_{1,x_k}$ which is $F$-decomposable, and for all $w\in\Set{0,1}^{k-1}$ and all $i\in[k-1]$, $|V(T_w)\cap \Set{x_i^0,x_i^1}|=1$, and since $x_k\notin V(F'')$.

We will now use \eqref{aux shifter degrees} in order to determine an expression for $|T_{k,S^\ast}(S)|$ (see~\eqref{shifter degree formula}) which will imply \ref{shifter low degrees special} and \ref{shifter degrees special}. Call $S\In V(T_{k,S^\ast})$ \defn{degenerate} if $\Set{x_i^{0},x_i^1}\In S$ for some $i\in[k]$. Clearly, if $S$ is degenerate, then $|T_{w}(S)|=0$ for all $w\in\Set{0,1}^{k-1}$.
If $S\In V(T_{k,S^\ast})$ is non-degenerate, define $I(S)$ as the set of all indices $i\in[k]$ such that $|S\cap \Set{x_i^0,x_i^1}|=1$, and define the `projection'
\begin{align*}
\pi(S)&:=(S\sm (X_0\cup X_1))\cup \set{x_i}{i\in I(S)}.
\end{align*}
Clearly, $\pi(S)\In V(F^\ast)$ and $|\pi(S)|=|S|$. Note that if $S\In V(T_w)$ and $k\notin I(S)$, then $S$ plays the role of $\pi(S)\In V(T_{1,x_k})$ in $T_w$ by~\ref{role playing projected}.
For $i\in I(S)$, let $z_i(S)\in\Set{0,1}$ be such that $S\cap \Set{x_i^0,x_i^1}=\Set{x_i^{z_i(S)}}$, and let $z(S):=\sum_{i\in I(S)}z_i(S)$. We claim that the following holds:
\begin{align}
|T_{k,S^\ast}(S)|&\equiv \begin{cases} (-1)^{z(S)}|F'(\pi(S))| \mod{Deg(F^\ast)_{|S|}}  &  \mbox{if }S\mbox{ is non-degenerate}\\
                                            & \mbox{and }|I(S)|=k;\\
                                0 \mod{Deg(F^\ast)_{|S|}} & \mbox{otherwise.}
					\end{cases}\label{shifter degree formula}
\end{align}

As seen above, if $S$ is degenerate, then we have $|T_{k,S^\ast}(S)|=0$. From now on, we assume that $S$ is non-degenerate.
Let $W(S)$ be the set of words $w=w_1\dots w_{k-1}\in \Set{0,1}^{k-1}$ such that $w_i=z_i(S)$ for all $i\in I(S)\sm\Set{k}$.
Clearly, if $w\in\Set{0,1}^{k-1}\sm W(S)$, then $|T_w(S)|=0$ by \ref{role playing projected}.\COMMENT{there exists $i\in I(S)\sm\Set{k}$ with $S\cap \Set{x_i^0,x_i^1}\neq\Set{x_i^{w_i}}$, i.e. $x_i^{1-w_i}\in S$. But $x_i^{1-w_i}$ is not contained in $V(T_w)$.}
Suppose that $w\in W(S)$. If $k\notin I(S)$, then $S$ plays the role of $\pi(S)$ in $T_w$ and hence we have $|T_w(S)|=|T_{1,x_k}(\pi(S))|=|F^{\ast}(\pi(S))|$ by \eqref{aux shifter degrees}. It follows that $|T_{k,S^\ast}(S)|\equiv 0 \mod{Deg(F^\ast)_{|S|}}$, as required.

From now on, suppose that $k\in I(S)$.
Let
\begin{align*}
W_e(S)&:=\set{w\in W(S)}{|w|_1+z_k(S)\mbox{ is even}};\\
W_o(S)&:=\set{w\in W(S)}{|w|_1+z_k(S)\mbox{ is odd}}.
\end{align*}
By \ref{role playing odd even}, we know that $x_k^{z_k(S)}$ plays the role of $x_k$ in $T_w$ if $w\in W_o(S)$ and the role of $\hat{x}_k$ if $w\in W_e(S)$. Hence, if $w\in W_o(S)$ then $S$ plays the role of $\pi(S)$ in $T_w$, and if $w\in W_e(S)$, then $S$ plays the role of $(\pi(S)\sm\Set{x_k})\cup \Set{\hat{x}_k}$ in $T_w$. Thus, we have
\begin{align*}
|T_w(S)|&=\begin{cases} |T_{1,x_k}(\pi(S))|\overset{\eqref{aux shifter degrees}}{=}|F^\ast(\pi(S))|-|F'(\pi(S))|        & \mbox{if }w\in W_o(S);\\
                        |T_{1,x_k}((\pi(S)\sm\Set{x_k})\cup \Set{\hat{x}_k})|\overset{\eqref{aux shifter degrees}}{=}|F'(\pi(S))|  &  \mbox{if }w\in W_e(S);\\
												0 & \mbox{if }w\notin W(S).
					\end{cases}
\end{align*}
It follows that
\begin{align*}
|T_{k,S^\ast}(S)|=\sum_{w\in\Set{0,1}^{k-1}}|T_w(S)|\equiv (|W_e(S)|-|W_o(S)|)|F'(\pi(S))| \mod{Deg(F^\ast)_{|S|}}.
\end{align*}
Observe that
\begin{align*}
|W_e(S)|&=|\set{w'\in\Set{0,1}^{k-|I(S)|}}{|w'|_1+z(S) \mbox{ is even}}|;\\
|W_o(S)|&=|\set{w'\in\Set{0,1}^{k-|I(S)|}}{|w'|_1+z(S) \mbox{ is odd}}|.
\end{align*}
Hence, if $|I(S)|<k$, then by Fact~\ref{fact:words} we have $|W_e(S)|=|W_o(S)|=2^{k-|I(S)|-1}$. If $|I(S)|=k$, then $|W_e(S)|=1$ if $z(S)$ is even and $|W_e(S)|=0$ if $z(S)$ is odd, and for $W_o(S)$, the reverse holds. Altogether, this implies \eqref{shifter degree formula}.

It remains to show that \eqref{shifter degree formula} implies \ref{shifter low degrees special} and \ref{shifter degrees special}. Clearly, \ref{shifter low degrees special} holds. Indeed, if $|S|<k$, then $S$ is degenerate or we have $|I(S)|<k$, and \eqref{shifter degree formula} implies that $|T_{k,S^\ast}(S)|\equiv 0 \mod{Deg(F^\ast)_{|S|}}$.

Finally, consider $S\in \binom{V(T_{k,S^\ast})}{k}$. If $S$ does not have the form $ \{ x_i^{z_i} : i \in [k] \}$ for suitable $z_1,\dots,z_k\in\Set{0,1}$, then $S$ is degenerate or $|I(S)|<k$ and \eqref{shifter degree formula} implies that $|T_{k,S^\ast}(S)|\equiv 0 \mod{Deg(F^\ast)_{k}}$, as required. Assume now that $S= \{ x_i^{z_i} : i \in [k] \}$ for suitable $z_1,\dots,z_k\in\Set{0,1}$. Then $S$ is not degenerate, $I(S)=[k]$, $z(S)=\sum_{i\in[k]}z_i$ and $\pi(S)=\Set{x_1,\dots,x_{k}}=X$, in which case \eqref{shifter degree formula} implies that $$|T_{k,S^\ast}(S)|\equiv (-1)^{z(S)}|F'(X)|=(-1)^{z(S)}|F(S^\ast)| \mod{Deg(F^\ast)_k},$$ as required for \ref{shifter degrees special}.
\endproof

\lateproof{Lemma~\ref{lem:shifters exist}}
By applying Lemma~\ref{lem:multishifter} (with $x_k^0$ and $x_k^1$ swapping their roles), we can see that there exists a multi-$r$-graph $T_k^\ast$ with $x^0_1, \dots, x^0_k  , x^1_{1},\dots, x^1_{k}\in V(T_k^\ast)$ such that the following properties hold:
\begin{enumerate}[label=\rm{\textbullet}]
\item $T_k^\ast$ has an $F$-decomposition $\Set{F_1,\dots,F_m}$ such that for all $j\in[m]$ and all $i\in[k]$, we have $|V(F_j)\cap \Set{x_i^0,x_i^1}|\le 1 $;\label{shifter decomposable new}
\item $|T_k^\ast(S)|\equiv 0\mod{Deg(F^\ast)_{|S|}}$ for all $S\In V(T_k^\ast)$ with $|S|<k$;
\item for all $S\in \binom{V(T_k^\ast)}{k}$, \begin{align*}
	|T_k^\ast(S)| \equiv
	\begin{cases}
		(-1)^{\sum_{i \in [k-1]}z_i +(1-z_k)} Deg(F)_{k} \mod{ Deg(F^\ast)_{k} }& \text{if $S = \{ x_i^{z_i} : i \in [k] \}$, }\\
		0 \mod{Deg(F^\ast)_{k} }& \text{otherwise.}
	\end{cases}
\end{align*}\label{shifter degrees new}
\end{enumerate}

Let $f:=|V(F)|$. For every $j\in[m]$, let $Z_{j}$ be a set of $f^\ast-f$ new vertices, such that $Z_{j}\cap Z_{j'}=\emptyset$ for all distinct $j,j'\in[m]$ and $Z_{j}\cap V(T_k^\ast)=\emptyset$ for all $j\in[m]$. Now, for every $j\in[m]$, let $F^\ast_{j}$ be a copy of $F^\ast$ on vertex set $V(F_j)\cup Z_{j}$ such that $\cF_j\cup \Set{F_j}$ is a $1$-well separated $F$-decomposition of $F^\ast_j$. In particular, we have that
\begin{enumerate}[label=\rm{(\alph*)}]
\item $(F^\ast_{j}-F_j)[V(F_j)]$ is empty;\label{F-extension 1}
\item $\cF_j$ is a $1$-well separated $F$-decomposition of $F^\ast_j-F_j$ such that for all $F'\in \cF_j$, $|V(F')\cap V(F_j)|\le r-1$.\label{F-extension 2}
\end{enumerate}
Let $$T_k:=\mathop{\dot{\bigcup}}_{j\in[m]}(F^\ast_j-F_j).$$

We claim that $T_k$ is the desired shifter. First, observe that $T_k$ is a (simple) $r$-graph since $(F^\ast_{j}-F_j)[V(F_j)]$ is empty for every $j\in[m]$ by \ref{F-extension 1}. Moreover, since $\cF_1,\dots,\cF_m$ are $r$-disjoint by \ref{F-extension 2}, Fact~\ref{fact:ws}\ref{fact:ws:2} implies that $\cF:=\cF_1\cup \dots \cup \cF_m$ is a $1$-well separated $F$-decomposition of $T_k$, and for each $j\in[m]$, all $F'\in \cF_j$ and all $i\in[k]$, we have $|V(F')\cap \Set{x_i^0,x_i^1}|\le |V(F_j)\cap \Set{x_i^0,x_i^1}|\le 1 $.
Thus, \ref{shifter decomposable} holds.

Moreover, note that for every $j\in[m]$, we have $|(F^\ast_j-F_j)(S)|\equiv -|F_j(S)| \mod{Deg(F^\ast)_{|S|}}$ for all $S\In V(T_k)$ with $|S|\le r-1$. Thus, $$|T_k(S)|\equiv \sum_{j\in[m]}-|F_j(S)| = -|T_k^\ast(S)| \mod{Deg(F^\ast)_{|S|}}$$ for all $S\In V(T_k)$ with $|S|\le r-1$. Hence, \ref{shifter low degrees} clearly holds. If $S=\set{x_i^{z_i}}{i\in[k]}$ for suitable $z_1,\dots,z_k\in\Set{0,1}$, then $$|T_k(S)|\equiv -|T_k^\ast(S)| \equiv (-1)^{\sum_{i \in [k]}z_i} Deg(F)_{k} \mod{Deg(F^\ast)_{k}}$$ and \ref{shifter degrees} holds. Thus, $T_k$ is indeed an $(  x^0_1, \dots, x^0_k  ,  x^1_{1},\dots, x^1_{k}  )$-shifter with respect to~$F,F^\ast$.

Finally, to see that $T_k$ has degeneracy at most $\binom{f^\ast-1}{r-1}$ rooted at $X$, consider the vertices of $V(T_k)\sm X$ in an ordering where the vertices of $V(T_k^\ast)\sm X$ precede all the vertices in sets $Z_j$, for $j\in[m]$. Note that $T_k[V(T_k^\ast)]$ is empty by \ref{F-extension 1}, i.e.~a vertex in $V(T_k^\ast)\sm X$ has no `backward' edges. Moreover, if $z\in Z_j$ for some $j\in[m]$, then $|T_k(\Set{z})|=|F^\ast_j(\Set{z})|\le \binom{f^\ast-1}{r-1}$.
\endproof

\subsection{Shifting procedure}
In the previous section, we constructed degree shifters which allow us to locally change the degrees of $k$-sets in some host graph. We will now show how to combine these local shifts in order to transform any given $F$-divisible $r$-graph $G$ into an $F^\ast$-divisible $r$-graph. It turns out to be more convenient to consider the shifting for `$r$-set functions' rather than $r$-graphs. We will then recover the graph theoretical statement by considering a graph as an indicator set function (see below).

Let $\phi: \binom{V}{r} \rightarrow \mathbb{Z}$. (Think of $\phi$ as the multiplicity function of a multi-$r$-graph.)
We extend $\phi$ to $\phi : \bigcup_{ k \in [r]_0 } \binom{V}{k} \rightarrow \mathbb{Z}$ by defining for all $S \subseteq V$ with $|S| = k \le r$,
\begin{align}
	\phi (S) := \sum_{S' \in \binom{V}{r} : S \subseteq S'} \phi (S').\label{function extension to lower sets}
\end{align}
Thus for all $0\le i \le k \le r$ and all $S \in \binom{V}{i}$,
\begin{align}
	\binom{r-i  }{k-i} \phi (S) = \sum_{S' \in \binom{V}{k} : S \subseteq S'} \phi (S').\label{handshaking for functions}
\end{align}

For $k \in [r-1]_0$ and $b_0,\dots,b_k \in \mathbb{N}$, we say that $\phi$ is \defn{$(b_0, \dots, b_k)$-divisible} if $b_{|S|} \mid \phi(S)$ for all $S\In V$ with $|S|\le k$.

If $G$ is an $r$-graph with $V(G)\In V$, we define $\mathds{1}_G\colon \binom{V}{r}\to \bZ$ as
$$\mathds{1}_G(S):=\begin{cases} 1 & \mbox{if }S\in G;\\ 0 & \mbox{if }S\notin G.\end{cases}$$
and extend $\mathds{1}_G$ as in \eqref{function extension to lower sets}.
Hence, for a set $S\In V$ with $|S|<r$, we have $\mathds{1}_G(S)=|G(S)|$. Thus, \eqref{handshaking for functions} corresponds to the handshaking lemma for $r$-graphs (cf.~\eqref{handshaking}).
Clearly, if $G$ and $G'$ are edge-disjoint, then we have $\mathds{1}_G+\mathds{1}_{G'}=\mathds{1}_{G\cup G'}$.
Moreover, for an $r$-graph $F$, $G$ is $F$-divisible if and only if $\mathds{1}_G$ is $(Deg(F)_0,\dots,Deg(F)_{r-1})$-divisible.

As mentioned before, our strategy is to successively fix the degrees of $k$-sets until we have fixed the degrees of all $k$-sets except possibly the degrees of those $k$-sets contained in some final vertex set $K$ which is too small as to continue with the shifting. However, as the following lemma shows, divisibility is then automatically satisfied for all the $k$-sets lying inside $K$. For this to work it is essential that the degrees of all $i$-sets for $i<k$ are already fixed.

\begin{lemma} \label{lem:auto div}
Let $1\le k<r$ and $b_0, \dots, b_k \in \mathbb{N}$ be such that $\binom{r-i}{k-i} b_i \equiv 0 \mod{b_{k}}$ for all $i \in [k]_0$.
Let $\phi: \binom{V}{r} \rightarrow \mathbb{Z}$ be a $(b_0, \dots, b_{k-1})$-divisible function.
Suppose that there exists a subset $K \subseteq V$ of size~$2k-1$ such that if $S \in \binom{V}{k}$ with $\phi(S) \not \equiv 0 \mod{b_k}$, then $S \subseteq K$.
Then $\phi$ is $(b_0, \dots, b_{k})$-divisible.
\end{lemma}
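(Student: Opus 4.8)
The plan is to prove this by a counting argument that leverages the handshaking identity~\eqref{handshaking for functions}: since all $k$-sets with nonzero degree (mod $b_k$) are confined to the $(2k-1)$-set $K$, we can pick a cleverly chosen $(k-1)$-set $T\subseteq K$ and use the fact that $\phi(T)$ is divisible by $b_{k-1}$, together with the hypothesis $\binom{r-i}{k-i}b_i\equiv 0\pmod{b_k}$, to deduce divisibility of the sum of $\phi(S)$ over all $k$-sets $S\supseteq T$. Because $T\subseteq K$ and $|K|=2k-1$, there are exactly $|K|-(k-1)=k$ such $k$-sets inside $K$, and all of them contain $T$; moreover any $k$-set $S\supseteq T$ with $S\not\subseteq K$ has $\phi(S)\equiv 0\pmod{b_k}$ by hypothesis, so it drops out. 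This should pin down a linear relation among the $k$ values $\phi(S)$ for the $k$-sets $S\subseteq K$ containing $T$.

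First I would fix an arbitrary $S_0\in\binom{K}{k}$ and aim to show $\phi(S_0)\equiv 0\pmod{b_k}$. Write $K=S_0\cupdot R$ where $|R|=k-1$ (using $|K|=2k-1$). Consider the $(k-1)$-set $T:=S_0\setminus\{x\}$ for a vertex $x\in S_0$; but actually the cleaner choice is to induct. I would set up a downward induction on $|S_0\cap K|$... no — every $k$-subset of interest already lies in $K$, so instead I would induct on the structure differently. Let me reconsider: the right move is to apply~\eqref{handshaking for functions} with $i=k-1$, $k$ (our $k$), to a $(k-1)$-set $T\subseteq V$, giving $(r-k+1)\phi(T)=\sum_{S\supseteq T,|S|=k}\phi(S)$. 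Now $(r-k+1)\phi(T)$: we know $b_{k-1}\mid\phi(T)$, and the hypothesis with $i=k-1$ gives $\binom{r-k+1}{1}b_{k-1}=(r-k+1)b_{k-1}\equiv 0\pmod{b_k}$, hence $(r-k+1)\phi(T)\equiv 0\pmod{b_k}$. Therefore $\sum_{S\supseteq T,|S|=k}\phi(S)\equiv 0\pmod{b_k}$, and after discarding the terms with $S\not\subseteq K$ (which vanish mod $b_k$), we get $\sum_{S\in\binom{K}{k}:T\subseteq S}\phi(S)\equiv 0\pmod{b_k}$ whenever $T\subseteq K$ (if $T\not\subseteq K$ the sum is empty mod $b_k$, giving no information, so we only use $T\subseteq K$).

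The remaining combinatorial task is: given that for every $(k-1)$-set $T\subseteq K$ the sum $\sum_{S\in\binom{K}{k}:T\subseteq S}\phi(S)\equiv 0\pmod{b_k}$, conclude that $\phi(S)\equiv 0\pmod{b_k}$ for every $S\in\binom{K}{k}$. This is a pure linear-algebra statement about the inclusion matrix $W_{k-1,k}$ of $(k-1)$-subsets versus $k$-subsets of a $(2k-1)$-set, working over $\mathbb{Z}/b_k\mathbb{Z}$. Since $|K|=2k-1$, both $\binom{2k-1}{k-1}$ and $\binom{2k-1}{k}$ equal $\binom{2k-1}{k}$ — the matrix is square — and it is a classical fact (Gottlieb's theorem) that this inclusion matrix has nonzero determinant; more precisely its determinant is $\pm 1$ for the square middle case $n=2k-1$ (one can see this by a direct argument, or by noting the matrix $W_{k-1,k}W_{k-1,k}^{\mathsf T}$ has a known nonsingular form). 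Actually for the argument one only needs that the map $\binom{K}{k}\to\binom{K}{k-1}$-indexed-sums is injective mod $b_k$; I expect the paper to establish this either by an explicit induction (peeling off a vertex: fixing a vertex $v\in K$, relate $k$-sets containing $v$ to $(k-1)$-sets in $K\setminus\{v\}$ via the induction hypothesis in $K\setminus\{v\}$, which has size $2(k-1)$... hmm, that is $2k-2$ not $2(k-1)-1$) or by directly invoking that the inclusion matrix is unimodular.

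The main obstacle will be this last step: proving the relevant inclusion matrix is invertible over $\mathbb{Z}$ (equivalently has determinant $\pm1$), or finding a clean inductive substitute. I would try first the inductive route: pick $v\in K$, split each $S\in\binom{K}{k}$ according to whether $v\in S$; the relations indexed by $(k-1)$-sets $T\ni v$ directly express (sums over $k$-sets $S=T\cup\{w\}$ with $w\in K\setminus T$) and, combined with relations for $T\not\ni v$, should let one solve for the $\phi(S)$ with $v\notin S$ in terms of those with $v\in S$ and then close the loop. If this gets messy, the fallback is to cite that $W_{k-1,k}$ over a $(2k-1)$-element ground set has determinant $\pm 1$ (a standard fact, see e.g.\ Wilson's work on inclusion matrices), which immediately finishes: the system $W_{k-1,k}\,x\equiv 0\pmod{b_k}$ forces $x\equiv 0$. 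Either way, once $\phi(S_0)\equiv0\pmod{b_k}$ is known for all $S_0\in\binom{K}{k}$, combined with the hypothesis that $\phi(S)\equiv0\pmod{b_k}$ for all $S\in\binom{V}{k}$ with $S\not\subseteq K$, we get $b_k\mid\phi(S)$ for all $S\in\binom{V}{k}$, and together with the assumed $(b_0,\dots,b_{k-1})$-divisibility this is exactly $(b_0,\dots,b_k)$-divisibility of $\phi$, completing the proof.
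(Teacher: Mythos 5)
Your first step is fine and matches the paper's opening move for the case $|T''|=k-1$: handshaking plus $(r-k+1)b_{k-1}\equiv 0 \mod{b_k}$ gives $\sum_{S\in\binom{K}{k}:\,T\subseteq S}\phi(S)\equiv 0\mod{b_k}$ for every $(k-1)$-set $T\subseteq K$. The gap is in the linear-algebra step you reduce to. The inclusion matrix $W_{k-1,k}$ of a $(2k-1)$-set is indeed square and nonsingular over $\mathbb{Q}$ (Gottlieb), but it is \emph{not} unimodular for $k\ge 2$, so "$W_{k-1,k}x\equiv 0\mod{b_k}$ forces $x\equiv 0$" is false for general $b_k$. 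Concretely, for $k=2$ the matrix is the vertex–edge incidence matrix of a triangle, with determinant $\pm 2$; if $b_2$ is even, the vector assigning $b_2/2$ to each of the three $2$-subsets of $K$ lies in the kernel mod $b_2$. (In general the elementary divisors of $W_{k-1,k}(2k-1)$ are $1,2,\dots,k$ with positive multiplicities, so the kernel mod $b_k$ is nontrivial whenever $\gcd(k!,b_k)>1$.) Hence no manipulation of the $(k-1)$-set relations alone can close the argument, and the inductive "peel off a vertex" fallback cannot rescue it either, since it would only re-derive consequences of the same deficient system.

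The fix — and what the paper actually does — is to use the relations coming from \emph{all} subsets $T''\subseteq K$ with $|T''|<k$, not just those of size $k-1$; this is where the full hypothesis $\binom{r-i}{k-i}b_i\equiv 0\mod{b_k}$ for every $i\in[k]_0$ (including $i=0$) is needed. For each such $T''$ one gets $\sum_{T'\in\binom{K}{k}:\,T''\subseteq T'}\phi(T')\equiv 0\mod{b_k}$, and then for a fixed target $T\in\binom{K}{k}$ one takes the signed combination with weights $f(T'')=(-1)^{|T''|}$ for $T''\subseteq K\setminus T$ (and $0$ otherwise). The identity $\sum_{T''\subsetneq T'}f(T'')=\mathds{1}_{\{T'=T\}}$, which holds precisely because $|K|<2k$ forces $(K\setminus T)\cap T'$ to be a proper subset of $T'$, then isolates $\phi(T)$ as an integer combination of the vanishing sums. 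This signed inclusion–exclusion plays the role of an explicit integral left inverse of the enlarged inclusion system, which is exactly what the square matrix $W_{k-1,k}$ alone fails to provide.
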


\proof
Let $\cK$ be the set of all subsets $T''$ of $K$ of size less than $k$. We first claim that for all $T''\in \cK$, we have
\begin{align}
\sum_{T'\in \binom{K}{k}\colon T''\In T'}\phi(T')\equiv 0\mod{b_k}.\label{support sets zero}
\end{align}
Indeed, suppose that $|T''|=i<k$, then we have
\begin{align*}
\sum_{T'\in \binom{K}{k}\colon T''\In T'}\phi(T') \equiv \sum_{T'\in \binom{V}{k}\colon T''\In T'}\phi(T')\overset{\eqref{handshaking for functions}}{=}\binom{r-i}{k-i}\phi(T'') \mod{b_k}.
\end{align*}
Since $\phi$ is $(b_0,\dots,b_{k-1})$-divisible, we have $\phi(T'')\equiv 0\mod{b_i}$, and since $\binom{r-i}{k-i} b_i \equiv 0 \mod{b_{k}}$, the claim follows.

Let $T\in \binom{K}{k}$. We need to show that $\phi(T)\equiv 0\mod{b_k}$. To this end, define the function $f\colon \cK\to \bZ$ as $$f(T''):=\begin{cases} (-1)^{|T''|} & \mbox{if }T''\In K\sm T;\\ 0 & \mbox{otherwise.}\end{cases}$$

We claim that for all $T'\in \binom{K}{k}$, we have
\begin{align}
\sum_{T''\subsetneq T'}f(T'')=\begin{cases} 1 & \mbox{if }T'=T;\\ 0 & \mbox{otherwise.}\end{cases}\label{counting function}
\end{align}
Indeed, let $T'\in \binom{K}{k}$, and set $t:=|T'\sm T|$. We then check that (using $|K|<2k$ in the first equality)
$$\sum_{T''\subsetneq T'}f(T'')=\sum_{T''\In (K\sm T)\cap T'}(-1)^{|T''|}= \sum_{j=0}^{t}(-1)^{j}\binom{t}{j} =\begin{cases} 1 & \mbox{if }t=0;\\ 0 & \mbox{if }t>0.\end{cases}$$
\COMMENT{if $t=0$ then $T'=T$ and thus $\sum_{T''\subsetneq T'}f(T'')=f(\emptyset)=1$}
We can now conclude that
\begin{align*}
\phi(T)&\overset{\eqref{counting function}}{=}\sum_{T'\in \binom{K}{k}}\phi(T')\sum_{T''\subsetneq T'}f(T'')= \sum_{T''\in \cK}f(T'')\left(\sum_{T'\in \binom{K}{k}\colon T''\In T'}\phi(T')\right)\overset{\eqref{support sets zero}}{\equiv} 0\mod{b_k},
\end{align*}
as desired.
\endproof

We now define a more abstract version of degree shifters, which we call adapters. They represent the effect of shifters and will finally be replaced by shifters again.

\begin{defin}[$\mathbf{x}$-adapter]\label{def:adapter}
Let $V$ be a vertex set and $k,r,b_0,\dots,b_k,h_k\in \bN$ be such that $k<r$ and $h_k\mid b_k$.
For distinct vertices $x^0_1, \dots, x^0_{k}, x^1_{1},\dots, x^1_{k} $ in $V$, we say that $\tau\colon \binom{V}{r}\to \bZ$ is an \emph{$( x^0_1, \dots, x^0_k  ,  x^1_{1},\dots, x^1_{k} )$-adapter with respect to $(b_0,\dots,b_k;h_k)$} if $\tau$ is $(b_0,\dots,b_{k-1})$-divisible and for all $S \in \binom{V}{k}$,
\begin{align*}
	\tau(S) \equiv
	\begin{cases}
		(-1)^{\sum_{i \in [k]}z_i }h_k \mod{ b_k }& \text{if $S = \{ x_i^{z_i} : i \in [k] \}$, }\\
		0 \mod{ b_k }& \text{otherwise.}
	\end{cases}
\end{align*}
\end{defin}

Note that such an adapter $\tau$ is $(b_0, \dots, b_{k-1},h_k)$-divisible.

\begin{fact}\label{fact:shifter is adapter}
If $T$ is an $\mathbf{x}$-shifter with respect to $F,F^*$, then $\mathds{1}_T$ is an $\mathbf{x}$-adapter with respect to $(Deg(F^\ast)_0,\dots,Deg(F^\ast)_k;Deg(F)_k)$.
\end{fact}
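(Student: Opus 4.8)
The statement to prove is Fact~\ref{fact:shifter is adapter}, which asserts that if $T$ is an $\mathbf{x}$-shifter with respect to $F,F^\ast$, then $\mathds{1}_T$ is an $\mathbf{x}$-adapter with respect to $(Deg(F^\ast)_0,\dots,Deg(F^\ast)_k;Deg(F)_k)$.

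\textbf{Proof plan.} This is essentially a bookkeeping verification: one needs to match up the defining conditions of an $\mathbf{x}$-shifter (SH1)--(SH3) with those of an $\mathbf{x}$-adapter in Definition~\ref{def:adapter}. First, I would recall that for an $r$-graph $T$ and a set $S\In V(T)$ with $|S|<r$, we have $\mathds{1}_T(S)=|T(S)|$ by the definition of the extension in~\eqref{function extension to lower sets} (which agrees with the handshaking relation~\eqref{handshaking for functions}). Set $b_i:=Deg(F^\ast)_i$ for $i\in[k]_0$ and $h_k:=Deg(F)_k$. One must check the divisibility $h_k\mid b_k$ required in Definition~\ref{def:adapter}: this holds because $F^\ast$ has a $1$-well separated $F$-decomposition (this is part of the hypothesis that $T$ is a shifter \emph{with respect to} $F,F^\ast$), so $F^\ast$ is in particular $F$-decomposable, hence $F$-divisible, which by the definition of divisibility vectors gives $Deg(F)_k\mid Deg(F^\ast)_k$. (Alternatively one can quote the observation, made just after~\eqref{gcd representation} and implicitly throughout Section~\ref{sec:make divisible}, that $h_k\mid b_k$ whenever $F^\ast$ is $F$-decomposable.)

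Then I would verify the two remaining requirements. For the $(b_0,\dots,b_{k-1})$-divisibility of $\mathds{1}_T$: given $S\In V(T)$ with $|S|=i<k$, condition (SH2) says $|T(S)|\equiv 0\bmod Deg(F^\ast)_{|S|}=b_i$, and since $\mathds{1}_T(S)=|T(S)|$ this is exactly $b_i\mid \mathds{1}_T(S)$. For the degree condition on $k$-sets: given $S\in\binom{V(T)}{k}$, condition (SH3) states precisely that
$$
\mathds{1}_T(S)=|T(S)|\equiv
\begin{cases}
(-1)^{\sum_{i\in[k]}z_i}Deg(F)_k \bmod{Deg(F^\ast)_k}& \text{if } S=\{x_i^{z_i}:i\in[k]\},\\
0\bmod{Deg(F^\ast)_k}& \text{otherwise,}
\end{cases}
$$
which matches the required form for an adapter with $h_k=Deg(F)_k$ and $b_k=Deg(F^\ast)_k$. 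This is all that needs checking, so the proof is a short unwinding of definitions.

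\textbf{Main obstacle.} There is no real obstacle here — the content of the fact is entirely definitional, and the only point requiring a half-line of justification is the divisibility $Deg(F)_k\mid Deg(F^\ast)_k$, which follows from $F$-decomposability of $F^\ast$. The statement of (SH1) (the existence of a well separated $F$-decomposition of $T$ with the extra property that each copy meets each pair $\{x_i^0,x_i^1\}$ in at most one vertex) plays no role in the conclusion; it is recorded in the shifter definition only because it is needed later, when shifters are embedded into a host graph via Lemma~\ref{lem:rooted embedding}. So the proof can be stated in a couple of sentences.

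\proof
Set $b_i:=Deg(F^\ast)_i$ for all $i\in[k]_0$ and $h_k:=Deg(F)_k$. Since $T$ is an $\mathbf{x}$-shifter with respect to $F,F^\ast$, condition \ref{shifter decomposable} provides an $F$-decomposition of $T$; in particular $F^\ast$ is $F$-decomposable and hence $F$-divisible, so $Deg(F)_k\mid Deg(F^\ast)_k$, i.e.~$h_k\mid b_k$. Thus the divisibility requirement in Definition~\ref{def:adapter} is satisfied.

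Recall that for any $r$-graph $T$ and any $S\In V(T)$ with $|S|<r$ we have $\mathds{1}_T(S)=|T(S)|$. By \ref{shifter low degrees}, for every $S\In V(T_k)$ with $|S|<k$ we have $\mathds{1}_T(S)=|T_k(S)|\equiv 0\bmod{Deg(F^\ast)_{|S|}}$, so $\mathds{1}_T$ is $(b_0,\dots,b_{k-1})$-divisible. Finally, by \ref{shifter degrees}, for every $S\in\binom{V(T_k)}{k}$ we have
\begin{align*}
\mathds{1}_T(S)=|T_k(S)|\equiv
\begin{cases}
(-1)^{\sum_{i\in[k]}z_i}Deg(F)_k\bmod{Deg(F^\ast)_k}& \text{if }S=\{x_i^{z_i}:i\in[k]\},\\
0\bmod{Deg(F^\ast)_k}& \text{otherwise,}
\end{cases}
\end{align*}
which is precisely the condition in Definition~\ref{def:adapter} with $h_k=Deg(F)_k$ and $b_k=Deg(F^\ast)_k$. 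Hence $\mathds{1}_T$ is an $\mathbf{x}$-adapter with respect to $(Deg(F^\ast)_0,\dots,Deg(F^\ast)_k;Deg(F)_k)$.
\endproof
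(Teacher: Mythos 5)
Your verification of the two substantive conditions is exactly what the paper intends: the $(b_0,\dots,b_{k-1})$-divisibility of $\mathds{1}_T$ is a literal restatement of \ref{shifter low degrees} via the identity $\mathds{1}_T(S)=|T(S)|$, and the congruences on $k$-sets are a literal restatement of \ref{shifter degrees}; the paper states this as a Fact without proof precisely because it is this definitional unwinding. The one flaw is your justification of the precondition $h_k\mid b_k$ from Definition~\ref{def:adapter}. The inference ``\ref{shifter decomposable} provides an $F$-decomposition of $T$; in particular $F^\ast$ is $F$-decomposable'' is a non-sequitur: \ref{shifter decomposable} decomposes the shifter $T$ itself into copies of $F$ and says nothing about $F^\ast$. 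Likewise, contrary to your proof plan, ``$F^\ast$ has a $1$-well separated $F$-decomposition'' is not part of the definition of an $\mathbf{x}$-shifter with respect to $F,F^\ast$ (only $Deg(F^\ast)$ and $Deg(F)_k$ enter that definition, through congruences which do not force $Deg(F)_k\mid Deg(F^\ast)_k$). The correct source of $Deg(F)_k\mid Deg(F^\ast)_k$ is the standing hypothesis in force wherever shifters are constructed or used: Lemma~\ref{lem:shifters exist} and Lemma~\ref{lem:make divisible} both assume that $F^\ast$ has a ($1$-well separated) $F$-decomposition, hence $F^\ast$ is $F$-divisible and $h_k\mid b_k$ for all $k$, as the paper notes at the start of the proof of Lemma~\ref{lem:make divisible}. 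With that one sentence repaired (or with the Fact read under that hypothesis), your proof is complete.
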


The following definition is crucial for the shifting procedure. Given some function $\phi$, we intend to add adapters in order to obtain a divisible function. Every adapter is characterised by a tuple $\mathbf{x}$ consisting of $2k$ distinct vertices, which tells us where to apply the adapter. All these tuples are contained within a multiset $\Omega$, which we call a balancer. $\Omega$ is capable of dealing with any input function $\phi$ in the sense that there is a multisubset of $\Omega$ which tells us where to apply the adapters in order to make $\phi$ divisible. Moreover, as we finally want to replace the adapters by shifters (and thus embed them into some host graph), there must not be too many of them.

\begin{defin}[balancer]\label{def:balancer}
Let $r,k,b_0,\dots,b_k\in \bN$ with $k<r$ and let $U,V$ be sets with $U\In V$. Let $\Omega_{k}$ be a multiset containing ordered tuples $\mathbf{x}=(x_1,\dots,x_{2k})$, where $x_1,\dots,x_{2k}\in U$ are distinct. We say that $\Omega_k$ is a \defn{$(b_0, \dots, b_{k})$-balancer for $V$ with uniformity $r$ acting on $U$} if for any $h_k\in \bN$ with $h_k\mid b_k$, the following holds: let $\phi\colon \binom{V}{r} \rightarrow \mathbb{Z}$ be any $(b_0, \dots, b_{k-1},h_k)$-divisible function such that $S\In U$ whenever $S\in \binom{V}{k}$ and $\phi(S)\not\equiv 0\mod{b_k}$. There exists a multisubset $\Omega'$ of $\Omega_{k}$ such that $\phi + \tau_{\Omega'}$ is $(b_0, \dots, b_k)$-divisible, where $\tau_{\Omega'} := \sum_{\mathbf{x} \in \Omega'} \tau_{\mathbf{x}}$ and $\tau_{\mathbf{x}}$ is any $\mathbf{x}$-adapter with respect to $(b_0,\dots,b_k;h_k)$.

For a set $S \in \binom{V}{k}$, let $\deg_{\Omega_{k}}(S)$ be the number of $\mathbf{x} = ( x_1, \dots, x_{2k} ) \in \Omega_k$ such that $|S \cap \{x_i,x_{i+k}\}| =1$ for all $i \in [k]$.
Furthermore, we denote $\Delta (\Omega_{k})$ to be the maximum value of $\deg_{\Omega_{k}}(S)$ over all $S \in \binom{V}{k}$.
\end{defin}

The following lemma shows that these balancers exist, i.e.~that the local shifts performed by the degree shifters guaranteed by Lemma~\ref{lem:shifters exist} are sufficient to obtain global divisibility (for which we apply Lemma~\ref{lem:auto div}).

\begin{lemma} \label{lem:balancer}
Let $1 \le k < r $.
Let $b_0, \dots, b_k \in \mathbb{N}$ be such that $\binom{r-s}{k-s} b_s \equiv 0 \mod{b_{k}}$ for all $s \in [k]_0$.
Let $U$ be a set of $n \ge 2k$ vertices and $U\In V$.\COMMENT{$\ge 2k$ technically not needed, if $n$ is smaller, $\Omega_k$ must be empty, but is still a balancer since automatic divisibility guarantees $(b_0,\dots,b_k)$-divisibility}
Then there exists a $(b_0, \dots, b_{k})$-balancer~$\Omega_k$ for~$V$ with uniformity $r$ acting on $U$ such that $\Delta( \Omega_k) \le 2^{k}  (k!)^2b_k$.
\end{lemma}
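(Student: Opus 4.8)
\textbf{Proof plan for Lemma~\ref{lem:balancer}.}
The plan is to build $\Omega_k$ as a disjoint union of "atomic" balancers, one for each ordered $2k$-tuple of distinct vertices of $U$, and to reduce the existence of a correcting multisubset $\Omega'$ to a linear-algebra statement over $\bZ/b_k\bZ$. First, fix any $(b_0,\dots,b_{k-1},h_k)$-divisible input $\phi$ with $h_k \mid b_k$ and such that $\phi(S) \not\equiv 0 \bmod b_k$ only for $S \subseteq U$. Using the hypothesis $\binom{r-s}{k-s}b_s \equiv 0 \bmod b_k$ for all $s \in [k]_0$, Lemma~\ref{lem:auto div} tells us that once we make all $k$-degrees outside a fixed $(2k-1)$-set divisible by $b_k$, the remaining ones are automatically divisible. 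So fix a $(2k-1)$-subset $K_0 \subseteq U$ (possible since $|U| = n \ge 2k$); it suffices to find $\Omega' \subseteq \Omega_k$ so that $\phi + \tau_{\Omega'}$ has all $k$-degrees outside $K_0$ divisible by $b_k$, where $\tau_{\mathbf{x}}$ is any $\mathbf{x}$-adapter (note all such adapters agree modulo $b_k$ on every $k$-set, and are $(b_0,\dots,b_{k-1})$-divisible, so adding them preserves lower divisibility — this is why we only need to track the $k$-degrees mod $b_k$).

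The core combinatorial step is an \emph{elimination argument}. Order the "bad" $k$-sets $S$ with $\phi(S) \not\equiv 0 \bmod b_k$ and $S \not\subseteq K_0$. For such an $S = \{v_1,\dots,v_k\}$, since $S \not\subseteq K_0$ and $|K_0| = 2k-1$, there is for each $i \in [k]$ a vertex $v_i' \in U \setminus (S \cup K_0 \cup \{v_1',\dots,v_{i-1}'\})$ — this needs $n \ge k + k + (2k-1) = 4k-1$, but one can be more economical: we only ever need $v_i'$ to avoid $S$, $K_0$, and the previously chosen $v_j'$; choosing them greedily requires $n \ge 2k$ suffices once we note that at most $k$ of the $v_i'$ are needed and $K_0$ can be chosen to contain $k-1$ of the eventual partner vertices. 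The tuple $\mathbf{x}_S := (v_1,\dots,v_k,v_1',\dots,v_k')$ gives an adapter $\tau_{\mathbf{x}_S}$ whose unique nonzero $k$-degree among sets meeting each pair $\{v_i,v_i'\}$ in exactly one vertex, and lying outside $K_0$, is at $S$ itself, where it equals $\pm h_k \bmod b_k$. Since $h_k \mid b_k$ and $h_k \mid \phi(S)$ (by $h_k$-divisibility of $\phi$ at $k$-sets — this is exactly where that hypothesis on $\phi$ is used), we can add $\tau_{\mathbf{x}_S}$ an appropriate number $0 \le t_S < b_k/h_k$ of times to kill $\phi(S) \bmod b_k$; crucially this does \emph{not} disturb any other bad $k$-set outside $K_0$, because the only other affected $k$-sets lie inside $K_0 \cup \{v_i'\}$ in a controlled way — so we must process the $v_i'$-sets carefully, e.g.\ by always picking $v_i'$ to also avoid future $S$'s, or (cleaner) by re-running Lemma~\ref{lem:auto div} with a slightly larger exceptional set and iterating. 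Collecting all tuples $\mathbf{x}_S$ with multiplicity $t_S$ yields $\Omega'$, and $\phi + \tau_{\Omega'}$ is $(b_0,\dots,b_k)$-divisible.

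For the degree bound, we define $\Omega_k$ to be the multiset in which \emph{every} ordered $2k$-tuple $(x_1,\dots,x_{2k})$ of distinct vertices of $U$ appears with multiplicity exactly $b_k - 1$ (so that any required multiplicity $t_S < b_k/h_k \le b_k$ of any single adapter tuple, up to reordering the pairs, is available — and reorderings within the $k$ pairs and swapping the two halves of a pair all give valid adapter tuples for the same $S$). Then for a fixed $S \in \binom{V}{k}$, $\deg_{\Omega_k}(S)$ counts tuples with $|S \cap \{x_i, x_{i+k}\}| = 1$ for all $i$: there are $k!$ ways to match the $k$ elements of $S$ to the $k$ pair-positions, $2^k$ ways to decide which of $\{x_i,x_{i+k}\}$ is the $S$-element, and at most $(n)_k \le n^k$… but we must not multiply by $n^k$ — rather, we count ordered tuples in which the $S$-elements sit in the $2k$ slots as described and the remaining $k$ slots are filled by any distinct vertices; however what matters is only that $\deg_{\Omega_k}(S) \le (b_k-1)\cdot k! \cdot 2^k \cdot (\text{number of choices of the partner vertices})$ — and since we built $\Omega_k$ from \emph{all} tuples, this is too big. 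The fix: we do \emph{not} take all tuples; instead $\Omega_k$ consists only of the tuples $\mathbf{x}_S$ actually produced by the elimination argument above, over all possible bad-set patterns. There are at most $\binom{n}{k} \le n^k$ sets $S$, but for the $\Delta$-bound note each fixed $S' \in \binom{V}{k}$ is "hit" (i.e.\ meets each pair of some $\mathbf{x}_S$ in one vertex) only when $S'$ is either $S$ or one of the $\le 2^k$ special $k$-subsets of $S \cup \{v_i'\}$; by choosing the $v_i'$ in a \emph{fixed canonical way} depending only on $S$ (e.g.\ lexicographically least available), each $S'$ arises this way for at most $k!$ source sets $S$, and with multiplicity $\le b_k$, and $\le 2^k$ internal patterns, giving $\Delta(\Omega_k) \le 2^k (k!)^2 b_k$. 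The main obstacle is precisely this bookkeeping — ensuring the canonical choice of partner vertices makes the "collision count" at any single $k$-set bounded by $k!$, which forces the elimination order and the partner-selection rule to be chosen together rather than greedily on the fly; getting that interplay right, and simultaneously verifying that the side-effects of each adapter stay confined so the elimination genuinely terminates, is where the real work lies.
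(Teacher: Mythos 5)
There is a genuine gap, and it sits exactly where you say "the real work lies": your elimination argument neither terminates nor yields the claimed degree bound as set up. An $\mathbf{x}_S$-adapter does not only change $\phi(S)$ modulo $b_k$; by Definition~\ref{def:adapter} it changes all $2^k$ transversals $\{x_i^{z_i}:i\in[k]\}$ of the pairs $\{v_i,v_i'\}$. If $S\not\subseteq K_0$, then every mixed transversal contains at least one vertex of $S$ and hence is \emph{not} contained in $K_0$, so killing one bad set creates up to $2^k-1$ new bad sets outside $K_0$, and there is no well-founded order guaranteeing the process stops. Your two suggested fixes do not repair this: choosing $v_i'$ to "avoid future $S$'s" does not prevent the new bad sets from being created, and Lemma~\ref{lem:auto div} cannot be rerun with a larger exceptional set because its proof essentially uses $|K|<2k$. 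The paper's resolution is structural: it proceeds by induction on $k$, and for fixed $k$ sweeps the vertices $v_n,v_{n-1},\dots,v_{2k}$ downwards. At step $\ell$ it fixes all bad $k$-sets containing $v_\ell$ at once by applying a $(b_1,\dots,b_k)$-balancer of uniformity $r-1$ (from the induction hypothesis) to the link function $\rho(S)=\phi_\ell(S\cup\{v_\ell\})$, lifting each $(2k-2)$-tuple to a $2k$-tuple by inserting $v_\ell$ and a partner $v_{j_{\mathbf v}}$ chosen from the window $\{\ell-2k+1,\dots,\ell\}$. New bad sets are tolerated, but only strictly inside $U_{\ell-1}$, so the sweep terminates at $U_{2k-1}$, where Lemma~\ref{lem:auto div} applies.

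The degree bound fails for the same reason. If the partners are chosen canonically from a bounded set (e.g.\ inside $K_0$), then a $k$-set $S'$ consisting of $j\ge 1$ partner vertices and $k-j$ source vertices is a transversal of $\mathbf{x}_S$ for every $S$ containing those $k-j$ vertices and mapped to those partners, which is of order $n^{j}$ choices of $S$ — not at most $k!$ as you assert — so $\Delta(\Omega_k)$ is unbounded. The window $\{\ell-2k+1,\dots,\ell\}$ in the paper is precisely what rescues this: for a fixed $S$ with top index $i_0$, only the $2k$ levels $\ell\in\{i_0,\dots,i_0+2k-1\}$ can contribute to $\deg_{\Omega_k}(S)$, and each contributes at most $k\,\Delta(\Omega_{k-1,\ell-1})$, giving $2k\cdot k\cdot 2^{k-1}((k-1)!)^2b_k=2^k(k!)^2b_k$. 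So the missing idea is the combination of (i) induction on $k$ via link functions and (ii) a sliding-window choice of the extra partner vertex, which simultaneously forces termination and localises the degree count.
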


\begin{proof}
We will proceed by induction on $k$. First, consider the case when $k=1$. Write $U= \{v_1, \dots, v_n\}$.
Define $\Omega_1$ to be the multiset containing precisely $b_1-1$ copies of $(v_j,v_{j+1})$ for all $j \in [n-1]$.
Note that $\Delta (\Omega_1) \le 2 b_1$.

We now show that $\Omega_1$ is a $(b_0,b_1)$-balancer for $V$ with uniformity $r$ acting on $U$.
Let $\phi: \binom{V}{r} \rightarrow \mathbb{Z}$ be $(b_0,h_1)$-divisible for some $h_1\in \bN$ with $h_1\mid b_1$, such that $v\in U$ whenever $v\in V$ and $\phi(\Set{v})\not\equiv 0\mod{b_1}$.
Let $m_0 := 0$.
For each $j \in [n-1]$, let $0 \le m_j < b_1$ be such that $(m_{j-1} - m_j) h_1 \equiv \phi (\Set{v_j})  \mod{ b_1 }$.
Let $\Omega' \subseteq \Omega_1$ consist of precisely $m_j$ copies of $(v_{j}, v_{j+1})$ for all $j \in [n -1]$.
Let $\tau : = \sum_{\mathbf{x} \in \Omega'} \tau_{\mathbf{x}}$, where $\tau_{\mathbf{x}}$ is an $\mathbf{x}$-adapter with respect to $(b_0,b_1;h_1)$, and let $\phi':=\phi+\tau$. Clearly, $\phi'$ is $(b_0)$-divisible.
Note that, for all $j \in [n-1]$,\COMMENT{The case $j=1$ is ok since $m_0=0$ and so we don't actually have to consider $\tau_{(v_{0},v_1)} $}
\begin{align}
		\tau (\Set{v_j}) & \equiv m_{j-1}  \tau_{(v_{j-1},v_j)} (\Set{v_j}) + m_j \tau_{(v_{j},v_{j+1})}(\Set{v_j})\mod{ b_1 }  \nonumber \\
		& \equiv (-m_{j-1} + m_j) h_1
		\equiv  - \phi (\Set{v_j}) \mod{ b_1 }, \label{eqn:phi1}
\end{align}
implying that $\phi'(\Set{v_j})\equiv 0\mod{b_1}$ for all $j\in[n-1]$. Moreover, for all $v\in V\sm U$, we have $\phi(\Set{v})\equiv 0 \mod{b_1}$ by assumption and $\tau(\Set{v})\equiv 0 \mod{b_1}$ since no element of $\Omega_1$ contains $v$. Thus, by Lemma~\ref{lem:auto div} (with $\Set{v_n}$ playing the role of $K$), $\phi'$ is $(b_0,b_1)$-divisible, as required.

We now assume that $k >1$ and that the statement holds for smaller $k$. Again, write $U= \{v_1, \dots, v_n\}$.
For every $\ell \in [n]$, let $U_{\ell} := \{v_j: j \in [\ell]\}$. We construct $\Omega_k$ inductively.
For each $\ell \in \Set{2k,\dots,n}$, we define a multiset $\Omega_{k,\ell}$ as follows.
Let $\Omega_{k-1, \ell-1}$ be a $(b_1, \dots,b_{k} )$-balancer for $V\sm \Set{v_\ell}$ with uniformity $r-1$ acting on $U_{\ell-1}$ and
\begin{align*}
\Delta( \Omega_{k-1, \ell-1}) \le 2^{k-1}  (k-1)!^2 b_k.
\end{align*}
(Indeed, $\Omega_{k-1, \ell-1}$ exists by our induction hypothesis with $r-1,k-1,b_1, \dots,b_{k},U_{\ell-1},V\sm \Set{v_\ell}$ playing the roles of $r,k,b_0, \dots, b_k,U,V$.)
For each $\mathbf{v} = (v_{j_1},\dots,v_{j_{2k-2}}) \in\Omega_{k-1, \ell-1}$, let
\begin{align}
				\mathbf{v'} := (v_{\ell}, v_{j_1}, \dots, v_{j_{k-1}} , v_{j_{\mathbf{v}}}, v_{j_{k}}, \dots, v_{j_{2k-2}} ) \in U_{\ell}\times U_{\ell-1}^{2k-1},\label{shift extension}
\end{align}
such that $j_{\mathbf{v}}\in \Set{\ell-2k+1,\dots,\ell} \sm \Set{\ell,j_{1},\dots,j_{2k-2}}$ (which exists since $\ell\ge 2k$). We let $\Omega_{k, \ell}:=\set{\mathbf{v'}}{\mathbf{v} \in \Omega_{k-1,\ell-1}}$. Now, define $$\Omega_k:=\bigcup_{\ell=2k}^n\Omega_{k, \ell}.$$

\begin{NoHyper}\begin{claim}
$\Delta (\Omega_k) \le  2^{k}  (k!)^2b_k$
\end{claim}\end{NoHyper}

\claimproof
Consider any $S \in \binom{V}{k}$. Clearly, if $S\not\In U$, then $\deg_{\Omega_{k}}(S)=0$, so assume that $S\In U$. Let $i_0$ be the largest~$i \in [n]$ such that $v_i \in S$.

First note that for all $\ell\in \Set{2k,\dots,n}$, we have $$\deg_{\Omega_{k, \ell}}(S)\le \sum_{v\in S} \deg_{\Omega_{k-1,\ell-1}}(S\sm\Set{v}) \le k\Delta (\Omega_{k-1, \ell-1}).$$
On the other hand, we claim that if $\ell<i_0$ or $\ell \ge i_0+2k$, then $\deg_{\Omega_{k, \ell}}(S)=0$. Indeed, in the first case, we have $S\not\In U_\ell$ which clearly implies that $\deg_{\Omega_{k, \ell}}(S)=0$. In the latter case, for any $\mathbf{v} \in\Omega_{k-1, \ell-1}$, we have $j_{\mathbf{v}}\ge \ell-2k+1 >i_0$ and thus $|S\cap \Set{v_\ell,v_{j_{\mathbf{v}}}}|=0$, which also implies $\deg_{\Omega_{k, \ell}}(S)=0$.
Therefore,
\begin{align*}
\deg_{\Omega_{k}}(S)=\sum_{\ell=2k}^n \deg_{\Omega_{k,\ell}}(S) \le 2k^2  \Delta (\Omega_{k-1, \ell-1}) \le 2^{k}  (k!)^2b_k,
\end{align*}
as required.
\endclaimproof

We now show that $\Omega_k$ is indeed a $(b_0, \dots, b_k)$-balancer on $V$ with uniformity $r$ acting on~$U$. The key to this is the following claim, which we will apply repeatedly.

\begin{claim} \label{clm:Omega}
Let $2k \le \ell \le n$.
Let $\phi_{\ell}\colon \binom{V}{r} \rightarrow \mathbb{Z}$ be any $(b_0, \dots, b_{k-1},h_k)$-divisible function for some $h_k \in \bN$ with $h_k\mid b_k$.
Suppose that if $\phi_{\ell}(S) \not\equiv 0 \mod{b_k}$ for some $S\in \binom{V}{k}$, then $S \subseteq U_{\ell}$.
Then there exists $\Omega_{k,\ell}' \subseteq \Omega_{k,\ell}$ such that $\phi_{\ell-1}:=\phi_{\ell} + \tau_{\Omega_{k,\ell}'}$ is $(b_0, \dots, b_{k-1},h_k)$-divisible and if $\phi_{\ell-1}(S) \not\equiv 0 \mod{b_k}$ for some $S\in \binom{V}{k}$, then $S \subseteq U_{\ell-1}$.
\end{claim}

(Here, $ \tau_{\Omega_{k,\ell}'}$ is as in Definition~\ref{def:balancer}, i.e.~$ \tau_{\Omega_{k,\ell}'}:=\sum_{\mathbf{v'}\in \Omega_{k,\ell}'}\tau_{\mathbf{v'}}$ and $\tau_{\mathbf{v'}}$ is an arbitrary $\mathbf{v'}$-adapter with respect to $(b_0,\dots,b_k;h_k)$.)

\claimproof
Define $\rho : \binom{ V\sm\Set{v_\ell}  }{ r - 1 } \rightarrow \mathbb{Z}$ such that for all $S \in \binom{ V\sm\Set{v_\ell}}{r-1}$,
\begin{align*}
\rho (S) := \phi_\ell(S \cup \Set{v_\ell}  ) .
\end{align*}
It is easy to check that this identity transfers to smaller sets $S$, that is, for all $S\In V\sm\Set{v_\ell}$, with $|S|\le r-1$, we have $\rho(S)=\phi_\ell(S \cup \Set{v_\ell}  ) $, where $\rho(S)$ and $\phi_\ell(S \cup \Set{v_\ell}  )$ are as defined in~\eqref{function extension to lower sets}.\COMMENT{$$\rho(S)=\sum_{S' \in \binom{V\sm\Set{v_\ell}}{r-1} : S \subseteq S'} \rho (S')=\sum_{S' \in \binom{V\sm\Set{v_\ell}}{r-1} : S \subseteq S'} \phi_\ell(S' \cup \Set{v_\ell}  )=\sum_{S'' \in \binom{V}{r} : S \cup \Set{v_\ell}\subseteq S''} \phi_\ell(S'')=\phi_\ell(S\cup \Set{v_\ell})$$}

Hence, since $\phi_\ell$ is $(b_0, \dots, b_{k-1},h_k)$-divisible, $\rho$ is $(b_1, \dots, b_{k-1},h_k)$-divisible. Moreover, for all $S\in \binom{V\sm\Set{v_\ell}}{k-1}$ with $\rho(S)\not \equiv 0\mod{b_k}$, we have $S\In U_{\ell-1}$.

Recall that $\Omega_{k-1,\ell-1}$ is a $(b_1, \dots, b_k)$-balancer for~$V\sm\Set{v_\ell}$ with uniformity $r-1$ acting on $U_{\ell-1}$.
Thus, there exists a multiset $\Omega' \subseteq \Omega_{k-1,\ell-1}$ such that
\begin{align}
\rho + \tau_{ \Omega'}\mbox{ is }( b_1, \dots, b_k )\mbox{-divisible}.\label{inductive divisibility}
\end{align}

Let $\Omega'_{k,\ell} \subseteq \Omega_{k,\ell}$ be induced by $\Omega'$, that is, $\Omega'_{k,\ell}:=\set{\mathbf{v'}}{\mathbf{v}\in \Omega'}$ (see~\eqref{shift extension}).
Let $\mathbf{v'}\in \Omega'_{k,\ell}$ and let $\tau_{\mathbf{v'}}$ be any $\mathbf{v'}$-adapter with respect to $(b_0,\dots,b_k;h_k)$. As noted after Definition~\ref{def:adapter}, $\tau_{\mathbf{v'}}$ is $(b_0,\dots,b_{k-1},h_k)$-divisible. Crucially, if $S \in \binom{V}{k}$ and $v_{\ell} \in S$, then $\tau_{\mathbf{v'}}(S)\equiv \tau_{\mathbf{v}} (S \setminus \Set{v_{\ell}})\mod{b_k}$. Indeed, let $x_1^0,\dots,x_{k-1}^0, x_1^1,\dots,x_{k-1}^1$ be such that $\mathbf{v}=(x_1^0,\dots,x_{k-1}^0, x_1^1,\dots,x_{k-1}^1)$ and thus $\mathbf{v'}=(v_\ell,x_1^0,\dots,x_{k-1}^0,v_{j_{\mathbf{v}}}, x_1^1,\dots,x_{k-1}^1)$. Then by Definition~\ref{def:adapter}, as $v_\ell\in S$, we have
\begin{align*}
\tau_{\mathbf{v'}}(S) &\equiv
   \begin{cases}
		(-1)^{0+\sum_{i \in [k-1]}z_i }h_k \mod{ b_k }& \text{if $S\sm\Set{v_\ell} = \{ x_i^{z_i} : i \in [k-1] \}$, }\\
		0 \mod{ b_k }& \text{otherwise,}
	\end{cases}\\
	&\equiv
\tau_{\mathbf{v}} (S \setminus \Set{v_{\ell}})\mod{b_k}.
\end{align*}

Let $\tau_{\Omega'_{k,\ell}}:=\sum_{\mathbf{v'}\in \Omega'_{k,\ell}}\tau_{\mathbf{v'}}$ and $\phi_{\ell-1}:=\phi_\ell+\tau_{\Omega'_{k,\ell}}$.
Note that for all $S \not \subseteq U_{\ell}$, we have $\tau_{\Omega'_{k,\ell}}(S)=0$ by \eqref{shift extension}. Moreover, if $S \in \binom{V}{k}$ and $v_{\ell} \in S$, then $\tau_{\Omega'_{k,\ell}}(S)\equiv \tau_{\Omega'} (S \setminus \Set{v_{\ell}})\mod{b_k}$ by the above.

Clearly, $\phi_{\ell-1}$ is $(b_0, \dots, b_{k-1},h_k)$-divisible. Now, consider any $S \in \binom{V}{k}$ with $S\not\In U_{\ell-1}$.
If $S \not \subseteq U_{\ell}$, then
\begin{align*}
	\phi_{\ell-1}(S) = \phi_{\ell}(S) + \tau_{\Omega'_{k,\ell}} (S) \equiv 0 +0 \equiv 0 \mod{b_k}.
\end{align*}
If $S \subseteq U_{\ell}$, then since $S\not\In U_{\ell-1}$ we must have $v_{\ell} \in S$, and so
\begin{align*}
	\phi_{\ell-1}(S) & = \phi_{\ell}(S) + \tau_{\Omega'_{k,\ell}} (S)
	\equiv \rho(S\sm \Set{v_{\ell}}) + \tau_{\Omega'} (S \sm \Set{v_{\ell}})
	\overset{\eqref{inductive divisibility}}{\equiv} 0 \mod{b_k}.
\end{align*}
This completes the proof of the claim.
\endclaimproof

Now, let $h_k\in \bN$ with $h_k\mid b_k$ and let $\phi\colon \binom{V}{r} \rightarrow \mathbb{Z}$ be any $(b_0, \dots, b_{k-1},h_k)$-divisible function such that $S\In U$ whenever $S\in \binom{V}{k}$ and $\phi(S)\not\equiv 0\mod{b_k}$. Let $\phi_n:=\phi$ and note that $U=U_n$. Thus, by Claim~\ref{clm:Omega}, there exists $\Omega_{k,n}' \subseteq \Omega_{k,n}$ such that $\phi_{n-1}:=\phi_{n} + \tau_{\Omega_{k,n}'}$ is $(b_0, \dots, b_{k-1},h_k)$-divisible and if $\phi_{n-1}(S) \not\equiv 0 \mod{b_k}$ for some $S\in \binom{V}{k}$, then $S \subseteq U_{n-1}$. Repeating this step finally yields some $\Omega_k' \subseteq \Omega_k$ such that $\phi^\ast := \phi + \tau_{\Omega_k'}$ is $(b_0, \dots, b_{k-1},h_k)$-divisible and such that $S\In U_{2k-1}$ whenever $S\in \binom{V}{k}$ and $\phi(S)\not\equiv 0\mod{b_k}$.
 By Lemma~\ref{lem:auto div} (with $U_{2k-1}$ playing the role of $K$), $\phi^\ast$ is then $(b_0, \dots, b_k)$-divisible.
Thus $\Omega_k$ is indeed a $(b_0, \dots, b_k)$-balancer.
\end{proof}

\subsection{Proof of Lemma~\ref{lem:make divisible}}

We now prove Lemma~\ref{lem:make divisible}. For this, we consider the balancers $\Omega_{k}$ guaranteed by Lemma~\ref{lem:balancer}. Recall that these consist of suitable adapters, and that Lemma~\ref{lem:shifters exist} guarantees the existence of shifters corresponding to these adapters. It remains to embed these shifters in a suitable way, which is achieved via Lemma~\ref{lem:rooted embedding}.
The following fact will help us to verify the conditions of Lemma~\ref{lem:balancer}.

\begin{fact}\label{fact:gcd connection}
Let $F$ be an $r$-graph. Then for all $0\le i \le k<r$, we have $\binom{r-i}{k-i}Deg(F)_i\equiv 0\mod{Deg(F)_k}$.
\end{fact}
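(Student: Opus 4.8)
The plan is to combine the handshaking identity~\eqref{handshaking} with the definition of $Deg(F)_i$ as a $\gcd$, exploiting that a $\gcd$ is an integer linear combination of its generators.

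First I would recall that for non-empty $F$ and any $i\in[r-1]_0$ the quantities $|F(S)|$, $S\in\binom{V(F)}{i}$, are non-negative integers, not all zero, so $Deg(F)_i$ is a well-defined positive integer. Next, fix $0\le i\le k<r$ and apply~\eqref{handshaking} (with $G=F$): for every $S\in\binom{V(F)}{i}$,
\begin{align*}
\binom{r-i}{r-k}|F(S)|=\sum_{T\in\binom{V(F)}{k}\colon S\In T}|F(T)|.
\end{align*}
Since $\binom{r-i}{r-k}=\binom{r-i}{k-i}$ and every summand $|F(T)|$ on the right-hand side is divisible by $Deg(F)_k$, we conclude that $\binom{r-i}{k-i}|F(S)|\equiv 0\mod{Deg(F)_k}$ for \emph{every} $S\in\binom{V(F)}{i}$.

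Finally, I would pass from the individual $|F(S)|$ to their $\gcd$: by B\'ezout there are integers $c_S$ (with all but finitely many zero) such that $Deg(F)_i=\sum_{S\in\binom{V(F)}{i}}c_S|F(S)|$, whence
\begin{align*}
\binom{r-i}{k-i}Deg(F)_i=\sum_{S\in\binom{V(F)}{i}}c_S\cdot\binom{r-i}{k-i}|F(S)|\equiv 0\mod{Deg(F)_k},
\end{align*}
as each term of the sum is a multiple of $Deg(F)_k$. This is exactly the claimed congruence, so there is no real obstacle here; the only points requiring a word of care are checking that the binomial coefficient appearing in~\eqref{handshaking} equals $\binom{r-i}{k-i}$, and noting the $\gcd$-is-a-$\bZ$-linear-combination step, both of which are routine.
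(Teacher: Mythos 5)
Your proposal is correct and follows essentially the same route as the paper: apply the handshaking identity~\eqref{handshaking} to get $\binom{r-i}{k-i}|F(S)|\equiv 0\mod{Deg(F)_k}$ for every $i$-set $S$, then pass to the $\gcd$. The B\'ezout step you spell out is left implicit in the paper's one-line proof, but it is the same argument.
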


\proof
Let $S$ be any $i$-set in $V(F)$. By \eqref{handshaking}, we have that $$\binom{r-i}{k-i}|F(S)|=\sum_{T\in \binom{V(F)}{k}\colon S\In T}|F(T)| \equiv 0\mod{Deg(F)_k},$$ and this implies the claim.
\endproof

\lateproof{Lemma~\ref{lem:make divisible}}
Let $x_1^0,\dots,x_{r-1}^0,x_1^1,\dots,x_{r-1}^1$ be distinct vertices (not in $V(G)$). For $k\in[r-1]$, let $X_k:=\Set{x_1^0,\dots,x_{k}^0,x_1^1,\dots,x_{k}^1}$.
By Lemma~\ref{lem:shifters exist}, for every $k\in[r-1]$, there exists an $( x^0_1, \dots, x^0_k , x^1_{1},\dots, x^1_{k} )$-shifter $T_k$ with respect to $F,F^\ast$ such that $T_k[X_k]$ is empty and $T_k$ has degeneracy at most $\binom{f^\ast-1}{r-1}$ rooted at $X_k$. Note that \ref{shifter decomposable} implies that
\begin{align}
|T_k(\Set{x_i^0,x_i^1})|=0\mbox{ for all }i\in[k].\label{no degenerate root}
\end{align}

We may assume that there exists $t\ge \max_{k\in[r-1]}|V(T_k)|$ such that $1/n\ll \gamma \ll 1/t \ll \xi,1/f^\ast$.
Let $Deg(F) = (h_0, h_1, \dots, h_{r-1})$ and let $Deg(F^\ast) = (b_0, b_1, \dots, b_{r-1})$. Since $F^\ast$ is $F$-decomposable and thus $F$-divisible, we have $h_k\mid b_k$ for all $k\in[r-1]_0$.

By Fact~\ref{fact:gcd connection}, we have $\binom{r - i}{k - i} b_{i} \equiv 0 \mod{ b_{k} }$ for all $0 \le i \le k < r$. For each $k\in[r-1]$ with $h_k<b_k$, we apply Lemma~\ref{lem:balancer} to obtain a $(b_0, \dots, b_{k})$-balancer~$\Omega_k$ for~$V(G)$ with uniformity $r$ acting on $V(G)$ such that $\Delta( \Omega_k) \le 2^{k}(k!)^2 b_k$. For values of $k$ for which we have $h_k=b_k$, we let $\Omega_k:=\emptyset$.
For every $k\in[r-1]$ and every $\mathbf{v}=(v_1,\dots,v_{2k})\in \Omega_k$, define the labelling $\Lambda_{\mathbf{v}}\colon X_k\to V(G)$ by setting $\Lambda_{\mathbf{v}}(x_i^0):=v_i$ and $\Lambda_{\mathbf{v}}(x_i^1):=v_{i+k}$ for all $i\in[k]$.

For technical reasons, let $T_0$ be a copy of $F$ and let $X_0:=\emptyset$. Let $\Omega_0$ be the multiset containing $b_0/h_0$ copies of $\emptyset$, and for every $\mathbf{v}\in \Omega_0$, let $\Lambda_{\mathbf{v}}\colon X_0 \to V(G)$ be the trivial $G$-labelling of $(T_0,X_0)$. Note that $T_0$ has degeneracy at most $\binom{f^\ast-1}{r-1}$ rooted at $X_0$. Note also that $\Lambda_{\mathbf{v}}$ does not root any set $S\In V(G)$ with $|S|\in[r-1]$.

We will apply Lemma~\ref{lem:rooted embedding} in order to find faithful embeddings of the $T_k$ into $G$. Let $\Omega:=\bigcup_{k=0}^{r-1}\Omega_k$. Let $\alpha:=\gamma^{-2}/n$.

\begin{NoHyper}
\begin{claim}\label{claim:rooted ok}
For every $k\in[r-1]$ and every $S\In V(G)$ with $|S|\in[r-1]$, we have $|\set{\mathbf{v}\in \Omega_k}{\Lambda_{\mathbf{v}}\mbox{ roots }S}|\le r^{-1}\alpha \gamma n^{r-|S|}$. Moreover, $|\Omega_k|\le r^{-1}\alpha \gamma n^{r}$.
\end{claim}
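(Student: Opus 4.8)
The plan is to bound two quantities: the number of tuples $\mathbf{v} \in \Omega_k$ whose labelling $\Lambda_{\mathbf{v}}$ roots a given set $S$, and the total size $|\Omega_k|$. Both bounds should follow from the degree bound $\Delta(\Omega_k) \le 2^k (k!)^2 b_k$ guaranteed by Lemma~\ref{lem:balancer}, together with the fact that $b_k = Deg(F^\ast)_k$ is bounded in terms of $f^\ast$ (and hence is at most some constant depending only on $t$, using $1/t \ll 1/f^\ast$), whereas $\alpha\gamma n^{r-|S|} = \gamma^{-1} n^{r-|S|-1}$ grows with $n$.

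First I would analyse when $\Lambda_{\mathbf{v}}$ roots a set $S$ with $|S| = i \in [r-1]$. By definition, $\Lambda_{\mathbf{v}}$ roots $S$ iff $S \In \Ima(\Lambda_{\mathbf{v}}) = \Set{v_1,\dots,v_{2k}}$ and $\Lambda_{\mathbf{v}}^{-1}(S)$ is a root of $(T_k, X_k)$, i.e.\ $|T_k(\Lambda_{\mathbf{v}}^{-1}(S))| > 0$. By \eqref{no degenerate root}, a set containing both $x_i^0$ and $x_i^1$ for some $i$ has zero degree in $T_k$, so a root $R \In X_k$ of $(T_k,X_k)$ must satisfy $|R \cap \Set{x_i^0, x_i^1}| \le 1$ for all $i \in [k]$; in particular it has at most $k$ elements, so $i \le k$. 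For such an $R$ of size $i$, the number of $\mathbf{v} \in \Omega_k$ with $\Lambda_{\mathbf{v}}^{-1}(S) = R$ is at most $\deg_{\Omega_k}(S')$ for the appropriate $S'$: more precisely, if we write $R = \Set{x_{j}^{z_j} : j \in J}$ for some $J \In [k]$ with $|J| = i$, then $\Lambda_{\mathbf{v}}^{-1}(S) = R$ forces $S$ to meet exactly one of $\Set{v_j, v_{j+k}}$ for each $j \in J$ (and in a prescribed way); summing over a small number of choices (at most $2^k \binom{k}{i} \le 4^k$ possibilities for $R$), the total count is at most $4^k \Delta(\Omega_k) \le 4^k \cdot 2^k (k!)^2 b_k$. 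Since $1/n \ll \gamma \ll 1/t \ll \xi, 1/f^\ast$ and $b_k \le \binom{f^\ast}{k} \le 2^{f^\ast}$, and $f^\ast \le t$, this constant is much smaller than $r^{-1}\alpha\gamma n^{r - i} = r^{-1}\gamma^{-1} n^{r-i-1} \ge r^{-1}\gamma^{-1}$, which tends to infinity. So the first inequality holds for all large $n$.

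For the bound $|\Omega_k| \le r^{-1} \alpha \gamma n^r = r^{-1}\gamma^{-1} n^{r-1}$, I would use a double-counting argument via handshaking. Each $\mathbf{v} \in \Omega_k$ is a $2k$-tuple of distinct vertices of $V(G)$, and $\deg_{\Omega_k}(S)$ counts those $\mathbf{v}$ with $|S \cap \Set{v_i, v_{i+k}}| = 1$ for all $i \in [k]$; for a fixed $\mathbf{v}$, the number of $k$-sets $S$ with this property is exactly $2^k$ (choose one of $\Set{v_i, v_{i+k}}$ for each $i$). Hence $\sum_{S \in \binom{V(G)}{k}} \deg_{\Omega_k}(S) = 2^k |\Omega_k|$, which gives $|\Omega_k| = 2^{-k} \sum_S \deg_{\Omega_k}(S) \le 2^{-k}\binom{n}{k}\Delta(\Omega_k) \le \binom{n}{k}(k!)^2 b_k \le n^k (k!)^2 b_k$. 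Since $k \le r-1$, this is at most $n^{r-1}(k!)^2 b_k \le n^{r-1}(k!)^2 2^{f^\ast}$, which is much smaller than $r^{-1}\gamma^{-1}n^{r-1}$ by the hierarchy $\gamma \ll 1/t$ (recall $f^\ast \le t$), for all sufficiently large $n$.

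The main obstacle is purely bookkeeping: carefully checking that ``$\Lambda_{\mathbf{v}}$ roots $S$'' only happens when $S$ is the image of a non-degenerate root set of $T_k$, and counting the small combinatorial multiplicities correctly so that the relevant quantity is genuinely controlled by $\Delta(\Omega_k)$; once that is set up, everything follows from the hierarchy of constants since the right-hand sides grow polynomially in $n$ while the left-hand sides are $O_t(n^{r-1})$ or $O_t(1)$ respectively with a better power or a better constant. No new ideas are needed beyond combining Lemma~\ref{lem:balancer}, \eqref{no degenerate root}, and a handshaking identity.
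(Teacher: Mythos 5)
Your overall strategy is the same as the paper's, and your handshaking argument for the second bound ($\sum_{S}\deg_{\Omega_k}(S)=2^k|\Omega_k|$, hence $|\Omega_k|\le n^k\Delta(\Omega_k)\le r^{-1}\alpha\gamma n^{r}$) is correct. However, there is a genuine error in the first part. You claim that for a fixed root $R\In X_k$ with $|R|=|S|=i$, the number of $\mathbf{v}\in\Omega_k$ with $\Lambda_{\mathbf{v}}^{-1}(S)=R$ is at most $\deg_{\Omega_k}(S')$ ``for the appropriate $S'$'', and conclude an overall bound of $4^k\Delta(\Omega_k)=O(1)$. This fails whenever $i<k$: the quantity $\deg_{\Omega_k}$ is only defined for $k$-sets, and the condition $\Lambda_{\mathbf{v}}^{-1}(S)=R$ pins down only $i$ of the $2k$ coordinates of $\mathbf{v}$, so there is no single $k$-set $S'$ whose degree controls all such $\mathbf{v}$ --- the natural extension of $S$ to a $k$-set meeting every pair $\Set{v_j,v_{j+k}}$ exactly once depends on $\mathbf{v}$ itself. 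Concretely, in the construction of Lemma~\ref{lem:balancer} every tuple of $\Omega_{k,\ell}$ has $v_\ell$ as its first coordinate, so already for $k=2$ the number of $\mathbf{v}\in\Omega_2$ rooting the singleton $S=\Set{v_\ell}$ is of order $n$, not $O(1)$.

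The repair is exactly the paper's argument: from $|S\cap\Set{v_j,v_{j+k}}|\le 1$ for all $j\in[k]$ (which you correctly derive from \eqref{no degenerate root}) one extends $S$ to \emph{some} $k$-set $S'\supseteq S$ with $|S'\cap\Set{v_j,v_{j+k}}|=1$ for all $j$; there are at most $n^{k-|S|}$ candidates for $S'$, and each accounts for at most $\Delta(\Omega_k)$ tuples, giving $n^{k-|S|}\Delta(\Omega_k)\le n^{r-1-|S|}\cdot 2^k(k!)^2b_k\le r^{-1}\alpha\gamma n^{r-|S|}$. Your final conclusion therefore survives, since the target $r^{-1}\alpha\gamma n^{r-|S|}=r^{-1}\gamma^{-1}n^{r-1-|S|}$ comfortably absorbs the extra factor $n^{k-|S|}$ (and in the one tight case $|S|=r-1$ we necessarily have $k=r-1=|S|$, where your argument is fine). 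But the constant bound you assert along the way is false, and the summation over the extensions $S'$ cannot be omitted.
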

\end{NoHyper}

\claimproof
Let $k\in[r-1]$ and $S\In V(G)$ with $|S|\in[r-1]$. Consider any $\mathbf{v}=(v_1,\dots,v_{2k})\in \Omega_k$ and suppose that $\Lambda_{\mathbf{v}}$ roots $S$, i.e.~$S\In \Set{v_1,\dots,v_{2k}}$ and $|T_k(\Lambda_{\mathbf{v}}^{-1}(S))|>0$. Note that if we had $\Set{x_i^0,x_i^1}\In \Lambda_{\mathbf{v}}^{-1}(S)$ for some $i\in[k]$ then $|T_k(\Lambda_{\mathbf{v}}^{-1}(S))|=0$ by \eqref{no degenerate root}, a contradiction. We deduce that $|S\cap \Set{v_i,v_{i+k}}|\le 1$ for all $i\in[k]$, in particular $|S|\le k$. Thus there exists $S'\supseteq S$ with $|S'|=k$ and such that $|S'\cap \Set{v_i,v_{i+k}}|=1$ for all $i\in[k]$. However, there are at most $n^{k-|S|}$ sets $S'$ with $|S'|=k$ and $S'\supseteq S$, and for each such $S'$, the number of $\mathbf{v}=(v_1,\dots,v_{2k})\in \Omega_k$ with $|S'\cap \Set{v_i,v_{i+k}}|=1$ for all $i\in[k]$ is at most $\Delta(\Omega_k)$. Thus, $|\set{\mathbf{v}\in \Omega_k}{\Lambda_{\mathbf{v}}\mbox{ roots }S}|\le n^{k-|S|}\Delta(\Omega_k)\le n^{r-1-|S|} 2^{k}(k!)^2 b_k \le r^{-1}\alpha \gamma n^{r-|S|}$. Similarly, we have $|\Omega_k|\le n^k\Delta(\Omega_k)\le r^{-1}\alpha \gamma n^{r}$.
\endclaimproof

Claim~\ref*{claim:rooted ok} implies that for every $S\In V(G)$ with $|S|\in [r-1]$, we have\COMMENT{here we also use that for each $\mathbf{v}\in \Omega_0$, $\Lambda_{\mathbf{v}}$ does not root $S$} $$|\set{\mathbf{v}\in \Omega}{\Lambda_{\mathbf{v}}\mbox{ roots }S}|\le \alpha \gamma n^{r-|S|}-1,$$\COMMENT{$-1$ needed for Lemma~\ref{lem:rooted embedding}, makes the analysis easier there}
and we have $|\Omega|\le b_0/h_0+\sum_{k=1}^{r-1}|\Omega_k|\le \alpha \gamma n^r$.
Therefore, by Lemma~\ref{lem:rooted embedding}, for every $k\in[r-1]_0$ and every $\mathbf{v}\in \Omega_k$, there exists a $\Lambda_{\mathbf{v}}$-faithful embedding $\phi_{\mathbf{v}}$ of $(T_k,X_k)$ into $G$, such that, letting $T_{\mathbf{v}}:=\phi_{\mathbf{v}}(T_k)$, the following hold:

\begin{enumerate}[label=\rm{(\alph*)}]
\item for all distinct $\mathbf{v}_1,\mathbf{v}_2\in \Omega$, the hulls of $(T_{\mathbf{v}_1},\Ima(\Lambda_{\mathbf{v}_1}))$ and $(T_{\mathbf{v}_2},\Ima(\Lambda_{\mathbf{v}_2}))$ are edge-disjoint;\label{rooted embedding:disjoint new}
\item for all $\mathbf{v}\in \Omega$ and $e\in O$ with $e\In V(T_{\mathbf{v}})$, we have $e\In \Ima(\Lambda_{\mathbf{v}})$;\label{rooted embedding:f-disjoint new}
\item $\Delta(\bigcup_{\mathbf{v}\in \Omega}T_{\mathbf{v}})\le \alpha \gamma^{(2^{-r})} n$.\label{rooted embedding:maxdeg new}
\end{enumerate}

Note that by \ref{rooted embedding:disjoint new}, all the graphs $T_{\mathbf{v}}$ are edge-disjoint. Let $$D:=\bigcup_{\mathbf{v}\in \Omega}T_{\mathbf{v}}.$$ By \ref{rooted embedding:maxdeg new}, we have $\Delta(D)\le \gamma^{-2}$. We will now show that $D$ is as desired.

For every $k\in[r-1]$ and $\mathbf{v}\in \Omega_k$, we have that $T_{\mathbf{v}}$ is a $\mathbf{v}$-shifter with respect to $F,F^\ast$ by definition of $\Lambda_{\mathbf{v}}$ and since $\phi_{\mathbf{v}}$ is $\Lambda_{\mathbf{v}}$-faithful. Thus, by Fact~\ref{fact:shifter is adapter},
\begin{align}
\mathds{1}_{T_{\mathbf{v}}}\mbox{ is a }\mathbf{v}\mbox{-adapter with respect to }(b_0,\dots,b_k;h_k).\label{shifter is adapter}
\end{align}

\begin{NoHyper}
\begin{claim}\label{claim:all shifter dec}
For every $\Omega'\In \Omega$, $\bigcup_{\mathbf{v}\in \Omega'}T_{\mathbf{v}}$ has a $1$-well separated $F$-decomposition $\cF$ such that $\cF^{\le(r+1)}$ and $O$ are edge-disjoint.
\end{claim}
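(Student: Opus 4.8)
\textbf{Proof proposal for Claim~\ref{claim:all shifter dec}.}

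The plan is to build the desired $F$-decomposition by taking the union of the local $F$-decompositions of the individual shifters $T_{\mathbf{v}}$, and then to verify that this union is a $1$-well separated $F$-packing whose edges at the $(r+1)$-level avoid $O$. First I would fix $\Omega'\In \Omega$. For each $\mathbf{v}\in \Omega'$, say $\mathbf{v}\in \Omega_k$, the shifter $T_{\mathbf{v}}=\phi_{\mathbf{v}}(T_k)$ is (up to the relabelling given by the faithful embedding $\phi_{\mathbf{v}}$) a copy of $T_k$, and by property \ref{shifter decomposable} the $r$-graph $T_k$ has a $1$-well separated $F$-decomposition; transporting this along $\phi_{\mathbf{v}}$ yields a $1$-well separated $F$-decomposition $\cF_{\mathbf{v}}$ of $T_{\mathbf{v}}$ (for $\mathbf{v}\in \Omega_0$ we simply take $\cF_{\mathbf{v}}:=\Set{T_{\mathbf{v}}}$, which is trivially $1$-well separated since $T_0$ is a single copy of $F$). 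Set $\cF:=\bigcup_{\mathbf{v}\in \Omega'}\cF_{\mathbf{v}}$; since the $T_{\mathbf{v}}$ are pairwise edge-disjoint by \ref{rooted embedding:disjoint new}, $\cF$ is indeed an $F$-packing with $\cF^{(r)}=\bigcup_{\mathbf{v}\in \Omega'}T_{\mathbf{v}}$.

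Next I would check that $\cF$ is $1$-well separated, i.e.~that \ref{separatedness:1} holds: for distinct $F',F''\in \cF$ we need $|V(F')\cap V(F'')|\le r$. If $F'$ and $F''$ come from the same $\cF_{\mathbf{v}}$ this is immediate since $\cF_{\mathbf{v}}$ is $1$-well separated. If $F'\in \cF_{\mathbf{v}_1}$ and $F''\in \cF_{\mathbf{v}_2}$ with $\mathbf{v}_1\ne \mathbf{v}_2$, then $V(F')\In V(T_{\mathbf{v}_1})$ and $V(F'')\In V(T_{\mathbf{v}_2})$, so it suffices to bound $|V(T_{\mathbf{v}_1})\cap V(T_{\mathbf{v}_2})|$. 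Here I would invoke the edge-disjointness of the \emph{hulls} of $(T_{\mathbf{v}_1},\Ima(\Lambda_{\mathbf{v}_1}))$ and $(T_{\mathbf{v}_2},\Ima(\Lambda_{\mathbf{v}_2}))$ from \ref{rooted embedding:disjoint new}: the hull contains every $r$-set disjoint from the root-image, so if $|V(T_{\mathbf{v}_1})\cap V(T_{\mathbf{v}_2})|\ge r$ one can find an $r$-set lying in $V(T_{\mathbf{v}_1})\cap V(T_{\mathbf{v}_2})$ witnessing a common hull edge unless that $r$-set meets the root-images in a non-root way — and since the roots of each $T_k$ rooted at $X_k$ have size at most $r-1$, one checks that the only way to avoid a common hull edge of size exactly $r$ is already incompatible with $|V(F')\cap V(F'')|>r$. (This is essentially the same bookkeeping as in the proof of Proposition~\ref{prop:S cover}; the faithful embeddings keep root-structure controlled.) I expect this step — carefully tracing through the hull definition to extract \ref{separatedness:1} for $F$-copies sitting inside distinct shifters — to be the main technical point, though it is routine given Lemma~\ref{lem:rooted embedding}.

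Finally, for the edge-disjointness of $\cF^{\le(r+1)}$ and $O$, note that $\cF^{\le(r+1)}$ consists of $(r+1)$-sets each of which lies inside $V(F')$ for some $F'\in \cF$, hence inside $V(T_{\mathbf{v}})$ for some $\mathbf{v}\in \Omega'$. Suppose some $e\in O$ with $|e|=r+1$ lies in $\cF^{\le(r+1)}$, so $e\In V(T_{\mathbf{v}})$ for some $\mathbf{v}$. Then $e\In \Ima(\Lambda_{\mathbf{v}})$ by \ref{rooted embedding:f-disjoint new}; but $|\Ima(\Lambda_{\mathbf{v}})|=|X_k|=2k\le 2(r-1)$, and crucially $e\In V(F')$ for some copy $F'\in \cF_{\mathbf{v}}$ of $F$, whereas property \ref{shifter decomposable} forces $|V(F')\cap \Set{x_i^0,x_i^1}|\le 1$ for each $i$, so $|V(F')\cap \Ima(\Lambda_{\mathbf{v}})|\le k < r+1$ — hence $e\not\In \Ima(\Lambda_{\mathbf{v}})$, a contradiction. (For $\mathbf{v}\in \Omega_0$, $\Ima(\Lambda_{\mathbf{v}})=\emptyset$ so there is nothing to check.) Therefore no such $e$ exists, i.e.~$\cF^{\le(r+1)}$ and $O$ are edge-disjoint, completing the proof of the claim.
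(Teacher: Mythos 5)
Your overall plan (decompose each $T_{\mathbf{v}}$ via \ref{shifter decomposable}, take the union, and use \ref{rooted embedding:disjoint new} and \ref{rooted embedding:f-disjoint new}) is the paper's, and your argument for the edge-disjointness of $\cF^{\le(r+1)}$ and $O$ is correct and matches the paper's. However, the well-separatedness step --- which you yourself flag as the main technical point --- has a genuine gap. Your reduction to bounding $|V(T_{\mathbf{v}_1})\cap V(T_{\mathbf{v}_2})|\le r$ is not available: Lemma~\ref{lem:rooted embedding} controls only the \emph{hulls} of distinct embeddings, not their vertex intersections; two tuples in $\Omega_k$ may have identical root images and the embedded shifters may share many further vertices. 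Moreover, even a bound $|V(F')\cap V(F'')|\le r$ across distinct shifters would only give \ref{separatedness:1}; for the union to be $1$-well separated you need the strict inequality $|V(F')\cap V(F'')|<r$, i.e.\ that $\cF_{\mathbf{v}_1}$ and $\cF_{\mathbf{v}_2}$ are $r$-disjoint, so that Fact~\ref{fact:ws}\ref{fact:ws:2} applies with $\kappa=1$.

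The missing idea is to prove that \emph{every} $r$-subset $e$ of $V(F')$, for $F'\in\cF_{\mathbf{v}}$, is an edge of the hull of $(T_{\mathbf{v}},\Ima(\Lambda_{\mathbf{v}}))$. If $e\cap\Ima(\Lambda_{\mathbf{v}})=\emptyset$ this is immediate from the definition of the hull; otherwise $S:=e\cap\Ima(\Lambda_{\mathbf{v}})$ satisfies $|S\cap\Set{v_i,v_{i+k}}|\le 1$ for all $i$ by \ref{shifter decomposable}, so $S$ extends to a transversal $k$-set $S'$ with $|S'\cap\Set{v_i,v_{i+k}}|=1$ for all $i$, and \ref{shifter degrees} gives $|T_{\mathbf{v}}(S')|\equiv\pm h_k\not\equiv 0\pmod{b_k}$, hence $|T_{\mathbf{v}}(S)|\ge|T_{\mathbf{v}}(S')|>0$, i.e.\ $S$ is a root and $e$ is a hull edge. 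This is exactly where the condition $h_k<b_k$ (equivalently, the convention $\Omega_k=\emptyset$ when $h_k=b_k$) is needed, and your proposal never invokes \ref{shifter degrees}. Once this is in place, a common $r$-set of $V(F')$ and $V(F'')$ for $F'\in\cF_{\mathbf{v}_1}$, $F''\in\cF_{\mathbf{v}_2}$ would be a common hull edge, contradicting \ref{rooted embedding:disjoint new} directly --- no bound on $|V(T_{\mathbf{v}_1})\cap V(T_{\mathbf{v}_2})|$ is needed or used.
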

\end{NoHyper}

\claimproof
Clearly, for every $\mathbf{v}\in \Omega_0$, $T_{\mathbf{v}}$ is a copy of $F$ and thus has a $1$-well separated $F$-decomposition $\cF_{\mathbf{v}}=\Set{T_{\mathbf{v}}}$. Moreover, for each $k\in[r-1]$ and all $\mathbf{v}=(v_1,\dots,v_{2k})\in \Omega_k$, $T_{\mathbf{v}}$ has a $1$-well separated $F$-decomposition $\cF_{\mathbf{v}}$ by \ref{shifter decomposable} such that for all $F'\in \cF_{\mathbf{v}}$ and all $i\in[k]$, $|V(F')\cap\Set{v_i,v_{i+k}}|\le 1$.

In order to prove the claim, it is thus sufficient to show that for all distinct $\mathbf{v}_1,\mathbf{v}_2\in \Omega$, $\cF_{\mathbf{v}_1}$ and $\cF_{\mathbf{v}_2}$ are $r$-disjoint (implying that $\cF:=\bigcup_{\mathbf{v}\in \Omega'}\cF_{\mathbf{v}}$ is $1$-well separated by Fact~\ref{fact:ws}\ref{fact:ws:2}) and that for every $\mathbf{v}\in \Omega$, $\cF_{\mathbf{v}}^{\le(r+1)}$ and $O$ are edge-disjoint.

To this end, we first show that for every $\mathbf{v}\in \Omega$ and $F'\in \cF_{\mathbf{v}}$, we have that $|V(F')\cap \Ima(\Lambda_{\mathbf{v}})|<r$ and every $e\in \binom{V(F')}{r}$ belongs to the hull of $(T_{\mathbf{v}},\Ima(\Lambda_{\mathbf{v}}))$. If $\mathbf{v}\in \Omega_0$, this is clear since $\Ima(\Lambda_{\mathbf{v}})=\emptyset$ and $F'=T_{\mathbf{v}}$, so suppose that $\mathbf{v}=(v_1,\dots,v_{2k})\in \Omega_k$ for some $k\in[r-1]$. (In particular, $h_k<b_k$.) By the above, we have $|V(F')\cap\Set{v_i,v_{i+k}}|\le 1$ for all $i\in[k]$. In particular, $|V(F')\cap \Ima(\Lambda_{\mathbf{v}})|\le k<r$, as desired. Moreover, suppose that $e\in \binom{V(F')}{r}$. If $e\cap \Ima(\Lambda_{\mathbf{v}})=\emptyset$, then $e$ belongs to the hull of $(T_{\mathbf{v}},\Ima(\Lambda_{\mathbf{v}}))$, so suppose further that $S:=e\cap \Ima(\Lambda_{\mathbf{v}})$ is not empty. Clearly, $|S\cap \Set{v_i,v_{i+k}}|\le |V(F')\cap \Set{v_i,v_{i+k}}|\le 1$ for all $i\in[k]$. Thus, there exists $S'\supseteq S$ with $|S'|=k$ and $|S'\cap \Set{v_i,v_{i+k}}|=1$ for all $i\in[k]$. By \ref{shifter degrees} (and since $h_k<b_k$), we have that $|T_{\mathbf{v}}(S')|>0$, which clearly implies that $|T_{\mathbf{v}}(S)|>0$. Thus, $e\cap \Ima(\Lambda_{\mathbf{v}})=S$ is a root of $(T_{\mathbf{v}},\Ima(\Lambda_{\mathbf{v}}))$ and therefore $e$ belongs to the hull of $(T_{\mathbf{v}},\Ima(\Lambda_{\mathbf{v}}))$.

Now, consider distinct $\mathbf{v}_1,\mathbf{v}_2\in \Omega$ and suppose, for a contradiction, that there is $e\in \binom{V(G)}{r}$ such that $e\In V(F')\cap V(F'')$ for some $F'\in \cF_{\mathbf{v}_1}$ and $F''\in \cF_{\mathbf{v}_2}$. But by the above, $e$ belongs to the hulls of both $(T_{\mathbf{v}_1},\Ima(\Lambda_{\mathbf{v}_1}))$ and $(T_{\mathbf{v}_2},\Ima(\Lambda_{\mathbf{v}_2}))$, a contradiction to \ref{rooted embedding:disjoint new}.

Finally, consider $\mathbf{v}\in \Omega$ and $e\in O$. We claim that $e\notin \cF_{\mathbf{v}}^{\le(r+1)}$. Let $F'\in \cF_{\mathbf{v}}$ and suppose, for a contradiction, that $e\In V(F')$. By \ref{rooted embedding:f-disjoint new}, we have $e\In \Ima(\Lambda_{\mathbf{v}})$. On the other hand, by the above, we have $|V(F')\cap \Ima(\Lambda_{\mathbf{v}})|<r$, a contradiction.
\endclaimproof

Clearly, $D$ is $F$-divisible by Claim~\ref*{claim:all shifter dec}.
We will now show that for every $F$-divisible $r$-graph $H$ on $V(G)$ which is edge-disjoint from $D$, there exists a subgraph $D^\ast\In D$ such that $H \cup D^\ast$ is $F^\ast$-divisible and $D-D^\ast$ has a $1$-well separated $F$-decomposition $\cF$ such that $\cF^{\le(r+1)}$ and $O$ are edge-disjoint.

Let $H$ be any $F$-divisible $r$-graph on $V(G)$ which is edge-disjoint from~$D$. We will inductively prove that the following holds for all $k\in[r-1]_0$:
\begin{itemize}
\item[SHIFT$_{k}$] there exists $\Omega_k^\ast\In \Omega_0\cup \dots \cup \Omega_k$ such that $\mathds{1}_{H\cup D_k^\ast}$ is $(b_0,\dots,b_k)$-divisible, where $D_k^\ast:=\bigcup_{\mathbf{v}\in \Omega_k^\ast}T_{\mathbf{v}}$.
\end{itemize}
We first establish SHIFT$_0$. Since $H$ is $F$-divisible, we have $|H|\equiv 0\mod{h_0}$. Since $h_0\mid b_0$, there exists some $0\le a< b_0/h_0$ such that $|H|\equiv ah_0 \mod{b_0}$.
Let $\Omega_0^\ast$ be the multisubset of $\Omega_0$ consisting of $b_0/h_0-a$ copies of $\emptyset$. Let $D_0^\ast:=\bigcup_{\mathbf{v}\in \Omega_0^\ast}T_{\mathbf{v}}$. Hence, $D_0^\ast$ is the edge-disjoint union of $b_0/h_0-a$ copies of $F$. We thus have $|H\cup D_0^\ast|\equiv ah_0+|F|(b_0/h_0-a)\equiv ah_0+b_0-ah_0\equiv 0\mod{b_0}$. Therefore, $\mathds{1}_{H\cup D_0^\ast}$ is $(b_0)$-divisible, as required.

Suppose now that SHIFT$_{k-1}$ holds for some $k \in [r-1]$, that is, there is $\Omega_{k-1}^\ast\In \Omega_0\cup \dots \cup \Omega_{k-1}$ such that $\mathds{1}_{H\cup D_{k-1}^\ast}$ is $(b_0,\dots,b_{k-1})$-divisible, where $D_{k-1}^\ast:=\bigcup_{\mathbf{v}\in \Omega_{k-1}^\ast}T_{\mathbf{v}}$.
Note that $D_{k-1}^\ast$ is $F$-divisible by Claim~\ref*{claim:all shifter dec}. Thus, since both $H$ and $D_{k-1}^\ast$ are $F$-divisible, we have $\mathds{1}_{H\cup D_{k-1}^\ast}(S)=|(H\cup D_{k-1}^\ast)(S)|\equiv 0\mod{h_k}$ for all $S\in \binom{V(G)}{k}$. Hence, $\mathds{1}_{H\cup D_{k-1}^\ast}$ is in fact $(b_0,\dots,b_{k-1},h_k)$-divisible. Thus, if $h_k=b_k$, then $\mathds{1}_{H\cup D_{k-1}^\ast}$ is $(b_0,\dots,b_k)$-divisible and we let $\Omega_k':=\emptyset$. Now, assume that $h_k<b_k$.
Recall that $\Omega_k$ is a $(b_0, \dots, b_k)$-balancer and that $h_k\mid b_k$.
Thus, there exists a multisubset $\Omega_k'$ of $\Omega_k$ such that the function $\mathds{1}_{H\cup D_{k-1}^\ast} + \sum_{\mathbf{v} \in \Omega_k'} \tau_{\textbf{v}}$ is $( b_0, \dots, b_k)$-divisible, where $\tau_{\textbf{v}}$ is any $\mathbf{v}$-adapter with respect to $(b_0, \dots, b_{k};h_k)$.
Recall that by \eqref{shifter is adapter} we can take $\tau_{\textbf{v}}=\mathds{1}_{T_{\mathbf{v}}}$.
In both cases, let
\begin{align*}
\Omega_k^\ast&:=\Omega_{k-1}^\ast \cup \Omega_k'\In \Omega_0\cup \dots \cup \Omega_k;\\
D_k'&:=\bigcup_{\mathbf{v}\in \Omega_k'}T_{\mathbf{v}};\\
D_k^\ast&:=\bigcup_{\mathbf{v}\in \Omega_k^\ast}T_{\mathbf{v}}=D_{k-1}^\ast \cup D_k'.
\end{align*}
Thus, $\sum_{\mathbf{v} \in \Omega_k'} \tau_{\textbf{v}} = \mathds{1}_{D_k'}$ and hence $\mathds{1}_{H\cup D_{k}^\ast}=\mathds{1}_{H\cup D_{k-1}^\ast}+ \mathds{1}_{D_k'}$ is $( b_0, \dots, b_k)$-divisible, as required.

Finally, SHIFT$_{r-1}$ implies that there exists $\Omega_{r-1}^\ast\In \Omega$ such that $\mathds{1}_{H\cup D^\ast}$ is $(b_0,\dots,b_{r-1})$-divisible, where $D^\ast:=\bigcup_{\mathbf{v}\in \Omega_{r-1}^\ast}T_{\mathbf{v}}$. Clearly, $D^\ast\In D$, and we have that $H \cup D^\ast$ is $F^\ast$-divisible. Finally, by Claim~\ref*{claim:all shifter dec}, $$D-D^\ast=\bigcup_{\mathbf{v}\in \Omega \sm \Omega_{r-1}^\ast}T_{\mathbf{v}}$$ has a $1$-well separated $F$-decomposition $\cF$ such that $\cF^{\le(r+1)}$ and $O$ are edge-disjoint, completing the proof.
\endproof

\vspace{2cm}

\appendix

\section{Additional tools}\label{app:tools}

We gather a few more tools which we use in the appendix.

\begin{prop}[\cite{GKLO}]\label{prop:sparse noise containment}
Let $f,r'\in \bN$ and $r\in \bN_0$ with $f>r$.\COMMENT{Don't require $f>r'$ because in some applications $f-i$ plays the role of $f$ and might be smaller than $r'$.} Let $L$ be an $r'$-graph on $n$ vertices with $\Delta(L)\le \gamma n$. Then every $e\in \binom{V(L)}{r}$ that does not contain any edge of $L$ is contained in at most $2^r \gamma n^{f-r}$ $f$-sets of $V(L)$ that contain an edge of $L$.\COMMENT{Don't use the more complicated bound here}
\end{prop}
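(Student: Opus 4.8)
\textbf{Proof plan for Proposition~\ref{prop:sparse noise containment}.}

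The plan is to count the relevant $f$-sets by choosing which edge of $L$ they contain and then extending greedily. Fix an $r$-set $e\in\binom{V(L)}{r}$ which contains no edge of $L$, and let $Q$ be an $f$-set of $V(L)$ with $e\In Q$ that contains some edge $g\in L$. Since $e$ contains no edge of $L$, we must have $g\not\In e$, so $g$ contains at least one vertex of $Q\sm e$; in particular $|g\cap(Q\sm e)|\ge 1$. The strategy is thus: first pick one vertex $v\in Q\sm e$ lying in such an edge $g$, then pick $g$ among the edges of $L$ through $v$, and finally pick the remaining $f-r-1$ vertices of $Q$ arbitrarily from $V(L)$.

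First I would bound the number of ways to choose $v$ together with an edge $g\in L$ with $v\in g$ and $g\cap e=\emptyset$ (the case $g\cap e\neq\emptyset$ is handled the same way, only making the count smaller, so it suffices to bound the total). There are at most $n$ choices for $v\in V(L)$, and for each such $v$ there are at most $\Delta(L)\le \gamma n$ edges $g\in L$ containing $v$. Having fixed $g$ (which supplies $r'$ vertices, at least one of which, namely $v$, lies outside $e$), the set $Q$ is determined once we choose the remaining vertices of $Q\sm(e\cup g)$; there are at most $r'-1\le f-r-1$ such vertices still to be chosen (here one uses $f>r$ and $f>r'$ is not required, but $Q$ has $f-r$ vertices outside $e$ of which at least one is already in $g$), so the number of completions is at most $n^{f-r-1}$.

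Multiplying, the number of pairs (vertex $v$, completion to an $f$-set $Q$ through a fixed edge) is at most $n\cdot\gamma n\cdot n^{f-r-1}=\gamma n^{f-r}$; but each such $Q$ may have been counted several times, once for every vertex $v\in Q\sm e$ that lies in an edge of $L$ contained in $Q$. Rather than deduplicate, I would simply observe that the overcount is at most $|Q\sm e|=f-r\le 2^r$ (crudely), or more carefully, that we may choose $v$ to be any fixed vertex of $g$ outside $e$, so the bound $2^r\gamma n^{f-r}$ follows with room to spare; the cleanest version is to note that choosing an \emph{ordered} pair consisting of an edge $g\in L$ with $g\not\In e$ and then the $\le f-r$ remaining vertices of $Q$, summed over the $\le 2^r$ possible values of $|e\cap g|$, already gives at most $2^r\cdot\gamma n\cdot n^{f-r-1}\cdot n^{|e\cap g|\text{-correction}}$, which one bounds by $2^r\gamma n^{f-r}$.

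The main obstacle is purely bookkeeping: making the multiplicities and the $\le f-r$ versus $2^r$ comparison precise enough to land exactly on the stated constant $2^r$, while being careful that an edge $g$ of $L$ meeting $e$ in $i<r$ vertices contributes $r'-i$ new vertices but leaves $f-r-(r'-i)$ further vertices of $Q\sm e$ free, so the worst case is $i$ as large as possible. Since this is stated as a known result from~\cite{GKLO}, I would keep the argument short and refer there for the routine constant-chasing.
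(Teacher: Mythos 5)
There is a genuine gap, and it sits at the heart of your count. In this paper, $\Delta(L)$ for an $r'$-graph $L$ denotes the maximum $(r'-1)$-degree $\Delta_{r'-1}(L)$, i.e.\ the largest number of edges containing a fixed $(r'-1)$-\emph{set}, not a fixed vertex. Your step ``for each such $v$ there are at most $\Delta(L)\le\gamma n$ edges $g\in L$ containing $v$'' is therefore false whenever $r'\ge 3$: the number of edges through a single vertex can be of order $\gamma n^{r'-1}$ (consider $L=\krq{3}{n}$, where $\Delta(L)\le n$ but every vertex lies in $\binom{n-1}{2}$ edges). Since the proposition is applied with $r'=r$ and $r'=r+1$ for general $r$, this cannot be waved away. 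There are further problems even granting that step: $n\cdot\gamma n\cdot n^{f-r-1}$ equals $\gamma n^{f-r+1}$, not $\gamma n^{f-r}$; the number of vertices of $Q$ still to be chosen after fixing $g$ is $f-r-|g\setminus e|$, not ``at most $r'-1$''; and the inequalities $r'-1\le f-r-1$ and $f-r\le 2^r$ that you invoke are both false in general.

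The fix is to organise the count by the trace $S:=g\cap e$ rather than by a vertex of $g\setminus e$; this is where the factor $2^r$ actually comes from. For each of the at most $2^r$ subsets $S\subseteq e$ (necessarily $S\subsetneq g$, since $e$ contains no edge of $L$), the number of edges $g\in L$ with $g\cap e=S$ is at most $n^{r'-1-|S|}\Delta(L)\le\gamma n^{r'-|S|}$ (extend $S$ to an $(r'-1)$-set in at most $n^{r'-1-|S|}$ ways and apply the codegree bound), and each such $g$ lies in at most $n^{f-r-(r'-|S|)}$ $f$-sets containing $e$ (and in none if this exponent is negative). The product is $\gamma n^{f-r}$ for each $S$, and summing over $S\subseteq e$ gives $2^r\gamma n^{f-r}$. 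This grouping is exactly what keeps the two exponents complementary; choosing a vertex of $g\setminus e$ first destroys that bookkeeping, because the number of remaining free vertices of $Q$ depends on $|g\cap e|$, which your ordering never fixes.
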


\begin{fact}[\cite{GKLO}]\label{fact:connected}
If $G$ is an $(\eps,\xi,f,r)$-supercomplex, then for all distinct $e,e'\in G^{(r)}$, we have $|G^{(f)}(e)\cap G^{(f)}(e')|\ge (\xi-\eps)(n-2r)^{f-r}$.\COMMENT{$h=r$, $F=\Set{e,e'}$. $(G(e)\cap G(e'))[Y]$ is $(\eps,d,f-r,0)$-regular for some $d\ge \xi$ and $Y\In(G(e)\cap G(e'))^{(f-r)}$. In particular, since $\emptyset \in (G(e)\cap G(e'))^{(0)}$ this implies that $|(G(e)\cap G(e'))^{(f-r)}|\ge (d-\eps)(n-|e\cup e'|)^{f-r-0}$.}
\end{fact}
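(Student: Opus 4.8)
Looking at the final statement, it's Fact~\ref{fact:connected}, which asserts that in an $(\eps,\xi,f,r)$-supercomplex $G$, any two distinct $r$-edges $e,e'$ have many common $f$-extensions: $|G^{(f)}(e)\cap G^{(f)}(e')|\ge (\xi-\eps)(n-2r)^{f-r}$.

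The plan is to apply the definition of supercomplex with $i=r$ and the two-element set $B=\{e,e'\}\subseteq G^{(r)}$ (note $|B|=2\le 2^r$ since $r\ge 1$). By Definition~\ref{def:supercomplex}, $\bigcap_{b\in B}G(b)=G(e)\cap G(e')$ is an $(\eps,\xi,f-r,r-r)=(\eps,\xi,f-r,0)$-complex. Unpacking Definition~\ref{def:complex}, this means there is an $(f-r)$-graph $Y$ on the vertex set of $G(e)\cap G(e')$ such that $(G(e)\cap G(e'))[Y]$ is a full $(\eps,\xi,f-r,0)$-complex; in particular it is $(\eps,d,f-r,0)$-regular for some $d\ge\xi$. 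The key observation is that $0$-regularity is a statement about the unique $0$-edge $\emptyset$: since $\emptyset\in (G(e)\cap G(e'))[Y]^{(0)}$ (the complex is nonempty because... we need to check this), regularity at $\emptyset$ gives $|(G(e)\cap G(e'))[Y]^{(f-r)}|=(d\pm\eps)|V(G(e)\cap G(e'))|^{f-r}$.

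First I would pin down the vertex count: $V(G(e)\cap G(e'))=V(G)\setminus(e\cup e')$, which has size at least $n-2r$ (and exactly $n-|e\cup e'|$). Then I would note $(G(e)\cap G(e'))[Y]^{(f-r)}\subseteq (G(e)\cap G(e'))^{(f-r)}$, and that $(G(e)\cap G(e'))^{(f-r)}$ is precisely $G^{(f)}(e)\cap G^{(f)}(e')$ via the identity $G^{(r)}(\cdot)$ relations: a set $Q$ of size $f-r$ disjoint from $e\cup e'$ lies in $G(e)\cap G(e')$ iff $e\cup Q\in G$ and $e'\cup Q\in G$, i.e. iff $Q\in G^{(f)}(e)\cap G^{(f)}(e')$. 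Combining, $|G^{(f)}(e)\cap G^{(f)}(e')|\ge (d-\eps)(n-2r)^{f-r}\ge(\xi-\eps)(n-2r)^{f-r}$ using $d\ge\xi$ and $(n-|e\cup e'|)^{f-r}\ge(n-2r)^{f-r}$.

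The only subtle point — and the one I'd be most careful about — is verifying that the $0$-complex $(G(e)\cap G(e'))[Y]$ is nonempty, i.e. that $\emptyset$ is actually an edge, so that the regularity condition at $\emptyset$ is non-vacuous. This should follow from the extendability or density part of the full-complex definition (a full complex that were empty would fail $(\xi,f+r,r)$-density for $e\in G^{(r)}$ — but here the relevant base $r$ is $0$, so one checks that $G^{(0)}(e)\cap G^{(0)}(e')\ne\emptyset$ reduces to $e,e'\in G^{(r)}$, which holds by hypothesis). Alternatively, nonemptiness of the $(\eps,\xi,f-r,0)$-complex is part of what it means for it to be "full" together with $\xi>0$; since the statement is quoted from \cite{GKLO} I would simply invoke that, as the parenthetical \COMMENT in the source already sketches. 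No hard calculation is involved; the proof is essentially a two-line unwinding of definitions once the $i=r$, $B=\{e,e'\}$ substitution is made.
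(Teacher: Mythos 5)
Your proof is correct and follows exactly the argument the paper itself sketches (apply Definition~\ref{def:supercomplex} with $i=r$, $B=\{e,e'\}$, and read off $(\eps,d,f-r,0)$-regularity at the edge $\emptyset$ of the $0$-complex $G(e)\cap G(e')$, whose vertex set is $V(G)\sm(e\cup e')$). The point you flag about nonemptiness is resolved just as you suspect: $\emptyset\in G(e)\cap G(e')$ precisely because $e,e'\in G$, and passing to $[Y]$ preserves all edges of size below $f-r$, so the regularity condition at $\emptyset$ is non-vacuous.
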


\begin{prop}[\cite{GKLO}]\label{prop:effect of decs}
Let $1/n\ll \eps,\xi,1/f$ and $r\in[f-1]$. Suppose that $G$ is an $(\eps,\xi,f,r)$-supercomplex on $n$ vertices. Let $Y_{used}$ be an $f$-graph on $V(G)$ with $\Delta_r(Y_{used})\le \eps n^{f-r}$. Then $G-Y_{used}$ is a $(2^{r+2}\eps,\xi-2^{2r+1}\eps,f,r)$-supercomplex.
\end{prop}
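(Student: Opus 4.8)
The final statement to prove is Proposition~\ref{prop:effect of decs}: if $G$ is an $(\eps,\xi,f,r)$-supercomplex on $n$ vertices and $Y_{used}$ is an $f$-graph on $V(G)$ with $\Delta_r(Y_{used})\le \eps n^{f-r}$, then $G-Y_{used}$ is a $(2^{r+2}\eps,\xi-2^{2r+1}\eps,f,r)$-supercomplex. Note that $G-Y_{used}$ here must be interpreted as $G[G^{(f)}-Y_{used}]$ (removing the $f$-sets in $Y_{used}$ from the complex $G$), so its $r$-graph is unchanged, $(G-Y_{used})^{(r)}=G^{(r)}$, and only the counts of $f$-sets (and larger sets) are affected.

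The plan is to unwind the definition of supercomplex (Definition~\ref{def:supercomplex}) and verify the three constituent properties — regularity, $(f+r,r)$-density, and $(f,r)$-extendability — for each link $\bigcap_{b\in B}(G-Y_{used})(b)$ where $B\In (G-Y_{used})^{(i)}=G^{(i)}$ with $1\le|B|\le 2^i$. First I would observe that taking links commutes with removing $f$-sets: $\bigl(\bigcap_{b\in B}(G-Y_{used})(b)\bigr)^{(f-i)}$ is obtained from $\bigl(\bigcap_{b\in B}G(b)\bigr)^{(f-i)}$ by removing precisely those $(f-i)$-sets $Q$ with $(\bigcup B)\cup Q\in Y_{used}$, and the maximum $(r-i)$-degree of this removed $(f-i)$-graph is at most $\Delta_r(Y_{used})/n^{?}\le \eps n^{f-i-(r-i)}=\eps (n')^{f-r}\cdot(1+o(1))$ where $n'=n-i$; more carefully, each $(r-i)$-set $S'$ in the link lies in at most $\Delta_r(Y_{used})\le \eps n^{f-r}$ such removed $(f-i)$-sets, since these correspond to $f$-sets of $G$ containing the fixed $r$-set $(\bigcup B)\cup S'$. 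So the removed $(f-i)$-graph has $\Delta_{r-i}\le \eps n^{f-r}\le \eps (n')^{f-i-(r-i)}$ up to the usual adjustment absorbed into the constant.

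Next, for each $B$ I know by Proposition~\ref{prop:hereditary} that $G_B:=\bigcap_{b\in B}G(b)$ is an $(\eps,\xi,f-i,r-i)$-supercomplex, hence an $(\eps,\xi,f-i,r-i)$-complex, so there is an $(f-i)$-graph $Y$ on $V(G_B)$ with $G_B[Y]$ a full $(\eps,\xi,f-i,r-i)$-complex. The removed link graph $(G-Y_{used})_B$ equals $G_B - Y'_{used}$ for the $(f-i)$-graph $Y'_{used}$ described above with $\Delta_{r-i}(Y'_{used})\le \eps n^{f-r}$. Using the witness $Y$ for $G_B$ and intersecting appropriately, I would apply Proposition~\ref{prop:noise} (with $H$ there being the $(f-i)$-graph of removed sets, treated via the degree bound; note Proposition~\ref{prop:noise} is stated for $r'$-graphs $H$ with $r'\ge r$, and here $r'=f-i\ge r-i$) — actually the cleaner route is to apply the already-stated machinery: Proposition~\ref{prop:noise}\ref{noise:complex} gives that removing an $(f-i)$-graph of small maximum degree from an $(\eps,\xi,f-i,r-i)$-complex yields an $(\eps+2^{r-i}\gamma,\xi-2^{r-i}\gamma,f-i,r-i)$-complex with $\gamma=\eps n^{f-r}/n^{f-r}=\eps$ roughly, and then Proposition~\ref{prop:noise}\ref{noise:supercomplex} gives the supercomplex version with the $2^{2(r-i)+1}$ factor. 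Tracking constants: $\gamma$ is $\eps$ up to a factor accounting for $|V(G_B)|=n-i<n$, and $2^{2(r-i)+1}\le 2^{2r+1}$ while for the regularity error we get $\eps$ boosted by at most $2^{2r+1}<2^{r+2}$ — here I'd need to double-check the arithmetic gives exactly $2^{r+2}\eps$ and $\xi-2^{2r+1}\eps$ as claimed, adjusting the bookkeeping in Proposition~\ref{prop:noise} to account for the fact that we are removing an $f$-set graph rather than an $r$-graph.

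The main obstacle — really the only subtle point — is the translation between ``$\Delta_r(Y_{used})\le \eps n^{f-r}$ as an $f$-graph'' and ``$\Delta_{r-i}$ of the induced removed $(f-i)$-graph on $V(G_B)$, measured relative to $n'=n-i$ vertices''. One has to verify that after fixing $i$ root vertices the relevant maximum degree is still bounded by something like $\eps n^{f-r}$ and that $n^{f-r}=(1+o(1))(n-i)^{f-r}$ so the error absorbed into the constants stays below $2^{r+2}\eps$ and $2^{2r+1}\eps$; since $1/n\ll\eps$, this slack is available. Once that reduction is in place, the result is an immediate consequence of Proposition~\ref{prop:hereditary} together with Proposition~\ref{prop:noise}\ref{noise:supercomplex}, so the write-up is short: state the reduction, bound the degree, cite Propositions~\ref{prop:hereditary} and~\ref{prop:noise}, and check the final constants.
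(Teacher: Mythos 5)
Your overall strategy (pass to the link complexes via Proposition~\ref{prop:hereditary}, identify which $(f-i)$-sets disappear, bound their degree, and re-verify regularity, density and extendability) is the right one, and your treatment of regularity is essentially correct up to a small slip: the $(f-i)$-sets removed from $\bigcap_{b\in B}G(b)$ are those $Q$ with $b\cup Q\in Y_{used}$ for \emph{some} $b\in B$ (not $(\bigcup B)\cup Q\in Y_{used}$, which is not even an $f$-set when $|B|>1$), so the relevant degree bound is $|B|\Delta_r(Y_{used})\le 2^i\eps n^{f-r}$ rather than $\eps n^{f-r}$; the slack in $2^{r+2}$ absorbs this. Extendability is indeed free, since the $(r-i)$-level of every link is unchanged.

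There are, however, two genuine gaps. First, Proposition~\ref{prop:noise} cannot be invoked: its hypothesis is $\Delta(H)=\Delta_{r'-1}(H)\le\gamma n$ for the removed $r'$-graph, i.e.\ a bound on the $(r'-1)$-degree, whereas the removed $(f-i)$-graph here only has its $(r-i)$-degree controlled by $2^i\eps n^{f-r}$; its $(f-i-1)$-degree can be as large as $n$ (take all $(f-i)$-sets through a fixed $(f-i-1)$-set), so no admissible $\gamma$ exists and the ``adjustment of the bookkeeping'' you defer is in fact the entire proof. Second, and relatedly, you never verify the $(\xi-2^{2r+1}\eps,(f-i)+(r-i),r-i)$-density of the new links, which is precisely where the constant $2^{2r+1}$ comes from and where the real counting happens. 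For $e$ in the $(r-i)$-level one must count the $(f+r-2i)$-sets $P\supseteq e$ that are lost, and $P$ is lost as soon as \emph{any} $f$-subset $S$ of $b\cup P$ lies in $Y_{used}$; such $S$ need not have the form $b\cup Q$ with $Q\subseteq P$, so at this level the link of $G-Y_{used}$ is not literally ``$\bigcap_{b\in B}G(b)$ minus an $(f-i)$-graph'' and your commutation claim fails. The count goes by splitting according to $T=S\cap(b\cup e)$ (necessarily $|T|\ge i$, else $S\setminus(b\cup e)$ does not fit inside $P\setminus e$), bounding the number of such $S$ by $\eps n^{f-|T|}$ via the handshaking identity~\eqref{handshaking} applied to $Y_{used}$, and the number of extensions to $P$ by $n^{|T|-i}$; summing over the at most $2^r$ choices of $T$ and $2^i$ choices of $b$ gives at most $2^{r+i}(1+o(1))\eps\,|V(G_B)|^{f-i}\le 2^{2r+1}\eps\,|V(G_B)|^{f-i}$ lost sets. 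With this count supplied (and the witness $f$-graph for the new link taken to be the old witness with the affected $(f-i)$-sets deleted), the proof is complete; without it, neither the density claim nor the stated constants are justified.
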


Using \ref{separatedness:2}, we can deduce that there are many $f$-disjoint $F$-decompositions of a supercomplex. This will be an important tool in the proof of the Cover down lemma (Lemma~\ref{lem:cover down}), where we will find many candidate $F$-decompositions and then pick one at random.

\begin{cor}\label{cor:many decs new}
Let $1/n\ll \eps\ll \xi,1/f$ and $r\in[f-1]$ and assume that \ind{r} is true. Let $F$ be a weakly regular $r$-graph on $f$ vertices. Suppose that $G$ is an $F$-divisible $(\eps,\xi,f,r)$-supercomplex on $n$ vertices. Then the number of pairwise $f$-disjoint $1/\eps$-well separated $F$-decompositions of $G$ is at least $\eps^2 n^{f-r}$.
\end{cor}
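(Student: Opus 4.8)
\textbf{Proof proposal for Corollary~\ref{cor:many decs new}.}

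The plan is to build the $f$-disjoint decompositions one at a time, using \ind{r} (which we are assuming) together with Proposition~\ref{prop:effect of decs} to guarantee that the ``used up'' $f$-sets do not destroy the supercomplex structure. Concretely, suppose that for some $t\ge 0$ we have already found pairwise $f$-disjoint $1/\eps$-well separated $F$-decompositions $\cF_1,\dots,\cF_t$ of $G$, and set $Y_{used}:=\bigcup_{i\in[t]}\cF_i^{\le(f)}$. As long as $\Delta_r(Y_{used})\le \eps n^{f-r}$, Proposition~\ref{prop:effect of decs} tells us that $G':=G-Y_{used}$ is still a $(2^{r+2}\eps,\xi-2^{2r+1}\eps,f,r)$-supercomplex (after relabelling the constants, $G'$ is an $(\eps',\xi',f,r)$-supercomplex with $1/n\ll 1/\kappa\ll \eps'\ll\xi',1/f$ where $\kappa:=1/\eps$). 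Note also that $G'$ is still $F$-divisible, since $G'^{(r)}=G^{(r)}$ (because $r<f$, removing $f$-sets does not change the $r$-graph). Hence \ind{r} applies and $G'$ has a $\kappa$-well separated (i.e.~$1/\eps$-well separated) $F$-decomposition $\cF_{t+1}$. Since $\cF_{t+1}$ uses only $f$-sets of $G'=G-Y_{used}$, it is $f$-disjoint from each of $\cF_1,\dots,\cF_t$, so we may continue.

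It remains to control how large $t$ can become before $\Delta_r(Y_{used})$ exceeds $\eps n^{f-r}$. Here is where \ref{separatedness:2} enters: each $\cF_i$ is $1/\eps$-well separated, so for every $r$-set $e$ the number of $f$-sets $Q\in\cF_i^{\le(f)}$ with $e\In Q$ is at most $1/\eps$. Therefore $\Delta_r(\cF_i^{\le(f)})\le 1/\eps$, and by subadditivity $\Delta_r(Y_{used})\le t/\eps$. Consequently, as long as $t/\eps\le \eps n^{f-r}$, i.e.~$t\le \eps^2 n^{f-r}$, the hypothesis of Proposition~\ref{prop:effect of decs} remains satisfied and we can extract another decomposition. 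This produces at least $\eps^2 n^{f-r}$ pairwise $f$-disjoint $1/\eps$-well separated $F$-decompositions of $G$, as claimed. (One should be slightly careful that the constants in the hierarchy only degrade by a bounded factor at each step — but they do not degrade at all across iterations, since Proposition~\ref{prop:effect of decs} is always applied to the \emph{original} $G$ with the \emph{cumulative} $Y_{used}$, not iteratively.)

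The main obstacle, such as it is, is bookkeeping rather than mathematics: one has to make sure that the single application of Proposition~\ref{prop:effect of decs} with $Y_{used}=\bigcup_{i\le t}\cF_i^{\le(f)}$ gives a supercomplex whose parameters still fit the hierarchy $1/n\ll 1/\kappa\ll\eps'\ll\xi',1/f$ demanded by \ind{r} with $\kappa=1/\eps$; this is immediate from $1/n\ll\eps\ll\xi,1/f$. One also needs the trivial but essential observation that $G-Y_{used}$ has the same $r$-graph as $G$ (as $r<f$), so that $F$-divisibility is preserved and each new $\cF_{t+1}$ is genuinely a decomposition of all of $G^{(r)}$, hence $f$-disjoint from the earlier ones by construction. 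With these points in place the induction on $t$ runs up to $t=\lfloor\eps^2 n^{f-r}\rfloor$ and the corollary follows.
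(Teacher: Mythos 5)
Your proposal is correct and is essentially the paper's own argument: the paper likewise extends any collection of $t\le \eps^2 n^{f-r}$ pairwise $f$-disjoint $1/\eps$-well separated $F$-decompositions by noting that \ref{separatedness:2} gives $\Delta_r(Y_{used})\le t/\eps\le \eps n^{f-r}$, so Proposition~\ref{prop:effect of decs} shows $G-Y_{used}$ is still an $F$-divisible supercomplex, to which \ind{r} applies. The bookkeeping points you flag (the hierarchy with $\kappa=1/\eps$ and the fact that $(G-Y_{used})^{(r)}=G^{(r)}$) are exactly the ones implicit in the paper's proof.
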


\proof
Suppose that $\cF_1,\dots,\cF_t$ are $f$-disjoint $1/\eps$-well separated $F$-decompositions of $G$, where $t\le \eps^2 n^{f-r}$. Let $Y_{used}:=\bigcup_{j\in[t]}\cF_j^{\le(f)}$. By \ref{separatedness:2}, we have $\Delta_r(Y_{used})\le t/\eps \le \eps n^{f-r}$. Thus, by Proposition~\ref{prop:effect of decs}, $G-Y_{used}$ is an $F$-divisible $(2^{r+2}\eps,\xi-2^{2r+1}\eps,f,r)$-supercomplex and thus has a $1/\eps$-well separated $F$-decomposition $\cF_{t+1}$ by~\ind{r}, which is $f$-disjoint from $\cF_1,\dots,\cF_t$.
\endproof

\subsection{Probabilistic tools}

\begin{lemma}[see {\cite[{Corollary~2.3, Corollary~2.4, Remark 2.5 and Theorem 2.8}]{JLR}}] \label{lem:chernoff}
Let $X$ be the sum of $n$ independent Bernoulli random variables. Then the following hold.
\begin{enumerate}[label={\rm(\roman*)}]
\item For all $t\ge 0$, $\prob{|X - \expn{X}| \geq t} \leq 2\eul^{-2t^2/n}$.\label{chernoff t}
\item For all $0\le\eps \le 3/2$, $\prob{|X - \expn{X}| \geq \eps\expn{X} } \leq 2\eul^{-\eps^2\expn{X}/3}$.\label{chernoff eps}
\item If $t\ge 7 \expn{X}$, then $\prob{X\ge t}\le \eul^{-t}$.\label{chernoff crude}
\end{enumerate}
\end{lemma}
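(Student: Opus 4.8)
The statement I am asked to prove is Lemma~\ref{lem:chernoff}, the standard Chernoff-type concentration inequalities for sums of independent Bernoulli random variables.

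The plan is to simply cite the literature, since all three parts are classical and appear exactly in the stated form in Janson, \L uczak and Ruci\'nski~\cite{JLR}. Concretely, for part~\ref{chernoff t} I would invoke Hoeffding's inequality (Corollary~2.3 in~\cite{JLR} together with Remark~2.5, which allows one to drop the requirement $t = o(\expn X)$ at the cost of the weaker exponent $2t^2/n$ rather than $t^2/(2\expn X)$). For part~\ref{chernoff eps} I would cite the multiplicative form (Corollary~2.4 of~\cite{JLR}), valid in the range $0 \le \eps \le 3/2$; the two-sided bound $2\eul^{-\eps^2 \expn X/3}$ follows by adding the upper- and lower-tail estimates $\eul^{-\eps^2\expn X/3}$ and $\eul^{-\eps^2\expn X/2}$ and bounding the latter by the former. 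For part~\ref{chernoff crude}, the crude upper-tail bound when $t \ge 7\expn X$, I would cite Theorem~2.8 of~\cite{JLR} (or equivalently derive it from the bound $\pr[X \ge t] \le (\eul \expn X / t)^t$ valid for $t \ge \expn X$: indeed if $t \ge 7\expn X$ then $\eul \expn X / t \le \eul/7 < \eul^{-1}$, so $(\eul\expn X/t)^t \le \eul^{-t}$).

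If instead a self-contained proof were wanted, the key step would be the exponential moment method: write $X = \sum_{i=1}^n Y_i$ with $Y_i$ independent Bernoulli, and for $\theta > 0$ estimate $\ex[\eul^{\theta X}] = \prod_i \ex[\eul^{\theta Y_i}] = \prod_i(1 + p_i(\eul^\theta - 1)) \le \exp\big((\eul^\theta - 1)\expn X\big)$ using $1 + x \le \eul^x$. Then Markov's inequality gives $\pr[X \ge \expn X + t] \le \exp\big((\eul^\theta-1)\expn X - \theta(\expn X + t)\big)$, and optimising over $\theta$ yields the upper tails in all three regimes; the lower tail is handled symmetrically with $\theta < 0$, or by applying the upper-tail bound to $n - X$. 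The only mildly delicate point is the bookkeeping needed to convert the optimised exponents into the clean forms stated (in particular obtaining the $n$-dependent Hoeffding bound in~\ref{chernoff t}, which uses $p_i(1-p_i) \le 1/4$ rather than $\expn X$), but this is entirely routine.

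Since the lemma is explicitly quoted with precise pointers into~\cite{JLR}, the main (and only) ``obstacle'' is purely bibliographic: making sure the cited corollaries and theorem match the stated inequalities, which they do. I would therefore present the proof as a one-line reference.

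\begin{proof}
All three statements are standard; see \cite[Corollary~2.3, Corollary~2.4, Remark~2.5 and Theorem~2.8]{JLR}. (For~\ref{chernoff eps}, add the upper- and lower-tail bounds $\eul^{-\eps^2\expn{X}/3}$ and $\eul^{-\eps^2\expn{X}/2}$ and note $\eul^{-\eps^2\expn{X}/2}\le \eul^{-\eps^2\expn{X}/3}$; for~\ref{chernoff crude}, if $t\ge 7\expn{X}$ then $\eul\expn{X}/t\le \eul/7<\eul^{-1}$, so the bound $\pr{[X\ge t]}\le(\eul\expn{X}/t)^t$ valid for $t\ge\expn{X}$ gives $\pr{[X\ge t]}\le\eul^{-t}$.)
\end{proof}
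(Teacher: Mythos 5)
Your proposal is correct and matches the paper exactly: the paper gives no proof of Lemma~\ref{lem:chernoff} and simply cites \cite[Corollary~2.3, Corollary~2.4, Remark~2.5 and Theorem~2.8]{JLR}, which is precisely what you do. Your supplementary remarks on deriving~\ref{chernoff crude} from $(\eul\expn{X}/t)^t$ and on combining the two tails for~\ref{chernoff eps} are accurate but not needed.
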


\begin{lemma}[\cite{GKLO}] \label{lem:separable chernoff}
Let $1/n\ll p,\alpha,1/a,1/B$. Let $\cI$ be a set of size at least $\alpha n^a$ and let $(X_i)_{i\in \cI}$ be a family of Bernoulli random variables with $\prob{X_i=1}\ge p$. Suppose that $\cI$ can be partitioned into at most $Bn^{a-1}$ sets $\cI_1,\dots,\cI_k$ such that for each $j\in[k]$, the variables $(X_i)_{i\in\cI_j}$ are independent. Let $X:=\sum_{i\in \cI}X_i$. Then we have
\begin{align*}
\prob{|X-\expn{X}|\ge n^{-1/5} \expn{X}} \le \eul^{-n^{1/6}}.
\end{align*}
\end{lemma}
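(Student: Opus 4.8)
The plan is to split $X$ along the given partition, apply the Chernoff bound of Lemma~\ref{lem:chernoff}\ref{chernoff t} to each part separately with a deviation scaled to the \emph{size} of that part, and then recombine via the triangle inequality and Cauchy--Schwarz.

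First I would fix notation: for $j\in[k]$ put $X^{(j)}:=\sum_{i\in\cI_j}X_i$, so $X=\sum_{j\in[k]}X^{(j)}$, and since the variables indexed by $\cI_j$ are independent, $X^{(j)}$ is a sum of $m_j:=|\cI_j|$ independent Bernoulli variables. Write $\mu:=\expn{X}$ and $\mu_j:=\expn{X^{(j)}}$, so $\mu=\sum_{j}\mu_j$. From $\prob{X_i=1}\ge p$ one reads off $\mu_j\ge pm_j$ and hence the two facts
\[
\mu\ge p|\cI|\ge p\alpha n^a \qquad\text{and}\qquad |\cI|=\sum_{j\in[k]}m_j\le \mu/p;
\]
we may clearly assume that no part is empty.

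Next, for each $j$ I would apply Lemma~\ref{lem:chernoff}\ref{chernoff t} to $X^{(j)}$ with deviation $t:=n^{1/7}\sqrt{m_j}$, which gives $\prob{|X^{(j)}-\mu_j|\ge n^{1/7}\sqrt{m_j}}\le 2\eul^{-2n^{2/7}}$. Taking a union bound over the at most $Bn^{a-1}$ parts, the event $\mathcal E$ that $|X^{(j)}-\mu_j|<n^{1/7}\sqrt{m_j}$ holds for all $j\in[k]$ fails with probability at most $2Bn^{a-1}\eul^{-2n^{2/7}}\le\eul^{-n^{1/6}}$ for large $n$, since $2/7>1/6$ makes $2n^{2/7}$ eventually dominate $n^{1/6}+\log(2Bn^{a-1})$. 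On $\mathcal E$, the triangle inequality and Cauchy--Schwarz yield
\[
|X-\mu|\le\sum_{j\in[k]}|X^{(j)}-\mu_j|<n^{1/7}\sum_{j\in[k]}\sqrt{m_j}\le n^{1/7}\sqrt{k}\,\Big(\sum_{j\in[k]}m_j\Big)^{1/2}\le n^{1/7}\sqrt{Bn^{a-1}}\,\sqrt{\mu/p},
\]
using $k\le Bn^{a-1}$ and $\sum_j m_j=|\cI|\le\mu/p$. To conclude it remains to check that this is at most $n^{-1/5}\mu$. Dividing by $\mu$, this reduces to $n^{1/7}\sqrt{B/p}\,n^{(a-1)/2}\le n^{-1/5}\sqrt{\mu}$, and since $\mu\ge p\alpha n^a$ it suffices that $n^{1/7-1/2+1/5}=n^{-11/70}\le p\sqrt{\alpha/B}$, which holds for all sufficiently large $n$ by the hierarchy $1/n\ll p,\alpha,1/B$. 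Combining this with the bound $\prob{\overline{\mathcal E}}\le\eul^{-n^{1/6}}$ completes the proof.

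The only real subtlety — and the reason for scaling the per-part deviation to $\sqrt{m_j}$ rather than to $\mu_j$ — is that a relative Chernoff bound applied within a part of bounded size is useless, while there may be $\Theta(n^{a-1})$ such parts. With the $\sqrt{m_j}$ scaling each part contributes a uniformly doubly-exponentially small tail, and then the total deviation $\sum_j\sqrt{m_j}=n^{a-1/2+o(1)}$ comfortably undercuts the target $n^{-1/5}\mu=\Omega(n^{a-1/5})$. I would also double-check that the chosen exponent $1/7$ (any value in $(1/12,3/10)$ works) is simultaneously large enough for the union bound and small enough for the final comparison.
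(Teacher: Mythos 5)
Your proof is correct. The paper states this lemma as a citation from~\cite{GKLO} and does not reproduce a proof, so there is nothing here to compare against line by line; your argument --- Hoeffding's inequality on each independent block with deviation scaled to $\sqrt{m_j}$, a union bound over the at most $Bn^{a-1}$ blocks (which works since $2/7>1/6$), and Cauchy--Schwarz together with $\sum_j m_j=|\cI|\le \expn{X}/p$ and $\expn{X}\ge p\alpha n^a$ to recombine --- is the standard route for this kind of separable concentration statement and is essentially the argument given in the cited source.
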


Lemma~\ref{lem:separable chernoff} can be conveniently applied in the following situation: We are given an $r$-graph $H$ on $n$ vertices and $H'$ is a random subgraph of $H$, where every edge of $H$ survives (i.e.~lies in $H'$) with some probability $\ge p$.

\begin{cor}[\cite{GKLO}]\label{cor:graph chernoff}
Let $1/n\ll p,1/r,\alpha$. Let $H$ be an $r$-graph on $n$ vertices with $|H|\ge\alpha n^r$. Let $H'$ be a random subgraph of $H$, where each edge of $H$ survives (i.e.~lies in $H'$) with some probability $\ge p$. Moreover, suppose that for every matching $M$ in $H$, the edges of $M$ survive independently. Then we have $$\prob{||H'|-\expn{|H'|}|\ge n^{-1/5} \expn{|H'|}}\le \eul^{-n^{1/6}}.$$\COMMENT{only used once}
\end{cor}

When we apply Corollary~\ref{cor:graph chernoff}, it will be clear that for every matching $M$ in $H$, the edges of $M$ survive independently, and we will not discuss this explicitly.

We will also use the following simple result.

\begin{prop}[Jain, see {\cite[Lemma 8]{R}}] \label{prop:Jain}
Let $X_1, \ldots, X_n$ be Bernoulli random variables such that, for any $i \in [n]$ and any $x_1, \ldots, x_{i-1}\in \{0,1\}$,
 \begin{align*}
\prob{X_i = 1 \mid X_1 = x_1, \ldots, X_{i-1} = x_{i-1}} \leq p.
\end{align*}
Let $B \sim B(n,p)$ and $X:=X_1+\dots+X_n$.  Then $\prob{X \geq a} \leq \prob{B \geq a}$ for any~$a\ge 0$.
\end{prop}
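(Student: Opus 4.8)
The plan is to prove the statement by constructing an explicit \emph{monotone coupling} of $(X_1,\dots,X_n)$ with an i.i.d.\ sequence of $\mathrm{Bernoulli}(p)$ variables, so that the sum $X$ is dominated pointwise. For $i\in[n]$ and $x_1,\dots,x_{i-1}\in\Set{0,1}$, write $p_i(x_1,\dots,x_{i-1}):=\prob{X_i=1\mid X_1=x_1,\dots,X_{i-1}=x_{i-1}}$ for the conditional probability (defined arbitrarily, say as $p$, when the conditioning event has probability zero); by hypothesis $p_i(x_1,\dots,x_{i-1})\le p$ always. Let $U_1,\dots,U_n$ be independent random variables, each uniform on $[0,1]$, on some probability space. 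Define $X_1':=\mathbf{1}[U_1\le p_1]$ and, inductively, $X_i':=\mathbf{1}[U_i\le p_i(X_1',\dots,X_{i-1}')]$, and set $Y_i:=\mathbf{1}[U_i\le p]$.

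First I would check that $(X_1',\dots,X_n')$ has the same joint law as $(X_1,\dots,X_n)$: this follows by induction on $i$, since conditionally on $(X_1',\dots,X_{i-1}')=(x_1,\dots,x_{i-1})$ the variable $X_i'$ equals $1$ precisely when $U_i\le p_i(x_1,\dots,x_{i-1})$, an event of probability $p_i(x_1,\dots,x_{i-1})$ which is independent of $U_1,\dots,U_{i-1}$ and hence of $(X_1',\dots,X_{i-1}')$. In particular $X':=X_1'+\dots+X_n'$ has the same law as $X$. Next, the $Y_i$ are independent (being functions of the independent $U_i$) and each $Y_i\sim\mathrm{Bernoulli}(p)$, so $B':=Y_1+\dots+Y_n\sim B(n,p)$ has the same law as $B$. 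Finally, since $p_i(\cdots)\le p$ we have $U_i\le p_i(\cdots)\Rightarrow U_i\le p$, i.e.\ $X_i'\le Y_i$ for every $i$, whence $X'\le B'$ pointwise. Therefore, for every $a\ge 0$,
\begin{align*}
\prob{X\ge a}=\prob{X'\ge a}\le\prob{B'\ge a}=\prob{B\ge a},
\end{align*}
as required.

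There is essentially no serious obstacle here; the only points needing (routine) care are the measurability of the inductive construction (each $X_i'$ is a function of $U_1,\dots,U_i$) and the verification that the coupled chain $(X_i')$ reproduces the prescribed conditional laws. It is worth noting that a purely moment-generating-function argument --- bounding $\ex[e^{tX}]\le(1-p+pe^t)^n=\ex[e^{tB}]$ by iterating conditional expectations and using that $q\mapsto 1+q(e^t-1)$ is increasing for $t>0$ --- only yields the Chernoff-type estimate $\prob{X\ge a}\le e^{-ta}(1-p+pe^t)^n$ after optimising over $t$, which is strictly weaker than the exact stochastic domination claimed. The coupling is precisely what delivers the sharp statement, and it is also the route taken in the reference.
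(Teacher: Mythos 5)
Your coupling argument is correct and complete: the sequential construction $X_i':=\mathbf{1}[U_i\le p_i(X_1',\dots,X_{i-1}')]$ reproduces the joint law of $(X_1,\dots,X_n)$, the $Y_i:=\mathbf{1}[U_i\le p]$ are i.i.d.\ Bernoulli$(p)$, and $X_i'\le Y_i$ pointwise gives the stochastic domination exactly as claimed. The paper itself states this proposition as a cited external result (Jain's lemma, via Raman) and gives no proof, so there is nothing to compare against; your monotone coupling is the standard and expected argument, and your remark that a moment-generating-function route would only yield a weaker Chernoff-type bound rather than the exact domination is a fair observation.
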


We now use Lemma~\ref{lem:separable chernoff} to prove Lemma~\ref{lem:random packing}, which states that the random $F$-packing derived from a clique packing yields a quasirandom leftover.

\lateproof{Lemma~\ref{lem:random packing}}
For $e\in G^{(r)}$, we let $K_e$ be the unique element of $\cK^{\le(f)}$ with $e\In K_e$. Let $G_{ind}:=G-\cK^{\le(r+1)}$. $G'^{(r)}$ is a random subgraph of $G_{ind}^{(r)}$, where for any $\cI\In G^{(r)}$, the events $\Set{e\in G'^{(r)}}_{e\in \cI}$ are independent if the sets $\Set{K_{e}}_{e\in \cI}$ are distinct. Since $\Delta(\cK^{\le(r+1)})\le f-r$, Proposition~\ref{prop:noise} implies that $G_{ind}$ is $(1.1\eps,d,f,r)$-regular and $(\xi-\eps,f+r,r)$-dense.

For $e\in G^{(r)}$, let $\cQ_e:= G_{ind}^{(f)}(e)$ and $\tilde{\cQ}_e:= G_{ind}^{(f+r)}(e)$. Thus, $|\cQ_e|=(d\pm 1.1\eps)n^{f-r}$ and $|\tilde{\cQ}_e|\ge 0.95\xi n^f$. Let $\cQ_e'$ be the random subgraph of $\cQ_e$ consisting of all $Q\in \cQ_e$ with $\binom{Q\cup e}{r}\sm \Set{e}\In G'^{(r)}$. Similarly, let $\tilde{\cQ}_e'$ be the random subgraph of $\tilde{\cQ}_e$ consisting of all $Q\in \tilde{\cQ}_e$ with $\binom{Q\cup e}{r}\sm \Set{e}\In G'^{(r)}$. Note that if $e\in G'^{(r)}$, then $\cQ_e'=G'^{(f)}(e)$.
Moreover, note that by definition of $G_{ind}$, we have
\begin{align}
|(e\cup Q)\cap K|\le r \mbox{ for all } Q\in \cQ_e,K\in \cK.\label{intersection for independence}
\end{align}
Consider $Q\in \cQ_e$. By \eqref{intersection for independence}, the $K_{e'}$ with $e'\in \binom{Q\cup e}{r}\sm \Set{e}$ are all distinct, hence we have $\prob{Q\in \cQ_e'}=p^{\binom{f}{r}-1}$. Thus, $\expn{|\cQ_e'|}=p^{\binom{f}{r}-1}|\cQ_e|$.

Define an auxiliary graph $A_e$ on vertex set $\cQ_e$ where $QQ'\in A_e$ if and only if there exists $K\in \cK^{\le(f)}\sm\Set{K_e}$ such that $|(e\cup Q)\cap K|=r$ and $|(e\cup Q')\cap K|=r$. Using \eqref{intersection for independence}, it is easy to see that if $Y$ is an independent set in $A_e$, then the events $\Set{Q\in \cQ_e'}_{Q\in Y}$ are independent.\COMMENT{Let $Y$ be an independent set in $A_e$. For $Q\in Y$, the event $\Set{Q\in \cQ_{e}'}$ depends only on the (elementary) events $\Set{e'\in G'^{(r)}}$, $e'\in \binom{Q\cup e}{r}\sm \Set{e}$. Thus, if the events $\Set{Q\in \cQ_e'}_{Q\in Y}$ were not independent, then there must be distinct $Q,Q'\in Y$ and $e'\in \binom{Q\cup e}{r}\sm \Set{e}$ and $e''\in \binom{Q'\cup e}{r}\sm \Set{e}$ such that $K_{e'}=K_{e''}=:K$. Using \eqref{intersection for independence}, we can see that $K\neq K_e$ (otherwise, $e'\cup e\In (e\cup Q)\cap K$ and thus $|(e\cup Q)\cap K|\ge r+1$). Moreover, $|(e\cup Q)\cap K|=r$ (as $e'\In (e\cup Q)\cap K$) and $|(e\cup Q')\cap K|=r$ (as $e''\In (e\cup Q')\cap K$). Thus, $QQ'\in A_e$, a contradiction to $Y$ being independent in $A_e$. }

\begin{claim}
$\cQ_e$ can be partitioned into $2\binom{f}{r}^2 n^{f-r-1}$ independent sets in $A_e$.
\end{claim}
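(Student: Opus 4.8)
The goal is to colour the vertex set $\cQ_e$ with at most $2\binom{f}{r}^2 n^{f-r-1}$ colours so that every colour class is an independent set in the auxiliary graph $A_e$; equivalently, to partition $\cQ_e$ into that many independent sets. The natural approach is to bound $\Delta(A_e)$ and then to use a greedy/first-fit colouring, since a graph of maximum degree $D$ on vertex set of any size can be partitioned into $D+1$ independent sets.

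First I would estimate $\Delta(A_e)$. Fix $Q\in\cQ_e$. If $QQ'\in A_e$, then by definition there is some $K\in\cK^{\le(f)}\setminus\{K_e\}$ with $|(e\cup Q)\cap K|=r$ and $|(e\cup Q')\cap K|=r$. The first condition, $|(e\cup Q)\cap K|=r$, together with $|e\cup Q|=f$, means that $K$ is determined by an $r$-subset of the $f$-set $e\cup Q$: indeed, any such $K$ satisfies $K\supseteq$ some $r$-set $e'\subseteq e\cup Q$, and since $\cK$ is a $\krq{r}{f}$-decomposition there is exactly one member of $\cK^{\le(f)}$ containing a given $r$-set. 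Hence there are at most $\binom{f}{r}$ choices for such a $K$. Having fixed $K$, the condition $|(e\cup Q')\cap K|=r$ means that $Q'$ contains an $r$-subset $e''$ of the $f$-set $K$ (with $e''\neq e$-related constraints handled by \eqref{intersection for independence}), so $e''$ is one of at most $\binom{f}{r}$ $r$-subsets of $K$, and then $Q'$ is an $(f-r)$-set of $V(G)$ containing $e''\setminus e$ together with... — more precisely, $Q'\in\cQ_e$ is an $(f-r)$-set and the $r$ vertices of $e''$ lie in $e\cup Q'$, so at least $r-|e''\cap e|\ge 1$ of them lie in $Q'$; fixing which $\le r$ vertices of $e''$ are forced into $Q'$ leaves at most $n^{f-r-1}$ extensions to a full $(f-r)$-set $Q'$. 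Multiplying, $\deg_{A_e}(Q)\le \binom{f}{r}\cdot\binom{f}{r}\cdot n^{f-r-1}=\binom{f}{r}^2 n^{f-r-1}$ (absorbing the small constant loss from the case analysis into the factor $2$). Therefore $\Delta(A_e)< 2\binom{f}{r}^2 n^{f-r-1}$, and a greedy colouring of $A_e$ with this many colours yields the desired partition into independent sets.

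The step I expect to be slightly delicate is the bookkeeping in the middle of the degree count: one must be careful that in the clause ``at least one vertex of $e''$ lies in $Q'$'' the count $n^{f-r-1}$ for the number of completions is genuinely an over-count (we fix one vertex of $Q'$ and freely choose the remaining $f-r-1$), and that the edge cases where $K=K_e$ are correctly excluded via \eqref{intersection for independence} (which guarantees $|(e\cup Q)\cap K_e|\le r$, with equality only in degenerate situations already handled). Once $\Delta(A_e)$ is bounded, the conclusion is immediate. This claim is then used in the surrounding proof of Lemma~\ref{lem:random packing} to invoke Lemma~\ref{lem:separable chernoff} (with $a=f-r$) and deduce concentration of $|\cQ_e'|=|G'^{(f)}(e)|$, giving part~\ref{f nibble trick regular}; parts \ref{f nibble trick dense} and \ref{f nibble trick maxdeg} follow analogously (using $\tilde{\cQ}_e$ for the former and a direct Chernoff bound via Lemma~\ref{lem:chernoff} or Proposition~\ref{prop:Jain} for the latter).
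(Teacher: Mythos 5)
Your proposal is correct and takes essentially the same route as the paper: one bounds $\Delta(A_e)\le\binom{f}{r}^2 n^{f-r-1}$ by counting at most $\binom{f}{r}$ choices for $K$ (as $K=K_{e'}$ for some $r$-subset $e'$ of $e\cup Q$), then at most $\binom{f}{r}$ choices for an $r$-subset $e''\neq e$ of $K$, then at most $n^{f-r-1}$ completions of $Q'$ since $|e\cup e''|\ge r+1$, and finally partitions greedily into $\Delta(A_e)+1\le 2\binom{f}{r}^2 n^{f-r-1}$ independent sets. This is exactly the paper's argument.
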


\claimproof
It is sufficent to prove that $\Delta(A_e)\le \binom{f}{r}^2 n^{f-r-1}$. Fix $Q\in V(A_e)$. There are $\binom{f}{r}-1$ $r$-subsets $e'$ of $e\cup Q$ other than $e$. For each of these, $K_{e'}$ is the unique $K\in \cK^{\le(f)}\sm \Set{K_e}$ which contains $e'$. Each choice of $K_{e'}$ has $\binom{f}{r}$ $r$-subsets $e''$. If we want $e\cup Q'$ to contain $e''$, then since $e''\neq e$, we have $|e\cup e''| \ge r+1$ and thus there are at most $n^{f-r-1}$ possibilities for $Q'$.
\endclaimproof
By Lemma~\ref{lem:separable chernoff}, we thus have $\prob{|\cQ_e'| \neq (1\pm n^{-1/5})\expn{|\cQ_e'|}}\le \eul^{-n^{1/6}}$. We conclude that with probability at least $1-\eul^{-n^{1/6}}$ we have $|\cQ_e'|=(p^{\binom{f}{r}-1}d \pm 2\eps)n^{f-r}$. Together with a union bound, this implies that whp $G'$ is $(2\eps,p^{\binom{f}{r}-1}d,f,r)$-regular, which proves \ref{f nibble trick regular}.

A similar argument shows that whp $G'$ is $(0.9p^{\binom{f+r}{r}-1}\xi,f+r,r)$-dense.

To prove \ref{f nibble trick maxdeg}, let $S\in \binom{V(G)}{r-1}$. Clearly, we have $\expn{|G'^{(r)}(S)|}=p|G^{(r)}(S)|$. If $|G^{(r)}(S)|=0$, then we clearly have $|G^{(r)}(S)|\le 1.1p\Delta(G^{(r)})$, so assume that $S\In e\in G^{(r)}$. Since $e$ is contained in at least $0.5\xi n^{f-r}$ $f$-sets in $G$, and every $r$-set $e'\neq e$ is contained in a most $n^{f-(r+1)}$ of these, we can deduce that $|G^{(r)}(S)|\ge 0.5\xi n$. Define the auxiliary graph $A_S$ with vertex set $G^{(r)}(S)$ such that $e_1e_2\in A_S$ if and only if $K_{S\cup e_1}=K_{S\cup e_2}$. Again, we have $\Delta(A_S)\le f-r$ and thus $G^{(r)}(S)$ can be partitioned into $f-r+1$ sets which are independent in $A_S$. By Lemma~\ref{lem:separable chernoff}, we thus have $\prob{|G'^{(r)}(S)| \neq (1\pm n^{-1/5})p|G^{(r)}(S)|}\le \eul^{-n^{1/6}}$. Using a union bound, we conclude that whp $\Delta(G'^{(r)})\le 1.1p\Delta(G^{(r)})$.
\endproof

Finally, we consider subcomplexes obtained by taking a random subset of the vertex set of~$G$.

\begin{cor}[\cite{GKLO}]\label{cor:random induced subcomplex}
Let $1/n\ll \gamma \ll \mu \ll \eps \ll \xi,1/f$ and $r\in[f-1]$. Let $G$ be an $(\eps,\xi,f,r)$-supercomplex on $n$ vertices. Suppose that $U$ is a random subset of $V(G)$ obtained by including every vertex from $V(G)$ independently with probability $\mu$. Then \whp for any $W\In V(G)$ with $|W|\le \gamma n$, $G[U\bigtriangleup W]$ is a $(2\eps,\xi-\eps,f,r)$-supercomplex.
\end{cor}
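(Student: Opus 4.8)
The statement to prove is Corollary~\ref{cor:random induced subcomplex}: taking a $\mu$-random subset $U$ of $V(G)$ yields \whp a subcomplex $G[U\bigtriangleup W]$ which is a $(2\eps,\xi-\eps,f,r)$-supercomplex, robustly under small perturbations $W$.

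\begin{proof}[Proof sketch]
The plan is to verify, for each of the finitely many `atomic' complexes appearing in the definition of a supercomplex, that the random restriction to $U$ preserves regularity, density and extendability, and then to transfer this to $G[U\bigtriangleup W]$ using the noise propositions. Fix $i\in[r]_0$ and $B\In G^{(i)}$ with $1\le|B|\le 2^i$, and write $G_B:=\bigcap_{b\in B}G(b)$, which by Proposition~\ref{prop:hereditary} is an $(\eps,\xi,f-i,r-i)$-supercomplex on the vertex set $V(G)\sm\bigcup B$; in particular it is a full-type complex after passing to a suitable $Y_B\In G_B^{(f-i)}$. We must show that $G_B[U']$ (where $U':=U\cap V(G_B)$) is whp an $(\eps',\xi',f-i,r-i)$-complex with slightly worse parameters, for a union over all $B$ simultaneously; since there are only boundedly many choices of $B$ (as $|G^{(i)}|\le n^i$ and $|B|\le 2^i$, this is $n^{O(1)}$ many), a union bound at the end is harmless provided each individual failure probability is superpolynomially small, e.g.\ $\eul^{-n^{1/6}}$.

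First I would handle regularity: for $e\in G_B^{(r-i)}[U']$, the quantity $|G_B[Y_B]^{(f-i)}(e)[U']|$ counts the $(f-i-(r-i))$-sets $Q$ with $Q\cup e$ a clique of $G_B[Y_B]$ and $Q\In U'$; since $e$ is fixed (not random) and each of the $(d\pm\eps)n^{f-r}$ choices of $Q$ lands inside $U'$ with probability $\mu^{|Q|}$ independently across a matching, Corollary~\ref{cor:graph chernoff} (applied to the $(f-i)$-graph $G_B[Y_B]^{(f-i)}(e)$ on $\approx n$ vertices, with survival probability $\mu^{f-r}$) gives concentration up to a $(1\pm n^{-1/5})$ factor; rescaling by $|U'|=(\mu\pm o(1))n$ shows that $G_B[Y_B][U']$ is $(2\eps,d\mu^{f-r}/(\ldots),f-i,r-i)$-regular — more precisely, one checks the density constant stays $\ge$ some $\xi'\ge\xi-\eps$ after the $(\mu n)^{f-r}$ normalisation, exactly as in the proof of Lemma~\ref{lem:random packing}\ref{f nibble trick regular}. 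Density is identical: $|G_B[Y_B]^{(f-i+r-i)}(e)[U']|$ is a sum of $\ge\xi n^{f}$ independent-over-a-matching indicators each surviving with probability $\ge\mu^{f}$, so Corollary~\ref{cor:graph chernoff} again yields $\ge\xi'(\mu n)^{f}$ whp. Extendability is slightly different but standard: one keeps the extendability witness set $X_B$ and shows $|X_B\cap U'|\ge\xi'|U'|$ by Chernoff (Lemma~\ref{lem:chernoff}\ref{chernoff eps}), and for each $e\in\binom{X_B\cap U'}{r-i}$ the number of surviving extension sets $Q$ concentrates by the same argument. Taking the intersection over all $B$ (and all the sub-intersections needed for $G_B[U']$ itself to be a supercomplex, which are again of the form $\bigcap_{b'\in B'}G_B(b')[U']$ — but these are just $G_{B\cup B'}[U']$ up to relabelling, so already covered) shows $G[U]$ is whp a $(2\eps,\xi-\eps/2,f,r)$-supercomplex, with room to spare.

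Finally I would pass from $U$ to $U\bigtriangleup W$ for arbitrary $W$ with $|W|\le\gamma n$. Here the key observation is that $G[U\bigtriangleup W]$ differs from $G[U]$ only in vertices, and deleting/adding $\le\gamma n$ vertices changes each relevant count $|G^{(f)}(e)[\cdot]|$, $|G^{(f+r)}(e)[\cdot]|$, $|X\cap\cdot|$ by at most $O(\gamma n\cdot n^{f-r-1})=O(\gamma)n^{f-r}$, which is negligible compared to $\eps n^{f-r}$ since $\gamma\ll\eps$; this is precisely the content of Proposition~\ref{prop:noise} (or rather an analogous vertex-deletion version), applied uniformly. Crucially this step requires no new randomness — the whp event is the one from the previous paragraph, and the $W$-robustness is deterministic given that event — so the union over the (continuum of) choices of $W$ causes no trouble. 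The main obstacle is bookkeeping: one must confirm that the family of atomic complexes whose properties need checking for `$G[U\bigtriangleup W]$ is a supercomplex' is closed, up to the vertex-restriction operation, under the link and intersection operations, so that a single whp event suffices; but this follows from the identities $G_B(b') = G_{B\cup\{b'\}}$ and $G[U](e)=G(e)[U\sm e]$, exactly as recorded in the discussion after Definition~\ref{def:supercomplex}. All the probabilistic estimates are routine applications of Lemma~\ref{lem:chernoff}, Lemma~\ref{lem:separable chernoff} and Corollary~\ref{cor:graph chernoff}, so I would not spell them out in full.
\end{proof}
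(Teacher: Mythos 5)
The paper does not prove this corollary itself; it is imported verbatim from \cite{GKLO}, so there is no in-text proof to compare against. Your argument is the standard (and correct) one: reduce to the atomic complexes $G_B=\bigcap_{b\in B}G(b)$ via $\bigcap_{b\in B}G[U\bigtriangleup W](b)=G_B[(U\bigtriangleup W)\sm\bigcup B]$, prove concentration of the clique counts after random restriction by Chernoff-type bounds over matchings, union bound over the polynomially many pairs $(B,e)$, and then absorb $W$ deterministically since moving $\le\gamma n$ vertices perturbs each count by $O(\gamma n^{f-r})\ll\eps n^{f-r}$.

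One point needs care in your write-up: you establish concentration only ``for $e\in G_B^{(r-i)}[U']$'', but such $e$ are themselves random, and more importantly an edge $e$ of $G[U\bigtriangleup W]$ may contain vertices of $W\sm U$ and hence not lie in $G[U]$ at all, so the whp event as you state it does not cover it. The fix is exactly the one you gesture at with ``$e$ is fixed (not random)'': prove the concentration of $|G_B[Y_B]^{(f-i)}(e)[U]|$ for \emph{every} $e\in G_B^{(r-i)}$ in the unrestricted complex (still only polynomially many, so the union bound survives), and only then restrict; the deterministic $W$-perturbation then applies uniformly to all edges of $G[U\bigtriangleup W]$. The same remark applies to the sets $e\in\binom{X_B}{r-i}$ in the extendability condition. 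With that quantification corrected, the proof is sound; the remaining slips (calling $G_B[Y_B]^{(f-i)}(e)$ an $(f-i)$-graph rather than an $(f-r)$-graph, and invoking Proposition~\ref{prop:noise} where only its elementary vertex-perturbation analogue is meant) are cosmetic.
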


\subsection{Greedy coverings and divisibility}\label{subsec:greedy cover}

The following lemma allows us to extend a given collection of $r$-sets into suitable $r$-disjoint $f$-cliques.

\begin{lemma}[\cite{GKLO}]\label{lem:greedy cover uni}
Let $1/n\ll \gamma \ll \alpha,1/s,1/f$ and $r\in[f-1]$. Let $G$ be a complex on $n$ vertices and let $L\In G^{(r)}$ satisfy $\Delta(L)\le \gamma n$. Suppose that $L$ decomposes into $L_1,\dots,L_m$ with $1\le |L_j|\le s$. Suppose that for every $j\in[m]$, we are given some candidate set $\cQ_j\In \bigcap_{e\in L_j}G^{(f)}(e)$ with $|\cQ_j|\ge \alpha n^{f-r}$. Then there exists $Q_j\in \cQ_j$ for each $j\in[m]$ such that, writing $K_j:=(Q_j\uplus L_j)^{\le}$\COMMENT{If $Q\in \bigcap_{e\in L}G^{(f)}(e)$, then automatically $L\In G^{(f)}(Q)$, so $Q\uplus L=\set{Q\cup e}{e\in L}$ is well-defined}, we have that $K_{j}$ and $K_{j'}$ are $r$-disjoint for all distinct $j,j'\in[m]$, and $\Delta(\bigcup_{j\in[m]}K_j^{(r)})\le \sqrt{\gamma} n$.\COMMENT{So for every $j$, we want to find an $(f-r)$-set `in the middle' of the edges of $L_j$, such that the $f-r$-set forms a clique with all the edges of $L_j$.}
\end{lemma}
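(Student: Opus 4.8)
\textbf{Proof plan for Lemma~\ref{lem:greedy cover uni}.}
The plan is to build the cliques $Q_j$ one at a time, greedily, maintaining at each stage a bound on the maximum $r$-degree of the $r$-graph formed so far. Fix constants with $1/n\ll \gamma \ll \alpha,1/s,1/f$ and $r\in[f-1]$. Process the index sets $L_1,\dots,L_m$ in order. Suppose that for some $j\in[m]$ we have already chosen $Q_1,\dots,Q_{j-1}$ with the property that $K_1,\dots,K_{j-1}$ are pairwise $r$-disjoint and that $\Delta\big(\bigcup_{j'<j}K_{j'}^{(r)}\big)\le \gamma^{1/3} n$, say (a bound slightly better than the final $\sqrt{\gamma}n$, which we will show is preserved). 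We want to pick $Q_j\in\cQ_j$ so that $K_j=(Q_j\uplus L_j)^{\le}$ is $r$-disjoint from all previous $K_{j'}$ and so that the degree bound is maintained.

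The two things we must avoid when choosing $Q_j$ are: (i) $Q_j$ creating an $r$-set already used by some earlier $K_{j'}$, i.e.\ some $r$-subset of $Q_j\cup e$ (for $e\in L_j$) lying in $\bigcup_{j'<j}K_{j'}^{(r)}$ or among the $r$-subsets of a single $K_j$-clique that coincides with a previous one; and (ii) $Q_j$ causing the degree of some $(r-1)$-set to become too large. For (i): each $e\in L_j$ is an $r$-set, and an $f$-set $Q\in\cQ_j$ together with $e$ spans $\binom{f}{r}$ many $r$-sets; an $r$-set $e'\ne e$ of $Q\cup e$ has $|e'\cap e|\le r-1$, so $e'$ is determined by choosing $|e'\setminus e|\ge 1$ vertices outside $e$ and at most $f-r-(|e'\setminus e|-1)$ remaining vertices of $Q$; hence for a fixed bad $r$-set $g\in \bigcup_{j'<j}K_{j'}^{(r)}$, the number of $Q\in\binom{V(G)}{f-r}$ with $g\subseteq Q\cup e$ is at most $n^{f-r-1}$. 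Since $\big|\bigcup_{j'<j}K_{j'}^{(r)}\big|\le \Delta\big(\bigcup_{j'<j}K_{j'}^{(r)}\big)\cdot n \le \gamma^{1/3} n^2$ by the handshaking bound, and $|L_j|\le s$, the total number of $Q\in\cQ_j$ ruled out by (i) is at most $s\cdot \gamma^{1/3} n^2\cdot n^{f-r-1}=s\gamma^{1/3} n^{f-r}$. (One also rules out the at most $\binom{f}{r}\cdot s\cdot n^{f-r-1}$ choices of $Q_j$ for which two distinct $r$-sets of $K_j$ coincide or which re-use an $r$-set within $L_j$; this is lower order.) For (ii): a choice of $Q_j$ increases the $(r-1)$-degree of an $(r-1)$-set $S$ only if $S$ is contained in some $r$-set of $K_j=(Q_j\uplus L_j)^{\le}$; there are at most $\binom{f}{r}\binom{r}{r-1}\le f^{f}$ such $S$ per clique and $|L_j|\le s$, so each step raises each individual $(r-1)$-degree by at most $sf^{f}$. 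Thus we should instead track, at step $j$, the set $\mathrm{Bad}_{r-1}$ of $(r-1)$-sets $S$ whose current degree in $\bigcup_{j'<j}K_{j'}^{(r)}$ exceeds $\tfrac12\sqrt{\gamma}n$; a counting argument as in the proof of Lemma~\ref{lem:rooted embedding} (bounding $\Delta(\mathrm{Bad}_{r-1})$ via the fact that the whole union has only $\le \gamma^{1/3}n^2$ edges) shows $\mathrm{Bad}_{r-1}$ is small, so the number of $Q\in\cQ_j$ meeting an $(r-1)$-set of $\mathrm{Bad}_{r-1}$ inside some clique $r$-set of $K_j$ is $O_{f,s}(\gamma^{c}n^{f-r})$ for some $c>0$.

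Combining, the number of forbidden choices of $Q_j$ is at most $C_{f,s}\gamma^{c} n^{f-r} < \alpha n^{f-r}/2 \le |\cQ_j|/2$ once $\gamma$ is small enough in terms of $\alpha,s,f$. Hence a valid $Q_j\in\cQ_j$ exists; choose it. By construction $K_j$ is $r$-disjoint from $K_1,\dots,K_{j-1}$, and no $(r-1)$-set is ever pushed above $\sqrt{\gamma}n$: a set $S$ only gets incremented while $S\notin\mathrm{Bad}_{r-1}$, i.e.\ while its degree is $\le \tfrac12\sqrt{\gamma}n$, and a single increment adds at most $sf^f\le \tfrac12\sqrt\gamma n$ (for $n$ large, since $\gamma\gg 1/n$), so its final degree is $\le \sqrt{\gamma}n$. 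Taking $k=r-1$ gives $\Delta\big(\bigcup_{j\in[m]}K_j^{(r)}\big)\le\sqrt\gamma n$, and the auxiliary invariant $\Delta(\cdot)\le\gamma^{1/3}n$ used during the induction follows a fortiori (indeed $\sqrt\gamma\le\gamma^{1/3}$), so it is maintained throughout. This completes the induction and the proof.

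\textbf{Main obstacle.} The delicate point is the bookkeeping in step (ii): a naive bound ``each step raises each $(r-1)$-degree by at most $sf^f$, over $m\le |L|/1\le \Delta(L)n\le\gamma n^2$ steps'' gives only $O(\gamma n^2)$, which is useless. The fix is the standard trick (cf.\ the proof of Lemma~\ref{lem:rooted embedding}) of never adding to an $(r-1)$-set that is already ``bad'', and bounding the number of bad $(r-1)$-sets — equivalently $\Delta(\mathrm{Bad}_{r-1})$ — by a double-counting/handshaking argument against the total edge count $\le\sqrt\gamma n^2$ of the union built so far. Making this self-sustaining (the degree bound we use to bound $|\mathrm{Bad}_{r-1}|$ is the same bound we are trying to preserve) is exactly the inductive hypothesis, and verifying it carefully is where the real work lies; everything else is routine counting.
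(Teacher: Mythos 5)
The paper itself does not prove this lemma (it is imported from \cite{GKLO}), so I am assessing your argument on its own; the closest in-paper model is the proof of Lemma~\ref{lem:rooted embedding}. Your strategy --- choose the $Q_j$ greedily, discarding from $\cQ_j$ the sets that would break $r$-disjointness or overload an $(r-1)$-set --- is the right one, but both counting steps fail as written. In (i), with $P:=\bigcup_{j'<j}K_{j'}^{(r)}$, handshaking gives $|P|\le \Delta(P)n^{r-1}$, not $\Delta(P)n$, so your union bound over all $g\in P$ yields roughly $s\cdot\gamma^{1/3}n^{r}\cdot n^{f-r-1}=s\gamma^{1/3}n^{f-1}$ forbidden sets, which swamps $|\cQ_j|\ge\alpha n^{f-r}$ once $r\ge 2$ (even your own figure $\gamma^{1/3}n^{2}\cdot n^{f-r-1}$ equals $\gamma^{1/3}n^{f-r+1}$, not $n^{f-r}$). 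The count must instead be stratified by $t:=|g\sm e|\ge 1$: there are at most $\binom{r}{t}\Delta_{r-t}(P)=O(\gamma^{1/3}n^{t})$ relevant $g$ in each stratum (handshaking down from the $(r-1)$-degree invariant), each forbidding at most $n^{f-r-t}$ sets $Q$, for a total of $O(\gamma^{1/3}n^{f-r})$. You also need to forbid those $Q_j$ for which $Q_j\cup e$ contains an edge of $L\sm L_j$: such an edge is forced into the later clique that must cover it, and no choice made at that later step can undo the collision.

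In (ii), a single badness threshold does not close the argument. To discard the $Q$ meeting a bad $(r-1)$-set you need, for each $(r-1-t)$-set $S_0\In e$, that few bad $S$ contain $S_0$; but double counting against the invariant $\Delta_{r-1-t}(P)\le\gamma^{1/3}n^{t+1}$ and the threshold $\tfrac12\sqrt\gamma\, n$ gives only $|\mathrm{Bad}_{r-1}(S_0)|=O(\gamma^{1/3-1/2}n^{t})=O(\gamma^{-1/6}n^{t})$, which is more than $n^t$, so the forbidden $Q$ are not few compared with $\alpha n^{f-r}$. This is precisely why the proof of Lemma~\ref{lem:rooted embedding} maintains the full hierarchy of invariants $\Delta_i(\cdot)\le 2\alpha\gamma^{(2^{-i})}n^{r-i}$ and bad families $BAD_i$ for all $i\in[r-1]$, with the nesting $2^{-r}+2^{-i}<2^{-(i-1)}$ making each $BAD_i$ sparse relative to the invariant one level down; you invoke that proof but your exponents do not reproduce its mechanism, and tracking only $(r-1)$-sets is not enough. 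Finally, an $(r-1)$-set lying inside an edge of $L_j$ is incremented no matter which $Q_j$ is chosen, so these forced increments (at most $s(f-r+1)|L(S)|\le s(f-r+1)\gamma n$ in total for each such $S$) must be budgeted separately rather than absorbed into the claim that $S$ only gets incremented while $S\notin\mathrm{Bad}_{r-1}$.
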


\begin{cor}\label{cor:greedy cover}
Let $1/n\ll \gamma \ll \alpha,1/f$ and $r\in[f-1]$. Suppose that $F$ is an $r$-graph on $f$ vertices. Let $G$ be a complex on $n$ vertices and let $H\In G^{(r)}$ with $\Delta(H)\le \gamma n$ and $|G^{(f)}(e)|\ge \alpha n^{f-r}$ for all $e\in H$. Then there is a $1$-well separated $F$-packing $\cF$ in $G$ that covers all edges of $H$ and such that $\Delta(\cF^{(r)})\le \sqrt{\gamma} n$.
\end{cor}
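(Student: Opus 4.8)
\textbf{Proof proposal for Corollary~\ref{cor:greedy cover}.}

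The plan is to reduce the statement to Lemma~\ref{lem:greedy cover uni} by cutting $H$ into singleton pieces and then filling in the missing $f-r$ vertices one clique at a time. First I would decompose $H$ into its individual edges: write $H=\Set{e_1,\dots,e_m}$ and set $L_j:=\Set{e_j}$ for each $j\in[m]$, so that $s=1$ in the notation of Lemma~\ref{lem:greedy cover uni}. For each $j$, take the candidate set $\cQ_j:=G^{(f)}(e_j)$; by hypothesis $|\cQ_j|\ge \alpha n^{f-r}$, so the hypotheses of Lemma~\ref{lem:greedy cover uni} are satisfied (with the same $\gamma,\alpha,f,r$ and with $s=1$). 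Applying that lemma yields, for each $j\in[m]$, a set $Q_j\in G^{(f)}(e_j)$ such that, writing $K_j:=(Q_j\uplus\Set{e_j})^{\le}$, the complexes $K_j$ are pairwise $r$-disjoint and $\Delta\bigl(\bigcup_{j\in[m]}K_j^{(r)}\bigr)\le\sqrt{\gamma}\,n$. Note that since $Q_j\in G^{(f)}(e_j)$ we have $Q_j\cup e_j\in G^{(f)}$, so each $K_j$ is a copy of the complete complex $K_f$ sitting inside $G$; in particular $V(K_j)\in G^{(f)}$ and $G^{(r)}[V(K_j)]\cong\krq{r}{f}$.

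Next I would place a copy of $F$ on each clique. For each $j\in[m]$, since $|V(K_j)|=f=|V(F)|$ and $G^{(r)}[V(K_j)]$ is complete, pick any bijection $V(F)\to V(K_j)$ whose image of the distinguished edge avoiding nothing in particular is unconstrained, and let $F_j$ be the corresponding copy of $F$ on vertex set $V(K_j)$; then $F_j\In G^{(r)}$ and, crucially, we may choose the bijection so that $e_j\in F_j$ (any $r$-subset of the clique is an edge of $F$ under a suitable labelling, since $F$ is non-empty, and $e_j$ is an $r$-subset of $V(K_j)$ by construction). Set $\cF:=\set{F_j}{j\in[m]}$. Since $V(F_j)=V(K_j)$ and the $K_j$ are pairwise $r$-disjoint, for distinct $j,j'$ we have $|V(F_j)\cap V(F_{j'})|=|V(K_j)\cap V(K_{j'})|<r$; hence $\cF$ satisfies \ref{separatedness:1}, and an $r$-set $e$ is contained in $V(F_j)$ for at most one $j$ (again by $r$-disjointness), so \ref{separatedness:2} holds with $\kappa=1$. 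Thus $\cF$ is a $1$-well separated $F$-packing in $G$. It covers every edge of $H$ because $e_j\in F_j\In\cF^{(r)}$ for each $j$. Finally $\cF^{(r)}=\bigcup_{j\in[m]}F_j\In\bigcup_{j\in[m]}K_j^{(r)}$, so $\Delta(\cF^{(r)})\le\Delta\bigl(\bigcup_{j\in[m]}K_j^{(r)}\bigr)\le\sqrt{\gamma}\,n$, as required.

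There is essentially no serious obstacle here: the corollary is a routine packaging of Lemma~\ref{lem:greedy cover uni} plus the observation that a complete $f$-clique in $G$ supports a copy of any $f$-vertex $r$-graph $F$, with the well-separatedness coming for free from the $r$-disjointness of the cliques. The only mild point of care is ensuring that the copy $F_j$ can be chosen to actually contain the prescribed edge $e_j$ (so that $H$ is genuinely covered), which holds since $F$ has at least one edge and all $r$-subsets of $V(K_j)$ are available; and that the edge-disjointness of distinct $F_j$ follows from $r$-disjointness of the $K_j$ rather than needing any further argument. Keeping track of the constant hierarchy $1/n\ll\gamma\ll\alpha,1/f$ is immediate since it is exactly the hierarchy required by Lemma~\ref{lem:greedy cover uni}.
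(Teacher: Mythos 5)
Your proposal is correct and is essentially identical to the paper's own proof: the paper also enumerates $H$ as $e_1,\dots,e_m$, applies Lemma~\ref{lem:greedy cover uni} with $L_j=\Set{e_j}$ and $\cQ_j=G^{(f)}(e_j)$, and places a copy of $F$ on each resulting clique $K_j$ so that $e_j\in F_j$. The extra checks you spell out (that $F_j$ can be chosen to contain $e_j$ since $F$ is non-empty, and that $1$-well separatedness follows from the $r$-disjointness of the $K_j$) are exactly the observations the paper leaves implicit.
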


\proof
Let $e_1,\dots,e_m$ be an enumeration of $H$. For $j\in[m]$, define $L_j:=\Set{e_j}$ and $\cQ_j:=G^{(f)}(e)$. Apply Lemma~\ref{lem:greedy cover uni} to obtain $K_1,\dots,K_m$. For each $j\in [m]$, let $F_j$ be a copy of $F$ with $V(F_j)=K_j$ and such that $e_j\in F_j$. Then $\cF:=\Set{F_1,\dots,F_m}$ is as desired.
\endproof

We can combine Lemma~\ref{lem:F nibble} and Corollary~\ref{cor:greedy cover} to deduce the following result. It allows us to make an $r$-graph divisible by deleting a small fraction of edges (even if we are forbidden to delete a certain set of edges $H$). Note that the result has a similar flavour as Corollary~\ref{cor:make divisible typical}, but the assumptions are different.

\begin{cor}\label{cor:make divisible}
Let $1/n\ll \gamma,\eps \ll \xi,1/f$ and $r\in[f-1]$. Let $F$ be an $r$-graph on $f$ vertices. Suppose that $G$ is a complex on $n$ vertices which is $(\eps,d,f,r)$-regular for some $d\ge \xi$ and $(\xi,f+r,r)$-dense. Let $H\In G^{(r)}$ satisfy $\Delta(H)\le \eps n$. Then there exists $L\In G^{(r)}-H$ such that $\Delta(L)\le \gamma n$ and $G^{(r)}-L$ is $F$-divisible.
\end{cor}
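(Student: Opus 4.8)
\textbf{Proof proposal for Corollary~\ref{cor:make divisible}.}

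The plan is to combine the $F$-nibble lemma (Lemma~\ref{lem:F nibble}) with the greedy covering corollary (Corollary~\ref{cor:greedy cover}) to produce a small $F$-decomposable subgraph $L$ whose removal achieves divisibility. First I would introduce an auxiliary constant $\gamma'$ with $1/n\ll \gamma'\ll \gamma,\eps$ and apply Lemma~\ref{lem:F nibble} to $G$ (with $\gamma'$ playing the role of the leftover parameter and $\kappa$ chosen appropriately) to obtain a $\kappa$-well separated $F$-packing $\cF_{nibble}$ with $\Delta(G^{(r)}-\cF_{nibble}^{(r)})\le \gamma' n$. Write $L_0:=G^{(r)}-\cF_{nibble}^{(r)}$ for this very sparse leftover. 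The key point is that $G-L_0$ has an $F$-decomposition (namely $\cF_{nibble}$), so $G-L_0$ is automatically $F$-divisible.

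However, $L_0$ might intersect the forbidden set $H$, so the next step is to ``uncover'' the edges of $H\cap \cF_{nibble}^{(r)}$. Concretely, I would set $L_1:=\bigcup\set{F'^{(r)}}{F'\in\cF_{nibble},\ V(F')\cap H\ne\emptyset}$, i.e.\ delete from $\cF_{nibble}$ every copy of $F$ that touches $H$, and put those edges back into the leftover. Since $\Delta(H)\le\eps n$ and $\cF_{nibble}$ is $\kappa$-well separated, standard counting (via Fact~\ref{fact:ws}) gives $\Delta(L_1)\le \kappa f\cdot \Delta(H)\le \gamma n/2$, say. Now $L^{bad}:=(L_0\cup L_1)-H$ is a subgraph of $G^{(r)}-H$ with $\Delta(L^{bad})\le \gamma n$, and $G^{(r)}-L^{bad}$ consists precisely of $H$ together with the edges covered by the surviving copies in $\cF_{nibble}$; in particular $G^{(r)}-L^{bad}$ need \emph{not} yet be $F$-divisible because we reinstated the edges of $L_1$ without covering them.

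To fix this, I would apply Corollary~\ref{cor:greedy cover}: since $G$ is $(\xi,f+r,r)$-dense (hence $|G^{(f)}(e)|\ge \xi n^{f-r}$ for every $e\in G^{(r)}$), and $\Delta(L_1)\le\gamma n$ is small, there is a $1$-well separated $F$-packing $\cF_{fix}$ in $G-(\text{already used }f\text{-sets})$ covering exactly the edges of $L_1$, with $\Delta(\cF_{fix}^{(r)})\le \sqrt{\gamma}n$; here one uses Proposition~\ref{prop:effect of decs} (or Proposition~\ref{prop:noise}) to see that removing the $f$-sets used by $\cF_{nibble}$ and by $L^{bad}$ leaves the density hypothesis of Corollary~\ref{cor:greedy cover} intact. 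Then I set $L:=L_0-H$ (or equivalently keep only the truly uncovered part), and the decomposition $\cF_{nibble}'\cup\cF_{fix}$ — where $\cF_{nibble}'$ is $\cF_{nibble}$ with the $L_1$-copies removed — is an $F$-decomposition of $G^{(r)}-L$, so $G^{(r)}-L$ is $F$-divisible; moreover $L\cap H=\emptyset$ and $\Delta(L)\le\Delta(L_0)\le \gamma' n\le\gamma n$ as required. I expect the main bookkeeping obstacle to be arranging the $f$-disjointness so that $\cF_{fix}$ can legitimately be found after $\cF_{nibble}$ is fixed — this is exactly what Corollary~\ref{cor:greedy cover} together with the regularity/density of $G$ and the smallness of all the deleted pieces is designed to handle, so no genuinely new idea should be needed.
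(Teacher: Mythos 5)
Your proposal uses the right two ingredients (Corollary~\ref{cor:greedy cover} and Lemma~\ref{lem:F nibble}), but it applies them in the wrong order, and this creates a genuine gap in the patching step. After the nibble, the set of $r$-edges of $G$ not covered by the surviving copies $\cF_{nibble}'$ is only $L_0\cup L_1$, which has maximum degree $O(\gamma n)$. Corollary~\ref{cor:greedy cover} needs every edge of $L_1$ to lie in $\Omega(n^{f-r})$ many $f$-sets all of whose $r$-subsets are still available; in a graph of maximum degree $O(\gamma n)$ there are only $O(\gamma n^{f-r})$ such $f$-sets, so $\cF_{fix}$ cannot be found edge-disjointly from $\cF_{nibble}'$. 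Excluding the ``already used $f$-sets'' only enforces $f$-disjointness, not edge-disjointness at the $r$-level: the copies of $\cF_{fix}$ would reuse $r$-edges covered by $\cF_{nibble}'$, so $\cF_{nibble}'\cup\cF_{fix}$ is not an $F$-packing and the divisibility of $G^{(r)}-L$ does not follow from it. A second, independent problem is bookkeeping: edges of $H$ that land in the nibble leftover $L_0$ belong to $G^{(r)}-L$ (they may not be deleted) but are covered neither by $\cF_{nibble}'$ nor by $\cF_{fix}$ (which only targets $L_1\subseteq\cF_{nibble}^{(r)}$), so your proposed family does not decompose $G^{(r)}-L$ even modulo the disjointness issue.

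The repair is exactly the reversal of your two steps, which is what the paper does: first apply Corollary~\ref{cor:greedy cover} to the \emph{full} dense complex $G$ to cover all of $H$ by an $F$-packing $\cF_0$ with $\Delta(\cF_0^{(r)})\le\sqrt{\eps}\,n$; by Proposition~\ref{prop:noise} the complex $G-\cF_0^{(r)}$ is still regular and dense, so the $F$-nibble lemma applies to it and leaves a leftover $L$ with $\Delta(L)\le\gamma n$. Since $H\subseteq\cF_0^{(r)}$, automatically $L\subseteq G^{(r)}-H$, and $G^{(r)}-L$ is $F$-decomposable, hence $F$-divisible. Covering the prescribed sparse graph first costs only a small degree perturbation, which the nibble tolerates; trying to cover prescribed edges after the nibble is hopeless because no room is left.
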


\proof
We clearly have $|G^{(f)}(e)|\ge 0.5\xi n^{f-r}$ for all $e\in H$. Thus, by Corollary~\ref{cor:greedy cover}, there exists an $F$-packing $\cF_0$ in $G$ which covers all edges of $H$ and satisfies $\Delta(\cF_0^{(r)})\le \sqrt{\eps}n$. By Proposition~\ref{prop:noise}\ref{noise:regular} and~\ref{noise:dense}, $G':=G-\cF_0^{(r)}$ is still $(2^{r+1}\sqrt{\eps},d,f,r)$-regular\COMMENT{$\eps+2^r \sqrt{\eps}$} and $(\xi/2,f+r,r)$-dense. Thus, by Lemma~\ref{lem:F nibble}, there exists an $F$-packing $\cF_{nibble}$ in $G'$ such that $\Delta(L)\le \gamma n $, where $L:=G'^{(r)}-\cF_{nibble}^{(r)}=G^{(r)}-\cF_0^{(r)}-\cF_{nibble}^{(r)}\In G^{(r)}-H$. Clearly, $G^{(r)}-L$ is $F$-divisible (in fact, $F$-decomposable).
\endproof

\section{Vortices}\label{app:vortices}

Here, we prove Lemma~\ref{lem:almost dec}, which states that given a vortex in a supercomplex, we can cover all edges which do not lie in the final vortex set. As sketched in Section~\ref{sec:vortices}, we achieve this by alternately applying the $F$-nibble lemma (Lemma~\ref{lem:F nibble}) and the Cover down lemma (Lemma~\ref{lem:cover down}). Recall that the Cover down down lemma guarantees the existence of a suitable `cleaning graph' or `partial absorber' which allows us to `clean' the leftover of an application of the $F$-nibble lemma in the sense that the new leftover is guaranteed to lie in the next vortex set. For technical reasons, we will in fact find all cleaning graphs first (one for each vortex set) and set them aside even before the first nibble.

\subsection{Existence of cleaners}
The aim of this subsection is to apply the Cover down lemma to each `level' $i$ of the vortex to obtain a `cleaning graph' $H_i$ (playing the role of $H^\ast$) for each $i\in[\ell]$ (see Lemma~\ref{lem:cleaner}).
Let $G$ be a complex and $U_0 \supseteq U_1 \supseteq \dots \supseteq U_\ell$ a vortex in $G$. We say that $H_1,\dots,H_\ell$ is a \defn{$(\gamma,\nu,\kappa,F)$-cleaner (for the said vortex)} if the following hold for all $i\in[\ell]$:
\begin{enumerate}[label={\rm(C\arabic*)}]
\item $H_i\In G^{(r)}[U_{i-1}]-G^{(r)}[U_{i+1}]$, where $U_{\ell+1}:=\emptyset$;\label{cleaner:location}
\item $\Delta(H_i)\le \nu |U_{i-1}|$;\label{cleaner:maxdeg}
\item $H_i$ and $H_{i+1}$ are edge-disjoint, where $H_{\ell+1}:=\emptyset$;\label{cleaner:disjoint}
\item whenever $L\In G^{(r)}[U_{i-1}]$ is such that $\Delta(L)\le \gamma |U_{i-1}|$ and $H_i\cup L$ is $F$-divisible and $O$ is an $(r+1)$-graph on $U_{i-1}$ with $\Delta(O)\le \gamma |U_{i-1}|$, there exists a $\kappa$-well separated $F$-packing $\cF$ in $G[H_i\cup L][U_{i-1}]-O$ which covers all edges of $H_i\cup L$ except possibly some inside $U_i$.\label{cleaner:cover down}
\end{enumerate}

Note that \ref{cleaner:location} and \ref{cleaner:disjoint} together imply that $H_1,\dots,H_\ell$ are edge-disjoint.
The following proposition will be used to ensure~\ref{cleaner:disjoint}.

\begin{prop}[\cite{GKLO}]\label{prop:sprinkling}
Let $1/n\ll \eps \ll \mu,\xi,1/f$ and $r\in[f-1]$. Let $\xi':=\xi(1/2)^{(8^f+1)}$. Let $G$ be a complex on $n$ vertices and let $U\In V(G)$ of size $\mu n$ and $(\eps,\mu,\xi,f,r)$-random in $G$. Suppose that $H$ is a random subgraph of $G^{(r)}$ obtained by including every edge of $G^{(r)}$ independently with probability $1/2$. Then with probability at least $1-\eul^{-n^{1/10}}$,
\begin{enumerate}[label={\rm(\roman*)}]
\item $U$ is $(\sqrt{\eps},\mu,\xi',f,r)$-random in $G[H]$ and\label{random slice:random preserved}
\item $G$ is $(\sqrt{\eps},f,r)$-dense with respect to $H-G^{(r)}[\bar{U}]$, where $\bar{U}:=V(G)\sm U$.\label{random slice:dense wrt}
\end{enumerate}
\end{prop}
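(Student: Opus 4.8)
## Proof proposal for Proposition~\ref{prop:sprinkling}

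The plan is to treat this as a standard ``sprinkling''/``random slicing'' concentration argument, following the pattern already used repeatedly in \cite{GKLO} for establishing that various random operations preserve randomness and density properties. The two properties to verify, \ref{random slice:random preserved} and \ref{random slice:dense wrt}, are essentially independent of each other: \ref{random slice:random preserved} is the assertion that keeping a $1/2$-random slice of $G^{(r)}$ (equivalently, of the $f$-graph witnessing randomness) keeps $U$ random in the slice, albeit with slightly worse parameters, while \ref{random slice:dense wrt} says that enough of the $f$-sets witnessing density survive with \emph{all their $r$-subsets outside $\bar U$} inside the slice.

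First I would set up the witnessing structure. Since $U$ is $(\eps,\mu,\xi,f,r)$-random in $G$, fix the $f$-graph $Y$ from Definition~\ref{def:regular subset}, so that \ref{random:binomial}--\ref{random:intersections} hold. For \ref{random slice:random preserved} the key observation is that $G[H]$ should still be witnessed by (a relabelling of) $Y$: the point is that passing to $H$ does not change $Y$ as an $f$-graph at all, but it changes which $f$-sets $Q$ have the property ``$\binom{Q\cup e}{r}\setminus\{e\}\subseteq H$'' used implicitly in the relevant counts. For each $e\in G^{(r)}$, each $x\in[f-r]_0$, and each $Q\in G[Y]^{(f)}(e)$ with $|Q\cap U|=x$, whether $Q$ contributes to the corresponding count for $G[H]$ is determined by the survival of the $\binom{f}{r}-1$ edges in $\binom{Q\cup e}{r}\setminus\{e\}$, an event of probability $(1/2)^{\binom{f}{r}-1}\ge (1/2)^{8^f}$ (using $\binom{f}{r}\le 2^f\le 8^f$, so $(1/2)^{\binom{f}{r}-1}\ge (1/2)^{8^f+1}$, which is exactly where the exponent in $\xi':=\xi(1/2)^{(8^f+1)}$ comes from). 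One then applies Lemma~\ref{lem:separable chernoff} (or Corollary~\ref{cor:graph chernoff}): the index set has size $\Theta(n^{f-r})$ by \ref{random:binomial}, and it partitions into $O(n^{f-r-1})$ classes on which the survival events are independent --- two $f$-sets $Q,Q'$ are in different classes unless they share an $r$-subset (other than $e$) whose survival both depend on, and for a fixed $e$ there are only boundedly many $r$-subsets and each forces $|e\cup e'|\ge r+1$, giving the $O(n^{f-r-1})$ bound exactly as in the proof of Lemma~\ref{lem:random packing}. So each count is $(1\pm n^{-1/5})$ times its expectation with probability $\ge 1-\eul^{-n^{1/6}}$; combined with \ref{random:binomial} this yields the required $(1\pm\sqrt\eps)bin(f-r,\mu,x)\,d'n^{f-r}$ form with $d':=(1/2)^{\binom{f}{r}-1}d\ge(1/2)^{8^f}\xi\ge\xi'$. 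The density condition \ref{random:dense} for $G[H]$ and the intersection condition \ref{random:intersections} are handled the same way (for \ref{random:intersections} one applies the argument to each of the boundedly many complexes $\bigcap_{b\in B}G(b)[U]$, noting $|B|\le 2^h\le 2^r$ and that $f-h$-sets survive with probability $\ge(1/2)^{\binom{f-h}{r-h}-1}$). A union bound over the $O(n^r)$ relevant edges $e$, sets $B$, and values $x$ absorbs the $\eul^{-n^{1/6}}$ failure probabilities into the stated $1-\eul^{-n^{1/10}}$.

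For \ref{random slice:dense wrt}: fix $e\in G^{(r)}$. By \ref{random:binomial} applied with $x=0$ (or more simply by the density-type counts available), $e$ lies in at least, say, $0.9\xi n^{f-r}$ $f$-sets $Q\in G[Y]^{(f)}(e)$ with $Q\cap U=\emptyset$; for such $Q$, every $r$-subset of $Q\cup e$ other than $e$ either meets $U$ in at most... --- actually the cleaner route is: $G[H\cup\{e\}]^{(f)}(e)$ counts $f$-sets $Q$ with $\binom{Q\cup e}{r}\setminus\{e\}\subseteq H$, and we want $\ge\sqrt\eps\, n^{f-r}$ of these to also lie in $G-G^{(r)}[\bar U]$ in the required sense. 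Take the $\Theta(n^{f-r})$ candidate $f$-sets $Q$ containing $e$ inside $G^{(f)}(e)$; each survives into the relevant count with probability $(1/2)^{\binom{f}{r}-1}$, and by the same partition-into-independent-classes argument plus Lemma~\ref{lem:separable chernoff} the number that survive concentrates around its expectation $\Theta(n^{f-r})$, which is $\ge\sqrt\eps\,n^{f-r}$ comfortably since $\eps\ll\xi,1/f$. Union-bounding over all $e\in G^{(r)}$ again keeps us within $\eul^{-n^{1/10}}$.

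The main obstacle --- and really the only non-bookkeeping point --- is organising the independence structure correctly so that Lemma~\ref{lem:separable chernoff} applies: one must be careful that the survival events for distinct witnessing $f$-sets $Q,Q'$ (sharing the base edge $e$) are genuinely independent unless $Q$ and $Q'$ share some other $r$-subset, and then bound the number of partition classes by $O(n^{f-r-1})$ uniformly. This is exactly the device appearing in the proof of Lemma~\ref{lem:random packing}, so it transfers with only cosmetic changes, and since Proposition~\ref{prop:sprinkling} is quoted from \cite{GKLO} I would in the final write-up simply refer to that proof, sketching the exponent computation $(1/2)^{\binom{f}{r}-1}\ge(1/2)^{8^f+1}$ that explains the shape of $\xi'$.
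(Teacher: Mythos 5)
Your argument for part \ref{random slice:random preserved} is essentially the standard one (the paper only cites \cite{GKLO} for this proposition, but the route you describe is the intended one): keep the witnessing $f$-graph $Y$, observe that conditional on $e\in H$ each relevant $f$-set, $(f+r)$-set or $(f+r-h)$-set survives into the corresponding count for $G[H]$ with probability $(1/2)^{N-1}$ where $N\le 2^r\binom{f+r}{r}\le 8^f$ is the number of $r$-subsets that must land in $H$, and concentrate via the partition into independence classes exactly as in the proof of Lemma~\ref{lem:random packing}, using Lemma~\ref{lem:separable chernoff}. (One small inaccuracy: in the \ref{random:intersections}-counts the exponent is the total number of $r$-subsets of $b\cup P$ over all $b\in B$, not $\binom{f-h}{r-h}-1$; this is why the definition of $\xi'$ carries the exponent $8^f+1$ rather than $\binom{f}{r}$, but it does not affect the validity of the scheme.)

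The genuine gap is in your treatment of \ref{random slice:dense wrt}. The graph there is $H-G^{(r)}[\bar{U}]$, which \emph{deterministically} excludes every $r$-set lying inside $\bar{U}$. For a typical $Q\in G^{(f)}(e)$ the set $Q\cup e$ has at least $r$ vertices in $\bar{U}$, hence contains some $r$-subset $e'\neq e$ with $e'\subseteq\bar{U}$; such a $Q$ contributes nothing to $|G[(H-G^{(r)}[\bar{U}])\cup\{e\}]^{(f)}(e)|$ no matter how the coins fall. So your ``cleaner route'' claim that each of the $\Theta(n^{f-r})$ candidates survives with probability $(1/2)^{\binom{f}{r}-1}$ is false, and your first attempt (taking $x=0$, i.e.\ $Q\cap U=\emptyset$) is exactly backwards: those $Q$ are the ones that are killed. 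The correct argument restricts to $Q\subseteq U$: by \ref{random:binomial} with $x=f-r$ there are $(1\pm\eps)\mu^{f-r}d\,n^{f-r}$ such $Q$ in $G[Y]^{(f)}(e)$; for each of them every $r$-subset $e'\neq e$ of $Q\cup e$ meets $Q\subseteq U$ and so is not deleted, whence $Q$ survives with probability $(1/2)^{\binom{f}{r}-1}$. The expected count is then at least $0.9\mu^{f-r}\xi(1/2)^{\binom{f}{r}-1}n^{f-r}\gg\sqrt{\eps}\,n^{f-r}$ since $\eps\ll\mu,\xi,1/f$, and the same independence-class concentration plus a union bound over $e\in G^{(r)}$ finishes the proof.
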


The following lemma shows that cleaners exist.

\begin{lemma}\label{lem:cleaner}
Let $1/m\ll 1/\kappa \ll \gamma\ll \eps \ll \nu \ll \mu,\xi,1/f$ be such that $\mu\le 1/2$ and $r\in[f-1]$. Assume that \ind{i} is true for all $i\in[r-1]$ and that $F$ is a weakly regular $r$-graph on $f$ vertices. Let $G$ be a complex and $U_0 \supseteq U_1 \supseteq \dots \supseteq U_\ell$ an $(\eps,\mu,\xi,f,r,m)$-vortex in $G$. Then there exists a $(\gamma,\nu,\kappa,F)$-cleaner.
\end{lemma}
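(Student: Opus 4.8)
The plan is to construct the cleaners $H_1,\dots,H_\ell$ one level at a time, working backwards from $i=\ell$ down to $i=1$, and to apply the Cover down lemma (Lemma~\ref{lem:cover down}) on each level. Fix auxiliary constants $1/m\ll 1/\kappa\ll\gamma\ll\eps\ll\eps'\ll\nu'\ll\nu\ll\mu,\xi,1/f$ with $\mu\le 1/2$. For each $i\in[\ell]$, consider the complex $G[U_{i-1}]$; by \ref{vortex:untwisted} of the vortex definition, $U_i$ is $(\eps,\mu,\xi,f,r)$-random in $G[U_{i-1}]$. To ensure the disjointness condition \ref{cleaner:disjoint}, I would first `sprinkle': split $G^{(r)}[U_{i-1}]$ into a random half and its complement by including each edge independently with probability $1/2$, obtaining a random subgraph $H^{rnd}_i\In G^{(r)}[U_{i-1}]$. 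By Proposition~\ref{prop:sprinkling} applied within $G[U_{i-1}]$ (with $|U_{i-1}|$ playing the role of $n$ and noting $|U_i|=\lfloor\mu|U_{i-1}|\rfloor$), whp $U_i$ is still $(\sqrt{\eps},\mu,\xi',f,r)$-random in $G[U_{i-1}][H^{rnd}_i]$ and $G[U_{i-1}]$ is $(\sqrt{\eps},f,r)$-dense with respect to $H^{rnd}_i-G^{(r)}[U_{i-1}][\bar U_i]$, where $\bar U_i:=U_{i-1}\sm U_i$.

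\textbf{Applying the Cover down lemma.} Having performed the sprinkling, I would apply Lemma~\ref{lem:cover down} with $G[U_{i-1}]$ in the role of $G$, the set $U_i$ in the role of $U$, and $\tilde G:=G[U_{i-1}][H^{rnd}_i]$ in the role of $\tilde G$; the hypotheses of Lemma~\ref{lem:cover down} are met since $U_i$ is $(\sqrt\eps,\mu,\xi',f,r)$-random in $G[U_{i-1}]$ (one needs $\sqrt\eps$ small enough, which the hierarchy provides) and $\tilde G$ is $(\sqrt\eps,f,r)$-dense with respect to $G^{(r)}[U_{i-1}]-G^{(r)}[U_{i-1}][\bar U_i]$ by Proposition~\ref{prop:sprinkling}\ref{random slice:dense wrt} — here one checks $G[U_{i-1}]\In\tilde G$ is not literally required; rather $G\In\tilde G$ in the statement corresponds to taking $G$ there to be a slightly sparsened complex, so I would instead run Cover down with $G[U_{i-1}][H^{rnd}_i]$ as the sparsened host and $G[U_{i-1}]$ as $\tilde G$, checking the density hypothesis accordingly. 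This yields $H_i:=H^\ast\In G^{(r)}[U_{i-1}][H^{rnd}_i]-G^{(r)}[U_{i-1}][H^{rnd}_i][\bar U_i]\In G^{(r)}[U_{i-1}]-G^{(r)}[U_{i-1}][U_{i+1}]$ (since edges avoiding $\bar U_i$ lie inside $U_i$, and we may further restrict to avoid $U_{i+1}$), with $\Delta(H_i)\le\nu'|U_{i-1}|\le\nu|U_{i-1}|$, giving \ref{cleaner:location} and \ref{cleaner:maxdeg}. Property \ref{cleaner:cover down} is exactly the conclusion of Lemma~\ref{lem:cover down}: for any $L\In G^{(r)}[U_{i-1}]$ with $\Delta(L)\le\gamma|U_{i-1}|$ and $H_i\cup L$ being $F$-divisible and any $(r+1)$-graph $O$ on $U_{i-1}$ with $\Delta(O)\le\gamma|U_{i-1}|$, there is a $\kappa$-well separated $F$-packing in $\tilde G[H_i\cup L]-O=G[U_{i-1}][H_i\cup L]-O$ covering all edges of $H_i\cup L$ except possibly some inside $U_i$.

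\textbf{Ensuring edge-disjointness and finishing.} The point of the sprinkling is that we can use \emph{independent} random halvings at different levels, but more simply: when constructing $H_i$ we choose $H^{rnd}_i\In G^{(r)}[U_{i-1}]-H_{i+1}$ — that is, we first remove the already-constructed $H_{i+1}$ from the available edges (this changes $\Delta$ by at most $\nu|U_i|\le\eps|U_{i-1}|$, which is absorbed by Proposition~\ref{prop:random noise} type arguments showing $U_i$ remains random after deleting a sparse graph) and perform the sprinkling and Cover down application inside $G^{(r)}[U_{i-1}]-H_{i+1}$. Then $H_i$ is automatically edge-disjoint from $H_{i+1}$, giving \ref{cleaner:disjoint}. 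Since $\ell\le\log_{1/\mu}(n/m)$ is bounded in terms of $n$ and the failure probability at each level is at most $\eul^{-|U_{i-1}|^{1/10}}\le\eul^{-m^{1/10}}$, a union bound over the $\ell$ levels shows that with positive probability all applications succeed, and we obtain the desired cleaner $H_1,\dots,H_\ell$.

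\textbf{Main obstacle.} The routine parts are the probabilistic bookkeeping (applying Proposition~\ref{prop:sprinkling} and the union bound over levels) and checking the constant hierarchy propagates correctly through the $\ell$ iterations. The genuinely delicate point is matching the hypotheses of the Cover down lemma precisely — in particular getting the roles of $G$ versus $\tilde G$ right, since Lemma~\ref{lem:cover down} requires $G\In\tilde G$ with $\tilde G$ dense with respect to a subgraph of $G^{(r)}$, whereas here the natural host is $G[U_{i-1}]$ and the natural sparsened complex is obtained by the sprinkling; one must set $\tilde G:=G[U_{i-1}]$ and let the sprinkled complex $G[U_{i-1}][H^{rnd}_i-H_{i+1}]$ play the role of $G$ in the lemma, then verify via Proposition~\ref{prop:sprinkling}\ref{random slice:dense wrt} that $\tilde G$ is indeed $(\sqrt\eps,f,r)$-dense with respect to $(H^{rnd}_i-H_{i+1})-G^{(r)}[U_{i-1}][\bar U_i]$, and that this sparsened complex still has $U_i$ random in it via Proposition~\ref{prop:sprinkling}\ref{random slice:random preserved}. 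Since this is essentially identical to the corresponding step in the clique case (Lemma~7.12 of \cite{GKLO}) with Proposition~\ref{prop:S cover} replacing the clique-specific extension argument inside Lemma~\ref{lem:cover down}, I would refer to the appendix for the full details.
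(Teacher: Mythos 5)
Your overall strategy (a random halving followed by an application of the Cover down lemma at each level of the vortex) is the same as the paper's, but two steps in your execution would fail as written. First, condition \ref{cleaner:location} requires $H_i$ to avoid all edges inside $U_{i+1}$, and applying Lemma~\ref{lem:cover down} with $U_i$ in the role of $U$ does not deliver this: the lemma only guarantees $H^\ast\In G^{(r)}-G^{(r)}[\bar U_i]$ with $\bar U_i=U_{i-1}\sm U_i$, i.e.\ every edge of $H^\ast$ meets $U_i$, which in no way prevents edges lying entirely inside $U_{i+1}$. Your parenthetical ``we may further restrict to avoid $U_{i+1}$'' is not justified: deleting edges from $H^\ast$ destroys the covering property, which Lemma~\ref{lem:cover down} asserts only for the specific graph $H^\ast$ it outputs (both the divisibility hypothesis on $H^\ast\cup L$ and the requirement that the packing live in $\tilde G[H^\ast\cup L]$ change when edges are removed). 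This is precisely what property \ref{vortex:twisted} of a vortex exists for: the paper applies the Cover down lemma with the twisted set $U_i':=U_i\sm U_{i+1}$, which is $(\eps,\mu(1-\mu),\xi,f,r)$-random in $G[U_{i-1}]$, playing the role of $U$, so that $H^\ast$ automatically avoids $G^{(r)}[U_{i-1}\sm U_i']\supseteq G^{(r)}[U_{i+1}]$, while the leftover still lies inside $U_i'\In U_i$ as needed for \ref{cleaner:cover down}.

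Second, your mechanism for \ref{cleaner:disjoint} --- deleting the already-constructed $H_{i+1}$ from the host before building $H_i$ --- does not survive the hierarchy. You claim the deletion ``changes $\Delta$ by at most $\nu|U_i|\le\eps|U_{i-1}|$'', but $\nu|U_i|\approx\nu\mu|U_{i-1}|$, and since $\eps\ll\nu\ll\mu$ with $\mu$ a fixed constant, we have $\nu\mu|U_{i-1}|\gg\eps|U_{i-1}|$. After deleting a graph whose maximum degree is of order $\nu|U_{i-1}|$, Proposition~\ref{prop:random noise} only yields that $U_i$ is roughly $(\sqrt{\nu},\dots)$-random in the remainder, which is useless for Lemma~\ref{lem:cover down}, whose hypotheses require the randomness parameter to be $\ll\nu$ (it must be small enough to produce an $H^\ast$ of maximum degree $\nu n$). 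The paper circumvents this by making the disjointness structural rather than achieving it by deletion: it performs a single random split of $G^{(r)}$ into $G_0\cupdot G_1$ up front and places $H_i$ inside $G_{i\;\mathrm{mod}\;2}[U_{i-1}]$, so that consecutive cleaners live in different halves of the split and are edge-disjoint for free; Proposition~\ref{prop:sprinkling} together with the bound $\sum_i|U_{i-1}|^{-2}<1$ shows that a single split works simultaneously for all levels. (Your final union bound $\ell\cdot\eul^{-m^{1/10}}$ would also be problematic since $\ell$ grows with $n$ while $m$ is fixed, though in a genuinely sequential construction this particular point could be repaired.)
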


\proof
For $i\in[\ell]$, define $U_i':=U_i\sm U_{i+1}$, where $U_{\ell+1}:=\emptyset$. For $i\in[\ell-1]$, let $\mu_i:=\mu(1-\mu)$, and let $\mu_\ell:=\mu$. By \ref{vortex:untwisted}\COMMENT{if $i=\ell$} and \ref{vortex:twisted}, we have for all $i\in[\ell]$ that $U_{i}'$ is $(\eps,\mu_i,\xi,f,r)$-random in $G[U_{i-1}]$.

Split $G^{(r)}$ randomly into $G_0$ and $G_1$, that is, independently for every edge $e\in G^{(r)}$, put $e$ into $G_{0}$ with probability $1/2$ and into $G_{1}$ otherwise. We claim that with positive probability, the following hold for every $i\in[\ell]$:
\begin{enumerate}[label={\rm(\roman*)}]
\item $U_{i}'$ is $(\sqrt{\eps},\mu_i,\xi(1/2)^{(8^f+1)},f,r)$-random in $G[G_{i\;\mathrm{mod}\;{2}}][U_{i-1}]$;\label{sprinkling:random}
\item $G[U_{i-1}]$ is $(\sqrt{\eps},f,r)$-dense with respect to $G_{i\;\mathrm{mod}\;{2}}[U_{i-1}]- G^{(r)}[U_{i-1}\sm U_i']$.\label{sprinkling:dense}
\end{enumerate}
By Proposition~\ref{prop:sprinkling}, the probability that \ref{sprinkling:random} or \ref{sprinkling:dense} do not hold for $i\in[\ell]$ is at most $\eul^{-|U_{i-1}|^{1/10}}\le |U_{i-1}|^{-2}$.
Since $\sum_{i=1}^{\ell}|U_{i-1}|^{-2}<1,$\COMMENT{$\le \sum_{i=2}^{\infty}\frac{1}{i^2}=\frac{\pi^2}{6}-1$} we deduce that with positive probability, \ref{sprinkling:random} and \ref{sprinkling:dense} hold for all $i\in[\ell]$.

Therefore, there exist $G_0,G_{1}$ satisfying the above properties. For every $i\in[\ell]$, we will find $H_i$ using the Cover down lemma (Lemma~\ref{lem:cover down}). Let $i\in[\ell]$. Apply Lemma~\ref{lem:cover down} with the following objects/parameters:

\smallskip
{\footnotesize
\noindent
{
\begin{tabular}{c|c|c|c|c|c|c|c|c|c|c|c|c|c}
object/parameter & $G[G_{i\;\mathrm{mod}\;{2}}][U_{i-1}]$ & $U_i'$ & $G[U_{i-1}]$ & $F$ & $|U_{i-1}|$ & $\kappa$ & $\gamma$ & $\sqrt{\eps}$ & $\nu$ & $\mu_i$ & $\xi(1/2)^{(8^f+1)}$ & $f$ & $r$\\ \hline
\rule{0pt}{3ex}playing the role of & $G$ & $U$ & $\tilde{G}$ & $F$ & $n$ & $\kappa$ & $\gamma$ & $\eps$ & $\nu$ & $\mu$ & $\xi$ & $f$ & $r$
\end{tabular}
}
}
\newline \vspace{0.2cm}

Hence, there exists $$H_i\In G_{i\;\mathrm{mod}\;{2}}[U_{i-1}] -G_{i\;\mathrm{mod}\;{2}}[U_{i-1}\sm U_{i}'] \In G_{i\;\mathrm{mod}\;{2}}[U_{i-1}]-G^{(r)}[U_{i+1}]$$ with $\Delta(H_i)\le \nu |U_{i-1}|$ and the following `cleaning' property: for all $L\In G^{(r)}[U_{i-1}]$ with $\Delta(L)\le \gamma |U_{i-1}|$ such that $H_i\cup L$ is $F$-divisible and all $(r+1)$-graphs $O$ on $U_{i-1}$ with $\Delta(O)\le \gamma |U_{i-1}|$, there exists a $\kappa$-well separated $F$-packing $\cF$ in $G[H_i\cup L][U_{i-1}]-O$ which covers all edges of $H_i\cup L$ except possibly some inside $U_i'\In U_i$. Thus, \ref{cleaner:location}, \ref{cleaner:maxdeg} and \ref{cleaner:cover down} hold.

Since $G_0$ and $G_1$ are edge-disjoint, \ref{cleaner:disjoint} holds as well. Thus, $H_1,\dots,H_\ell$ is a $(\gamma,\nu,\kappa,F)$-cleaner.
\endproof

\subsection{Obtaining a near-optimal packing}

Recall that Lemma~\ref{lem:almost dec} guarantees an $F$-packing covering all edges except those in the final set $U_\ell$ of a vortex. We prove this by applying successively the $F$-nibble lemma (Lemma~\ref{lem:F nibble}) and the definition of a cleaner to each set $U_i$ in the vortex.

\lateproof{Lemma~\ref{lem:almost dec}}
Choose new constants $\gamma,\nu>0$ such that $$1/m\ll 1/\kappa \ll \gamma \ll \eps \ll \nu \ll \mu \ll \xi,1/f.$$

Apply Lemma~\ref{lem:cleaner} to obtain a $(\gamma,\nu,\kappa,F)$-cleaner $H_1,\dots,H_\ell$. Note that by \ref{vortex:untwisted} and Fact~\ref{fact:random trivial}\ref{fact:random trivial:subcomplex}, $G[U_{i}]$ is an $(\eps,\xi,f,r)$-supercomplex for all $i\in[\ell]$, and the same holds for $i=0$ by assumption.
Let $H_{\ell+1}:=\emptyset$ and $U_{\ell+1}:=\emptyset$.

For $i\in[\ell]_0$ and $\cF_{i}^\ast$, define the following conditions:
\begin{enumerate}[label={\rm(FP\arabic*$^\ast$)}]
\item$\hspace{-5pt}_{i}$ $\cF_{i}^\ast$ is a $4\kappa$-well separated $F$-packing in $G-H_{i+1}- G^{(r)}[U_{i+1}]$;\label{vortex iterative packing:1}
\item$\hspace{-5pt}_{i}$ $\cF_{i}^\ast$ covers all edges of $G^{(r)}$ that are not inside $U_{i}$;\label{vortex iterative packing:2}
\item$\hspace{-5pt}_{i}$ for all $e\in G^{(r)}[U_{i}]$, $|\cF_{i}^{\ast\le(f)}(e)|\le 2\kappa$;\label{vortex iterative packing:3}
\item$\hspace{-5pt}_{i}$ $\Delta(\cF_{i}^{\ast(r)}[U_{i}])\le \mu |U_{i}|$.\label{vortex iterative packing:5}
\end{enumerate}

Note that \ref{vortex iterative packing:1}$_0$--\ref{vortex iterative packing:5}$_0$ hold trivially with $\cF_{0}^\ast:=\emptyset$. We will now proceed inductively until we obtain $\cF^\ast_\ell$ satisfying \ref{vortex iterative packing:1}$_\ell$--\ref{vortex iterative packing:5}$_\ell$. Clearly, taking $\cF:=\cF^\ast_\ell$ completes the proof (using \ref{vortex iterative packing:1}$_\ell$ and \ref{vortex iterative packing:2}$_\ell$).

Suppose that for some $i\in[\ell]$, we have found $\cF_{i-1}^\ast$ such that \ref{vortex iterative packing:1}$_{i-1}$--\ref{vortex iterative packing:5}$_{i-1}$ hold. Let $$G_i:=G[U_{i-1}]-(\cF_{i-1}^{\ast(r)}\cup H_{i+1}\cup G^{(r)}[U_{i+1}])-\cF_{i-1}^{\ast\le(r+1)}.$$
We now intend to find $\cF_i$ such that:
\begin{enumerate}[label={\rm(FP\arabic*)}]
\item $\cF_i$ is a 2$\kappa$-well separated $F$-packing in $G_i$;\label{vortex iterative step:1}
\item $\cF_i$ covers all edges from $G^{(r)}[U_{i-1}]-\cF_{i-1}^{\ast(r)}$ that are not inside $U_i$;\label{vortex iterative step:2}
\item $\Delta(\cF_{i}^{(r)}[U_{i}])\le \mu |U_{i}|$.\label{vortex iterative step:3}
\end{enumerate}
We first observe that this is sufficient for $\cF_{i}^\ast:=\cF_{i-1}^\ast\cup \cF_i$ to satisfy \ref{vortex iterative packing:1}$_{i}$--\ref{vortex iterative packing:5}$_{i}$. Note that $\cF_i^{(r)}$ and $\cF_{i-1}^{\ast(r)}$ are edge-disjoint, and $\cF_i$ and $\cF_{i-1}^\ast$ are $(r+1)$-disjoint by definition of $G_i$. Together with \ref{vortex iterative packing:1}$_{i-1}$ this implies that $\cF_{i}^\ast$ is a well separated $F$-packing in $G-H_{i+1}- G^{(r)}[U_{i+1}]$.\COMMENT{$H_{i+1}\cup G^{(r)}[U_{i+1}]\In G^{(r)}[U_{i}]$ and thus $G-H_{i}- G^{(r)}[U_{i}]\In G-H_{i+1}- G^{(r)}[U_{i+1}]$} Let $e\in G^{(r)}$. If $e\not\In U_{i-1}$, then $|\cF_i^{\le(f)}(e)|=0$ and hence $|\cF_i^{\ast\le(f)}(e)|=|\cF_{i-1}^{\ast\le(f)}(e)|\le 4\kappa$. If $e\In U_{i-1}$, then we have $|\cF_i^{\ast\le(f)}(e)|=|\cF_{i-1}^{\ast\le(f)}(e)|+|\cF_{i}^{\le(f)}(e)|\le 4\kappa$ by \ref{vortex iterative packing:3}$_{i-1}$ and \ref{vortex iterative step:1}. Thus, $\cF_{i}^\ast$ is $4\kappa$-well separated and \ref{vortex iterative packing:1}$_{i}$ holds.

Clearly, \ref{vortex iterative packing:2}$_{i-1}$ and \ref{vortex iterative step:2} imply \ref{vortex iterative packing:2}$_{i}$. Moreover, observe that $\cF_{i-1}^{\ast\le(r)}[U_i]$ is empty by \ref{vortex iterative packing:1}$_{i-1}$.\COMMENT{$\le$ is important here and stronger than just saying that $\cF_{i-1}^{\ast(r)}[U_i]$ is empty} Thus, \ref{vortex iterative packing:3}$_{i}$ holds since $\cF_i$ is $2\kappa$-well separated, and \ref{vortex iterative step:3} implies \ref{vortex iterative packing:5}$_{i}$.

It thus remains to show that $\cF_i$ satisfying \ref{vortex iterative step:1}--\ref{vortex iterative step:3} exists. We will obtain $\cF_i$ as the union of two packings, one obtained from the $F$-nibble lemma (Lemma~\ref{lem:F nibble}) and one using \ref{cleaner:cover down}.
Let $G_{i,nibble}:=G[U_{i-1}]-(\cF_{i-1}^{\ast(r)}\cup H_i \cup G^{(r)}[U_i])-\cF_{i-1}^{\ast\le(r+1)}$. Recall that $G[U_{i-1}]$ is an $(\eps,\xi,f,r)$-supercomplex. In particular, it is $(\eps,d,f,r)$-regular for some $d\ge \xi$, and $(\xi,f+r,r)$-dense. Note that by \ref{vortex iterative packing:5}$_{i-1}$, \ref{cleaner:maxdeg} and \ref{vortex:size} we have $$\Delta(\cF_{i-1}^{\ast(r)}[U_{i-1}]\cup H_i \cup G^{(r)}[U_i])\le \mu|U_{i-1}|+\nu |U_{i-1}|+\mu |U_{i-1}|\le 3\mu |U_{i-1}|.$$ Moreover, $\Delta(\cF_{i-1}^{\ast\le(r+1)})\le 4\kappa (f-r)\le \mu |U_{i-1}|$ by Fact~\ref{fact:ws}\ref{fact:ws:maxdeg}. Thus, Proposition~\ref{prop:noise}\ref{noise:regular} and~\ref{noise:dense} imply that $G_{i,nibble}$ is still $(2^{r+3}\mu,d,f,r)$-regular and $(\xi/2,f+r,r)$-dense.\COMMENT{$\eps+2^r3\mu +2^r\mu \le 2^{r+3}\mu$}
Since $\mu\ll \xi$, we can apply Lemma~\ref{lem:F nibble} to obtain a $\kappa$-well separated $F$-packing $\cF_{i,nibble}$ in $G_{i,nibble}$ such that $\Delta(L_{i,nibble})\le \frac{1}{2}\gamma |U_{i-1}|$, where $L_{i,nibble}:=G_{i,nibble}^{(r)}-\cF^{(r)}_{i,nibble}$.
Since by \ref{vortex iterative packing:2}$_{i-1}$,
\begin{align*}
G^{(r)}-\cF_{i-1}^{\ast(r)}-\cF_{i,nibble}^{(r)} &=G^{(r)}[U_{i-1}]-\cF_{i-1}^{\ast(r)}-\cF_{i,nibble}^{(r)}\\
                                        &=(G_{i,nibble}^{(r)}\cup H_i \cup G^{(r)}[U_i])-\cF_{i,nibble}^{(r)}\\
																				&=H_i\cup G^{(r)}[U_i]\cup L_{i,nibble},
\end{align*}
 we know that $H_i\cup G^{(r)}[U_i]\cup L_{i,nibble}$ is $F$-divisible.
By \ref{cleaner:location} and \ref{cleaner:disjoint}, we know that $H_{i+1}\cup G^{(r)}[U_{i+1}]\In G^{(r)}[U_i]-H_i$. Moreover, by \ref{cleaner:maxdeg} and Proposition~\ref{prop:noise}\ref{noise:supercomplex} we have that $G[U_i]-H_i$ is a $(2\mu,\xi/2,f,r)$-supercomplex. We can thus apply Corollary~\ref{cor:make divisible} (with $G[U_i]-H_i$, $H_{i+1}\cup G^{(r)}[U_{i+1}]$, $2\mu$ playing the roles of $G,H,\eps$) to find an $F$-divisible subgraph $R_{i}$ of $G^{(r)}[U_i]-H_i$ containing $H_{i+1}\cup G^{(r)}[U_{i+1}]$ such that $\Delta(L_{i,res})\le \frac{1}{2}\gamma |U_{i}|$, where $L_{i,res}:=G^{(r)}[U_i]-H_i-R_{i}$.

Let $L_i:=L_{i,nibble}\cup L_{i,res}$. Clearly, $L_i\In G^{(r)}[U_{i-1}]$ and $\Delta(L_i)\le \gamma |U_{i-1}|$. Note that
\begin{align}
H_i\cup L_i &=(H_i\cupdot (G^{(r)}[U_i]-H_i) \cupdot L_{i,nibble})-R_i=G^{(r)}-\cF_{i-1}^{\ast(r)}-\cF_{i,nibble}^{(r)}-R_i\label{clean protect}
\end{align} is $F$-divisible. Moreover, $\Delta(\cF_{i-1}^{\ast\le(r+1)}\cup \cF_{i,nibble}^{\le(r+1)})\le 5\kappa (f-r)$ by Fact~\ref{fact:ws}\ref{fact:ws:maxdeg}. Thus, by \ref{cleaner:cover down} there exists a $\kappa$-well separated $F$-packing $\cF_{i,clean}$ in $$G_{i,clean}:=G[H_i\cup L_i][U_{i-1}]-\cF_{i-1}^{\ast\le(r+1)}-\cF_{i,nibble}^{\le(r+1)}$$ which covers all edges of $H_i\cup L_i$ except possibly some inside $U_i$.

We claim that $\cF_i:=\cF_{i,nibble}\cup \cF_{i,clean}$ is the desired packing. Since $\cF_{i,nibble}^{(r)}$ and $\cF_{i,clean}^{(r)}$ are edge-disjoint and $\cF_{i,nibble}$ and $\cF_{i,clean}$ are $(r+1)$-disjoint, we have that $\cF_i$ is a $2\kappa$-well separated $F$-packing by Fact~\ref{fact:ws}\ref{fact:ws:1}. Moreover, it is easy to see from \ref{cleaner:location} that $G_{i,nibble}\In G_i$. Crucially, since $R_i$ was chosen to contain $H_{i+1}\cup G^{(r)}[U_{i+1}]$, we have from \ref{vortex iterative packing:2}$_{i-1}$ that $$H_i\cup L_i \overset{\eqref{clean protect}}{\In} G^{(r)}[U_{i-1}]-R_i-\cF_{i-1}^{\ast(r)} \In G^{(r)}[U_{i-1}] - (\cF_{i-1}^{\ast(r)}\cup H_{i+1}\cup G^{(r)}[U_{i+1}])$$ and thus $G_{i,clean}\In G_i$ as well. Hence, \ref{vortex iterative step:1} holds.

Clearly, $\cF_i$ covers all edges of $G^{(r)}[U_{i-1}]-\cF_{i-1}^{\ast(r)}$ that are not inside $U_i$, thus \ref{vortex iterative step:2} holds.
Finally, since $\cF_{i,nibble}^{(r)}[U_i]$ is empty, we have $\Delta(\cF_{i}^{(r)}[U_{i}])\le \Delta(H_i\cup L_i) \le \nu |U_{i-1}|+\gamma |U_{i-1}|\le \mu |U_i|$, as needed for \ref{vortex iterative step:3}.\COMMENT{Here we need $\nu\ll \mu$}
\endproof

\section{Transformers}\label{app:transformers}

Here, we prove Lemma~\ref{lem:transformer}, which guarantees a transformer from $H$ to $H'$ if $H\rightsquigarrow H'$.
As indicated in Section~\ref{subsec:transformers}, a key step in the argument is the ability to construct `localised transformers' between graphs $S_1\uplus L$ and $S_2\uplus L$. This is achieved by the following lemma.

\begin{lemma}\label{lem:iterative transforming local}
Let $1/n \ll \gamma' \ll \gamma,1/\kappa,\eps \ll \xi,1/f$ and $1\le i<r<f$. Assume that \ind{r-i} is true. Let $F$ be a weakly regular $r$-graph on $f$ vertices and assume that $S^\ast\in \binom{V(F)}{i}$ is such that $F(S^\ast)$ is non-empty. Let $G$ be an $(\eps,\xi,f,r)$-supercomplex on $n$ vertices, let $S_1,S_2\in G^{(i)}$ with $S_1\cap S_2=\emptyset$, and let $\phi\colon S_1\to S_2$ be a bijection.  Moreover, suppose that $L$ is an $F(S^\ast)$-divisible subgraph of $G(S_1)^{(r-i)}\cap G(S_2)^{(r-i)}$ with $|V(L)|\le\gamma' n $.

Then there exist $T,R\In G^{(r)}$ such that the following hold:
\begin{enumerate}[label={\rm(TR\arabic*)}]
\item $V(R)\In V(G)\sm S_2$ and $|e\cap S_1|\in[i-1]$ for all $e\in R$ (so if $i=1$, then $R$ must be empty since $[0]=\emptyset$);\label{lemst:iterative transformer local:location}
\item $T$ is a $(\kappa+1)$-well separated $((S_1\uplus L)\cup \phi(R),(S_2\uplus L)\cup R;F)$-transformer in $G$;\label{lemst:iterative transformer local:transform}
\item $|V(T\cup R)|\le \gamma n$.\label{lemst:iterativ transformer local:maxdeg}
\end{enumerate}
\end{lemma}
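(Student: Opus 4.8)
\textbf{Proof strategy for Lemma~\ref{lem:iterative transforming local}.}
The plan is to build the localised transformer by following the sketch given for Lemma~\ref{lem:transformer}, but now adapted so that it can be carried out inside a fixed host supercomplex $G$ with prescribed roots $S_1,S_2$. Since the construction should avoid touching $S_2$ on one side and $S_1$ on the other, the key tool is the two-sided link extension of Definition~\ref{def:link extension two sided} together with Proposition~\ref{prop:S cover two sided}. The rough idea: first extend the decomposition we can afford by induction into a packing that covers $S_1\uplus L$, simultaneously mirroring it via $\phi$ to cover $S_2\uplus L$, and collect the new edges into an auxiliary graph $R$ whose edges meet $S_1$ in fewer than $i$ vertices. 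Iterating (or handling it in one step via Proposition~\ref{prop:S cover two sided}) then yields the statement.

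First I would deal with the difficulty that $(G(S_1)\cap G(S_2))[L]$ need not be a supercomplex, so that \ind{r-i} cannot be applied to it directly. By Proposition~\ref{prop:hereditary}, $G(S_1)\cap G(S_2)$ \emph{is} an $(\eps,\xi,f-i,r-i)$-supercomplex. Hence, as in the sketch after Lemma~\ref{lem:transformer}, I would (randomly) choose an $F(S^\ast)$-divisible subgraph $A\In (G(S_1)\cap G(S_2))^{(r-i)}$ which is edge-disjoint from $L$ and has $\Delta(A)$ bounded, using Corollary~\ref{cor:make divisible} (applied inside $G(S_1)\cap G(S_2)$ with $L$ playing the role of the `protected' graph $H$); note $L$ is sparse since $|V(L)|\le\gamma' n$. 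Then both $(G(S_1)\cap G(S_2))[A]$ and $(G(S_1)\cap G(S_2))[A\cup L]$ are $F(S^\ast)$-divisible supercomplexes (using Proposition~\ref{prop:noise}), so by \ind{r-i} and Proposition~\ref{prop:link divisibility} each has a $\kappa$-well separated $F(S^\ast)$-decomposition, say $\cF_A'$ and $\cF_{A\cup L}'$. Applying Proposition~\ref{prop:S cover two sided} to each (with $S^\ast$ playing the role of $S^\ast$, and $\phi\colon S_1\to S_2$ the given bijection) yields pairs $(\cF_{A,1},\cF_{A,2})=S_1\triangleleft\cF_A'\triangleright S_2$ and $(\cF_{A\cup L,1},\cF_{A\cup L,2})=S_1\triangleleft\cF_{A\cup L}'\triangleright S_2$, each a pair of $\kappa$-well separated $F$-packings in $G$, mirrored by $\phi$, with $\cF_{A,1}$ covering exactly $S_1\uplus A$, etc.

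Next I would assemble $T$ and $R$. Set $R:=\cF_{A\cup L,1}^{(r)}\sm(S_1\uplus(A\cup L))$, the `new' edges created by the extension covering $S_1\uplus(A\cup L)$; by the construction in Definition~\ref{def:link extension two sided} every edge of $R$ meets $S_1$ in strictly fewer than $i$ vertices, and in at least one vertex (otherwise it would be disjoint from $S_1$ and hence part of a copy of $F$ living entirely in $V(G)\sm S_1$... one checks this cannot happen for an edge intersecting the root region), giving \ref{lemst:iterative transformer local:location}; also $V(R)\In V(G)\sm S_2$ by part~(ii) of Proposition~\ref{prop:S cover two sided}, and $\phi(R)$ is the analogous leftover of $\cF_{A\cup L,2}^{(r)}$. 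Now take $T:=\bigl(\cF_{A,1}^{(r)}\cup\cF_{A,2}^{(r)}\cup\cF_{A\cup L,1}^{(r)}\cup\cF_{A\cup L,2}^{(r)}\bigr)\sm\bigl((S_1\uplus L)\cup(S_2\uplus L)\cup R\cup\phi(R)\bigr)$, chosen so that appropriate unions of the four packings give the two required $F$-decompositions: one of $(S_1\uplus L)\cup\phi(R)\cup T$ (namely $\cF_{A\cup L,1}\cup\cF_{A,2}$) and one of $(S_2\uplus L)\cup R\cup T$ (namely $\cF_{A,1}\cup\cF_{A\cup L,2}$), establishing \ref{lemst:iterative transformer local:transform}; well-separatedness with parameter $\kappa+1$ follows from Fact~\ref{fact:ws}, after first ensuring the constituent packings are $(r+1)$-disjoint by choosing $A$ and the decompositions while avoiding the $(r+1)$-shadows already used (Proposition~\ref{prop:noise}\ref{noise:supercomplex} keeps everything a supercomplex). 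Finally \ref{lemst:iterativ transformer local:maxdeg}: since $|V(L)|\le\gamma' n$ and the link decompositions only introduce $O(|L|)$ copies of $F$, each on $f$ vertices, a crude count gives $|V(T\cup R)|\le\gamma n$ for $\gamma'$ small enough.

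\textbf{Main obstacle.} I expect the technical heart to be the bookkeeping in the last paragraph: verifying that the four packings produced by the two applications of Proposition~\ref{prop:S cover two sided} can be combined (in the two different groupings) into genuine $F$-decompositions of the claimed graphs \emph{simultaneously}, and that the `new' edges on the two sides match up under $\phi$ exactly as needed so that $R$ and $\phi(R)$ play symmetric roles. This requires being careful that $A$ is chosen edge-disjoint from $L$ and that the copies of $F$ used are edge-disjoint across the two decompositions, which in turn forces a mild iterative/greedy choice of the auxiliary graphs while maintaining the supercomplex property via Propositions~\ref{prop:hereditary} and~\ref{prop:noise}. The rooting conditions \ref{lemst:iterative transformer local:location} — in particular that no edge of $R$ is disjoint from $S_1$ — need a short separate argument from the structure of the link extension. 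Everything else (divisibility inheritance, degree bounds, applicability of \ind{r-i}) is routine given the tools already developed.
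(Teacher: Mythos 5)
Your first half reproduces the paper's argument for this lemma: pass to a random small vertex set, use Proposition~\ref{prop:hereditary} together with Corollary~\ref{cor:make divisible} to find an $F(S^\ast)$-divisible $A\In (G(S_1)\cap G(S_2))^{(r-i)}$ edge-disjoint from $L$, apply \ind{r-i} to obtain $(r-i+1)$-disjoint $\kappa$-well separated $F(S^\ast)$-decompositions of $A$ and of $A\cup L$, and extend both two-sidedly via Proposition~\ref{prop:S cover two sided}. The gap is in your assembly step. Write $\cF_{j,A}^{(r)}=(S_j\uplus A)\cup R_{j,A}\cup T_A$ and $\cF_{j,A\cup L}^{(r)}=(S_j\uplus (A\cup L))\cup R_{j,A\cup L}\cup T_{A\cup L}$, where $R_{j,\cdot}$ consists of the edges meeting $S_j$ in $[i-1]$ vertices and $T_{\cdot}$ of those disjoint from $S_1\cup S_2$ (so $R_{2,\cdot}=\phi(R_{1,\cdot})$). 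Your grouping $\cF_{A\cup L,1}\cup\cF_{A,2}$ covers, besides $S_1\uplus L$ and root-free edges, exactly $R_{1,A\cup L}$ and $\phi(R_{1,A})$; the other grouping covers $R_{1,A}$ and $\phi(R_{1,A\cup L})$. For \ref{lemst:iterative transformer local:transform}, every root-touching extra edge on the $(S_1\uplus L)$-side must lie in $\phi(R)$ and hence avoid $S_1$, but $R_{1,A\cup L}$ meets $S_1$; symmetrically $\phi(R_{1,A\cup L})$ meets $S_2$ but sits on the $(S_2\uplus L)$-side. So for $i\ge 2$ no relabelling of $R$ makes your two groupings into the required decompositions: the $(A\cup L)$-leftovers are on the wrong sides, and they cannot simply be absorbed into $T$ either, since $T$ must be covered by \emph{both} packings while each of $R_{1,A\cup L}$, $\phi(R_{1,A\cup L})$ is covered by only one of yours. (Separately, your $R$ as literally defined also contains $T_{A\cup L}$, whose edges are disjoint from $S_1$, contradicting \ref{lemst:iterative transformer local:location}.)

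The paper fixes this with a second transformation round that your proposal is missing. For each $e\in R_{1,A\cup L}$ one chooses $Q_e\in G^{(f)}(e)\cap G^{(f)}(\phi(e))$ disjoint from $S_1\cup S_2$ (possible by Fact~\ref{fact:connected}, with the $Q_e$ chosen $r$-disjointly via Lemma~\ref{lem:greedy cover uni}) and places mirrored copies $\tilde{F}_{e,1},\tilde{F}_{e,2}$ of $F$ on $e\cup Q_e$ and $\phi(e)\cup Q_e$. This yields $1$-well separated packings $\cF_1',\cF_2'$ with $\cF_1'^{(r)}=T'\cup R_{1,A\cup L}\cup R'$ and $\cF_2'^{(r)}=T'\cup\phi(R_{1,A\cup L})\cup\phi(R')$, where the new leftover $R'$ again meets $S_1$ in $[i-1]$ vertices. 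Setting $T:=\hat{T}\cup R_{1,A\cup L}\cup\phi(R_{1,A\cup L})\cup T'$ with $\hat{T}:=(S_1\uplus A)\cup(S_2\uplus A)\cup T_A\cup T_{A\cup L}$, and $R:=R_{1,A}\cup R'$, the packings $\cF_{A\cup L,1}\cup\cF_{A,2}\cup\cF_2'$ and $\cF_{A,1}\cup\cF_{A\cup L,2}\cup\cF_1'$ do decompose $T\cup(S_1\uplus L)\cup\phi(R)$ and $T\cup(S_2\uplus L)\cup R$ respectively; this is also why the lemma allows $\kappa+1$ rather than $\kappa$ in the well-separatedness (Fact~\ref{fact:ws}\ref{fact:ws:1}). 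Without this extra step your construction only proves the case $i=1$, where all the $R$-graphs are empty.
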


\proof
We may assume that $\gamma' \ll \gamma\ll 1/\kappa,\eps$. Choose $\mu>0$ with $\gamma'\ll  \mu \ll \gamma \ll 1/\kappa,\eps$. We split the argument into two parts. First, we will establish the following claim, which is the essential part and relies on \ind{r-i}.

\begin{NoHyper}\begin{claim}\label{claim:crucial transformer bit}
There exist $\hat{T},R_{1,A},R_{1,A\cup L}\In G^{(r)}$ and $\kappa$-well separated $F$-packings $\hat{\cF}_1,\hat{\cF}_2$ in~$G$
such that the following hold:
\begin{enumerate}[label={\rm(tr\arabic*)}]
\item $V(R_{1,A}\cup R_{1,A\cup L})\In V(G)\sm S_2$ and $|e\cap S_1|\in[i-1]$ for all $e\in R_{1,A} \cup R_{1,A\cup L}$;\label{lemst:iterative transformer local:location 2}
\item $\hat{T}$, $S_1\uplus L$, $S_2\uplus L$, $R_{1,A}$, $\phi(R_{1,A})$, $R_{1,A\cup L}$, $\phi(R_{1,A\cup L})$ are pairwise edge-disjoint subgraphs of $G^{(r)}$;\label{lemst:iterative transformer local:disjoint 2}
\item $\hat{\cF}_1^{(r)}=\hat{T}\cup (S_1\uplus L)\cup R_{1,A\cup L} \cup \phi(R_{1,A})$ and $\hat{\cF}_2^{(r)}=\hat{T}\cup (S_2\uplus L)\cup R_{1,A} \cup \phi(R_{1,A\cup L})$;\label{lemst:iterative transformer local:transform 2}
\item $|V(\hat{T}\cup R_{1,A} \cup R_{1,A\cup L})|\le 2\mu n$.\label{lemst:iterative transformer local:maxdeg 2}
\end{enumerate}
\end{claim}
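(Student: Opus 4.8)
\textbf{Plan for proving Claim~\ref{claim:crucial transformer bit}.} The idea is to build a ``localised transformer'' by the same mechanism sketched in Section~\ref{subsec:transformers}: use \ind{r-i} to find an $F(S^\ast)$-decomposition of a suitable $(r-i)$-graph in the common link of $S_1$ and $S_2$, then extend it ``by adding $S_1$ back'' and, mirroring this, ``by adding $S_2$ back'', via Proposition~\ref{prop:S cover two sided}. The difficulty is that $(G(S_1)\cap G(S_2))[L]$ need not be a supercomplex (it is some arbitrary leftover), so $L$ itself may not be decomposable. This is precisely why we introduce the auxiliary graph $A$: by Proposition~\ref{prop:hereditary}, $G(S_1)\cap G(S_2)$ \emph{is} an $(\eps,\xi,f-i,r-i)$-supercomplex. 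So first I would choose (randomly) a subgraph $A\In (G(S_1)\cap G(S_2))^{(r-i)}$ with $\Delta(A)\le \mu n$ which is $F(S^\ast)$-divisible, edge-disjoint from $L$, and such that both $(G(S_1)\cap G(S_2))[A]$ and $(G(S_1)\cap G(S_2))[A\cup L]$ remain supercomplexes --- the latter using that $L$ is sparse ($|V(L)|\le\gamma' n$, so $\Delta(L)$ is tiny) together with Proposition~\ref{prop:noise}\ref{noise:supercomplex}. For the existence of such an $A$ one uses the density/extendability of the supercomplex $G(S_1)\cap G(S_2)$ together with Corollary~\ref{cor:make divisible} (applied inside $G(S_1)\cap G(S_2)$, with $L$ as the ``protected'' set $H$) to first get $F(S^\ast)$-divisibility; one can also throw in extra edges to guarantee positivity of the relevant multiplicities if needed.

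Second, I would apply \ind{r-i} twice: once to obtain a $\kappa$-well separated $F(S^\ast)$-decomposition $\cF_A'$ of $(G(S_1)\cap G(S_2))[A]$, and once to obtain a $\kappa$-well separated $F(S^\ast)$-decomposition $\cF_{A\cup L}'$ of $(G(S_1)\cap G(S_2))[A\cup L]$. Now apply the two-sided extension $S_1\triangleleft\,\cdot\,\triangleright S_2$ of Definition~\ref{def:link extension two sided} to each: let $(\cF_{A,1},\cF_{A,2}):=S_1\triangleleft \cF_A'\triangleright S_2$ and $(\cF_{A\cup L,1},\cF_{A\cup L,2}):=S_1\triangleleft \cF_{A\cup L}'\triangleright S_2$. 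By Proposition~\ref{prop:S cover two sided}, each $\cF_{\bullet,j}$ is a $\kappa$-well separated $F$-packing in $G$ covering exactly the edges through $S_j$ coming from $A$ (resp.\ $A\cup L$), and we have the mirror property $\phi(\cF_{\bullet,1}^{(r)})=\cF_{\bullet,2}^{(r)}$ with $V(\cF_{\bullet,1}^{(r)})\In V(G)\sm S_2$. Set
\begin{align*}
R_{1,A}&:=\cF_{A,1}^{(r)}-(S_1\uplus A);\\
R_{1,A\cup L}&:=\cF_{A\cup L,1}^{(r)}-(S_1\uplus (A\cup L)).
\end{align*}
These are the ``new'' edges created by the extensions; since an extended copy $F'_1$ of $F$ on $S_1\cup V(F')$ meets $S_1$ in exactly $i$ vertices only in the edges playing the role of edges through $S^\ast$, and all other edges of $F'_1$ meet $S_1$ in at most $i-1$ vertices (and do not meet $S_2$), one checks \ref{lemst:iterative transformer local:location 2} directly. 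For the transformer $\hat T$ I would take $\hat T:=R_{1,A}\cupdot R_{1,A\cup L}$ (the ``body'' common to both sides), after arranging, by choosing all the copies of $F$ on disjoint extra vertex sets, that the seven graphs in \ref{lemst:iterative transformer local:disjoint 2} are pairwise edge-disjoint --- here one uses that $G$ is $(\eps,d,f,r)$-regular and $(\xi,f+r,r)$-dense so there is plenty of room to pick the $(f-r)$-set extensions one at a time avoiding previously used vertices, and that $|V(L)|,\,|V(A)|$ are all $o(n)$.

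Third, I would verify \ref{lemst:iterative transformer local:transform 2}. Define $\hat\cF_1:=\cF_{A\cup L,1}\cup \cF_{A,2}$ and $\hat\cF_2:=\cF_{A,1}\cup \cF_{A\cup L,2}$. Using Proposition~\ref{prop:S cover two sided}\ref{prop:S cover two sided:packing} and the decompositions:
$\cF_{A\cup L,1}^{(r)}=(S_1\uplus(A\cup L))\cup R_{1,A\cup L}$, $\cF_{A,2}^{(r)}=(S_2\uplus A)\cup\phi(R_{1,A})$, so
$\hat\cF_1^{(r)}=(S_1\uplus A)\cupdot(S_1\uplus L)\cupdot R_{1,A\cup L}\cupdot(S_2\uplus A)\cupdot\phi(R_{1,A})$. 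Since $S_2\uplus A=\phi(S_1\uplus A)=\phi((S_1\uplus A))$ and, crucially, $R_{1,A}=(S_1\uplus A)\cup R_{1,A}$ minus $(S_1\uplus A)$ so that $\cF_{A,1}^{(r)}=(S_1\uplus A)\cupdot R_{1,A}$, one rewrites this to match $\hat T\cup(S_1\uplus L)\cup R_{1,A\cup L}\cup\phi(R_{1,A})$ after absorbing the ``$S\uplus A$'' pieces --- the point is that $S_1\uplus A$ appears on one side and $S_2\uplus A=\phi(S_1\uplus A)$ on the other, and these cancel against $\hat T$ containing neither; a short edge-counting check using $\hat T=R_{1,A}\cupdot R_{1,A\cup L}$ completes it. (To be careful: one actually wants $\hat T$ to also contain $S_1\uplus A$ and $S_2\uplus A$ on the appropriate sides; the cleanest bookkeeping is to set $\hat T:= R_{1,A}\cupdot R_{1,A\cup L}$ and note $\hat\cF_1^{(r)}=\hat T\cup(S_1\uplus L)\cup R_{1,A\cup L}\cup\phi(R_{1,A})$ only after one checks that $(S_1\uplus A)$ and $(S_2\uplus A)$ are each covered once on the correct side; I would spell out these identities exactly as in the proof of Lemma~8.5 in~\cite{GKLO}, for which the present setup is the verbatim analogue.) Finally \ref{lemst:iterative transformer local:maxdeg 2} follows since $\hat T\cup R_{1,A}\cup R_{1,A\cup L}$ lives on $V(L)\cup V(A)\cup S_1\cup S_2$ together with the $O(|A\cup L|)$ extension vertices, all of total size $\le 2\mu n$ by choice of the hierarchy $\gamma'\ll\mu$.

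\textbf{Main obstacle.} The genuinely delicate point is the first step --- producing the auxiliary graph $A$ that is simultaneously $F(S^\ast)$-divisible, edge-disjoint from $L$, sparse, and such that \emph{both} $(G(S_1)\cap G(S_2))[A]$ and $(G(S_1)\cap G(S_2))[A\cup L]$ are supercomplexes to which \ind{r-i} applies. One must balance Corollary~\ref{cor:make divisible} (to fix divisibility without touching $L$) against Proposition~\ref{prop:noise} (to keep supercomplex status after removing/adding the sparse sets), and one has to check the degenerate cases $i=1$ (where $R$ is forced empty since $[i-1]=\emptyset$) and make sure the ``positivity of multiplicities'' needed to realise adapters as genuine subgraphs is available. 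After the claim, assembling $T$ and $R$ from the claim plus one more application of \ind{r-i} / the transformer recursion to dispose of $\phi(R_{1,A\cup L})$ versus $R_{1,A\cup L}$ is routine, again following~\cite{GKLO}.
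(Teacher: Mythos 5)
Your overall architecture matches the paper's: decompose an auxiliary $F(S^\ast)$-divisible graph $A$ and also $A\cup L$ via \ind{r-i}, extend both two-sidedly with Proposition~\ref{prop:S cover two sided}, and cross-combine the four resulting packings into $\hat{\cF}_1=\cF_{A\cup L,1}\cup\cF_{A,2}$ and $\hat{\cF}_2=\cF_{A,1}\cup\cF_{A\cup L,2}$. However, your first step --- choosing $A$ as a \emph{sparse} subgraph of $(G(S_1)\cap G(S_2))^{(r-i)}$ with $\Delta(A)\le\mu n$ on the full vertex set --- does not work. If $A$ is sparse, then $(G(S_1)\cap G(S_2))[A]$ is not an $(\eps',\xi',f-i,r-i)$-supercomplex for any admissible $\xi'$: its $(f-i)$-sets are exactly those all of whose $(r-i)$-subsets lie in $A$, so the regularity and density counts in Definition~\ref{def:complex} degrade by a factor of roughly $\mu^{\binom{f-i}{r-i}-1}$, and since the hierarchy forces $\mu\ll 1/\kappa\ll\xi$ (with $\kappa$ fixed \emph{before} $\mu$), the degraded parameter is far too small to invoke \ind{r-i} with the required well-separation constant $\kappa$. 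Moreover, even granting such a decomposition, $V(A)$ could comprise almost all of $V(G)$, so \ref{lemst:iterative transformer local:maxdeg 2} would fail; a maximum-degree bound is the wrong condition here, what is needed is a bound on the number of vertices. The paper's move is dual to yours: first pass to a \emph{random vertex subset} $U$ of size about $\mu n$, so that $G':=G[U\cup S_1\cup S_2\cup V(L)]$ is still a supercomplex with essentially the same $\xi$ (Corollary~\ref{cor:random induced subcomplex}); then set $G'':=G'(S_1)\cap G'(S_2)$ and take $A:=G''^{(r-i)}-L-H$ to be \emph{dense inside $G''$}, where $H$ is the sparse divisibility-fixer from Corollary~\ref{cor:make divisible}. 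Then both $G''[A]$ and $G''[A\cup L]$ are genuine supercomplexes with parameter about $\xi/2$, and everything lives on at most $2\mu n$ vertices.

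Two smaller gaps. First, your tentative $\hat T:=R_{1,A}\cupdot R_{1,A\cup L}$ cannot be right, since \ref{lemst:iterative transformer local:disjoint 2} requires $\hat T$ to be edge-disjoint from $R_{1,A}$ and $R_{1,A\cup L}$; the correct choice is $\hat T=(S_1\uplus A)\cup(S_2\uplus A)\cup T_A\cup T_{A\cup L}$, where $T_A$ and $T_{A\cup L}$ consist of the edges of the extensions meeting neither $S_1$ nor $S_2$ (these coincide on both sides by Definition~\ref{def:link extension two sided} and Proposition~\ref{prop:S cover two sided}\ref{prop:S cover two sided:mirror}, and they form the bulk of $\hat T$ --- your accounting omits them entirely). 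Second, you must choose $\cF_A$ and $\cF_{A\cup L}$ to be $(r-i+1)$-disjoint (e.g.\ by finding $\cF_{A\cup L}$ in $G''[A\cup L]-\cF_A^{\le(r-i+1)}$); otherwise the pairwise edge-disjointness of $T_A$ and $T_{A\cup L}$, and of $R_{1,A}$ and $R_{1,A\cup L}$, needed for \ref{lemst:iterative transformer local:disjoint 2} is not guaranteed.
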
\end{NoHyper}

\claimproof By Corollary~\ref{cor:random induced subcomplex} and Lemma~\ref{lem:chernoff}\ref{chernoff t}, there exists a subset $U\In V(G)$ with $0.9\mu n \le |U| \le 1.1 \mu n$ such that $G':=G[U\cup S_1 \cup S_2 \cup V(L)]$ is a $(2\eps,\xi-\eps,f,r)$-supercomplex.
By Proposition~\ref{prop:hereditary}, $G'':=G'(S_1)\cap G'(S_2)$ is a $(2\eps,\xi-\eps,f-i,r-i)$-supercomplex. Clearly, $L\In G''^{(r-i)}$ and $\Delta(L)\le \gamma' n\le \sqrt{\gamma'} |U|$.
Thus, by Proposition~\ref{prop:noise}\ref{noise:supercomplex}, $G''-L$ is a $(3\eps,\xi-2\eps,f-i,r-i)$-supercomplex. By Corollary~\ref{cor:make divisible},\COMMENT{With $H,G''-L,\emptyset,F(S^\ast)$ playing role of $L,G,H,F$} there exists $H\In G''^{(r-i)}-L$ such that $A:=G''^{(r-i)}-L-H$ is $F(S^\ast)$-divisible and $\Delta(H)\le \gamma' n$. In particular, by Proposition~\ref{prop:noise}\ref{noise:supercomplex} we have that
\begin{enumerate}[label={\rm(\roman*)}]
\item $G''[A]$ is an $F(S^\ast)$-divisible $(3\eps,\xi/2,f-i,r-i)$-supercomplex;\label{absorber dec 1}
\item $G''[A\cup L]$ is an $F(S^\ast)$-divisible $(3\eps,\xi/2,f-i,r-i)$-supercomplex.\label{absorber dec 2}
\end{enumerate}
Recall that $F$ being weakly regular implies that $F(S^\ast)$ is weakly regular as well (see Proposition~\ref{prop:link divisibility}). By~\ref{absorber dec 1} and \ind{r-i}, there exists a $\kappa$-well separated $F(S^\ast)$-decomposition $\cF_A$ of $G''[A]$. By Fact~\ref{fact:ws}\ref{fact:ws:maxdeg}, $\Delta(\cF_A^{\le (r-i+1)})\le \kappa f$. Thus, by~\ref{absorber dec 2}, Proposition~\ref{prop:noise}\ref{noise:supercomplex} and \ind{r-i}, there also exists a $\kappa$-well separated $F(S^\ast)$-decomposition $\cF_{A\cup L}$ of $G''[A\cup L]-\cF_A^{\le(r-i+1)}$. In particular, $\cF_A$ and $\cF_{A\cup L}$ are $(r-i+1)$-disjoint.

We define
\begin{align*}
(\cF_{1,A},\cF_{2,A}) &:= S_1 \triangleleft \cF_A \triangleright S_2, \\
(\cF_{1,A\cup L},\cF_{2,A\cup L}) &:= S_1 \triangleleft \cF_{A\cup L} \triangleright S_2.
\end{align*}
By Proposition~\ref{prop:S cover two sided}\ref{prop:S cover two sided:packing}, for $j\in[2]$, $\cF_{j,A}$ is a $\kappa$-well separated $F$-packing in $G'\In G$ with $\set{e\in \cF_{j,A}^{(r)}}{S_j\In e}=S_j\uplus A$ and $\cF_{j,A\cup L}$ is a $\kappa$-well separated $F$-packing in $G'\In G$ with $\set{e\in \cF_{j,A\cup L}^{(r)}}{S_j\In e}=S_j\uplus (A\cup L)$.

For $j\in[2]$, let
\begin{align*}
T_{j,A}&:=\set{e\in \cF_{j,A}^{(r)}}{|e\cap S_j|=0}, \\
T_{j,A\cup L}&:=\set{e\in \cF_{j,A\cup L}^{(r)}}{|e\cap S_j|=0}, \\
R_{j,A}&:=\set{e\in \cF_{j,A}^{(r)}}{|e\cap S_j|\in[i-1]}, \\
R_{j,A\cup L}&:=\set{e\in \cF_{j,A\cup L}^{(r)}}{|e\cap S_j|\in[i-1]}.
\end{align*}

By Definition~\ref{def:link extension two sided}, we have that $T_{1,A}=T_{2,A}$ and $T_{1,A\cup L}=T_{2,A\cup L}$. We thus set
\begin{align*}
T_A:=T_{1,A}=T_{2,A} \quad \mbox{and}\quad T_{A\cup L}&:=T_{1,A\cup L}=T_{2,A\cup L}.
\end{align*} Moreover, we have
\begin{align}
\phi(R_{1,A})=R_{2,A} \quad \mbox{and}\quad \phi(R_{1,A\cup L})=R_{2,A\cup L}.\label{mirror}
\end{align}

Note that $R_{1,A},R_{2,A},R_{1,A\cup L},R_{2,A\cup L}$ are empty if $i=1$. Crucially, since $\cF_A$ and $\cF_{A\cup L}$ are $(r-i+1)$-disjoint, it is easy to see (by contradiction) that $T_A$ and $T_{A\cup L}$ are edge-disjoint, and that for $j\in[2]$, $R_{j,A}$ and $R_{j,A\cup L}$ are edge-disjoint.\COMMENT{Suppose to the contrary that there exists $e\in \cF_{j,A}^{(r)}\cap \cF_{j,A\cup L}^{(r)}$ with $|e\cap S_j|<i$. Let $e\in F'_{j,A}\in \cF_{j,A}$ and $e\in F'_{j,A\cup L}\in \cF_{j,A\cup L}$. Thus, $e\In S_j\cup V(F'_{A})$ and $e\In S_j\cup V(F'_{A\cup L})$ for $F'_A\in \cF_A$ and $F'_{A\cup L}\in \cF_{A\cup L}$. But then $|V(F'_A)\cap V(F'_{A\cup L})|\ge |e\sm S_j|>r-i$, a contraction to $\cF_A$ and $\cF_{A\cup L}$ being $(r-i+1)$-disjoint} Further, since $A$ and $L$ are edge-disjoint, we clearly have for $j\in[2]$ that $S_j\uplus L$ and $S_j \uplus A$ are edge-disjoint. Using this, it is straightforward to see that
\begin{itemize}
\item[($\dagger$)] $S_1\uplus L$, $S_2\uplus L$, $S_1\uplus A$, $S_2\uplus A$, $T_A$, $T_{A\cup L}$, $R_{1,A}$, $R_{2,A}$, $R_{1,A\cup L}$, $R_{2,A\cup L}$ are pairwise edge-disjoint subgraphs of $G^{(r)}$.
\end{itemize}
Observe that for $j\in[2]$, we have
\begin{align}
\cF_{j,A}^{(r)} &= (S_j\uplus A) \cupdot R_{j,A} \cupdot T_A\label{link dec:1};\\
\cF_{j,A\cup L}^{(r)} &= (S_j\uplus (A\cup L)) \cupdot R_{j,A\cup L} \cupdot T_{A\cup L}.\label{link dec:2}
\end{align}
Define
\begin{align*}
\hat{T}&:=(S_1\uplus A) \cup (S_2\uplus A) \cup T_A \cup T_{A\cup L}; \\
\hat{\cF}_1&:=\cF_{1,A\cup L} \cup \cF_{2,A};\\
\hat{\cF}_2&:=\cF_{1,A} \cup \cF_{2,A\cup L}.
\end{align*}
We now check that \ref*{lemst:iterative transformer local:location 2}--\ref*{lemst:iterative transformer local:maxdeg 2} hold. First note that by ($\dagger$) we clearly have $\hat{T},R_{1,A},R_{1,A\cup L}\In G^{(r)}$. Moreover, since $\cF_A$ and $\cF_{A\cup L}$ are $(r-i+1)$-disjoint, we have that $\cF_{1,A\cup L}$ and $\cF_{2,A}$ are $r$-disjoint\COMMENT{in fact: $(r-i+1)$-disjoint, $i\ge 1$} and thus $\hat{\cF}_1$ is a $\kappa$-well separated $F$-packing in $G$ by Fact~\ref{fact:ws}\ref{fact:ws:2}. Similarly, $\hat{\cF}_2$ is a $\kappa$-well separated $F$-packing in $G$.

To check \ref*{lemst:iterative transformer local:location 2}, note that $V(R_{1,A})\In V(\cF_{1,A}^{(r)}) \In V(G)\sm S_2$ and $V( R_{1,A\cup L})\In V(\cF_{1,A\cup L}^{(r)}) \In V(G)\sm S_2$ by Proposition~\ref{prop:S cover two sided}\ref{prop:S cover two sided:mirror}. Moreover, for all $e\in R_{1,A}\cup R_{1,A\cup L}$, we have $|e\cap S_1|\in[i-1]$ by definition. Hence, \ref*{lemst:iterative transformer local:location 2} holds. Clearly, \eqref{mirror} and ($\dagger$) imply \ref*{lemst:iterative transformer local:disjoint 2}. Crucially, by~\eqref{mirror}--\eqref{link dec:2} we have that
\begin{align*}
\hat{\cF}_1^{(r)} &= \cF_{1,A\cup L}^{(r)} \cupdot \cF_{2,A}^{(r)}= \hat{T} \cup (S_1\uplus L)\cup R_{1,A\cup L} \cup \phi(R_{1,A});\\
\hat{\cF}_2^{(r)} &= \cF_{1,A}^{(r)} \cupdot \cF_{2,A\cup L}^{(r)}= \hat{T} \cup (S_2\uplus L)\cup R_{1,A} \cup \phi(R_{1,A\cup L}).
\end{align*}
Thus, \ref*{lemst:iterative transformer local:transform 2} is satisfied.
Finally, $|V(\hat{T}\cup R_{1,A} \cup R_{1,A\cup L})|\le |V(G')| \le 2\mu n$, proving the claim.
\endclaimproof

The transformer $\hat{T}$ almost has the required properties, except that to satisfy \ref{lemst:iterative transformer local:transform} we would have needed $R_{1,A\cup L}$ and $\phi(R_{1,A\cup L})$ to be on the `other side' of the transformation. In order to resolve this, we carry out an additional transformation step. (Since $R_{1,A}$ and $R_{1,A\cup L}$ are empty if $i=1$, this additional step is vacuous in this case.)

\begin{NoHyper}\begin{claim}\label{claim:non crucial transformer bit}
There exist $T',R'\In G^{(r)}$ and $1$-well separated $F$-packings $\cF'_1,\cF'_2$ in $G-\hat{\cF}_1^{\le(r+1)}-\hat{\cF}_2^{\le(r+1)}$ such that the following hold:
\begin{enumerate}[label={\rm(tr\arabic*$'$)}]
\item $V(R')\In V(G)\sm S_2$ and $|e\cap S_1|\in[i-1]$ for all $e\in R'$; \label{lemst:iterative transformer local:location 3}
\item $T'$, $R'$, $\phi(R')$, $\hat{T}$, $S_1\uplus L$, $S_2\uplus L$, $R_{1,A}$, $\phi(R_{1,A})$, $R_{1,A\cup L}$, $\phi(R_{1,A\cup L})$ are pairwise edge-disjoint $r$-graphs;\label{lemst:iterative transformer local:disjoint 3}
\item $\cF_1'^{(r)}=T'\cup R_{1,A\cup L}\cup R'$ and $\cF_2'^{(r)}=T'\cup \phi(R_{1,A\cup L})\cup \phi(R')$;\label{lemst:iterative transformer local:transform 3}
\item $|V(T'\cup R')|\le 0.7\gamma n$.\label{lemst:iterative transformer local:maxdeg 3}
\end{enumerate}
\end{claim}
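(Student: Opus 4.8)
\textbf{Proof strategy for Claim~\ref{claim:non crucial transformer bit}.}
The plan is to build $T'$ by a single `extension' step on the $r$-graph $R_{1,A\cup L}$, using the fact that each edge of $R_{1,A\cup L}$ meets $S_1$ in at most $i-1$ vertices, so $R_{1,A\cup L}$ sits in the link graphs of fewer than $r$ vertices. First I would decompose $R_{1,A\cup L}$ into `local' pieces according to the trace on $S_1$: for each $S\In S_1$ with $|S|\in[i-1]$, set $L_S:=R_{1,A\cup L}(S)$ (viewed as an $(r-|S|)$-graph), so that the sets $S\uplus L_S$ partition $R_{1,A\cup L}$. Let $S_2':=\phi(S)$ and let $S^{\ast\ast}\in\binom{V(F)}{|S|}$ be such that $F(S^{\ast\ast})$ is non-empty. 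By Proposition~\ref{prop:link divisibility}, $F(S^{\ast\ast})$ is weakly regular, and since $R_{1,A\cup L}$ arises as part of an $F$-decomposition it is $F$-divisible; one checks (essentially as in Fact~\ref{fact:extensions}\ref{fact:extensions:divisible} combined with the divisibility inheritance of Proposition~\ref{prop:link divisibility}) that each $L_S$ is $F(S^{\ast\ast})$-divisible after possibly first using Corollary~\ref{cor:make divisible} to set aside an $F(S^{\ast\ast})$-divisible subgraph $A_S$ of $(G(S)\cap G(\phi(S)))^{(|S|)}$ which is edge-disjoint from $L_S$ and sparse. This is exactly the device described in the sketch after Lemma~\ref{lem:transformer}: one then runs the argument twice, once for $A_S$ and once for $A_S\cup L_S$.

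The key technical step is to apply \ind{r-|S|}, which holds since $|S|\le i-1$ and \ind{r-i} is assumed (and all smaller cases follow by the induction hierarchy of Theorem~\ref{thm:main complex}), to obtain $\kappa$-well separated $F(S^{\ast\ast})$-decompositions of the relevant supercomplexes; here one needs that $G(S)\cap G(\phi(S))$ is a supercomplex, which is Proposition~\ref{prop:hereditary}. Then Proposition~\ref{prop:S cover two sided} turns each such decomposition $\cF'_S$ into a pair of $\kappa$-well separated $F$-packings $(\cF_{1,S},\cF_{2,S})$ with $\{e\in\cF_{j,S}^{(r)}:S_j\In e\}=S_j\uplus L_S$ (where $S_1:=S$, $S_2:=\phi(S)$ locally) and with $\phi(\cF_{1,S}^{(r)})=\cF_{2,S}^{(r)}$ via part~\ref{prop:S cover two sided:mirror}. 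Splitting each $\cF_{j,S}^{(r)}$ into the edges meeting $S_j$ in exactly $|S|$ vertices (these are $S_j\uplus L_S$) versus those meeting it in fewer vertices (these contribute to $T'$ and $R'$), and defining $T'$ as the union of the `type-$0$' parts (which agree on the two sides by Definition~\ref{def:link extension two sided}) together with the auxiliary $S_j\uplus A_S$ graphs, and $R'$ as the union of the remaining low-intersection parts on the $S_1$-side, gives \ref{lemst:iterative transformer local:location 3}, \ref{lemst:iterative transformer local:transform 3} and (via \eqref{mirror}-type identities) $\phi(R')$ on the other side — completely in parallel with the proof of Claim~\ref{claim:crucial transformer bit}. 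Disjointness \ref{lemst:iterative transformer local:disjoint 3} follows by choosing all extension clique-sets to avoid the bounded vertex sets $V(\hat{T})\cup V(R_{1,A})\cup V(R_{1,A\cup L})\cup S_1\cup S_2\cup V(L)$, which is possible since $G$ is $(\eps,d,f,r)$-regular with $d\ge\xi$; and working throughout inside $G-\hat{\cF}_1^{\le(r+1)}-\hat{\cF}_2^{\le(r+1)}$ is harmless because, by Fact~\ref{fact:ws}\ref{fact:ws:maxdeg}, $\Delta(\hat{\cF}_1^{\le(r+1)}\cup\hat{\cF}_2^{\le(r+1)})\le 2\kappa(f-r)$, so Proposition~\ref{prop:noise}\ref{noise:supercomplex} keeps all the relevant link complexes supercomplexes. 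The bound \ref{lemst:iterative transformer local:maxdeg 3} is obtained by carrying out all the local extensions inside a random vertex subset of size $\le 0.35\gamma n$ (obtained via Corollary~\ref{cor:random induced subcomplex} and Lemma~\ref{lem:chernoff}\ref{chernoff t}), exactly as $U$ was used in Claim~\ref{claim:crucial transformer bit}, so that $|V(T'\cup R')|$ is bounded by the size of this subset.

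I expect the main obstacle to be bookkeeping rather than a conceptual difficulty: one must verify that iterating the two-sided extension over all $S\In S_1$ with $1\le|S|\le i-1$ keeps all the pieces pairwise edge-disjoint and keeps the relevant $1$-well separatedness (not just $\kappa$-well separatedness — note the claim asks for $1$-well separated $\cF_1',\cF_2'$, which forces one to use Fact~\ref{fact:ext in complex} and the $r$-disjointness of Fact~\ref{fact:ws}\ref{fact:ws:2} carefully, choosing the extensions so distinct copies of $F$ meet in $<r$ vertices). The point is that the low-intersection leftover $R'$ has edges meeting $S_1$ in at most $i-1$ vertices, so it can be disposed of recursively; but since we only need one round here (the recursion is handled globally in the proof of Lemma~\ref{lem:transformer}, not inside this claim), we simply stop after producing $R'$ and $\phi(R')$. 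No genuinely new idea beyond those in Claim~\ref{claim:crucial transformer bit} and the discussion following Lemma~\ref{lem:transformer} is needed; the whole argument is a mild variant of that claim with the roles of the two decompositions $\cF_A,\cF_{A\cup L}$ adjusted so that $R_{1,A\cup L}$ ends up transformed onto its $\phi$-image.
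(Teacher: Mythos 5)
Your approach diverges substantially from the paper's, and it has genuine gaps. The paper's proof of this claim is elementary and uses no induction hypothesis at all: it sets $W:=V(R_{1,A\cup L})\cup V(\phi(R_{1,A\cup L}))$, picks a random $U'$ with $|U'|\le 0.6\gamma n$ so that $G[U'\cup W]$ is still a supercomplex, and then for \emph{each single edge} $e\in R_{1,A\cup L}$ chooses (via Fact~\ref{fact:connected} and Lemma~\ref{lem:greedy cover uni}) a common outer set $Q_e\in \tilde{G}^{(f)}(e)\cap\tilde{G}^{(f)}(\phi(e))$ with $Q_e\cap(S_1\cup S_2)=\emptyset$, so that the resulting cliques are pairwise $r$-disjoint. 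The two copies $\tilde F_{e,1}$ on $e\cup Q_e$ and $\tilde F_{e,2}=\phi(\tilde F_{e,1})$ on $\phi(e)\cup Q_e$ then give $\cF_1',\cF_2'$ directly; $T'$ is the common edge set $\cF_1'^{(r)}\cap\cF_2'^{(r)}$ and $R'$ the rest. The $1$-well separatedness is immediate from the $r$-disjointness of the extensions, and \ref{lemst:iterative transformer local:maxdeg 3} from $|U'\cup W|\le 0.7\gamma n$.

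Your route -- decomposing $R_{1,A\cup L}$ into pieces $S\uplus L_S$ for $S\In S_1$ and re-running the two-sided link-decomposition machinery of Claim~1 -- fails on several concrete points. First, the claim demands \emph{$1$-well separated} packings, but any packing obtained by extending a $\kappa$-well separated $F(S^{\ast\ast})$-decomposition via Proposition~\ref{prop:S cover two sided} is only $\kappa$-well separated: two copies in the link decomposition may share $r-|S|$ vertices, hence $r$ vertices after adding $S$ back, and no amount of care in choosing extensions can undo that. Second, your assertion that each $L_S=R_{1,A\cup L}(S)$ is (essentially) $F(S^{\ast\ast})$-divisible is unjustified and generally false: $\cF_{1,A\cup L}^{(r)}$ is $F$-divisible, but subtracting $(S_1\uplus(A\cup L))(S\cup b)=|(A\cup L)(b)|$ only yields divisibility by $Deg(F)_{i+|b|}$, not by the required $Deg(F)_{|S|+|b|}$. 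Third, the $A_S$-device you import from the proof sketch of Lemma~\ref{lem:transformer} produces additional edges ($S\uplus A_S$ and $\phi(S)\uplus A_S$, plus further low-intersection remainders) and edges meeting $(S_1\cup S_2)\sm(S\cup\phi(S))$, which do not fit the exact decomposition $\cF_1'^{(r)}=T'\cup R_{1,A\cup L}\cup R'$, $\cF_2'^{(r)}=T'\cup\phi(R_{1,A\cup L})\cup\phi(R')$ required by \ref{lemst:iterative transformer local:transform 3} without yet another round of corrections. The recursion you describe belongs to the outer induction in the proof of Lemma~\ref{lem:transformer}; inside this claim the correct move is the one-edge-at-a-time mirrored extension.
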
\end{NoHyper}

\claimproof
Let $H':=\hat{T}\cup R_{1,A} \cup \phi(R_{1,A}) \cup (S_1\uplus L) \cup (S_2\uplus L)$. Clearly, $\Delta(H')\le 5\mu n$.\COMMENT{$|V(\phi(R_{1,A}))|\le |V(R_{1,A})|$, $|V((S_1\uplus L) \cup (S_2\uplus L))|\le 2\gamma' n$}

Let $W:=V(R_{1,A\cup L})\cup V(\phi(R_{1,A\cup L}))$. By~\ref*{lemst:iterative transformer local:maxdeg 2}, we have that $|W|\le 4\mu n$.
Similarly to the beginning of the proof of Claim~\ref*{claim:crucial transformer bit}, by Corollary~\ref{cor:random induced subcomplex} and Lemma~\ref{lem:chernoff}\ref{chernoff t}, there exists a subset $U'\In V(G)$ with $0.4\gamma n \le |U'| \le 0.6 \gamma n$ such that $G''':=G[U'\cup W]$ is a $(2\eps,\xi-\eps,f,r)$-supercomplex. Let $\tilde{n}:=|U'\cup W|$. Note that $$\Delta(H')\le 5\mu n \le \sqrt{\mu}\tilde{n}\quad \mbox{and}\quad \Delta(\hat{\cF}_j^{\le(r+1)})\le \kappa (f-r)$$ for $j\in[2]$ by Fact~\ref{fact:ws}\ref{fact:ws:maxdeg}. Thus, by Proposition~\ref{prop:noise}\ref{noise:supercomplex}, $$\tilde{G}:=G'''-H'-\hat{\cF}_1^{\le(r+1)}-\hat{\cF}_2^{\le(r+1)}$$ is still a $(3\eps,\xi-2\eps,f,r)$-supercomplex.
For every $e\in R_{1,A\cup L}$, let $$\cQ_e:=\set{Q\in \tilde{G}^{(f)}(e)\cap \tilde{G}^{(f)}(\phi(e))}{Q\cap (S_1\cup S_2)=\emptyset}.$$
By Fact~\ref{fact:connected}, for every $e\in R_{1,A\cup L}\In \tilde{G}^{(r)}$, we have that $|\tilde{G}^{(f)}(e)\cap \tilde{G}^{(f)}(\phi(e))|\ge 0.5\xi \tilde{n}^{f-r}$. Thus, we have that $|\cQ_e|\ge 0.4\xi \tilde{n}^{f-r}$.\COMMENT{There are at most $2i n^{f-r-1}$ $(f-r)$-sets that intersect $S_1\cup S_2$}
Since $\Delta(R_{1,A\cup L}\cup \phi(R_{1,A\cup L}))\le 4\mu n \le \sqrt{\mu}\tilde{n}$, we can apply Lemma~\ref{lem:greedy cover uni} (with $|R_{1,A\cup L}|,2,\Set{e,\phi(e)},\cQ_e$ playing the roles of $m,s,L_j,\cQ_j$)
to find for every $e\in R_{1,A\cup L}$ some $Q_e\in \cQ_e$ such that, writing $K_e:=(Q_e\uplus \Set{e,\phi(e)})^\le$, we have that
\begin{align}
K_e\mbox{ and }K_{e'}\mbox{ are }r\mbox{-disjoint for distinct }e,e'\in R_{1,A\cup L}.\label{disjoint extensions}
\end{align}
For each $e\in R_{1,A\cup L}$, let $\tilde{F}_{e,1}$ and $\tilde{F}_{e,2}$ be copies of $F$ with $V(\tilde{F}_{e,1})=e\cup Q_e$ and $V(\tilde{F}_{e,2})=\phi(e)\cup Q_e$ and such that $e\in \tilde{F}_{e,1}$ and $\phi(\tilde{F}_{e,1})=\tilde{F}_{e,2}$. Clearly, we have that $\phi(e)\in \tilde{F}_{e,2}$. Moreover, since $e\In V( R_{1,A\cup L})\In V(G)\sm S_2$ by \ref*{lemst:iterative transformer local:location 2} and $Q_e\cap (S_1\cup S_2)=\emptyset$, we have $V(\tilde{F}_{e,1})\In V(G)\sm S_2$. Let
\begin{align}
\cF'_1&:=\set{\tilde{F}_{e,1}}{e\in R_{1,A\cup L}};\label{extra transformer 1}\\
\cF'_2&:=\set{\tilde{F}_{e,2}}{e\in R_{1,A\cup L}}.\label{extra transformer 2}
\end{align}
By~\eqref{disjoint extensions}, $\cF'_1$ and $\cF'_2$ are both $1$-well separated $F$-packings in $\tilde{G}\In G-\hat{\cF}_1^{\le(r+1)}-\hat{\cF}_2^{\le(r+1)}$. Moreover, $V(\cF_1'^{(r)}) \In V(G)\sm S_2$ and $\phi(\cF_1'^{(r)})=\cF_2'^{(r)}$. Let
\begin{align}
T'&:=\cF_1'^{(r)}\cap \cF_2'^{(r)};\\
R'&:=\cF_1'^{(r)}-T'-R_{1,A\cup L}.\label{break down 2}
\end{align}

We clearly have $T',R'\In G^{(r)}$ and now check \ref*{lemst:iterative transformer local:location 3}--\ref*{lemst:iterative transformer local:maxdeg 3}. Note that no edge of $T'$ intersects $S_1\cup S_2$. For \ref*{lemst:iterative transformer local:location 3}, we first have that $V(R')\In V(\cF_1'^{(r)})\In V(G)\sm S_2$. Now, consider $e'\in R'$. There exists $e\in R_{1,A\cup L}$ with $e'\in \tilde{F}_{e,1}$ and thus $e'\In e\cup Q_e$. If we had $e'\cap S_1=\emptyset$, then $e'\In (e\sm S_1)\cup Q_e$. Since $\phi(\tilde{F}_{e,1})=\tilde{F}_{e,2}$, it follows that $e'\in T'$, a contradiction to \eqref{break down 2}. Hence, $|e'\cap S_1|>0$. Moreover, by \ref*{lemst:iterative transformer local:location 2} we have $|e'\cap S_1|\le |(e\cup Q_e)\cap S_1| =|e\cap S_1|\le i-1$. Therefore, $|e'\cap S_1|\in [i-1]$ and \ref*{lemst:iterative transformer local:location 3} holds.

In order to check \ref*{lemst:iterative transformer local:transform 3}, observe first that by \eqref{break down 2} and \eqref{extra transformer 1},\COMMENT{\eqref{extra transformer 1} implies that $R_{1,A\cup L}\In \cF_1'^{(r)}$} we have $\cF_1'^{(r)}=T'\cupdot R_{1,A\cup L} \cupdot R'$. Hence, by Fact~\ref{fact:simple proj}\ref{simple proj:union}, we have 
\begin{align}
\cF_2'^{(r)} &= \phi(\cF_1'^{(r)})=\phi(T')\cupdot \phi(R_{1,A\cup L}) \cupdot \phi(R')=T'\cupdot \phi(R_{1,A\cup L}) \cupdot \phi(R'),\label{break down 2 mirror}
\end{align}
so \ref*{lemst:iterative transformer local:transform 3} is satisfied.

We now check \ref*{lemst:iterative transformer local:disjoint 3}.
Note that $T',R',\phi(R')\In \tilde{G}^{(r)}\In G^{(r)}-H'$. Thus, by \ref*{lemst:iterative transformer local:disjoint 2}, it is enough to check that $T',R',\phi(R'),R_{1,A\cup L},\phi(R_{1,A\cup L})$ are pairwise edge-disjoint. Recall that no edge of $T'$ intersects $S_1\cup S_2$. Moreover, for every $e\in R'\cup R_{1,A\cup L}$, we have $|e\cap S_1|\in [i-1]$ and $e\cap S_2=\emptyset$, and for every $e\in \phi(R')\cup \phi(R_{1,A\cup L})$, we have $|e\cap S_2|\in [i-1]$ and $e\cap S_1=\emptyset$. Since $R'$ and $R_{1,A\cup L}$ are edge-disjoint by \eqref{break down 2} and $\phi(R')$ and $\phi(R_{1,A\cup L})$ are edge-disjoint by \eqref{break down 2 mirror}, this implies that $T',R',\phi(R'),R_{1,A\cup L},\phi(R_{1,A\cup L})$ are indeed pairwise edge-disjoint, proving \ref*{lemst:iterative transformer local:disjoint 3}.

Finally, we can easily check that $|V(T'\cup R')|\le \tilde{n}\le 0.7\gamma n$.
\endclaimproof

We now combine the results of Claims~\ref{claim:crucial transformer bit} and~\ref{claim:non crucial transformer bit}. Let
\begin{align*}
T&:=\hat{T}\cup R_{1,A\cup L} \cup \phi(R_{1,A\cup L}) \cup T';\\
R&:=R_{1,A}\cup R';\\
\cF_1&:= \hat{\cF}_1\cup \cF'_2; \\
\cF_2&:=\hat{\cF}_2\cup \cF'_1.
\end{align*}
Clearly, \ref*{lemst:iterative transformer local:location 2} and~\ref*{lemst:iterative transformer local:location 3} imply that \ref{lemst:iterative transformer local:location} holds. Moreover, \ref*{lemst:iterative transformer local:disjoint 3} implies that $T$ is edge-disjoint from both $(S_1\uplus L)\cup \phi(R)$ and $(S_2\uplus L)\cup R$. Using~\ref*{lemst:iterative transformer local:transform 2} and \ref*{lemst:iterative transformer local:transform 3}, observe that
\begin{align*}
T\cup (S_1\uplus L)\cup \phi(R) &= \hat{T}\cup R_{1,A\cup L} \cup \phi(R_{1,A\cup L}) \cup T' \cup (S_1\uplus L) \cup \phi(R_{1,A}) \cup \phi(R')\\
                                &= (\hat{T} \cup (S_1\uplus L)\cup R_{1,A\cup L} \cup \phi(R_{1,A})) \cupdot ( T' \cup \phi(R_{1,A\cup L})\cup \phi(R'))\\
																&=\hat{\cF}_1^{(r)} \cupdot \cF_2'^{(r)}=\cF_1^{(r)}.
\end{align*}
Similarly, $\cF_2^{(r)}=\hat{\cF}_2^{(r)} \cupdot \cF_1'^{(r)}=T\cup (S_2\uplus L)\cup R$. In particular, by Fact~\ref{fact:ws}\ref{fact:ws:1} we can see that $\cF_1$ and $\cF_2$ are $(\kappa+1)$-well separated $F$-packings in $G$. Thus, $T$ is a $(\kappa+1)$-well separated $((S_1\uplus L)\cup \phi(R),(S_2\uplus L)\cup R;F)$-transformer in $G$, so~\ref{lemst:iterative transformer local:transform} holds.
Finally, we have $|V(T\cup R)|\le 4\mu n +0.7\gamma n \le \gamma n$ by \ref*{lemst:iterative transformer local:maxdeg 2} and~\ref*{lemst:iterative transformer local:maxdeg 3}.
\endproof

So far, our maps $\phi\colon S_1\to S_2$ were bijections. When $\phi$ is an edge-bijective homomorphism from $H$ to $H'$, $\phi$ is in general not injective. In order to still have a meaningful notion of `mirroring' as before, we introduce the following notation.

\begin{defin}\label{def:projectable}
Let $V$ be a set and let $V_1,V_2$ be disjoint subsets of $V$, and let $\phi\colon V_1\to V_2$ be a map. For a set $S\In V\sm V_2$, define $\phi(S):=(S\sm V_1)\cup \phi(S\cap V_1)$. Let $r\in \bN$ and suppose that $R$ is an $r$-graph with $V(R)\In V$ and $i\in[r]_0$. We say that $R$ is \defn{$(\phi,V,V_1,V_2,i)$-projectable} if the following hold:
\begin{enumerate}[label={\rm(Y\arabic*)}]
\item for every $e\in R$, we have that $e\cap V_2=\emptyset$ and $|e\cap V_1|\in [i]$ (so if $i=0$, then $R$ must be empty since $[0]=\emptyset$);\label{projectable:intersection}
\item for every $e\in R$, we have $|{\phi}(e)|=r$;\label{projectable:size}
\item for every two distinct edges $e,e'\in R$, we have ${\phi}(e)\neq {\phi}(e')$.\label{projectable:no multi}
\end{enumerate}
Note that if $\phi$ is injective and $e\cap V_2=\emptyset$ for all $e\in R$, then \ref{projectable:size} and~\ref{projectable:no multi} always hold. If $R$ is $(\phi,V,V_1,V_2,i)$-projectable, then let $\phi(R)$ be the $r$-graph on ${\phi}(V(R)\sm V_2)$ with edge set $\set{{\phi}(e)}{e\in R}$.
For an $r$-graph $P$ with $V(P)\In V\sm V_2$ that satisfies \ref{projectable:size}, let $P^\phi$ be the $r$-graph on $V(P)\cup V_1$ that consists of all $e\in\binom{V\sm V_2}{r}$ such that ${\phi}(e)={\phi}(e')$ for some $e'\in P$.\COMMENT{Let $e$ be an $r$-set such that ${\phi}(e)={\phi}(e')$ for some $e'\in P$. We must then have $e\sm V_1=e'\sm V_1\In e'\In V(P)$, hence $e\In V(P)\cup V_1$.}
\end{defin}

The following facts are easy to see.

\begin{prop}[\cite{GKLO}]\label{prop:projectable facts}
Let $V,V_1,V_2,\phi,R,r,i$ be as above and assume that $R$ is $(\phi,V,V_1,V_2,i)$-projectable. Then the following hold:
\begin{enumerate}[label={\rm(\roman*)}]
\item $R\rightsquigarrow \phi(R)$;\COMMENT{Don't really apply this}
\item every subgraph of $R$ is $(\phi,V,V_1,V_2,i)$-projectable;\label{prop:projectable facts:subgraph}
\item for all $e'\in \phi(R)$, we have $e'\cap V_1=\emptyset$ and $|e'\cap V_2|\in [i]$;\label{prop:projectable facts:projected intersection}
\item assume that for all $e\in R$, we have $|e\cap V_1|=i$, and let $\cS$ contain all $S\in \binom{V_1}{i}$ such that $S$ is contained in some edge of $R$, then $$R=\mathop{\dot{\bigcup}}_{S\in \cS} (S\uplus R(S))\quad\mbox{ and }\quad\phi(R)=\mathop{\dot{\bigcup}}_{S\in \cS} (\phi(S)\uplus R(S)).$$\label{prop:projectable facts:splitting}
\end{enumerate}
\end{prop}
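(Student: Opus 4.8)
The final statement is Proposition~\ref{prop:projectable facts}, which collects four easy facts about projectable $r$-graphs. Since the excerpt itself says ``The following facts are easy to see,'' the plan is to verify each item directly from Definition~\ref{def:projectable}, with the bulk of the work being careful bookkeeping of how $\phi$ acts on $r$-sets via the formula $\phi(e) = (e\sm V_1)\cup \phi(e\cap V_1)$.

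\textbf{Plan for each item.} For (i), $R\rightsquigarrow \phi(R)$: I would exhibit an edge-bijective homomorphism. Define $\Phi\colon V(R)\to V(R)\sm V_2$ by $\Phi(x):=\phi(x)$ if $x\in V_1$ and $\Phi(x):=x$ otherwise. Then $\Phi(e)=\phi(e)$ for every $e\in R$, so $\Phi(e)\in\phi(R)$, i.e.\ $\Phi$ is a homomorphism; property \ref{projectable:size} guarantees $|\Phi(e)|=r$ (so $\Phi$ restricted to each edge is injective), and \ref{projectable:no multi} guarantees $|\phi(R)|=|R|$, so $\Phi$ is edge-bijective. By Proposition~\ref{prop:simple identification facts}\ref{fact:identification equivalence simple} this gives $R\rightsquigarrow\phi(R)$. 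For (ii), if $R'\In R$ then \ref{projectable:intersection}, \ref{projectable:size}, \ref{projectable:no multi} are all inherited verbatim since they are conditions quantified over edges (and pairs of edges) of $R$. For (iii), let $e'=\phi(e)$ with $e\in R$; since $e\cap V_2=\emptyset$ by \ref{projectable:intersection} and $\phi$ maps $e\cap V_1$ into $V_2$, we get $e'\cap V_1 = (e\sm V_1)\cap V_1 \cup (\phi(e\cap V_1)\cap V_1)=\emptyset$ (using $V_1\cap V_2=\emptyset$ and $e\sm V_1$ disjoint from $V_1$), and $e'\cap V_2 = \phi(e\cap V_1)$ which has size between $1$ and $i$ because $\phi\restrict_{e\cap V_1}$ need not be injective but $|e\cap V_1|\in[i]$; here I should note the lower bound $\ge 1$ follows since $e\cap V_1\neq\emptyset$ and so its image is nonempty, and combine this with \ref{projectable:size} ($|e'|=r$) to see $|e'\cap V_2|=|e\cap V_1|\ge 1$ — actually \ref{projectable:size} forces $\phi$ to be injective on $e\cap V_1$, which cleans this up, so $|e'\cap V_2|=|e\cap V_1|\in[i]$.

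\textbf{The main item} is (iv), the splitting identity, which I expect to be the only place requiring genuine (though still short) argument. Assume $|e\cap V_1|=i$ for all $e\in R$. For $S\in\binom{V_1}{i}$, recall $R(S)=\set{e\sm S}{e\in R,\ S\In e}$, an $(r-i)$-graph, and $S\uplus R(S)=\set{S\cup g}{g\in R(S)}$. Every $e\in R$ has a unique $i$-subset $S:=e\cap V_1\in\binom{V_1}{i}$, with $e\sm S\in R(S)$ and $e\cap S=\emptyset$ forced by \ref{projectable:intersection}; conversely each $S\uplus R(S)\In R$. This gives the disjoint decomposition $R=\dot\bigcup_{S\in\cS}(S\uplus R(S))$, where $\cS$ is the set of $i$-subsets of $V_1$ meeting some edge, and disjointness holds because the classes are distinguished by $e\cap V_1$. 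Applying $\phi$ edgewise: $\phi(S\cup g)=((S\cup g)\sm V_1)\cup\phi((S\cup g)\cap V_1)=g\cup\phi(S)=\phi(S)\cup g$ (using $g\cap V_1=\emptyset$ since $g\In e\sm V_1$). Hence $\phi(S\uplus R(S))=\phi(S)\uplus R(S)$, and the union $\phi(R)=\dot\bigcup_{S\in\cS}(\phi(S)\uplus R(S))$ is still disjoint: any common edge $e'$ of $\phi(S)\uplus R(S)$ and $\phi(S')\uplus R(S')$ would satisfy $e'\cap V_2=\phi(S)=\phi(S')$, and combined with \ref{projectable:no multi} (injectivity of $\phi$ on $R$ at the level of edges) one deduces $S=S'$. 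I would just remark that disjointness of the $\phi$-image classes follows from \ref{projectable:no multi} rather than spelling out the contradiction argument in full. Throughout, the proof is a sequence of one-line set-theoretic verifications; the only subtlety worth flagging is the interaction of \ref{projectable:size} with the possible non-injectivity of $\phi$, which in the hypotheses of (iii) and (iv) collapses to genuine injectivity on each edge.
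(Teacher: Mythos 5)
Your verification is correct: each of (i)--(iv) follows by exactly the kind of direct unwinding of Definition~\ref{def:projectable} that you carry out, with \ref{projectable:size} giving injectivity of $\phi$ on each edge (needed in (i), (iii) and (iv)) and \ref{projectable:no multi} giving edge-bijectivity in (i) and the disjointness of the image classes in (iv). The paper itself states these facts without proof (they are cited from~\cite{GKLO} and flagged as easy to see), so your argument is precisely the omitted routine verification.
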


We can now prove the Transforming lemma by combining many localised transformers.

\lateproof{Lemma~\ref{lem:transformer}}
We can assume that $1/\kappa \ll \gamma \ll 1/h,\eps$. Choose new constants $\kappa'\in \bN$ and $\gamma_2,\dots,\gamma_r,\gamma_2'\dots,\gamma_r'>0$ such that $$1/n\ll 1/\kappa \ll \gamma_{r} \ll \gamma_r' \ll \gamma_{r-1} \ll \gamma_{r-1}' \ll \dots \ll \gamma_2 \ll \gamma_2' \ll \gamma \ll 1/\kappa', 1/h, \eps \ll \xi,1/f.$$

Let $\phi\colon V(H)\to V(H')$ be an edge-bijective homomorphism from $H$ to $H'$. Extend $\phi$ as in Definition~\ref{def:projectable} with $V(H),V(H')$ playing the roles of $V_1,V_2$. Since $\phi$ is edge-bijective, we have that
\begin{align}
\phi{\restriction_{S}}\mbox{ is injective whenever }S\In e\mbox{ for some }e\in H.\label{projectable:restriction injective}
\end{align}
For every $e\in H$, we have $|G^{(f)}(e)\cap G^{(f)}(\phi(e))|\ge 0.5\xi n^{f-r}$ by Fact~\ref{fact:connected}. It is thus easy\COMMENT{only constantly many, no finding lemma needed} to find for each $e\in H$ some $Q_e\in G^{(f)}(e)\cap G^{(f)}(\phi(e))$ with $Q_e\cap (V(H)\cup V(H'))=\emptyset$ such that $Q_e\cap Q_{e'}=\emptyset$ for all distinct $e,e'\in H$. For each $e\in H$, let $\tilde{F}_{e,1}$ and $\tilde{F}_{e,2}$ be copies of $F$ with $V(\tilde{F}_{e,1})=e\cup Q_e$ and $V(\tilde{F}_{e,2})=\phi(e)\cup Q_e$ and such that $e\in \tilde{F}_{e,1}$ and $\phi(\tilde{F}_{e,1})=\tilde{F}_{e,2}$. Clearly, we have that $\phi(e)\in \tilde{F}_{e,2}$. For $j\in[2]$, define $\cF_{r,j}^\ast:=\set{\tilde{F}_{e,j}}{e\in H}$. Clearly, $\cF_{r,1}^\ast$ and $\cF_{r,2}^\ast$ are both $1$-well separated $F$-packings in $G$. Define
\begin{align}
T_r^\ast&:=\cF_{r,1}^{\ast(r)}\cap \cF_{r,2}^{\ast(r)},\label{transformer middle}\\
R_r^\ast&:= \cF_{r,1}^{\ast(r)}-T_r^\ast-H.\nonumber
\end{align}

Let $\gamma_1:=\gamma$. Furthermore, let $\kappa_r:=1$ and recursively define $\kappa_i:=\kappa_{i+1}+\binom{h}{i}\kappa'$ for all $i\in[r-1]$.

Given $i\in[r-1]_0$ and $T_{i+1}^\ast,R_{i+1}^\ast,\cF_{i+1,1}^\ast,\cF_{i+1,2}^\ast$, we define the following conditions:
\begin{enumerate}[label={\rm(TR\arabic*$^\ast$)}]
\item$\hspace{-5pt}_{i}$ $R_{i+1}^\ast$ is $(\phi,V(G),V(H),V(H'),i)$-projectable;\label{iterative transformer:location}
\item$\hspace{-5pt}_{i}$ $T_{i+1}^\ast,R_{i+1}^\ast,\phi(R_{i+1}^\ast),H,H'$ are edge-disjoint subgraphs of $G^{(r)}$;\label{iterative transformer:disjoint}
\item$\hspace{-5pt}_{i}$ $\cF_{i+1,1}^\ast$ and $\cF_{i+1,2}^\ast$ are $\kappa_{i+1}$-well separated $F$-packings in $G$ with $\cF_{i+1,1}^{\ast(r)}=T_{i+1}^\ast\cup H\cup R_{i+1}^\ast$ and $\cF_{i+1,2}^{\ast(r)}=T_{i+1}^\ast\cup H'\cup \phi(R_{i+1}^\ast)$;\label{iterative transformer:transform}
\item$\hspace{-5pt}_{i}$ $|V(T_{i+1}^\ast\cup R_{i+1}^\ast)|\le \gamma_{i+1} n$.\label{iterative transformer:maxdeg}
\end{enumerate}
We will first show that the above choices of $T_r^\ast,R_r^\ast,\cF_{r,1}^\ast,\cF_{r,2}^\ast$ satisfy \ref{iterative transformer:location}$_{r-1}$--\ref{iterative transformer:maxdeg}$_{r-1}$. We will then proceed inductively until we obtain $T_1^\ast,R_1^\ast,\cF_{1,1}^\ast,\cF_{1,2}^\ast$ satisfying \ref{iterative transformer:location}$_{0}$--\ref{iterative transformer:maxdeg}$_{0}$, which will then easily complete the proof.

\begin{NoHyper}\begin{claim}
$T_r^\ast,R_r^\ast,\cF_{r,1}^\ast,\cF_{r,2}^\ast$ satisfy \ref{iterative transformer:location}$_{r-1}$--\ref{iterative transformer:maxdeg}$_{r-1}$.
\end{claim}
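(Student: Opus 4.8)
The claim is the base case of the downward induction on $i$ that drives the proof of the Transforming lemma: one must verify that the explicitly constructed objects $T_r^\ast,R_r^\ast,\cF_{r,1}^\ast,\cF_{r,2}^\ast$ satisfy the four conditions \ref{iterative transformer:location}$_{r-1}$--\ref{iterative transformer:maxdeg}$_{r-1}$. The plan is simply to check each of these four conditions in turn, using the defining properties of the gadgets $\tilde F_{e,1},\tilde F_{e,2}$ and the sets $Q_e$ fixed just before the claim, together with \eqref{projectable:restriction injective} and Fact~\ref{fact:simple proj}.

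First I would verify \ref{iterative transformer:location}$_{r-1}$, i.e.\ that $R_r^\ast$ is $(\phi,V(G),V(H),V(H'),r-1)$-projectable. Recall $R_r^\ast=\cF_{r,1}^{\ast(r)}-T_r^\ast-H$. Every $e'\in R_r^\ast$ lies in some $\tilde F_{e,1}$ with $V(\tilde F_{e,1})=e\cup Q_e$ and $Q_e\cap(V(H)\cup V(H'))=\emptyset$, so $e'\cap V(H')=\emptyset$. Since $e'\notin H$ we have $e'\neq e$, so $|e'\cap e|\le r-1$, hence $|e'\cap V(H)|=|e'\cap e|\in[r-1]$; note it is also $\ge 1$ because $e'\notin T_r^\ast$ would be violated otherwise (if $e'\cap e=\emptyset$ then $e'\subseteq Q_e$ and $\phi$ fixes $e'$ pointwise, so $e'\in\tilde F_{e,2}$ as well by $\phi(\tilde F_{e,1})=\tilde F_{e,2}$, i.e.\ $e'\in T_r^\ast$). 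This gives \ref{projectable:intersection}. For \ref{projectable:size} and \ref{projectable:no multi}: $e'\subseteq e\cup Q_e$, $\phi$ fixes $Q_e$ pointwise and is injective on $e'\cap e\subseteq e\in H$ by \eqref{projectable:restriction injective}, so $|\phi(e')|=r$; and if $\phi(e'_1)=\phi(e'_2)$ for $e'_1,e'_2\in R_r^\ast$, then since $\phi$ is injective off $V(H')$ and both have image disjoint from $V(H')$, and $e'_k\setminus V(H)=\phi(e'_k)\setminus V(H')$... more carefully, one disentangles using the disjointness $Q_e\cap Q_{e'}=\emptyset$ to conclude $e'_1,e'_2$ come from the same $\tilde F_{e,1}$ and then $e'_1=e'_2$. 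Here I would mirror the injectivity argument already used for $\pi$ in the proof of Lemma~\ref{lem:unique multigraph} / for $\cF_1'$ in Claim~\ref{claim:non crucial transformer bit}.

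Next, \ref{iterative transformer:disjoint}$_{r-1}$: $T_r^\ast,R_r^\ast,\phi(R_r^\ast),H,H'$ are pairwise edge-disjoint subgraphs of $G^{(r)}$. Edges of $T_r^\ast,R_r^\ast$ lie in $G^{(r)}$ since $\cF_{r,1}^\ast$ is an $F$-packing in $G$, and $\phi(R_r^\ast)=\cF_{r,2}^{\ast(r)}-T_r^\ast-H'\subseteq G^{(r)}$ by Fact~\ref{fact:simple proj}\ref{simple proj:union} applied to the decomposition $\cF_{r,1}^{\ast(r)}=T_r^\ast\cupdot R_r^\ast\cupdot H$ (using $\phi(\cF_{r,1}^{\ast(r)})=\cF_{r,2}^{\ast(r)}$, $\phi(H)=H'$, $\phi$ fixes $T_r^\ast$ pointwise). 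The disjointness of $T_r^\ast,R_r^\ast,H$ is immediate from \eqref{transformer middle} and the definition of $R_r^\ast$; $T_r^\ast$ has no vertex in $V(H)\cup V(H')$ so it is disjoint from $H,H'$ and from $\phi(R_r^\ast)$ (whose edges meet $V(H')$); $R_r^\ast$ edges meet $V(H)$ but not $V(H')$ while $\phi(R_r^\ast)$ edges meet $V(H')$ but not $V(H)$, so these are disjoint from each other and from $H'$ resp.\ $H$; finally $R_r^\ast$ is disjoint from $H$ by construction and $\phi(R_r^\ast)$ from $H'$ because $\phi$ restricted to edges is a bijection $\cF_{r,1}^{\ast(r)}\to\cF_{r,2}^{\ast(r)}$ carrying $R_r^\ast$ to $\phi(R_r^\ast)$ and $H$ to $H'$ disjointly. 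Then \ref{iterative transformer:transform}$_{r-1}$: $\cF_{r,1}^\ast,\cF_{r,2}^\ast$ are $1$-well separated ($=\kappa_r$-well separated since $\kappa_r:=1$) $F$-packings in $G$, already noted; and $\cF_{r,1}^{\ast(r)}=T_r^\ast\cup H\cup R_r^\ast$ is the defining partition, while $\cF_{r,2}^{\ast(r)}=\phi(\cF_{r,1}^{\ast(r)})=T_r^\ast\cup H'\cup\phi(R_r^\ast)$ by the same Fact~\ref{fact:simple proj}\ref{simple proj:union} computation. Finally \ref{iterative transformer:maxdeg}$_{r-1}$: $V(T_r^\ast\cup R_r^\ast)\subseteq V(H)\cup\bigcup_{e\in H}Q_e$, which has size at most $h+h\cdot f\le\gamma_r n$ since $1/n\ll\gamma_r\ll1/h,1/f$.

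\textbf{Main obstacle.} None of the four verifications is deep; the only point requiring genuine care is the projectability of $R_r^\ast$ — specifically conditions \ref{projectable:size} and \ref{projectable:no multi}, where one must exploit that $\phi$ is edge-bijective (hence injective on subsets of edges of $H$, \eqref{projectable:restriction injective}) and that the pads $Q_e$ are pairwise disjoint and avoid $V(H)\cup V(H')$, so that an edge $e'\in R_r^\ast$ determines the unique $e\in H$ with $e'\in\tilde F_{e,1}$. The bookkeeping is entirely analogous to arguments already carried out elsewhere in the paper, so I would keep it brief and point to those.
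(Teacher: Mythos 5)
Your proposal is correct and follows essentially the same route as the paper: you verify \ref{projectable:intersection}--\ref{projectable:no multi} for $R_r^\ast$ using the disjointness of the pads $Q_e$ from $V(H)\cup V(H')$ and from each other together with \eqref{projectable:restriction injective}, deduce edge-disjointness from how each of the five graphs meets $V(H)$ and $V(H')$, and read off \ref{iterative transformer:transform}$_{r-1}$ from the construction via Fact~\ref{fact:simple proj}\ref{simple proj:union}. The only blemish is the throwaway bound $h+h\cdot f$ on $|V(T_r^\ast\cup R_r^\ast)|$ (it should be at most $h+\binom{h}{r}(f-r)$ since there is one pad per edge of $H$), but this is still a constant and so \ref{iterative transformer:maxdeg}$_{r-1}$ holds regardless.
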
\end{NoHyper}

\claimproof \ref{iterative transformer:maxdeg}$_{r-1}$ clearly holds. To see~\ref{iterative transformer:location}$_{r-1}$, consider any $e'\in R_r^\ast$. There exists $e\in H$ such that $e'\in \tilde{F}_{e,1}$. In particular, $e'\In e\cup Q_e$. If $e'\In V(H)$, then $e'=e\in H$, and if $e'\cap V(H)=\emptyset$, then $e'\in \tilde{F}_{e,2}$ since $\phi(\tilde{F}_{e,1})=\tilde{F}_{e,2}$ and thus $e'\in T_r^\ast$. Hence, by definition of $R_r^\ast$, we must have $|e'\cap V(H)|\in[r-1]$. Clearly, $e'\cap V(H')\In (e\cup Q_e)\cap V(H')=\emptyset$, so~\ref{projectable:intersection} holds. Moreover, $e'\cap V(H)\In e$, so $\phi{\restriction_{e'\cap V(H)}}$ is injective by~\eqref{projectable:restriction injective}, and~\ref{projectable:size} holds. Let $e',e''\in R_r^\ast$ and suppose that $\phi(e')=\phi(e'')$. We thus have $e'\sm V(H)=e''\sm V(H)\neq \emptyset$. Since the $Q_e$'s were chosen to be vertex-disjoint, we must have $e',e''\In e\cup Q_e$ for some $e\in H$. Hence, $(e'\cup e'')\cap V(H)\In e$ and so $\phi{\restriction_{(e'\cup e'')\cap V(H)}}$ is injective by~\eqref{projectable:restriction injective}. Since $\phi(e'\cap V(H))=\phi(e''\cap V(H))$ by assumption, we have $e'\cap V(H)=e''\cap V(H)$, and thus $e'=e''$. Altogether, \ref{projectable:no multi} holds, so \ref{iterative transformer:location}$_{r-1}$ is satisfied. In particular, $\phi(R_r^\ast)$ is well-defined. Observe that $$\phi(R_r^\ast)=\cF_{r,2}^{\ast(r)}-T_r^\ast-H'.$$

Clearly, $T_{r}^\ast,R_{r}^\ast,\phi(R_{r}^\ast),H,H'$ are subgraphs of $G^{(r)}$. Using Proposition~\ref{prop:projectable facts}\ref{prop:projectable facts:projected intersection}, it is easy to see that they are indeed edge-disjoint, so \ref{iterative transformer:disjoint} holds. Moreover, note that $\cF_{r,1}^\ast$ and $\cF_{r,2}^\ast$ are $1$-well separated $F$-packings in $G$ with $\cF_{r,1}^{\ast(r)}=T_r^\ast \cup H\cup R_{r}^\ast$ and $\cF_{r,2}^{\ast(r)}=T_r^\ast \cup H'\cup \phi(R_{r}^\ast)$, so $T_r^\ast$ satisfies \ref{iterative transformer:transform}$_{r-1}$.
\endclaimproof

Suppose that for some $i\in[r-1]$,\COMMENT{We assume $r\ge 2$ throughout the section} we have already found $T_{i+1}^\ast,R_{i+1}^\ast,\cF_{{i+1},1}^\ast,\cF_{{i+1},2}^\ast$ such that \ref{iterative transformer:location}$_{i}$--\ref{iterative transformer:maxdeg}$_{i}$ hold.
We will now find $T_{i}^\ast,R_{i}^\ast,\cF_{i,1}^\ast,\cF_{i,2}^\ast$ such that \ref{iterative transformer:location}$_{i-1}$--\ref{iterative transformer:maxdeg}$_{i-1}$ hold.
To this end, let $$R_i:=\set{e\in R_{i+1}^\ast}{|e\cap V(H)|=i}.$$ By Proposition~\ref{prop:projectable facts}\ref{prop:projectable facts:subgraph}, $R_i$ is $(\phi,V(G),V(H),V(H'),i)$-projectable. Let $\cS_i$ be the set of all $S\in \binom{V(H)}{i}$ such that $S$ is contained in some edge of $R_i$. For each $S\in \cS_i$, let $L_S:=R_i(S)$. By Proposition~\ref{prop:projectable facts}\ref{prop:projectable facts:splitting}, we have that
\begin{align}
R_i=\mathop{\dot{\bigcup}}_{S\in \cS_i} (S\uplus L_S)\quad\mbox{ and }\quad\phi(R_i)=\mathop{\dot{\bigcup}}_{S\in \cS_i} (\phi(S)\uplus L_S).\label{break down}
\end{align}
We intend to apply Lemma~\ref{lem:iterative transforming local} to each pair $S,\phi(S)$ with $S\in \cS_i$ individually. For each $S\in \cS_i$, define $$V_S:=(V(G)\sm (V(H)\cup V(H')))\cup S \cup \phi(S).$$

\begin{NoHyper}\begin{claim}\label{claim:transformer building link graph}
For every $S\in \cS_i$, $L_S\In G[V_S](S)^{(r-i)}\cap G[V_S](\phi(S))^{(r-i)}$ and $|V(L_S)|\le 1.1\gamma_{i+1} |V_S|$.
\end{claim}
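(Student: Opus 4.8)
\textbf{Proof proposal for Claim~\ref{claim:transformer building link graph}.}

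The plan is to verify the two assertions separately, both essentially by bookkeeping with the structures that have already been set up. For the containment $L_S\In G[V_S](S)^{(r-i)}\cap G[V_S](\phi(S))^{(r-i)}$, I would first recall that $L_S=R_i(S)$ where $R_i=\set{e\in R_{i+1}^\ast}{|e\cap V(H)|=i}$ and $R_{i+1}^\ast\In G^{(r)}$ by \ref{iterative transformer:disjoint}$_i$. Thus each $e\in R_i$ with $S\In e$ satisfies $e\in G^{(r)}$, so $e\sm S\in G(S)^{(r-i)}$, giving $L_S\In G(S)^{(r-i)}$. To upgrade this to $G[V_S](S)^{(r-i)}$ I need to check that every such $e$ has $e\In V_S$. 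Since $R_{i+1}^\ast$ is $(\phi,V(G),V(H),V(H'),i)$-projectable by \ref{iterative transformer:location}$_i$, property \ref{projectable:intersection} gives $e\cap V(H')=\emptyset$ and $|e\cap V(H)|\le i$; combined with $|e\cap V(H)|=i$ and $S\In e\cap V(H)$ we get $e\cap V(H)=S$, hence $e\sm S\In V(G)\sm(V(H)\cup V(H'))\In V_S$ and $e\In S\cup(V(G)\sm(V(H)\cup V(H')))\In V_S$. Because $G[V_S]$ is the induced subcomplex on $V_S$, this shows $L_S\In G[V_S](S)^{(r-i)}$. For the claim that $L_S\In G[V_S](\phi(S))^{(r-i)}$, I would use the projectability of $R_i$ (Proposition~\ref{prop:projectable facts}\ref{prop:projectable facts:subgraph}) together with~\eqref{break down}: the edge $\phi(S)\uplus L_S$ is a subgraph of $\phi(R_i)\In\phi(R_{i+1}^\ast)$, and $\phi(R_{i+1}^\ast)\In G^{(r)}$ by \ref{iterative transformer:disjoint}$_i$. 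So for each $g\in L_S$ the set $\phi(S)\cup g$ lies in $G^{(r)}$, i.e.\ $g\in G(\phi(S))^{(r-i)}$; and $\phi(S)\cup g\In V_S$ by the same argument as before (using that $g\cap(V(H)\cup V(H'))=\emptyset$, which follows from $g\In e\sm S$ for some $e\in R_i$ and the intersection analysis above), giving $L_S\In G[V_S](\phi(S))^{(r-i)}$.

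For the size bound, I would simply note that $V(L_S)\In V(R_i)\sm(S\cup\phi(S))\In V(R_{i+1}^\ast)$. Hence $|V(L_S)|\le|V(R_{i+1}^\ast)|\le|V(T_{i+1}^\ast\cup R_{i+1}^\ast)|\le\gamma_{i+1}n$ by \ref{iterative transformer:maxdeg}$_i$. Since $|V_S|\ge|V(G)\sm(V(H)\cup V(H'))|\ge n-2h$ and $1/n\ll 1/h$, we have $n\le 1.1|V_S|$ (say), so $|V(L_S)|\le\gamma_{i+1}n\le 1.1\gamma_{i+1}|V_S|$, as required.

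I do not expect a genuine obstacle here; this claim is purely a matter of carefully tracing through the definitions of projectability and the induced-subcomplex notation. The only point requiring a little care is the observation that, for $e\in R_i$ with $S\In e$, one has $e\cap V(H)=S$ exactly (rather than merely $S\In e\cap V(H)$): this is where the hypothesis $|e\cap V(H)|=i$ defining $R_i$ — as opposed to $|e\cap V(H)|\le i-1$ or similar — is used, and it is what guarantees $e\sm S$ avoids both $V(H)$ and $V(H')$ and therefore lies in the "free" part of $V_S$. Once that is in hand, everything else is immediate from \ref{iterative transformer:location}$_i$--\ref{iterative transformer:maxdeg}$_i$ and~\eqref{break down}.
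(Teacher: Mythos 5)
Your proof is correct and follows essentially the same route as the paper's: both use the projectability of $R_{i+1}^\ast$ (via \ref{iterative transformer:location}$_i$) together with \eqref{break down} to show that each $e'\in L_S$ avoids $V(H)\cup V(H')$ and that both $S\cup e'$ and $\phi(S)\cup e'$ lie in $G^{(r)}[V_S]$, and both deduce the size bound directly from \ref{iterative transformer:maxdeg}$_i$. The paper's version is terser but identical in substance.
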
\end{NoHyper}

\claimproof
The second assertion clearly holds by~\ref{iterative transformer:maxdeg}$_{i}$. To see the first one, let $e'\in L_S=R_i(S)$. Since $R_i\In R_{i+1}^\ast\In G^{(r)}$, we have $e'\in G(S)^{(r-i)}$. Moreover, $\phi(S)\cup e'\in \phi(R_i)\In  \phi(R_{i+1}^\ast) \In G^{(r)}$ by \eqref{break down}. Since $R_{i+1}^\ast$ is $(\phi,V(G),V(H),V(H'),i)$-projectable, we have that $e'\cap (V(H)\cup V(H'))=\emptyset$. Thus, $S\cup e'\In V_S$ and $\phi(S)\cup e'\In V_S$.
\endclaimproof

Let $S^\ast\in \binom{V(F)}{i}$ be such that $F(S^\ast)$ is non-empty.

\begin{NoHyper}\begin{claim}\label{claim:transformer building link graph divisible}
For every $S\in \cS_i$, $L_S$ is $F(S^\ast)$-divisible.
\end{claim}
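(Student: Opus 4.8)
\textbf{Proof strategy for Claim~\ref{claim:transformer building link graph divisible}.}
The plan is to show that $F$-divisibility of $H$ (together with the structure of the objects constructed so far) forces $L_S$ to be $F(S^\ast)$-divisible for each $S \in \cS_i$. The key observation is that $L_S = R_i(S)$ where $R_i$ is the part of $R_{i+1}^\ast$ whose edges meet $V(H)$ in exactly $i$ vertices, and that $R_{i+1}^\ast$, $T_{i+1}^\ast$, $H$, $H'$, $\phi(R_{i+1}^\ast)$ sit together in the $\kappa_{i+1}$-well separated $F$-packings $\cF_{i+1,1}^\ast$ and $\cF_{i+1,2}^\ast$ via \ref{iterative transformer:transform}$_{i}$. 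First I would use Proposition~\ref{prop:link divisibility} to record that $F(S^\ast)$ is weakly regular and that $Deg(F(S^\ast))_j = Deg(F)_{i+j}$ for all $j \in [r-i-1]_0$; this tells us exactly which divisibility we need to verify for $L_S$, namely $Deg(F)_{i+j} \mid |L_S(T)|$ for all $j$-sets $T$ in $V(L_S)$.

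The main computation is to express $|L_S(T)| = |R_i(S \cup T)|$ in terms of quantities we already control. Since $\cF_{i+1,1}^{\ast(r)} = T_{i+1}^\ast \cup H \cup R_{i+1}^\ast$ is an edge-disjoint union (by \ref{iterative transformer:disjoint}$_{i}$) and $\cF_{i+1,1}^\ast$ is an $F$-packing, $\cF_{i+1,1}^{\ast(r)}$ is $F$-divisible; hence $\mathds{1}_{T_{i+1}^\ast} + \mathds{1}_H + \mathds{1}_{R_{i+1}^\ast}$ is $(Deg(F)_0,\dots,Deg(F)_{r-1})$-divisible as a set function. The same holds for $\cF_{i+1,2}^{\ast(r)} = T_{i+1}^\ast \cup H' \cup \phi(R_{i+1}^\ast)$. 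Moreover $H$ and $H'$ are each $F$-divisible by hypothesis, and (using $H \rightsquigarrow H'$ via $\phi$ edge-bijective, together with Proposition~\ref{prop:simple identification facts}\ref{fact:identification divisible}) this is consistent. Subtracting, $\mathds{1}_{R_{i+1}^\ast} - \mathds{1}_H$ and $\mathds{1}_{\phi(R_{i+1}^\ast)} - \mathds{1}_{H'}$ differ from $F$-divisible functions only by the $F$-divisible $\mathds{1}_H$, $\mathds{1}_{H'}$; so $\mathds{1}_{R_{i+1}^\ast}$ is $F$-divisible, i.e.\ $R_{i+1}^\ast$ is an $F$-divisible $r$-graph. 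Now for a $k$-set $S'$ with $S \subseteq S'$ and $|S'| = i + j$ (so $S' = S \cup T$ with $|T| = j$), the edges of $R_{i+1}^\ast$ containing $S'$ are precisely the edges of $R_i$ containing $S'$ \emph{plus} edges of $R_{i+1}^\ast$ meeting $V(H)$ in $i+j$ vertices other than via $S$; but by the definition of $\cS_i$ and $R_i$, and since $S \subseteq S' \cap V(H)$ already has size $i$ and $S' \cap V(H) \supseteq S$, every edge of $R_{i+1}^\ast$ containing $S'$ in fact meets $V(H)$ in exactly the set $S' \cap V(H)$; if $|S' \cap V(H)| = i$ then such an edge lies in $R_i$. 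The cleanest route is to restrict to the case $S' \cap V(H) = S$, i.e.\ $T \cap V(H) = \emptyset$, which is automatic since $L_S \subseteq G(S)^{(r-i)}$ has $V(L_S) \cap V(H) \subseteq $ the relevant set; then $|R_i(S')| = |R_{i+1}^\ast(S')|$ minus the contribution of edges meeting $V(H)$ in more than $i$ vertices — and I would argue these extra contributions are themselves divisible by $Deg(F)_{i+j}$ by applying the same reasoning recursively (or by handling the refined decomposition $R_{i+1}^\ast = \bigcup_{k \ge i} \{e : |e \cap V(H)| = k\}$ level by level, noting $R_i$ is the bottom relevant level and divisibility of $R_{i+1}^\ast$ propagates down via Fact~\ref{fact:gcd connection}).

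I expect the main obstacle to be the bookkeeping in this last step: cleanly separating, among the edges of $R_{i+1}^\ast$ through a fixed $(i+j)$-set, those lying in $R_i$ from those meeting $V(H)$ more heavily, and verifying that the ``heavier'' contributions form an $F(S^\ast)$-divisible (equivalently $Deg(F)_{i+j}$-divisible) quantity. The resolution is that by \ref{iterative transformer:location}$_{i}$ the edges of $R_{i+1}^\ast$ meet $V(H)$ in between $1$ and $i$ vertices (it is $(\phi,\dots,i)$-projectable), so in fact $R_i$ is exactly the set of edges meeting $V(H)$ in the \emph{maximum} possible number $i$ of vertices, and for a $(i+j)$-set $S'$ with $S' \cap V(H) = S$ (size $i$), \emph{every} edge of $R_{i+1}^\ast$ containing $S'$ automatically meets $V(H)$ in exactly $S$, hence lies in $R_i$. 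Thus $|L_S(T)| = |R_i(S \cup T)| = |R_{i+1}^\ast(S \cup T)| \equiv 0 \pmod{Deg(F)_{i+j}}$ directly from $F$-divisibility of $R_{i+1}^\ast$, which by Proposition~\ref{prop:link divisibility} is exactly $F(S^\ast)$-divisibility of $L_S$. (A symmetric check with $\phi(S)$ in place of $S$ using $\phi(R_{i+1}^\ast)$ confirms consistency but is not needed separately.) This completes the proof of the claim.
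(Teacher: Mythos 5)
Your overall skeleton matches the paper's: reduce to showing $Deg(F)_{i+|b|}\mid|R_{i+1}^\ast(S\cup b)|$, observe that every edge of $R_{i+1}^\ast$ containing $S\cup b$ meets $V(H)$ in exactly $i$ vertices (by \ref{iterative transformer:location}$_i$) and hence lies in $R_i$, so $|L_S(b)|=|R_i(S\cup b)|=|R_{i+1}^\ast(S\cup b)|$, and finish with Proposition~\ref{prop:link divisibility}. However, there is a genuine gap in the middle step. You claim that subtracting the divisibility of $H$ and $H'$ from the divisibility of $\cF_{i+1,1}^{\ast(r)}=T_{i+1}^\ast\cup H\cup R_{i+1}^\ast$ and $\cF_{i+1,2}^{\ast(r)}=T_{i+1}^\ast\cup H'\cup\phi(R_{i+1}^\ast)$ shows that $R_{i+1}^\ast$ is $F$-divisible. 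It does not: the term $\mathds{1}_{T_{i+1}^\ast}$ is never eliminated. All you can extract from these identities is that $\mathds{1}_{T_{i+1}^\ast}+\mathds{1}_{R_{i+1}^\ast}$ and $\mathds{1}_{T_{i+1}^\ast}+\mathds{1}_{\phi(R_{i+1}^\ast)}$ are $F$-divisible, hence that $\mathds{1}_{R_{i+1}^\ast}-\mathds{1}_{\phi(R_{i+1}^\ast)}$ is; neither summand is controlled individually. Indeed the statement is false in general: already at level $0$, $|R_{i+1}^\ast|$ need not be divisible by $|F|$ (for $F$ a triangle and $r=2$ one gets $|R_2^\ast|=2|H|$ in the first step).

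The repair, which is exactly what the paper does, is to evaluate both identities only at the sets $S\cup b$ you actually care about, where $S\in\cS_i$ satisfies $S\subseteq V(H)$ and $|S|=i\ge 1$. Since $V(H)\cap V(H')=\emptyset$, the graph $H'$ has no edge containing $S$; and by \ref{iterative transformer:location}$_i$ together with Proposition~\ref{prop:projectable facts}\ref{prop:projectable facts:projected intersection}, every edge of $\phi(R_{i+1}^\ast)$ is disjoint from $V(H)$, so $\phi(R_{i+1}^\ast)$ has no edge containing $S$ either. Hence the second identity gives $|T_{i+1}^\ast(S\cup b)|=|(T_{i+1}^\ast\cup H'\cup\phi(R_{i+1}^\ast))(S\cup b)|\equiv 0\pmod{Deg(F)_{|S\cup b|}}$, and only then does the first identity (together with $F$-divisibility of $H$) yield $|R_{i+1}^\ast(S\cup b)|\equiv 0$. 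This local use of the ``mirrored'' packing is the one idea your write-up is missing; with it inserted, the rest of your argument goes through as in the paper.
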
\end{NoHyper}

\claimproof
Consider $b\In V(L_S)$ with $|b|< r-i$. We have to check that $Deg(F(S^\ast))_{|b|}\mid |L_S(b)|$. By \ref{iterative transformer:transform}$_{i}$, both $T_{i+1}^\ast\cup H\cup R_{i+1}^\ast$ and $T_{i+1}^\ast\cup H'\cup \phi(R_{i+1}^\ast)$ are necessarily $F$-divisible. Clearly, $H'$ does not contain an edge that contains $S$. Note that by \ref{iterative transformer:location}$_{i}$ and Proposition~\ref{prop:projectable facts}\ref{prop:projectable facts:projected intersection}, $\phi(R_{i+1}^\ast)$ does not contain an edge that contains $S$ either, hence $|T_{i+1}^\ast(S\cup b)|=|(T_{i+1}^\ast\cup H' \cup \phi(R_{i+1}^\ast))(S\cup b)|\equiv 0\mod{Deg(F)_{|S\cup b|}}$. Moreover, since $H$ is $F$-divisible, we have $|(T_{i+1}^\ast \cup R_{i+1}^\ast)(S\cup b)|\equiv |(T_{i+1}^\ast \cup H \cup R_{i+1}^\ast)(S\cup b)|\equiv 0 \mod{Deg(F)_{|S\cup b|}}$. Thus, we have $Deg(F)_{|S\cup b|}\mid |R_{i+1}^\ast(S\cup b)|$. Moreover, $|R_{i+1}^\ast(S\cup b)|=|R_{i}(S\cup b)|=|L_S(b)|$. Hence, $Deg(F)_{|S\cup b|}\mid |L_S(b)|$, which proves the claim as $Deg(F)_{|S\cup b|}=Deg(F(S^\ast))_{|b|}$ by Proposition~\ref{prop:link divisibility}.
\endclaimproof

We now intend to apply Lemma~\ref{lem:iterative transforming local} for every $S\in \cS_i$ in order to define $T_S,R_S\In G^{(r)}$ and $\kappa'$-well separated $F$-packings $\cF_{S,1},\cF_{S,2}$ in $G$ such that the following hold:
\begin{enumerate}[label={\rm(TR\arabic*$'$)}]
\item $R_S$ is $(\phi,V(G),V(H),V(H'),i-1)$-projectable;\label{iterative transformer local:location}
\item $T_S,R_S,\phi(R_S),S\uplus L_S, \phi(S)\uplus L_S$ are edge-disjoint;\label{iterative transformer local:disjoint}
\item $\cF_{S,1}^{(r)}=T_S \cup (S\uplus L_S)\cup \phi(R_S)$ and $\cF_{S,2}^{(r)}=T_S \cup (\phi(S)\uplus L_S)\cup R_S$;\label{iterative transformer local:transform}
\item $|V(T_S\cup R_S)|\le \gamma_{i+1}' n$.\label{iterative transformer local:maxdeg}
\end{enumerate}
We also need to ensure that all these graphs and packings satisfy several `disjointness properties' (see \ref{transformer building blocks:edge-disjoint}--\ref{transformer building blocks:decs disjoint}), and we will therefore choose them successively. Recall that $P^{\phi}$ (for a given $r$-graph $P$) was defined in Definition~\ref{def:projectable}.
Let $\cS'\In \cS_i$ be the set of all $S'\in \cS_i$ for which $T_{S'},R_{S'}$ and $\cF_{S',1},\cF_{S',2}$ have already been defined such that \ref{iterative transformer local:location}--\ref{iterative transformer local:maxdeg} hold. Suppose that next we want to find $T_S$, $R_S$, $\cF_{S,1}$ and $\cF_{S,2}$. Let
\begin{align*}
P_{S}&:=R_{i+1}^\ast \cup \bigcup_{S'\in \cS'}R_{S'},\\
M_S&:=T_{i+1}^\ast \cup R_{i+1}^\ast \cup \phi(R_{i+1}^\ast) \cup \bigcup_{S'\in \cS'}(T_{S'}\cup R_{S'} \cup \phi(R_{S'})),\\
O_S&:=\cF_{{i+1},1}^{\ast\le(r+1)}\cup \cF_{{i+1},2}^{\ast\le(r+1)} \cup \bigcup_{S'\in \cS'}\cF_{S',1}^{\le(r+1)}\cup \cF_{S',2}^{\le(r+1)},\\
G_S&:=G[V_S]-((M_S\cup P_S^\phi) -((S\uplus L_S) \cup (\phi(S)\uplus L_S)))-O_S.
\end{align*}
Observe that \ref{iterative transformer:maxdeg}$_{i}$ and \ref{iterative transformer local:maxdeg} imply that
\begin{align*}
|V(M_S\cup P_S)| &\le |V(T_{i+1}^\ast\cup R_{i+1}^\ast \cup \phi(R_{i+1}^\ast))| +\sum_{S'\in \cS'}|V(T_{S'}\cup R_{S'} \cup \phi(R_{S'}))|\\
                                                       & \le 2\gamma_{i+1} n + 2\binom{h}{i}\gamma_{i+1}' n \le \gamma_i n.
\end{align*}
In particular, $|V(P_S^\phi)|\le |V(P_S)\cup V(H)| \le \gamma_i n +h$. Moreover, by Fact~\ref{fact:ws}\ref{fact:ws:maxdeg}, \ref{iterative transformer:transform}$_{i}$ and \ref{iterative transformer local:transform}, we have that $\Delta(O_S)\le (2\kappa_{i+1}+2\binom{h}{i}\kappa')(f-r)$.
Thus, by Proposition~\ref{prop:noise}\ref{noise:supercomplex} $G_S$ is still a $(2\eps,\xi/2,f,r)$-supercomplex. Moreover, note that $L_S\In G_S(S)^{(r-i)}\cap G_S(\phi(S))^{(r-i)}$ and $|V(L_S)|\le 1.1\gamma_{i+1}|V_S|$ by Claim~\ref*{claim:transformer building link graph} and that $L_S$ is $F(S^\ast)$-divisible by Claim~\ref*{claim:transformer building link graph divisible}.

Finally, by definition of $\cS_i$, $S$ is contained in some $e\in R_i$. Since $R_i$ satisfies \ref{projectable:size} by \ref{iterative transformer:location}$_{i}$, we know that $\phi{\restriction_{e}}$ is injective. Thus, $\phi{\restriction_{S}}\colon S\to \phi(S)$ is a bijection. We can thus apply Lemma~\ref{lem:iterative transforming local} with the following objects/parameters:

\smallskip
{\footnotesize
\noindent
{
\begin{tabular}{c|c|c|c|c|c|c|c|c|c|c|c|c|c|c|c|c}
object/parameter & $G_S$ & $i$ & $S$ & $\phi(S)$ & $\phi{\restriction_{S}}$ & $L_S$ & $1.1\gamma_{i+1}$ & $\gamma_{i+1}'$ & $2\eps$ & $|V_S|$ & $\xi/2$ & $f$ & $r$ & $F$ & $S^\ast$ & $\kappa'/2$\\ \hline
playing the role of & $G$ & $i$ & $S_1$ & $S_2$ & $\phi$ & $L$ & $\gamma'$& $\gamma$ & $\eps$ & $n$ & $\xi$ & $f$ & $r$ & $F$ & $S^\ast$ & $\kappa$
\end{tabular}
}
}
\newline \vspace{0.2cm}

This yields $T_S,R_S\In G_S^{(r)}$ and $\kappa'/2$-well separated $F$-packings $\cF_{S,1},\cF_{S,2}$ such that \ref{iterative transformer local:disjoint}--\ref{iterative transformer local:maxdeg} hold, $V(R_S)\In V(G_S) \sm \phi(S)$ and $|e\cap S|\in[i-1]$ for all $e\in R_S$. Note that the latter implies that $R_S$ is $(\phi,V(G),V(H),V(H'),i-1)$-projectable as $V(H)\cap V(G_S)=S$ and $V(H')\cap V(G_S)=\phi(S)$, so~\ref{iterative transformer local:location} holds as well. Moreover, using \ref{iterative transformer:disjoint}$_{i}$ and \ref{iterative transformer local:disjoint} it is easy to see that our construction ensures that
\begin{enumerate}[label={\rm(\alph*)}]
\item $H,H',T_{i+1}^\ast,R_{i+1}^\ast,\phi(R_{i+1}^\ast),(T_S)_{S\in \cS_i},(R_S)_{S\in \cS_i},(\phi(R_S))_{S\in \cS_i}$ are pairwise edge-disjoint;\label{transformer building blocks:edge-disjoint}
\item for all distinct $S,S'\in \cS_i$ and all $e\in R_S$, $e'\in R_{S'}$, $e''\in R_{i+1}^\ast-R_i$ we have that $\phi(e)$, $\phi(e')$ and $\phi(e'')$ are pairwise distinct;\label{transformer building blocks:no multi}
\item for any $j,j'\in[2]$ and all distinct $S,S'\in \cS_i$, $\cF_{S,j}$ is $(r+1)$-disjoint from $\cF^\ast_{{i+1},j'}$ and from $\cF_{S',j'}$.\label{transformer building blocks:decs disjoint}
\end{enumerate}
Indeed, \ref{transformer building blocks:edge-disjoint} holds by the choice of $M_S$,\COMMENT{When we choose $T_S$ and $R_S$, we exclude all forbidden edges except $(S\uplus L_S)\cup (\phi(S)\uplus L_S)$. But $T_S,R_S,\phi(R_S)$ do automatically not contain any edges from there, and the three picked graphs are automatically edge-disjoint from each other.} \ref{transformer building blocks:no multi} holds by definition of $P_S^\phi$,\COMMENT{Proof of \ref{transformer building blocks:no multi}: Assume $R_S$ was chosen after $R_{S'}$, so $e',e''\in P_S$. Now, if $\phi(e)=\phi(e')$ or $\phi(e)=\phi(e'')$, then $e\in P_S^{\phi}$. Since $e\in R_S\In G_S^{(r)}$, we must have $e\in S\uplus L_S$, a contradiction since $R_S$ is edge-disjoint from $S\uplus L_S$.} and \ref{transformer building blocks:decs disjoint} holds by definition of $O_S$.
Let
\begin{align*}
T_{i}^\ast &:=T_{i+1}^\ast \cup R_i \cup \phi(R_i) \cup \bigcup_{S\in \cS_i} T_S;\\
R_{i}^\ast &:=(R_{i+1}^\ast- R_i) \cup \bigcup_{S\in \cS_i} R_S;\\
\cF_{i,1}^\ast &:= \cF_{i+1,1}^\ast \cup \bigcup_{S\in \cS_i}\cF_{S,2};\\
\cF_{i,2}^\ast &:= \cF_{i+1,2}^\ast \cup \bigcup_{S\in \cS_i}\cF_{S,1}.
\end{align*}
Using \ref{iterative transformer:transform}$_{i}$, \ref{iterative transformer local:transform}, \ref{transformer building blocks:edge-disjoint} and \eqref{break down}, it is easy to check that both $\cF_{i,1}^\ast$ and $\cF_{i,2}^\ast$ are $F$-packings in $G$.
We check that \ref{iterative transformer:location}$_{i-1}$--\ref{iterative transformer:maxdeg}$_{i-1}$ hold.
Using \ref{iterative transformer:maxdeg}$_{i}$ and \ref{iterative transformer local:maxdeg}, we can confirm that
\begin{align*}
|V(T_{i}^\ast\cup R_{i}^\ast)| &\le |V(T_{i+1}^\ast\cup R_{i+1}^\ast \cup \phi(R_{i+1}^\ast))| +\sum_{S\in \cS_i}|V(T_S\cup R_S)|\\
                                                       & \le 2\gamma_{i+1} n + \binom{h}{i}\gamma_{i+1}' n \le \gamma_i n,
\end{align*}
so \ref{iterative transformer:maxdeg}$_{i-1}$ holds.

In order to check \ref{iterative transformer:location}$_{i-1}$, i.e.~that $R_i^\ast$ is $(\phi,V(G),V(H),V(H'),i-1)$-projectable, note that \ref{projectable:intersection} and~\ref{projectable:size} hold by~\ref{iterative transformer:location}$_{i}$, the definition of $R_i$ and \ref{iterative transformer local:location}. Moreover, \ref{projectable:no multi} is implied by~\ref{iterative transformer:location}$_{i}$,\COMMENT{if $e,e'\in R_{i+1}^\ast-R_i$} \ref{iterative transformer local:location}\COMMENT{if $e,e'\in R_S$} and~\ref{transformer building blocks:no multi}.\COMMENT{if $e\in R_S$ and ($e'\in R_{S'}$ or $e'\in R_{i+1}^\ast-R_i$)}

Moreover, \ref{iterative transformer:disjoint}$_{i-1}$ follows from \ref{transformer building blocks:edge-disjoint}.
Finally, we check \ref{iterative transformer:transform}$_{i-1}$. Observe that
\begin{eqnarray*}
T_{i}^\ast \cup H \cup R_i^\ast &=&T_{i+1}^\ast \cup R_i \cup \phi(R_i) \cup \bigcup_{S\in \cS_i} T_S \cup H \cup (R_{i+1}^\ast- R_i) \cup \bigcup_{S\in \cS_i} R_S\\
                                &\overset{\eqref{break down}}{=}&(T_{i+1}^\ast \cup H \cup R_{i+1}^\ast) \cup \bigcup_{S\in \cS_i}(T_S\cup (\phi(S)\uplus L_S) \cup R_S),\\								
T_{i}^\ast \cup H' \cup \phi(R_i^\ast) &=&T_{i+1}^\ast \cup R_i \cup \phi(R_i) \cup \bigcup_{S\in \cS_i} T_S \cup H' \cup (\phi(R_{i+1}^\ast)- \phi(R_i)) \cup \bigcup_{S\in \cS_i} \phi(R_S)\\
                                &\overset{\eqref{break down}}{=}&(T_{i+1}^\ast \cup H' \cup \phi(R_{i+1}^\ast)) \cup \bigcup_{S\in \cS_i}(T_S\cup (S\uplus L_S) \cup \phi(R_S)).
\end{eqnarray*}
Thus, by \ref{iterative transformer:transform}$_{i}$ and \ref{iterative transformer local:transform}, $\cF_{i,1}^\ast$ is an $F$-decomposition of $T_{i}^\ast \cup H \cup R_i^\ast$ and $\cF_{i,2}^\ast$ is an $F$-decomposition of $T_{i}^\ast \cup H' \cup \phi(R_i^\ast)$. Moreover, by \ref{transformer building blocks:decs disjoint} and Fact~\ref{fact:ws}\ref{fact:ws:1}, $\cF_{i,1}^\ast$ and $\cF_{i,2}^\ast$ are both $(\kappa_{i+1}+\binom{h}{i}\kappa')$-well separated in $G$. Since $\kappa_{i+1}+\binom{h}{i}\kappa'=\kappa_{i}$, this establishes \ref{iterative transformer:transform}$_{i-1}$.

Finally, let $T_1^\ast,R_1^\ast,\cF_{1,1}^\ast,\cF_{1,2}^\ast$ satisfy \ref{iterative transformer:location}$_{0}$--\ref{iterative transformer:maxdeg}$_{0}$. Note that $R_1^\ast$ is empty by \ref{iterative transformer:location}$_{0}$ and~\ref{projectable:intersection}. Moreover, $T_1^\ast\In G^{(r)}$ is edge-disjoint from $H$ and $H'$ by \ref{iterative transformer:disjoint}$_{0}$ and $\Delta(T_1^\ast)\le \gamma_1 n$ by \ref{iterative transformer:maxdeg}$_{0}$. Most importantly, $\cF_{1,1}^\ast$ and $\cF_{1,2}^\ast$ are $\kappa_{1}$-well separated $F$-packings in $G$ with $\cF_{1,1}^{\ast(r)}=T_{1}^\ast\cup H$ and $\cF_{1,2}^{\ast(r)}=T_{1}^\ast\cup H'$ by~\ref{iterative transformer:transform}$_{0}$. Therefore, $T_1^\ast$ is a $\kappa_1$-well separated $(H,H';F)$-transformer in $G$ with $\Delta(T_1^\ast)\le \gamma_1 n$. Recall that $\gamma_1=\gamma$ and note that $\kappa_1\le 2^h \kappa' \le \kappa$. Thus, $T_1^\ast$ is the desired transformer.
\endproof

\section{Covering down}\label{app:covering down}

Here, we prove the Cover down lemma (Lemma~\ref{lem:cover down}). Our proof proceeds by induction on the `type' of the edges to be covered. In order to carry out the induction step we will actually prove a significantly stronger result, the `Cover down lemma for setups' (Lemma~\ref{lem:horrible}), from which Lemma~\ref{lem:cover down} immediately follows. In each step, some edges will be covered via an inductive argument, and the remaining ones via the `Localised cover down lemma' (Lemma~\ref{lem:dense jain}).

\subsection{Systems and focuses}

\begin{defin}
Given $i \in \bN_0$, an \defn{$i$-system in a set $V$} is a collection $\cS$ of distinct subsets of $V$ of size~$i$. A subset of $V$ is called \defn{$\cS$-important} if it contains some $S\in \cS$, otherwise we call it \defn{$\cS$-unimportant}. We say that $\cU=(U_S)_{S\in \cS}$ is a \defn{focus for $\cS$} if for each $S\in \cS$, $U_S$ is a subset of $V\sm S$.
\end{defin}

\begin{defin}
Let $G$ be a complex and $\cS$ an $i$-system in $V(G)$. We call $G$ \defn{$r$-exclusive with respect to $\cS$} if every $e\in G$ with $|e|\ge r$ contains at most one element of $\cS$. Let $\cU$ be a focus for $\cS$. If $G$ is $r$-exclusive with respect to $\cS$, the following functions are well-defined: For $r'\ge r$, let $\cE_{r'}$ denote the set of $\cS$-important $r'$-sets in $G$. Define $\tau_{r'}\colon \cE_{r'}\to [r'-i]_0$ as $\tau_{r'}(e):=|e\cap U_S|$, where $S$ is the unique $S\in \cS$ contained in $e$. We call $\tau_{r'}$ the \defn{type function of $G^{(r')}$, $\cS$, $\cU$}.
\end{defin}

\begin{defin}
Let $G$ be a complex and $\cS$ an $i$-system in $V(G)$. Let $\cU$ be a focus for $\cS$ and suppose that $G$ is $r$-exclusive with respect to $\cS$. For $i'\in\Set{i+1,\dots,r-1}$, we define $\cT$ as the set of all $i'$-subsets $T$ of $V(G)$ which satisfy $S\In T\In e\sm U_S$ for some $S\in \cS$ and $e\in G^{(r)}$. We call $\cT$ the \defn{$i'$-extension of $\cS$ in $G$ around $\cU$}.

Clearly, $\cT$ is an $i'$-system in $V(G)$. Moreover, note that for every $T\in \cT$, there is a unique $S\in \cS$ with $S\In T$ because $G$ is $r$-exclusive with respect to $\cS$.\COMMENT{Otherwise $e\in G^{(r)}$ with $T\In e$ would contain two elements of $\cS$.} We let $T{\restriction_{\cS}}:=S$ denote this element. (On the other hand, we may have $|\cT|<|\cS|$.) Note that $\cU':=\set{U_{T{\restriction_{\cS}}}}{T\in \cT}$ is a focus for $\cT$ as $T\cap U_{T{\restriction_{\cS}}}=\emptyset$ for all $T\in \cT$.
\end{defin}

The following proposition contains some basic properties of $i'$-extensions, which are straightforwardly checked using the definitions (see also Step~1 in the proof of Lemma~10.22 in \cite{GKLO}).

\begin{prop}\label{prop:extension types}
Let $0\le i < i'<r$. Let $G$ be a complex and $\cS$ an $i$-system in $V(G)$. Let $\cU$ be a focus for $\cS$ and suppose that $G$ is $r$-exclusive with respect to $\cS$. Let $\cT$ be the $i'$-extension of $\cS$ in $G$ around $\cU$. For $r'\ge r$, let $\tau_{r'}$ be the type function of $G^{(r')}$, $\cS$, $\cU$. Then the following hold for $$G':=G-\set{e\in G^{(r)}}{e\mbox{ is }\cS\mbox{-important and }\tau_r(e)<r-i'}:$$
\begin{enumerate}[label=\rm{(\roman*)}]
\item $G'$ is $r$-exclusive with respect to $\cT$;\label{prop:extension types 1}
\item for all $e\in G$ with $|e|\ge r$, we have
\begin{align*}
e\notin G' \quad \Leftrightarrow \quad e\mbox{ is }\cS\mbox{-important and }\tau_{|e|}(e)< |e|-i';
\end{align*}\label{prop:extension types 2}
\item for $r'\ge r$, the $\cT$-important elements of $G'^{(r')}$ are precisely the elements of $\tau_{r'}^{-1}(r'-i')$.\label{prop:extension types 3}
\end{enumerate}
\end{prop}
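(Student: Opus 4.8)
The statement to prove is Proposition~\ref{prop:extension types}, which records three basic properties of $i'$-extensions. The plan is to verify each of the three claims directly from the definitions, keeping careful track of the type functions and the $r$-exclusivity properties. Throughout, I will write $\tau := \tau_r$ for the type function of $G^{(r)}$, and more generally $\tau_{r'}$ for the type function of $G^{(r')}$ relative to $\cS$, $\cU$; recall that for an $\cS$-important set $e$ of size $r'$, $\tau_{r'}(e) = |e \cap U_S|$ where $S$ is the unique element of $\cS$ contained in $e$ (unique because $G$ is $r$-exclusive with respect to $\cS$ and $r' \ge r$). Note also that $G'$ is obtained from $G$ by deleting a set of $r$-edges, so $G'$ is still a complex (indeed $G' = G[G^{(r)} \setminus D]$ where $D$ is the deleted set), and $V(G') = V(G)$; moreover every element of $\cT$ lies in $G'$, since by definition each $T \in \cT$ satisfies $S \subseteq T \subseteq e \setminus U_S$ for some $S \in \cS$ and $e \in G^{(r)}$, and then $\tau_r(e) = |e \cap U_S| \le |e \setminus T| = r - i'$... wait, one must check $e$ survives: $e \setminus U_S \supseteq T$ has size $\ge i'$ so $|e \cap U_S| \le r - i'$, hence $\tau_r(e) = |e\cap U_S| \le r-i'$, but the deletion only removes edges with $\tau_r(e) < r - i'$, so we need $\tau_r(e) = r-i'$ exactly; if not, we can still find a witness: actually we only need $T \in G'$, and $T \subseteq e$ with $|T| = i' < r$, so $T \in G^{(i')} = G'^{(i')}$ since only $r$-edges were deleted. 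Good.

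For part~\ref{prop:extension types 2}, I would argue as follows. Let $e \in G$ with $|e| \ge r$. If $e$ is $\cS$-unimportant, then no subset of $e$ of size $r$ is $\cS$-important either (any such $r$-subset containing an element of $\cS$ would make $e$ $\cS$-important), so $e$ is not removed in the definition of $G'$ and the right-hand side fails; hence the equivalence holds in this case. Now suppose $e$ is $\cS$-important, with $S \in \cS$ the unique element of $\cS$ contained in $e$ (uniqueness by $r$-exclusivity of $G$ since $|e| \ge r$). Then $\tau_{|e|}(e) = |e \cap U_S|$. If $\tau_{|e|}(e) < |e| - i'$, then $|e \setminus U_S| = |e| - |e \cap U_S| > i'$... hmm I need the relation to the $r$-subsets. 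The key observation: $e \notin G'$ iff $e$ contains some $\cS$-important $r$-set $e''$ with $\tau_r(e'') < r - i'$. Given $S \subseteq e$, take any $r$-subset $e''$ with $S \subseteq e'' \subseteq e$; then $e''$ is $\cS$-important with the same associated $S$, and $\tau_r(e'') = |e'' \cap U_S|$. One can choose $e''$ to contain as few or as many vertices of $e \cap U_S$ as desired (subject to containing $S$ and having size $r$); the minimum possible value of $|e'' \cap U_S|$ over such $e''$ is $\max\{0, r - i - |e \setminus (U_S \cup S)|\}$, and working this out shows: there exists such $e''$ with $\tau_r(e'') < r - i'$ if and only if $|e \cap U_S| < |e| - i'$, i.e.\ $\tau_{|e|}(e) < |e| - i'$. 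This is the crux of the computation and I would do it carefully with the inclusion-exclusion on the three disjoint parts $S$, $e \cap U_S$, $e \setminus (S \cup U_S)$ of $e$.

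Part~\ref{prop:extension types 1} then follows: suppose $e \in G'$ with $|e| \ge r$ contained two distinct elements $T_1, T_2 \in \cT$. Each $T_j$ has a unique $S_j := T_j{\restriction_\cS} \in \cS$ with $S_j \subseteq T_j \subseteq e$. If $S_1 \ne S_2$, then $e$ contains two distinct elements of $\cS$; taking an $r$-subset $e'' \subseteq e$ containing $S_1$, $r$-exclusivity of $G$ is already violated unless we're careful — actually $e$ itself has $|e| \ge r$ and contains $S_1 \ne S_2$, directly contradicting that $G$ is $r$-exclusive with respect to $\cS$ (and $e \in G' \subseteq G$). So $S_1 = S_2 =: S$. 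Now $T_1, T_2 \subseteq e$ both have size $i'$ and contain $S$; since $e \in G'$, by part~\ref{prop:extension types 2} we have $\tau_{|e|}(e) \ge |e| - i'$, i.e.\ $|e \cap U_S| \ge |e| - i'$, so $|e \setminus U_S| \le i'$. But $T_1 \subseteq e \setminus U_S$ (by definition of $\cT$, each $T_j \subseteq e_j \setminus U_{S}$ for \emph{some} witness $e_j$; I need $T_j \cap U_S = \emptyset$, which holds since $T_j{\restriction_\cS} = S$ and $\cU' = (U_{T{\restriction_\cS}})$ is a focus for $\cT$, giving $T_j \cap U_{S} = \emptyset$). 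Hence $T_1, T_2 \subseteq e \setminus U_S$, a set of size $\le i' = |T_1| = |T_2|$, forcing $T_1 = T_2 = e \setminus U_S$, contradiction. Thus $G'$ is $r$-exclusive with respect to $\cT$.

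For part~\ref{prop:extension types 3}: fix $r' \ge r$ and let $e \in G'^{(r')}$. If $e$ is $\cT$-important, it contains some $T \in \cT$, hence contains $S := T{\restriction_\cS} \in \cS$, so $e$ is $\cS$-important in $G$; since $e \in G'$, part~\ref{prop:extension types 2} gives $\tau_{r'}(e) \ge r' - i'$; on the other hand $T \subseteq e \setminus U_S$ (as above $T \cap U_S = \emptyset$), so $|e \cap U_S| \le r' - i'$, i.e.\ $\tau_{r'}(e) \le r' - i'$; combining, $\tau_{r'}(e) = r' - i'$, so $e \in \tau_{r'}^{-1}(r' - i')$. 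Conversely, suppose $e \in \tau_{r'}^{-1}(r' - i')$, meaning $e$ is $\cS$-important with unique associated $S \in \cS$ and $|e \cap U_S| = r' - i'$; then $|e \setminus U_S| = i'$, and $S \subseteq e \setminus U_S$ since $S \cap U_S = \emptyset$ (as $\cU$ is a focus for $\cS$). Set $T := e \setminus U_S$, so $|T| = i'$ and $S \subseteq T$. To see $T \in \cT$ I need a witness $r$-edge: pick any $r$-subset $e'' $ with $T \subseteq e'' \subseteq e$ (possible as $i' < r \le r'$); then $S \subseteq T \subseteq e'' \setminus U_S$, wait need $e'' \setminus U_S \supseteq T$: since $T \cap U_S = \emptyset$ and $T \subseteq e''$, indeed $T \subseteq e'' \setminus U_S$. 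And $e'' \in G^{(r)}$ since $e \in G$ and $G$ is a complex. So $T \in \cT$ by definition, hence $e$ is $\cT$-important. This completes all three parts. The main obstacle throughout is the bookkeeping in part~\ref{prop:extension types 2} relating the type of a large edge $e$ to the types of its $r$-subsets; everything else is a direct unwinding of definitions once that is in hand.

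\endproof
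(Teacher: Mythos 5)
Your proof is correct, and it takes the route the paper itself indicates (the proposition is stated there without proof as being "straightforwardly checked using the definitions"): the key step is your reduction of "$e\notin G'$" to the existence of an $\cS$-important $r$-subset $e''\supseteq S$ with $|e''\cap U_S|<r-i'$, together with the computation that the minimum of $|e''\cap U_S|$ over such $e''$ is $\max\{0,\,r-i-|e\setminus(S\cup U_S)|\}$, which correctly yields the equivalence with $\tau_{|e|}(e)<|e|-i'$; parts (i) and (iii) then follow as you argue, using $T\cap U_{T\restriction_{\cS}}=\emptyset$. The argument is complete and matches the intended definition-chasing proof.
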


Let $\cZ_{r,i}$ be the set of all quadruples $(z_0,z_1,z_2,z_3)\in\bN_0^4$ such that $z_0+z_1<i$, $z_0+z_3<i$ and $z_0+z_1+z_2+z_3=r$. Clearly, $|\cZ_{r,i}|\le (r+1)^3$, and $\cZ_{r,i}=\emptyset$ if $i=0$.

\begin{defin}
Let $V$ be a set of size $n$, let $\cS$ be an $i$-system in $V$ and let $\cU$ be a focus for $\cS$. We say that $\cU$ is a \defn{$\mu$-focus for $\cS$} if each $U_S\in \cU$ has size $\mu n\pm n^{2/3}$.
For all $S\in\cS$, $z=(z_0,z_1,z_2,z_3)\in \cZ_{r,i}$ and all $(z_1+z_2-1)$-sets $b\In V\sm S$,\COMMENT{In appl., we have $b\In U_S$.} define
\begin{align*}
\cJ^b_{S,z} &:=\set{S'\in\cS}{|S\cap S'|=z_0,b\In S'\cup U_{S'},|U_{S'}\cap S|\ge z_3},\\
\cJ^b_{S,z,1} &:=\set{S'\in\cJ_{S,z}^b}{|b\cap S'|=z_1},\\
\cJ^b_{S,z,2} &:=\set{S'\in\cJ_{S,z}^b}{|b\cap S'|=z_1-1,|U_S\cap (S'\sm b)|\ge 1}.
\end{align*}
We say that $\cU$ is a \defn{$(\rho_{size},\rho,r)$-focus for $\cS$} if
\begin{enumerate}[label={\rm(F\arabic*)}]
\item each $U_S$ has size $\rho_{size}\rho n\pm n^{2/3}$;\label{def:focus:size}
\item $|U_S\cap U_{S'}|\le 2\rho^2 n$ for distinct $S,S'\in\cS$;\label{def:focus:intersection}
\item for all $S\in\cS$, $z=(z_0,z_1,z_2,z_3)\in \cZ_{r,i}$ and $(z_1+z_2-1)$-sets $b\In V\sm S$, we have
\begin{align*}
|\cJ^b_{S,z,1}| &\le 2^{6r}\rho^{z_2+z_3-1}n^{i-z_0-z_1},\\
|\cJ^b_{S,z,2}| &\le 2^{9r}\rho^{z_2+z_3+1}n^{i-z_0-z_1+1}.
\end{align*}\label{def:focus:Js}\COMMENT{If $i=0$ then \ref{def:focus:intersection} and \ref{def:focus:Js} are vacuously true. The definition is only needed for $i>0$.}
\end{enumerate}
\end{defin}

The sets $S'$ in $\cJ^b_{S,z,1}$ and $\cJ^b_{S,z,2}$ are those which may give rise to interference when covering the edges containing $S$. \ref{def:focus:Js} ensures that there are not too many of them.
The next lemma states that a suitable random choice of the $U_S$ yields a $(\rho_{size},\rho,r)$-focus.

\begin{lemma}[\cite{GKLO}]\label{lem:focus}
Let $1/n\ll \rho \ll \rho_{size},1/r$ and $i\in[r-1]$. Let $V$ be a set of size $n$, let $\cS$ be an $i$-system in $V$ and let $\cU'=(U_S')_{S\in \cS}$ be a $\rho_{size}$-focus for $\cS$. Let $\cU=(U_S)_{S\in \cS}$ be a random focus obtained as follows: independently for all pairs $S\in \cS$ and $x\in U_S'$, retain $x$ in $U_S$ with probability $\rho$.
Then \whp $\cU$ is a $(\rho_{size},\rho,r)$-focus for $\cS$.
\end{lemma}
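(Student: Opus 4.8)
The statement to prove is Lemma~\ref{lem:focus}: a random thinning of a $\rho_{size}$-focus is whp a $(\rho_{size},\rho,r)$-focus. I would verify the three defining properties \ref{def:focus:size}, \ref{def:focus:intersection} and \ref{def:focus:Js} separately, in each case by computing the expectation of the relevant quantity and applying a concentration inequality together with a union bound over the (polynomially many) choices of the parameters $S,z,b$. Throughout I write $n=|V|$ and use the hierarchy $1/n\ll\rho\ll\rho_{size},1/r$; note $i\in[r-1]$ so $i\ge 1$ and $|\cZ_{r,i}|\le(r+1)^3$, and there are at most $n^i$ sets in $\cS$ and at most $n^{r}$ choices of a $(z_1+z_2-1)$-set $b$.

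First, \ref{def:focus:size}: for fixed $S$, $|U_S|$ is a sum of $|U_S'|=\rho_{size}n\pm n^{2/3}$ independent Bernoulli$(\rho)$ variables, so $\ex|U_S|=\rho_{size}\rho n\pm\rho n^{2/3}$. Lemma~\ref{lem:chernoff}\ref{chernoff t} with $t=\tfrac12 n^{2/3}$ gives $\pr[\,|\,|U_S|-\ex|U_S|\,|\ge \tfrac12 n^{2/3}\,]\le 2\eul^{-2(n^{2/3}/2)^2/n}=2\eul^{-n^{1/3}/2}$; a union bound over $\le n^i$ choices of $S$ kills this, and then $|U_S|=\rho_{size}\rho n\pm n^{2/3}$ (absorbing $\rho n^{2/3}$ into $n^{2/3}$ for $n$ large). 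Second, \ref{def:focus:intersection}: for distinct $S,S'$ each $x\in U_S'\cap U_{S'}'$ lies in $U_S\cap U_{S'}$ with probability $\rho^2$ (the two retentions are independent), so $\ex|U_S\cap U_{S'}|\le \rho_{size}\rho^2 n+\rho^2 n^{2/3}\le 1.1\rho_{size}\rho^2 n$. Since $\rho_{size}\le 1$, Lemma~\ref{lem:chernoff}\ref{chernoff eps} (or \ref{chernoff crude} if the expectation is tiny) gives that $|U_S\cap U_{S'}|\le 2\rho^2 n$ fails with probability at most $\eul^{-c\rho^2 n}$ for a constant $c$ depending on $r$, which beats a union bound over $\le n^{2i}$ pairs because $1/n\ll\rho$. (If $\ex|U_S\cap U_{S'}|$ is below the threshold of Lemma~\ref{lem:chernoff}\ref{chernoff crude}, one instead applies that part directly with $t=2\rho^2 n\ge 7\ex$.)

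The substantive part is \ref{def:focus:Js}, bounding $|\cJ^b_{S,z,1}|$ and $|\cJ^b_{S,z,2}|$. Fix $S\in\cS$, $z=(z_0,z_1,z_2,z_3)\in\cZ_{r,i}$ and a $(z_1+z_2-1)$-set $b\subseteq V\setminus S$. For $|\cJ^b_{S,z,1}|$: a set $S'\in\cS$ contributes only if $|S\cap S'|=z_0$, $|b\cap S'|=z_1$, and $b\subseteq S'\cup U_{S'}$, so $b\setminus S'$ (of size $z_2-1$) must lie in $U_{S'}$, and moreover $|U_{S'}\cap S|\ge z_3$. The \emph{deterministic} constraints $|S\cap S'|=z_0$ and $|b\cap S'|=z_1$ already pin down $z_0$ coordinates inside $S$ and $z_1$ coordinates inside $b$, leaving at most $n^{i-z_0-z_1}$ choices for the remaining vertices of $S'$ — this is exactly the $n$-power appearing in the target bound. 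The \emph{random} events, for each such candidate $S'$, are ``$b\setminus S'\subseteq U_{S'}$'' (probability $\le\rho^{z_2-1}$, since $b\setminus S'$ is a fixed $(z_2-1)$-set of vertices that each survive into $U_{S'}$ with probability $\le\rho$, assuming $b\setminus S'\subseteq U_{S'}'$, else probability $0$) and ``$|U_{S'}\cap S|\ge z_3$'' (probability $\le\binom{|S|}{z_3}\rho^{z_3}\le 2^r\rho^{z_3}$ by a union bound over which $z_3$ elements of $S$ survive). These two events are independent (they concern retentions into $U_{S'}$ of disjoint vertex sets — one avoids $S$, the other lies in $S$), so the candidate contributes with probability $\le 2^r\rho^{z_2+z_3-1}$, giving $\ex|\cJ^b_{S,z,1}|\le 2^r\rho^{z_2+z_3-1}n^{i-z_0-z_1}$. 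One then needs concentration: the indicators over distinct $S'$ are not independent (they may share the parameter $S'$'s own randomness trivially, but different $S'$ use independent randomness — so in fact they \emph{are} independent), hence Lemma~\ref{lem:chernoff}\ref{chernoff crude} with threshold $2^{6r}\rho^{z_2+z_3-1}n^{i-z_0-z_1}\ge 7\ex$ gives failure probability $\le\eul^{-2^{6r}\rho^{z_2+z_3-1}n^{i-z_0-z_1}}$; since $i-z_0-z_1\ge 1$ (as $z_0+z_1<i$) this is $\eul^{-\Omega(\rho^{O(1)}n)}$, which survives the union bound over $\le n^i\cdot(r+1)^3\cdot n^r$ triples $(S,z,b)$ because $1/n\ll\rho$. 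The bound on $|\cJ^b_{S,z,2}|$ is analogous, with two differences: now $|b\cap S'|=z_1-1$, so one coordinate of $b$ is \emph{free}, costing an extra factor $n$ (hence $n^{i-z_0-z_1+1}$); and the additional requirement $|U_S\cap(S'\setminus b)|\ge 1$ contributes a factor $\le|S'\setminus b|\cdot\rho\le r\rho$ (union bound over which vertex of $S'\setminus b$ survives into $U_S$), giving an extra $\rho$, so the exponent becomes $z_2+z_3+1$; the combinatorial slack $2^{9r}$ (versus $2^{6r}$) absorbs the extra union-bound factors, and concentration follows the same way since $i-z_0-z_1+1\ge 2$.

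The main obstacle is purely bookkeeping: carefully disentangling, for each of the definitions of $\cJ^b_{S,z,1}$ and $\cJ^b_{S,z,2}$, which constraints are deterministic (fixing overlap patterns with $S$ and $b$, hence contributing powers of $n$) versus which are random events about the independent retention processes defining the various $U_{S'}$ (contributing powers of $\rho$), and checking that the random events one multiplies are genuinely independent so that probabilities multiply. Once the expectation is computed with the correct $n$- and $\rho$-exponents matching the right-hand sides of \ref{def:focus:Js} up to the generous constants $2^{6r},2^{9r}$, the concentration-plus-union-bound step is routine precisely because every $n$-exponent in sight is at least $1$ and $\rho$ is a constant independent of $n$. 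I would structure the write-up as: Step~1 (set up the random focus and record independence), Step~2 (\ref{def:focus:size}), Step~3 (\ref{def:focus:intersection}), Step~4 (\ref{def:focus:Js}, with the expectation computation and the final Chernoff/union-bound argument), and conclude.
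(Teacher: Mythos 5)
Your overall strategy --- compute expectations and apply Chernoff plus a union bound over the polynomially many triples $(S,z,b)$ --- is the right one, and your treatment of \ref{def:focus:size}, \ref{def:focus:intersection} and of $|\cJ^b_{S,z,1}|$ is sound. In particular, for $\cJ^b_{S,z,1}$ the indicator of ``$S'$ qualifies'' depends only on the retention randomness of the pairs $(S',x)$, so the indicators over distinct candidates $S'$ really are mutually independent, the two random requirements for a single candidate concern disjoint vertex sets (as $b\cap S=\emptyset$) and hence multiply, and Lemma~\ref{lem:chernoff}\ref{chernoff crude} applies; your exponent bookkeeping matches the right-hand sides with room to spare.

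There is, however, a genuine gap in the concentration step for $|\cJ^b_{S,z,2}|$. The extra condition $|U_S\cap(S'\sm b)|\ge 1$ depends on the retention randomness of $U_S$, which is \emph{shared by all candidates} $S'$. The indicators over distinct $S'$ are therefore not independent (learning that many candidates qualify suggests $U_S$ is large, which raises the conditional probability that further candidates qualify), so ``concentration follows the same way'' is not justified; nor does Proposition~\ref{prop:Jain} apply directly, since conditioning on earlier indicators can only be said to \emph{increase} the relevant conditional probabilities. The expectation you compute is still correct (linearity of expectation needs no independence, and for a fixed candidate the three random events do concern disjoint pairs and are independent), but the tail bound needs a decoupling step: first reveal $U_S$ and use \ref{def:focus:size} to assume $|U_S|\le 2\rho\rho_{size}n$; conditional on $U_S$, the condition $|U_S\cap(S'\sm b)|\ge 1$ becomes a deterministic restriction cutting the candidate count to at most $2^{O(r)}|U_S|\,n^{i-z_0-z_1}\le 2^{O(r)}\rho n^{i-z_0-z_1+1}$ (which is the true source of the extra factors $\rho$ and $n$ in the target bound); the remaining events ($b\sm S'\In U_{S'}$, now of size $z_2$, and $|U_{S'}\cap S|\ge z_3$) involve only the $U_{S'}$ and are genuinely independent across $S'$, so Lemma~\ref{lem:chernoff}\ref{chernoff crude} applies conditionally. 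With this repair (or, equivalently, by splitting $\cJ^b_{S,z,2}$ according to a choice of vertex $v\in U_S\cap(S'\sm b)$ and bounding each piece by an independent-indicator Chernoff argument before summing over $v\in U_S$), the rest of your argument goes through.
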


The following `Localised cover down lemma' allows us to simultaneously cover all $\cS$-important edges of an $i$-system $\cS$ provided that the associated focus $\cU$ satisfies \ref{def:focus:size}--\ref{def:focus:Js} and all $\cS$-important edges are `localised' in the sense that their links are contained in the respective focus set (or, equivalently, their type is maximal).

\begin{lemma}[Localised cover down lemma]\label{lem:dense jain}
Let $1/n\ll \rho \ll \rho_{size},\xi,1/f$ and $1\le i<r<f$. Assume that \ind{r-i} is true. Let $F$ be a weakly regular $r$-graph on $f$ vertices and $S^\ast\in \binom{V(F)}{i}$ such that $F(S^\ast)$ is non-empty. Let $G$ be a complex on $n$ vertices and let $\cS=\Set{S_1,\dots,S_p}$ be an $i$-system in $G$ such that $G$ is $r$-exclusive with respect to $\cS$. Let $\cU=\Set{U_1,\dots,U_p}$ be a $(\rho_{size},\rho,r)$-focus for $\cS$. Suppose further that whenever $S_j\In e\in G^{(r)}$, we have $e\sm S_j\In U_j$. Finally, assume that
$G(S_j)[U_j]$ is an $F(S^\ast)$-divisible $(\rho,\xi,f-i,r-i)$-supercomplex for all $j\in[p]$.

Then there exists a $\rho^{-1/12}$-well separated $F$-packing $\cF$ in $G$ covering all $\cS$-important $r$-edges.
\end{lemma}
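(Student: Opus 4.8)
The goal is to cover all $\cS$-important $r$-edges of $G$ by a single well separated $F$-packing, where all such edges are ``localised'' (every $\cS$-important edge containing $S_j$ has its link inside $U_j$). The natural strategy is to use Proposition~\ref{prop:S cover} to turn, for each $j\in[p]$, an $F(S^\ast)$-decomposition of the link complex $G(S_j)[U_j]$ into an $F$-packing $\cF_j$ covering exactly those $r$-edges containing $S_j$; then the union $\cF:=\bigcup_{j\in[p]}\cF_j$ covers all $\cS$-important edges. The two things one must arrange are: (i) each $\cF_j$ exists and is well separated --- this is immediate from \ind{r-i} applied to the $F(S^\ast)$-divisible $(\rho,\xi,f-i,r-i)$-supercomplex $G(S_j)[U_j]$ (note $F(S^\ast)$ is weakly regular by Proposition~\ref{prop:link divisibility}), followed by Proposition~\ref{prop:S cover}; and (ii) the $\cF_j$ fit together into a genuinely well separated packing, i.e.\ the $\cF_j$ should be pairwise $r$-disjoint (so Fact~\ref{fact:ws}\ref{fact:ws:2} gives well separatedness with $\kappa$ the max of the individual parameters), and more delicately, property \ref{separatedness:2} must hold globally. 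This is exactly where the focus conditions \ref{def:focus:size}--\ref{def:focus:Js} enter.

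\textbf{Key steps in order.}
First I would fix, for each $j$, a $\rho^{-1/13}$-well separated $F(S^\ast)$-decomposition $\cF_j'$ of $G(S_j)[U_j]$ via \ind{r-i}, and set $\cF_j:=S_j\triangleleft\cF_j'$; by Proposition~\ref{prop:S cover}, $\cF_j$ is a $\rho^{-1/13}$-well separated $F$-packing in $G$ with $\{e\in\cF_j^{(r)}:S_j\In e\}=S_j\uplus G(S_j)[U_j]^{(r-i)}$, which by the localisation hypothesis equals the set of \emph{all} $\cS$-important $r$-edges of $G$ containing $S_j$. Hence $\bigcup_j\cF_j$ covers every $\cS$-important $r$-edge. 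Next I would need to make the $\cF_j$ compatible: one cannot simply take all $\cF_j'$ independently, because a copy $F'\in\cF_j$ and a copy $F''\in\cF_k$ ($j\neq k$) could share $r$ or more vertices. The remedy is to choose the decompositions $\cF_1',\dots,\cF_p'$ \emph{sequentially}, at each stage deleting from the next link complex the $f$-sets already used --- more precisely, when building $\cF_j'$ in $G(S_j)[U_j]$, one first removes all $f$-subsets of $U_j$ that arise as vertex sets (or would cause an $r$-overlap) of copies already placed in some $\cF_k$ with $k<j$; Corollary~\ref{cor:many decs new} / Proposition~\ref{prop:effect of decs} (from the appendix) guarantees that after removing a sufficiently sparse $f$-graph $Y_{used}$, the resulting complex is still an $F(S^\ast)$-divisible supercomplex, so \ind{r-i} still applies. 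The crucial counting input is that the number of prior copies that can interfere with a given $r$-set of $U_j$ is controlled by the quantities $|\cJ^b_{S_j,z,1}|$ and $|\cJ^b_{S_j,z,2}|$ from \ref{def:focus:Js}, together with the intersection bound \ref{def:focus:intersection}; these ensure $\Delta_r(Y_{used})$ stays below the threshold needed for Proposition~\ref{prop:effect of decs}, and simultaneously that \ref{separatedness:2} holds for $\bigcup_j\cF_j$ with parameter roughly $\rho^{-1/12}$ (the loss from $\rho^{-1/13}$ absorbing the polynomially-many overlaps).

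\textbf{The main obstacle.}
The hard part is the bookkeeping in step two: one must show that the ``used'' $f$-graph accumulated across the sequential choices, when restricted to any single $U_j$, has $r$-degree small enough (say $\le\rho^{1/2}(\rho_{size}\rho n)^{f-r}$ in the appropriate normalisation) to keep $G(S_j)[U_j]-Y_{used}$ a supercomplex, and that the final packing satisfies \ref{separatedness:2} with the claimed bound $\rho^{-1/12}$. This requires translating the abstract focus estimates \ref{def:focus:size}--\ref{def:focus:Js} into degree bounds on $Y_{used}$ --- essentially the content of the analogous clique-case argument in~\cite{GKLO} (the proof of their Localised cover down lemma, around Lemma~10.22 there). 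With Proposition~\ref{prop:S cover} in hand, the passage from the clique case to general weakly regular $F$ is routine: every place where one would have invoked a $\krq{r-i}{f-i}$-decomposition of a link and ``added $S$ back'' is replaced by \ind{r-i} plus Proposition~\ref{prop:S cover}, and divisibility of the link complexes is inherited correctly via Proposition~\ref{prop:link divisibility}. So in the write-up I would state the reduction carefully, verify the supercomplex/divisibility hypotheses of \ind{r-i} at each stage, cite the counting estimates from the focus definition for the degree control, and refer to~\cite{GKLO} for the (unchanged) details of that counting, exactly as the paper does for the other appendix lemmas.
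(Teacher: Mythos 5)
Your skeleton matches the paper's: extend $F(S^\ast)$-decompositions of the link complexes $G(S_j)[U_j]$ via Proposition~\ref{prop:S cover}, force the resulting packings to be pairwise $r$-disjoint by forbidding certain structures when choosing each successive decomposition, and conclude via Fact~\ref{fact:ws}\ref{fact:ws:2}. (One small correction there: since the extended packings are $r$-disjoint, the union is $\max$-well separated, so there is no need to reserve slack by working with $\rho^{-1/13}$.) However, the mechanism you propose for the sequential step --- deterministically delete the already-used $f$-sets (plus the $r$-overlap-creating configurations) and argue via Proposition~\ref{prop:effect of decs} that the remaining complex is still a supercomplex --- does not work, and this is where the real content of the lemma lies. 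For a fixed $(z_1+z_2-1)$-set $b\In U_j$, the focus condition \ref{def:focus:Js} only bounds the number of indices $j'$ that could \emph{potentially} interfere, and this bound is of order $\rho^{z_2+z_3-1}n^{i-z_0-z_1}$, which for $i-z_0-z_1\ge 1$ is at least of order $\rho n$ and for $i-z_0-z_1\ge 2$ is polynomially larger than $n$. Even multiplying by only $O(\kappa)$ witnesses per index, the resulting degree of the forbidden graph inside $U_j$ exceeds $|U_j|\approx\rho\rho_{size}n$, so Proposition~\ref{prop:noise} cannot be applied and the link complex need not remain a supercomplex. A worst-case (adversarial) choice of the earlier decompositions really can concentrate this much interference on a single $b$.

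The missing idea is that the choice at each step must be \emph{random}: the paper's proof runs a random greedy algorithm in which, for each $j$, one first produces $t=\rho^{1/6}(0.5\rho\rho_{size}n)^{f-r}$ pairwise $(f-i)$-disjoint $\kappa$-well separated $F(S^\ast)$-decompositions of $G(S_j)[U_j]$ minus the forbidden graphs $T^j_z$ (these exist by Corollary~\ref{cor:many decs new}, which uses \ind{r-i}), and then selects one uniformly at random. The $(f-i)$-disjointness of the candidates is what makes the probability that a fixed earlier index $j'$ interferes with a fixed $b$ at most $|\cK^{b,j'}_{j,z}|/t$; combined with the counting bounds \ref{def:focus:size}--\ref{def:focus:Js} and a sequential concentration inequality (Proposition~\ref{prop:Jain} together with Lemma~\ref{lem:chernoff}), this shows that with positive probability the accumulated interference satisfies $\Delta(T^j_z)\le 2^{2r}f\kappa\rho^{1/2}|U_j|$ for all $j$ and $z$, which is exactly what keeps every $G(S_j)[U_j]-\bigcup_z T^j_z$ a supercomplex and lets the algorithm run to completion. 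Also note that this lemma is one of the places where the paper gives a full proof rather than deferring to~\cite{GKLO}, so the probabilistic step cannot simply be cited away.
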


\proof
Recall that by Proposition~\ref{prop:link divisibility}, $F(S^\ast)$ is a weakly regular $(r-i)$-graph. We will use \ind{r-i} together with Corollary~\ref{cor:many decs new} in order to find many $F(S^\ast)$-decompositions of $G(S_j)[U_j]$ and then pick one of these at random. Let $t:=\rho^{1/6}(0.5\rho\rho_{size} n)^{f-r}$ and $\kappa:=\rho^{-1/12}$. For all $j\in[p]$, define $G_j:=G(S_j)[U_j]$. Consider Algorithm~\ref{alg:randomgreedy} which, if successful, outputs a $\kappa$-well separated $F(S^\ast)$-decomposition $\cF_j$ of $G_j$ for every $j\in[p]$.

\begin{algorithm}
\caption{}
\label{alg:randomgreedy}
\begin{algorithmic}
\For{$j$ from $1$ to $p$}
\State{for all $z=(z_0,z_1,z_2,z_3)\in \cZ_{r,i}$, define $T^j_{z}$ as the $(z_1+z_2)$-graph on $U_j$ containing all $Z_1\cupdot Z_2 \In U_j$ with $|Z_1|=z_1,|Z_2|=z_2$ such that for some $j'\in[j-1]$ with $|S_j\cap S_{j'}|=z_0$ and some $K'\in\cF_{j'}^{\le(f-i)}$, we have $Z_1\In S_{j'}$, $Z_2\In K'$ and $|K'\cap S_j|= z_3$}
\If {there exist $\kappa$-well separated $F(S^\ast)$-decompositions $\cF_{j,1},\dots,\cF_{j,t}$ of $G_{j}-\bigcup_{z\in \cZ_{r,i}} T^j_{z}$ which are pairwise $(f-i)$-disjoint}
    \State pick $s\in[t]$ uniformly at random and let $\cF_j:=\cF_{j,s}$
\Else
    \State \Return `unsuccessful'
\EndIf
\EndFor
\end{algorithmic}
\end{algorithm}

\begin{NoHyper}\begin{claim}
If Algorithm~\ref{alg:randomgreedy} outputs $\cF_{1},\dots,\cF_{p}$, then $\cF:=\bigcup_{j\in [p]}\tilde{\cF}_j$ is a packing as desired, where $\tilde{\cF}_j:=S_j\triangleleft \cF_j$.
\end{claim}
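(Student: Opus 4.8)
The plan is to verify that the $F$-packing $\cF:=\bigcup_{j\in[p]}\tilde{\cF}_j$ produced by Algorithm~\ref{alg:randomgreedy} is a $\kappa$-well separated $F$-packing in $G$ covering every $\cS$-important $r$-edge, where $\kappa=\rho^{-1/12}$. First I would argue that each $\tilde{\cF}_j=S_j\triangleleft\cF_j$ is itself a well separated $F$-packing in $G$ that covers precisely the $r$-edges containing $S_j$: since $\cF_j$ is a $\kappa$-well separated $F(S^\ast)$-decomposition of $G_j=G(S_j)[U_j]$, Proposition~\ref{prop:S cover} (applied with $S_j$, $S^\ast$, $L=G(S_j)^{(r-i)}$ restricted to $U_j$ in the roles of $S$, $T$, $L$) gives that $\tilde{\cF}_j$ is a $\kappa$-well separated $F$-packing in $G$ with $\{e\in\tilde{\cF}_j^{(r)}:S_j\In e\}=S_j\uplus G_j^{(r-i)}$. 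By the hypothesis that every $\cS$-important $r$-edge $e$ with $S_j\In e$ satisfies $e\sm S_j\In U_j$, this is exactly the set of all $\cS$-important $r$-edges containing $S_j$; and since $G_j^{(r-i)}=G(S_j)^{(r-i)}[U_j]$ is the whole link (restricted to $U_j$), no $\cS$-important edge containing $S_j$ is missed. So each $\tilde{\cF}_j$ covers the correct set of edges.

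Next I would show the various $\tilde{\cF}_j$ fit together. Because $G$ is $r$-exclusive with respect to $\cS$, every $\cS$-important $r$-edge contains a \emph{unique} $S_j\in\cS$; hence the edge sets $\tilde{\cF}_j^{(r)}=\{S_j\uplus G_j^{(r-i)}\}$ are pairwise edge-disjoint as subgraphs of $G^{(r)}$, and together they cover all $\cS$-important $r$-edges exactly once, so $\cF$ is an $F$-packing covering what we want. The subtle point — and the one the algorithm's $T^j_z$ bookkeeping is designed for — is that $\cF$ must be \emph{well separated}, i.e.\ I must check \ref{separatedness:1} (overlaps of size $\le r$) and \ref{separatedness:2} (bounded multiplicity). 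For \ref{separatedness:1}: take $F'\in\tilde{\cF}_j$ and $F''\in\tilde{\cF}_{j'}$ with $j<j'$, arising from $F'_\circ\in\cF_j$, $F''_\circ\in\cF_{j'}$. Write $V(F')=S_j\cup V(F'_\circ)$, $V(F'')=S_{j'}\cup V(F''_\circ)$. The removal of $\bigcup_z T^j_z$ from $G_{j'}$ before choosing $\cF_{j'}$ is precisely what forces, for the quadruple $z=(z_0,z_1,z_2,z_3)$ recording $|S_{j'}\cap S_j|$, $|S_j\cap V(F''_\circ)|$, etc., that $V(F''_\circ)$ does not simultaneously contain a piece of $S_j$, a piece of $V(F'_\circ)$, and meet $S_{j'}$ in the forbidden pattern; combined with $|V(F'_\circ)\cap V(F''_\circ)|\le r-i$ (from $\cF_j,\cF_{j'}$ being well separated) and $z_0+z_1<i$, $z_0+z_3<i$, a short case analysis on $(z_0,z_1,z_2,z_3)\in\cZ_{r,i}$ yields $|V(F')\cap V(F'')|\le r$. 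This is the combinatorial heart and I expect it to be the main obstacle, since it is exactly the place where the definition of $\cZ_{r,i}$ and of the $T^j_z$ must be reconciled.

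Finally, for \ref{separatedness:2} I would bound, for a fixed $r$-set $e$, the number of $F'\in\cF$ with $e\In V(F')$. Such $F'$ lies in some $\tilde{\cF}_j$; if $e$ is $\cS$-important with unique $S_j\In e$ then there are at most $\kappa$ choices from $\tilde{\cF}_j$ (by \ref{separatedness:2} for $\cF_j$ via Proposition~\ref{prop:S cover}), and contributions from $\tilde{\cF}_{j'}$ with $j'\ne j$ are controlled by the focus conditions \ref{def:focus:size}--\ref{def:focus:Js}: an $F'\in\tilde{\cF}_{j'}$ with $e\In V(F')$ forces $S_{j'}\In V(F')\In e\cup U_{j'}$, so $S_{j'}\in\cJ^b_{S_j,z,\bullet}$ for suitable $b$, $z$, and \ref{def:focus:Js} caps the number of such $S_{j'}$, each again contributing $\le\kappa$ copies. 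Summing over the $O(r^3)$ quadruples and using $\rho\ll\rho_{size},1/f$ gives a total well below $\rho^{-1/12}=\kappa$, so $\cF$ is $\kappa$-well separated. (Here one also uses the randomness: the choices $s\in[t]$ are only needed to \emph{exist} the decompositions $\cF_{j,1},\dots,\cF_{j,t}$ — that the algorithm never returns `unsuccessful' — which follows from Corollary~\ref{cor:many decs new} applied to $G_j-\bigcup_z T^j_z$, itself a supercomplex by Proposition~\ref{prop:effect of decs} since $\Delta_{r-i}(\bigcup_z T^j_z)$ is small by the inductive degree bounds and \ref{def:focus:Js}.) This completes the proof of the Claim.
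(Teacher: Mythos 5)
There is a genuine gap, concentrated in two places. First, your identification $\tilde{\cF}_j^{(r)}=S_j\uplus G_j^{(r-i)}$ is false: Proposition~\ref{prop:S cover} only says that the edges of $\tilde{\cF}_j^{(r)}$ \emph{containing} $S_j$ form the set $S_j\uplus G_j^{(r-i)}$. Each copy $F'_{\triangleleft}$ is a full copy of $F$ on $S_j\cup V(F')$, so it also covers edges meeting $S_j$ in fewer than $i$ vertices (or not at all), and these lie in $\tilde{\cF}_j^{(r)}$ as well. Consequently, edge-disjointness of the graphs $\tilde{\cF}_j^{(r)}$ does \emph{not} follow from $r$-exclusivity of $G$ with respect to $\cS$; it is precisely one of the things the removal of the $T^j_z$ has to buy you. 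Second, your argument for \ref{separatedness:2} cannot close: the contribution to $|\cF^{\le(f)}(e)|$ from the single packing $\tilde{\cF}_j$ can already be as large as $\kappa=\rho^{-1/12}$, so any nonzero contribution from some $\tilde{\cF}_{j'}$ with $j'\ne j$ already violates the bound; moreover the quantities in \ref{def:focus:Js} are of order $\rho^{\ast}n^{\ast}$, i.e.\ counts of $i$-sets, not small constants, so ``summing over quadruples'' does not give something below $\kappa$. What is actually needed is that the contribution from every $j'\ne j$ is exactly zero.

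Both defects are repaired simultaneously by proving the single stronger statement that the $\tilde{\cF}_j$ are pairwise \emph{$r$-disjoint} (no $f$-set of $\tilde{\cF}_j^{\le(f)}$ and $f$-set of $\tilde{\cF}_{j'}^{\le(f)}$ share $r$ or more vertices), after which Fact~\ref{fact:ws}\ref{fact:ws:2} gives in one stroke that $\cF$ is a $\kappa$-well separated $F$-packing (with the \emph{same} $\kappa$, and with all the $\tilde{\cF}_j^{(r)}$ edge-disjoint). The $r$-disjointness is where the $T^j_z$ enter, and the argument is a clean contradiction rather than a case analysis: if $\tilde K\in\tilde{\cF}_j^{\le(f)}$ and $\tilde K'\in\tilde{\cF}_{j'}^{\le(f)}$ with $j'<j$ satisfy $|\tilde K\cap\tilde K'|\ge r$, set $K:=\tilde K\sm S_j$, $K':=\tilde K'\sm S_{j'}$, $z_0:=|S_j\cap S_{j'}|$, $z_3:=|S_j\cap K'|$; then $|K\cap(S_{j'}\cup K')|\ge r-z_0-z_3$, so one can pick $Z_1\In K\cap S_{j'}$ and $Z_2\In K\cap K'$ with $|Z_1|+|Z_2|=r-z_0-z_3$. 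Exclusivity forces $z_0+z_3<i$ and $z_0+|Z_1|<i$ (equality would put $S_j$ inside $\tilde K'$, resp.\ force two elements of $\cS$ into one $r$-edge), so $z:=(z_0,|Z_1|,|Z_2|,z_3)\in\cZ_{r,i}$ and $Z_1\cup Z_2\in T^j_z$ — contradicting $Z_1\cup Z_2\In K\in\cF_j^{\le(f-i)}$, since $\cF_j$ lives in $G_j-\bigcup_{z}T^j_z$. (Note also that since $z_1+z_2>r-i$ for every $z\in\cZ_{r,i}$, deleting the $T^j_z$ does not remove any $(r-i)$-edges, which is why $\cF_j$ is still a decomposition of all of $G_j$ and the covering part of your argument survives.) Your closing remark about the algorithm not returning ``unsuccessful'' is not part of this claim — the claim is conditional on the algorithm producing an output — and belongs to the separate analysis of $\Delta(T^j_z)$.
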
\end{NoHyper}

\claimproof Since $z_1+z_2>r-i$, we have $G_j^{(r-i)}=(G_{j}-\bigcup_{z\in \cZ_{r,i}} T^j_{z})^{(r-i)}$. Hence, $\cF_{j}$ is indeed an $F(S^\ast)$-decomposition of $G_j$. Thus, by Proposition~\ref{prop:S cover}, $\tilde{\cF}_j$ is a $\kappa$-well separated $F$-packing in $G$ covering all $r$-edges containing $S_j$. Therefore, $\cF$ covers all $\cS$-important $r$-edges of $G$. By Fact~\ref{fact:ws}\ref{fact:ws:2} it suffices to show that $\tilde{\cF}_1,\dots,\tilde{\cF}_p$ are $r$-disjoint.

To this end, let $j'<j$ and suppose, for a contradiction, that there exist $\tilde{K}\in \tilde{\cF}_j^{\le(f)}$ and $\tilde{K}'\in \tilde{\cF}_{j'}^{\le(f)}$ such that $|\tilde{K}\cap \tilde{K}'|\ge r$. Let $K:=\tilde{K}\sm S_j$ and $K':=\tilde{K}'\sm S_{j'}$. Then $K\in\cF_j^{\le(f-i)}$ and $K'\in\cF_{j'}^{\le(f-i)}$ and $|(S_j\cup K)\cap (S_{j'}\cup K')|\ge r$. Let $z_0:=|S_j\cap S_{j'}|$ and $z_3:=|S_j\cap K'|$. Hence, we have $|K\cap (S_{j'}\cup K')|\ge r-z_0-z_3$. Choose $X\In K$ such that $|X\cap (S_{j'}\cup K')|= r-z_0-z_3$ and let $Z_1:=X\cap S_{j'}$ and $Z_2:=X\cap K'$. We claim that $z:=(z_0,|Z_1|,|Z_2|,z_3)\in \cZ_{r,i}$.
Clearly, we have $z_0+|Z_1|+|Z_2|+z_3=r$.
Furthermore, note that $z_0+z_3<i$. Indeed, we clearly have $z_0+z_3= |S_j\cap (S_{j'}\cup K')|\le |S_j|=i$, and equality can only hold if $S_j\In S_{j'}\cup K'=\tilde{K}'$, which is impossible since $G$ is $r$-exclusive. Similarly, we have $z_0+|Z_1|<i$.
Thus, $z\in \cZ_{r,i}$. But this implies that $Z_1\cup Z_2\in T^j_z$, in contradiction to $Z_1\cup Z_2\In K$.
\endclaimproof

In order to prove the lemma, it is thus sufficient to prove that with positive probability, $\Delta(T^j_{z})\le 2^{2r} f\kappa \rho^{1/2}|U_j|$ for all $j\in[p]$ and $z\in \cZ_{r,i}$. Indeed, this would imply that $\Delta(\bigcup_{z\in \cZ_{r,i}} T^j_{z})\le (r+1)^3 2^{2r} f\rho^{1/2-1/12}|U_j|$, and by Proposition~\ref{prop:noise}\ref{noise:supercomplex}, $G_{j}-\bigcup_{z\in \cZ_{r,i}} T^j_{z}$ would be a $(\rho^{1/12},\xi/2,f-i,r-i)$-supercomplex. By Corollary~\ref{cor:many decs new} and since $|U_j|\ge 0.5\rho\rho_{size} n$, the number of pairwise $(f-i)$-disjoint $\kappa$-well separated $F(S^\ast)$-decompositions in $G_{j}-\bigcup_{z\in \cZ_{r,i}} T^j_{z}$ is at least $\rho^{2/12}|U_j|^{(f-i)-(r-i)}\ge t$, so the algorithm would succeed.

In order to analyse $\Delta(T^j_{z})$, we define the following variables.
Suppose that $1\le j'<j\le p$, that $z=(z_0,z_1,z_2,z_3)\in \cZ_{r,i}$ and $b\In U_j$ is a $(z_1+z_2-1)$-set. Let $Y^{b,j'}_{j,z}$ denote the random indicator variable of the event that each of the following holds:
\begin{enumerate}[label={\rm(\alph*)}]
\item there exists some $K'\in \cF_{j'}^{\le(f-i)}$ with $|K'\cap S_j|=z_3$;\label{dense jain:max deg analysis 1}
\item there exist $Z_1\In S_{j'}$, $Z_2\In K'$ with $|Z_1|=z_1$, $|Z_2|=z_2$ such that $b\In Z_1\cup Z_2\In U_j$;\label{dense jain:max deg analysis 2}
\item $|S_j\cap S_{j'}|=z_0$.\label{dense jain:max deg analysis 3}
\end{enumerate}
We say that $v\in \binom{U_j\sm b}{1}$ is a \defn{witness for $j'$} if \ref{dense jain:max deg analysis 1}--\ref{dense jain:max deg analysis 3} hold with $Z_1\cupdot Z_2=b\cupdot v$.\COMMENT{$T^{j}_z(b)$ is a $1$-graph, so $v=\Set{v'}$ for some vertex $v'$}
For all $j\in[p]$, $z=(z_0,z_1,z_2,z_3)\in \cZ_{r,i}$ and $(z_1+z_2-1)$-sets $b\In U_j$, let $X^{b}_{j,z}:=\sum_{j'=1}^{j-1}Y^{b,j'}_{j,z}$.

\begin{NoHyper}\begin{claim}
For all $j\in[p]$, $z=(z_0,z_1,z_2,z_3)\in \cZ_{r,i}$ and $(z_1+z_2-1)$-sets $b\In U_j$, we have $|T^j_{z}(b)|\le 2^{2r} f\kappa X^{b}_{j,z}$.
\end{claim}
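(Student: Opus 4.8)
\textbf{Proof plan for the Claim: $|T^j_{z}(b)| \le 2^{2r} f \kappa X^{b}_{j,z}$.}

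The plan is to bound the left-hand side by a sum over those indices $j'<j$ for which the variable $Y^{b,j'}_{j,z}$ equals $1$, and then to count how many edges of $T^j_z$ containing $b$ each such $j'$ can contribute. First I would unpack the definition of $T^j_z$: a vertex $v$ (identified with the $1$-set $\Set{v}$) lies in $T^j_z(b)$ precisely when the $(z_1+z_2)$-set $b \cupdot v$ lies in $T^j_z$, which by definition means there are $j'\in[j-1]$ with $|S_j\cap S_{j'}|=z_0$ and some $K' \in \cF_{j'}^{\le(f-i)}$ admitting a splitting $b\cupdot v = Z_1 \cupdot Z_2$ with $Z_1 \In S_{j'}$, $|Z_1|=z_1$, $Z_2 \In K'$, $|Z_2|=z_2$, and $|K'\cap S_j|=z_3$. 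So $v$ is a witness for $j'$ in the sense defined just above the Claim. Hence each $v \in T^j_z(b)$ is a witness for at least one $j'<j$, and for that $j'$ the event underlying $Y^{b,j'}_{j,z}$ holds, so $Y^{b,j'}_{j,z}=1$; therefore the set of $j'<j$ that admit a witness is contained in $\set{j'<j}{Y^{b,j'}_{j,z}=1}$, a set of size exactly $X^b_{j,z}$.

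The remaining step is to bound, for a fixed $j'$ with $Y^{b,j'}_{j,z}=1$, the number of vertices $v$ that are witnesses for this particular $j'$. Given such a $j'$, the witnessing data consists of a choice of $K'\in\cF_{j'}^{\le(f-i)}$ with $|K'\cap S_j|=z_3$ together with the splitting $b\cupdot v = Z_1\cupdot Z_2$. The new vertex $v$ must lie in $Z_1 \In S_{j'}$ or in $Z_2 \In K'$. Since $|S_{j'}|=i \le r$ there are at most $r$ choices for $v$ coming from $S_{j'}$. For the other case, $v$ lies in one of the sets $K'$ with $|V(K')|=f-i \le f$; but how many such $K'$ can there be? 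Here I would use property \ref{separatedness:2} of the $\kappa$-well separated $F(S^\ast)$-decomposition $\cF_{j'}$: since $b$ has size $z_1+z_2-1 \ge r-i \ge$ (one less than $r-i$, actually $z_1+z_2 = r - z_0 - z_3 > r - i \ge r-i$, so $z_1 + z_2 - 1 \ge r-i-1$), pick an $(r-i-1)$-subset of $b\cap Z_2$ (possible since $b\cap Z_2$ has size $z_2 - [\,v\in Z_1\,]$, which together with $v$ gives an $(r-i)$-set inside $K'$); the number of $K'\in\cF_{j'}^{\le(f-i)}$ containing a fixed $(r-i)$-set is at most $\kappa$ by well separatedness, and extending that $(r-i-1)$-set by $v$ ranges over at most $f$ vertices of each such $K'$. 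Combining the $O(1)$ choices for which coordinate of the splitting $v$ occupies (at most $2^{z_1+z_2} \le 2^r$ splittings of $b\cupdot v$, and $|Z_1|+|Z_2| = z_1+z_2 \le r$ positions), one gets a crude bound of the form $2^{2r} f \kappa$ witnesses per qualifying $j'$.

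Putting these together, $|T^j_z(b)| \le \sum_{j'\,:\,Y^{b,j'}_{j,z}=1} 2^{2r} f \kappa = 2^{2r} f \kappa \, X^b_{j,z}$, which is the claim. I do not expect a serious obstacle here: the only point requiring care is the bookkeeping that turns "$v$ lies in some $K'$ of $\cF_{j'}$ containing a fixed small set" into the factor $\kappa$, which is exactly what \ref{separatedness:2} provides, and the combinatorial factor $2^{2r}f$ absorbing the finitely many choices of splitting type and of which of the (boundedly many) vertices of $K'$ (or of $S_{j'}$) the new vertex $v$ is. The subsequent (not-yet-stated) step of the overall argument will presumably bound $\ex[X^b_{j,z}]$ via \ref{def:focus:Js} and the randomness in the choice of $s\in[t]$, and then apply a concentration inequality to deduce $\Delta(T^j_z)\le 2^{2r}f\kappa\rho^{1/2}|U_j|$ with positive probability; that concentration step, rather than the present counting claim, is where the real work lies.
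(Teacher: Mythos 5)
Your overall strategy is the same as the paper's: every $v\in T^j_z(b)$ is a witness for some $j'$ with $Y^{b,j'}_{j,z}=1$, so it suffices to show that each such $j'$ has at most $2^{2r}f\kappa$ witnesses, which one does by bounding the number of admissible $K'\in\cF_{j'}^{\le(f-i)}$ by $2^{2r}\kappa$ and noting that each $K'$ accounts for at most $f$ witnesses (since $v\in Z_1\cup Z_2\In S_{j'}\cup K'$ and $|S_{j'}\cup K'|=f$). The first step and the "at most $f$ per $K'$" step are fine.

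The gap is in how you invoke \ref{separatedness:2} to bound the number of admissible $K'$. You try to find an $(r-i)$-set inside $K'$ of the form $P\cup\{v\}$ with $P$ an $(r-i-1)$-subset of $b\cap Z_2$. This fails for two reasons. First, $b\cap Z_2$ has size $z_2$ or $z_2-1$, and $z_2-1\ge r-i-1$ is not guaranteed: the constraints defining $\cZ_{r,i}$ only give $z_2+z_3\ge r-i+1$ (from $z_0+z_1<i$), so $z_2$ alone can be much smaller than $r-i$ when $z_3$ is large (e.g.\ $r=5$, $i=3$, $z=(0,2,1,2)$ gives $z_2=1<r-i=2$). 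Second, even when such a $P$ exists, the $(r-i)$-set $P\cup\{v\}$ depends on the vertex $v$ you are trying to count, so \ref{separatedness:2} only tells you that each \emph{fixed} $v$ lies in at most $\kappa$ cliques $K'$; it does not bound the total number of distinct $K'$ arising as $v$ ranges, which is what you need. The missing ingredient is the condition $|K'\cap S_j|=z_3$ from the definition of the event: fixing $Z_2':=b\cap Z_2\In b$ (one of at most $2^{|b|}$ subsets, of size $z_2$ or $z_2-1$) \emph{and} $Z_3:=K'\cap S_j\in\binom{S_j}{z_3}$ yields a set $Z_2'\cup Z_3\In K'$ that is independent of $v$ and has size at least $z_2-1+z_3\ge r-i$, so \ref{separatedness:2} bounds the number of such $K'$ by $\kappa$, giving at most $2^{|b|}\binom{i}{z_3}\kappa\le 2^{2r}\kappa$ choices of $K'$ in total. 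With that repair your argument coincides with the paper's.
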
\end{NoHyper}

\claimproof
Let $j,z$ and $b$ be fixed. Clearly, if $v\in T^{j}_z(b)$, then by Algorithm~\ref{alg:randomgreedy}, $v$ is a witness for some $j'\in[j-1]$. Conversely, we claim that for each $j'\in[j-1]$, there are at most $2^{2r} f\kappa$ witnesses for $j'$. Clearly, this would imply that $|T^j_{z}(b)|\le 2^{2r} f\kappa|\set{j'\in[j-1]}{Y^{b,j'}_{j,z}=1}|= 2^{2r} f\kappa X^{b}_{j,z}$.\COMMENT{E.g. consider the map that maps every $v\in T^{j}_z(b)$ to the minimum $j'$ that $v$ is a witness for. This map is well-defined by the first sentence. By the second claim, every element of the image has at most $2^{2r} f\kappa$ preimages, thus the domain has cardinality at most $2^{2r} f\kappa$ times that of the image.}

Fix $j'\in[j-1]$. If $v$ is a witness for $j'$, then there exists $K_v\in \cF_{j'}^{\le(f-i)}$ such that \ref{dense jain:max deg analysis 1}--\ref{dense jain:max deg analysis 3} hold with $Z_1\cupdot Z_2=b\cupdot v$ and $K_v$ playing the role of $K'$. By~\ref{dense jain:max deg analysis 2} we must have $v\In Z_1\cup Z_2 \In S_{j'}\cup K_v$. Since $|S_{j'}\cup K_v|=f$, there are at most $f$ witnesses $v'$ for $j'$ such that $K_v$ can play the role of $K_{v'}$. It is thus sufficient to show that there are at most $2^{2r}\kappa$ $K'\in \cF_{j'}^{\le(f-i)}$ such that \ref{dense jain:max deg analysis 1}--\ref{dense jain:max deg analysis 3} hold.

Note that for any possible choice of $Z_1,Z_2,K'$, we must have $|b\cap Z_2|\in \Set{z_2,z_2-1}$ and $b\cap Z_2\In Z_2\In K'$ by~\ref{dense jain:max deg analysis 2}.
For any $Z_2'\In b$ with $|Z_{2}'|\in\Set{z_2,z_2-1}$ and any $Z_3\in \binom{S_j}{z_3}$, there can be at most $\kappa$ $K'\in \cF_{j'}^{\le(f-i)}$ with $Z_2'\In K'$ and $K'\cap S_j=Z_3$. This is because $\cF_{j'}$ is a $\kappa$-well separated $F(S^\ast)$-decomposition and $|Z_2'\cup Z_3|\ge z_2-1+z_3\ge r-i$. Hence, there can be at most $2^{|b|}\binom{i}{z_3}\kappa\le 2^{2r}\kappa$ possible choices for $K'$.\COMMENT{at most $2^{|b|}$ choices for $Z_2'$ and $\binom{i}{z_3}$ choices for $Z_3$}
\endclaimproof

The following claim thus implies the lemma.

\begin{NoHyper}\begin{claim}\label{claim:jain analysis}
With positive probability, we have $X^{b}_{j,z}\le \rho^{1/2}|U_j|$ for all $j\in[p]$, $z=(z_0,z_1,z_2,z_3)\in \cZ_{r,i}$ and $(z_1+z_2-1)$-sets $b\In U_j$.
\end{claim}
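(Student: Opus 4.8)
The key point is that each indicator $Y^{b,j'}_{j,z}$ depends only on the random choices made in rounds $1,\dots,j'$ (namely the choice of $\cF_{j'}$ from among $\cF_{j',1},\dots,\cF_{j',t}$), so the variables $Y^{b,1}_{j,z},\dots,Y^{b,j-1}_{j,z}$ form a sequence to which we can apply Jain's inequality (Proposition~\ref{prop:Jain}). The plan is thus: first bound $\pr{Y^{b,j'}_{j,z}=1 \mid \text{history}}$ by a quantity of order $\rho^{z_2+z_3}$ (roughly; the exact exponent will be governed by the $(\rho_{size},\rho,r)$-focus conditions \ref{def:focus:size}--\ref{def:focus:Js}), sum the resulting binomial tail bound over all $j'$, and then take a union bound over all $j\in[p]$, all $z\in\cZ_{r,i}$ (there are at most $(r+1)^3$ of these) and all $(z_1+z_2-1)$-subsets $b\In U_j$ (at most $n^r$ of these).

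\textbf{The conditional probability estimate.} Fix $j$, $z=(z_0,z_1,z_2,z_3)$, $b\In U_j$ and $j'<j$. Condition on everything decided before round $j'$; then whether $Y^{b,j'}_{j,z}=1$ is determined by the single uniformly random index $s\in[t]$ picking $\cF_{j'}=\cF_{j',s}$. The event \ref{dense jain:max deg analysis 3}, that $|S_j\cap S_{j'}|=z_0$, is not random, so if it fails the probability is $0$. Assuming it holds, I need $\cF_{j'}$ to contain some $K'\in\cF_{j'}^{\le(f-i)}$ with $|K'\cap S_j|=z_3$ and containing a prescribed $(z_2$ or $z_2{-}1)$-subset of $b$ that together with the relevant part of $S_{j'}$ forms $Z_1\cupdot Z_2=b\cupdot v$ landing in $U_j$. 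The crucial structural fact, which is exactly what \ref{def:focus:Js} (bounding $|\cJ^b_{S,z,1}|$ and $|\cJ^b_{S,z,2}|$) is designed to control, is that such an $S_{j'}$ forces membership in $\cJ^b_{S_j,z,1}$ or $\cJ^b_{S_j,z,2}$; and then among the $t$ candidate decompositions $\cF_{j',1},\dots,\cF_{j',t}$, which are pairwise $(f-i)$-disjoint, at most $O(\kappa)$ of them can cover any given $(r-i)$-set, so the chance a uniformly chosen one produces the required clique is $O(\kappa \cdot (\rho\rho_{size}n)^{-(f-r)}\cdot (\text{number of ways to complete}))/t$. Carrying out this counting — using that $t=\rho^{1/6}(0.5\rho\rho_{size}n)^{f-r}$, that each $U_{j'}$ has size $\Theta(\rho\rho_{size}n)$, and the intersection bound \ref{def:focus:intersection} $|U_S\cap U_{S'}|\le 2\rho^2 n$ to handle the vertices of $v$ that must lie in $U_j\cap U_{j'}$ — should yield $\pr{Y^{b,j'}_{j,z}=1\mid\cdot}\le \rho^{1+o(1)}/n$ (say $\le \rho^{2/3}/n$), uniformly in the history.

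\textbf{From the per-step bound to the claim.} With $p\le n^i$ and $X^b_{j,z}=\sum_{j'<j}Y^{b,j'}_{j,z}$, Jain's inequality gives $X^b_{j,z}\preceq B(n,\rho^{2/3}/n)$ in the stochastic-dominance sense, so $\ex{X^b_{j,z}}\le \rho^{2/3}$, which is far below the target $\rho^{1/2}|U_j|=\Theta(\rho^{1/2}\cdot\rho\rho_{size}n)$; indeed $\rho^{1/2}|U_j|\gg 7\,\ex{B}$ once $n$ is large, so the crude Chernoff bound Lemma~\ref{lem:chernoff}\ref{chernoff crude} (via Proposition~\ref{prop:Jain}) yields $\pr{X^b_{j,z}\ge \rho^{1/2}|U_j|}\le e^{-\rho^{1/2}|U_j|}\le e^{-n^{2/3}}$. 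A union bound over the at most $(r+1)^3 n^i \cdot n^{r}\le n^{2r}$ choices of $(j,z,b)$ beats $e^{-n^{2/3}}$ for $n$ large, giving the claim with positive (in fact overwhelming) probability. Combined with the two earlier claims — $|T^j_z(b)|\le 2^{2r}f\kappa X^b_{j,z}$ and that controlled $\Delta(T^j_z)$ makes $G_j-\bigcup_z T^j_z$ a supercomplex with enough $(f-i)$-disjoint $\kappa$-well separated $F(S^\ast)$-decompositions for Algorithm~\ref{alg:randomgreedy} to succeed — this completes the proof of Lemma~\ref{lem:dense jain}.

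\textbf{Main obstacle.} The delicate part is the conditional probability estimate: one must argue that the event $Y^{b,j'}_{j,z}=1$ (which is about what the \emph{chosen} decomposition $\cF_{j'}$ looks like near $S_j$) can be charged, conditionally on the past, to a uniform choice among $t$ alternatives only a bounded number of which are "bad", and that the number of structural configurations $(S_{j'},K',Z_1,Z_2,v)$ that could realize it is bounded by the right power of $\rho$ and $n$ — this is precisely where \ref{def:focus:intersection} and \ref{def:focus:Js} enter, and getting the exponents of $\rho$ to line up so that $\pr{\cdot}\le\rho^{2/3}/n$ (rather than something like $\rho^{1/3}$) is the crux. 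This mirrors the argument for the clique case in \cite{GKLO}, and the only genuinely new ingredient here is replacing the clique decomposition of the link by an $F(S^\ast)$-decomposition supplied by \ind{r-i}, which changes nothing in the probabilistic bookkeeping.
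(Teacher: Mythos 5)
Your overall architecture is the same as the paper's: only indices $j'$ for which $S_{j'}$ lies in $\cJ^b_{S_j,z,1}\cup\cJ^b_{S_j,z,2}$ can have $Y^{b,j'}_{j,z}=1$; the conditional probability of $Y^{b,j'}_{j,z}=1$ is at most $|\cK^{b,j'}_{j,z}|/t$ because the $t$ candidate decompositions are pairwise $(f-i)$-disjoint; and one finishes with Proposition~\ref{prop:Jain}, Lemma~\ref{lem:chernoff}\ref{chernoff crude} and a union bound. However, the quantitative middle step of your argument contains a genuine gap.

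The uniform conditional bound $\pr{Y^{b,j'}_{j,z}=1\mid\cdot}\le\rho^{2/3}/n$ is false. Take $z$ with $z_0+z_1=i-1$ (so $z_2+z_3=r-i+1$) and $j'$ with $|b\cap S_{j'}|=z_1-1$. Then every $K'\in\cK^{b,j'}_{j,z}$ is only forced to contain the $z_2$ vertices of $b\sm S_{j'}$ and to meet $S_j$ in a $z_3$-set, so $|\cK^{b,j'}_{j,z}|$ can be as large as order $(\rho\rho_{size}n)^{f-i-z_2-z_3}=(\rho\rho_{size}n)^{f-r-1}$, giving
$$\frac{|\cK^{b,j'}_{j,z}|}{t}=\Theta\Bigl(\frac{(\rho\rho_{size}n)^{f-r-1}}{\rho^{1/6}(\rho\rho_{size}n)^{f-r}}\Bigr)=\Theta\bigl(\rho^{-7/6}\rho_{size}^{-1}n^{-1}\bigr),$$
which exceeds $\rho^{2/3}/n$ by the large factor $\rho^{-11/6}$. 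Moreover, the number of potentially contributing indices is not $n$: by \ref{def:focus:Js} it can be as large as $2^{9r}\rho^{z_2+z_3+1}n^{i-z_0-z_1+1}=\Theta(\rho^{r-i+2}n^2)$ in this case, so your domination $X^b_{j,z}\preceq B(n,\rho^{2/3}/n)$ has both the wrong trial count and the wrong success probability, and the conclusion $\ex{X^b_{j,z}}\le\rho^{2/3}$ does not follow (indeed with $p\le n^i$ summands and $i\ge2$ your own accounting would give $\rho^{2/3}n^{i-1}$). The claim is nonetheless true, but the proof must split $X^b_{j,z}=X^b_{j,z,1}+X^b_{j,z,2}$ according to whether $j'\in\cJ^b_{j,z,1}$ or $j'\in\cJ^b_{j,z,2}$ and, \emph{separately for each class}, pair a $z$-dependent per-step bound ($\tilde\rho_1=\rho^{z_0+z_1-i+5/3}\rho_{size}n^{1+z_0+z_1-i}$, resp.\ $\tilde\rho_2=\rho^{z_0+z_1-i-1/5}\rho_{size}n^{z_0+z_1-i}$, obtained by counting $\cK^{b,j'}_{j,z}$ with, resp.\ without, the intersection bound \ref{def:focus:intersection}) against the matching $z$-dependent class-size bound from \ref{def:focus:Js}. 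Only the products $|\cJ^b_{j,z,\bullet}|\cdot\tilde\rho_\bullet$ are uniformly of order $\rho^{r-i+2/3}\rho_{size}n\ll\rho^{1/2}|U_j|$; neither factor is controlled on its own. You correctly identified this exponent bookkeeping as the crux, but the specific way you propose to discharge it does not work.
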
\end{NoHyper}

\claimproof
Fix $j,z,b$ as above. We split $X^{b}_{j,z}$ into two sums. For this, let
\begin{align*}
\cJ^b_{j,z} &:=\set{j'\in[j-1]}{|S_j\cap S_{j'}|=z_0,b\sm S_{j'}\In U_{j'},|U_{j'}\cap S_j|\ge z_3},\\
\cJ^b_{j,z,1} &:=\set{j'\in\cJ^b_{j,z}}{|b\cap S_{j'}|=z_1},\\
\cJ^b_{j,z,2} &:=\set{j'\in\cJ^b_{j,z}}{|b\cap S_{j'}|=z_1-1,|U_j\cap (S_{j'}\sm b)|\ge 1}.
\end{align*}
Since $\cU$ is a $(\rho_{size},\rho,r)$-focus for $\cS$, \ref{def:focus:Js} implies that
\begin{align}
|\cJ^b_{j,z,1}|&\le 2^{6r}\rho^{z_2+z_3-1}n^{i-z_0-z_1},\label{Js:1}\\
|\cJ^b_{j,z,2}|&\le 2^{9r}\rho^{z_2+z_3+1}n^{i-z_0-z_1+1}.\label{Js:2}
\end{align}

Note that if $Y^{b,j'}_{j,z}=1$, then $j'\in \cJ^b_{j,z,1}\cup\cJ^b_{j,z,2}$.\COMMENT{Let $K',Z_1,Z_2$ be such that \ref{dense jain:max deg analysis 1}--\ref{dense jain:max deg analysis 3} hold. Clearly, we have $|S_j\cap S_{j'}|=z_0$ by \ref{dense jain:max deg analysis 3}. Moreover, $b\In Z_1\cup Z_2 \In S_{j'}\cup K'$ and hence $b\sm S_{j'}\In K'\In U_{j'}$. Moreover, $|U_{j'}\cap S_j|\ge |K'\cap S_j|=z_3$. Hence, $j'\in \cJ^b_{j,z}$. We also have $|b\cap S_{j'}|\in\Set{z_1-1,z_1}$, and if $|b\cap S_{j'}|=z_1-1$, then $Z_1\sm b$ is non-empty, implying that $|U_j\cap (S_{j'}\sm b)|\ge |Z_1\cap (Z_1\sm b)|\ge 1$.} Hence, we have $X^{b}_{j,z}=X^{b}_{j,z,1}+X^{b}_{j,z,2}$, where $X^{b}_{j,z,1}:=\sum_{j'\in \cJ^b_{j,z,1}}Y^{b,j'}_{j,z}$ and $X^{b}_{j,z,2}:=\sum_{j'\in \cJ^b_{j,z,2}}Y^{b,j'}_{j,z}$.
We will bound $X^{b}_{j,z,1}$ and $X^{b}_{j,z,2}$ separately.

For $j'\in \cJ^b_{j,z,1}\cup \cJ^b_{j,z,2}$, define
\begin{align}
\cK^{b,j'}_{j,z} := \set{K'\in \binom{U_{j'}}{f-i}}{b\In S_{j'}\cup K',|K'\cap U_j|\ge z_2,|K'\cap S_j|= z_3}.\label{jain:candidates}
\end{align}
Note that if $Y^{b,j'}_{j,z}=1$, then $\cF_{j',k}^{\le(f-i)}\cap \cK^{b,j'}_{j,z} \neq \emptyset$. Recall that the candidates $\cF_{j',1},\dots,\cF_{j',t}$ in Algorithm~\ref{alg:randomgreedy} from which $\cF_{j'}$ was chosen at random are $(f-i)$-disjoint. We thus have $$\prob{Y^{b,j'}_{j,z}=1}\le \frac{|\set{k\in [t]}{\cF_{j',k}^{\le(f-i)}\cap \cK^{b,j'}_{j,z} \neq \emptyset}|}{t} \le \frac{|\cK^{b,j'}_{j,z}|}{t}.$$ This upper bound still holds if we condition on variables $Y^{b,j''}_{j,z}$, $j''\neq j'$. We thus need to bound $|\cK^{b,j'}_{j,z}|$ in order to bound $X^{b}_{j,z,1}$ and $X^{b}_{j,z,2}$.

\medskip

{\em Step~1: Estimating $X^{b}_{j,z,1}$}

\smallskip

Consider $j'\in \cJ^b_{j,z,1}$.
For all $K'\in \cK^{b,j'}_{j,z}$, we have $b\sm S_{j'}\In K'$ and $|b\cap K'|=|b|-|b\cap S_{j'}|=z_2-1$, and the sets $b\cap K'$, $K'\cap S_j$, $(K'\sm b)\cap(U_j\cap U_{j'})$ are disjoint.\COMMENT{To see that $b\cap K'$ and $K'\cap S_j$ are disjoint we use that $b\In U_j$} Moreover,  we have $|(K'\sm b)\cap(U_j\cap U_{j'})|=|(K'\sm b)\cap U_j|\ge |K'\cap U_j|-|b\cap K'|\ge 1$. We can thus count
\begin{align*}
|\cK^{b,j'}_{j,z}| &\le \binom{|S_j|}{z_3}\cdot |U_j\cap U_{j'}|\cdot |U_{j'}|^{f-i-(z_2-1)-1-z_3} \le 2^{i}\cdot 2\rho^2 n\cdot (2\rho\rho_{size}n)^{f-i-z_2-z_3}.
\end{align*}
Let $\tilde{\rho}_1:=\rho^{z_0+z_1-i+5/3}\rho_{size}n^{1+z_0+z_1-i}\in [0,1]$.\COMMENT{We always have $1+z_0+z_1-i\le 0$. If $1+z_0+z_1-i<0$, then obvious. If $1+z_0+z_1-i=0$, we have $\rho^{2/3}\rho_{size}\le 1$.} In order to apply Proposition~\ref{prop:Jain}, let $j_1,\dots,j_m$ be an enumeration of $\cJ^b_{j,z,1}$. We then have for all $k\in[m]$ and all $y_1,\dots,y_{k-1}\in\Set{0,1}$ that
\begin{align*}
\prob{Y^{b,j_k}_{j,z}=1 \mid Y^{b,j_1}_{j,z}=y_1,\dots,Y^{b,j_{k-1}}_{j,z}=y_{k-1}} &\le \frac{|\cK^{b,j_k}_{j,z}|}{t} \le \frac{2^{i}\cdot 2\rho^2 n\cdot (2\rho\rho_{size}n)^{f-i-z_2-z_3}}{\rho^{1/6} (0.5\rho\rho_{size} n)^{f-r}}\\
										&= 2^{2f-r+1-z_2-z_3} \rho^{11/6}(\rho\rho_{size})^{z_0+z_1-i}n^{1+z_0+z_1-i}\\
										&\le \tilde{\rho}_1.
\end{align*}
Let $B_1\sim Bin(|\cJ^b_{j,z,1}|,\tilde{\rho}_1)$ and observe that
\begin{align*}
7\expn{B_1} &=7|\cJ^b_{j,z,1}|\tilde{\rho}_1 \overset{\eqref{Js:1}}{\le} 7\cdot 2^{6r}\rho^{z_2+z_3-1}n^{i-z_0-z_1} \cdot \rho^{z_0+z_1-i+5/3}\rho_{size}n^{1+z_0+z_1-i} \\
             &=7\cdot 2^{6r}\rho^{r-i+2/3}\rho_{size}n       \le 0.5\rho^{1/2}|U_j|.
\end{align*}
Thus,
\begin{align*}
\prob{X^{b}_{j,z,1}\ge 0.5\rho^{1/2}|U_j|} \overset{\mbox{\tiny Proposition~\ref{prop:Jain}}}{\le} \prob{B_1\ge 0.5\rho^{1/2}|U_j|} \overset{\mbox{\tiny Lemma~\ref{lem:chernoff}\ref{chernoff crude}}}{\le} \eul^{-0.5\rho^{1/2}|U_j|}.
\end{align*}

\medskip

{\em Step~2: Estimating $X^{b}_{j,z,2}$}

\smallskip

Consider $j'\in \cJ^b_{j,z,2}$. This time, since $|b\cap S_{j'}|=z_1-1$, we have $|K'\cap b|=|b\sm S_{j'}|=z_2$ for all $K'\in \cK^{b,j'}_{j,z}$. Thus, we count $$|\cK^{b,j'}_{j,z}|\le \binom{|S_j|}{z_3}\cdot |U_{j'}|^{f-i-z_2-z_3}\le 2^i\cdot (2\rho\rho_{size}n)^{f-i-z_2-z_3}.$$
Let $\tilde{\rho}_2:=\rho^{z_0+z_1-i-1/5}\rho_{size}n^{z_0+z_1-i}\in [0,1]$. In order to apply Proposition~\ref{prop:Jain}, let $j_1,\dots,j_m$ be an enumeration of $\cJ^f_{j,z,2}$. We then have for all $k\in[m]$ and all $y_1,\dots,y_{k-1}\in\Set{0,1}$ that
\begin{align*}
\prob{Y^{b,j_k}_{j,z}=1 \mid Y^{b,j_1}_{j,z}=y_1,\dots,Y^{b,j_{k-1}}_{j,z}=y_{k-1}} &\le \frac{|\cK^{b,j_k}_{j,z}|}{t}\le \frac{2^i\cdot (2\rho\rho_{size}n)^{f-i-z_2-z_3}}{\rho^{1/6} (0.5\rho\rho_{size} n)^{f-r}}\\
																	&= 2^{2f-r-z_2-z_3} \rho^{-1/6} (\rho\rho_{size}n)^{z_0+z_1-i} \\
																	&\le \tilde{\rho}_2.
\end{align*}
Let $B_2\sim Bin(|\cJ^b_{j,z,2}|,\tilde{\rho}_2)$ and observe that
\begin{align*}
7\expn{B_2}&=7|\cJ^b_{j,z,2}|\tilde{\rho}_2 \overset{\eqref{Js:2}}{\le} 7 \cdot 2^{9r}\rho^{z_2+z_3+1}n^{i-z_0-z_1+1} \cdot \rho^{z_0+z_1-i-1/5}\rho_{size}n^{z_0+z_1-i}\\
           &= 7 \cdot 2^{9r}\rho^{r-i+4/5}\rho_{size}n    \le 0.5\rho^{1/2}|U_j|.
\end{align*}
Thus,
\begin{align*}
\prob{X^{b}_{j,z,2}\ge 0.5\rho^{1/2}|U_j|} \overset{\mbox{\tiny Proposition~\ref{prop:Jain}}}{\le} \prob{B_2\ge 0.5\rho^{1/2}|U_j|} \overset{\mbox{\tiny Lemma~\ref{lem:chernoff}\ref{chernoff crude}}}{\le} \eul^{-0.5\rho^{1/2}|U_j|}.
\end{align*}
Hence, $$\prob{X^{b}_{j,z}\ge \rho^{1/2}|U_j|}\le \prob{X^{b}_{j,z,1}\ge 0.5\rho^{1/2}|U_j|} +\prob{X^{b}_{j,z,2}\ge 0.5\rho^{1/2}|U_j|} \le 2\eul^{-0.5\rho^{1/2}|U_j|}.$$ Since $p=|\cS|\le n^i$, a union bound easily implies Claim~\ref*{claim:jain analysis}.
\endclaimproof
This completes the proof of Lemma~\ref{lem:dense jain}.
\endproof

\subsection{Partition pairs}
We now develop the appropriate framework to be able to state the Cover down lemma for setups (Lemma~\ref{lem:horrible}). Recall that we will consider (and cover) $r$-sets separately according to their type. The type of an $r$-set $e$ naturally imposes constraints on the type of an $f$-set which covers $e$. We will need to track and adjust the densities of $r$-sets with respect to $f$-sets for each pair of types separately. This gives rise to the following concepts of partition pairs and partition regularity (see Section~\ref{subsec:partition reg}).

Let $X$ be a set. We say that $\cP=(X_1,\dots,X_a)$ is an ordered partition of $X$ if the $X_i$ are disjoint subsets of $X$ whose union is $X$. We let $\cP(i):=X_i$ and $\cP([i]):=(X_1,\dots,X_i)$. If $\cP=(X_1,\dots,X_a)$ is an ordered partition of $X$ and $X'\In X$, we let $\cP[X']$ denote the ordered partition $(X_1\cap X',\dots,X_a\cap X')$ of $X'$.
If $\Set{X',X''}$ is a partition of $X$, $\cP'=(X_1',\dots,X_a')$ is an ordered partition of $X'$ and $\cP''=(X_1'',\dots,X_b'')$ is an ordered partition of $X''$, we let $$\cP'\sqcup \cP'':=(X_1',\dots,X_a',X_1'',\dots,X_b'').$$

\begin{defin}\label{def:partition pair}
Let $G$ be a complex and let $f>r\ge 1$.
An \defn{$(r,f)$-partition pair of $G$} is a pair $(\cP_{r},\cP_{f})$, where $\cP_{r}$ is an ordered partition of $G^{(r)}$ and $\cP_{f}$ is an ordered partition of $G^{(f)}$, such that for all $\cE\in\cP_{r}$ and $\cQ\in\cP_{f}$, every $Q\in\cQ$ contains the same number $C(\cE,\cQ)$ of elements from $\cE$. We call $C\colon \cP_{r}\times \cP_{f} \to [\binom{f}{r}]_0$ the \defn{containment function} of the partition pair. We say that $(\cP_{r},\cP_{f})$ is \defn{upper-triangular} if $C(\cP_{r}(\ell),\cP_{f}(k))=0$ whenever $\ell>k$.
\end{defin}

Clearly, for every $\cQ\in \cP_{f}$, $\sum_{\cE\in\cP_{r}}C(\cE,\cQ)=\binom{f}{r}$. If $(\cP_{r},\cP_{f})$ is an $(r,f)$-partition pair of $G$ and $G'\In G$ is a subcomplex, we define $$(\cP_{r},\cP_{f})[G']:=(\cP_{r}[G'^{(r)}],\cP_{f}[G'^{(f)}]).$$ Clearly, $(\cP_{r},\cP_{f})[G']$ is an $(r,f)$-partition pair of $G'$.\COMMENT{Containment function is restricted to new domain, i.e.~for all $\cE\in \cP_{r}$ and $\cQ\in \cP_f$ we have $C'(\cE\cap G'^{(r)},\cQ\cap G'^{(f)}):=C(\cE,\cQ)$.}

\begin{example} \label{ex:partition pair}
Suppose that $G$ is a complex and $U\In V(G)$. For $\ell\in[r]_0$, define $\cE_\ell:=\set{e\in G^{(r)}}{|e\cap U|=\ell}$. For $k\in[f]_0$, define $\cQ_k:=\set{Q\in G^{(f)}}{|Q\cap U|=k}$. Let $\cP_{r}:=(\cE_0,\dots,\cE_r)$ and $\cP_{f}:=(\cQ_0,\dots,\cQ_f)$. Then clearly $(\cP_{r},\cP_{f})$ is an $(r,f)$-partition pair of $G$, where the containment function is given by $C(\cE_\ell,\cQ_k)=\binom{k}{\ell}\binom{f-k}{r-\ell}$. In particular, $C(\cE_\ell,\cQ_k)=0$ whenever $\ell>k$ or $k>f-r+\ell$. We say that $(\cP_{r},\cP_{f})$ is the \defn{$(r,f)$-partition pair of $G$, $U$.}
\end{example}

The partition pairs we use are generalisations of the above example. More precisely, suppose that $G$ is a complex, $\cS$ is an $i$-system in $V(G)$ and $\cU$ is a focus for $\cS$. Moreover, assume that $G$ is $r$-exclusive with respect to $\cS$.
For $r'\ge r$, let $\tau_{r'}$ denote the type function of $G^{(r')}$, $\cS$, $\cU$. As in the above example, if $\cE_\ell:=\tau_r^{-1}(\ell)$ for all $\ell\in[r-i]_0$ and $\cQ_k:=\tau_f^{-1}(k)$ for all $k\in[f-i]_0$, then every $Q\in \cQ_k$ contains exactly $\binom{k}{\ell}\binom{f-i-k}{r-i-\ell}$ elements from $\cE_\ell$. However, we also have to consider $\cS$-unimportant edges and cliques. It turns out that it is useful to assume that the unimportant edges and cliques are partitioned into $i$ parts each, in an upper-triangular fashion.

More formally, for $r'\ge r$, let $\cD_{r'}$ denote the set of $\cS$-unimportant $r'$-sets of $G$ and assume that $\cP_{r}^\ast$ is an ordered partition of $\cD_{r}$ and $\cP_{f}^\ast$ is an ordered partition of $\cD_{f}$. We say that $(\cP_{r}^\ast,\cP_{f}^\ast)$ is \defn{admissible with respect to $G$, $\cS$, $\cU$} if the following hold:
\begin{enumerate}[label={\rm(P\arabic*)}]
\item $|\cP_{r}^\ast|=|\cP_{f}^\ast|=i$;\label{admissible length}
\item for all $S\in \cS$, $h\in [r-i]_0$ and $B\In G(S)^{(h)}$ with $1\le|B|\le 2^h$ and all $\ell\in[i]$, there exists $D(S,B,\ell)\in\bN_0$ such that for all $Q\in \bigcap_{b\in B}G(S\cup b)[U_S]^{(f-i-h)}$, we have that $$|\set{e\in \cP_{r}^\ast(\ell)}{\exists b\in B\colon e\In S\cup b \cup Q}|=D(S,B,\ell);$$\label{new containment condition}
\item $(\cP_{r}^\ast\sqcup \Set{G^{(r)}\sm \cD_r},\cP_{f}^\ast\sqcup \Set{G^{(f)}\sm \cD_f})$ is an upper-triangular $(r,f)$-partition pair of~$G$.\label{containment}
\end{enumerate}

Note that for $i=0$, $\cS=\Set{\emptyset}$ and $\cU=\Set{U}$ for some $U\In V(G)$, the pair $(\emptyset,\emptyset)$ trivially satisfies these conditions. Also note that \ref{new containment condition} can be viewed as an analogue of the containment function (from Definition~\ref{def:partition pair}) which is suitable for dealing with supercomplexes.

Assume that $(\cP_{r}^\ast,\cP_{f}^\ast)$ is admissible with respect to $G$, $\cS$, $\cU$. Define
\begin{align*}
\cP_{r} &:= \cP_{r}^\ast \sqcup (\tau_{r}^{-1}(0),\dots,\tau_{r}^{-1}(r-i)), \\
\cP_{f} &:= \cP_{f}^\ast \sqcup (\tau_{f}^{-1}(0),\dots,\tau_{f}^{-1}(f-i)).
\end{align*}

It is not too hard to see that $(\cP_{r},\cP_{f})$ is an $(r,f)$-partition pair of $G$. Indeed, $\cP_{r}$ clearly is a partition of $G^{(r)}$ and $\cP_{f}$ is a partition of $G^{(f)}$. Suppose that $C$ is the containment function of $(\cP_{r}^\ast\sqcup \Set{G^{(r)}\sm \cD_r},\cP_{f}^\ast\sqcup \Set{G^{(f)}\sm \cD_f})$. Then $C'$ as defined below is the containment function of $(\cP_{r},\cP_{f})$:
\begin{itemize}
\item For all $\cE\in \cP_{r}^{\ast}$ and $\cQ\in \cP_{f}^{\ast}$, let $C'(\cE,\cQ):=C(\cE,\cQ)$.
\item For all $\ell\in [r-i]_0$ and $\cQ\in \cP_{f}^{\ast}$, let $C'(\tau_{r}^{-1}(\ell),\cQ):=0$.
\item For all $\cE\in \cP_{r}^{\ast}$ and $k\in[f-i]_0$, define $C'(\cE,\tau_{f}^{-1}(k)):=C(\cE,\Set{G^{(f)}\sm \cD_f}).$
\item For all $\ell\in [r-i]_0$, $k\in[f-i]_0$, let
\begin{align}
C'(\tau_{r}^{-1}(\ell),\tau_{f}^{-1}(k)):=\binom{k}{\ell}\binom{f-i-k}{r-i-\ell}.\label{containment types}
\end{align}
\end{itemize}

We say that $(\cP_{r},\cP_{f})$ as defined above is \defn{induced by $(\cP_{r}^\ast,\cP_{f}^\ast)$ and $\cU$}.

Finally, we say that $(\cP_{r},\cP_{f})$ is an \defn{$(r,f)$-partition pair of $G$, $\cS$, $\cU$}, if
\begin{itemize}
\item $(\cP_{r}([i]),\cP_{f}([i]))$ is admissible with respect to $G$, $\cS$, $\cU$;
\item $(\cP_{r},\cP_{f})$ is induced by $(\cP_{r}([i]),\cP_{f}([i]))$ and $\cU$.
\end{itemize}

\begin{figure}
\footnotesize
\begin{center}
    \resizebox{\textwidth}{!}{\begin{tabular}{*{12}{c|}}
																& $\cP_{f}^\ast(1)$			 	& $\dots$ 								& $\cP_{f}^\ast(i)$ 			& $\tau_{f}^{-1}(0)$ 			& $\tau_{f}^{-1}(1)$ 			& $\dots$ 									& $\dots$ 								& $\tau_{f}^{-1}(f-r)$ 		& $\dots$ 																		& $\dots$ & $\tau_{f}^{-1}(f-i)$ \\ \hline
    $\cP_{r}^\ast(1)$ 					& \textcolor{red}{$\ast$} 	&  											&  												&  												&  												&  													&  												&  												&  																						&         &   \\ \hline
		$\dots$ 										& $0$ 											& \textcolor{red}{$\ast$} &  											&  												&  												&  													& 												&  												&  																						&         &    \\ \hline
		$\cP_{r}^\ast(i)$ 					& $0$ 											&	 $0$ 										& \textcolor{red}{$\ast$} &  											&  												&  													&  												&  												&  																						&         &     \\ \hline
		$\tau_{r}^{-1}(0)$ 					& $0$ 											& $0$ 										& $0$ 										& \textcolor{red}{$\ast$} &  											&  												&  												& \textcolor{red}{$\ast$} &  $0$ 																				&  $0$       & $0$ \\ \hline
		$\dots$ 										& $0$ 											& $0$ 										& $0$ 										& $0$ 										& \textcolor{red}{$\ast$} &  												&  												&  											& {\cellcolor{gray!25}\textcolor{red}{$\ast$}} 	& {\cellcolor{gray!25}$0$}        & {\cellcolor{gray!25}$0$}  \\ \hline
		$\dots$ 										& $0$ 											& $0$ 										& $0$ 										& $0$ 										& $0$										  & \textcolor{red}{$\ast$}		&                       	&  											& {\cellcolor{gray!25}} 	                      & {\cellcolor{gray!25}\textcolor{red}{$\ast$}}        & {\cellcolor{gray!25}$0$}  \\ \hline
		$\tau_{r}^{-1}(r-i)$ 				& $0$ 											& $0$ 										& $0$ 										& $0$		 									& $0$ 										& $0$											 	& \textcolor{red}{$\ast$} &  											& {\cellcolor{gray!25}}  											&  {\cellcolor{gray!25}}       & {\cellcolor{gray!25}\textcolor{red}{$\ast$}}  \\ \hline
		\end{tabular}}
\end{center}
\caption{The above table sketches the containment function of an $(r,f)$-partition pair induced by $(\cP_{r}^\ast,\cP_{f}^\ast)$ and $\cU$. The cells marked with \textcolor{red}{$\ast$} and the shaded subtable will play an important role later on.}
\label{fig:containment table}
\end{figure}

The next proposition summarises basic properties of an $(r,f)$-partition pair of $G$,~$\cS$,~$\cU$.

\begin{prop}[\cite{GKLO}\COMMENT{Proposition 10.11}]\label{prop:facts about partition pairs}
Let $0\le i <r <f$ and suppose that $G$ is a complex, $\cS$ is an $i$-system in $V(G)$ and $\cU$ is a focus for $\cS$. Moreover, assume that $G$ is $r$-exclusive with respect to~$\cS$. Let $(\cP_{r},\cP_{f})$ be an $(r,f)$-partition pair of $G$, $\cS$, $\cU$ with containment function~$C$. Then the following hold:
\begin{enumerate}[label={\rm(P\arabic*$'$)}]
\item $|\cP_{r}|=r+1$ and $|\cP_{f}|=f+1$;\label{partition pair facts length}
\item for $i<\ell\le r+1$, $\cP_{r}(\ell)=\tau_{r}^{-1}(\ell-i-1)$, and for $i<k\le f+1$, $\cP_{f}(k)=\tau_{f}^{-1}(k-i-1)$;\label{prop:facts about partition pairs:index shift}
\item $(\cP_{r},\cP_{f})$ is upper-triangular;\label{partition pair facts triangular}
\item $C(\cP_{r}(\ell),\cP_{f}(k))=0$ whenever both $\ell>i$ and $k>f-r+\ell$;\label{partition pair facts zeros}
\item \ref{new containment condition} holds for all $\ell\in[r+1]$, with $\cP_{r}$ playing the role of $\cP_{r}^\ast$.\label{new containment condition full}
\item if $i=0$, $\cS=\Set{\emptyset}$ and $\cU=\Set{U}$ for some $U\In V(G)$, then the (unique) $(r,f)$-partition pair of $G$, $\cS$, $\cU$ is the $(r,f)$-partition pair of $G$, $U$ (cf.~Example~\ref{ex:partition pair});\label{partition pair facts base case}
\item for every subcomplex $G'\In G$, $(\cP_{r},\cP_{f})[G']$ is an $(r,f)$-partition pair of $G'$, $\cS$, $\cU$.\label{partition pair facts subgraph}
\end{enumerate}
\end{prop}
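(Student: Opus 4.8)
\textbf{Plan of proof for Proposition~\ref{prop:facts about partition pairs}.}
Since an $(r,f)$-partition pair of $G$, $\cS$, $\cU$ is by definition induced by an admissible pair $(\cP_{r}([i]),\cP_{f}([i]))$ and the focus $\cU$, the whole statement is really an unwinding of Definitions, and I would establish the seven items essentially in order, reading off the data from the explicit description of the induced containment function $C'$ (displayed just before Figure~\ref{fig:containment table}).

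First, \ref{partition pair facts length}: by admissibility \ref{admissible length} we have $|\cP_{r}^\ast|=|\cP_{f}^\ast|=i$, and the induced partitions append the blocks $\tau_r^{-1}(0),\dots,\tau_r^{-1}(r-i)$ and $\tau_f^{-1}(0),\dots,\tau_f^{-1}(f-i)$, giving $i+(r-i+1)=r+1$ and $i+(f-i+1)=f+1$ blocks respectively; note the $\tau_{r'}^{-1}$ are indeed partitions of the $\cS$-important $r'$-sets because $G$ is $r$-exclusive with respect to $\cS$, so the type function $\tau_{r'}$ is well defined. This also immediately yields \ref{prop:facts about partition pairs:index shift}, since the $(\ell-i)$-th appended block of $\cP_{r}$ sits in position $i+(\ell-i)=\ell$ when we index so that the first appended block $\tau_r^{-1}(0)$ is $\cP_{r}(i+1)$; I would just be careful with the off-by-one in the indexing convention $\cP_{r}(\ell)=\tau_r^{-1}(\ell-i-1)$ for $i<\ell\le r+1$, and likewise for $\cP_f$.

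For \ref{partition pair facts triangular} and \ref{partition pair facts zeros}: upper-triangularity of the $\cP^\ast$-block comes from admissibility \ref{containment}; the mixed blocks $C'(\tau_r^{-1}(\ell),\cQ)=0$ for $\cQ\in\cP_f^\ast$ hold by construction (an $\cS$-important $r$-set is never contained in an $\cS$-unimportant $f$-set, since $G$ is $r$-exclusive); the blocks $C'(\cE,\tau_f^{-1}(k))=C(\cE,\{G^{(f)}\sm\cD_f\})$ for $\cE\in\cP_r^\ast$ are constant in $k$ and are automatically $0$ once $\ell=\mathrm{index}(\cE)\le i$ exceeds the smallest clique-index unless the original $C$ was nonzero, but anyway triangularity there is inherited from \ref{containment}; and within the type blocks $C'(\tau_r^{-1}(\ell),\tau_f^{-1}(k))=\binom{k}{\ell}\binom{f-i-k}{r-i-\ell}$, which vanishes when $\ell>k$ (i.e. when the row index $i+\ell+1$ exceeds the column index $i+k+1$), proving triangularity, and vanishes also when $k>f-r+\ell$ because then $r-i-\ell<k-(f-i-k)$ forces $\binom{f-i-k}{r-i-\ell}=0$, which is exactly \ref{partition pair facts zeros} (the condition $\ell>i$ in the statement corresponds to being in a type block after the index shift). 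Then \ref{new containment condition full} is just the statement that \ref{new containment condition} (which is part of admissibility) together with the obvious counting identity $|\{e\in\tau_r^{-1}(\ell)\colon e\In S\cup b\cup Q\}|$ being determined by $|b\cap U_S|$ and $|Q\cap U_S|$ gives the required constants $D(S,B,\ell)$ for every block, including the appended type blocks; I would spell out that for a type block the value is $\sum$ over $b\in B$ corrected by inclusion-exclusion, or more simply invoke that the set $S\cup b\cup Q$ is a fixed $f$-set once $S,b,Q$ are fixed and its $r$-subsets of a given type form a fixed-size family. Item \ref{partition pair facts base case} is the $i=0$ degenerate case: then $\cP_r([0])$ and $\cP_f([0])$ are empty, $(\emptyset,\emptyset)$ is trivially admissible, and the induced pair is $(\tau_r^{-1}(0),\dots,\tau_r^{-1}(r)),(\tau_f^{-1}(0),\dots,\tau_f^{-1}(f))$ with $\tau_{r'}(e)=|e\cap U|$, which is precisely Example~\ref{ex:partition pair} with containment $\binom{k}{\ell}\binom{f-k}{r-\ell}$; uniqueness follows since $(\emptyset,\emptyset)$ is the only admissible pair when $i=0$. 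Finally \ref{partition pair facts subgraph} follows because restricting to a subcomplex $G'\In G$ commutes with all the constructions: $\cS$ is still an $i$-system in $V(G')$, $G'$ is $r$-exclusive with respect to $\cS$, the type functions of $G'$ are the restrictions of those of $G$, admissibility conditions \ref{admissible length}--\ref{containment} are inherited (the constants $D(S,B,\ell)$ and $C$ only decrease the domain, not the values, as noted after Example~\ref{ex:partition pair} and in the footnote to \ref{containment}), and the induced pair of $G'$ equals $(\cP_r,\cP_f)[G']$.

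I do not expect a serious obstacle here: the proof is bookkeeping, and this is why the paper cites it as Proposition~10.11 of \cite{GKLO} rather than reproving it. The one place demanding care is the index-shifting between the convention used in Definition~\ref{def:partition pair}/the construction (blocks listed as $\cP_r^\ast(1),\dots,\cP_r^\ast(i),\tau_r^{-1}(0),\dots$) and the convention in the statement ($\cP_r(\ell)$ with $1\le\ell\le r+1$), so I would fix notation once at the start and then verify \ref{partition pair facts zeros} and \ref{new containment condition full} against that fixed notation, since those are the items where an off-by-one would actually matter for later applications (the shaded subtable and the $\ast$-cells in Figure~\ref{fig:containment table}).
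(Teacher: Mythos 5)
Your proposal is correct and is precisely the routine verification from the definitions that the paper delegates to \cite{GKLO} (Proposition~10.11): all seven items are read off from the explicit description of the induced containment function, with \ref{partition pair facts subgraph} resting on the fact that $G'$ is downward closed, so restriction only shrinks the domain of quantification without changing any of the counts. Two small repairs for a full write-up: in \ref{partition pair facts zeros} the relevant inequality is $r-i-\ell>f-i-k$ (equivalently $k>f-r+\ell$, in your type-value indexing), which is what makes $\binom{f-i-k}{r-i-\ell}=0$; and in \ref{new containment condition full} the assertion that the $r$-subsets of $S\cup b\cup Q$ of a given type form a family of fixed size needs the observation that $r$-exclusivity applied to the $f$-set $S\cup b\cup Q\in G^{(f)}$ forces every $\cS$-important $r$-subset to contain $S$ itself, so its type equals $|e\cap U_S|$ and the count is determined by the intersection pattern of the sets in $B$ with $U_S$ (via inclusion-exclusion) together with $|Q|=f-i-h$, independently of $Q$.
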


\subsection{Partition regularity}\label{subsec:partition reg}

\begin{defin}
Let $G$ be a complex on $n$ vertices and $(\cP_{r},\cP_{f})$ an $(r,f)$-partition pair of $G$ with $a:=|\cP_{r}|$ and $b:=|\cP_{f}|$. Let $A=(a_{\ell,k})\in[0,1]^{a\times b}$. We say that $G$ is \defn{$(\eps,A,f,r)$-regular with respect to $(\cP_{r},\cP_{f})$} if for all $\ell\in[a]$, $k\in[b]$ and $e\in \cP_{r}(\ell)$, we have
\begin{align}
|(\cP_{f}(k))(e)|=(a_{\ell,k}\pm \eps)n^{f-r},\label{def matrix regular}
\end{align}
where we view $\cP_{f}(k)$ as a subgraph of $G^{(f)}$.
If $\cE\In \cP_{r}(\ell)$ and $\cQ\In \cP_{f}(k)$, we will often write $A(\cE,\cQ)$ instead of $a_{\ell,k}$.
\end{defin}

For $A\in[0,1]^{a\times b}$ with $1\le t \le a\le b$, we define
\begin{itemize}
\item $\min^{\backslash}(A):=\min\set{a_{j,j}}{j\in [a]}$ as the minimum value on the diagonal,
\item $\min^{\backslash t}(A):=\min\set{a_{j,j+b-a}}{j\in\Set{a-t+1,\dots,a}}$ and
\item $\min^{\backslash\backslash t}(A):=\min\Set{\min^{\backslash}(A),\min^{\backslash t}(A)}$.
\end{itemize}
Note that $\min^{\backslash\backslash r-i+1}(A)$ is the minimum value of the entries in $A$ that correspond to the entries marked with \textcolor{red}{$\ast$} in Figure~\ref{fig:containment table}.

\begin{defin}\label{def:diagonal}
We say that $A\in[0,1]^{a\times b}$ is \defn{diagonal-dominant} if $a_{\ell,k}\le a_{k,k}/2(a-\ell)$ for all $1\le \ell<k\le \min\Set{a,b}$.
\end{defin}

\begin{lemma}[\cite{GKLO}]\label{lem:dominant2regular}
Let $1/n\ll \eps \ll \xi,1/f$ and $r\in[f-1]$. Suppose that $G$ is a complex on $n$ vertices and $(\cP_{r},\cP_{f})$ is an upper-triangular $(r,f)$-partition pair of $G$ with $|\cP_{r}|\le|\cP_{f}|\le f+1$. Let $A\in[0,1]^{|\cP_{r}|\times |\cP_{f}|}$ be diagonal-dominant with $d:=\min^{\backslash}(A)\ge \xi$. Suppose that $G$ is $(\eps,A,f,r)$-regular with respect to $(\cP_{r},\cP_{f})$ and $(\xi,f+r,r)$-dense. Then there exists $Y^\ast\In G^{(f)}$ such that $G[Y^\ast]$ is $(2f\eps,d,f,r)$-regular and $(0.9\xi(\xi/4(f+1))^{\binom{f+r}{f}},f+r,r)$-dense.
\end{lemma}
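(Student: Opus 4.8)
\textbf{Plan of proof for Lemma~\ref{lem:dominant2regular}.}
The plan is to obtain $Y^\ast$ by a random sparsification: include each $f$-set of $G$ in $Y^\ast$ independently with a well-chosen probability, and show that whp the resulting complex $G[Y^\ast]$ is both $(2f\eps,d,f,r)$-regular and still dense enough. The key point is that the target density $d=\min^{\backslash}(A)$ is the \emph{smallest} diagonal entry, so for each class $\cE=\cP_{r}(\ell)$ we need to kill off the contributions of all the ``off-diagonal'' $f$-classes $\cP_{f}(k)$ with $k\neq\ell$ and rescale the diagonal class $\cP_{f}(\ell)$ down to density exactly $d$. So I would not use a single global retention probability; instead, for each $f$-class $\cP_{f}(k)$ I retain each of its members independently with probability $p_k$, where the $p_k$ are chosen so that the expected link density of any $e\in\cP_{r}(\ell)$ in the retained $f$-sets is exactly $d$. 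Concretely, first I would fix $p_\ell := d/a_{\ell,\ell}$ for the diagonal entries (well-defined and in $(0,1]$ since $a_{\ell,\ell}\ge d$), and then choose the remaining $p_k$ by solving, level by level in decreasing order of $\ell$, the triangular system $\sum_{k}a_{\ell,k}p_k = d$ for each $\ell$. Upper-triangularity of $(\cP_{r},\cP_{f})$ (so $a_{\ell,k}=0$ for $k<\ell$, using $|\cP_{r}|\le|\cP_{f}|$ and $C(\cP_{r}(\ell),\cP_{f}(k))=0$ for $\ell>k$) makes this system solvable; diagonal-dominance (Definition~\ref{def:diagonal}), i.e.\ $a_{\ell,k}\le a_{k,k}/2(a-\ell)$, is exactly what guarantees that the solutions $p_k$ all lie in $[0,1]$: after fixing $p_{k}\le 1$ for $k>\ell$, the off-diagonal mass $\sum_{k>\ell}a_{\ell,k}p_k$ is at most $\sum_{k>\ell}a_{k,k}/2(a-\ell)\le a_{\ell,\ell}/2 \le d$, leaving a nonnegative (and at most $a_{\ell,\ell}$, hence $\le 1$ after dividing) value for $a_{\ell,\ell}p_\ell$. [One has to be slightly careful: the system actually forces $p_\ell = d/a_{\ell,\ell}$ only if the off-diagonal terms vanish; in general one solves for a fresh $p_\ell$ at each level, but the diagonal-dominant bound still keeps everything in range — this bookkeeping is the one genuinely fiddly part.]

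Next I would run the standard concentration argument. For each $\ell\in[a]$, $k\in[b]$, and $e\in\cP_{r}(\ell)$, the quantity $|(\cP_{f}(k)\cap Y^\ast)(e)|$ is a sum of $|(\cP_{f}(k))(e)| = (a_{\ell,k}\pm\eps)n^{f-r}$ independent Bernoulli$(p_k)$ variables, so its expectation is $(a_{\ell,k}p_k\pm\eps)n^{f-r}$, and summing over $k$ gives expected total link $(d\pm(b\eps))n^{f-r}=(d\pm(f+1)\eps)n^{f-r}$ into $Y^\ast$. A Chernoff bound (Lemma~\ref{lem:chernoff}\ref{chernoff t}, with $t=n^{f-r-1/3}$, say) shows each of these at most $n^{r}\cdot(f+1)$ events fails with probability at most $2\eul^{-2n^{2(f-r)-2/3}/n^{f-r}}$, which for $1/n\ll\eps$ beats a union bound over the at most $(f+1)^2 n^r$ relevant $(\ell,k,e)$ triples. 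Hence whp $|(G[Y^\ast]^{(f)})(e)| = (d\pm 2f\eps)n^{f-r}$ for all $e\in G^{(r)}$, i.e.\ $G[Y^\ast]$ is $(2f\eps,d,f,r)$-regular. (Here I am absorbing the $(f+1)\eps$ from summing into the looser $2f\eps$; the precise constant $2f$ is easily accommodated.)

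Finally I would check the density condition $(0.9\xi(\xi/4(f+1))^{\binom{f+r}{f}},f+r,r)$-dense. Fix $e\in G^{(r)}$. By $(\xi,f+r,r)$-density of $G$ we have $|G^{(f+r)}(e)|\ge\xi n^f$. Each $(f+r-r)=f$-set $P$ witnessing this contains $\binom{f+r}{f}$ sets of size $f$ (those $f$-subsets $Q$ of $e\cup P$ with $e\cap Q$ an $r$-subset of $e$ — more precisely the relevant $f$-cliques inside $P\cup e$), each of which must survive into $Y^\ast$ for $P$ to remain a witness in $G[Y^\ast]$; the worst-case retention probability for any such $f$-set is $\min_k p_k$, and from the solution of the triangular system one gets $\min_k p_k \ge \xi/(4(f+1))$ (crude bound: each $p_k$ is $d$ divided by a positive combination of at most $f+1$ entries each $\le 1$, rescaled by diagonal-dominance, and $d\ge\xi$). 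A second-moment / Janson-type lower-tail bound, or more simply the concentration machinery again (Corollary~\ref{cor:graph chernoff} or Lemma~\ref{lem:separable chernoff}, since for each fixed $e$ the witnesses survive with independence along matchings), then shows that whp at least $0.9\,\xi\,(\xi/4(f+1))^{\binom{f+r}{f}} n^f$ of the witnesses $P$ survive, giving the claimed density with respect to $Y^\ast$; a union bound over $e\in G^{(r)}$ (at most $n^r$ of them) finishes it. The main obstacle is the first paragraph: correctly setting up the retention probabilities $p_k$ via the triangular system and verifying, from diagonal-dominance exactly as stated, that they all lie in $[0,1]$ while making every diagonal \emph{and} off-diagonal contribution sum to the common value $d$ — once that linear-algebra bookkeeping is pinned down, the probabilistic part is routine Chernoff plus union bounds.
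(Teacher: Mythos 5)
Your overall strategy coincides with the one used for this lemma in~\cite{GKLO} (the present paper only cites it): retain each $Q\in\cP_{f}(k)$ independently with a class-dependent probability $p_k$ chosen so that $\sum_k a_{\ell,k}p_k=d$ for every $\ell$, then use Chernoff bounds for regularity and a survival-probability count for the $(f+r,r)$-density. The probabilistic parts of your sketch are fine. However, the step you yourself flag as ``the one genuinely fiddly part'' --- verifying that the recursively defined $p_\ell$ lie in $[0,1]$ --- is wrong as written. The chain $\sum_{k>\ell}a_{\ell,k}p_k\le\sum_{k>\ell}a_{k,k}/2(a-\ell)\le a_{\ell,\ell}/2\le d$ does not hold: using only $p_k\le 1$, the middle sum is bounded by $\tfrac12\max_k a_{k,k}\le\tfrac12$, and neither the comparison with $a_{\ell,\ell}/2$ nor with $d$ follows ($d=\min^{\backslash}(A)\ge\xi$ is a small constant, typically far below $1/2$, and $a_{\ell,\ell}$ need not dominate the later diagonal entries), so the recursion could produce a negative $p_\ell$. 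The repair is to carry the stronger inductive invariant $p_k\le d/a_{k,k}$ down the backward recursion: then $\sum_{\ell<k\le a}a_{\ell,k}p_k\le\sum_{\ell<k\le a}\frac{a_{k,k}}{2(a-\ell)}\cdot\frac{d}{a_{k,k}}=d/2$, which yields $d/(4a_{\ell,\ell})\le p_\ell\le d/a_{\ell,\ell}\le 1$ and closes the induction --- this is exactly what Definition~\ref{def:diagonal} is tailored to.

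A second, related gap: diagonal-dominance only constrains $a_{\ell,k}$ for $k\le\min\{a,b\}=a$, so the columns $a<k\le b$ are not covered by your bound at all, and the system has $a$ equations in $b\ge a$ unknowns, so the backward recursion cannot even start until $p_{a+1},\dots,p_b$ are fixed. One must choose these explicitly, e.g.\ $p_k:=d/(4(f+1))$ for $k>a$; since $b\le f+1$ and each $a_{\ell,k}\le 1$, their contribution to every row is at most $d/4$, which together with the $d/2$ above still leaves $p_\ell\ge d/(4a_{\ell,\ell})$. This choice also supplies the uniform lower bound $\min_k p_k\ge \xi/(4(f+1))$ that your density calculation needs, and explains the constant appearing in the statement. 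Finally, note that the entries $a_{\ell,k}$ with $k<\ell$ are only $O(\eps)$ rather than exactly $0$, contributing the additional $O(f\eps)$ error that is absorbed into the $2f\eps$ in the conclusion. With these repairs your argument goes through.
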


The following concept of a setup turns out to be the appropriate generalisation of Definition~\ref{def:regular subset} to $i$-systems and partition pairs.

\begin{defin}[Setup]\label{def:regular focus}
Let $G$ be a complex on $n$ vertices and $0\le i<r<f$.
We say that $\cS,\cU,(\cP_{r},\cP_{f})$ form an \defn{$(\eps,\mu,\xi,f,r,i)$-setup} for $G$ if there exists an $f$-graph $Y$ on $V(G)$ such that the following hold:
\begin{enumerate}[label={\rm(S\arabic*)}]
\item $\cS$ is an $i$-system in $V(G)$ such that $G$ is $r$-exclusive with respect to $\cS$; $\cU$ is a $\mu$-focus for $\cS$ and $(\cP_{r},\cP_{f})$ is an $(r,f)$-partition pair of $G$, $\cS$, $\cU$;\label{setup:objects}
\item there exists a matrix $A\in[0,1]^{(r+1)\times(f+1)}$ with $\min^{\backslash\backslash r-i+1}(A)\ge \xi$ such that $G[Y]$ is $(\eps,A,f,r)$-regular with respect to $(\cP_{r},\cP_{f})[G[Y]]=(\cP_{r},\cP_{f}[Y])$;\label{setup:matrix regular}
\item every $\cS$-unimportant $e\in G^{(r)}$ is contained in at least $\xi (\mu n)^{f}$ $\cS$-unimportant $Q\in G[Y]^{(f+r)}$, and for every $\cS$-important $e\in G^{(r)}$ with $e\supseteq S\in \cS$, we have $|G[Y]^{(f+r)}(e)[U_S]|\ge \xi (\mu n)^{f}$;\label{setup:dense}
\item for all $S\in \cS$, $h\in[r-i]_0$ and all $B\In G(S)^{(h)}$ with $1\le|B|\le 2^{h}$ we have that $\bigcap_{b\in B}G(S\cup b)[U_S]$ is an $(\eps,\xi,f-i-h,r-i-h)$-complex.\label{setup:intersections}
\end{enumerate}
Moreover, if \ref{setup:objects}--\ref{setup:intersections} are true and $A$ is diagonal-dominant, then we say that $\cS,\cU,(\cP_{r},\cP_{f})$ form a \defn{diagonal-dominant $(\eps,\mu,\xi,f,r,i)$-setup} for $G$.
\end{defin}

Note that \ref{setup:intersections} implies that $G(S)[U_S]$ is an $(\eps,\xi,f-i,r-i)$-supercomplex for every $S\in \cS$, but is stronger in the sense that $B$ is not restricted to $U_S$.
The following observation shows that Definition~\ref{def:regular focus} does indeed generalise Definition~\ref{def:regular subset}. (Recall that the partition pair of $G,U$ was defined in Example~\ref{ex:partition pair}.) We will use it to derive the Cover down lemma from the more general Cover down lemma for setups.

\begin{prop}[\cite{GKLO}]\label{prop:random generalisation}
Let $G$ be a complex on $n$ vertices and suppose that $U\In V(G)$ is $(\eps,\mu,\xi,f,r)$-random in $G$. Let $(\cP_{r},\cP_{f})$ be the $(r,f)$-partition pair of $G,U$. Then $\Set{\emptyset},\Set{U},(\cP_{r},\cP_{f})$ form an $(\eps,\mu,\tilde{\mu}\xi,f,r,0)$-setup for $G$, where $\tilde{\mu}:=(\min{\Set{\mu,1-\mu}})^{f-r}$.
\end{prop}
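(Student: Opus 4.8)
\textbf{Proof plan for Proposition~\ref{prop:random generalisation}.}
The plan is to unpack Definition~\ref{def:regular focus} in the special case $i=0$, $\cS=\Set{\emptyset}$, $\cU=\Set{U}$, $(\cP_r,\cP_f)$ the $(r,f)$-partition pair of $G,U$ from Example~\ref{ex:partition pair}, and to check conditions \ref{setup:objects}--\ref{setup:intersections} one by one against the hypothesis that $U$ is $(\eps,\mu,\xi,f,r)$-random in $G$ (Definition~\ref{def:regular subset}). First I would record the trivial structural facts: since $\cS=\Set{\emptyset}$ every $r$-set of $G$ is $\cS$-important, $G$ is vacuously $r$-exclusive with respect to $\cS$, the focus $\Set{U}$ is a $\mu$-focus for $\cS$ by~\ref{random:objects}, and by Proposition~\ref{prop:facts about partition pairs}\ref{partition pair facts base case} the pair $(\cP_r,\cP_f)$ is indeed the (unique) $(r,f)$-partition pair of $G$, $\cS$, $\cU$. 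This gives~\ref{setup:objects}. The $f$-graph $Y$ witnessing the setup will be the $f$-graph $Y$ witnessing that $U$ is random in $G$ (the one from Definition~\ref{def:regular subset}, i.e.\ such that \ref{random:binomial}--\ref{random:intersections} hold for $G[Y]$).

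For~\ref{setup:matrix regular}: the partition $\cP_r=(\tau_r^{-1}(0),\dots,\tau_r^{-1}(r))$ is just the partition of $G^{(r)}$ by $|e\cap U|$, and likewise for $\cP_f$. Condition~\ref{random:binomial} of Definition~\ref{def:regular subset} says exactly that for each $e\in G^{(r)}$ and each $x\in[f-r]_0$, $|\set{Q\in G[Y]^{(f)}(e)}{|Q\cap U|=x}|=(1\pm\eps)bin(f-r,\mu,x)dn^{f-r}$. However the partition class of $Q$ is indexed by $|Q\cap U|=k$, which for a fixed $e$ with $|e\cap U|=\ell$ forces $x=k-\ell$; so I would set $a_{\ell,k}:=bin(f-r,\mu,k-\ell)d$ for $0\le\ell\le r$, $0\le k\le f$ (interpreting $bin$ as $0$ when $k-\ell<0$ or $k-\ell>f-r$, matching the paper's convention), which makes $G[Y]$ $(\eps,A,f,r)$-regular with respect to $(\cP_r,\cP_f[Y])$. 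The only real content is the bound $\min^{\backslash\backslash r+1}(A)\ge\tilde\mu\xi$: the diagonal entries are $a_{j,j}=bin(f-r,\mu,0)d=(1-\mu)^{f-r}d$ and the entries $a_{j,j+f-r}=bin(f-r,\mu,f-r)d=\mu^{f-r}d$; since $d\ge\xi$ by~\ref{random:binomial} and $(1-\mu)^{f-r},\mu^{f-r}\ge\tilde\mu=(\min\Set{\mu,1-\mu})^{f-r}$, all of these are $\ge\tilde\mu\xi$, which is exactly the required inequality on $\min^{\backslash\backslash r-i+1}(A)$ with $i=0$.

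For~\ref{setup:dense}: every $e\in G^{(r)}$ is $\cS$-important with $e\supseteq\emptyset\in\cS$ and $U_\emptyset=U$, so the required statement is $|G[Y]^{(f+r)}(e)[U]|\ge\xi(\mu n)^f$, which is precisely~\ref{random:dense}. For~\ref{setup:intersections}: with $i=0$ and $S=\emptyset$, $U_S=U$, the condition asks that $\bigcap_{b\in B}G(b)[U]$ be an $(\eps,\xi,f-h,r-h)$-complex for every $h\in[r]_0$ and every $B\In G^{(h)}$ with $1\le|B|\le 2^h$ — this is verbatim~\ref{random:intersections}. Finally I would note that $A$ need not be assumed diagonal-dominant (the proposition only claims a setup, not a diagonal-dominant one), so no further work is needed. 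The main (and only) obstacle is the bookkeeping in~\ref{setup:matrix regular}: one must be careful that the partition-pair indexing in Definition~\ref{def:regular focus} runs over \emph{classes} (so over $k=|Q\cap U|$) while Definition~\ref{def:regular subset} is phrased in terms of the \emph{offset} $x=|Q\cap U|-|e\cap U|$, and that the definitions of $\min^{\backslash}$ and $\min^{\backslash t}$ pick out exactly the $x=0$ and $x=f-r$ columns relative to each row; once this translation is made explicit, everything else is a direct quotation of the hypotheses.
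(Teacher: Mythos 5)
Your proposal is correct. The paper cites this proposition from~\cite{GKLO} without reproducing the proof, but the argument can only be the definitional unpacking you give, and you carry it out accurately: the one substantive point is identifying the matrix $A$ via $a_{\ell,k}=bin(f-r,\mu,k-\ell)d$ from~\ref{random:binomial} and observing that the entries picked out by $\min^{\backslash\backslash r+1}$ are exactly $bin(f-r,\mu,0)d=(1-\mu)^{f-r}d$ and $bin(f-r,\mu,f-r)d=\mu^{f-r}d$, which is where $\tilde{\mu}$ comes from; conditions \ref{setup:dense} and \ref{setup:intersections} then follow from \ref{random:dense} and \ref{random:intersections} (with the harmless weakening from $\xi$ to $\tilde{\mu}\xi$, which you state slightly imprecisely for \ref{setup:dense} but which causes no problem).
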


The following lemma shows that we can (probabilistically) sparsify a given setup so that the resulting setup is diagonal-dominant.
\begin{lemma}[\cite{GKLO}\COMMENT{Lemma 10.19}]\label{lem:make dominant}
Let $1/n\ll \eps \ll \nu \ll \mu,\xi,1/f$ and $0\le i<r<f$.\COMMENT{Will never be applied with $i=0$} Let $\xi':=\nu^{8^f \cdot f+1}$. Let $G$ be a complex on $n$ vertices and suppose that
\begin{align*}
\cS,\cU,(\cP_{r},\cP_{f})\mbox{ form an }(\eps,\mu,\xi,f,r,i)\mbox{-setup for }G.
\end{align*}
Then there exists a subgraph $H\In G^{(r)}$ with $\Delta(H)\le 1.1\nu n$ and the following property: for all $L\In G^{(r)}$ with $\Delta(L)\le \eps n$ and all $(r+1)$-graphs $O$ on $V(G)$ with $\Delta(O)\le \eps n$, the following holds for $G':=G[H\bigtriangleup L]-O$:
\begin{align*}
\cS,\cU,(\cP_{r},\cP_{f})[G']\mbox{ form a diagonal-dominant }(\sqrt{\eps},\mu,\xi',f,r,i)\mbox{-setup for }G'.
\end{align*}
\end{lemma}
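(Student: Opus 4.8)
\textbf{Plan of proof for Lemma~\ref{lem:make dominant}.}
The goal is to sparsify a given setup so that the associated matrix $A$ becomes diagonal-dominant, while keeping all the quantitative properties \ref{setup:objects}--\ref{setup:intersections} intact. The natural approach is to pick a random subgraph $H\In G^{(r)}$ in which every edge is retained with some small probability $p$ chosen as a function of its \emph{position} in the partition $\cP_{r}$: an edge in the class $\cP_{r}(\ell)$ survives with probability $\nu^{\alpha_\ell}$ for suitably decreasing exponents $\alpha_1\ge \alpha_2\ge \dots$, and likewise we keep the $f$-cliques of $Y$ (equivalently, restrict $Y$) so that a clique in $\cP_{f}(k)$ survives with its own small probability depending on $k$. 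The effect of passing to $G[H]$ is that the density $|(\cP_{f}(k))(e)[H]|$ for $e\in\cP_{r}(\ell)$ is multiplied, in expectation, by a product of survival probabilities whose exponent depends only on $(\ell,k)$ and on the containment function $C(\cP_{r}(\cdot),\cP_{f}(k))$ (see~\eqref{containment types} and Figure~\ref{fig:containment table}). One then chooses the exponents $\alpha_\ell$ so that the new matrix $A'$ satisfies $a'_{\ell,k}\le a'_{k,k}/2(r+1-\ell)$ for all $\ell<k$, i.e.\ so that off-diagonal entries are killed much faster than diagonal ones. Since by \ref{setup:matrix regular} the relevant entries $\min^{\backslash\backslash r-i+1}(A)\ge \xi$ are bounded below, and there are only $O(f^2)$ entries to control, one can make all exponents bounded in terms of $f$ and thereby guarantee $\min^{\backslash\backslash r-i+1}(A')\ge \xi'=\nu^{8^f\cdot f+1}$ (the exponent $8^f\cdot f$ being a crude upper bound on the total survival exponent that can accumulate on a diagonal entry over the at most $\binom{f+r}{f}\le 8^f$ cliques through a fixed edge, times a factor $f$ of slack).

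The execution then proceeds as follows. First I would set up the random restriction: keep each $e\in G^{(r)}$ in $H_0$ independently with probability $p_\ell$ if $e\in\cP_{r}(\ell)$, and independently keep each $Q\in Y$ with probability $q_k$ if $Q\in\cP_{f}(k)$, calling the resulting $f$-graph $Y'$. Using the upper-triangularity \ref{partition pair facts triangular}, \ref{partition pair facts zeros} of $(\cP_{r},\cP_{f})$ and the explicit form of $C$ on the type classes \eqref{containment types}, compute for $e\in\cP_{r}(\ell)$ and each target class $\cP_{f}(k)$ the expected value $\ex|(\cP_{f}(k))(e)[Y']|$; this is $a_{\ell,k}n^{f-r}$ times $q_k$ times $\prod_{\ell'}p_{\ell'}^{C(\cP_{r}(\ell'),\cP_{f}(k))-[\ell'=\ell]}$ (the $-1$ accounting for $e$ itself). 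Apply a Chernoff/separable-Chernoff bound (Lemma~\ref{lem:separable chernoff} or Corollary~\ref{cor:graph chernoff}, exactly as in the proof of Lemma~\ref{lem:random packing}) and a union bound over all $e$ and all $(\ell,k)$ to conclude that whp the restricted complex $G[H_0]$ (with its $f$-graph $Y'$) is $(\eps',A',f,r)$-regular with respect to $(\cP_{r},\cP_{f})[G[H_0]]$, for the matrix $A'$ whose entries are the scaled-down densities; here one chooses the $p_\ell,q_k$ first so that $A'$ is diagonal-dominant and $\min^{\backslash\backslash r-i+1}(A')\ge \xi'$, which is possible precisely because off-diagonal entries of $A$ lie above the diagonal-dominant threshold only via \eqref{containment types} relations that one can overwhelm by making $p_\ell/p_{\ell+1}$ and $q_k/q_{k+1}$ large powers of $1/\nu$. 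One must also verify \ref{setup:dense} and \ref{setup:intersections} survive: \ref{setup:dense} follows by the same Chernoff argument applied to the $(f+r)$-cliques (which keeps $\ge \xi'(\mu n)^f$ of them since we only multiply by a bounded power of $\nu$), and \ref{setup:intersections} follows because each $\bigcap_{b\in B}G(S\cup b)[U_S]$ is an $(\eps,\xi,f-i-h,r-i-h)$-complex and restricting to $H_0$ is a random sparsification of the $(r-i-h)$-graph inside it, so by the appropriate `random noise' statements (in the same spirit as Corollary~\ref{cor:random supercomplex} applied at lower uniformity) it remains a $(\sqrt\eps,\nu^{O(1)}\xi,\cdot)$-complex, which is $\ge \xi'$.

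Second, I would absorb the $H\bigtriangleup L$ and $-O$ perturbations. Set $H:=H_0$. We have $\Delta(H_0)\le 1.1\nu n$ whp (Chernoff again, since $\ex\deg\le \nu n$). Given any $L\In G^{(r)}$ with $\Delta(L)\le\eps n$ and any $(r+1)$-graph $O$ with $\Delta(O)\le\eps n$, the symmetric difference changes each link count $|(\cP_{f}(k))(e)[H]|$ by at most $2^r\eps n^{f-r}$ (via Proposition~\ref{prop:sparse noise containment}-type containment bounds as in Proposition~\ref{prop:noise}), which is absorbed into the regularity error, turning $\eps'$ into $\sqrt\eps$; deleting $O$ does not affect $G^{(r)}$ or $G^{(f)}$ at all and only shrinks the relevant $(f+r)$-clique counts by a negligible amount, again by Proposition~\ref{prop:noise}\ref{noise:dense}. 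Hence $\cS,\cU,(\cP_{r},\cP_{f})[G']$ form a diagonal-dominant $(\sqrt\eps,\mu,\xi',f,r,i)$-setup for $G'=G[H\bigtriangleup L]-O$, as required. The only subtle point—and the main obstacle—is the bookkeeping that determines the survival exponents $\alpha_\ell$ (equivalently $p_\ell,q_k$): one needs to check that the linear system of inequalities ``$a'_{\ell,k}\le a'_{k,k}/2(r+1-\ell)$ for all $1\le\ell<k\le\min\{r+1,f+1\}$'' together with ``every starred entry of $A'$ is $\ge\xi'$'' is simultaneously solvable with all exponents bounded by $8^f\cdot f$; this is where the precise shape of the containment function in Figure~\ref{fig:containment table} (in particular that the off-diagonal entries in a given column $k$ all come from rows $\ell<k$ with strictly more containments, via \eqref{containment types} and \ref{partition pair facts zeros}) is exactly what makes the choice possible, and getting the inequalities to close with the claimed value of $\xi'$ is the delicate part of the argument. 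I expect the rest (the Chernoff concentration and the $H\bigtriangleup L$, $-O$ robustness) to be routine given the tools already assembled.
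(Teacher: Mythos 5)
Your proposal follows essentially the same route as the proof of this lemma in \cite{GKLO}: a random sparsification of $G^{(r)}$ with class-dependent retention probabilities $p_\ell$ for $\cP_{r}(\ell)$, so that for each column $k$ the new densities satisfy $a'_{\ell,k}/a'_{k,k}=(a_{\ell,k}/a_{k,k})\cdot p_k/p_\ell$ and diagonal-dominance is forced by making $p_k\ll p_\ell$ for $k>\ell$, with Chernoff-type concentration and the robustness to $H\bigtriangleup L$ and $-O$ handled exactly as you describe. Two small cautions. First, your opening sentence has the monotonicity backwards: with $p_\ell=\nu^{\alpha_\ell}$ and $\nu<1$ you need \emph{increasing} exponents $\alpha_1<\alpha_2<\cdots$, i.e.\ decreasing probabilities, which is what your later (correct) requirement that $p_\ell/p_{\ell+1}$ be a large power of $1/\nu$ actually asks for; also the additional random restriction of $Y$ via the $q_k$ is redundant, since $q_k$ cancels in the ratio $a'_{\ell,k}/a'_{k,k}$ and only erodes the diagonal entries you must keep above $\xi'$. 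Second, when verifying \ref{setup:intersections}, the survival of an $(f-i-h)$-set $Q$ of $\bigcap_{b\in B}G(S\cup b)[U_S]$ is governed by the survival of \emph{all} $r$-subsets of the sets $S\cup b\cup Q$ for $b\in B$, not only those visible as $(r-i-h)$-subsets of $Q$; the uniformity over $Q$ of the resulting survival probability $\prod_\ell p_\ell^{D(S,B,\ell)}$ is precisely what \ref{new containment condition} (extended via \ref{new containment condition full}) supplies, and this is the structural point your appeal to a generic ``random noise'' statement glosses over.
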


We also need a similar result which `sparsifies' the neighbourhood complexes of an $i$-system.

\begin{lemma}[\cite{GKLO}\COMMENT{Lemma 10.20}]\label{lem:local sparsifier}
Let $1/n\ll \eps \ll \mu,\beta,\xi,1/f$ and $1\le i<r<f$. Let $\xi':=0.9\xi\beta^{(8^f)}$.
Let $G$ be a complex on $n$ vertices and let $\cS$ be an $i$-system in $G$ such that $G$ is $r$-exclusive with respect to $\cS$. Let $\cU$ be a $\mu$-focus for $\cS$. Suppose that
\begin{align*}
G(S)[U_S]\mbox{ is an }(\eps,\xi,f-i,r-i)\mbox{-supercomplex for every }S\in\cS.
\end{align*}
Then there exists a subgraph $H\In G^{(r)}$ with $\Delta(H)\le 1.1\beta n$ and the following property: for all $L\In G^{(r)}$ with $\Delta(L)\le \eps n$ and all $(r+1)$-graphs $O$ on $V(G)$ with $\Delta(O)\le \eps n$, the following holds for $G':=G[H\bigtriangleup L]-O$:
\begin{align*}
G'(S)[U_S]\mbox{ is a }(\sqrt{\eps},\xi',f-i,r-i)\mbox{-supercomplex for every }S\in\cS.
\end{align*}
\end{lemma}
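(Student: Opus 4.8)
\textbf{Proof plan for Lemma~\ref{lem:local sparsifier}.}
The plan is to obtain $H$ as a random subgraph of $G^{(r)}$ and to show that this works with positive probability, following the same template as for Lemma~\ref{lem:make dominant} and Corollary~\ref{cor:random induced subcomplex}. First I would fix an $f$-graph $Y_S$ on $V(G(S)[U_S])$ witnessing that $G(S)[U_S]$ is an $(\eps,\xi,f-i,r-i)$-supercomplex, for each $S\in\cS$, so that $G(S)[U_S][Y_S]$ is a full $(\eps,\xi,f-i,r-i)$-complex and hence $(\eps,d_S,f-i,r-i)$-regular for some $d_S\ge\xi$, $(\xi,f+r-2i,r-i)$-dense, and $(\xi,f-i,r-i)$-extendable; moreover all the intersections $\bigcap_{b\in B}G(S)[U_S](b)$ for $B\In G(S)[U_S]^{(h)}$ with $1\le|B|\le 2^h$ are $(\eps,\xi,f-i-h,r-i-h)$-complexes. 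Then I would let $H$ be obtained by retaining every edge of $G^{(r)}$ independently with probability $\beta$. The target is that, after symmetric difference with any sparse $L$ and deletion of any sparse $(r+1)$-graph $O$, each $G'(S)[U_S]$ remains a supercomplex with the slightly worsened density parameter $\xi'=0.9\xi\beta^{(8^f)}$.

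The key steps are the standard ones for checking the supercomplex conditions under random sparsification. For a fixed $S$ and a fixed family $B\In G(S)[U_S]^{(h)}$ with $|B|\le 2^h$, write $G_B:=\bigcap_{b\in B}G(S)[U_S](b)$, which is an $(\eps,\xi,f-i-h,r-i-h)$-complex with witness obtained from the $Y_S$. The effect of passing to $H$ is that $G_B^{(r-i-h)}$ is replaced by a random subgraph in which each edge survives with probability $\beta$ (independently over matchings), while the $(f-i-h)$-sets are unchanged except that one additionally needs all $\binom{f-i-h}{r-i-h}$ of their $(r-i-h)$-subsets to survive; so the regular density $d$ becomes $\beta^{\binom{f-i-h}{r-i-h}-1}d$ and the dense/extendable parameters pick up a factor that is a bounded power of $\beta$. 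Concentration (via Corollary~\ref{cor:graph chernoff} / Lemma~\ref{lem:separable chernoff}) shows that whp all these parameters are within $n^{-1/5}$ of their expectations, uniformly over the polynomially many choices of $S$, $B$, and of an edge/vertex at which regularity/density/extendability is tested; a union bound over $|\cS|\le n^i$ and the $O(n^{\text{const}})$ further choices gives positive probability. Crucially $\eps\ll\beta$ is \emph{not} assumed, and one absorbs the $\eps$-error using the Boost lemma (Lemma~\ref{lem:boost}) applied within each $G(S)[U_S]$, exactly as in the proof of Lemma~\ref{lem:boosted nibble}; this is what lets the final parameter $\xi'$ be a clean explicit function of $\xi$ and $\beta$ rather than depending on $\eps$. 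Finally, the robustness under adding $L$ and deleting $O$ with $\Delta(L),\Delta(O)\le\eps n$ is handled by Proposition~\ref{prop:noise} (respectively its parts on regularity, density, extendability and the supercomplex statement), since $\Delta(H\bigtriangleup L)\le 1.1\beta n+\eps n$ and the changes to all parameters are at most $2^{O(r)}(\beta+\eps)$, which the slack in $\xi'$ absorbs; the bound $\Delta(H)\le 1.1\beta n$ itself is immediate from a Chernoff bound (Lemma~\ref{lem:chernoff}\ref{chernoff eps}) on vertex degrees of $H$ plus a union bound.

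The main obstacle I expect is bookkeeping rather than a conceptual difficulty: one has to verify that for \emph{every} admissible test family $B\In G'(S)[U_S]^{(h)}$ — not merely those already present in $G(S)[U_S]$, since $G'$ can also lose edges and hence $G'(S)[U_S]^{(h)}$ is a subgraph of $G(S)[U_S]^{(h)}$ (here $h\le r-i$ so $h<f-i$ and these $h$-graphs are complete, so in fact the test families are unchanged) — the intersection $\bigcap_{b\in B}G'(S\cup b)[U_S]$ is a complex of the right type. Since $B$ ranges over a set of size at most $2^h\binom{|U_S|}{h}=O(n^h)$ per $S$, and for each $B$ one tests regularity at each $(r-i-h)$-edge and density/extendability at each relevant $(r-i-h)$-edge/vertex, the total number of ``bad events'' is polynomial in $n$, each of exponentially small probability, so the union bound closes. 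Because all of this is identical in structure to the arguments already carried out in \cite{GKLO} for Lemmas~\ref{lem:make dominant} and~\ref{lem:local sparsifier}, the write-up can cite those probabilistic estimates and the Boost lemma directly; the only genuinely new point relative to Lemma~\ref{lem:make dominant} is that here the sparsification acts on the neighbourhood complexes $G(S)[U_S]$ rather than on a partition pair of $G$ itself, but since $G$ is $r$-exclusive with respect to $\cS$ the families $\{G(S)[U_S]\}_{S\in\cS}$ are ``essentially disjoint'' in the sense needed for independence, so no extra care beyond the union bound is required.
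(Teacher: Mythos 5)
Your approach --- taking $H$ to be a $\beta$-random subgraph of $G^{(r)}$, verifying the regularity/density/extendability conditions of each intersection complex $\bigcap_{b\in B}G'(S\cup b)[U_S]$ via Chernoff-type concentration (Lemma~\ref{lem:separable chernoff}) with a polynomial union bound over $S$, $B$ and the test edges, and then absorbing $L$ and $O$ via Proposition~\ref{prop:noise} --- is correct and is exactly the route taken in \cite{GKLO} for this lemma (the present paper only quotes it), matching the analogous arguments for Corollary~\ref{cor:random supercomplex} and Lemma~\ref{lem:make dominant}. One correction: the hierarchy here \emph{does} assume $\eps\ll\beta$, so the detour through the Boost lemma (Lemma~\ref{lem:boost}) is both unnecessary and mildly harmful --- it would cost an extra factor $0.9(1/4)^{\binom{f+r}{f}}$ in the final density, putting the stated constant $\xi'=0.9\xi\beta^{(8^f)}$ at risk --- whereas the $O(2^{2r}\eps)$ errors from concentration and from $L,O$ are already absorbed by the target regularity parameter $\sqrt{\eps}$ and by the slack built into $\xi'$.
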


The final tool that we need is the following lemma. Given a setup in a supercomplex $G$ and an $i'$-extension $\cT$ of the respective $i$-system $\cS$, it allows us to find a new focus $\cU'$ for $\cT$ and a suitable partition pair which together form a new setup in the complex $G'$ (which is the complex we look at after all edges with type less than $r-i'$ have been covered).

The idea is to choose $\cU'$ randomly. The proof of Lemma~\ref{lem:setup refinement} follows the lines of Claim~4 and Step~2 in the proof of Lemma~10.22 in \cite{GKLO}. In particular, \ref{twisted focus setup} and \ref{after focus super} below correspond to (10.11) and (10.12), respectively, and \ref{twisted focus intersections} holds using Lemma~\ref{lem:focus}.

\begin{lemma}\label{lem:setup refinement}
Let $1/n\ll \eps \ll \rho \ll \mu,\xi,1/f$ and $0\le i<i'<r<f$. Let $G$ be a complex on $n$ vertices and suppose that $\cS,\cU,(\cP_{r},\cP_{f})$ form an $(\eps,\mu,\xi,f,r,i)$-setup for $G$. For $r'\ge r$, let $\tau_{r'}$ be the type function of $G^{(r')}$, $\cS$, $\cU$. Let $\cT$ be the $i'$-extension of $\cS$ in $G$ around $\cU$, and let $$G':=G-\set{e\in G^{(r)}}{e\mbox{ is }\cS\mbox{-important and }\tau_r(e)<r-i'}.$$ Then there exist $\cU',\cP_{r}',\cP_{f}'$ with the following properties:
\begin{enumerate}[label=\rm{(\roman*)}]
\item $\cU'$ is a $(\mu,\rho,r)$-focus for $\cT$ such that $U_T\In U_{T{\restriction_{\cS}}}$ for all $T\in \cT$;\label{twisted focus intersections}
\item $\cT,\cU',(\cP_{r}',\cP_{f}')$ form a $(1.1\eps,\rho\mu,\rho^{f-r}\xi,f,r,i')$-setup for $G'$;\label{twisted focus setup}
\item $G'(T)[U_T]$ is a $(1.1\eps,0.9\xi,f-i',r-i')$-supercomplex for every $T\in\cT$.\label{after focus super}
\end{enumerate}
\end{lemma}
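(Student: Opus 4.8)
The strategy is exactly the randomised construction of the new focus alluded to in the text, following Claim~4 and Step~2 in the proof of Lemma~10.22 of~\cite{GKLO}. Let $Y$ be the $f$-graph witnessing that $\cS,\cU,(\cP_{r},\cP_{f})$ form an $(\eps,\mu,\xi,f,r,i)$-setup for $G$, and let $A\in[0,1]^{(r+1)\times(f+1)}$ be the associated regularity matrix with $\min^{\backslash\backslash r-i+1}(A)\ge\xi$. For each $T\in\cT$, independently for every vertex $x\in U_{T\restriction_{\cS}}$, retain $x$ in $U_T$ with probability $\rho$; this defines a random focus $\cU'=(U_T)_{T\in\cT}$ for $\cT$. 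By Lemma~\ref{lem:focus} (applied with $\mu$ playing the role of $\rho_{size}$ and $\rho$ playing the role of $\rho$), whp $\cU'$ is a $(\mu,\rho,r)$-focus for $\cT$, and by construction $U_T\In U_{T\restriction_{\cS}}$; this gives~\ref{twisted focus intersections}. Now define $\cP_{r}'$ and $\cP_{f}'$: by Proposition~\ref{prop:extension types}\ref{prop:extension types 1} and~\ref{prop:extension types 3}, $G'$ is $r$-exclusive with respect to $\cT$, and the $\cT$-important $r'$-sets of $G'$ are precisely $\tau_{r'}^{-1}(r'-i')$. Take the admissible pair for $G',\cT,\cU'$ to consist of $\cP_{r}'([i']):=\cP_{r}([i])\sqcup(\tau_{r}^{-1}(0),\dots,\tau_{r}^{-1}(i'-i-1))$ (and analogously at level $f$), i.e.\ we reinterpret the unimportant edges together with the now-removed low-type important edges as the new `unimportant' blocks, and then let $(\cP_{r}',\cP_{f}')$ be induced by this admissible pair and $\cU'$ in the sense of the paragraph preceding Proposition~\ref{prop:facts about partition pairs}.

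\textbf{Verifying the setup conditions.} It remains to check~\ref{twisted focus setup}, i.e.\ that $\cT,\cU',(\cP_{r}',\cP_{f}')$ form a $(1.1\eps,\rho\mu,\rho^{f-r}\xi,f,r,i')$-setup for $G'$, and~\ref{after focus super}. Condition~\ref{setup:objects} is immediate from the construction once~\ref{twisted focus intersections} holds and one checks that $(\cP_{r}',\cP_{f}')$ is a genuine $(r,f)$-partition pair of $G',\cT,\cU'$; admissibility \ref{admissible length}--\ref{containment} is inherited from that of $(\cP_{r},\cP_{f})$ essentially by restriction, using that the low-type important blocks $\tau_r^{-1}(<r-i')$ have simply been deleted from $G$ so the containment counts in~\ref{containment} for the surviving blocks are unchanged (they come from~\eqref{containment types} and from the original admissibility). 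For~\ref{setup:matrix regular}: set $Y':=Y\cap G'^{(f)}$, so $G'[Y']=G[Y]-\set{e:e\ \cS\text{-important},\tau_r(e)<r-i'}$, which changes maximum $(r-1)$-degree by $0$ at the $r$-graph level but simply removes some $r$-edges; the new matrix $A'$ has entries obtained from $A$ by the same bookkeeping as~\eqref{containment types}, except that each $U_T$ now has density $\approx\rho$ inside $U_{T\restriction_\cS}$, so the diagonal entries of $A'$ corresponding to types pick up a factor of the form $\rho^{f-r}$ up to lower-order terms; a Chernoff/concentration argument (Lemma~\ref{lem:chernoff} together with the independence of the retention events across vertices) shows that whp, for every $e\in G'^{(r)}$, the count $|(\cP_{f}'(k))(e)|$ is within $1.1\eps\, n^{f-r}$ of the predicted value, and that $\min^{\backslash\backslash r-i'+1}(A')\ge\rho^{f-r}\xi$. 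Condition~\ref{setup:dense} is checked the same way: each $\cS$-important $e$ with $e\supseteq S$ had $|G[Y]^{(f+r)}(e)[U_S]|\ge\xi(\mu n)^f$, and a binomial concentration bound (again Lemma~\ref{lem:chernoff}) shows that restricting from $U_S$ to the random $U_T\In U_S$ of size $\approx\rho\mu n$ leaves at least $\rho^{f-r}\xi\cdot 0.9\,(\rho\mu n)^f$ such cliques whp, with the $\cT$-unimportant case handled analogously using that the surviving unimportant edges of $G'$ are among the unimportant/low-type edges of $G$. Finally~\ref{setup:intersections} for the new setup, and also assertion~\ref{after focus super}, follow by combining~\ref{setup:intersections} of the original setup (which gives, for each $S\in\cS$ and small $B$, that $\bigcap_{b\in B}G(S\cup b)[U_S]$ is an $(\eps,\xi,f-i-h,r-i-h)$-complex) with the observation that for $T\in\cT$ with $S=T\restriction_\cS$ one has $G(T)[U_T]=\bigl(G(S)[U_S]\bigr)(T\sm S)[U_T]$ up to the random sparsification $U_S\rightsquigarrow U_T$; passing to a $\rho$-random subset of the ground set multiplies densities by roughly $\rho^{f-r}$ and, by the standard random-subset preservation of complex-regularity (the analogue of Corollary~\ref{cor:random induced subcomplex}/Lemma~\ref{lem:focus}), yields a $(1.1\eps,0.9\xi\rho^{f-r},\dots)$-complex whp; since $G'$ differs from $G$ only by deletion of some $r$-edges, passing from $G$ to $G'$ can only remove edges and the relevant density lower bounds are preserved by Proposition~\ref{prop:noise}.

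\textbf{Putting it together and the main obstacle.} Since each of conditions~\ref{twisted focus intersections}, \ref{setup:objects}--\ref{setup:intersections} for $G'$, and~\ref{after focus super} holds whp, they hold simultaneously for some choice of $\cU'$, and we fix such a $\cU'$; the induced $(\cP_{r}',\cP_{f}')$ then completes the proof. I expect the main obstacle to be the bookkeeping in~\ref{setup:matrix regular}: one must show that the deleted low-type important blocks and the reinterpretation of indices genuinely produce an $(r,f)$-partition pair whose containment function is of the required shape (cf.\ Figure~\ref{fig:containment table}), and that the new regularity matrix $A'$ both satisfies the degree estimates to within $1.1\eps$ and has $\min^{\backslash\backslash r-i'+1}(A')\ge\rho^{f-r}\xi$ — this is where the precise definition of an $(r,f)$-partition pair of $G',\cT,\cU'$, the upper-triangularity from Proposition~\ref{prop:facts about partition pairs}\ref{partition pair facts triangular}, and the concentration inequalities all have to be coordinated carefully. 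The probabilistic estimates themselves are routine applications of Lemma~\ref{lem:chernoff} once the correct expectations are identified, but identifying those expectations (especially for the `\ref{setup:dense}' clique counts restricted to the random subfocus) requires attention to which of the $f-r$ free coordinates of a clique may or may not fall in $U_T$.
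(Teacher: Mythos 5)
Your overall strategy is the one the paper intends (the proof is deferred to \cite{GKLO}): obtain each $U_T$ by retaining every vertex of $U_{T\restriction_{\cS}}$ independently with probability $\rho$, invoke Lemma~\ref{lem:focus} for~\ref{twisted focus intersections}, and verify the remaining setup conditions by concentration. However, there is a concrete error in your construction of the new partition pair. You set $\cP_{r}'([i']):=\cP_{r}([i])\sqcup(\tau_{r}^{-1}(0),\dots,\tau_{r}^{-1}(i'-i-1))$, ``reinterpreting the now-removed low-type important edges as the new unimportant blocks''. But the edges of type $<r-i'$ have been deleted from $G'$, so they cannot appear in any block of a partition of $G'^{(r)}$; and conversely the $\cS$-important edges of types $r-i'+1,\dots,r-i$ \emph{do} survive into $G'$, are $\cT$-unimportant by Proposition~\ref{prop:extension types}\ref{prop:extension types 3}, and are assigned to no block under your definition, so $\cP_{r}'$ is not a partition of $G'^{(r)}$. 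The correct choice of the $i'$ new unimportant blocks is $\cP_{r}([i])\sqcup(\tau_{r}^{-1}(r-i'+1),\dots,\tau_{r}^{-1}(r-i))$ (note $\Set{r-i'+1,\dots,r-i}$ has exactly $i'-i$ elements, as required by \ref{admissible length}), with the $\cT$-important edges $\tau_r^{-1}(r-i')$ then subdivided by the new type function into the remaining $r-i'+1$ blocks. Verifying \ref{new containment condition} for these blocks (using that $Q\In U_T\In U_{T\restriction_{\cS}}$ forces $|e\cap U_{T\restriction_{\cS}}|$ to be determined by $|e\cap Q|$ and the trace of $b$ on $U_{T\restriction_{\cS}}$) is precisely the bookkeeping you flag, but it must be done for the right blocks.

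A second, smaller problem concerns~\ref{after focus super}: your scaling argument yields only a $(1.1\eps,0.9\xi\rho^{f-r},f-i',r-i')$-complex, which is weaker than the assertion $(1.1\eps,0.9\xi,f-i',r-i')$. The source of the confusion is the normalisation. For \ref{setup:matrix regular} and \ref{setup:dense} of the new setup the counts are measured against $n^{f-r}$ and $(\rho\mu n)^{f}$ with $n=|V(G')|$ fixed, so restricting the $f-r$ free vertices of an extension to the $\rho$-random set $U_T$ genuinely costs a factor $\rho^{f-r}$ — this is why $\rho^{f-r}\xi$ appears in~\ref{twisted focus setup}. For~\ref{after focus super}, by contrast, the complex $G'(T)[U_T]=G(T)[U_T]$ (no $r$-set of $U_T$ together with $T$ has type below $r-i'$, so the deletion is invisible here) lives on the ground set $U_T$, and its regularity is measured against $|U_T|^{(f-i')-(r-i')}=|U_T|^{f-r}$; since both the clique counts and the normalisation scale by $\rho^{f-r}$, the density parameter is preserved up to a small loss, exactly as in Corollary~\ref{cor:random induced subcomplex}. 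So the correct conclusion is the stated $0.9\xi$, obtained by applying \ref{setup:intersections} of the original setup with $h=i'-i+j$ and $B=\set{(T\sm S)\cup b}{b\in B'}$ and then passing to the random subset $U_T$ of $U_S$ with the $|U_T|$-normalisation; your write-up as it stands does not establish~\ref{after focus super}.
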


\subsection{Proof of the Cover down lemma}
In this subsection, we state and prove the Cover down lemma for setups and deduce the Cover down lemma (Lemma~\ref{lem:cover down}).

\begin{defin}\label{def:qr divisible}
Let $F$ and $G$ be $r$-graphs, let $\cS$ be an $i$-system in $V(G)$, and let $\cU$ be a focus for $\cS$. We say that $G$ is \defn{$F$-divisible with respect to $\cS,\cU$}, if for all $S\in\cS$ and all $T\In V(G)\sm S$ with $|T|\le r-i-1$ and $|T\sm U_S|\ge 1$, we have $Deg(F)_{i+|T|}\mid |G(S\cup T)|$.
\end{defin}

Note that if $G$ is $F$-divisible, then it is $F$-divisible with respect to any $i$-system and any associated focus.

Recall that a setup for $G$ was defined in Definition~\ref{def:regular focus}, and $G$ being $(\xi,f,r)$-dense with respect to $H\In G^{(r)}$ in Definition~\ref{def:dense wrt}.
We will prove the Cover down lemma for setups by induction on $r-i$. We will deduce the Cover down lemma by applying this lemma with $i=0$.

\begin{lemma}[Cover down lemma for setups]\label{lem:horrible}
Let $1/n\ll 1/\kappa \ll \gamma \ll \eps \ll \nu \ll \mu,\xi,1/f$ and $0\le i<r<f$. Let $F$ be a weakly regular $r$-graph on $f$ vertices. Assume that \ind{\ell} is true for all $\ell\in[r-i-1]$.\COMMENT{So if $i=r-1$, no assumption is needed, and if $i=0$, then we assume \ind{1}--\ind{r-1}} Let $G$ be a complex on $n$ vertices and suppose that $\cS,\cU,(\cP_{r},\cP_{f})$ form an $(\eps,\mu,\xi,f,r,i)$-setup for $G$.
 For $r'\ge r$, let $\tau_{r'}$ denote the type function of $G^{(r')}$, $\cS$, $\cU$.
Then the following hold.
\begin{enumerate}[label={\rm(\roman*)}]
\item Let $\tilde{G}$ be a complex on $V(G)$ with $G\In \tilde{G}$ such that $\tilde{G}$ is $(\eps,f,r)$-dense with respect to $G^{(r)}- \tau_{r}^{-1}(0)$. Then there exists a subgraph $H^{\ast}\In G^{(r)}- \tau_{r}^{-1}(0)$ with $\Delta(H^\ast)\le \nu n$ such that for any $L^\ast \In \tilde{G}^{(r)}$ with $\Delta(L^\ast)\le \gamma n$ and $H^\ast \cup L^\ast$ being $F$-divisible with respect to $\cS,\cU$ and any $(r+1)$-graph $O^\ast$ on $V(G)$ with $\Delta(O^\ast)\le \gamma n$, there exists a $\kappa$-well separated $F$-packing in $\tilde{G}[H^\ast \cup L^\ast]-O^\ast$ which covers all edges of $L^\ast$, and all $\cS$-important edges of $H^\ast$ except possibly some from $\tau_{r}^{-1}(r-i)$.\label{lem:horrible:specific}
\item If $G^{(r)}$ is $F$-divisible with respect to $\cS,\cU$ and the setup is diagonal-dominant, then there exists a $2\kappa$-well separated $F$-packing in $G$ which covers all $\cS$-important $r$-edges except possibly some from $\tau_{r}^{-1}(r-i)$.\label{lem:horrible:all}
\end{enumerate}
\end{lemma}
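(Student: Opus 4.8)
\textbf{Proof plan for Lemma~\ref{lem:horrible} (Cover down lemma for setups).}
The plan is to prove both parts simultaneously by induction on $r-i$, following the strategy sketched after the statement of Lemma~\ref{lem:cover down}: cover the low-type edges via an inductive application to an $i'$-extension, and cover the (essentially) top-type edges via the Localised cover down lemma (Lemma~\ref{lem:dense jain}). The base case is $r-i=1$, i.e.\ $i=r-1$: here $\tau_r$ takes only the values $0$ and $1$, so after discarding the type-$0$ edges the $\cS$-important edges all satisfy $e\sm S\In U_S$, and one applies Lemma~\ref{lem:dense jain} directly (using \ref{setup:intersections} to get that $G(S)[U_S]$ is the required supercomplex, after first thinning it via Lemma~\ref{lem:local sparsifier} to absorb a sparse leftover and a sparse $O$). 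For the inductive step, I would first reduce \ref{lem:horrible:all} to \ref{lem:horrible:specific}: given a diagonal-dominant setup, apply Lemma~\ref{lem:make dominant}-style bookkeeping is unnecessary since it is already diagonal-dominant; instead extract via Lemma~\ref{lem:dominant2regular} a $Y^\ast$ so that $G[Y^\ast]$ is $(O(\eps),d,f,r)$-regular and dense, use the $F$-nibble lemma (Lemma~\ref{lem:F nibble}) on $G-G^{(r)}[\text{type }r-i]-H^\ast$ to get an $F$-packing whose leftover $L^\ast$ has $\Delta(L^\ast)\le\gamma n$ and is $F$-divisible-with-respect-to-$\cS,\cU$ (using Corollary~\ref{cor:make divisible} to fix divisibility without touching $H^\ast$), then feed $L^\ast$ into part \ref{lem:horrible:specific}.

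The heart of the argument is constructing $H^\ast$ for \ref{lem:horrible:specific}. Fix $i':=i+1$ and let $\cT$ be the $i'$-extension of $\cS$ in $G$ around $\cU$; let $G_1:=G-\{e\in G^{(r)}: e\text{ is }\cS\text{-important, }\tau_r(e)<r-i'\}$ as in Proposition~\ref{prop:extension types}. Apply Lemma~\ref{lem:setup refinement} (with $i'=i+1$) to obtain a focus $\cU'$ for $\cT$ and a partition pair making $\cT,\cU',(\cP_r',\cP_f')$ an $(O(\eps),\rho\mu,\rho^{f-r}\xi,f,r,i')$-setup for $G_1$, with $U_T\In U_{T{\restriction_\cS}}$ for all $T\in\cT$, and with $G_1(T)[U_T]$ a supercomplex for each $T$. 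Now split the work: the $\cS$-important edges of type $<r-i'$ in $G$ correspond exactly to $\cT$-important edges of $G_1$ (Proposition~\ref{prop:extension types}\ref{prop:extension types 3}), so I apply the induction hypothesis (for $r-i'=r-i-1$) to the setup for $G_1$ with $\tilde G$ restricted appropriately, obtaining a partial absorber $H_1^\ast\In G_1^{(r)}-(\text{type-}0\text{ w.r.t. }\cT)$ with $\Delta(H_1^\ast)\le\nu n/2$; this will clean the low-type edges, leaving only edges of type $r-i'$ with respect to $\cT$, i.e.\ $\cS$-important edges of type $r-i-1$ or $r-i$ with respect to $\cS$. The remaining type-$(r-i-1)$ edges are then handled by Lemma~\ref{lem:dense jain} applied to the system $\cT$ (here one again sparsifies $G_1(T)[U_T]$ via Lemma~\ref{lem:local sparsifier} to a supercomplex robust to the sparse leftovers and to $O$, then checks the divisibility hypothesis using $F$-divisibility-with-respect-to-$\cS,\cU$ together with Proposition~\ref{prop:link divisibility}). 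Finally $H^\ast$ is taken to be the union of $H_1^\ast$ with the type-$(r-i-1)$ part of $G_1$ together with the image of what Lemma~\ref{lem:local sparsifier} carves out, intersected away from type $0$; type-$(r-i)$ edges are never touched and remain as the permitted residue $\tau_r^{-1}(r-i)$.

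The order of operations matters: all the sparsifications (Lemmas~\ref{lem:make dominant}, \ref{lem:local sparsifier}, and the one hidden inside Lemma~\ref{lem:setup refinement}) and the reservation of $H^\ast$ must be done \emph{before} any leftover $L^\ast$ or obstruction $O^\ast$ is revealed, because the packings we build must avoid $O^\ast$ and must work for every admissible $L^\ast$; conversely, when $L^\ast$ and $O^\ast$ are given we first greedily cover $L^\ast$ (using Corollary~\ref{cor:greedy cover}, legitimate since $\tilde G$ is dense w.r.t.\ $G^{(r)}-\tau_r^{-1}(0)$ and $\Delta(L^\ast)$ is tiny), producing a new small leftover inside the already-reserved $H^\ast$ plus a sparse $(r+1)$-graph to add to $O^\ast$, and only then invoke the inductive packing and Lemma~\ref{lem:dense jain}. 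Keeping track of well-separatedness across the two pieces is routine via Fact~\ref{fact:ws}, after arranging that the inductive packing and the localised packing are chosen $(r+1)$-disjoint (respectively $r$-disjoint for the local parts), exactly as in the cleaner/vortex arguments. I expect the main obstacle to be purely bookkeeping: verifying that the hypotheses of Lemma~\ref{lem:setup refinement} and of the induction hypothesis are met with the slightly degraded parameters, that the partition-pair indices shift correctly (Proposition~\ref{prop:facts about partition pairs}\ref{prop:facts about partition pairs:index shift}), and that ``$F$-divisible with respect to $\cS,\cU$'' propagates to ``$F(S^\ast)$-divisible'' for the link complexes fed into Lemma~\ref{lem:dense jain}; none of these steps is conceptually new relative to \cite{GKLO}, which is why this lemma is deferred to the appendix, but assembling the parameter hierarchy $1/n\ll1/\kappa\ll\gamma\ll\eps\ll\nu\ll\rho\ll\mu,\xi,1/f$ consistently through the induction requires care.
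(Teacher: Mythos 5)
Your overall architecture (induction on $r-i$, a greedy covering of $L^\ast$, an inductive call on an extension system, and Lemma~\ref{lem:dense jain} for the localised top-type edges) matches the paper's, but the inductive step contains a genuine error that leaves most of the edges uncovered. You assert that ``the $\cS$-important edges of type $<r-i'$ in $G$ correspond exactly to $\cT$-important edges of $G_1$''. This reverses Proposition~\ref{prop:extension types}\ref{prop:extension types 3}: the edges of type $<r-i'$ are precisely the ones \emph{deleted} to form $G_1$, while the $\cT$-important edges of $G_1$ are those of type exactly $r-i'=r-i-1$. Consequently your single reduction to the $(i+1)$-extension only addresses edges of types $r-i-1$ and $r-i$, and nothing in your plan covers the $\cS$-important edges of $H^\ast$ of types $1,\dots,r-i-2$. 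These cannot be avoided: the greedy reservoir $H_0$ must be a positive-density subgraph of $G^{(r)}-\tau_r^{-1}(0)$ for Corollary~\ref{cor:greedy cover} to apply to an arbitrary $L^\ast$, so it necessarily contains edges of every type in $[r-i-1]$, and its uncovered remainder after the greedy step is far too dense (maximum degree of order $\eps n\gg\gamma n$) to be absorbed as a ``leftover'' by a single inductive call to part~\ref{lem:horrible:specific}. The paper's proof instead iterates over all type levels $\ell\in[r-i-1]$: for each $\ell$ it reserves a separate pair of graphs ($H_\ell$ via Lemma~\ref{lem:make dominant} and $J_\ell$ via Lemma~\ref{lem:local sparsifier}) associated with the $(r-\ell)$-extension $\cT^{r-\ell}$, and at step $\ell$ it covers the type-$\ell$ edges by applying part~\ref{lem:horrible:all} inductively (not part~\ref{lem:horrible:specific} --- part~\ref{lem:horrible:all} runs its own nibble internally and can therefore digest the dense remainder $H_\ell\bigtriangleup L$), followed by Lemma~\ref{lem:dense jain} on $J_\ell$.

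Two smaller points. In the base case $i=r-1$ you propose to apply Lemma~\ref{lem:dense jain} directly, but its divisibility hypothesis (that $(H^\ast\cup L^\ast)(S)[U_S]$ is $F(S^\ast)$-divisible) does not follow from $F$-divisibility with respect to $\cS,\cU$, which by Definition~\ref{def:qr divisible} says nothing about sets contained in $U_S$ --- this is exactly why $\tau_r^{-1}(r-i)$ is the permitted residue; the base case of~\ref{lem:horrible:specific} is just the greedy covering of $L^\ast$, since every $\cS$-important edge of $H^\ast\In G^{(r)}-\tau_r^{-1}(0)$ already lies in $\tau_r^{-1}(1)$. In~\ref{lem:horrible:all}, deleting $\tau_r^{-1}(r-i)$ before running the nibble forces you to repair divisibility with Corollary~\ref{cor:make divisible}, which yields plain $F$-divisibility rather than divisibility with respect to $\cS,\cU$ and deletes edges that then remain uncovered; the paper simply nibbles on all of $G^{(r)}-H^\ast$, so that $H^\ast\cup L_{nibble}=G^{(r)}-\cF_{nibble}^{(r)}$ is automatically divisible with respect to $\cS,\cU$, and the surviving top-type edges are covered because part~\ref{lem:horrible:specific} covers all of the leftover it is fed.
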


Before proving Lemma~\ref{lem:horrible}, we show how it implies the Cover down lemma (Lemma~\ref{lem:cover down}). Note that we only need part \ref{lem:horrible:specific} of Lemma~\ref{lem:horrible} to prove Lemma~\ref{lem:cover down}. \ref{lem:horrible:all} is used in the inductive proof of Lemma~\ref{lem:horrible} itself.

\lateproof{Lemma~\ref{lem:cover down}}
Let $\cS:=\Set{\emptyset}$, $\cU:=\Set{U}$ and let $(\cP_{r},\cP_{f})$ be the $(r,f)$-partition pair of $G,U$. By Proposition~\ref{prop:random generalisation}, $\cS,\cU,(\cP_{r},\cP_{f})$ form a $(\eps,\mu,\mu^{f-r}\xi,f,r,0)$-setup for $G$. We can thus apply Lemma~\ref{lem:horrible}\ref{lem:horrible:specific} with $\mu^{f-r}\xi$ playing the role of $\xi$. Recall that all $r$-edges of $G$ are $\cS$-important. Moreover, let $\tau_{r}$ denote the type function of $G^{(r)}$, $\cS$, $\cU$. We then have $\tau_{r}^{-1}(0)=G^{(r)}[\bar{U}]$ and $\tau_{r}^{-1}(r)=G^{(r)}[U]$, where $\bar{U}:=V(G)\sm U$.
\endproof

\lateproof{Lemma~\ref{lem:horrible}}
The proof is by induction on $r-i$. For $i=r-1$, we will prove the statement directly. For $i<r-1$, we assume that the statement is true for all $i'\in\Set{i+1,\dots,r-1}$. We will first prove \ref{lem:horrible:specific} using \ref{lem:horrible:all} inductively, and then derive \ref{lem:horrible:all} from \ref{lem:horrible:specific} (for the same value of $r-i$).

\lateproof{\ref{lem:horrible:specific}}

If $i<r-1$, choose new constants $\nu_1,\rho_1,\beta_1,\dots,\nu_{r-i-1},\rho_{r-i-1},\beta_{r-i-1}$ such that $$1/n\ll  1/\kappa \ll \gamma \ll \eps \ll \nu_1 \ll \rho_1 \ll \beta_1 \ll \dots \ll \nu_{r-i-1} \ll \rho_{r-i-1} \ll \beta_{r-i-1} \ll \nu \ll \mu,\xi,1/f.$$

For every $\ell\in[r-i-1]$, let
\begin{align}
G_{\ell}:=G-\set{e\in G^{(r)}}{e\mbox{ is }\cS\mbox{-important and }\tau_r(e)<\ell}.\label{def G l}
\end{align}
For every $i'\in \Set{i+1,\dots,r-1}$, let $\cT^{i'}$ be the $i'$-extension of $\cS$ in $G$ around $\cU$. By Proposition~\ref{prop:extension types}, the following hold for all $i'\in \Set{i+1,\dots,r-1}$:
\begin{enumerate}[label=\rm{(\Roman*)}]
\item $G_{r-i'}$ is $r$-exclusive with respect to $\cT^{i'}$;\label{prop:extension types 1 newer}
\item the elements of $\tau_{r}^{-1}(r-i')$ are precisely the $\cT^{i'}$-important elements of $G_{r-i'}^{(r)}$.\label{prop:extension types 3 newer}
\end{enumerate}

By Lemma~\ref{lem:setup refinement}, for every $i'\in \Set{i+1,\dots,r-1}$, there exist $\cU^{i'}$, $\cP_{r}^{i'}$, $\cP_{f}^{i'}$ such that the following hold:
\begin{enumerate}[label=\rm{(\alph*)}]
\item $\cU^{i'}$ is a $(\mu,\rho_{r-i'},r)$-focus for $\cT^{i'}$ such that $U_T\In U_{T{\restriction_{\cS}}}$ for all $T\in \cT^{i'}$;\label{refined focus random}
\item $\cT^{i'},\cU^{i'},(\cP_{r}^{i'},\cP_{f}^{i'})$ form a $(1.1\eps,\rho_{r-i'}\mu,\rho_{r-i'}^{f-r}\xi,f,r,i')$-setup for $G_{r-i'}$;\label{twisted focus setup new}
\item $G_{r-i'}(T)[U_T]$ is a $(1.1\eps,0.9\xi,f-i',r-i')$-supercomplex for every $T\in\cT^{i'}$.\label{after focus super new}
\end{enumerate}

\ref{prop:extension types 1 newer} allows us to consider the type function $\tau_{r-i',r}$ of $G_{r-i'}^{(r)},\cT^{i'},\cU^{i'}$.

\medskip

{\bf Step~1: Reserving subgraphs}

\smallskip

In this step, we will find a number of subgraphs of $G^{(r)}- \tau_{r}^{-1}(0)$ whose union will be the $r$-graph $H^\ast$ we seek in \ref{lem:horrible:specific}. Let $\tilde{G}$ be a complex as specified in \ref{lem:horrible:specific}.
Let $\beta_0:=\eps$. Let $H_0$ be a subgraph of $G^{(r)}- \tau_{r}^{-1}(0)$ with $\Delta(H_0)\le 1.1\beta_0 n$ such that for all $e\in \tilde{G}^{(r)}$, we have
\begin{align}
|\tilde{G}[H_0\cup \Set{e}]^{(f)}(e)|\ge  0.9 \beta_0^{\binom{f}{r}}n^{f-r}.\label{greedy assumption}
\end{align}
($H_0$ will be used to greedily cover $L^\ast$.)
That such a subgraph exists can be seen by a probabilistic argument: let $H_0$ be obtained by including every edge of $G^{(r)}- \tau_{r}^{-1}(0)$ with probability $\beta_0$. Clearly, \whp $\Delta(H_0)\le 1.1\beta_0 n$. Also, since $\tilde{G}$ is $(\eps,f,r)$-dense with respect to $G^{(r)}- \tau_{r}^{-1}(0)$ by assumption, we have for all $e\in \tilde{G}^{(r)}$ that
\begin{align*}
\expn{|\tilde{G}[H_0\cup \Set{e}]^{(f)}(e)|}=\beta_0^{\binom{f}{r}-1}|\tilde{G}[(G^{(r)}- \tau_{r}^{-1}(0))\cup \Set{e}]^{(f)}(e)| \ge \beta_0^{\binom{f}{r}-1}\eps n^{f-r}.
\end{align*}
Using Corollary~\ref{cor:graph chernoff} and a union bound, it is then easy to see that \whp $H_0$ satisfies~\eqref{greedy assumption} for all $e\in \tilde{G}^{(r)}$.

\medskip

{\em Step~1.1: Defining `sparse' induction graphs $H_\ell$.}

\smallskip

Consider $\ell\in[r-i-1]$ and let $i':=r-\ell$. Let $\xi_\ell:=\nu_\ell^{8^f\cdot f+1}$.
By \ref{twisted focus setup new} and Lemma~\ref{lem:make dominant} (with $G_\ell,3\beta_{\ell-1},\nu_\ell,\rho_\ell\mu,\rho_\ell^{f-r}\xi,i'$ playing the roles of $G,\eps,\nu,\mu,\xi,i$), there exists a subgraph $H_\ell\In G_\ell^{(r)}$ with $\Delta(H_\ell)\le 1.1\nu_\ell n$ and the following property: for all $L\In G_\ell^{(r)}$ with $\Delta(L)\le 3\beta_{\ell-1} n$ and every $(r+1)$-graph $O$ on $V(G_\ell)$ with $\Delta(O)\le 3\beta_{\ell-1} n$, the following holds for $G':=G_\ell[H_\ell \bigtriangleup L]-O$:
\begin{align}
&\cT^{i'},\cU^{i'},(\cP^{i'}_{r},\cP^{i'}_{f})[G']\mbox{ form a diagonal-dominant }\label{sparse induction graph}\\ &(\sqrt{3\beta_{\ell-1}},\rho_\ell\mu,\xi_\ell,f,r,i')\mbox{-setup for }G'.\nonumber
\end{align}

\medskip

{\em Step~1.2: Defining `localised' cleaning graphs $J_\ell$.}

\smallskip

Again, consider $\ell\in[r-i-1]$ and let $i':=r-\ell$.
Let
\begin{align}
G_\ell^\ast:=G_\ell-\set{e\in G_\ell^{(r)}}{e\mbox{ is }\cT^{i'}\mbox{-important and }\tau_{\ell,r}(e)< \ell}.\label{def G l star}
\end{align}
We claim that $G_\ell^\ast(T)[U_T]=G_\ell(T)[U_T]$ for every $T\in\cT^{i'}$.
Indeed, consider any $T\in\cT^{i'}$ and $e\in G_\ell(T)[U_T]$. Hence, $e\In U_T$ and $e\cup T\in G_\ell$. We need to show that $e\cup T\in G_\ell^\ast$, i.e.~that there is no $\cT^{i'}$-important $r$-subset $e'$ of $e\cup T$ with $\tau_{\ell,r}(e')< \ell$. However, if $e'\in \binom{e\cup T}{r}$ is $\cT^{i'}$-important, then $|e\cup T|\ge |e'|=r$ and since $G_\ell$ is $r$-exclusive with respect to $\cT^{i'}$ by \ref{prop:extension types 1 newer}, we must have $T\In e'$. As $e'\sm T\In e\In U_T$, we deduce that $\tau_{\ell,r}(e')= |e'\cap U_T|=|e'\sm T|=r-i'=\ell$.

Hence, by~\ref{after focus super new}, for every $T\in\cT^{i'}$, $G_\ell^\ast(T)[U_T]$ is a $(1.1\eps,0.9\xi,f-i',r-i')$-supercomplex.
Thus, by Lemma~\ref{lem:local sparsifier} (with $G_\ell^\ast,3\nu_\ell,\rho_\ell\mu,\beta_\ell,0.9\xi$ playing the roles of $G,\eps,\mu,\beta,\xi$), there exists a subgraph $J_\ell\In G_\ell^{\ast(r)}$ with $\Delta(J_\ell)\le 1.1\beta_\ell n$ and the following property: for all $L\In G_\ell^{\ast(r)}$ with $\Delta(L)\le 3\nu_\ell n$ and every $(r+1)$-graph $O$ on $V(G_\ell^\ast)$ with $\Delta(O)\le 3\nu_\ell n$, the following holds for $G^\ast:=G_\ell^\ast[J_\ell\bigtriangleup L]-O$:
\begin{align}
G^\ast(T)[U_T]\mbox{ is a }(\sqrt{3\nu_\ell},0.81\xi\beta_\ell^{(8^f)},f-i',r-i')\mbox{-supercomplex for every }T\in\cT^{i'}.\label{robust dense}
\end{align}

\bigskip

We have defined subgraphs $H_0,H_1,\dots,H_{r-i-1},J_1,\dots,J_{r-i-1}$ of $G^{(r)}- \tau_r^{-1}(0)$. Note that they are not necessarily edge-disjoint. Let $H_0^\ast:=H_0$ and for all $\ell\in[r-i-1]$ define inductively
\begin{align*}
H_\ell' &:=H_{\ell-1}^\ast\cup H_\ell,\\
H_\ell^\ast &:=H_{\ell-1}^\ast\cup H_\ell \cup J_\ell=H_\ell' \cup J_\ell,\\
H^\ast &:=H_{r-i-1}^\ast.
\end{align*}
Clearly, $\Delta(H_{\ell}^\ast)\le 2\beta_{\ell} n$ for all $\ell\in[r-i-1]_0$ and $\Delta(H_{\ell}')\le 2\nu_{\ell} n$ for all $\ell\in[r-i-1]$. In particular, $\Delta(H^\ast)\le 2\beta_{r-i-1}n\le \nu n$, as desired.

\medskip

{\bf Step~2: Covering down}

\smallskip

Let $L^\ast$ be any subgraph of $\tilde{G}^{(r)}$ with $\Delta(L^\ast)\le \gamma n$ such that $H^\ast \cup L^\ast$ is $F$-divisible with respect to $\cS,\cU$, and let $O^\ast \In \tilde{G}^{(r+1)}$ with $\Delta(O^\ast)\le \gamma n$. We need to find a $\kappa$-well separated $F$-packing $\cF$ in $\tilde{G}[H^\ast \cup L^\ast]-O^\ast$ which covers all edges of $L^\ast$, and covers all $\cS$-important edges of $H^\ast$ except possibly some from $\tau_{r}^{-1}(r-i)$.
We will do so by inductively showing that the following holds for all $\ell\in[r-i]$.
\begin{itemize}
\item[\indcov{\ell}] There exists a $(3\ell\sqrt{\kappa})$-well separated $F$-packing $\cF_{\ell-1}^\ast$ in $\tilde{G}[H_{\ell-1}^\ast \cup L^\ast]-O^\ast$ covering all edges of $L^\ast$, and all $\cS$-important $e\in H_{\ell-1}^\ast$ with $\tau_r(e)<\ell$.
\end{itemize}
Clearly, \indcov{r-i} establishes~\ref{lem:horrible:specific}.\COMMENT{$H_{r-i-1}^\ast=H^\ast$, $3(r-i)\sqrt{\kappa}\le \kappa$, $Im(\tau_r)=[r-i]_0$, thus if all edges $e$ with $\tau_r(e)<r-i$ are covered, then the only remaining ones are those of type $r-i$}

\begin{NoHyper}\begin{claim}
\indcov{1} is true.
\end{claim}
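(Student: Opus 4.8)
The plan is to establish \indcov{1}, the base case of the induction on $\ell$ that drives the proof of Lemma~\ref{lem:horrible}\ref{lem:horrible:specific}. Recall that $H_0^\ast = H_0$ and $3\cdot 1\cdot \sqrt{\kappa}=3\sqrt{\kappa}$, so we need a $3\sqrt{\kappa}$-well separated $F$-packing $\cF_0^\ast$ in $\tilde{G}[H_0\cup L^\ast]-O^\ast$ which covers all edges of $L^\ast$ and all $\cS$-important edges $e\in H_0$ with $\tau_r(e)<1$, i.e.\ with $\tau_r(e)=0$. But $H_0\In G^{(r)}-\tau_r^{-1}(0)$ by construction (Step~1), so $H_0$ contains no $\cS$-important edge of type $0$; hence in fact we only need to cover the edges of $L^\ast$. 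So the claim reduces to: there is a $1$-well separated (and thus certainly $3\sqrt{\kappa}$-well separated) $F$-packing in $\tilde{G}[H_0\cup L^\ast]-O^\ast$ covering all of $L^\ast$.

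First I would observe that every edge $e\in L^\ast$ lies in $\tilde{G}^{(r)}$ and that, by~\eqref{greedy assumption}, $|\tilde{G}[H_0\cup\Set{e}]^{(f)}(e)|\ge 0.9\beta_0^{\binom{f}{r}}n^{f-r}$. Since $L^\ast,H_0,O^\ast$ are all sparse ($\Delta(L^\ast),\Delta(H_0),\Delta(O^\ast)$ all at most linear in $n$ with small coefficients), one can find, for each $e\in L^\ast$ in turn, an $(f-r)$-set $Q_e$ with $Q_e\cup e\in \tilde{G}^{(f)}$, $\binom{Q_e\cup e}{r}\In H_0\cup L^\ast$, such that the resulting copies of $F$ are edge-disjoint, are $1$-well separated (pairwise sharing fewer than $r$ vertices — this is automatic if $f>r$ by choosing the $Q_e$ greedily so that distinct cliques overlap in $<r$ vertices), and such that no $F'$ in the packing has an $(r+1)$-set lying in $O^\ast$. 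This is precisely the type of greedy covering argument carried out in Corollary~\ref{cor:greedy cover} (combined with Lemma~\ref{lem:greedy cover uni}), adapted to respect the forbidden $(r+1)$-graph $O^\ast$: one shrinks the candidate set $\cQ_e$ by at most $O(\gamma n^{f-r})$ to avoid $O^\ast$ and the previously used edges, which leaves it of size $\ge 0.8\beta_0^{\binom{f}{r}}n^{f-r}$, still large enough to continue. The key point is that $1/\kappa\ll\gamma\ll\eps\ll\nu_1$ and $\beta_0=\eps$, so $\gamma$ is negligible compared to $\beta_0^{\binom{f}{r}}$.

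Concretely I would: (i) enumerate $L^\ast = \Set{e_1,\dots,e_m}$ with $m\le \binom{n}{r}$; (ii) for each $j$ define $\cQ_j := \tilde{G}[H_0\cup\Set{e_j}]^{(f)}(e_j)$, which has size $\ge 0.9\beta_0^{\binom{f}{r}}n^{f-r}$ by~\eqref{greedy assumption}; (iii) apply Lemma~\ref{lem:greedy cover uni} with $L_j:=\Set{e_j}$, $s:=1$, $\alpha:=0.9\beta_0^{\binom{f}{r}}$, the host complex $\tilde{G}[H_0\cup L^\ast]$, and with the $(r+1)$-graph $O^\ast$ (of maximum degree $\le\gamma n$) incorporated by using the version of the greedy argument that also avoids $O^\ast$ — alternatively, first delete from each $\cQ_j$ the $\le 2^{f-r}\gamma n^{f-r}$ many $f$-sets $Q$ with $\binom{Q\cup e_j}{r+1}\cap O^\ast\neq\emptyset$, still leaving $|\cQ_j|\ge 0.8\beta_0^{\binom{f}{r}}n^{f-r}\ge \sqrt{\gamma}n^{f-r}$; (iv) obtain cliques $K_1,\dots,K_m$ which are pairwise $r$-disjoint with $\Delta(\bigcup_j K_j^{(r)})\le\sqrt{\gamma}n$, let $F_j$ be a copy of $F$ on $V(F_j)=K_j$ with $e_j\in F_j$, and set $\cF_0^\ast:=\Set{F_1,\dots,F_m}$. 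Since the $K_j$ are pairwise $r$-disjoint, $\cF_0^\ast$ is $1$-well separated by Fact~\ref{fact:ws}\ref{fact:ws:2} (or directly by Definition~\ref{def:well separated}). It covers all of $L^\ast$ by construction, it uses only edges of $H_0\cup L^\ast$ so it is an $F$-packing in $\tilde{G}[H_0\cup L^\ast]$, and by step (iii) no $F_j$ contains an edge of $O^\ast$, so $\cF_0^\ast$ is a packing in $\tilde{G}[H_0\cup L^\ast]-O^\ast$. As $1\le 3\sqrt{\kappa}$, \indcov{1} holds.

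The only mild subtlety — the main obstacle, such as it is — is bookkeeping the interaction with the forbidden graph $O^\ast$ and with the edges already used by earlier cliques in the greedy process, i.e.\ verifying that the candidate sets $\cQ_j$ stay large throughout; this is a routine application of the sparsity of $O^\ast$, $H_0$ and of the accumulated $\bigcup_{j'<j}K_{j'}^{(r)}$ (whose maximum degree stays $\le\sqrt{\gamma}n$), together with Proposition~\ref{prop:sparse noise containment} to bound the number of bad $f$-sets. The case $i=r-1$ is handled identically, since in that case there are no induction graphs $H_\ell$ for $\ell\ge 1$ at all and $H^\ast=H_0$, so \indcov{1} is exactly the full statement of \ref{lem:horrible:specific} (with $\tau_r^{-1}(r-i)=\tau_r^{-1}(1)$ being the only type that may remain uncovered, namely the $\cS$-important edges of $H_0\cup L^\ast$ — but $H_0$ has no type-$0$ edges and we have covered all of $L^\ast$, so only type-$(r-i)$ edges of $H_0$ remain, as required).
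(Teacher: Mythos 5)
Your proposal is correct and follows essentially the same route as the paper: reduce \indcov{1} to covering $L^\ast$ (since $H_0\In G^{(r)}-\tau_r^{-1}(0)$ makes the type-$0$ condition vacuous), use \eqref{greedy assumption} together with the sparsity of $O^\ast$ (Proposition~\ref{prop:sparse noise containment}) to keep the candidate sets of size at least $0.8\beta_0^{\binom{f}{r}}n^{f-r}$, and then invoke Corollary~\ref{cor:greedy cover} to obtain a $1$-well separated $F$-packing in $\tilde{G}[H_0\cup L^\ast]-O^\ast$ covering all of $L^\ast$. The bookkeeping you describe in step (iii) is exactly what the paper delegates to working inside the complex $\tilde{G}[H_0\cup L^\ast]-O^\ast$ and to Corollary~\ref{cor:greedy cover}/Lemma~\ref{lem:greedy cover uni}.
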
\end{NoHyper}

\claimproof
Let $H_0':=H_0\cup L^\ast=H_0^\ast\cup L^\ast$. By \eqref{greedy assumption} and Proposition~\ref{prop:sparse noise containment}, for all $e\in L^\ast$ we have that $$|(\tilde{G}[H_0']-O^\ast)^{(f)}(e)|\ge |\tilde{G}[H_0\cup e]^{(f)}(e)| -2^r\gamma n^{f-r}\ge 0.8\beta_0^{\binom{f}{r}}n^{f-r}.$$\COMMENT{Here we need $\gamma\ll \eps$}
By Corollary~\ref{cor:greedy cover},\COMMENT{With $\gamma$ playing the role of $\gamma$ and $0.8\beta_0^{\binom{f}{r}}$ playing the role of $\alpha$} there is a $1$-well separated $F$-packing $\cF_0^\ast$ in $\tilde{G}[H_0']-O^\ast$ covering all edges of $L^\ast$. Since $H_0^\ast$ does not contain any edges from $\tau_r^{-1}(0)$, $\cF_0^\ast$ satisfies \indcov{1}.
\endclaimproof

If $i=r-1$, we can take $\cF_0^\ast$ and complete the proof of \ref{lem:horrible:specific}. So assume that $i<r-1$ and that Lemma~\ref{lem:horrible} holds for larger values of $i$.

Suppose that for some $\ell\in[r-i-1]$, $ \cF_{\ell-1}^{\ast}$ satisfies \indcov{\ell}. Let $i':=r-\ell>i$.
We will now find a $3\sqrt{\kappa}$-well separated $F$-packing $\cF_\ell$ in $G[H_{\ell}^\ast]-\cF_{\ell-1}^{\ast(r)}-\cF_{\ell-1}^{\ast\le(r+1)}-O^\ast$ such that $\cF_\ell$ covers all edges of $H_{\ell}^\ast-\cF_{\ell-1}^{\ast(r)}$ that belong to $\tau_{r}^{-1}(\ell)$.

Then $\cF_{\ell}^\ast:=\cF_{\ell-1}^\ast\cup \cF_\ell$ covers all edges of $L^\ast$ and all $\cS$-important $e\in H_{\ell}^\ast$ with $\tau_r(e)< \ell+1$. By Fact~\ref{fact:ws}\ref{fact:ws:1}, $\cF_{\ell}^\ast$ is $(3\ell\sqrt{\kappa}+3\sqrt{\kappa})$-well separated, implying that \indcov{\ell+1} is true.

Crucially, by \ref{prop:extension types 3 newer}, all the edges of $\tau_{r}^{-1}(\ell)$ that we seek to cover in this step are $\cT^{i'}$-important. We will obtain $\cF_\ell$ as the union of $\cF_\ell^\circ$ and $\cF_\ell^\dagger$, where
\begin{enumerate}[label=(COV\arabic*)]
\item $\cF_\ell^\circ$ is $2\sqrt{\kappa}$-well separated $F$-packing in $G[H_{\ell}^\ast]-\cF_{\ell-1}^{\ast(r)}-\cF_{\ell-1}^{\ast\le(r+1)}-O^\ast$ which covers all $\cT^{i'}$-important edges of $H_{\ell}^\ast-\cF_{\ell-1}^{\ast(r)}$ except possibly some from $\tau_{\ell,r}^{-1}(\ell)$;\label{inductive cover:a}
\item $\cF_\ell^\dagger$ is a $\sqrt{\kappa}$-well separated $F$-packing in $G[H_{\ell}^\ast]-\cF_{\ell-1}^{\ast(r)}-\cF_\ell^{\circ(r)}-\cF_{\ell-1}^{\ast\le(r+1)}-\cF_\ell^{\circ\le(r+1)}-O^\ast$ which covers all $\cT^{i'}$-important edges of $H_{\ell}^\ast-\cF_{\ell-1}^{\ast(r)}-\cF_\ell^{\circ(r)}$.\label{inductive cover:b}
\end{enumerate}
Since $\cF_\ell^\dagger$ and $\cF_{\ell}^\circ$ are $(r+1)$-disjoint, $\cF_\ell:=\cF_\ell^\circ\cup \cF_\ell^\dagger$ is $3\sqrt{\kappa}$-well separated by Fact~\ref{fact:ws}\ref{fact:ws:1}. Clearly, $\cF_\ell$ covers all $\cT^{i'}$-important edges of $H_{\ell}^\ast-\cF_{\ell-1}^{\ast(r)}$, as required. We will obtain $\cF_\ell^\circ$ by using \ref{lem:horrible:all} of this lemma inductively, and $\cF_\ell^\dagger$ by an application of the Localised cover down lemma (Lemma~\ref{lem:dense jain}).

Recall that $F$-divisibility with respect to $\cT^{i'},\cU^{i'}$ was defined in Definition~\ref{def:qr divisible}.
Let $H_\ell'':=H_{\ell}'-\cF_{\ell-1}^{\ast(r)}$.
\begin{NoHyper}\begin{claim}\label{claim:cover down divisible}
$H_\ell''$ is $F$-divisible with respect to $\cT^{i'},\cU^{i'}$.
\end{claim}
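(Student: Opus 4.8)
\textbf{Proof proposal for Claim~\ref{claim:cover down divisible}.}

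The plan is to reduce the divisibility of $H_\ell''$ with respect to $\cT^{i'},\cU^{i'}$ to the divisibility of $H^\ast\cup L^\ast$ with respect to $\cS,\cU$ (which we assumed), by tracking which edges have already been covered by $\cF_{\ell-1}^\ast$ and exploiting the structure of the type function. Recall that $H_\ell''=H_\ell'-\cF_{\ell-1}^{\ast(r)}$ and that, by \indcov{\ell}, $\cF_{\ell-1}^\ast$ covers all edges of $L^\ast$ and all $\cS$-important $e\in H_{\ell-1}^\ast$ with $\tau_r(e)<\ell$; moreover $\cF_{\ell-1}^\ast$ is an $F$-packing in $\tilde G$, so $\cF_{\ell-1}^{\ast(r)}$ is $F$-divisible, and by Fact~\ref{fact:ws}\ref{fact:ws:maxdeg} it has bounded $(r+1)$-degree, which is irrelevant here but confirms everything is well-defined.

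First I would fix $T\in\cT^{i'}$, so that $T{\restriction_\cS}=:S\in\cS$ is the unique element of $\cS$ with $S\In T$, and consider any $T'\In V(G)\sm T$ with $|T'|\le r-i'-1$ and $|T'\sm U_T|\ge 1$. I need to show $Deg(F)_{i'+|T'|}\mid |H_\ell''(T\cup T')|$. The key observation is that, by Definition~\ref{def:link extension two sided}-type reasoning and because $\cU^{i'}$ satisfies $U_T\In U_S$ (property \ref{refined focus random}), the set $T\cup T'$ is itself a set containing $S$ with $T\cup T'\In V(G)\sm S$, $|(T\cup T')\sm S|=|T|+|T'|-i\le r-i-1$, and $|(T\cup T')\sm U_S|\ge |T\sm U_S|\ge 1$ (using that $T\sm S$ consists of vertices outside $U_S$ by definition of an $i'$-extension, since $\cT^{i'}$ is built from sets $T$ with $S\In T\In e\sm U_S$). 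Hence $F$-divisibility of $H^\ast\cup L^\ast$ with respect to $\cS,\cU$ gives $Deg(F)_{i+|T\cup T'|-i}=Deg(F)_{|T|+|T'|-i}=Deg(F)_{i'+|T'|}$ (since $|T|=i'$) dividing $|(H^\ast\cup L^\ast)(T\cup T')|$.

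It then remains to account for the edges removed in passing from $H^\ast\cup L^\ast$ to $H_\ell''$. We have $H_\ell''=(H_{\ell-1}^\ast\cup H_\ell)-\cF_{\ell-1}^{\ast(r)}$, while $H^\ast=H_{r-i-1}^\ast\supseteq H_\ell'=H_{\ell-1}^\ast\cup H_\ell$, so $H^\ast\cup L^\ast$ decomposes (as a multiset, but everything here is simple) into $H_\ell''$, plus $\cF_{\ell-1}^{\ast(r)}\cap(H_{\ell-1}^\ast\cup H_\ell\cup L^\ast)$, plus the edges of $H^\ast$ lying in $H_{\ell}^\ast,\dots,H_{r-i-1}^\ast$ but not in $H_\ell'$; I would argue the latter two groups each have the relevant link-degree divisible by $Deg(F)_{i'+|T'|}$. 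The covered part $\cF_{\ell-1}^{\ast(r)}$ is $F$-decomposable, hence $F$-divisible, and so its link at $T\cup T'$ is divisible by $Deg(F)_{i'+|T'|}$; the remaining graphs $J_\ell,H_{\ell+1},\dots$ were all chosen so that they contain no edge $e$ with $\tau_r(e)<\ell$, but in fact the cleaner argument is that they contain no $\cT^{i'}$-important edge with $\tau_{\ell,r}$-type less than $\ell$ \emph{that contains $T$}, by the computation in Step~1.2 showing $G_\ell^\ast(T)[U_T]=G_\ell(T)[U_T]$ — so I would instead work directly: an edge $e$ with $T\cup T'\In e$ lies in $G_{\ell+1}$ only if $\tau_r(e)\ge\ell+1$, forcing $|e\cap U_S|\ge\ell+1$, contradicting $e\supseteq T\cup T'$ with $T'\not\In U_S$ and $|T\cap U_S|\le|T\sm S|=i'-i=\ell$ being too small; hence $H_{\ell+1}(T\cup T')=\dots=0$ and likewise $J_\ell(T\cup T')=0$ since $J_\ell\In G_\ell^{\ast(r)}$ only contains $\cT^{i'}$-important edges of type $\ge\ell$ together with $\cT^{i'}$-unimportant ones, neither of which contains $T\cup T'$ (the former by the type computation, the latter trivially). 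Combining, $|(H^\ast\cup L^\ast)(T\cup T')| = |H_\ell''(T\cup T')| + |\cF_{\ell-1}^{\ast(r)}(T\cup T')|$ up to terms that vanish, and since the left side and the second summand are both divisible by $Deg(F)_{i'+|T'|}$, so is $|H_\ell''(T\cup T')|$.

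\textbf{Main obstacle.} The delicate point is the bookkeeping of exactly which edges containing a given $T\cup T'$ survive into $H_\ell''$: one must verify carefully, using the definitions of the graphs $G_\ell,G_\ell^\ast$ in \eqref{def G l},\eqref{def G l star}, the relation $U_T\In U_S$, and the fact that an $i'$-extension set $T$ meets $U_S$ in at most $i'-i=\ell$ vertices, that none of the auxiliary cleaning/induction graphs $J_\ell,H_{\ell+1},\dots,H_{r-i-1}$ nor the already-covered graph $\cF_{\ell-1}^{\ast(r)}$ contributes a link-degree at $T\cup T'$ that is not a multiple of $Deg(F)_{i'+|T'|}$ — and symmetrically that nothing relevant is \emph{lost}. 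This is precisely the kind of type-arithmetic that pervades Section~\ref{app:covering down} and mirrors Step~2 in the proof of Lemma~10.22 of \cite{GKLO}, so I expect it to be routine but notationally heavy; the conceptual content is entirely in the two facts that (i) $F$-divisibility with respect to $\cS,\cU$ descends to link graphs indexed by extension sets via Proposition~\ref{prop:link divisibility}-style reasoning, and (ii) passing to $G_\ell$ and removing already-decomposed edges preserves divisibility because $F$-decomposable graphs are $F$-divisible.
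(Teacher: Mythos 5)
Your proposal is correct and follows essentially the same route as the paper's proof: you translate the pair $(T,T')$ into the pair $(S,b)$ with $b=T'\cup(T\sm S)$, apply the assumed divisibility of $H^\ast\cup L^\ast$ with respect to $\cS,\cU$ together with the $F$-divisibility of the already-covered graph $\cF_{\ell-1}^{\ast(r)}$, and use type arithmetic to show that the uncovered edges of $H^\ast\cup L^\ast$ containing $T\cup T'$ are exactly those of $H_\ell''$ (ruling out $J_\ell$ via $\tau_{\ell,r}(e)<\ell$, which follows from $|T'\sm U_T|\ge 1$, and ruling out the later graphs $H_{\ell+1},J_{\ell+1},\dots$ via $\tau_r(e)\le \ell$). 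One harmless slip: your parenthetical claim that $T'\not\In U_S$ does not follow from $|T'\sm U_T|\ge 1$, since $U_T\In U_S$ and a vertex outside $U_T$ may still lie in $U_S$; but it is also not needed, because the bound $|e\cap U_S|\le|e\sm T|=\ell$, which follows from $T\cap U_S=\emptyset$ alone, already contradicts $\tau_r(e)\ge\ell+1$.
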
\end{NoHyper}

\claimproof
Let $T\in\cT^{i'}$ and $b'\In V(G)\sm T$ with $|b'|\le r-i'-1$ and $|b'\sm U_T|\ge 1$. We have to show that $Deg(F)_{i'+|b'|}\mid |H_\ell''(T\cup b')|$. Let $S:=T{\restriction_{\cS}}$ and $b:=b'\cup (T\sm S)$. Hence, $|b|=|b'|+i'-i$. Clearly, $b\In V(G)\sm S$, $|b|\le r-i-1$ and $|b\sm U_S|\ge |T\sm S| \ge 1$. Hence, since $H^\ast\cup L^\ast$ is $F$-divisible with respect to $\cS,\cU$ by assumption, we have $Deg(F)_{i+|b|}\mid |(H^\ast\cup L^\ast)(S\cup b)|$, and this implies that $Deg(F)_{i+|b|}\mid |((H^\ast\cup L^\ast)-\cF_{\ell-1}^{\ast(r)})(S\cup b)|$. It is thus sufficient to show that
$$H_\ell''(T\cup b')=((H^\ast\cup L^\ast)-\cF_{\ell-1}^{\ast(r)})(S\cup b).$$
Clearly, we have $T\cup b'=S\cup b$ and $H_\ell''\In H^\ast-\cF_{\ell-1}^{\ast(r)}$. Conversely, observe that every $e\in H^\ast\cup L^\ast$ that contains $T\cup b'$ and is not covered by $\cF_{\ell-1}^\ast$ must belong to $H_\ell''$. Indeed, since $e$ contains $T$, we have that $\tau_r(e)\le r-i'=\ell$, so $e\in H_\ell^\ast$. Moreover, by \indcov{\ell} we must have $\tau_r(e)\ge \ell$.\COMMENT{If $e\in H_{\ell-1}^\ast$, this is clear. If not, then $e\in H_\ell \cup J_\ell$ and thus the conclusion is true as well.} Hence, $\tau_r(e)=\ell$. But since $|b'\sm U_T|\ge 1$, we have $\tau_{\ell,r}(e)<\ell$. By~\eqref{def G l star}, $e\notin J_\ell$. Thus, $e\in H_\ell'-\cF_{\ell-1}^{\ast(r)}= H_\ell''$. Hence, $H_\ell''(T\cup b')=((H^\ast\cup L^\ast)-\cF_{\ell-1}^{\ast(r)})(S\cup b)$. This implies the claim.
\endclaimproof

Let $L_\ell':=H_\ell''\bigtriangleup H_\ell$. So $H_\ell''=H_\ell\bigtriangleup L_\ell'$.

\begin{NoHyper}\begin{claim}\label{claim:cover down symmetric difference}
$L_\ell'\In G_\ell^{(r)}$ and $\Delta(L_\ell')\le 3\beta_{\ell-1}n$.
\end{claim}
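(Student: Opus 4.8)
This claim is a routine bookkeeping step inside the inductive proof of Lemma~\ref{lem:horrible}\ref{lem:horrible:specific}, so the plan is to unwind the definitions and collect the already-established degree bounds. First I would establish the containment $L_\ell' \In G_\ell^{(r)}$. Recall $L_\ell' = H_\ell''\bigtriangleup H_\ell$ where $H_\ell'' = H_\ell' - \cF_{\ell-1}^{\ast(r)}$ and $H_\ell' = H_{\ell-1}^\ast \cup H_\ell$. Thus $L_\ell' \In H_\ell' \cup H_\ell = H_\ell' = H_{\ell-1}^\ast \cup H_\ell$. By construction $H_\ell \In G_\ell^{(r)}$ (it was chosen as a subgraph of $G_\ell^{(r)}$ in Step~1.1), so it remains to check that $H_{\ell-1}^\ast \cap L_\ell' \In G_\ell^{(r)}$. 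An edge $e \in H_{\ell-1}^\ast$ lies in $L_\ell'$ only if $e \notin H_\ell''$, i.e.\ only if $e$ is covered by $\cF_{\ell-1}^\ast$ or $e \in H_\ell$; but if $e$ is covered by $\cF_{\ell-1}^\ast$ then by \indcov{\ell} we have $\tau_r(e) < \ell$, whence $e \notin G_\ell$ by \eqref{def G l}. Actually it is cleaner to argue the other inclusion directly: $H_{\ell-1}^\ast$ consists of edges from $H_0, H_1, \dots, H_{\ell-1}, J_1, \dots, J_{\ell-1}$, and one checks that each of $H_1, \dots, H_{\ell-1}$ is a subgraph of $G_1^{(r)}, \dots, G_{\ell-1}^{(r)}$ respectively (so of $G_\ell^{(r)}$ by the nesting $G_1 \supseteq G_2 \supseteq \cdots$), similarly $J_1, \dots, J_{\ell-1} \In G_1^{\ast(r)}, \dots \In G_\ell^{(r)}$, and $H_0 \In G^{(r)} - \tau_r^{-1}(0)$ — but this last one need not lie in $G_\ell$. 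The resolution is exactly the observation above: the edges of $H_0$ that survive into $L_\ell'$ must have type $\geq \ell$ by \indcov{\ell}, hence lie in $G_\ell$. So the correct argument is: any $e \in L_\ell'$ either lies in $H_\ell \In G_\ell^{(r)}$, or lies in $H_{\ell-1}^\ast \sm H_\ell$ and is not covered by $\cF_{\ell-1}^\ast$; in the latter case \indcov{\ell} forces $\tau_r(e) \geq \ell$, so $e \in G_\ell^{(r)}$ by \eqref{def G l}. (Here I use that $e$ is $\cS$-important: all edges of $H_{\ell-1}^\ast$ avoid $\tau_r^{-1}(0)$, and more relevantly each $H_j, J_j$ for $j \geq 1$ consists of $\cT^{r-j}$-important, hence $\cS$-important, edges, while $H_0$ also avoids type~$0$; for the type bound via \indcov{\ell} one only needs that $e$ lies in some $H_j^\ast$ with small index, which holds.)

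For the maximum degree bound, I would simply use $L_\ell' \In H_\ell'' \cup H_\ell \In H_\ell' \cup H_\ell = H_\ell'$, together with the crude estimate $\Delta(H_\ell') \le 2\nu_\ell n$ recorded just after the definition of $H^\ast$ in Step~1. Since $\nu_\ell \ll \beta_\ell$... wait — we want an upper bound by $3\beta_{\ell-1}n$, and the hierarchy gives $\beta_{\ell-1} \ll \nu_\ell$, so $2\nu_\ell n$ is \emph{not} at most $3\beta_{\ell-1}n$. So this crude bound is the wrong one; I must instead bound $\Delta(L_\ell')$ more carefully. The point is that $L_\ell' = H_\ell'' \bigtriangleup H_\ell$ and $H_\ell'' = H_\ell \bigtriangleup L_\ell'$ with $L_\ell' = (H_{\ell-1}^\ast - \cF_{\ell-1}^{\ast(r)}) \bigtriangleup (H_\ell \cap (H_{\ell-1}^\ast \cup \cF_{\ell-1}^{\ast(r)}))$ — essentially $L_\ell' \In H_{\ell-1}^\ast \cup \cF_{\ell-1}^{\ast(r)}$. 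Now $\Delta(H_{\ell-1}^\ast) \le 2\beta_{\ell-1}n$ (also recorded after the definition of $H^\ast$), and $\cF_{\ell-1}^{\ast(r)} \In H_{\ell-1}^\ast \cup L^\ast$ with $\Delta(L^\ast) \le \gamma n \ll \beta_{\ell-1} n$. Hence $\Delta(L_\ell') \le \Delta(H_{\ell-1}^\ast) + \Delta(L^\ast) \le 2\beta_{\ell-1}n + \gamma n \le 3\beta_{\ell-1}n$, as required. So the key facts to assemble are: (i) $L_\ell' \In H_{\ell-1}^\ast \cup \cF_{\ell-1}^{\ast(r)}$, which follows because $H_\ell'' \bigtriangleup H_\ell = (H_{\ell-1}^\ast \cup H_\ell - \cF_{\ell-1}^{\ast(r)}) \bigtriangleup H_\ell \In H_{\ell-1}^\ast \cup \cF_{\ell-1}^{\ast(r)}$; (ii) $\Delta(H_{\ell-1}^\ast) \le 2\beta_{\ell-1}n$ from Step~1; (iii) $\cF_{\ell-1}^{\ast(r)} \In H_{\ell-1}^\ast \cup L^\ast$ from \indcov{\ell}.

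The main obstacle — really the only subtle point — is pinning down exactly which graph $L_\ell'$ is contained in, since the symmetric differences can be confusing: one must carefully track that $L_\ell'$ consists only of edges that are either in $H_{\ell-1}^\ast$, or were removed from $H_\ell'$ by deleting $\cF_{\ell-1}^{\ast(r)}$, and in particular that $L_\ell'$ does \emph{not} contain generic edges of $H_\ell$ (which would be too dense). For the containment-in-$G_\ell$ part, the subtlety is that $H_0$ is \emph{not} a subgraph of $G_\ell^{(r)}$, so one genuinely needs the inductive hypothesis \indcov{\ell} (edges of type $< \ell$ among $H_{\ell-1}^\ast$ are already covered by $\cF_{\ell-1}^\ast$, hence removed) to conclude that the surviving edges of $L_\ell'$ all have type $\ge \ell$ and thus lie in $G_\ell$ by \eqref{def G l}. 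Once these two observations are in place, the claim follows by the stated arithmetic in the hierarchy $1/n \ll 1/\kappa \ll \gamma \ll \eps \ll \nu_1 \ll \rho_1 \ll \beta_1 \ll \cdots$.
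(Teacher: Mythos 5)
Your proposal is correct and follows essentially the same route as the paper: the containment is proved by noting that an uncovered edge of $H_{\ell-1}^\ast$ must, by \indcov{\ell}, be $\cS$-unimportant or have type $\ge\ell$ (and hence lie in $G_\ell^{(r)}$ by \eqref{def G l}), and the degree bound comes from $L_\ell'\In H_{\ell-1}^\ast\cup\cF_{\ell-1}^{\ast(r)}\In H_{\ell-1}^\ast\cup L^\ast$. Your parenthetical attempt to show every relevant edge is $\cS$-important is both unnecessary (an $\cS$-unimportant edge lies in $G_\ell^{(r)}$ automatically, since $G_\ell$ only deletes $\cS$-important edges of low type) and not quite right (the $H_j$, $J_j$ may contain $\cS$-unimportant edges), but this does not affect the validity of the argument.
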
\end{NoHyper}

\claimproof
Suppose, for a contradiction, that there is $e\in H_\ell''\bigtriangleup H_\ell$ with $e\notin G_\ell^{(r)}$. Since $H_\ell\In G_\ell^{(r)}$, we must have $e\in H_\ell''=H_{\ell}'-\cF_{\ell-1}^{\ast(r)}$. Thus, since $e$ is not covered by $\cF_{\ell-1}^\ast$, \indcov{\ell} implies that $e$ is $\cS$-unimportant or $\tau_r(e)\ge \ell$, both contradicting $e\notin G_\ell^{(r)}$.

In order to see the second part, observe that $L_\ell'= ((H_{\ell-1}^\ast\cup H_\ell)-\cF_{\ell-1}^{\ast(r)})\bigtriangleup H_\ell \In H_{\ell-1}^\ast\cup L^\ast$ since $\cF_{\ell-1}^{\ast(r)}\In L^\ast \cup H_{\ell-1}^\ast$.\COMMENT{Suppose $e\notin H_{\ell-1}^\ast\cup L^\ast$. Distinguish the cases $e\in H_\ell$ and $e\notin H_\ell$. If $e\in H_\ell$, $e$ is not covered by $\cF_{\ell-1}^{\ast(r)}$. If $e\notin H_\ell$, we have $e\notin H_{\ell-1}^\ast\cup H_\ell$ as well.} Thus, $\Delta(L_\ell')\le \Delta(H_{\ell-1}^\ast)+\Delta(L^\ast) \le 3\beta_{\ell-1}n$.
\endclaimproof

Note that Claim~\ref*{claim:cover down symmetric difference} implies that $H_\ell''\In G_\ell^{(r)}$. Let $G_{\ell,ind}:=G_\ell[H_\ell'']-\cF_{\ell-1}^{\ast\le(r+1)}-O^\ast$. By Fact~\ref{fact:ws}\ref{fact:ws:maxdeg} and \indcov{\ell}, we have that $\Delta(\cF_{\ell-1}^{\ast\le(r+1)}\cup O^\ast)\le (3\ell \sqrt{\kappa})(f-r)+\gamma n \le 2 \gamma n$. Thus, by~\eqref{sparse induction graph} and Claim~\ref*{claim:cover down symmetric difference}, $\cT^{i'},\cU^{i'},(\cP^{i'}_{r},\cP^{i'}_{f})[G_{\ell,ind}]$ form a diagonal-dominant $(\sqrt{3\beta_{\ell-1}},\rho_\ell\mu,\xi_\ell,f,r,i')$-setup for $G_{\ell,ind}$.
We can thus apply Lemma~\ref{lem:horrible}\ref{lem:horrible:all} inductively with the following objects/parameters.\COMMENT{$\gamma,\nu$ not needed for \ref{lem:horrible:all}}

\smallskip
{\footnotesize
\noindent
{
\begin{tabular}{c|c|c|c|c|c|c|c|c|c|c|c|c|c}
object/parameter & $G_{\ell,ind}$ & $n$ & $\sqrt{3\beta_{\ell-1}}$ & $\rho_\ell\mu$ & $\xi_\ell$ & $i'$ & $\cT^{i'}$ & $\cU^{i'}$ & $(\cP^{i'}_{r},\cP_{f}^{i'})[G_{\ell,ind}]$ & $\sqrt{\kappa}$ & $f$ & $r$ & $F$\\ \hline
playing the role of & $G$ & $n$ & $\eps$ & $\mu$ & $\xi$ & $i$ & $\cS$ & $\cU$& $(\cP_{r},\cP_{f})$ & $\kappa$ & $f$ & $r$ & $F$
\end{tabular}
}}\newline \vspace{0.2cm}

Since $G_{\ell,ind}^{(r)}=H_\ell''$ is $F$-divisible with respect to $\cT^{i'},\cU^{i'}$ by Claim~\ref*{claim:cover down divisible}, there exists a $2\sqrt{\kappa}$-well separated $F$-packing $\cF_\ell^\circ$ in $G_{\ell,ind}$ covering all $\cT^{i'}$-important edges of $H_\ell''$ except possibly some from $\tau_{\ell,r}^{-1}(r-i')=\tau_{\ell,r}^{-1}(\ell)$. Note that $H_\ell^\ast-H_\ell'\In J_\ell$ and that every  $\cT^{i'}$-important edge of $J_\ell$ lies in $\tau_{\ell,r}^{-1}(\ell)$. Thus $\cF_\ell^\circ$ does indeed cover all $\cT^{i'}$-important edges of $H_\ell^\ast-\cF_{\ell-1}^{\ast(r)}$ except possibly some from $\tau_{\ell,r}^{-1}(\ell)$, as required for~\ref{inductive cover:a}.

We will now use $J_\ell$ to cover the remaining $\cT^{i'}$-important edges of $H_\ell^\ast$.
Let $J_\ell':=H_{\ell}^\ast-\cF_{\ell-1}^{\ast(r)}-\cF_\ell^{\circ(r)}$.
Let $S^\ast_{i'}\in \binom{V(F)}{i'}$ be such that $F(S^\ast_{i'})$ is non-empty.

\begin{NoHyper}\begin{claim}\label{claim:localised divisible}
$J_\ell'(T)[U_T]$ is $F(S^\ast_{i'})$-divisible for every $T\in \cT^{i'}$.
\end{claim}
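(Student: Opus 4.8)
The claim to prove is that $J_\ell'(T)[U_T]$ is $F(S^\ast_{i'})$-divisible for every $T\in\cT^{i'}$, where $J_\ell' = H_\ell^\ast - \cF_{\ell-1}^{\ast(r)} - \cF_\ell^{\circ(r)}$.

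The plan is to transfer the divisibility information that we already control to the link complex at $T$, and to show that the only edges that got removed to pass from $H_\ell''$ to $J_\ell'$ are of `maximal type' relative to $T$ (i.e. their link at $T$ lies inside $U_T$), so that removing them does not disturb the lower-degree divisibility conditions. More precisely, first I would recall from Claim~\ref*{claim:cover down divisible} that $H_\ell''$ is $F$-divisible with respect to $\cT^{i'},\cU^{i'}$; by Definition~\ref{def:qr divisible} this means $Deg(F)_{i'+|b'|}\mid |H_\ell''(T\cup b')|$ whenever $b'\In V(G)\sm T$ with $|b'|\le r-i'-1$ and $|b'\sm U_T|\ge 1$. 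Since $F$ is weakly regular, Proposition~\ref{prop:link divisibility} gives $Deg(F(S^\ast_{i'}))_j = Deg(F)_{i'+j}$ for all relevant $j$, and $F(S^\ast_{i'})$ is itself weakly regular. So to establish $F(S^\ast_{i'})$-divisibility of the $(r-i')$-graph $J_\ell'(T)[U_T]$ we need exactly that $Deg(F)_{i'+|c|}\mid |J_\ell'(T\cup c)|$ for all $c\In U_T$ with $|c|\le r-i'-1$; note here $c\subseteq U_T$ so $|c\sm U_T|=0$, which is the `local' regime not directly covered by the hypothesis on $H_\ell''$ — one has to go via the full complex and handshaking, or observe that the relevant sets $T\cup c$ with $c\subseteq U_T$ still satisfy $|c\sm U_T|=0<1$, so we instead use $F$-divisibility of $H^\ast\cup L^\ast$ at sets of the form $S\cup b$ with $b$ of maximal type and a handshaking/summation argument. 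This is essentially the same reduction as in Claim~\ref*{claim:cover down divisible}, but now in the regime where $b$ lies entirely in $U_S$; here $F$-divisibility (not just with respect to $\cS,\cU$) of $H^\ast\cup L^\ast$ is what we are given, so it applies to all sets, including those with $b\subseteq U_S$.

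The key computation is then: fix $T\in\cT^{i'}$, set $S:=T{\restriction_{\cS}}$, and for $c\In U_T$ with $|c|\le r-i'-1$ put $b:=c\cup(T\sm S)$, so $b\In V(G)\sm S$, $|b| = |c| + i'-i \le r-i-1$, and $T\cup c = S\cup b$. Since $H^\ast\cup L^\ast$ is $F$-divisible, $Deg(F)_{i+|b|}\mid |(H^\ast\cup L^\ast)(S\cup b)|$, hence $Deg(F)_{i'+|c|}\mid |(H^\ast\cup L^\ast)(T\cup c)|$. Now I would argue $|J_\ell'(T\cup c)| \equiv |(H^\ast\cup L^\ast)(T\cup c)| \pmod{Deg(F)_{i'+|c|}}$ by showing that the edges removed in forming $J_\ell'$ from $H^\ast\cup L^\ast$ contribute a multiple of $Deg(F)_{i'+|c|}$ to the degree of $T\cup c$: the edges of $L^\ast$ and the $\cS$-important edges of type $<\ell$ that are not in $H_\ell^\ast$ are covered by $\cF_{\ell-1}^\ast$, an $F$-packing, so each copy of $F$ in it that contains $T\cup c$ contributes a multiple of $Deg(F)_{i'+|c|}$ — more carefully, one uses that $\cF_{\ell-1}^{\ast(r)}\cup\cF_\ell^{\circ(r)}$ together with the edges removed to form $H_\ell^\ast$ from $H^\ast\cup L^\ast$ either miss $T\cup c$ entirely or split into $F$-copies. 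Concretely: $J_\ell' = (H^\ast\cup L^\ast) - \cF_{\ell-1}^{\ast(r)} - \cF_\ell^{\circ(r)} - (\text{edges of }H^\ast\cup L^\ast\text{ not in }H_\ell^\ast)$; the last group consists of $\cS$-important edges of type $<\ell$ together with $\cS$-unimportant edges, none of which can contain $T$ (an edge containing $T\in\cT^{i'}$ has type $\le r-i'=\ell$ and is $\cT^{i'}$-important, hence — being in $H^\ast$ by our earlier analysis that $G_\ell^\ast(T)[U_T]=G_\ell(T)[U_T]$ — is $\cS$-important of type exactly $\ell$), so they contribute nothing to $|{}\cdot{}(T\cup c)|$. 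And $\cF_{\ell-1}^{\ast(r)}\cup\cF_\ell^{\circ(r)}$ is the edge set of the $F$-packing $\cF_{\ell-1}^\ast\cup\cF_\ell^\circ$, so by the handshaking argument in Proposition~\ref{prop:simple identification facts}\ref{fact:identification divisible}-style reasoning (or directly: for each $F'$ in the packing, $|F'(e)|\equiv 0\bmod Deg(F)_{|e|}$ for $|e|=i'+|c|$), removing it changes the degree of $T\cup c$ by a multiple of $Deg(F)_{i'+|c|}$.

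The main obstacle I expect is bookkeeping the set of removed edges precisely — verifying that every edge of $H^\ast\cup L^\ast$ containing a fixed $T\in\cT^{i'}$ that is not in $J_\ell'$ is either $(i)$ an edge of some $F$-copy in $\cF_{\ell-1}^\ast\cup\cF_\ell^\circ$, or $(ii)$ impossible to contain $T$. Point $(ii)$ requires the observation (already used in Claim~\ref*{claim:cover down divisible}) that an edge containing $T$ has $\tau_r \le \ell$ and is $\cS$-important, so it lies in $H_\ell^\ast$; combined with \indcov{\ell} and the definitions of $H_\ell'$, $J_\ell$, this pins down its membership. Once this combinatorial case analysis is done, the divisibility follows by the modular arithmetic above, and then Proposition~\ref{prop:link divisibility} converts the statement into $F(S^\ast_{i'})$-divisibility of $J_\ell'(T)[U_T]$. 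I would close the claim's proof here and note that the supercomplex structure needed for the subsequent application of the Localised cover down lemma (Lemma~\ref{lem:dense jain}) comes from \eqref{robust dense}, which is a separate point.
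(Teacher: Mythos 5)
Your overall architecture matches the paper's: reduce the claim to showing $Deg(F)_{i'+|c|}\mid |J_\ell'(T\cup c)|$ for $c\In U_T$, identify $T\cup c$ with $S\cup b$ where $S=T{\restriction_{\cS}}$ and $b=c\cup(T\sm S)$, get divisibility of $|(H^\ast\cup L^\ast)(S\cup b)|$, and then show that the edges removed in passing to $J_\ell'$ either form $F$-packings (hence change the degree by a multiple of $Deg(F)_{i+|b|}$ — in fact the paper shows the stronger statement that every surviving edge of $H^\ast\cup L^\ast$ through $T\cup c$ lies in $J_\ell'$, so the degrees are literally equal) or cannot contain $T$. Proposition~\ref{prop:link divisibility} then converts this into $F(S^\ast_{i'})$-divisibility of the link, exactly as you say.

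However, there is a genuine gap in how you source the key divisibility. You write that ``$F$-divisibility (not just with respect to $\cS,\cU$) of $H^\ast\cup L^\ast$ is what we are given'' and that you are ``in the regime where $b$ lies entirely in $U_S$''. Both statements are false. The hypothesis of Lemma~\ref{lem:horrible}\ref{lem:horrible:specific} is only that $H^\ast\cup L^\ast$ is $F$-divisible \emph{with respect to} $\cS,\cU$ (Definition~\ref{def:qr divisible}); full $F$-divisibility is not available inside the induction (it only happens to hold in the top-level application that derives Lemma~\ref{lem:cover down}). Moreover $b=c\cup(T\sm S)$ does \emph{not} lie in $U_S$: by the definition of the $i'$-extension we have $T\In e\sm U_S$, so $T\sm S$ is a nonempty set (as $i'>i$) disjoint from $U_S$. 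This is precisely what rescues the argument: since $|b\sm U_S|\ge |T\sm S|\ge 1$, the pair $(S,b)$ falls squarely within the scope of Definition~\ref{def:qr divisible}, and the assumed relative divisibility gives $Deg(F)_{i+|b|}\mid |(H^\ast\cup L^\ast)(S\cup b)|$ directly — no appeal to full divisibility or to a handshaking detour is needed. Your instinct that the ``local regime'' $c\In U_T$ is not covered at the level of $\cT^{i'},\cU^{i'}$ is correct, but the resolution is that lifting from $T$ to $S$ automatically injects the vertices of $T\sm S$, which escape $U_S$. A second, smaller point you should make explicit: to pass between $|J_\ell'(T)[U_T](c)|$ and $|J_\ell'(T\cup c)|$ you need that every $e\in J_\ell'$ containing $T$ satisfies $e\sm T\In U_T$; this follows from $J_\ell'\In G_\ell^{\ast(r)}$ and the definition of $G_\ell^\ast$, which forces $\tau_{\ell,r}(e)=\ell$, and is not quite the same as your observation that $\tau_r(e)=\ell$ (the latter concerns $U_S$, not $U_T$).
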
\end{NoHyper}

\claimproof
Let $T\in \cT^{i'}$ and $b'\In U_T$ with $|b'|\le r-i'-1$. We have to show that $Deg(F(S^\ast_{i'}))_{|b'|}\mid |J_\ell'(T)[U_T](b')|$. Note that for every $e\in J_\ell'\In G_\ell^{\ast(r)}$ containing $T$, we have $\tau_{\ell,r}(e)=r-i'$. Thus, $J_\ell'(T)[U_T]$ is identical with $J_\ell'(T)$ except for the different vertex sets. It is thus sufficient to show that $Deg(F(S^\ast_{i'}))_{|b'|}\mid |J_\ell'(T\cup b')|$. By Proposition~\ref{prop:link divisibility}, we have that $Deg(F(S^\ast_{i'}))_{|b'|}=Deg(F)_{i'+|b'|}$. Let $S:=T{\restriction_{\cS}}$ and $b:=b'\cup (T\sm S)$. By assumption, $H^\ast\cup L^\ast$ is $F$-divisible with respect to $\cS,\cU$. Thus, since $S\in \cS$, $|b|\le r-i-1$ and $|b\sm U_S|\ge |T\sm S|\ge 1$, we have that $Deg(F)_{i+|b|}\mid |(H^\ast\cup L^\ast)(S\cup b)|$. This implies that $Deg(F)_{i+|b|}\mid |((H^\ast\cup L^\ast)- \cF_{\ell-1}^{\ast(r)}-\cF_\ell^{\circ(r)})(S\cup b)|$.
It is thus sufficient to prove that $J_\ell'(T\cup b')=((H^\ast\cup L^\ast)- \cF_{\ell-1}^{\ast(r)}-\cF_\ell^{\circ(r)})(S\cup b)$.\COMMENT{$i+|b|=i'+|b'|$} Clearly, $J_\ell'\In H^\ast-\cF_{\ell-1}^{\ast(r)}-\cF_\ell^{\circ(r)}$ by definition. Conversely, observe that every $e\in (H^\ast\cup L^\ast) -\cF_{\ell-1}^{\ast(r)}-\cF_\ell^{\circ(r)}$ that contains $T\cup b'$ must belong to $J_\ell'$. Indeed, since $L^\ast\In \cF_{\ell-1}^{\ast(r)}$, we have $e\in H^\ast$, and since $e$ contains $T$, we have $\tau_r(e)\le \ell$.\COMMENT{$=$ holds} Hence, $e\in H_\ell^\ast$ and thus $e\in J_\ell'$. This implies the claim.
\endclaimproof

Let $L_\ell'':=J_\ell'\bigtriangleup J_\ell$. So $J_\ell'=J_\ell\bigtriangleup L_\ell''$.

\begin{NoHyper}\begin{claim}\label{claim:dense remainder}
$L_\ell''\In G_\ell^{\ast(r)}$ and $\Delta(L_\ell'')\le 3\nu_\ell n$.
\end{claim}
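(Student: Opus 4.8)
\textbf{Proof plan for Claim~\ref*{claim:dense remainder}.}

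The claim asserts that $L_\ell'' := J_\ell' \bigtriangleup J_\ell \In G_\ell^{\ast(r)}$ and $\Delta(L_\ell'') \le 3\nu_\ell n$. This is the exact analogue of Claim~\ref*{claim:cover down symmetric difference}, only now one level deeper: there we compared $H_\ell''$ against $H_\ell$ (with error graph $L_\ell'$ bounded by $3\beta_{\ell-1}n$), here we compare $J_\ell'$ against $J_\ell$. So the proof should follow the same two-part template.

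For the containment part, I would argue by contradiction: suppose some $e \in J_\ell' \bigtriangleup J_\ell$ fails to lie in $G_\ell^{\ast(r)}$. Since $J_\ell \In G_\ell^{\ast(r)}$ by its construction in Step~1.2 (it was found as a subgraph of $G_\ell^{\ast(r)}$ via Lemma~\ref{lem:local sparsifier}), such an $e$ must lie in $J_\ell' = H_\ell^\ast - \cF_{\ell-1}^{\ast(r)} - \cF_\ell^{\circ(r)}$. Now $e$ is not covered by $\cF_{\ell-1}^\ast$, so by \indcov{\ell} either $e$ is $\cS$-unimportant or $\tau_r(e) \ge \ell$; and since $e \in H_\ell^\ast$, we in fact have $\tau_r(e) = \ell = r - i'$ when $e$ is $\cS$-important (using that $e$ contains some $T \in \cT^{i'}$ whenever it lies in the relevant part of $H_\ell^\ast$). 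Moreover $e$ is not covered by $\cF_\ell^\circ$; recalling from the construction that $\cF_\ell^\circ$ covers all $\cT^{i'}$-important edges of $H_\ell''$ except possibly some from $\tau_{\ell,r}^{-1}(\ell)$, and that $H_\ell^\ast - H_\ell' \In J_\ell \In G_\ell^{\ast(r)}$, one checks that $e$ either already sits in $G_\ell^{\ast(r)}$ (contradiction) or has $\tau_{\ell,r}(e) = \ell$, which by the definition~\eqref{def G l star} of $G_\ell^\ast$ again forces $e \in G_\ell^{\ast(r)}$ — a contradiction either way. Hence $L_\ell'' \In G_\ell^{\ast(r)}$.

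For the degree bound, I would show $L_\ell'' \In H_{\ell}' \cup L^\ast \cup \cF_\ell^{\circ(r)}$ (or a similarly small set) by a case analysis mirroring the footnote-level computation in Claim~\ref*{claim:cover down symmetric difference}: writing $J_\ell' = H_\ell^\ast - \cF_{\ell-1}^{\ast(r)} - \cF_\ell^{\circ(r)}$ and $H_\ell^\ast = H_\ell' \cup J_\ell$, any edge of $J_\ell' \bigtriangleup J_\ell$ either lies in $H_\ell' = H_{\ell-1}^\ast \cup H_\ell$ or is an edge of $J_\ell$ removed by $\cF_{\ell-1}^{\ast(r)} \cup \cF_\ell^{\circ(r)} \In L^\ast \cup H_{\ell-1}^\ast \cup \cF_\ell^{\circ(r)}$. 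Therefore $\Delta(L_\ell'') \le \Delta(H_{\ell-1}^\ast) + \Delta(H_\ell) + \Delta(L^\ast) + \Delta(\cF_\ell^{\circ(r)})$. Each term is controlled: $\Delta(H_{\ell-1}^\ast) \le 2\beta_{\ell-1}n$, $\Delta(H_\ell) \le 1.1\nu_\ell n$, $\Delta(L^\ast) \le \gamma n$, and $\Delta(\cF_\ell^{\circ(r)})$ is small because $\cF_\ell^\circ$ is a $2\sqrt{\kappa}$-well separated $F$-packing supported on $H_\ell'' \In H_\ell \cup L_\ell'$ with $\Delta$ of that base graph bounded; in particular $\Delta(\cF_\ell^{\circ(r)}) \le \Delta(H_\ell'')$, which by Claim~\ref*{claim:cover down symmetric difference} is at most $\Delta(H_\ell) + \Delta(L_\ell') \le 1.1\nu_\ell n + 3\beta_{\ell-1}n$. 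Since the hierarchy gives $\beta_{\ell-1} \ll \nu_\ell$, summing these contributions yields $\Delta(L_\ell'') \le 3\nu_\ell n$, as claimed.

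The only mildly delicate point is bounding $\Delta(\cF_\ell^{\circ(r)})$: one must observe that $\cF_\ell^\circ$ lives inside $G_{\ell,ind}[H_\ell'']$, so its covered edges form a subgraph of $H_\ell''$, whence $\Delta(\cF_\ell^{\circ(r)}) \le \Delta(H_\ell'')$, and then invoke the already-established bound on $\Delta(H_\ell'')$. Everything else is the same bookkeeping as in Claim~\ref*{claim:cover down symmetric difference}, and the constants close because of the strict separations $\gamma \ll \eps \ll \nu_1 \ll \rho_1 \ll \beta_1 \ll \cdots \ll \beta_{r-i-1}$ fixed at the start of the proof of~\ref{lem:horrible:specific}.
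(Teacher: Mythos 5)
Your proposal is correct and follows essentially the same route as the paper: the same contradiction argument (via \indcov{\ell} and the coverage property of $\cF_\ell^{\circ}$) for the containment $L_\ell''\In G_\ell^{\ast(r)}$, and the same symmetric-difference bookkeeping for the degree bound. The only difference is that the paper shortens the second part by noting $\cF_{\ell-1}^{\ast(r)}\cup\cF_\ell^{\circ(r)}\In H_\ell'\cup L^\ast$ (since $\cF_\ell^{\circ(r)}\In H_\ell''\In H_\ell'$), so that $L_\ell''\In H_\ell'\cup L^\ast$ directly and no separate bound on $\Delta(\cF_\ell^{\circ(r)})$ is needed; your extra terms are harmlessly absorbed by the hierarchy.
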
\end{NoHyper}

\claimproof
Suppose, for a contradiction, that there is $e\in J_\ell'\bigtriangleup J_\ell$ with $e\notin G_\ell^{\ast(r)}$. By~\eqref{def G l} and~\eqref{def G l star}, the latter implies that $e$ is $\cS$-important with $\tau_{r}(e)<\ell$ or $\cT^{i'}$-important with $\tau_{\ell,r}(e)<\ell$. However, since $J_\ell\In G_\ell^{\ast(r)}$, we must have $e\in J_\ell'- J_\ell$ and thus $e\in H_{\ell}'$ and $e\notin \cF_{\ell-1}^{\ast(r)}\cup \cF_\ell^{\circ(r)}$. In particular, $e\in H_\ell''$. Now, if $e$ was $\cS$-important with $\tau_{r}(e)<\ell$, then $e\in H_\ell'-H_\ell \In H_{\ell-1}^\ast$. But then $e$ would be covered by $\cF_{\ell-1}^{\ast}$, a contradiction. So $e$ must be $\cT^{i'}$-important with $\tau_{\ell,r}(e)<\ell$. But since $e\in H_\ell''$, $e$ would be covered by $\cF_\ell^{\circ}$ unless $\tau_{\ell,r}(e)=\ell$, a contradiction.

In order to see the second part, observe that $$L_\ell''=((H_\ell' \cup J_\ell)-\cF_{\ell-1}^{\ast(r)}-\cF_\ell^{\circ(r)})\bigtriangleup J_\ell \In H_\ell'\cup L^\ast$$ since $\cF_{\ell-1}^{\ast(r)}\cup\cF_\ell^{\circ(r)} \In H_\ell' \cup L^\ast$.\COMMENT{Suppose $e\notin H_{\ell}'\cup L^\ast$. Distinguish the cases $e\in J_\ell$ and $e\notin J_\ell$.} Thus, $\Delta(L_\ell'')\le \Delta(H_\ell') +\Delta(L^\ast) \le 3\nu_\ell n$.
\endclaimproof

Note that Claim~\ref*{claim:dense remainder} implies that $J_\ell'\In G_{\ell}^{\ast(r)}$. Let $$G_{\ell,clean}:= G_\ell^\ast[J_\ell']-\cF_{\ell-1}^{\ast\le(r+1)}-\cF_{\ell}^{\circ\le(r+1)}-O^\ast.$$ By \indcov{\ell}, \ref{inductive cover:a} and Fact~\ref{fact:ws}\ref{fact:ws:maxdeg}, we have that $$\Delta(\cF_{\ell-1}^{\ast\le(r+1)}\cup \cF_{\ell}^{\circ\le(r+1)} \cup O^\ast)\le (3\ell \sqrt{\kappa})(f-r)+ (2 \sqrt{\kappa})(f-r) + \gamma n \le 2 \gamma n.$$

Thus, by \eqref{robust dense}, Claim~\ref*{claim:localised divisible} and Claim~\ref*{claim:dense remainder}, $G_{\ell,clean}(T)[U_T]$ is an $F(S^\ast_{i'})$-divisible $(\rho_\ell,\beta_\ell^{(8^f)+1},f-i',r-i')$-supercomplex for every $T\in\cT^{i'}$.\COMMENT{$\beta_\ell^{(8^f)+1}\le 0.81\xi \beta_\ell^{(8^f)}$} Moreover, whenever there are $T\in \cT^{(i')}$ and $e\in G_{\ell,clean}^{(r)}\In G_{\ell}^{\ast(r)}$ with $T\In e$, then $|(e\sm T)\cap U_T|=\tau_{\ell,r}(e)=\ell=|e\sm T|$\COMMENT{$\Ima(\tau_{\ell,r})=[\ell]_0$} and thus $e\sm T\In U_T$.
By~\ref{prop:extension types 1 newer}, $G_{\ell,clean}\In G_{\ell}$ is $r$-exclusive with respect to $\cT^{i'}$, and by~\ref{refined focus random}, $\cU^{i'}$ is a $(\mu,\rho_{\ell},r)$-focus for $\cT^{i'}$. We can therefore apply the Localised cover down lemma (Lemma~\ref{lem:dense jain}) with the following objects/parameters.

\smallskip
{\footnotesize
\noindent
{
\begin{tabular}{c|c|c|c|c|c|c|c|c|c|c|c|c}
object/parameter & $n$ & $\rho_\ell$ & $\mu$ & $\beta_\ell^{(8^f)+1}$ & $i'$ & $G_{\ell,clean}$ & $\cT^{i'}$ & $\cU^{i'}$ & $r$ & $f$ & $F$ & $S^\ast_{i'}$\\ \hline
playing the role of & $n$ & $\rho$ & $\rho_{size}$ & $\xi$ & $i$ & $G$ & $\cS$ & $\cU$ & $r$ & $f$ & $F$ & $S^\ast$
\end{tabular}
}}\newline \vspace{0.2cm}

This yields a $\rho_{\ell}^{-1/12}$-well separated $F$-packing $\cF_\ell^\dagger$ in $G_{\ell,clean}$ covering all $\cT^{i'}$-important edges of $G_{\ell,clean}^{(r)}=J_\ell'=H_{\ell}^\ast-\cF_{\ell-1}^{\ast(r)}-\cF_\ell^{\circ(r)}$ . Thus $\cF_\ell^\dagger$ is as required in \ref{inductive cover:b}. As observed before, this completes the proof of~\indcov{\ell+1} and thus the proof of~\ref{lem:horrible:specific}.
\endproof

\lateproof{\ref{lem:horrible:all}}

Let $Y\In G^{(f)}$ and $A\in [0,1]^{(r+1)\times(f+1)}$ be such that \ref{setup:objects}--\ref{setup:intersections} hold. We assume that $G^{(r)}$ is $F$-divisible with respect to $\cS,\cU$ and that $A$ is diagonal-dominant.

\begin{NoHyper}\setcounter{claim}{5}\begin{claim}\label{claim:hoover dense}
$G$ is $(\xi-\eps,f,r)$-dense with respect to $G^{(r)}- \tau_r^{-1}(0)$.
\end{claim}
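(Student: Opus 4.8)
The plan is to prove the claim by a single application of the regularity statement \ref{setup:matrix regular}. Let $Y$ and $A$ be as in \ref{setup:matrix regular} and set $H:=G^{(r)}- \tau_r^{-1}(0)$. Fix $e\in G^{(r)}$; unpacking Definition~\ref{def:dense wrt}, it suffices to exhibit at least $(\xi-\eps)n^{f-r}$ many $(f-r)$-sets $Q$ that are disjoint from $e$, satisfy $e\cup Q\in G^{(f)}$, and have every $r$-subset $e'\neq e$ of $e\cup Q$ lying in $H$. Let $\ell\in[r+1]$ be the index with $e\in \cP_r(\ell)$, and define $k:=\ell$ if $\ell\le i$ (so $e$ is $\cS$-unimportant) and $k:=\ell+f-r$ if $\ell>i$ (so $e$ is $\cS$-important, and then $\ell\in\Set{i+1,\dots,r+1}$ by \ref{prop:facts about partition pairs:index shift}, whence $k\in\Set{i+1,\dots,f+1}$ is a legitimate index for $\cP_f$). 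I will show that the whole set $(\cP_f(k)[G[Y]^{(f)}])(e)$ consists of admissible $Q$'s.

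The core step is the inclusion $(\cP_f(k)[G[Y]^{(f)}])(e)\In G[H\cup\Set{e}]^{(f)}(e)$. Given such a $Q$ and writing $R:=e\cup Q\in \cP_f(k)\cap G^{(f)}$, every $r$-subset of $R$ already lies in $G^{(r)}$ because $G$ is a complex, so one only has to check that no $r$-subset $e'\neq e$ of $R$ is $\cS$-important of type~$0$. If $\ell\le i$, this is immediate, since then $R$ lies in a part $\cP_f(\ell)$ consisting of $\cS$-unimportant $f$-sets, so all $r$-subsets of $R$ are $\cS$-unimportant. If $\ell>i$, pick $S\in\cS$ with $S\In e$ and set $t:=\tau_r(e)=\ell-i-1$; then $R\in \tau_f^{-1}(k-i-1)=\tau_f^{-1}(t+f-r)$, and by $r$-exclusivity of $G$ with respect to $\cS$ (part of \ref{setup:objects}) the set $S$ is the unique element of $\cS$ contained in $R$, so every $\cS$-important $r$-subset $e'$ of $R$ contains $S$. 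Such an $e'$ has type~$0$ precisely when $e'\sm S\In R\sm S\sm U_S$; but $|e'\sm S|=r-i$ while $|R\sm S\sm U_S|=(f-i)-(t+f-r)=r-i-t$, which is impossible unless $t=0$. In that remaining sub-case $|R\sm S\sm U_S|=r-i$, and since $e$ itself has type~$0$, $e\sm S$ is an $(r-i)$-subset of $R\sm S\sm U_S$, forcing $R\sm S\sm U_S=e\sm S$; hence any type-$0$ $r$-subset $e'$ of $R$ satisfies $e'\sm S=e\sm S$, i.e.\ $e'=e$. This proves the inclusion.

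It then remains to invoke regularity: by \ref{setup:matrix regular}, $|(\cP_f(k)[G[Y]^{(f)}])(e)|\ge (A(\cP_r(\ell),\cP_f(k))-\eps)n^{f-r}$, and $A(\cP_r(\ell),\cP_f(k))$ equals the diagonal entry $a_{\ell,\ell}$ when $\ell\le i$ and the entry $a_{\ell,\ell+f-r}$ with $\ell\in\Set{i+1,\dots,r+1}$ when $\ell>i$; both are among the entries bounded below by $\min^{\backslash\backslash r-i+1}(A)\ge\xi$ (the first contributing to $\min^{\backslash}(A)$, the second to $\min^{\backslash r-i+1}(A)$). Combining yields $|G[H\cup\Set{e}]^{(f)}(e)|\ge (\xi-\eps)n^{f-r}$, which is exactly the asserted density. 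I do not anticipate a genuine obstacle here; the only delicate point is the sub-case $t=0$, where $e$ is itself a type-$0$ edge and one must verify that the unique type-$0$ $r$-subset forced inside $R$ is precisely $e$ (and hence not forbidden) rather than a new bad edge. Everything else is a direct reading of the definition of the partition pair of $G,\cS,\cU$ (in particular \ref{prop:facts about partition pairs:index shift} and the structure of $\cP_f$) together with the definitions of $\min^{\backslash}(A)$ and $\min^{\backslash r-i+1}(A)$.
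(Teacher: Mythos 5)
Your proof is correct and follows essentially the same route as the paper's: in both arguments one fixes the part $\cP_r(\ell)$ containing $e$, passes to the corresponding part $\cP_f(\ell)$ (for $\cS$-unimportant $e$) or $\cP_f(\ell+f-r)$ (for $\cS$-important $e$), checks that the $f$-sets in that part containing $e$ contain no edge of $\tau_r^{-1}(0)$ other than possibly $e$ itself, and reads off the lower bound from the diagonal/shifted-diagonal entries of $A$ guaranteed by $\min^{\backslash\backslash r-i+1}(A)\ge\xi$. The only cosmetic difference is that you merge the paper's cases $\ell>i+1$ and $\ell=i+1$ into one counting argument $|R\sm S\sm U_S|=r-i-t$, with the sub-case $t=0$ handled exactly as in the paper's third case.
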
\end{NoHyper}

\claimproof Let $e\in G^{(r)}$ and let $\ell'\in[r+1]$ be such that $e\in \cP_{r}(\ell')$. Suppose first that $\ell'\le i$. Then no $f$-set from $\cP_{f}(\ell')$ contains any edge from $\tau_r^{-1}(0)$ (as such an $f$-set is $\cS$-unimportant). Recall from \ref{setup:matrix regular} for $\cS,\cU,(\cP_{r},\cP_{f})$ that $G[Y]$ is $(\eps,A,f,r)$-regular with respect to $(\cP_{r},\cP_{f}[Y])$ and $\min^{\backslash\backslash r-i+1}(A)\ge \xi$. Thus,
\begin{align*}
|G[(G^{(r)}- \tau_r^{-1}(0))\cup e]^{(f)}(e)| &\ge |(Y\cap\cP_{f}(\ell'))(e)| \ge (a_{\ell',\ell'}-\eps) n^{f-r} \ge (\xi-\eps) n^{f-r}.
\end{align*}

If $\ell'>i+1$, then by~\ref{prop:facts about partition pairs:index shift} in Proposition~\ref{prop:facts about partition pairs}, no $f$-set from $\cP_{f}(f-r+\ell')$ contains any edge from $\tau_r^{-1}(0)$. Thus, we have
$$|G[(G^{(r)}- \tau_r^{-1}(0))\cup e]^{(f)}(e)|\ge (a_{\ell',f-r+\ell'}-\eps) n^{f-r} \ge (\xi-\eps) n^{f-r}.$$

If $\ell'=i+1$, then $\cP_{r}(\ell')=\tau_r^{-1}(0)$ by~\ref{prop:facts about partition pairs:index shift}. However, every $f$-set from $\tau_{f}^{-1}(f-r)=\cP_{f}(f-r+\ell')$ that contains $e$ contains no other edge from $\tau_r^{-1}(0)$. Thus, $$|G[(G^{(r)}- \tau_r^{-1}(0))\cup e]^{(f)}(e)|\ge (a_{\ell',f-r+\ell'}-\eps) n^{f-r} \ge (\xi-\eps) n^{f-r}.$$
\endclaimproof

By Claim~\ref*{claim:hoover dense}, we can choose $H^\ast\In G^{(r)}- \tau_r^{-1}(0)$ such that \ref{lem:horrible:specific} holds with $G$ playing the role of $\tilde{G}$.
Let $$H_{nibble}:=G^{(r)}-H^\ast.$$

Recall that by~\ref{setup:matrix regular}, $G[Y]$ is $(\eps,A,f,r)$-regular with respect to $(\cP_{r},\cP_{f}[Y])$, and \ref{setup:dense} implies that $G[Y]$ is $(\mu^{f}\xi,f+r,r)$-dense. Let $$G_{nibble}:=(G[Y])[H_{nibble}].$$ Using Proposition~\ref{prop:sparse noise containment}, it is easy to see that $G_{nibble}$ is $(2^{r+1}\nu,A,f,r)$-regular with respect to $(\cP_{r},\cP_{f})[G_{nibble}]$. Moreover, by Proposition~\ref{prop:noise}\ref{noise:dense}, $G_{nibble}$ is $(\mu^f\xi/2,f+r,r)$-dense. Thus, by Lemma~\ref{lem:dominant2regular}, there exists $Y^\ast\In G_{nibble}^{(f)}$ such that $G_{nibble}[Y^\ast]$ is $(\sqrt{\nu},d,f,r)$-regular for $d:=\min^{\backslash}(A)\ge \xi$ and $(0.45\mu^f\xi(\mu^f\xi/8(f+1))^{\binom{f+r}{f}},f+r,r)$-dense. Thus, by Lemma~\ref{lem:F nibble}\COMMENT{Since $\nu\ll\mu$} there is a $\kappa$-well separated $F$-packing $\cF_{nibble}$ in $G_{nibble}[Y^\ast]$ such that $\Delta(L_{nibble})\le \gamma n$, where $L_{nibble}:=G_{nibble}[Y^\ast]^{(r)}-\cF_{nibble}^{(r)}=H_{nibble}-\cF_{nibble}^{(r)}$. Since $G^{(r)}$ is $F$-divisible with respect to $\cS,\cU$, we clearly have that $H^\ast \cup L_{nibble}=G^{(r)}-\cF_{nibble}^{(r)}$ is $F$-divisible with respect to $\cS,\cU$. By Fact~\ref{fact:ws}\ref{fact:ws:maxdeg}, we have that $\Delta(\cF_{nibble}^{\le(r+1)})\le \kappa (f-r)\le \gamma n$. Thus, by~\ref{lem:horrible:specific}, there exists a $\kappa$-well separated $F$-packing $\cF^\ast$ in $G[H^\ast \cup L_{nibble}]-\cF_{nibble}^{\le(r+1)}$ which covers all edges of $L_{nibble}$, and all $\cS$-important edges of $H^\ast$ except possibly some from $\tau_{r}^{-1}(r-i)$. But then, by Fact~\ref{fact:ws}\ref{fact:ws:1}, $\cF_{nibble}\cup \cF^\ast$ is a $2\kappa$-well separated $F$-packing in $G$ which covers all $\cS$-important $r$-edges except possibly some from $\tau_{r}^{-1}(r-i)$, completing the proof.
\endproof
This completes the proof of Lemma~\ref{lem:horrible}.
\endproof

\vspace{2cm}

{\footnotesize \obeylines \parindent=0pt

Stefan Glock, Daniela K\"{u}hn, Allan Lo, Deryk Osthus
\vspace{0.3cm}
School of Mathematics
University of Birmingham
Edgbaston
Birmingham
B15 2TT
UK
}
\vspace{0.3cm}
\begin{flushleft}
{\it{E-mail addresses}:}
\tt{[sglock,d.kuhn,s.a.lo,d.osthus]@bham.ac.uk}
\end{flushleft}


\begin{thebibliography}{10}


\bibitem{AY} N.~Alon and R.~Yuster, On a hypergraph matching problem, \emph{Graphs and Combinatorics}~{\bf 21} (2005), 377--384.


\bibitem{Archdeacon} D.~Archdeacon, Self-dual embeddings of complete multipartite graphs, \emph{J. Graph Theory}~{\bf 18} (1994), 735--749.

\bibitem{BKLMO} B.~Barber, D.~K\"uhn, A.~Lo, R.~Montgomery and D.~Osthus,
\newblock Fractional clique decompositions of dense graphs and hypergraphs,
{\em J. Combinatorial Theory Ser.~B}, to appear.


\bibitem{BKLO}
B.~Barber, D.~K\"uhn, A.~Lo and D.~Osthus, Edge-decompositions of graphs with high minimum degree, \emph{Adv. Math.}~{\bf 288} (2016), 337--385.


\bibitem{BKKKLZ} J.~Bl\"omer, M.~Kalfane, R.~Karp, M.~Karpinski, M.~Luby, D.~Zuckerman, An XOR-based erasure-resilient coding scheme, TR-95-048, International Computer Science Institute, 1995.


\bibitem{C} A.L.~Cauchy, Exercices d'analyse et de physique mathematique {\bf 2}, 2nd edition, Bachelier, Paris, 1841.

\bibitem{CD} C.J.~Colbourn and J.H.~Dinitz, {\em Handbook of Combinatorial Designs}, 2nd edition, CRC Press, 2006.

	
\bibitem{D}
F.~Dross, Fractional triangle decompositions in graphs with large minimum degree, {\em SIAM J. Discr. Math.}~{\bf 30} (2016), 36--42.

\bibitem{DukesLing} P.~Dukes and A.~Ling, Asymptotic existence of resolvable graph designs,
\emph{Canad. Math. Bull.}~{\bf 50} (2007), 504--518.


\bibitem{EH} P.~Erd\H{o}s and H.~Hanani, On a limit theorem in combinatorial analysis, \emph{Publicationes Mathematicae Debrecen}~{\bf 10} (1963), 10--13.


\bibitem{GKLMO} S.~Glock, D.~K\"uhn, A.~Lo, R.~Montgomery and D.~Osthus,
\newblock On the decomposition threshold of a given graph,
{\em arXiv:1603.04724}, 2016.


\bibitem{GKLO} S.~Glock, D.~K\"uhn, A.~Lo and D.~Osthus,
\newblock The existence of designs via iterative absorption,
{\em arXiv:1611.06827}, 2016.


\bibitem{hanani} H.~Hanani,
Decomposition of hypergraphs into octahedra,
\emph{Annals of the New York Academy of Sciences}, {\bf 319} (1979) 260--264.


\bibitem{JLR}
S.~Janson, T.~{\L}uczak, and A.~Ruci\'{n}ski,
\newblock {\em Random graphs},
\newblock Wiley-Interscience Series in Discrete Mathematics and Optimization.
  Wiley-Interscience, New York, 2000.\COMMENT{only need this in appendix version}

\bibitem{Ke}
P.~Keevash,
\newblock The existence of designs,
\newblock {\em arXiv:1401.3665}, 2014.

\bibitem{Ke2}
P.~Keevash,
\newblock Counting designs,
\newblock {\em J.~European Math. Soc.}, to appear.


\bibitem{Ki} T.P.~Kirkman, On a problem in combinatorics,
\newblock {\em Cambridge Dublin Math. J.}~\textbf{2} (1847), 191--204.

\bibitem{KKO} F.~Knox, D.~K{\"u}hn and D.~Osthus, Edge-disjoint Hamilton cycles in random graphs, \emph{Random Structures Algorithms}~{\bf 46} (2015), 397--445.

\bibitem{KMV} A.~Kostochka, D.~Mubayi and J.~Verstra\"ete, On independent sets in hypergraphs, \emph{Random Structures Algorithms}~{\bf 44} (2014), 224--239.

\bibitem{Kriv}
M.~Krivelevich, Triangle factors in random graphs, \emph{Combin. Probab. Comput.}~{\bf 6} (1997), 337--347.

\bibitem{KO}  D.~K{\"u}hn and D.~Osthus, Hamilton decompositions of regular expanders: A proof of Kelly's conjecture for large tournaments, \emph{Adv. Math.}~{\bf 237} (2013), 62--146.

\bibitem{KLP} G.~Kuperberg, S.~Lovett and R.~Peled, Probabilistic existence of regular combinatorial
objects, {\em arXiv:1302.4295}, 2013. (Extended abstract version appeared in STOC 2012.)

\bibitem{LRV} S.~Lovett, S.~Rao and A.~Vardy, Probabilistic existence of large sets of designs, {\em arXiv:1704.07964}, 2017.


\bibitem{NW}
C.St.J.A. Nash-Williams,
\newblock An unsolved problem concerning decomposition of graphs into
  triangles,
\newblock In: {\em Combinatorial Theory and its Applications III}, pages
  1179--1183, North Holland, 1970.


\bibitem{R}
R.~Raman,
\newblock The power of collision: randomized parallel algorithms for chaining
  and integer sorting,
\newblock In: {\em Foundations of software technology and theoretical computer
  science ({B}angalore, 1990)}, volume 472 of {\em Lecture Notes in Comput.
  Sci.}, pages 161--175, Springer, Berlin, 1990.\COMMENT{only need this in appendix version}
	
	
\bibitem{RCW} D.R.~Ray-Chaudhuri and R.M.~Wilson, The existence of resolvable designs, in: \emph{A Survey of Combinatorial Theory}, pages 361--375, North-Holland, Amsterdam, 1973.
	
\bibitem{Ro} V.~R\"odl, On a packing and covering problem, \emph{European J. Comb.}~{\bf 5} (1985), 69--78.


\bibitem{RRS}
V.~R\"odl, A.~Ruci\'{n}ski and E.~Szemer\'{e}di, A Dirac-type theorem for $3$-uniform hypergraphs, \emph{Combin. Probab. Comput.}~{\bf 15} (2006), 229--251.


\bibitem{RS}
V.~R\"odl and E.~{\v{S}}i{\v{n}}ajov{\'a},
\newblock Note on independent sets in {S}teiner systems,
\newblock {\em Random Structures Algorithms}~{\bf 5} (1994), 183--190.


\bibitem{Sche} S.~Schechter, On the inversion of certain matrices, \emph{Mathematical Tables and Other Aids to Computation}~{\bf 13} (1959), 73--77.


\bibitem{T} L.~Teirlinck, Non-trivial $t$-designs without repeated blocks exist for all $t$, {\em Discrete Math.}~{\bf 65} (1987), 301--311.


\bibitem{W1} R.M.~Wilson, An existence theory for pairwise balanced designs I. Composition theorems and morphisms, {\em J. Combinatorial Theory Ser. A}~{\bf 13} (1972), 220--245.

\bibitem{W2} R.M.~Wilson, An existence theory for pairwise balanced designs II. The structure of PBD-closed sets and the existence conjectures, {\em J. Combinatorial Theory Ser. A}~{\bf 13} (1972), 246--273.

\bibitem{W3} R.M.~Wilson, An existence theory for pairwise balanced designs III. Proof of the existence conjectures, {\em J. Combinatorial Theory Ser. A}~{\bf 18} (1975), 71--79.

\bibitem{WilsonBCC} R.M.~Wilson, Decompositions of complete graphs into subgraphs isomorphic to a
  given graph, In {\em Proc. of the {F}ifth {B}ritish {C}ombinatorial
  {C}onference ({U}niv. {A}berdeen, {A}berdeen, 1975)}, pages 647--659,
  Congressus Numerantium, No. XV, Utilitas Math., Winnipeg, Man., 1976.

\bibitem{Y2} R.~Yuster, Decomposing hypergraphs with simple hypertrees, \emph{Combinatorica}~{\bf 20} (2000), 119--140.



\end{thebibliography}
\end{document}